\newcommand{\tabincell}[2]{\begin{tabular}{@{}#1@{}}#2\end{tabular}}
\numberwithin{equation}{section}
\theoremstyle{definition}
\newtheorem{defn}{Definition}[section]
\newtheorem{rem}[defn]{Remark}
\newtheorem{exm}[defn]{Example}
\theoremstyle{plain}
\newtheorem{cor}[defn]{Corollary}
\newtheorem{thm}[defn]{Theorem}
\newtheorem{lem}[defn]{Lemma}
\newtheorem{exmrem}[defn]{Example-Remark}
\newtheorem{Notation}[defn]{Notation}
\newtheorem{prop}[defn]{Proposition}
\newtheorem{fact}[defn]{Fact}
\newtheorem{defn-thm}[defn]{Definition-Theorem}
\newtheorem*{Freyd-Mitchell}{Freyd-Mitchell embedding Theorem}
\def\Fib{\operatorname{Fib}}
\def\Cofib{\operatorname{Cofib}}
\def\Weq{\operatorname{Weq}}
\def\Coker{\operatorname{Coker}}
\def\Hom{\operatorname{Hom}}
\def\Ext{\operatorname{Ext}}
\def\Tor{\operatorname{Tor}}
\def\Ker{\operatorname{Ker}}
\def\Id{\operatorname{Id}}
\def\Im{\operatorname{Im}}
\begin{document}

\title[Cotorsion pairs and model structures on Morita rings] {Cotorsion pairs and model structures \\ on  Morita rings}
\author[Pu Zhang, Jian Cui, Shi Rong] {Pu Zhang$^*$,  Jian Cui, Shi Rong \\ \\  School of Mathematical Sciences \\
Shanghai Jiao Tong University,  \ Shanghai 200240, \ China }
\thanks{$^*$ Corresponding author}
\thanks{pzhang$\symbol{64}$sjtu.edu.cn \ \ \ \  provinceanying$\symbol{64}$sjtu.edu.cn \ \ \ \ rongshi$\symbol{64}$sjtu.edu.cn}
\thanks{\it 2020 Mathematics Subject Classification. Primary 18N40, 16D90, 16E30; Secondary 16E65, 16G50, 16G20}
\thanks{Supported by National Natural Science Foundation of China, Grant No. 12131015, 11971304; and Natural Science Foundation of Shanghai, Grant No. 23ZR1435100.}
\maketitle
\begin{abstract} We study cotorsion pairs and abelian model structures on Morita rings \ $\Lambda =\left(\begin{smallmatrix} A & {}_AN_B \\
	{}_BM_A & B\end{smallmatrix}\right)$ which are Artin algebras. Given cotorsion pairs $(\mathcal U, \mathcal X)$ and $(\mathcal V, \mathcal Y)$ in  $A$-Mod and $B$-Mod, respectively, one can construct four cotorsion pairs in $\Lambda$-Mod:
$$({}^\perp\left(\begin{smallmatrix}\mathcal X\\ \mathcal Y\end{smallmatrix}\right), \ \left(\begin{smallmatrix}\mathcal X\\ \mathcal Y\end{smallmatrix}\right)),
\ \ \ \ \ (\Delta(\mathcal U, \ \mathcal V), \ \Delta(\mathcal U, \ \mathcal V)^\perp), \ \ \ \ \ \
(\left(\begin{smallmatrix}\mathcal U\\ \mathcal V\end{smallmatrix}\right), \ \left(\begin{smallmatrix}\mathcal U\\ \mathcal V\end{smallmatrix}\right)^\perp),
\ \ \ \ \ \  (^{\perp}\nabla(\mathcal X, \ \mathcal Y), \ \nabla(\mathcal X, \ \mathcal Y)).$$
These cotorsion pairs have relations:
$$\Delta(\mathcal U, \ \mathcal V)^{\bot} \subseteq \left(\begin{smallmatrix}\mathcal X\\ \mathcal Y\end{smallmatrix}\right), \ \ \ \ \ \ \ \ ^{\bot}\nabla(\mathcal X, \ \mathcal Y) \subseteq \left(\begin{smallmatrix}\mathcal U\\ \mathcal V\end{smallmatrix}\right).$$
An important feature  is that they are not equal, in general. In fact, there even exists a Morita algebra $\Lambda$, such that the four cotorsion pairs are pairwise different.
The problem of identifications, i.e., when these inclusions are the same,  are studied;
the heredity and completeness of these cotorsion pairs are investigated; and finally, various model structures on $\Lambda$\mbox{-}{\rm Mod} are obtained,
by explicitly giving the corresponding Hovey triples and Quillen's homotopy categories.
In particular, cofibrantly generated Hovey triples,
and the Gillespie-Hovey triples induced by compatible generalized projective (respectively, injective) cotorsion pairs, are explicitly constructed.
All these Hovey triples obtained are pairwise different and ``new" in some sense.
Some results are even new  when $M = 0$ or $N = 0$.

\vskip5pt

Key words: Morita ring, cotorsion pair, model structure, Hovey triple, Quillen's homotopy category, Gorenstein-projective module, monomorphism category

\end{abstract}

\vskip10pt

\section{\bf Introduction}

This paper is to study cotorsion pairs and abelian model structures on some Morita rings.

\vskip5pt

Morita rings $\Lambda = \left(\begin{smallmatrix} A & {}_AN_B \\
{}_BM_A & B\end{smallmatrix}\right)$, originated from equivalences of module categories ([M]), and formulated in [Bas],
are also called the rings of Morita contexts, and the formal matrix rings. They are widely used in various aspects of mathematics; and for more information we refer to [C], [G], [MR], [KT] and [GrP].
Throughout this paper, we assume that the considered Morita rings are Artin algebras.

\vskip5pt

Model structures, introduced by D. Quillen [Q1, Q2],  provide common ideas and framework for many branches of mathematics.
A triple \ $(\mathcal C, \ \mathcal F, \ \mathcal W)$  of classes of objects of abelian category $\mathcal A$ is {\it a Hovey triple},
if $\mathcal W$ is thick and $(\mathcal C\cap \mathcal W, \ \mathcal F)$ and $(\mathcal C, \ \mathcal F\cap \mathcal W)$ are complete cotorsion pairs in $\mathcal A$;
and it is {\it hereditary}, if both the
cotorsion pairs are hereditary.
By M. Hovey [H2] (see also [BR]), abelian model structures on $\mathcal A$ and the Hovey triples in $\mathcal A$ are in one-to-one correspondence.

\vskip5pt

Of special interest are hereditary Hovey triples. In this case, $\mathcal C\cap \mathcal F$ is a Frobenius category,  $\mathcal C \cap \mathcal F\cap \mathcal W$ is
the class of projective-injective objects, and Quillen's homotopy category is exactly
the stable category $(\mathcal C\cap \mathcal F)/(\mathcal C\cap \mathcal F\cap \mathcal W)$. See  [BR],  [Bec], [Gil4].

\vskip5pt

J. Gillespie [Gil3] gives an approach to construct a hereditary Hovey triple \ $(^\perp\Upsilon, \ \Theta^\perp, \ \mathcal W)$,
from two compatible complete hereditary cotorsion pairs \ $(\Theta, \ \Theta^\perp)$ and \ $(^\perp\Upsilon, \ \Upsilon)$, where
$\mathcal W$ is given as in Theorem \ref{GHtriple}; and conversely, any hereditary Hovey triple in an abelian category $\mathcal A$ is obtained in this way.
This general construction   \ $(^\perp\Upsilon, \ \Theta^\perp, \ \mathcal W)$  of hereditary Hovey triples will be called {\it the Gillespie-Hovey triples}.
See Subsection 2.9 for details.

\vskip5pt

The module categories of Morita rings have been described ([G]), and
cotorsion pairs and abelian model structures on the special case of triangular matrix rings (i.e., $M = 0$) have been studied ([ZPD]). Nevertheless, a general study of cotorsion pairs and abelian model structures on some Morita rings meets difficulties and induces a lot of new phenomena, even under the assumption of
$M\otimes_AN = 0 = N\otimes_BM$.

\vskip5pt

From cotorsion pairs $(\mathcal U, \mathcal X)$ and $(\mathcal V, \mathcal Y)$,
respectively in  $A$-Mod and $B$-Mod, one can construct four kinds of cotorsion pairs in Morita rings.
Quite different from the case of $M = 0$ or $N = 0$, the four cotorsion pairs are pairwise different, in general.
The heredity,  the problem of identifications, the completeness, and the specializations,  of these cotorsion pairs are studied. It turns out that Morita rings are rich in producing cotorsion pairs. Even if one takes $(\mathcal U, \mathcal X)$ and $(\mathcal V, \mathcal Y)$ to be the projective or the injective cotorsion pair, what one gets in $\Lambda$\mbox{-}{\rm Mod} are pairwise generally different and ``new" cotorsion pairs.

\vskip5pt

Based on these,
various model structures on $\Lambda$\mbox{-}{\rm Mod} are obtained,
by explicitly giving the Hovey triples and Quillen's homotopy categories.
In particular, cofibrantly generated Hovey triples,
and the Gillespie-Hovey triples induced by compatible generalized projective (respectively, injective) cotorsion pairs, are explicitly constructed. All these Hovey triples obtained are pairwise different and ``new" in some sense.
Some results are new even for $M = 0$ or $N = 0$.

\vskip5pt

The paper is organized as follows.

\vskip5pt

1. \ Introduction

2. \ Preliminaries

3. \ (Hereditary) cotorsion pairs in Morita rings

4. \ Identifications

5. \ Completeness

6. \ Realizations

7. \ Abelian model structures on Morita rings

\vskip5pt

\subsection{(Hereditary) cotorsion pairs in Morita rings}  For a ring $R$, let $R$-Mod be the category of left $R$-modules.
For a class $\mathcal C$ of objects in abelian category $\mathcal A$ and $X\in \mathcal A$, by \ $\Ext^1_\mathcal A(X, \mathcal C)=0$ we mean
\ $\Ext^1_\mathcal A(X, C)=0$ for all $C\in \mathcal C$. Let $^\perp\mathcal C$ be the full subcategory of objects $X$ with \ $\Ext^1_\mathcal A(X, \mathcal C)=0$.
Similarly for $\mathcal C^\perp$.

\vskip5pt

Given a class \ $\mathcal X$ \ of \ $A$-modules and a class \ $\mathcal Y$ \ of \ $B$-modules, three classes
$$\left(\begin{smallmatrix}\mathcal X\\ \mathcal Y\end{smallmatrix}\right), \ \ \ \ \ \Delta(\mathcal X, \ \mathcal Y),  \ \ \ \ \ \nabla(\mathcal X, \ \mathcal Y)$$  of modules over Morita ring \ $\Lambda$ \ are defined. See Subsection 3.1.
In particular, one has  the monomorphism category \ ${\rm Mon}(\Lambda) = \Delta(A\mbox{\rm-Mod},  \ B\mbox{\rm-Mod})$, and the epimorphism category \
${\rm Epi}(\Lambda)= \nabla(A\mbox{\rm-Mod},  \ B\mbox{\rm-Mod})$.

\vskip5pt

By \ $\Tor^A_1(M, \ \mathcal U)$ $=0$ we mean \ $\Tor^A_1(M, \ U)=0$ for all $U\in\mathcal U$. Constructions of (hereditary) cotorsion pairs in $\Lambda$-Mod are given as follows.

\vskip5pt

\begin{thm}\label{mainin31} \ {\rm (Theorem \ref{ctp1})} \ Let \ $\Lambda = \left(\begin{smallmatrix} A & N \\ M & B\end{smallmatrix}\right)$  be a Morita ring  with $\phi = 0=\psi$,  \ $(\mathcal U, \ \mathcal X)$ and \ $(\mathcal V, \ \mathcal Y)$  cotorsion pairs in $A\mbox{-}{\rm Mod}$ and $B$\mbox{\rm-Mod}, respectively.

\vskip5pt

$(1)$ \ If \ $\Tor^A_1(M, \ \mathcal U)=0 = \Tor^B_1(N, \ \mathcal V)$,
then \ $({}^\perp\left(\begin{smallmatrix}\mathcal X\\ \mathcal Y\end{smallmatrix}\right), \ \left(\begin{smallmatrix}\mathcal X\\ \mathcal Y\end{smallmatrix}\right))$ is a cotorsion pair in $\Lambda${\rm-Mod}$;$ and it is hereditary if and only if  \ so are \ $(\mathcal U, \ \mathcal X)$ and  \ $(\mathcal V, \ \mathcal Y)$.

\vskip5pt

$(2)$ \ If \ $\Ext_A^1(N, \ \mathcal X) =0 = \Ext_B^1(M, \ \mathcal Y)$,
then \ $(\left(\begin{smallmatrix}\mathcal U\\ \mathcal V\end{smallmatrix}\right), \ \left(\begin{smallmatrix}\mathcal U\\ \mathcal V\end{smallmatrix}\right)^\perp)$ is a cotorsion pair in $\Lambda${\rm-Mod}$;$ and
it is hereditary if and only if so are
\ $(\mathcal U, \ \mathcal X)$  and  $(\mathcal V, \ \mathcal Y)$.
\end{thm}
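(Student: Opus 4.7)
The strategy is to reduce each claim to properties of the given pairs $(\mathcal U,\mathcal X)$ and $(\mathcal V,\mathcal Y)$ via the standard adjunctions between $\Lambda$-{\rm Mod} and the component categories. Write $U_A:\Lambda\text{-}{\rm Mod}\to A\text{-}{\rm Mod}$ for the forgetful functor $(X,Y,f,g)\mapsto X$, and let $T_A,H_A$ denote its left and right adjoints; define $U_B,T_B,H_B$ analogously. The key technical inputs I will invoke (presumably prepared in Section~2 or at the start of Section~3) are the two Ext-isomorphisms
$$\Ext^1_\Lambda(T_AU,\,L)\;\cong\;\Ext^1_A(U,\,U_AL)\qquad\text{whenever}\ \Tor_1^A(M,U)=0,$$
$$\Ext^1_\Lambda(L,\,H_AX)\;\cong\;\Ext^1_A(U_AL,\,X)\qquad\text{whenever}\ \Ext_A^1(N,X)=0,$$
together with their $B$-analogues. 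These follow by applying the Hom-adjunction to a projective (resp.\ injective) presentation and observing that the $\Tor$- (resp.\ $\Ext$-) vanishing hypothesis is exactly what is needed to keep the presentation exact after $M\otimes_A-$ (resp.\ $\Hom_A(N,-)$), so that $T_AU$ (resp.\ $H_AX$) inherits its first Ext from an $A$-module resolution via the adjunction.

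Granted these, part (1) is almost formal. Set $\mathcal C:=T_A(\mathcal U)\cup T_B(\mathcal V)\subseteq\Lambda\text{-}{\rm Mod}$. The first Ext-isomorphism and its $B$-analogue yield
$$L\in\mathcal C^\perp\;\Longleftrightarrow\;U_AL\in\mathcal U^\perp=\mathcal X\ \text{and}\ U_BL\in\mathcal V^\perp=\mathcal Y\;\Longleftrightarrow\;L\in\left(\begin{smallmatrix}\mathcal X\\ \mathcal Y\end{smallmatrix}\right),$$
so $\left(\begin{smallmatrix}\mathcal X\\ \mathcal Y\end{smallmatrix}\right)=\mathcal C^\perp$; hence $({}^\perp\mathcal C^\perp,\mathcal C^\perp)$ is automatically a cotorsion pair, proving (1). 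Part (2) is proved dually: take $\mathcal D:=H_A(\mathcal X)\cup H_B(\mathcal Y)$ and apply the second Ext-isomorphism to conclude $\left(\begin{smallmatrix}\mathcal U\\ \mathcal V\end{smallmatrix}\right)={}^\perp\mathcal D$.

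For the heredity equivalence in (1), I would use the characterization that a cotorsion pair is hereditary iff its right-hand class is coresolving. A short exact sequence in $\Lambda$-{\rm Mod} restricts to short exact sequences of $A$- and $B$-modules componentwise, and injective $\Lambda$-modules have injective components; so if $\mathcal X,\mathcal Y$ are coresolving then $\left(\begin{smallmatrix}\mathcal X\\ \mathcal Y\end{smallmatrix}\right)$ is too. Conversely, the exact fully faithful embedding $\iota_A:A\text{-}{\rm Mod}\to\Lambda\text{-}{\rm Mod}$, $X\mapsto(X,0,0,0)$ (well-defined precisely because $\phi=\psi=0$) sends $\mathcal X$ into $\left(\begin{smallmatrix}\mathcal X\\ \mathcal Y\end{smallmatrix}\right)$ since $0\in\mathcal Y$; hence coresolvingness of $\left(\begin{smallmatrix}\mathcal X\\ \mathcal Y\end{smallmatrix}\right)$ descends to $\mathcal X$, and symmetrically to $\mathcal Y$ via $\iota_B$. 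The heredity argument for (2) is the mirror image, using ``resolving'' in place of ``coresolving''.

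The principal obstacle will be the rigorous derivation of the two Ext-isomorphisms: this is where the vanishing hypotheses on $\Tor_1^A(M,\mathcal U)$ and $\Ext^1_A(N,\mathcal X)$ (and their $B$-counterparts) are genuinely doing work, since the plain Hom-adjunction between $T_A$ (resp.\ $H_A$) and $U_A$ says nothing about $\Ext^1$. Once those are in place, everything else is formal: the cotorsion-pair existence comes for free from the right-hand class being of the form $\mathcal C^\perp$, and the heredity equivalence is the standard transfer of (co)resolving properties through the componentwise structure of exact sequences in $\Lambda$-{\rm Mod}.
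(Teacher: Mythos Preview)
Your proposal is correct and follows essentially the same route as the paper: the paper also derives the Ext-isomorphisms $\Ext^1_\Lambda(T_AU,L)\cong\Ext^1_A(U,U_AL)$ etc.\ from the adjunctions under the $\Tor$/$\Ext$-vanishing hypotheses (its Lemmas~\ref{adj}--\ref{extadj1}), then deduces $\binom{\mathcal X}{\mathcal Y}=(T_A(\mathcal U)\cup T_B(\mathcal V))^\perp$ (Lemma~\ref{destheta}) and uses the general identity $({}^\perp(\mathcal S^\perp))^\perp=\mathcal S^\perp$, with heredity handled via componentwise exactness and the embeddings $Z_A,Z_B$ exactly as you suggest with $\iota_A,\iota_B$. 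One small caution: your aside that ``injective $\Lambda$-modules have injective components'' is not true in general (the first component of $H_BJ$ is $\Hom_B(M,J)$, injective only when $M_A$ is flat), but you do not actually need it---once $\binom{\mathcal X}{\mathcal Y}$ is known to equal $\mathcal C^\perp$, it automatically contains all injectives, so heredity reduces purely to closure under cokernels of monomorphisms, which is what your componentwise argument establishes.
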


\vskip5pt

\begin{thm}\label{mainin32} \ \ {\rm (Theorem \ref{ctp6})} \ Let $\Lambda = \left(\begin{smallmatrix} A & N \\	M & B\end{smallmatrix}\right)$ be a Morita ring  with $M\otimes_A N = 0 = N\otimes_BM$,  \ $(\mathcal U, \ \mathcal X)$ and $(\mathcal V, \ \mathcal Y)$  cotorsion pairs in $A\mbox{-}{\rm Mod}$ and  $B$\mbox{\rm-Mod}, respectively.  Then

\vskip5pt

$(1)$ \ \ $(\Delta(\mathcal U, \ \mathcal V), \ \Delta(\mathcal U, \ \mathcal V)^\perp)$ is a cotorsion pair in $\Lambda${\rm-Mod}$;$ and if \ $M_A$ and \ $N_B$ are flat, then it is hereditary if and only if so are \ $(\mathcal U, \ \mathcal X)$  and \ $(\mathcal V, \ \mathcal Y)$.

\vskip5pt

$(2)$ \ \ $(^{\perp}\nabla(\mathcal X, \ \mathcal Y), \ \nabla(\mathcal X, \ \mathcal Y))$ is a cotorsion pair in $\Lambda${\rm-Mod}$;$ and if \ $_BM$ and \ $_AN$ are projective, then
it is hereditary if and only if so are \ $(\mathcal U, \ \mathcal X)$  and  $(\mathcal V, \ \mathcal Y)$.
\end{thm}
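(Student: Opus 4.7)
The plan is to exploit the adjoint structure between $\Lambda$\mbox{-}{\rm Mod} and $A$\mbox{-}{\rm Mod} $\times$ $B$\mbox{-}{\rm Mod}. Writing a $\Lambda$-module as a quadruple $T=(X,Y,\phi_T,\psi_T)$ with $\phi_T\colon M\otimes_A X\to Y$ and $\psi_T\colon N\otimes_B Y\to X$, the assumption $M\otimes_A N=0=N\otimes_B M$ renders the Morita-compatibility conditions vacuous and makes the induction functors transparent: $\TT_A(U)=(U,\,M\otimes_A U,\,\Id,\,0)$ and $\TT_B(V)=(N\otimes_B V,\,V,\,0,\,\Id)$, left adjoint to the forgetful functors $\UU_A,\UU_B$; their right adjoints $\HH_A,\HH_B$ are the coinduction functors. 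Two observations guide the proof: $\TT_A,\TT_B$ land in $\Delta(\mathcal U,\mathcal V)$ when applied to $\mathcal U,\mathcal V$ respectively, and dually $\HH_A,\HH_B$ land in $\nabla(\mathcal X,\mathcal Y)$ when applied to $\mathcal X,\mathcal Y$.

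For part (1), the first task is to associate to each $T\in\Delta(\mathcal U,\mathcal V)$ two natural short exact sequences
$$0\to\TT_B(Y)\to T\to(U,0,0,0)\to 0, \qquad 0\to\TT_A(X)\to T\to(0,V,0,0)\to 0,$$
with $U=\Coker\psi_T\in\mathcal U$ and $V=\Coker\phi_T\in\mathcal V$; here $M\otimes_A N=0$ is exactly what makes $(\psi_T,\Id_Y)\colon\TT_B(Y)\hookrightarrow T$ a $\Lambda$-module map. Combined with the adjunction-induced identities that compute $\Ext^1_\Lambda(\TT_A(U),W)$ and $\Ext^1_\Lambda(\TT_B(V),W)$ in terms of $\Ext^1_A$ and $\Ext^1_B$, these sequences yield an explicit description of $\Delta(\mathcal U,\mathcal V)^\perp$. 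Then for $T\in{}^\perp(\Delta(\mathcal U,\mathcal V)^\perp)$, orthogonality against strategically chosen objects in $\Delta(\mathcal U,\mathcal V)^\perp$---including $\HH_A(X')$ for $X'\in\mathcal X$ and $\HH_B(Y')$ for $Y'\in\mathcal Y$---forces $\phi_T,\psi_T$ to be injective with cokernels in $\mathcal V,\mathcal U$, giving $T\in\Delta(\mathcal U,\mathcal V)$ and closing the equality ${}^\perp(\Delta(\mathcal U,\mathcal V)^\perp)=\Delta(\mathcal U,\mathcal V)$.

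For heredity in (1), the flatness of $M_A,N_B$ promotes $\TT_A,\TT_B$ to exact functors, upgrading the adjunction identity to $\Ext^n_\Lambda(\TT_A(U),W)\cong\Ext^n_A(U,\UU_A W)$ for all $n\ge 1$. Applying the long exact $\Ext$-sequences to the two short exact sequences above reduces $\Ext^2_\Lambda(\Delta(\mathcal U,\mathcal V),\Delta(\mathcal U,\mathcal V)^\perp)=0$ to $\Ext^2_A(\mathcal U,\mathcal X)=0=\Ext^2_B(\mathcal V,\mathcal Y)$; conversely, testing $\TT_A(U)$ against $\HH_A(X)$, and $\TT_B(V)$ against $\HH_B(Y)$, recovers the heredity of the input cotorsion pairs from the heredity in $\Lambda$\mbox{-}{\rm Mod}.

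Part (2) follows by a formally dual argument: projectivity of $_BM$ and $_AN$ makes the coinduction functors $\HH_A,\HH_B$ exact, and each $T\in\nabla(\mathcal X,\mathcal Y)$ admits analogous short exact sequences built from the kernels of $\phi_T,\psi_T$ and the coinduction functors in place of the induction ones. The orthogonality and heredity arguments then run in mirror, now using the right-adjoint identity $\Ext^n_\Lambda(W,\HH_A(X))\cong\Ext^n_A(\UU_A W,X)$ and its $B$-analogue. The main obstacle throughout is the careful bookkeeping of structure maps through these short exact sequences and verifying that the candidate submodules and quotients are genuinely $\Lambda$-(sub)modules; this is where the hypotheses $M\otimes_A N=0=N\otimes_B M$, together with the flatness (respectively projectivity) conditions on $M$ and $N$, play their essential roles.
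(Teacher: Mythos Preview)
Your plan has a genuine gap at the key reverse inclusion ${}^\perp(\Delta(\mathcal U,\mathcal V)^\perp)\subseteq\Delta(\mathcal U,\mathcal V)$. Testing a module $T=(X,Y)_{\phi_T,\psi_T}$ against $\HH_A(X')$ and $\HH_B(Y')$ cannot force the structure maps to be monomorphisms. Even when the adjunction identity $\Ext^1_\Lambda(T,\HH_A(X'))\cong\Ext^1_A(\UU_A T,X')$ is available (which already requires $\Ext^1_A(N,X')=0$, a hypothesis not present here), it tells you only that $X\in{}^\perp\mathcal X=\mathcal U$. The class ${}^\perp\HH_A(\mathcal X)\cap{}^\perp\HH_B(\mathcal Y)$ is, under the right hypotheses, equal to $\binom{\mathcal U}{\mathcal V}$ (this is Lemma~\ref{destheta}(2)), which strictly contains $\Delta(\mathcal U,\mathcal V)$: for instance $Z_A(U)=(U,0)_{0,0}$ lies in $\binom{\mathcal U}{\mathcal V}$ but its structure map $M\otimes_A U\to 0$ is not injective when $M\otimes_A U\neq 0$. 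So the objects $\HH_A(X'),\HH_B(Y')$ are the wrong test objects for detecting the monomorphism condition.

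The paper's route (Lemma~\ref{desdelta}) is to test instead against the zero embeddings $Z_A(X)=(X,0)_{0,0}$ for $X\in\mathcal X$ and $Z_B(Y)=(0,Y)_{0,0}$ for $Y\in\mathcal Y$, proving $\Delta(\mathcal U,\mathcal V)={}^\perp\bigl(Z_A(\mathcal X)\cup Z_B(\mathcal Y)\bigr)$. The substantive step is that $L\in{}^\perp Z_A(\mathcal X)$ forces $g$ to be a monomorphism: for each $A$-map $u:N\otimes_B L_2\to X$ one constructs by hand an extension of $L$ by $Z_A(X)$ whose middle term has first component $X\oplus L_1$ and structure map $\binom{u}{g}$ (the hypothesis $M\otimes_A N=0=N\otimes_B M$ is precisely what makes this a $\Lambda$-module), and the splitting of this extension factors $u$ through $g$; taking $X$ injective then shows $g$ is monic. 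Once $f,g$ are known to be monic, Lemma~\ref{extadj2} gives $\Coker g\in{}^\perp\mathcal X=\mathcal U$ and $\Coker f\in\mathcal V$. There is no analogue of this argument through $\HH_A,\HH_B$.

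There is also a slip in your description-of-$\Delta^\perp$ step: your short exact sequences contain $\TT_B(Y)$ and $\TT_A(X)$ with $X,Y$ the \emph{full} components of $T$, not $\TT_A(U)$ and $\TT_B(V)$; since those $X,Y$ need not lie in $\mathcal U,\mathcal V$, the adjunction identities you invoke for $\TT_A(U),\TT_B(V)$ do not apply to those terms, and no clean description of $\Delta(\mathcal U,\mathcal V)^\perp$ falls out. For the heredity statement the paper likewise avoids $\Ext^2$ and argues directly via the Snake Lemma (Lemma~\ref{deltaher}) that $\Delta(\mathcal U,\mathcal V)$ is closed under kernels of epimorphisms when $M_A,N_B$ are flat; your $\Ext^2$ reduction would again stumble on the same issue that $Y$ is not known to be in $\mathcal V$.
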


We stress that, the condition \ ``$M\otimes_A N = 0 = N\otimes_BM$" in {\rm Theorem \ref{mainin32}}, can not be weakened as \ ``$\phi = 0 = \psi$" in general, as Example \ref{irem1} shows.

\vskip5pt

The four cotorsion pairs $$({}^\perp\left(\begin{smallmatrix}\mathcal X\\ \mathcal Y\end{smallmatrix}\right), \ \left(\begin{smallmatrix}\mathcal X\\ \mathcal Y\end{smallmatrix}\right)),
\ \ \ \ \ \ \ \ (\Delta(\mathcal U, \ \mathcal V), \ \Delta(\mathcal U, \ \mathcal V)^\perp), \ \ \ \ \ \ \ (\left(\begin{smallmatrix}\mathcal U\\ \mathcal V\end{smallmatrix}\right), \ \left(\begin{smallmatrix}\mathcal U\\ \mathcal V\end{smallmatrix}\right)^\perp),
\ \ \ \ \ \ \ \ (^{\perp}\nabla(\mathcal X, \ \mathcal Y), \ \nabla(\mathcal X, \ \mathcal Y))$$
given in Theorems \ref{mainin31}  and  \ref{mainin32} have relations:
$$\Delta(\mathcal U, \ \mathcal V)^{\bot} \subseteq \left(\begin{smallmatrix}\mathcal X\\ \mathcal Y\end{smallmatrix}\right); \ \ \ \ \ \ \ ^{\bot}\nabla(\mathcal X, \ \mathcal Y) \subseteq \left(\begin{smallmatrix}\mathcal U\\ \mathcal V\end{smallmatrix}\right).$$ See Theorem \ref{compare} for details. An important and interesting feature is that,  {\bf they are not equal}, in general. In fact, there even exists an algebra $\Lambda$, such that the four cotorsion pairs above are pairwise different. Such an example has been given in Example \ref{ie}.

\subsection{Identifications}  If \ $M=0$ or $N = 0$, \ Theorems \ref{mainin31}  and  \ref{mainin32} have been obtained by {\rm R. M. Zhu}, {\rm Y. Y. Peng} and {\rm N. Q. Ding} [ZPD,  3.4, 3.6].
Moreover, they prove that (see {\rm [ZPD, 3.7]}) $$({}^\perp\left(\begin{smallmatrix}\mathcal X\\ \mathcal Y\end{smallmatrix}\right), \ \left(\begin{smallmatrix}\mathcal X\\ \mathcal Y\end{smallmatrix}\right)) = (\Delta(\mathcal U, \ \mathcal V), \ \Delta(\mathcal U, \ \mathcal V)^\perp)$$ and
$$(\left(\begin{smallmatrix} \mathcal U\\ \mathcal V\end{smallmatrix}\right), \ \left(\begin{smallmatrix} \mathcal U\\ \mathcal V\end{smallmatrix}\right)^\perp) =(^{\perp}\nabla(\mathcal X, \ \mathcal Y), \ \nabla(\mathcal X, \ \mathcal Y)).$$
As pointed out above, in general, these are not true! We will study the problem of identifications, i.e., when the two equalities hold true. If they are equal, then one has the cotorsion pairs
$$(\Delta(\mathcal U, \ \mathcal V), \ \left(\begin{smallmatrix}\mathcal X\\ \mathcal Y\end{smallmatrix}\right))
\ \ \ \ \mbox{and} \ \ \ \ (\left(\begin{smallmatrix} \mathcal U\\ \mathcal V\end{smallmatrix}\right), \ \nabla(\mathcal X, \ \mathcal Y)),$$
both are explicitly given. Since \ $^\perp\left(\begin{smallmatrix}\mathcal X\\ \mathcal Y\end{smallmatrix}\right)$ and \
$\Delta(\mathcal U, \ \mathcal V)^\perp$ are usually difficult to determine,
these identifications are of significance, in explicitly finding abelian model structures in Morita rings.

\vskip5pt

In the rest of this section,  \ $$\Lambda = \left(\begin{smallmatrix} A & N \\ M & B \end{smallmatrix}\right)$$ is a Morita ring which is an Artin algebra with \ $M\otimes_A N = 0 = N\otimes_BM$. We will not state this every time.
For functors \ ${\rm T}_A, \ {\rm H}_A: A\mbox{-Mod} \longrightarrow \Lambda\mbox{-Mod}$ and
\ ${\rm T}_B, \ {\rm H}_B: B\mbox{-Mod} \longrightarrow \Lambda\mbox{-Mod}$,  we refer to Subsection 2.4. By \ $M\otimes_A\mathcal U \subseteq \mathcal Y$ we mean
$M\otimes_A U \in \mathcal Y$ for all $U\in\mathcal U$.

\vskip5pt

\begin{thm}\label{mainin41} \ {\rm (Theorem \ref{identify1})}  \ Let \ $(\mathcal U, \ \mathcal X)$ and $(\mathcal V, \ \mathcal Y)$ be cotorsion pairs in $A\mbox{-}{\rm Mod}$ and in $B$\mbox{\rm-Mod}, respectively.

\vskip5pt

$(1)$ \ Assume that \ $\Tor^A_1(M, \ \mathcal U) =0 = \Tor^B_1(N, \ \mathcal V)$. If \
$M\otimes_A\mathcal U \subseteq \mathcal Y$ \  and \ $N\otimes_B\mathcal V \subseteq \mathcal X$, then
 \ $\Delta(\mathcal U, \ \mathcal V) =
\ ^\perp\left(\begin{smallmatrix} \mathcal X\\ \mathcal Y\end{smallmatrix}\right) = {\rm T}_A(\mathcal U)\oplus {\rm T}_B(\mathcal V)$, and thus \ $({\rm T}_A(\mathcal U)\oplus {\rm T}_B(\mathcal V), \ \left(\begin{smallmatrix} \mathcal X\\ \mathcal{Y}\end{smallmatrix}\right))$ \ is a cotorsion pair.
\vskip10pt

$(2)$ Assume that $\Ext_B^1(M,  \mathcal Y) =0 = \Ext_A^1(N,  \mathcal X)$. If $\Hom_B(M, \ \mathcal Y)\subseteq \mathcal U$  and  $\Hom_A(N,  \mathcal X) \subseteq \mathcal V$, then
$\nabla(\mathcal X,  \mathcal Y) =
\left(\begin{smallmatrix} \mathcal U \\ \mathcal V\end{smallmatrix}\right)^\perp = {\rm H}_A(\mathcal X)\oplus {\rm H}_B(\mathcal Y)$,   and thus  $(\left(\begin{smallmatrix} \mathcal U \\ \mathcal V\end{smallmatrix}\right),  {\rm H}_A(\mathcal X)\oplus {\rm H}_B(\mathcal Y))$  is a cotorsion pair.
\end{thm}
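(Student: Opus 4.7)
Parts $(1)$ and $(2)$ are dual, so I describe only $(1)$; $(2)$ follows by the analogous dual argument (exchanging $\Delta\leftrightarrow\nabla$, $\TT\leftrightarrow\HH$, $\otimes\leftrightarrow\Hom$, and the left and right halves of each cotorsion pair). My plan is to establish the chain
$$\TT_A(\mathcal U)\oplus \TT_B(\mathcal V) \ \subseteq \ \Delta(\mathcal U,\mathcal V) \ \subseteq \ \TT_A(\mathcal U)\oplus \TT_B(\mathcal V) \ \subseteq \ {}^\perp\left(\begin{smallmatrix}\mathcal X\\ \mathcal Y\end{smallmatrix}\right),$$
and then close the loop using the cotorsion pair of Theorem~\ref{ctp6} together with the inclusion $\Delta(\mathcal U,\mathcal V)^\perp \subseteq \left(\begin{smallmatrix}\mathcal X\\ \mathcal Y\end{smallmatrix}\right)$ recorded in Theorem~\ref{compare}.

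The first inclusion is a direct check: for $U\in\mathcal U$, the module $\TT_A(U)=(U,\ M\otimes_A U,\ \mathrm{id},\ 0)$ (the second structure map is zero thanks to $N\otimes_B M=0$) meets the monomorphism conditions trivially, with cokernels $U\in\mathcal U$ and $0\in\mathcal V$; symmetrically $\TT_B(V)\in\Delta(\mathcal U,\mathcal V)$, and the conclusion follows as $\Delta(\mathcal U,\mathcal V)$ is closed under direct sums.

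The middle inclusion is the heart of the argument. For $L=(X,Y,f,g)\in\Delta(\mathcal U,\mathcal V)$ the definition of $\Delta$ furnishes two short exact sequences
\begin{align*}
&0 \to N\otimes_B Y \xrightarrow{\,g\,} X \to U \to 0 \quad\text{in $A$-Mod, with } U\in\mathcal U,\\
&0 \to M\otimes_A X \xrightarrow{\,f\,} Y \to V \to 0 \quad\text{in $B$-Mod, with } V\in\mathcal V.
\end{align*}
Applying $N\otimes_B -$ to the second sequence and using $\Tor^B_1(N,V)=0$ together with $N\otimes_B(M\otimes_A X)=(N\otimes_B M)\otimes_A X=0$ yields $N\otimes_B Y\cong N\otimes_B V\in\mathcal X$ (by $N\otimes_B \mathcal V\subseteq\mathcal X$); since $U\in\mathcal U$ and $(\mathcal U,\mathcal X)$ is a cotorsion pair, $\Ext^1_A(U,\,N\otimes_B V)=0$, so the first sequence splits and $X\cong U\oplus(N\otimes_B V)$. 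A symmetric computation using $M\otimes_A -$, the vanishings $\Tor^A_1(M,U)=0$ and $M\otimes_A N=0$, and the hypothesis $M\otimes_A \mathcal U\subseteq\mathcal Y$, splits the second sequence and gives $Y\cong V\oplus(M\otimes_A U)$. A direct inspection of the structure maps of $L$ under these two splittings then identifies $L\cong \TT_A(U)\oplus\TT_B(V)$.

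The third inclusion is an $\Ext$-adjunction argument. The functor $\TT_A$ is left adjoint to the projection $\UU_A\colon\Lambda\text{-Mod}\to A\text{-Mod}$ from Subsection~2.4, and the vanishing $\Tor^A_1(M,\mathcal U)=0$ ensures that $\TT_A$ sends a short exact sequence $0\to K\to P\to U\to 0$ with $P$ projective in $A$-Mod to a short exact sequence in $\Lambda$-Mod whose middle term $\TT_A(P)$ remains projective; hence for any $\left(\begin{smallmatrix}X\\ Y\end{smallmatrix}\right)$ with $X\in\mathcal X,Y\in\mathcal Y$,
$$\Ext^1_\Lambda\bigl(\TT_A(U),\ \left(\begin{smallmatrix}X\\ Y\end{smallmatrix}\right)\bigr)\ \cong\ \Ext^1_A(U,X)\ =\ 0,$$
and symmetrically for $\TT_B(V)$. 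Combining the three inclusions yields $\Delta(\mathcal U,\mathcal V)\subseteq{}^\perp\left(\begin{smallmatrix}\mathcal X\\ \mathcal Y\end{smallmatrix}\right)$, equivalently $\left(\begin{smallmatrix}\mathcal X\\ \mathcal Y\end{smallmatrix}\right)\subseteq\Delta(\mathcal U,\mathcal V)^\perp$; paired with the reverse inclusion of Theorem~\ref{compare}, this gives $\Delta(\mathcal U,\mathcal V)^\perp=\left(\begin{smallmatrix}\mathcal X\\ \mathcal Y\end{smallmatrix}\right)$, and the cotorsion-pair identity $\Delta(\mathcal U,\mathcal V)={}^\perp(\Delta(\mathcal U,\mathcal V)^\perp)$ from Theorem~\ref{ctp6} collapses all three classes into one; the cotorsion pair $(\TT_A(\mathcal U)\oplus\TT_B(\mathcal V),\ \left(\begin{smallmatrix}\mathcal X\\ \mathcal Y\end{smallmatrix}\right))$ is then an immediate consequence. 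The main obstacle I foresee is verifying that the individually chosen splittings of $X$ and $Y$ assemble into a $\Lambda$-linear isomorphism $L\cong \TT_A(U)\oplus\TT_B(V)$; this is formal once the natural splittings are fixed, but requires careful bookkeeping of which of $\Tor$-vanishing, $M\otimes_A \mathcal U\subseteq\mathcal Y$, $N\otimes_B\mathcal V\subseteq\mathcal X$, and $M\otimes_A N = 0 = N\otimes_B M$ drives each compatibility.
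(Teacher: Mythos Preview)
Your proposal is correct and uses the same essential ingredients as the paper: the isomorphisms $N\otimes_B Y\cong N\otimes_B V$ and $M\otimes_A X\cong M\otimes_A U$ coming from $M\otimes_A N=0=N\otimes_B M$, the splitting of the two defining short exact sequences via $\Ext^1_A(U,N\otimes_B V)=0$ and $\Ext^1_B(V,M\otimes_A U)=0$, the adjunction $\Ext^1_\Lambda(\TT_A U,-)\cong\Ext^1_A(U,\UU_A(-))$ (the paper's Lemma~\ref{extadj1}), and the comparison of Theorem~\ref{compare}.

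The organization differs slightly. The paper proves the stronger Theorem~\ref{identify1}, where \emph{either} condition $M\otimes_A\mathcal U\subseteq\mathcal Y$ \emph{or} $N\otimes_B\mathcal V\subseteq\mathcal X$ already forces $\Delta(\mathcal U,\mathcal V)={}^\perp\binom{\mathcal X}{\mathcal Y}$. Under a single condition only one of the two sequences splits, so the paper instead exhibits $L$ as an \emph{extension} $0\to\TT_B V\to L\to\TT_A U\to 0$ (Case~I) or $0\to\TT_A U\to L\to\TT_B V\to 0$ (Case~II), and then closure of ${}^\perp\binom{\mathcal X}{\mathcal Y}$ under extensions finishes. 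Your direct-sum argument is cleaner for the stated version with both hypotheses, but does not extend to the one-condition case. Regarding your flagged obstacle, the verification that the two separately chosen splittings assemble to a $\Lambda$-isomorphism $L\cong\TT_A U\oplus\TT_B V$ is indeed routine: once $M\otimes_A(N\otimes_B V)=0=N\otimes_B(M\otimes_A U)$, the induced structure maps collapse to $\binom{1}{0}$ and $\binom{0}{1}$ under the obvious identifications, exactly as in $\TT_A U\oplus\TT_B V$; the paper likewise asserts this step without further detail.
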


Even if the two cotorsion pairs are not equal in general,
there are possibilities that they can be equal for some special $A$, $B$, $M$ and $N$.
The following result provide such important cases:
cotorsion pairs  $$(^\perp\binom{_A\mathcal I}{_B\mathcal I}, \ \binom{_A\mathcal I}{_B\mathcal I}) \ \ \  \mbox{and} \ \ \ \ ({\rm Mon}(\Lambda), \ {\rm Mon}(\Lambda)^\bot) = (\Delta (A\mbox{-}{\rm Mod}, \ B\mbox{-}{\rm Mod}), \ \Delta (A\mbox{-}{\rm Mod}, \ B\mbox{-}{\rm Mod})^\perp)$$
are not equal in general (cf. Example \ref{ie}); but the following result claims that they can be the same in some special cases.

\vskip5pt

For a ring $R$, let $_R\mathcal P$  (respectively,  $_R\mathcal I$) be the full subcategory of $R$-Mod of projective (respectively, injective) modules, $_R \mathcal P^{\le 1}$  (respectively,  $_R\mathcal I^{\le 1}$) the full subcategory  of modules
with projective (respectively, injective) dimension $\le 1$.

\begin{thm}\label{mainin42} \ {\rm (Theorem \ref{ctp4})} \ Assume that $A$ and $B$ are quasi-Frobenius rings, \ $_AN$ and $_BM$ are projective, and that $M_A$ and $N_B$ are flat. Then

\vskip5pt

${\rm (1)}$ \ \ $\Lambda$ is a Gorenstein ring with \ ${\rm inj.dim} _\Lambda\Lambda\le 1$.

\vskip5pt

${\rm (2)}$ \ \ $(^\perp\binom{_A\mathcal I}{_B\mathcal I}, \ \binom{_A\mathcal I}{_B\mathcal I}) = ({\rm Mon}(\Lambda), \ {\rm Mon}(\Lambda)^\bot);$ and it is
exactly the Gorenstein-projective cotorsion pair  $({\rm GP}(\Lambda), \ _\Lambda\mathcal P^{\le 1})$.  So, it is complete and hereditary, and
$${\rm GP}(\Lambda) = {\rm Mon}(\Lambda) = \ {}^\perp \ _\Lambda\mathcal P, \ \ \ \ {\rm Mon}(\Lambda)^\bot = \ _\Lambda \mathcal P^{\le 1}.$$

\vskip5pt

${\rm (2)'}$ \ \ $(\binom{_A\mathcal P}{_B\mathcal P}, \ \binom{_A\mathcal P}{_B\mathcal P}^\perp) = (^\bot{\rm Epi}(\Lambda), \ {\rm Epi}(\Lambda));$ and it
is exactly the Gorenstein-injective cotorsion pair \ $(_\Lambda \mathcal P^{\le 1}, \ {\rm GI}(\Lambda))$.  So, it is  complete and hereditary, and
$$ {\rm GI}(\Lambda) = {\rm Epi}(\Lambda) = \ _\Lambda\mathcal I{}^\perp, \ \ \ \ ^\bot{\rm Epi}(\Lambda) = \ _\Lambda \mathcal P^{\le 1}.$$
\end{thm}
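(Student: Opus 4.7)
The plan is to reduce all identifications in (2) and (2)' to a combination of (1) and the cotorsion-pair constructions of Theorem~\ref{mainin31}. Once (1) is in hand, the standard $1$-Gorenstein theory supplies the complete hereditary cotorsion pair $({\rm GP}(\Lambda),\,{}_\Lambda\mathcal P^{\le 1})$ together with its dual $({}_\Lambda\mathcal P^{\le 1},\,{\rm GI}(\Lambda))$, and gives ${\rm GP}(\Lambda) = {}^\perp{}_\Lambda\mathcal P^{\le 1} = {}^\perp\Lambda$. It then suffices, for (2), to match the three candidate left-hand classes ${}^\perp\binom{{}_A\mathcal I}{{}_B\mathcal I}$, ${\rm Mon}(\Lambda)$, and ${\rm GP}(\Lambda)$ (and dually for (2)').

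For (1), I would resolve each summand of ${}_\Lambda\Lambda = {\rm T}_A(A) \oplus {\rm T}_B(B) = \binom{A}{M} \oplus \binom{N}{B}$ by at most one step of injective $\Lambda$-modules. Because $A, B$ are QF, the modules ${}_AA$ and ${}_BB$ are injective, so ${\rm H}_A(A)$ and ${\rm H}_B(B)$ are injective in $\Lambda$-Mod (the functors ${\rm H}_A, {\rm H}_B$ being right adjoints to exact restrictions). Using that ${}_AN, {}_BM$ are projective (hence also injective over the QF rings $A, B$) and that $M_A, N_B$ are flat, I would exhibit explicit embeddings ${\rm T}_A(A) \hookrightarrow {\rm H}_A(A) \oplus {\rm H}_B(M)$ and ${\rm T}_B(B) \hookrightarrow {\rm H}_A(N) \oplus {\rm H}_B(B)$, and verify that their cokernels are again direct sums ${\rm H}_A(I) \oplus {\rm H}_B(J)$ with $I, J$ injective. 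The symmetric computation on the right-module side yields the analogous bound, so $\Lambda$ is Gorenstein.

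For (2), Theorem~\ref{mainin31}(1) applied to the injective cotorsion pairs $(A\mbox{-}{\rm Mod},\,{}_A\mathcal I)$ and $(B\mbox{-}{\rm Mod},\,{}_B\mathcal I)$ yields the hereditary cotorsion pair $({}^\perp\binom{{}_A\mathcal I}{{}_B\mathcal I},\,\binom{{}_A\mathcal I}{{}_B\mathcal I})$; the needed Tor-vanishing reduces precisely to $M_A, N_B$ being flat. QF-ness rewrites the right-hand class as $\binom{{}_A\mathcal P}{{}_B\mathcal P}$, and using $M \otimes_A N = 0 = N \otimes_B M$ one checks that every $\binom{X}{Y}$ with $X \in {}_A\mathcal P$, $Y \in {}_B\mathcal P$ admits a length-one projective resolution in $\Lambda$-Mod, whence $\binom{{}_A\mathcal I}{{}_B\mathcal I} \subseteq {}_\Lambda\mathcal P^{\le 1}$. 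Combined with the general comparison ${}^\perp\binom{{}_A\mathcal I}{{}_B\mathcal I} \subseteq {\rm Mon}(\Lambda)$ (Theorem~\ref{compare}) and with the independently established inclusion ${\rm Mon}(\Lambda) \subseteq {\rm GP}(\Lambda)$, one obtains the sandwich
\[
{\rm Mon}(\Lambda) \subseteq {\rm GP}(\Lambda) = {}^\perp{}_\Lambda\mathcal P^{\le 1} \subseteq {}^\perp\binom{{}_A\mathcal I}{{}_B\mathcal I} \subseteq {\rm Mon}(\Lambda),
\]
forcing equality everywhere; the right-hand classes then agree by applying ${}^\perp$. Part (2)' is entirely dual: apply Theorem~\ref{mainin31}(2), use the Ext-vanishing from projectivity of ${}_AN, {}_BM$, and run the analogous sandwich with ${\rm GI}(\Lambda)$ and ${\rm Epi}(\Lambda)$.

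The main obstacle is the inclusion ${\rm Mon}(\Lambda) \subseteq {\rm GP}(\Lambda)$: for each compatible monomorphism $\binom{X}{Y}$ one must construct a totally acyclic complex of projective $\Lambda$-modules having it as a syzygy. The downward half, iterated projective covers of the form ${\rm T}_A(-) \oplus {\rm T}_B(-)$, keeps the kernel in ${\rm Mon}(\Lambda)$ by the bimodule hypotheses. The upward half is more delicate: one builds a coresolution by projectives using QF-ness of $A, B$ (so that injective hulls of the components are again projective) together with $M \otimes_A N = 0 = N \otimes_B M$ and the two-sided projectivity/flatness of $M, N$, so that the monomorphism structure on the components survives each splice.
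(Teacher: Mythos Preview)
Your overall architecture matches the paper's: prove (1) via explicit length-one injective resolutions of ${\rm T}_A(A)$ and ${\rm T}_B(B)$ (this is exactly the paper's Lemma~\ref{proj-injdim}(2)), identify $\binom{{}_A\mathcal I}{{}_B\mathcal I}=\binom{{}_A\mathcal P}{{}_B\mathcal P}\subseteq {}_\Lambda\mathcal P^{\le 1}$, and close the sandwich using ${}^\perp\binom{{}_A\mathcal I}{{}_B\mathcal I}\subseteq{\rm Mon}(\Lambda)$ from Theorem~\ref{compare}.

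The one real divergence is your treatment of the inclusion ${\rm Mon}(\Lambda)\subseteq{\rm GP}(\Lambda)$. You propose building, for each object of ${\rm Mon}(\Lambda)$, a totally acyclic complex of projectives, and you flag the upward (coresolution) half as delicate. The paper avoids this entirely. You yourself already observed that $1$-Gorensteinness gives ${\rm GP}(\Lambda)={}^{\perp_{\ge 1}}{}_\Lambda\mathcal P={}^\perp{}_\Lambda\mathcal P$ (the last equality because ${\rm inj.dim}\,{}_\Lambda\Lambda\le 1$ forces $\Ext^i(-,P)=0$ for $i\ge 2$ automatically). So all that is needed is the single vanishing $\Ext^1_\Lambda({\rm Mon}(\Lambda),\,{}_\Lambda\mathcal P)=0$. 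The paper checks this directly: for $P\in{}_A\mathcal P$ one uses the short exact sequence
\[
0\longrightarrow \binom{0}{M\otimes_AP}_{0,0}\longrightarrow {\rm T}_AP\longrightarrow \binom{P}{0}_{0,0}\longrightarrow 0
\]
together with Lemma~\ref{extadj2}(1),(2), which reduce $\Ext^1_\Lambda(L,{\rm Z}_AX)$ and $\Ext^1_\Lambda(L,{\rm Z}_BY)$, for $L\in{\rm Mon}(\Lambda)$, to $\Ext^1_A(\Coker g,X)$ and $\Ext^1_B(\Coker f,Y)$; both vanish since $P$ and $M\otimes_AP$ are projective, hence injective, over the QF rings. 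The same works for ${\rm T}_BQ$. Your complete-resolution route would presumably succeed, but it is substantially harder and, given that you already have ${\rm GP}(\Lambda)={}^\perp{}_\Lambda\mathcal P$ in hand, unnecessary.
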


\vskip5pt

The conditions of {\rm Theorem \ref{mainin42}} really and often occur. See Example \ref{examctp4}.
Note that \ ${\rm GP}(\Lambda) = {\rm Mon}(\Lambda)$  is  a new result, as an application of
cotorsion theory and monomorphism category to Gorenstein-projective modules. See Remark \ref{remctp4}.

\subsection{Completeness} \ Completeness of a cotorsion pair is important, not only in the theory itself,  but also in finding abelian model structures ([H2]. See Theorem \ref {hoveycorrespondence}).
In view of identifications, we only discuss the completeness of cotorsion pairs in Theorem \ref{mainin31}.

\vskip5pt If cotorsion pairs \ $(\mathcal U,  \mathcal X)$ and \ $(\mathcal V,  \mathcal Y)$  are cogenerated by sets $S_1$ and $S_2$, respectively,
then the cotorsion pair \ $(^\perp\left(\begin{smallmatrix}\mathcal X \\ \mathcal Y\end{smallmatrix}\right), \ \left(\begin{smallmatrix}\mathcal X \\ \mathcal Y\end{smallmatrix}\right))$
is  cogenerated by the set \ ${\rm T}_A(S_1) \cup {\rm T}_B(S_2)$, and hence complete, by a well-known theorem [ET2, Theorem 10]  (see Proposition \ref{cogenerated}) of P. C. Eklof and J. Trlifaj. See Proposition \ref{generatingcomplete}.

\vskip5pt

However, since the theorem of Eklof and Trlifaj has no dual versions, in general\footnote{For some special cases, there are indeed dual versions. See e.g. [ET1, Theorem 14(ii)].}, there is no information on the completeness of \ $(\left(\begin{smallmatrix}\mathcal U\\ \mathcal V\end{smallmatrix}\right), \ \left(\begin{smallmatrix}\mathcal U\\ \mathcal V\end{smallmatrix}\right)^\perp)$.
Also, it is more natural to start from the completeness of  $(\mathcal U,  \mathcal X)$ and \ $(\mathcal V,  \mathcal Y)$.
Thus, we need module-theoretical methods to the completeness of cotorsion pairs in Morita rings.

\vskip5pt

Take \ $(\mathcal V, \ \mathcal Y)$ to be an arbitrary complete cotorsion pair in $B$-Mod.
Taking \ $(\mathcal U, \ \mathcal X) = (_A\mathcal P, \ A\mbox{-}{\rm Mod})$ and applying Theorem \ref{mainin31}(1),
we have Theorem \ref{mainin51}(1) below;
taking \ $(\mathcal U, \ \mathcal X) = (A\mbox{-}{\rm Mod}, \ _A\mathcal I)$ and applying Theorem \ref{mainin31}(2),  we have Theorem \ref{mainin51}(2) below.

\begin{thm} \label{mainin51} \ {\rm (Theorem \ref{ctp2})} \ Assume that \ $N_B$ is flat and \ $_BM$ is projective. Let \  $(\mathcal V, \ \mathcal Y)$ be a complete cotorsion pair in $B\mbox{-}{\rm Mod}$.
	
\vskip5pt
	
$(1)$  \  If \ $M\otimes_A\mathcal P\subseteq \mathcal Y$,
then \ $({\rm T}_A(_A\mathcal P)\oplus {\rm T}_B(\mathcal V), \ \left (\begin{smallmatrix} A\mbox{-}{\rm Mod} \\ \mathcal Y\end{smallmatrix}\right))$ \ is a complete cotorsion pair.

\vskip10pt

$(2)$ \ If \ $\Hom_A(N, \ _A\mathcal I) \subseteq \mathcal V$, then \ $(\left(\begin{smallmatrix} A\mbox{-}{\rm Mod} \\ \mathcal V\end{smallmatrix}\right), \ {\rm H}_A(_A\mathcal I)\oplus {\rm H}_B(\mathcal Y))$ \  is a complete cotorsion pair.
\end{thm}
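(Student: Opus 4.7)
The plan is to read off the cotorsion pair structure from Theorems~\ref{mainin31} and~\ref{mainin41}, and then verify completeness by producing one family of approximation sequences explicitly and invoking Salce's lemma for the other. For part~(1), I apply Theorem~\ref{mainin31}(1) with the trivial cotorsion pair $(\mathcal U,\mathcal X)=({}_A\mathcal P,\,A\mbox{-}{\rm Mod})$: here $\Tor^A_1(M,{}_A\mathcal P)=0$ is automatic and $\Tor^B_1(N,\mathcal V)=0$ follows from flatness of $N_B$, producing the cotorsion pair $({}^\perp\!\left(\begin{smallmatrix} A\mbox{-}{\rm Mod}\\ \mathcal Y\end{smallmatrix}\right),\left(\begin{smallmatrix} A\mbox{-}{\rm Mod}\\ \mathcal Y\end{smallmatrix}\right))$; Theorem~\ref{mainin41}(1) then identifies the left class as ${\rm T}_A({}_A\mathcal P)\oplus{\rm T}_B(\mathcal V)$, its remaining hypotheses $M\otimes_A{}_A\mathcal P\subseteq\mathcal Y$ and $N\otimes_B\mathcal V\subseteq A\mbox{-}{\rm Mod}$ being given and trivial respectively. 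For part~(2) I proceed dually, taking $(\mathcal U,\mathcal X)=(A\mbox{-}{\rm Mod},{}_A\mathcal I)$ (with ${}_BM$ projective giving $\Ext^1_B(M,\mathcal Y)=0$) and using $\Hom_A(N,{}_A\mathcal I)\subseteq\mathcal V$ to identify the right class as ${\rm H}_A({}_A\mathcal I)\oplus{\rm H}_B(\mathcal Y)$.

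For the completeness of~(1), given any $X=\left(\begin{smallmatrix} X_1\\ X_2\end{smallmatrix}\right)_{\phi,\psi}\in\Lambda\mbox{-}{\rm Mod}$, I choose a surjection $p:P\to X_1$ with $P\in{}_A\mathcal P$ and, using completeness of $(\mathcal V,\mathcal Y)$, a special $\mathcal V$-precover $0\to Y\to V\stackrel{v}{\to} X_2\to 0$. The adjunctions for ${\rm T}_A,{\rm T}_B$ turn the pair $(p,v)$ into a morphism $f:{\rm T}_A(P)\oplus{\rm T}_B(V)\to X$ that is surjective on both components, and a short diagram chase on the structure maps of $X$ places the $B$-component $K_2$ of $\Ker f$ in a short exact sequence
\[
0\longrightarrow Y\longrightarrow K_2\longrightarrow M\otimes_A P\longrightarrow 0.
\]
Hence $K_2\in\mathcal Y$ (using $M\otimes_A P\in\mathcal Y$ by hypothesis, $Y\in\mathcal Y$, and closure of $\mathcal Y$ under extensions), so $\Ker f\in\left(\begin{smallmatrix} A\mbox{-}{\rm Mod}\\ \mathcal Y\end{smallmatrix}\right)$ and $f$ is a special precover. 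Because $M\otimes_A N=0=N\otimes_B M$ forces every projective $\Lambda$-module to decompose as ${\rm T}_A(P)\oplus{\rm T}_B(Q)$ with $P\in{}_A\mathcal P,\ Q\in{}_B\mathcal P$, and ${}_B\mathcal P\subseteq\mathcal V$, the left class contains all projective $\Lambda$-modules, so Salce's lemma converts these precovers into special preenvelopes, finishing~(1).

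Part~(2) is handled symmetrically. Given $X$, I take an injective embedding $\alpha:X_1\to I$ in $A\mbox{-}{\rm Mod}$ and a special $\mathcal Y$-preenvelope $0\to X_2\stackrel{\beta}{\to} Y\to V\to 0$; the adjunctions for ${\rm H}_A,{\rm H}_B$ produce a monomorphism $X\to{\rm H}_A(I)\oplus{\rm H}_B(Y)$ (injectivity on each component follows from injectivity of $\alpha$ and $\beta$), and a dual diagram chase places the $B$-component $C_2$ of the cokernel in an exact sequence
\[
0\longrightarrow \Hom_A(N,I)\longrightarrow C_2\longrightarrow V\longrightarrow 0,
\]
which lies in $\mathcal V$ by $\Hom_A(N,{}_A\mathcal I)\subseteq\mathcal V$ and extension-closure of $\mathcal V$. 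Since the right class contains all injective $\Lambda$-modules, Salce's lemma supplies the missing special precovers. The main obstacle in both parts is this single diagram chase identifying the non-trivial component of the kernel (resp.~cokernel) as an extension of two objects in $\mathcal Y$ (resp.~$\mathcal V$); once that short exact sequence is in hand, extension-closure together with Theorems~\ref{mainin31}, \ref{mainin41}, and Salce's lemma do the remaining work mechanically.
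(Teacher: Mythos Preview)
Your proposal is correct and follows essentially the same route as the paper: the paper packages the construction of the approximation map ${\rm T}_A(P)\oplus{\rm T}_B(V)\twoheadrightarrow\left(\begin{smallmatrix}L_1\\L_2\end{smallmatrix}\right)_{f,g}$ (resp.\ its dual) into explicit Lemmas~\ref{completeness1} and~\ref{completeness3}, and identifies the left/right class via Corollary~\ref{identification1}, exactly as you do via Theorems~\ref{mainin31} and~\ref{mainin41}. The only cosmetic difference is that the paper observes your extension $0\to Y\to K_2\to M\otimes_AP\to 0$ actually splits (since $_BM$ projective makes $M\otimes_AP$ a projective $B$-module), so $K_2\cong(M\otimes_AP)\oplus Y$ directly, and dually $C_2\cong\Hom_A(N,I)\oplus V$; but your extension-closure argument is equally valid.
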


\vskip5pt

We stress that

$(1)$ \ \  If  \ $_BM$ is injective, then \ $M\otimes_A\mathcal P\subseteq \mathcal Y$ always
holds;

$(2)$ \ \ If \ $N_B$ is flat, then \ $\Hom_A(N, \ _A\mathcal I) \subseteq \mathcal V$ always holds.

\vskip10pt

Similarly, let \ $(\mathcal U, \ \mathcal X)$  be an arbitrary complete cotorsion pair in $A$-Mod.
Taking $(\mathcal V, \ \mathcal Y)  = (_B\mathcal P, \ B\mbox{-}{\rm Mod})$ and applying Theorem \ref{mainin31}(1), we get Theorem \ref{ctp3}(1);
and taking $(\mathcal V, \ \mathcal Y) = (B\mbox{-}{\rm Mod}, \ _B\mathcal I)$ and applying Theorem \ref{mainin31}(2), we get Theorem \ref{ctp3}(2).

\subsection{Realizations} \ It turns out that Morita rings are rich in cotorsion  pairs. In Theorem \ref{mainin31} (respectively, Theorem \ref{mainin32}),
even if one starts form the projective or the injective cotorsion pair in $A$-Mod and $B$-Mod,
what one gets in $\Lambda$-Mod are already {\it pairwise generally different} (see Definition \ref{difference}) and {\it``new"} cotorsion pairs.
Here, by a ``new" cotorsion pair we mean that it is generally different from
the projective and the injective cotorsion pair, the Gorenstein-projective and the Gorenstein-injective cotorsion pair, and the flat cotorsion pair ([EJ, Lemma 7.1.4]).
For details see Definition \ref{new}, Propositions \ref{different}, \ref{newI},  \ref{different2} and  \ref{newII}.

\subsection{Abelian model structures on Morita rings}

A natural method of getting abelian model structures on Morita rings, is to see how abelian model structures on $A$-Mod and $B$-Mod can induce the ones on $\Lambda$-Mod.

\vskip5pt

A cofibrantly generated model category ([H1, 2.1.17])  enjoys nice properties.
A Hovey triple $(\mathcal C, \mathcal F, \mathcal W)$ will be called {\it cofibrantly generated}, if both the cotorsion pairs $(\mathcal C\cap \mathcal W, \mathcal F)$ and $(\mathcal C, \mathcal F\cap\mathcal W)$ are cogenerated by sets. For a Grothendieck category  $\mathcal A$ with enough projective objects, a Hovey triple $(\mathcal C, \mathcal F, \mathcal W)$  in $\mathcal A$
is cofibrantly generated if and only if the corresponding model category is cofibrantly generated. See Proposition \ref {cofibrantly}.

\vskip5pt

\begin{thm}\label{main71} \ {\rm (Theorem \ref{cofibrantlygenHtriple})} \ Let \ $(\mathcal U', \ \mathcal X, \ \mathcal W_1)$ and \ $(\mathcal V', \ \mathcal Y, \ \mathcal W_2)$ be cofibrantly generated
Hovey triples in $A\mbox{-}{\rm Mod}$ and  $B\mbox{-}{\rm Mod}$, respectively.

\vskip5pt

$(1)$ \ Assume that  \ ${\rm Tor}^A_1(M, \ \mathcal U') = 0 = {\rm Tor}^B_1(N, \ \mathcal V')$,  \  $M\otimes_A\mathcal U' \subseteq \mathcal Y\cap \mathcal W_2$ and \ $N\otimes_B\mathcal V' \subseteq \mathcal X\cap \mathcal W_1.$ Then
$$({\rm T}_A(\mathcal U')\oplus {\rm T}_B(\mathcal V'), \ \left(\begin{smallmatrix} \mathcal X \\ \mathcal Y\end{smallmatrix}\right), \ \left(\begin{smallmatrix} \mathcal W_1 \\ \mathcal W_2\end{smallmatrix}\right))$$
is a cofibrantly generated Hovey triple in $\Lambda\mbox{-}{\rm Mod};$ and it is hereditary with \ ${\rm Ho}(\Lambda) \cong   {\rm Ho}(A)\oplus {\rm Ho}(B),$ if \ $(\mathcal U',  \mathcal X,  \mathcal W_1)$ and \ $(\mathcal V',  \mathcal Y,  \mathcal W_2)$ are hereditary.

\vskip5pt

$(2)$ \ Assume that   ${\rm Ext}_B^1(M,  \mathcal Y) = 0 = {\rm Ext}_A^1(N,  \mathcal X)$, $\Hom_B(M, \mathcal Y) \subseteq \mathcal U'\cap \mathcal W_1$ and  $\Hom_A(N, \mathcal X) \subseteq \mathcal V'\cap \mathcal W_2$. Then
$$(\left(\begin{smallmatrix} \mathcal U' \\ \mathcal V'\end{smallmatrix}\right),  \ {\rm H}_A(\mathcal X)\oplus {\rm H}_B(\mathcal Y), \ \left(\begin{smallmatrix} \mathcal W_1 \\ \mathcal W_2\end{smallmatrix}\right))$$
is a cofibrantly generated
Hovey triple$;$ and it is hereditary with  ${\rm Ho}(\Lambda) \cong   {\rm Ho}(A)\oplus {\rm Ho}(B)$, if  $(\mathcal U', \mathcal X,  \mathcal W_1)$ and  $(\mathcal V',  \mathcal Y, \mathcal W_2)$ are hereditary.
\end{thm}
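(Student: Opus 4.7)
The plan is to build the asserted Hovey triple in three stages: construct its two complete cotorsion pairs via the identification theorem, verify thickness of the middle class together with the required intersections, and upgrade completeness to cofibrant generation. Focus on part (1). The input Hovey triple $(\mathcal U', \mathcal X, \mathcal W_1)$ furnishes two complete cotorsion pairs $(\mathcal U', \mathcal X \cap \mathcal W_1)$ and $(\mathcal U' \cap \mathcal W_1, \mathcal X)$ in $A$-Mod, and analogously on the $B$-side. I would apply Theorem \ref{mainin41}(1) twice: once to $(\mathcal U', \mathcal X \cap \mathcal W_1)$ and $(\mathcal V', \mathcal Y \cap \mathcal W_2)$, yielding the cotorsion pair $({\rm T}_A(\mathcal U') \oplus {\rm T}_B(\mathcal V'), \binom{\mathcal X \cap \mathcal W_1}{\mathcal Y \cap \mathcal W_2})$ in $\Lambda$-Mod, and once to $(\mathcal U' \cap \mathcal W_1, \mathcal X)$ and $(\mathcal V' \cap \mathcal W_2, \mathcal Y)$, yielding $({\rm T}_A(\mathcal U' \cap \mathcal W_1) \oplus {\rm T}_B(\mathcal V' \cap \mathcal W_2), \binom{\mathcal X}{\mathcal Y})$. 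The hypotheses of \ref{mainin41}(1) descend to these subclasses automatically; for instance $M \otimes_A \mathcal U' \subseteq \mathcal Y \cap \mathcal W_2 \subseteq \mathcal Y$ and $\Tor^A_1(M, \mathcal U' \cap \mathcal W_1) \subseteq \Tor^A_1(M, \mathcal U') = 0$.

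To recognize these as $(\mathcal C \cap \mathcal W, \mathcal F)$ and $(\mathcal C, \mathcal F \cap \mathcal W)$ for $\mathcal C = {\rm T}_A(\mathcal U') \oplus {\rm T}_B(\mathcal V')$, $\mathcal F = \binom{\mathcal X}{\mathcal Y}$, $\mathcal W = \binom{\mathcal W_1}{\mathcal W_2}$, I would verify the two intersection identities
\begin{align*}
\mathcal F \cap \mathcal W &= \binom{\mathcal X \cap \mathcal W_1}{\mathcal Y \cap \mathcal W_2},\\
\mathcal C \cap \mathcal W &= {\rm T}_A(\mathcal U' \cap \mathcal W_1) \oplus {\rm T}_B(\mathcal V' \cap \mathcal W_2).
\end{align*}
The first is immediate. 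The second uses the explicit shape ${\rm T}_A(U) = \binom{U}{M \otimes_A U}$ and its dual for ${\rm T}_B$: given ${\rm T}_A(U') \oplus {\rm T}_B(V') \in \binom{\mathcal W_1}{\mathcal W_2}$, the hypotheses $M \otimes_A \mathcal U' \subseteq \mathcal W_2$ and $N \otimes_B \mathcal V' \subseteq \mathcal W_1$, together with closure of the thick classes $\mathcal W_1, \mathcal W_2$ under direct sums and summands, reduce membership cleanly to $U' \in \mathcal W_1$ and $V' \in \mathcal W_2$. Thickness of $\mathcal W$ itself follows by restricting a short exact sequence in $\Lambda$-Mod to its $A$- and $B$-entries and invoking thickness of $\mathcal W_1$ and $\mathcal W_2$ separately.

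Cofibrant generation is obtained from Propositions \ref{cofibrantly} and \ref{generatingcomplete}: cofibrant generation of the two input triples means each of the four input cotorsion pairs is cogenerated by a set, and \ref{generatingcomplete} then lifts these to generators of the form ${\rm T}_A(S_1) \cup {\rm T}_B(S_2)$ for the two lifted cotorsion pairs in $\Lambda$-Mod. Heredity transfers through the ``if and only if'' in Theorem \ref{mainin31}(1). For the homotopy category, the same reduction yields $\mathcal C \cap \mathcal F = {\rm T}_A(\mathcal U' \cap \mathcal X) \oplus {\rm T}_B(\mathcal V' \cap \mathcal Y)$ and $\mathcal C \cap \mathcal F \cap \mathcal W = {\rm T}_A(\mathcal U' \cap \mathcal X \cap \mathcal W_1) \oplus {\rm T}_B(\mathcal V' \cap \mathcal Y \cap \mathcal W_2)$; inspecting morphisms in the Frobenius stable quotient (where cross-morphisms between ${\rm T}_A$- and ${\rm T}_B$-images factor through the common projective-injective class) produces the additive splitting ${\rm Ho}(\Lambda) \cong {\rm Ho}(A) \oplus {\rm Ho}(B)$. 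Part (2) is dual, with ${\rm H}_A, {\rm H}_B$ and Theorem \ref{mainin41}(2) replacing ${\rm T}_A, {\rm T}_B$ and Theorem \ref{mainin41}(1), and the Ext/Hom hypotheses playing the role of the Tor/tensor hypotheses.

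The main technical obstacle is the identity $\mathcal C \cap \mathcal W = {\rm T}_A(\mathcal U' \cap \mathcal W_1) \oplus {\rm T}_B(\mathcal V' \cap \mathcal W_2)$: the strengthened containments $M \otimes_A \mathcal U' \subseteq \mathcal Y \cap \mathcal W_2$ and $N \otimes_B \mathcal V' \subseteq \mathcal X \cap \mathcal W_1$ in the hypotheses, as opposed to the milder $\subseteq \mathcal Y$ and $\subseteq \mathcal X$ that already suffice for identification in \ref{mainin41}(1), are precisely what forces the off-diagonal entries $M \otimes_A U'$ and $N \otimes_B V'$ of a $\mathcal C$-object to lie in $\mathcal W_2$ and $\mathcal W_1$, so that $\mathcal W$-testing collapses to the diagonal entries. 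Without this strengthening, $\mathcal C$ and $\mathcal W$ would not interact properly and the two cotorsion pairs produced in the first stage would fail to assemble into a Hovey triple.
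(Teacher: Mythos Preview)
Your proposal is correct and follows essentially the same route as the paper: apply the identification theorem (Theorem \ref{identify1}) twice to produce the two cotorsion pairs, verify the intersection identities (using the strengthened containments $M\otimes_A\mathcal U'\subseteq\mathcal W_2$ and $N\otimes_B\mathcal V'\subseteq\mathcal W_1$ exactly as you isolate), check thickness, and lift cogeneration via Proposition \ref{generatingcomplete}. One small point on part (2): the duality is not quite mechanical at the cogeneration step. Proposition \ref{generatingcomplete}(1) gives generators ${\rm T}_A(S_1)\cup{\rm T}_B(S_2)$, but there is no direct analogue producing generators of the form ${\rm H}_A(S_1)\cup{\rm H}_B(S_2)$. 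Instead the paper passes through the identification $\binom{\mathcal U'}{\mathcal V'}^\perp=\nabla(\mathcal X',\mathcal Y')$ from Theorem \ref{identify1}(2) and then invokes Proposition \ref{generatingcomplete}(2), which supplies cogenerating sets of the form ${\rm Z}_A(S_1')\cup{\rm Z}_B(S_2')$. This is the only place where the argument for (2) is not a straight transcription of (1).
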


For general Hovey triples (not assumed to be cofibrantly generated), we have

\begin{thm} \label{main72} \ {\rm (Theorem \ref{Htriple1})} \  Let \ $N_B$ be flat,  $_BM$ projective,  and \ $(\mathcal V',  \mathcal Y,  \mathcal W)$ a Hovey triple in $B$\mbox{-}{\rm Mod}.

\vskip5pt

$(1)$  \  If \ $M\otimes_A\mathcal P \subseteq \mathcal Y\cap \mathcal W$, then
$$({\rm T}_A(_A\mathcal P)\oplus {\rm T}_B(\mathcal V'), \ \left(\begin{smallmatrix} A\text{\rm\rm-Mod} \\ \mathcal Y\end{smallmatrix}\right), \ \left(\begin{smallmatrix} A\text{\rm\rm-Mod} \\ \mathcal W\end{smallmatrix}\right))$$
is a Hovey triple in $\Lambda$\mbox{-}{\rm Mod}$;$ and it is hereditary with \ ${\rm Ho}(\Lambda) \cong {\rm Ho}(B)$, if
 \ $(\mathcal V',  \mathcal Y,  \mathcal W)$ is hereditary.

\vskip5pt

$(2)$  \ If \ $\Hom_A(N, \ _A\mathcal I)\subseteq \mathcal V'\cap \mathcal W$, then
$$(\left(\begin{smallmatrix} A\mbox{-}{\rm Mod} \\ \mathcal V'\end{smallmatrix}\right), \ \ {\rm H}_A(_A\mathcal I)\oplus {\rm H}_B(\mathcal Y), \ \ \left(\begin{smallmatrix}A\text{\rm\rm-Mod}\\ \mathcal W\end{smallmatrix}\right))$$
is a Hovey triple in $\Lambda$\mbox{-}{\rm Mod}$;$ and it is hereditary with \ ${\rm Ho}(\Lambda) \cong {\rm Ho}(B)$, if
 \ $(\mathcal V',  \mathcal Y,  \mathcal W)$ is hereditary.
\end{thm}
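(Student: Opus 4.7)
The plan is to deduce both parts from Theorem~\ref{ctp2} (i.e.\ Theorem~\ref{mainin51}) applied twice: once to each of the two complete cotorsion pairs $(\mathcal V',\mathcal Y\cap\mathcal W)$ and $(\mathcal V'\cap\mathcal W,\mathcal Y)$ furnished by the given Hovey triple $(\mathcal V',\mathcal Y,\mathcal W)$ in $B$-Mod. Thickness of the candidate trivial class $\left(\begin{smallmatrix} A\mbox{-}{\rm Mod} \\ \mathcal W\end{smallmatrix}\right)$ in $\Lambda$-Mod is immediate: a short exact sequence of $\Lambda$-modules induces short exact sequences on its $A$- and $B$-components, so the condition reduces to thickness of $\mathcal W$ in $B$-Mod.

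For part (1), the hypothesis $M\otimes_A\mathcal P\subseteq\mathcal Y\cap\mathcal W$ yields in particular $M\otimes_A\mathcal P\subseteq\mathcal Y$, so Theorem~\ref{ctp2}(1) applies to each of the two inputs, producing the complete cotorsion pairs $({\rm T}_A({}_A\mathcal P)\oplus{\rm T}_B(\mathcal V'),\left(\begin{smallmatrix} A\mbox{-}{\rm Mod} \\ \mathcal Y\cap\mathcal W\end{smallmatrix}\right))$ and $({\rm T}_A({}_A\mathcal P)\oplus{\rm T}_B(\mathcal V'\cap\mathcal W),\left(\begin{smallmatrix} A\mbox{-}{\rm Mod} \\ \mathcal Y\end{smallmatrix}\right))$. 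To match these with the two cotorsion pairs required by the candidate Hovey triple in $\Lambda$-Mod, I verify two identifications: the equality $\left(\begin{smallmatrix} A\mbox{-}{\rm Mod} \\ \mathcal Y\end{smallmatrix}\right)\cap\left(\begin{smallmatrix} A\mbox{-}{\rm Mod} \\ \mathcal W\end{smallmatrix}\right)=\left(\begin{smallmatrix} A\mbox{-}{\rm Mod} \\ \mathcal Y\cap\mathcal W\end{smallmatrix}\right)$ is immediate, while for the left-hand side the identity $({\rm T}_A({}_A\mathcal P)\oplus{\rm T}_B(\mathcal V'))\cap\left(\begin{smallmatrix} A\mbox{-}{\rm Mod} \\ \mathcal W\end{smallmatrix}\right)={\rm T}_A({}_A\mathcal P)\oplus{\rm T}_B(\mathcal V'\cap\mathcal W)$ uses the fact that every ${\rm T}_A(P)$ already lies in the trivial class (its $B$-component $M\otimes_AP$ being in $\mathcal W$ by hypothesis), so that the $B$-component $(M\otimes_AP)\oplus V'$ of a direct sum is in $\mathcal W$ iff $V'$ is, by thickness of $\mathcal W$ applied to the split sequence.

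Heredity in (1) then follows from the hereditary clause of Theorem~\ref{ctp2}(1): if $(\mathcal V',\mathcal Y,\mathcal W)$ is hereditary, then both $(\mathcal V',\mathcal Y\cap\mathcal W)$ and $(\mathcal V'\cap\mathcal W,\mathcal Y)$ are hereditary, and $({}_A\mathcal P, A\mbox{-}{\rm Mod})$ is always hereditary, while the required Tor-vanishing conditions hold because $_A\mathcal P$ is projective and $N_B$ is flat. For the homotopy-category identification, one computes $\mathcal C\cap\mathcal F={\rm T}_A({}_A\mathcal P)\oplus{\rm T}_B(\mathcal V'\cap\mathcal Y)$ (using closure of $\mathcal Y$ under direct summands) and $\mathcal C\cap\mathcal F\cap\mathcal W={\rm T}_A({}_A\mathcal P)\oplus{\rm T}_B(\mathcal V'\cap\mathcal Y\cap\mathcal W)$; since ${\rm T}_A({}_A\mathcal P)$ is contained in the trivial class by hypothesis, its summands become null in the stable quotient, and the fully faithful embedding ${\rm T}_B$ descends to an equivalence ${\rm Ho}(\Lambda)\cong(\mathcal V'\cap\mathcal Y)/(\mathcal V'\cap\mathcal Y\cap\mathcal W)={\rm Ho}(B)$.

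Part (2) is entirely symmetric, using Theorem~\ref{ctp2}(2) and the functors ${\rm H}_A,{\rm H}_B$ in place of ${\rm T}_A,{\rm T}_B$; the hypothesis $\Hom_A(N,{}_A\mathcal I)\subseteq\mathcal V'\cap\mathcal W$ plays the role of $M\otimes_A\mathcal P\subseteq\mathcal Y\cap\mathcal W$, ensuring that every ${\rm H}_A(I)$ already lies in both $\left(\begin{smallmatrix} A\mbox{-}{\rm Mod} \\ \mathcal V'\end{smallmatrix}\right)$ and the trivial class. The main obstacle throughout is the bookkeeping needed to identify the intersections $\mathcal C\cap\mathcal W$, $\mathcal F\cap\mathcal W$, and $\mathcal C\cap\mathcal F\cap\mathcal W$; each identification ultimately rests on the observation that the ``ambient'' summand---${\rm T}_A({}_A\mathcal P)$ in (1), ${\rm H}_A({}_A\mathcal I)$ in (2)---already lies inside the trivial class, so intersecting with it only restricts the remaining summand, a reduction that crucially depends on thickness of $\mathcal W$ together with the stated compatibility conditions on $M\otimes_A-$ or $\Hom_A(N,-)$.
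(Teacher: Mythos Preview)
Your proposal is correct and follows essentially the same approach as the paper's proof: apply Theorem~\ref{ctp2} to each of the two complete cotorsion pairs $(\mathcal V'\cap\mathcal W,\mathcal Y)$ and $(\mathcal V',\mathcal Y\cap\mathcal W)$ coming from the Hovey triple, verify the two intersection identities (using $M\otimes_A\mathcal P\subseteq\mathcal W$ for the cofibrant side), check thickness of $\binom{A\text{-Mod}}{\mathcal W}$, and compute ${\rm Ho}(\Lambda)$ via Theorem~\ref{Ho} exactly as you describe. The paper's argument is organized identically, with the same bookkeeping and the same use of the hypothesis to cancel the ${\rm T}_A({}_A\mathcal P)$ (resp.\ ${\rm H}_A({}_A\mathcal I)$) summand in the homotopy quotient.
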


Theorem \ref{main72} is not a corollary of Theorem \ref{main71}, it needs to use Theorem \ref{mainin51},
a module-theoretical argument on the completeness of cotorsion pairs in Morita rings.

\vskip5pt

Similarly, starting from a Hovey triple in $A$-{\rm Mod}, we get Theorem \ref{Htriple2}.
By Theorems  \ref{main72} and \ref{Htriple2}, we in fact get four kinds of abelian model structures on $\Lambda$\mbox{-}{\rm Mod}.

\subsection{Generalized projective cotorsion pairs (gpctps) and projective models} \ A complete cotorsion pair \ $(\mathcal U, \ \mathcal X)$  in $A\mbox{-}{\rm Mod}$ is {\it generalized projective}, if \ $\mathcal U\cap \mathcal X = \ _A\mathcal P$  and \ $\mathcal X$ is thick (see [Bec, 1.1.9]).
A generalized projective cotorsion pair (or in short, a gpctp) is always hereditary and not necessarily the projective cotorsion pair $(_A\mathcal P, \ A\mbox{-}{\rm Mod})$. Following [H2] and [Gil4],
an abelian model structure is {\it projective}, if
each object is fibrant, i.e., the corresponding Hovey triple is of the form $(\mathcal U, \ A\mbox{-}{\rm Mod}, \ \mathcal X)$. Note that gpctps and projective models are in one-one correspondence, i.e.,
\ $(\mathcal U, \ \mathcal X)$ is a gpctp
in $A\mbox{-}{\rm Mod}$  if and only if $(\mathcal U, \ A\mbox{-}{\rm Mod}, \ \mathcal X)$ is a Hovey triple. See Subsection 7.3.

\vskip5pt

Dually, one has the notion of {\it a generalized injective cotorsion pair} (or in short, {\it gictp}),  and {\it an injective model}.

\vskip5pt

The following result deals with the important  case of gpctps (gictps) in Theorem \ref{main72},
with stronger result, where the conditions
``$M\otimes_A\mathcal P \subseteq \mathcal Y\cap \mathcal W$" and ``$\Hom_A(N, \ _A\mathcal I)\subseteq \mathcal V'\cap \mathcal W$" in Theorem \ref{main72} can be dropped in these cases.

\vskip5pt

\begin{thm}\label{main73} {\rm(Theorem \ref{GHtriplegpgiB})} \ Assume that  \ $N_B$ is flat and  $_BM$ is projective.

\vskip5pt

$(1)$ \ Let \  $(\mathcal V, \ \mathcal Y)$ and $(\mathcal V', \ \mathcal Y')$ be compatible gpctps in $B$-{\rm Mod}, with Gillespie-Hovey triple
\ $(\mathcal V', \ \mathcal Y, \ \mathcal W)$. Then $$({\rm T}_A(_A\mathcal P)\oplus {\rm T}_B(\mathcal V), \ \left(\begin{smallmatrix} A\text{\rm\rm-Mod} \\ \mathcal Y\end{smallmatrix}\right))
\ \ \ \mbox{and} \ \ \
({\rm T}_A(_A\mathcal P)\oplus {\rm T}_B(\mathcal V'), \ \left(\begin{smallmatrix} A\text{\rm\rm-Mod} \\ \mathcal Y'\end{smallmatrix}\right))$$
are compatible gpctps in \ $\Lambda\mbox{-}{\rm Mod}$, with Gillespie-Hovey triple
$$({\rm T}_A(_A\mathcal P)\oplus {\rm T}_B(\mathcal V'), \ \left(\begin{smallmatrix} A\text{\rm\rm-Mod} \\ \mathcal Y\end{smallmatrix}\right), \ \left(\begin{smallmatrix} A\text{\rm\rm-Mod} \\ \mathcal W\end{smallmatrix}\right))$$
and \ ${\rm Ho}(\Lambda)\cong (\mathcal V'\cap \mathcal Y)/_B\mathcal P\cong {\rm Ho}(B)$.

\vskip5pt

$(2)$   \ Let \ $(\mathcal V, \mathcal Y)$ and \ $(\mathcal V', \mathcal Y')$ be compatible gictps, with Gillespie-Hovey triple
\ $(\mathcal V', \ \mathcal Y, \ \mathcal W)$.  Then
$$(\left(\begin{smallmatrix} A\mbox{-}{\rm Mod} \\ \mathcal V\end{smallmatrix}\right), \ {\rm H}_A(_A\mathcal I)\oplus {\rm H}_B(\mathcal Y)) \ \ \ \mbox{and} \ \ \ (\left(\begin{smallmatrix} A\mbox{-}{\rm Mod} \\ \mathcal V'\end{smallmatrix}\right), \ {\rm H}_A(_A\mathcal I)\oplus {\rm H}_B(\mathcal Y'))$$
are compatible gictps in $\Lambda$-{\rm Mod}, with Gillespie-Hovey triple
$$(\left(\begin{smallmatrix} A\mbox{-}{\rm Mod} \\ \mathcal V'\end{smallmatrix}\right), \ \ {\rm H}_A(_A\mathcal I)\oplus {\rm H}_B(\mathcal Y), \ \ \left(\begin{smallmatrix}A\text{\rm\rm-Mod}\\ \mathcal W\end{smallmatrix}\right))$$
and \ ${\rm Ho}(\Lambda) \cong (\mathcal V'\cap \mathcal Y)/ _B\mathcal I\cong {\rm Ho}(B)$.
\end{thm}

\vskip5pt

A gpctp $(\mathcal V, \ \mathcal Y)$ in $B$-{\rm Mod} gives compatible gpctps
$(_B\mathcal P, \ B\mbox{-}{\rm Mod})$ and $(\mathcal V, \ \mathcal Y)$.
A gictp $(\mathcal V, \ \mathcal Y)$ in $B$-{\rm Mod} gives compatible gictps
$(\mathcal V, \ \mathcal Y)$ and $(B\mbox{-}{\rm Mod}, \ _B\mathcal I)$.
Thus, by Theorem \ref{main73} one has

\vskip5pt

\begin{cor}\label{main74} \ {\rm (Corollaries \ref{projinjtripleB}, \ref{frobB})} \ Suppose that \ $N_B$ is flat and \ $_BM$ is projective.

\vskip5pt

$(1)$ \  Let \ $(\mathcal V, \mathcal Y)$ be a gpctp in $B$-{\rm Mod}. Then
$$({\rm T}_A(_A\mathcal P)\oplus {\rm T}_B(\mathcal V), \ \ \Lambda\text{\rm\rm-Mod}, \ \ \left(\begin{smallmatrix} A\text{\rm\rm-Mod} \\ \mathcal Y\end{smallmatrix}\right))$$
is a hereditary Hovey triple, with \ ${\rm Ho}(\Lambda)\cong\mathcal V/_B\mathcal P$.

\vskip5pt

In particular, if $B$ is quasi-Frobenius, then
\ $({\rm T}_A(_A\mathcal P)\oplus {\rm T}_B(B\text{\rm\rm-Mod}), \ \ \Lambda\text{\rm\rm-Mod}, \ \ \left(\begin{smallmatrix} A\text{\rm\rm-Mod} \\ _B\mathcal I\end{smallmatrix}\right))$ \
is a hereditary {\rm Hovey} triple with \ ${\rm Ho}(\Lambda) \cong B\mbox{-}\underline{{\rm Mod}}.$

\vskip5pt

$(2)$   \ Let \ $(\mathcal V, \mathcal Y)$ be a gictp in $B$-{\rm Mod}. Then
$$(\Lambda\text{\rm\rm-Mod}, \ \ {\rm H}_A(_A\mathcal I)\oplus {\rm H}_B(\mathcal Y), \ \ \left(\begin{smallmatrix}A\text{\rm\rm-Mod}\\ \mathcal V\end{smallmatrix}\right))$$
is a hereditary Hovey triple, with \ ${\rm Ho}(\Lambda)\cong \mathcal Y/_B\mathcal I$.

\vskip5pt

In particular, if $B$ is quasi-Frobenius, then \ $(\Lambda\text{\rm\rm-Mod}, \ \ {\rm H}_A(_A\mathcal I)\oplus {\rm H}_B(B\text{\rm\rm-Mod}), \ \ \left(\begin{smallmatrix}A\text{\rm\rm-Mod}\\ _B\mathcal P\end{smallmatrix}\right))$
is a hereditary Hovey triple with \ ${\rm Ho}(\Lambda)\cong B\mbox{-}\underline{{\rm Mod}}.$
\end{cor}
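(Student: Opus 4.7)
The plan is to obtain this corollary as a direct application of Theorem \ref{main72} to the canonical Hovey triple associated to the given gpctp (respectively, gictp), and then to recover the quasi-Frobenius specialization by inserting the ``largest'' available gpctp/gictp. By the stated one-one correspondence, a gpctp $(\mathcal V,\mathcal Y)$ in $B$-Mod gives the projective Hovey triple $(\mathcal V,\ B\text{-Mod},\ \mathcal Y)$, which is automatically hereditary (one cotorsion pair is $({}_B\mathcal P,\ B\text{-Mod})$, trivially hereditary; the other is the gpctp itself, hereditary by definition).

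For part (1), I would feed this triple into Theorem \ref{main72}(1) with $\mathcal V'=\mathcal V$, $\mathcal Y_{\text{Thm}}=B\text{-Mod}$ and $\mathcal W=\mathcal Y$. The only hypothesis to check is $M\otimes_A{}_A\mathcal P\subseteq \mathcal Y\cap\mathcal W=\mathcal Y$. Since $_BM$ is projective, the direct-summand argument $_BM\oplus X=B^{(\kappa)}$ shows $M\otimes_A P\in{}_B\mathcal P$ for every $P\in{}_A\mathcal P$; and $_B\mathcal P=\mathcal V\cap\mathcal Y\subseteq \mathcal Y$ because $(\mathcal V,\mathcal Y)$ is a gpctp. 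Theorem \ref{main72}(1) now produces the hereditary Hovey triple
$\bigl({\rm T}_A({}_A\mathcal P)\oplus{\rm T}_B(\mathcal V),\ \Lambda\text{-Mod},\ \left(\begin{smallmatrix}A\text{-Mod}\\ \mathcal Y\end{smallmatrix}\right)\bigr)$,
with ${\rm Ho}(\Lambda)\cong{\rm Ho}(B)$. Since the homotopy category of a hereditary projective model equals $(\text{Cof}\cap\text{Fib})/(\text{Cof}\cap\text{Fib}\cap\text{Triv})$, one computes ${\rm Ho}(B)=\mathcal V/{}_B\mathcal P$, giving the claimed identification.

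Part (2) is the dual, using Theorem \ref{main72}(2) on the injective Hovey triple $(B\text{-Mod},\ \mathcal Y,\ \mathcal V)$ associated to the gictp $(\mathcal V,\mathcal Y)$. The hypothesis to verify becomes $\Hom_A(N,\ {}_A\mathcal I)\subseteq \mathcal V'\cap\mathcal W=\mathcal V$. Via the adjunction $\Hom_B(-,\Hom_A(N,I))\cong \Hom_A(N\otimes_B-,I)$, the flatness of $N_B$ (together with $I$ being $A$-injective) makes $\Hom_B(-,\Hom_A(N,I))$ exact, so $\Hom_A(N,I)\in{}_B\mathcal I$; and $_B\mathcal I=\mathcal V\cap\mathcal Y\subseteq\mathcal V$ because $(\mathcal V,\mathcal Y)$ is a gictp. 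Theorem \ref{main72}(2) then delivers the hereditary Hovey triple displayed, and hereditariness yields ${\rm Ho}(\Lambda)\cong \mathcal Y/{}_B\mathcal I$.

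For the quasi-Frobenius specializations, the main observation is that in a quasi-Frobenius ring $_B\mathcal P={}_B\mathcal I$ and this class is thick in $B$-Mod. Hence $(B\text{-Mod},\ {}_B\mathcal I)$ is a complete hereditary cotorsion pair (the right orthogonal of $B$-Mod is exactly the injectives) with intersection $_B\mathcal P$, i.e., a gpctp; dually $({}_B\mathcal P,\ B\text{-Mod})$ is a gictp. Substituting these into the general statements of (1) and (2) collapses $\mathcal V$ (resp. $\mathcal Y$) to $B$-Mod and gives ${\rm Ho}(\Lambda)\cong B\text{-Mod}/{}_B\mathcal P=B\text{-}\underline{{\rm Mod}}$. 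The only potential obstacle would have been the identification of ${\rm Ho}(\Lambda)$, but this follows formally once hereditariness is transported through Theorem \ref{main72}; the genuine work was already done there.
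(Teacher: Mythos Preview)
Your proof is correct and essentially parallel to the paper's, with only a packaging difference. The paper derives Corollary~\ref{projinjtripleB} from Theorem~\ref{GHtriplegpgiB} by observing that a single gpctp $(\mathcal V,\mathcal Y)$ yields the \emph{compatible pair} of gpctps $({}_B\mathcal P,\ B\text{-Mod})$ and $(\mathcal V,\mathcal Y)$, and then Corollary~\ref{frobB} is the specialization $(\mathcal V,\mathcal Y)=(B\text{-Mod},{}_B\mathcal I)$; you instead bypass Theorem~\ref{GHtriplegpgiB} and feed the projective Hovey triple $(\mathcal V,\ B\text{-Mod},\ \mathcal Y)$ straight into Theorem~\ref{main72}. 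The crucial verification---that $_BM$ projective forces $M\otimes_A{}_A\mathcal P\subseteq{}_B\mathcal P=\mathcal V\cap\mathcal Y\subseteq\mathcal Y$ (and dually $N_B$ flat forces $\Hom_A(N,{}_A\mathcal I)\subseteq{}_B\mathcal I\subseteq\mathcal V$)---is literally the same computation appearing in the proof of Theorem~\ref{GHtriplegpgiB}, so the two routes share their substance. Your shortcut loses only the side observation (recorded in Theorem~\ref{GHtriplegpgiB} but not needed here) that the resulting cotorsion pair in $\Lambda$-Mod is again a gpctp/gictp.
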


\vskip5pt

Similarly, starting from compatible gpctps (gictps) in $A$-{\rm Mod}, one has the corresponding results. See Theorem \ref{GHtriplegpgiA}, {\rm Corollaries \ref{projinjtripleA} and \ref{frobA}}.

\vskip5pt

We stress that all the abelian model structures obtained above are pairwise generally different and ``new".
See Proposition \ref{newmodel} for details.

\section{\bf Preliminaries}

\subsection{Notations}
 For a ring $R$, let $R$-Mod be the category of left $R$-modules,
 $_R\mathcal P$  (respectively,  $_R\mathcal I$) the full subcategory of $R$-Mod of projective (respectively, injective) modules;
 $_R\mathcal P^{<\infty}$  (respectively,  $_R\mathcal I^{<\infty}$) the full subcategory  of $R$-Mod modules of finite projective (respectively, injective) dimension. Denote by ${\rm GP}(R)$ (respectively, ${\rm GI}(R)$) the full subcategory of $R$-Mod of Gorenstein-projective (respectively, Gorenstein-injective) modules.

\vskip5pt

For a class $\mathcal C$ of objects in abelian category $\mathcal A$, let
\begin{align*}^\perp\mathcal C &= \{X\in \mathcal A \ \mid \ \Ext^1_\mathcal A(X,\mathcal C)=0\},  \ \ \ \ \  \ \ \ \ \ ^{\bot_{\ge 1}}\mathcal C = \{ X\in \mathcal A \  | \  \Ext^i_\mathcal A(X, \ \mathcal C) = 0, \ \forall \ i\ge 1\}, \\ \mathcal C^{\perp} & = \{X\in \mathcal A \ \mid \ \Ext^1_\mathcal A(\mathcal C, X)=0\}, \ \ \ \ \  \ \ \ \ \
\mathcal C^{\bot_{\ge 1}} = \{X\in \mathcal A \ |  \ \Ext^i_\mathcal A(\mathcal C, X)=0, \ \forall \ i\ge 1\}.\end{align*}
For classes $\mathcal C$ and $\mathcal D$ of objects in $\mathcal A$, by $\Hom_\mathcal A(\mathcal C, \mathcal D) = 0$  we mean
$\Hom_\mathcal A(C, D) = 0$ for all $C\in\mathcal C$ and for all $D\in\mathcal D$. Similarly for $\Ext^1_\mathcal A(\mathcal C, \mathcal D) = 0.$

\subsection{Morita rings} \ Let $A$ and $B$ be rings, $_BM_A$ a $B$-$A$-bimodule, \ $_AN_B$ an $A$-$B$-bimodule,  \ $\phi: M\otimes_AN\longrightarrow B$ a $B$-bimodule map,
and $\psi: N\otimes_BM\longrightarrow A$ an $A$-bimodule map, such that
$$m'\psi(n\otimes_Bm) = \phi(m'\otimes_An)m, \ \   n'\phi(m\otimes_An) = \psi(n'\otimes_Bm)n, \ \  \forall \ m, m'\in M, \ \ \forall \ n, n'\in N. \eqno(*)$$

A {\it Morita ring} is \ $\Lambda = \Lambda_{(\phi, \psi)}:=\left(\begin{smallmatrix} A & {}_AN_B \\
	{}_BM_A & B\end{smallmatrix}\right)$,  with componentwise addition, and multiplication
$$\left(\begin{smallmatrix} a & n \\ m & b \end{smallmatrix}\right) \left(\begin{smallmatrix} a' & n' \\ m' & b' \end{smallmatrix}\right)
=\left(\begin{smallmatrix} aa'+\psi(n\otimes_Bm') & an'+nb' \\ ma'+bm' & \phi(m\otimes_An')+bb'\end{smallmatrix}\right).$$
The assumptions $(*)$ guarantee the associativity of the multiplication (the converse is also true).
This construction is finally formulated in [Bas]. Throughout this paper, we will assume $\phi = 0 =\psi$. This contains triangular matrix rings (i.e., $M =0$ or $N = 0$).

\vskip5pt

Throughout this paper, we assume that the considered Morita rings $\Lambda$ are Artin algebras, i.e., $A$ and $B$ are Artin $R$-algebras, where $R$ is a commutative artinian ring,
$M$ and $N$ are finitely generated $R$-modules, such that $R$ acts centrally both on $M$ and $N$ (see [GrP, Proposition 2.2]).

\vskip5pt

\subsection{Two expressions of modules over Morita rings} \ Let $\mathcal M(\Lambda)$ be the category with objects $\left(\begin{smallmatrix} X \\ Y \end{smallmatrix}\right)_{f,g}$, where $X\in A\mbox{-Mod}$, \ $Y\in B\mbox{-Mod}$, \ $f\in \Hom_B(M\otimes_AX,Y)$ and  $g\in \Hom_A(N\otimes_BY, X)$,
satisfy  the conditions
$$g(n\otimes_Bf(m\otimes_Ax)) = \psi(n\otimes_Bm)x, \ \ \
f(m\otimes_Ag(n\otimes_Ay)) = \phi(m\otimes_An)y, \ \ \forall \ m\in M, \ n\in N, \ x\in X, \ y\in Y.$$
The maps $f$ and $g$ are called {\it the structure maps} of $\left(\begin{smallmatrix} X \\ Y \end{smallmatrix}\right)_{f,g}$.

\vskip5pt

For $\phi = 0 = \psi,$ the conditions are just \ $g(1_N\otimes f) = 0 = f(1_M\otimes g).$

\vskip5pt

A morphism  in $\mathcal M(\Lambda)$ is $\left(\begin{smallmatrix} a \\ b \end{smallmatrix}\right): \left(\begin{smallmatrix} X \\ Y\end{smallmatrix}\right)_{f,g}\longrightarrow \left(\begin{smallmatrix} X' \\ Y'\end{smallmatrix}\right)_{f',g'}$, where \ $a: X\rightarrow X'$ and  $b: Y\rightarrow Y'$ are respectively an $A$-map and a $B$-map, so that the following diagrams  commute:
$$\xymatrix@R= 0.7cm{M\otimes_AX\ar[d]_-f\ar[r]^-{1_M\otimes a} & M\otimes_AX' \ar[d]^-{f'} \\
	Y \ar[r]^-b & Y'}\qquad \qquad \qquad
\xymatrix@R= 0.7cm{N\otimes_BY\ar[d]_-g\ar[r]^-{1_N\otimes b} & N\otimes Y'\ar[d]^-{g'} \\
	X\ar[r]^-a & X'.}$$

Let $\eta_{X, Y}: \Hom_B(M\otimes_AX, Y)\cong \Hom_A(X, \ \Hom_B(M, Y))$
and $\eta'_{Y, X}: \Hom_A(N\otimes_BY, X)\cong \Hom_B(Y, \ \Hom_A(N, X))$ be the adjunction isomorphisms.
For $f\in \Hom_B(M\otimes_AX, Y)$ and
$g\in \Hom_A(N\otimes_BY, X)$, put $\widetilde{f} = \eta_{X, Y}(f)$ and $\widetilde{g} = \eta'_{X, Y}(g)$. Thus
$$\widetilde{f}(x) = ``m\mapsto f(m\otimes_Ax)", \ \forall \ x\in X; \ \ \ \ \ \widetilde{g}(y) = ``n\mapsto g(n\otimes_By)", \ \forall \ y\in Y.$$

Using the bi-functorial property of the  adjunction isomorphisms one knows that
$$fb = f'(1_M\otimes_Aa) \ \ \mbox{if and only if} \ \  (M, b)\widetilde{f} = \widetilde{f'}a$$ and
$$ ag = g'(1_N\otimes_Bb) \ \ \mbox{if and only if} \ \ (N, a)\widetilde{g} = \widetilde{g'}b.$$

Let $\mathcal M'(\Lambda)$ be the category with objects $\left(\begin{smallmatrix} X \\ Y \end{smallmatrix}\right)_{\widetilde{f}, \ \widetilde{g}}$, where $X\in A\mbox{-Mod}$, \ $Y\in B\mbox{-Mod}$, \ $\widetilde{f}\in \Hom_A(X, \ \Hom_B(M, Y))$ and  $\widetilde{g}\in \Hom_B(Y, \ \Hom_A(N, X))$, such that the following diagrams commute:
$$\xymatrix@R= 0.7cm {X \ar[d]_-{\widetilde{f}}\ar[r]^-{(\psi, X)h_{A, X}} & \Hom_A(N\otimes_BM, X)\ar[d]_-\cong^-{\eta'_{M, X}} \\
\Hom_B(M, Y) \ar[r]^-{(M, \widetilde{g})} & \Hom_A(M, \Hom_B(N, X))} \qquad
\xymatrix@R= 0.7cm{Y\ar[d]_-{\widetilde{g}}\ar[r]^-{(\phi, Y)h_{B, Y}} & \Hom_B(M\otimes_AN, Y)\ar[d]_-\cong
^-{\eta_{N, Y}} \\
\Hom_A(N, X) \ar[r]^-{(N, \widetilde{f})} & \Hom_A(N, \Hom_B(M, Y))}$$
where $h_{A, X}: X\rightarrow \Hom_A(A, X)$ and $h_{B, Y}: Y \rightarrow \Hom_B(B, Y)$ are the canonical isomorphisms.
The maps $\widetilde{f}$ and $\widetilde{g}$ are also called {\it the structure maps} of $\left(\begin{smallmatrix} X \\ Y \end{smallmatrix}\right)_{\widetilde{f}, \widetilde{g}}$.

\vskip5pt

For $\phi = 0 = \psi,$ the conditions are  just \ $(M, \widetilde{g}) \widetilde{f} = 0 = (N, \widetilde{f}) \widetilde{g}.$

\vskip5pt

A morphism  in $\mathcal M'(\Lambda)$ is $\left(\begin{smallmatrix} a \\ b \end{smallmatrix}\right): \left(\begin{smallmatrix} X \\ Y\end{smallmatrix}\right)_{\widetilde{f}, \widetilde{g}}\longrightarrow \left(\begin{smallmatrix} X' \\ Y'\end{smallmatrix}\right)_{\widetilde{f'}, \widetilde{g'}}$, where \ $a: X\rightarrow X'$ and  $b: Y\rightarrow Y'$ are respectively an $A$-map and a $B$-map, so that diagrams
$$\xymatrix@R= 0.7cm{X\ar[d]_-{\widetilde{f}}\ar[r]^-{a} & X' \ar[d]^-{\widetilde{f'}} \\
	\Hom_B(M, Y) \ar[r]^-{(M, b)} & \Hom_B(M, Y')}\qquad \qquad \qquad
\xymatrix@R= 0.7cm{Y\ar[d]_-{\widetilde{g}}\ar[r]^-b & Y'\ar[d]^-{\widetilde{g'}} \\
\Hom_A(N, X)\ar[r]^-{(N, a)} & \Hom_A(N, X')}$$
 commute. Then
 $$\left(\begin{smallmatrix} X \\ Y\end{smallmatrix}\right)_{f, g}\mapsto \left(\begin{smallmatrix} X \\ Y\end{smallmatrix}\right)_{\widetilde{f}, \widetilde{g}}, \ \ \ \ \ \ \ ``\left(\begin{smallmatrix} X \\ Y\end{smallmatrix}\right)_{f, g}\stackrel{\left(\begin{smallmatrix} a \\ b \end{smallmatrix}\right)}\longrightarrow \left(\begin{smallmatrix} X' \\ Y'\end{smallmatrix}\right)_{f', g'}"\mapsto ``\left(\begin{smallmatrix} X \\ Y\end{smallmatrix}\right)_{\widetilde{f}, \widetilde{g}}\stackrel{\left(\begin{smallmatrix} a \\ b \end{smallmatrix}\right)}\longrightarrow \left(\begin{smallmatrix} X' \\ Y'\end{smallmatrix}\right)_{\widetilde{f'}, \widetilde{g'}}"$$ gives an isomorphism $\mathcal M(\Lambda)\cong \mathcal M'(\Lambda)$ of categories.

\begin{thm}\label{modovermorita} {\rm (E. L. Green [G, 1.5])} \ Let $\Lambda =\left(\begin{smallmatrix} A & N \\
	M & B\end{smallmatrix}\right)$ be a Morita ring. Then $\Lambda\mbox{-}{\rm Mod}\cong \mathcal M(\Lambda)\cong \mathcal M'(\Lambda)$ as categories.
\end{thm}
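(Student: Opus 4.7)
The plan is to build an equivalence $\Lambda\text{-Mod} \xrightarrow{\sim} \mathcal M(\Lambda)$ using the two orthogonal idempotents $e_1 = \left(\begin{smallmatrix} 1_A & 0 \\ 0 & 0\end{smallmatrix}\right)$ and $e_2 = \left(\begin{smallmatrix} 0 & 0 \\ 0 & 1_B\end{smallmatrix}\right)$, which satisfy $e_1 + e_2 = 1_\Lambda$ and $e_1\Lambda e_1 \cong A$, $e_2\Lambda e_2 \cong B$, $e_1\Lambda e_2 \cong N$, $e_2\Lambda e_1 \cong M$. Given $Z \in \Lambda\text{-Mod}$, set $X := e_1 Z$, which becomes a left $A$-module via $a\cdot x := \left(\begin{smallmatrix} a & 0 \\ 0 & 0\end{smallmatrix}\right)x$, and $Y := e_2 Z$, a left $B$-module analogously. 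Since $Z = e_1 Z \oplus e_2 Z$ as abelian groups, the remaining data of the $\Lambda$-action is encoded in the off-diagonal multiplications, which yield $A$-balanced and $B$-balanced biadditive maps inducing $f \colon M\otimes_A X \to Y$, $m\otimes x \mapsto \left(\begin{smallmatrix} 0 & 0 \\ m & 0\end{smallmatrix}\right)x$, and $g \colon N\otimes_B Y \to X$, $n\otimes y \mapsto \left(\begin{smallmatrix} 0 & n \\ 0 & 0\end{smallmatrix}\right)y$. Associativity of the action with respect to the multiplication formula for $\Lambda$ translates exactly into the defining compatibility conditions $g(n\otimes f(m\otimes x)) = \psi(n\otimes m)x$ and $f(m\otimes g(n\otimes y)) = \phi(m\otimes n)y$.

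For the inverse functor, given $\left(\begin{smallmatrix} X \\ Y\end{smallmatrix}\right)_{f,g}$, I would put a $\Lambda$-action on $X\oplus Y$ by declaring
\[
\left(\begin{smallmatrix} a & n \\ m & b\end{smallmatrix}\right)\left(\begin{smallmatrix} x \\ y\end{smallmatrix}\right) = \left(\begin{smallmatrix} ax + g(n\otimes y) \\ f(m\otimes x) + by\end{smallmatrix}\right),
\]
and verify that the two compatibility conditions on $(f,g)$ are precisely what make this action associative (this is a bookkeeping expansion of a $2\times 2$ matrix product applied to a column). Identity and unit axioms are immediate. On morphisms, a $\Lambda$-map $h \colon Z \to Z'$ automatically restricts to an $A$-map $a = h|_{e_1 Z}$ and a $B$-map $b = h|_{e_2 Z}$, and the commutativity of the two squares in the definition of $\mathcal M(\Lambda)$-morphisms is just the statement that $h$ commutes with multiplication by $\left(\begin{smallmatrix} 0 & 0 \\ m & 0\end{smallmatrix}\right)$ and by $\left(\begin{smallmatrix} 0 & n \\ 0 & 0\end{smallmatrix}\right)$. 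The two constructions are mutually inverse (the isomorphism $Z \cong e_1 Z \oplus e_2 Z$ is the identity up to relabelling), giving the first equivalence.

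The second equivalence $\mathcal M(\Lambda) \cong \mathcal M'(\Lambda)$ is essentially a change of coordinates through the tensor-hom adjunctions $\eta_{X,Y}$ and $\eta'_{Y,X}$, for which the explicit formulas $\widetilde f(x)(m) = f(m\otimes x)$ and $\widetilde g(y)(n) = g(n\otimes y)$ are already written down in the excerpt. The naturality identities
\[
fb = f'(1_M\otimes a) \iff (M,b)\widetilde f = \widetilde{f'}a, \qquad ag = g'(1_N\otimes b) \iff (N,a)\widetilde g = \widetilde{g'}b,
\]
show that morphisms correspond. What remains is to translate the compatibility diagrams on $(f,g)$ into those on $(\widetilde f, \widetilde g)$; applying $\eta$ to both sides of $g(1_N\otimes f) = \psi\cdot$ and using functoriality of the adjunctions (together with $\eta_{N\otimes M, X} = \eta'_{M,X}\circ \eta_{N, \Hom}$ after suitable rewriting) yields the square involving $(M,\widetilde g)\widetilde f$, and dually for the other.

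The only real obstacle is the bookkeeping of the adjunction-translation step in the last paragraph: one must ensure that the two prescribed compatibility diagrams in $\mathcal M'(\Lambda)$ are precisely the images of the two compatibility conditions of $\mathcal M(\Lambda)$ under the adjunctions, in particular that the canonical isomorphisms $h_{A,X}, h_{B,Y}$ and the bimodule maps $\phi, \psi$ appear in the correct slots. Everything else is the standard idempotent decomposition of a module over a ring with a complete set of orthogonal idempotents, which guarantees functoriality and that the composites are naturally isomorphic to the identities.
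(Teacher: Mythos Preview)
Your proposal is correct and follows the standard approach. Note that the paper does not actually prove this theorem: the equivalence $\Lambda\text{-Mod}\cong\mathcal M(\Lambda)$ is attributed to Green [G, 1.5] without argument, while the isomorphism $\mathcal M(\Lambda)\cong\mathcal M'(\Lambda)$ is sketched in the paragraph preceding the theorem statement, via exactly the adjunction translation $\left(\begin{smallmatrix} X\\ Y\end{smallmatrix}\right)_{f,g}\mapsto\left(\begin{smallmatrix} X\\ Y\end{smallmatrix}\right)_{\widetilde f,\widetilde g}$ that you describe. Your idempotent-decomposition argument for the first equivalence is the expected one and is sound; your treatment of the second equivalence matches the paper's sketch.
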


Throughout we will identify   a  $\Lambda$-module with $\left(\begin{smallmatrix} X \\ Y \end{smallmatrix}\right)_{f,g}$.
We will also use  the expression $\left(\begin{smallmatrix} X \\ Y \end{smallmatrix}\right)_{\widetilde{f}, \widetilde{g}},$ when it is more convenient.
For convenience we will call $\left(\begin{smallmatrix} X \\ Y \end{smallmatrix}\right)_{\widetilde{f}, \widetilde{g}}$
the second expression of a $\Lambda$-module. A sequence of $\Lambda$-maps
 $$\left(\begin{smallmatrix} X_1 \\ Y_1\end{smallmatrix}\right)_{f_1,g_1}\stackrel{\binom{a_1}{b_1}}\longrightarrow \left(\begin{smallmatrix} X_2 \\ Y_2 \end{smallmatrix}\right)_{f_2,g_2}\stackrel{\binom{a_2}{b_2}}\longrightarrow
	\left(\begin{smallmatrix} X_3 \\ Y_3 \end{smallmatrix}\right)_{f_3,g_3}$$
is exact  if and only if both the sequences
\ $X_1\stackrel{a_1}\longrightarrow X_2\stackrel{a_2}\longrightarrow X_3$  and
\ $Y_1\stackrel{b_1}\longrightarrow Y_2\stackrel{b_2}\longrightarrow Y_3$
are exact. Also, in the second expressions of $\Lambda$-modules, a sequence of $\Lambda$-maps
 $$\left(\begin{smallmatrix} X_1 \\ Y_1\end{smallmatrix}\right)_{\widetilde{f_1},\widetilde{g_1}}\stackrel{\binom{a_1}{b_1}}\longrightarrow \left(\begin{smallmatrix} X_2 \\ Y_2 \end{smallmatrix}\right)_{\widetilde{f_2},\widetilde{g_2}}\stackrel{\binom{a_2}{b_2}}\longrightarrow
	\left(\begin{smallmatrix} X_3 \\ Y_3 \end{smallmatrix}\right)_{\widetilde{f_3},\widetilde{g_3}}$$
is exact  if and only if both the sequences
\ $X_1\stackrel{a_1}\longrightarrow X_2\stackrel{a_2}\longrightarrow X_3$  and
\ $Y_1\stackrel{b_1}\longrightarrow Y_2\stackrel{b_2}\longrightarrow Y_3$
are exact.

\subsection{Twelve functors and two recollements}
Denote by  $\Psi_X$ the composition \ $N\otimes_BM\otimes_AX \stackrel {\psi\otimes 1_X} \longrightarrow A\otimes_AX\stackrel {\cong} \rightarrow X$, and denote by $\Phi_Y$
the composition \ $M\otimes_AN\otimes_B Y\stackrel{1_M\otimes g} \longrightarrow B\otimes_BY \stackrel{\cong}\rightarrow Y$.

\vskip5pt

Let   $\epsilon:  M\otimes_A\Hom_B(M, -)\longrightarrow {\rm Id}_{B\text{-Mod}}$ be the counit,
and  $\delta: {\rm Id}_{A\text{-Mod}}\longrightarrow \Hom_B(M, M\otimes_A -)$ the unit, of the adjoint pair $(M\otimes_A-, \ \Hom_A(M, -))$. Let
\ $\epsilon': N\otimes_B\Hom_A(N, -)\longrightarrow {\rm Id}_{A\text{-Mod}}$ be the counit,
and \ $\delta': {\rm Id}_{B\text{-Mod}}\longrightarrow \Hom_A(N, N\otimes_B-)$ the unit, of the adjoint pair $(N\otimes_B-, \ \Hom_A(N, -))$.

\vskip5pt

Recall twelve functors involving $\Lambda$-Mod.

\vskip5pt

$\bullet$ \ ${\rm T}_A: A\mbox{-Mod} \longrightarrow \Lambda_{(\phi,\psi)}\mbox{-Mod}$, \quad $X \longmapsto\left(\begin{smallmatrix} X \\ M\otimes_A X\end{smallmatrix}\right)_{1_{M\otimes_A X}, \Psi_X}$.

\vskip5pt

$\bullet$ \ ${\rm T}_B: B\mbox{-Mod} \longrightarrow \Lambda_{(\phi,\psi)}\mbox{-Mod}$, \quad $Y \longmapsto\left(\begin{smallmatrix} N\otimes_BY \\ Y\end{smallmatrix}\right)_{\Phi_Y,1_{N\otimes_BY}}$.

\vskip5pt

If $\phi = \psi=0$, then \ ${\rm T}_AX= \left(\begin{smallmatrix} X \\ M\otimes_A X\end{smallmatrix}\right)_{1, 0}$ \ and  \
${\rm T}_BY= \left(\begin{smallmatrix} N\otimes_BY \\ Y\end{smallmatrix}\right)_{0, 1}.$

\vskip5pt

$\bullet$ \ ${\rm U}_A: \Lambda_{(\phi,\psi)}\mbox{-Mod} \longrightarrow A\mbox{-Mod}, \quad \left(\begin{smallmatrix} X \\ Y \end{smallmatrix}\right)_{f,g} \longmapsto X$.

\vskip5pt

$\bullet$ \ ${\rm U}_B: \Lambda_{(\phi,\psi)}\mbox{-Mod} \longrightarrow B\mbox{-Mod}, \quad \left(\begin{smallmatrix} X \\ Y \end{smallmatrix}\right)_{f,g} \longmapsto Y$.

\vskip5pt

$\bullet$ \ ${\rm H}_A: A\mbox{-Mod} \longrightarrow \Lambda_{(\phi,\psi)}\mbox{-Mod},
\quad X \longmapsto \left(\begin{smallmatrix} X \\ \Hom_A(N, X)\end{smallmatrix}\right)_{\widetilde{\Psi_X}, \ \epsilon'_X}$.

Note that \  $\widetilde{\Psi_X} = \Hom_A(N, \ \Psi_X)\circ\delta'_{M\otimes_AX}$; and
${\rm H}_AX =
\left(\begin{smallmatrix} X \\ \Hom_A(N, X)\end{smallmatrix}\right)_{\widetilde{\widetilde{\Psi_X}}, \ 1}$ in the second expression.

\vskip5pt

$\bullet$ \ ${\rm H}_B: B\mbox{-Mod} \longrightarrow \Lambda_{(\phi,\psi)}\mbox{-Mod},
\quad Y \longmapsto \left(\begin{smallmatrix} \Hom_B(M,Y) \\ Y\end{smallmatrix}\right)_{\epsilon_Y, \widetilde{\Phi_Y}}$.

Note that $\widetilde{\Phi_Y} = \Hom_B(M, \ \Phi_Y)\circ \delta_{N\otimes_BY};$ and ${\rm H}_BY =
\left(\begin{smallmatrix} \Hom_B(M,Y) \\ Y\end{smallmatrix}\right)_{1, \widetilde{\widetilde{\Phi_Y}}}$ in the second expression.

\vskip5pt

If $\phi = \psi=0$, then \ ${\rm H}_A X = \left(\begin{smallmatrix} X \\ \Hom_A(N, X)\end{smallmatrix}\right)_{0, \ \epsilon'_X},  \
{\rm H}_BY = \left(\begin{smallmatrix} \Hom_B(M, Y) \\ Y\end{smallmatrix}\right)_{\epsilon_Y, 0};$
and it is convenient to use the second expression:
${\rm H}_A X = \left(\begin{smallmatrix} X \\ \Hom_A(N, X)\end{smallmatrix}\right)_{0, 1},  \ \ \
{\rm H}_BY = \left(\begin{smallmatrix} \Hom_B(M, Y) \\ Y\end{smallmatrix}\right)_{1, 0}.$

\vskip5pt

$\bullet$ \ ${\rm C}_A: \Lambda_{(\phi, \psi)}\text{\rm-Mod}\longrightarrow A\text{\rm-Mod}$, \quad $\left(\begin{smallmatrix} X\\Y\end{smallmatrix}\right)_{f,g}\longmapsto \Coker g$.
	
\vskip5pt

$\bullet$ \ ${\rm C}_B: \Lambda_{(\phi, \psi)}\text{\rm-Mod}\longrightarrow B\text{\rm-Mod}$, \quad $\left(\begin{smallmatrix} X\\Y\end{smallmatrix}\right)_{f,g}\longmapsto \Coker f$.
	
\vskip5pt

$\bullet$ \ ${\rm K}_A: \Lambda_{(\phi, \psi)}\text{\rm-Mod}\longrightarrow A\text{\rm-Mod}$, \quad $\left(\begin{smallmatrix} X\\Y\end{smallmatrix}\right)_{f,g}\longmapsto \Ker \widetilde{f}$.
	
\vskip5pt
	
$\bullet$ \ ${\rm K}_B: \Lambda_{(\phi, \psi)}\text{\rm-Mod}\longrightarrow B\text{\rm-Mod}$, \quad $\left(\begin{smallmatrix} X\\Y\end{smallmatrix}\right)_{f,g}\longmapsto \Ker \widetilde{g}$.
	
\vskip10pt

If $\phi = \psi=0$, then $\left(\begin{smallmatrix} X \\ 0 \end{smallmatrix}\right)_{0,0}$ is a left $\Lambda$-module for any $A$-module $_AX$,
and $\left(\begin{smallmatrix} 0 \\ Y \end{smallmatrix}\right)_{0,0}$ is a left $\Lambda$-module for any $B$-module $_BY$. (In general, they are not left $\Lambda$-modules.) In this case,
one has extra functors:

\vskip5pt

$\bullet$ \ ${\rm Z}_A: A\mbox{-Mod}\longrightarrow \Lambda_{(0,0)}\mbox{-Mod}$, \quad $X\longmapsto \left(\begin{smallmatrix} X \\ 0 \end{smallmatrix}\right)_{0,0}$.

\vskip5pt

$\bullet$ \ ${\rm Z}_B: B\mbox{-Mod}\longrightarrow \Lambda_{(0,0)}\mbox{-Mod}$, \quad $Y\longmapsto \left(\begin{smallmatrix} 0 \\ Y \end{smallmatrix}\right)_{0,0}$.

\begin{thm} \label{recollments} {\rm ([GrP, 2.4])} \ There are recollements of abelian categories $($in the sense of {\rm [FP]}$)$$:$
$$\xymatrix{A\text{\rm-Mod}\ar[rr]^-{{\rm Z}_A} && \Lambda_{(0,0)}\text{\rm-Mod} \ar[rr]^-{{\rm U}_B}\ar@<-12pt>[ll]_-{{\rm C}_A}\ar@<12pt>[ll]_-{{\rm K}_A}
		&& B\text{\rm-Mod} \ar@<-12pt>[ll]_-{{\rm T}_B} \ar@<12pt>[ll]_-{{\rm H}_B}}$$
\noindent and
$$\xymatrix{B\text{\rm-Mod}\ar[rr]^-{{\rm Z}_B} && \Lambda_{(0,0)}\text{\rm-Mod} \ar[rr]^-{{\rm U}_A}\ar@<-13pt>[ll]_-{{\rm C}_B}\ar@<13pt>[ll]_-{{\rm K}_B}
		&& A\text{\rm-Mod}. \ar@<-13pt>[ll]_-{{\rm T}_A} \ar@<13pt>[ll]_-{{\rm H}_A}}$$
\end{thm}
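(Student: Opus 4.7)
The plan is to verify the four defining axioms of an abelian recollement for the first diagram; the second follows by the strict symmetry $A \leftrightarrow B$, $M \leftrightarrow N$. These axioms are: (a) $({\rm C}_A, {\rm Z}_A, {\rm K}_A)$ and $({\rm T}_B, {\rm U}_B, {\rm H}_B)$ are adjoint triples; (b) ${\rm Z}_A$, ${\rm T}_B$, ${\rm H}_B$ are fully faithful; (c) ${\rm U}_B \circ {\rm Z}_A = 0$; and (d) for every $\Lambda$-module $\left(\begin{smallmatrix}X\\Y\end{smallmatrix}\right)_{f,g}$ there are natural exact sequences
\[
{\rm T}_B {\rm U}_B \textstyle\left(\begin{smallmatrix}X\\Y\end{smallmatrix}\right)_{f,g} \to \left(\begin{smallmatrix}X\\Y\end{smallmatrix}\right)_{f,g} \to {\rm Z}_A {\rm C}_A \left(\begin{smallmatrix}X\\Y\end{smallmatrix}\right)_{f,g} \to 0,
\]
\[
0 \to {\rm Z}_A {\rm K}_A \textstyle\left(\begin{smallmatrix}X\\Y\end{smallmatrix}\right)_{f,g} \to \left(\begin{smallmatrix}X\\Y\end{smallmatrix}\right)_{f,g} \to {\rm H}_B {\rm U}_B \left(\begin{smallmatrix}X\\Y\end{smallmatrix}\right)_{f,g}
\]
induced by the unit and counit of the adjoint triples in (a).

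For (a), I would compute each adjunction directly in the quadruple description (Theorem \ref{modovermorita}). For ${\rm T}_B \dashv {\rm U}_B$: a $\Lambda$-morphism $\left(\begin{smallmatrix}a\\b\end{smallmatrix}\right)\colon {\rm T}_B Y = \left(\begin{smallmatrix}N\otimes_B Y\\Y\end{smallmatrix}\right)_{0,1} \to \left(\begin{smallmatrix}X'\\Y'\end{smallmatrix}\right)_{f',g'}$ is forced by the compatibility $a\cdot 1 = g'(1_N\otimes b)$ to be $(g'(1_N\otimes b),\, b)$, while the second compatibility $f'(1_M\otimes a) = 0$ then reduces to $f'(1_M\otimes g')(1\otimes b) = 0$, which holds automatically because $\phi = 0$ forces $f'(1_M\otimes g') = 0$. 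Hence $(a,b)\mapsto b$ gives the adjunction. The dual ${\rm U}_B \dashv {\rm H}_B$ is best done using the second expression ${\rm H}_B Y = \left(\begin{smallmatrix}\Hom_B(M,Y)\\Y\end{smallmatrix}\right)_{1,\widetilde{\widetilde{\Phi_Y}}}$ and $\psi = 0$. For $({\rm C}_A, {\rm Z}_A, {\rm K}_A)$: since ${\rm Z}_A X = \left(\begin{smallmatrix}X\\0\end{smallmatrix}\right)_{0,0}$, a morphism ${\rm Z}_A X \to \left(\begin{smallmatrix}X'\\Y'\end{smallmatrix}\right)_{f',g'}$ is necessarily $(a,0)$ with $\widetilde{f'}a = 0$, i.e., $a$ factoring through $\Ker\widetilde{f'} = {\rm K}_A\left(\begin{smallmatrix}X'\\Y'\end{smallmatrix}\right)_{f',g'}$; dually a morphism $\left(\begin{smallmatrix}X'\\Y'\end{smallmatrix}\right)_{f',g'} \to {\rm Z}_A X$ is $(a,0)$ with $a\circ g' = 0$, i.e., $a$ factoring through $\Coker g' = {\rm C}_A\left(\begin{smallmatrix}X'\\Y'\end{smallmatrix}\right)_{f',g'}$.

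For (b), one reads off ${\rm U}_B{\rm T}_B \cong \Id_{B\text{-Mod}} \cong {\rm U}_B{\rm H}_B$ and ${\rm K}_A{\rm Z}_A \cong \Id_{A\text{-Mod}}$ by direct inspection, whence the three functors are fully faithful. Item (c) is immediate from $\mathrm{U}_B \mathrm{Z}_A X = 0$. For (d), the counit ${\rm T}_B{\rm U}_B \left(\begin{smallmatrix}X\\Y\end{smallmatrix}\right)_{f,g} \to \left(\begin{smallmatrix}X\\Y\end{smallmatrix}\right)_{f,g}$ is, by the formula in (a), the map $\left(\begin{smallmatrix}g\\1_Y\end{smallmatrix}\right)$; its cokernel is computed componentwise to be $\left(\begin{smallmatrix}\Coker g\\0\end{smallmatrix}\right)_{0,0} = {\rm Z}_A{\rm C}_A \left(\begin{smallmatrix}X\\Y\end{smallmatrix}\right)_{f,g}$. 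Dually, in the second expression, the unit $\left(\begin{smallmatrix}X\\Y\end{smallmatrix}\right)_{\widetilde f,\widetilde g} \to {\rm H}_B{\rm U}_B \left(\begin{smallmatrix}X\\Y\end{smallmatrix}\right)$ is $\left(\begin{smallmatrix}\widetilde f\\ 1_Y\end{smallmatrix}\right)$, whose kernel is $\left(\begin{smallmatrix}\Ker\widetilde f\\0\end{smallmatrix}\right)_{0,0} = {\rm Z}_A{\rm K}_A \left(\begin{smallmatrix}X\\Y\end{smallmatrix}\right)$.

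The main obstacle is not conceptual but bookkeeping: one must faithfully track the compatibility conditions $f(1_M\otimes g) = 0$ and $g(1_N\otimes f) = 0$ — the consequences of $\phi = 0 = \psi$ — in both the first and the second expressions, and verify each time that the candidate maps produced by the adjunction bijections respect the full $\Lambda$-module structure. These vanishing conditions are precisely what make the otherwise obstructing compositions disappear, so that all data pass through as required. Once (a)--(d) are in place for the first diagram, the second is obtained verbatim by swapping the roles of $A$ and $B$ (and $M$ and $N$), replacing $({\rm T}_B, {\rm U}_B, {\rm H}_B, {\rm Z}_A, {\rm C}_A, {\rm K}_A)$ by $({\rm T}_A, {\rm U}_A, {\rm H}_A, {\rm Z}_B, {\rm C}_B, {\rm K}_B)$.
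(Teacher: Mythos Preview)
The paper does not prove this theorem; it is simply quoted from \textrm{[GrP, 2.4]} as background material. Your direct verification of the recollement axioms via the quadruple description is correct and is exactly the standard argument one would expect (and is essentially what \textrm{[GrP]} does). One small remark: your axiom (c) as stated, $\mathrm{U}_B\mathrm{Z}_A = 0$, only gives $\operatorname{Im}\mathrm{Z}_A \subseteq \operatorname{Ker}\mathrm{U}_B$; the reverse inclusion is needed for the recollement but is immediate here, since any $\left(\begin{smallmatrix}X\\0\end{smallmatrix}\right)_{f,g}$ necessarily has $f=0$, $g=0$ and hence lies in the essential image of $\mathrm{Z}_A$.
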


\vskip10pt

\subsection{Projective (injective) modules}

A left $\Lambda_{(\phi, \psi)}$-module $\left(\begin{smallmatrix} L_1 \\ L_2 \end{smallmatrix}\right)_{f,g}$ is projective if and only if
\ $\left(\begin{smallmatrix} L_1 \\ L_2\end{smallmatrix}\right)_{f,g}\cong {\rm T}_AP \oplus {\rm T}_BQ$
\ for some $P\in \ _A\mathcal P$ \ and $Q\in \ _B\mathcal P$;  and it is injective if and only if
\ $\left(\begin{smallmatrix} L_1 \\ L_2\end{smallmatrix}\right)_{f,g}\cong {\rm H}_AI \oplus {\rm H}_BJ$
\ for some $I\in \ _A\mathcal I$ \ and $J\in \ _B\mathcal I$.

\vskip5pt
Thus, if $\phi = 0 = \phi$, a left $\Lambda_{(0, 0)}$-module $\left(\begin{smallmatrix} L_1 \\ L_2 \end{smallmatrix}\right)_{f,g}$ is projective if and only if
$$\left(\begin{smallmatrix} L_1 \\ L_2\end{smallmatrix}\right)_{f,g}\cong \left(\begin{smallmatrix} P \\ M\otimes_AP\end{smallmatrix}\right)_{1,0}\oplus \left(\begin{smallmatrix} N\otimes_BQ \\ Q \end{smallmatrix}\right)_{0,1}= \left(\begin{smallmatrix} P\oplus (N\otimes_BQ) \\ (M\otimes_AP)\oplus Q\end{smallmatrix}\right)_{\left(\begin{smallmatrix} 1 & 0 \\ 0 & 0 \end{smallmatrix}\right), \left(\begin{smallmatrix} 0 & 0 \\ 0 & 1 \end{smallmatrix}\right)}$$
for some $P\in \ _A\mathcal P$ and $Q\in \ _B\mathcal P$; and it is injective if and only if
$$\left(\begin{smallmatrix} L_1 \\ L_2\end{smallmatrix}\right)_{f,g}\cong \left(\begin{smallmatrix} I \\ \Hom_A(N,I)\end{smallmatrix}\right)_{0, \epsilon'_I}\oplus \left(\begin{smallmatrix} \Hom_B(M,J) \\ J \end{smallmatrix}\right)_{\epsilon_{_J},0}\cong \left(\begin{smallmatrix} I\oplus \Hom_B(M,J) \\ \Hom_A(N,I)\oplus J\end{smallmatrix}\right)_{\left(\begin{smallmatrix} 0 & 0 \\ 0 & \epsilon_{_J} \end{smallmatrix}\right), \left(\begin{smallmatrix} \epsilon'_{_I} & 0 \\ 0 & 0 \end{smallmatrix}\right)}$$
for some $I\in \ _A\mathcal I$ and $J\in \ _B\mathcal I$. Using the second expression of $\Lambda$-modules, a left
$\Lambda_{(0, 0)}$-module $\left(\begin{smallmatrix} L_1 \\ L_2 \end{smallmatrix}\right)_{\widetilde{f},\widetilde{g}}$ is injective if and only if
$$\left(\begin{smallmatrix} L_1 \\ L_2\end{smallmatrix}\right)_{\widetilde{f},\widetilde{g}}\cong \left(\begin{smallmatrix} I \\ \Hom_A(N,I)\end{smallmatrix}\right)_{0, 1}\oplus \left(\begin{smallmatrix} \Hom_B(M,J) \\ J \end{smallmatrix}\right)_{1,0}\cong \left(\begin{smallmatrix} I\oplus \Hom_B(M,J) \\ \Hom_A(N,I)\oplus J\end{smallmatrix}\right)_{\left(\begin{smallmatrix} 0 & 0 \\ 0 & 1 \end{smallmatrix}\right), \left(\begin{smallmatrix} 1 & 0 \\ 0 & 0 \end{smallmatrix}\right).}$$
See [GrP, 3.1].

\vskip10pt
	
\subsection{Cotorsion Pairs} \
Let $\mathcal A$ be an abelian category.   A pair \ $(\mathcal C, \ \mathcal F)$ of classes of objects of  $\mathcal A$ is
a {\it cotorsion pair} (see [S]), if \ $\mathcal C={}^\perp\mathcal F \ \ \mbox{and} \ \ \mathcal F = \mathcal C^{\perp}.$

\vskip5pt

A cotorsion pair $(\mathcal C, \mathcal F)$  is  {\it complete}, if for any object $X\in \mathcal A$, there are exact sequences
$$0\longrightarrow F\longrightarrow C\longrightarrow X\longrightarrow 0, \quad \text{and}\quad
0\longrightarrow X\longrightarrow F'\longrightarrow C'\longrightarrow 0,$$
with $C, \ C'\in \mathcal C$, and \ $F, \ F'\in \mathcal F$.

\begin{prop} \label{completenessandheredity} \ {\rm ([EJ, 7.17])} \  Let \ $\mathcal A$ be an abelian category with  enough projective objects and enough injective objects, and \ $(\mathcal C, \mathcal F)$ a cotorsion pair in
$\mathcal A$. Then the following are equivalent$:$

\vskip5pt

${\rm(i)}$ \   $(\mathcal C, \mathcal F)$ is complete$;$

\vskip5pt

${\rm(ii)}$ \  For any object $X\in \mathcal A$, there is an  exact sequence
$0\rightarrow F\rightarrow C\rightarrow X\rightarrow 0$ with $C\in \mathcal C$ and \ $F\in \mathcal F;$

\vskip5pt

${\rm(iii)}$ \ For an any object $X\in \mathcal A$, there is exact sequence
$0\rightarrow X\rightarrow F'\rightarrow C'\rightarrow 0$ with $C'\in \mathcal C$ and \ $F'\in \mathcal F$. \end{prop}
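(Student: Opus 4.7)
The plan is to notice first that, by the stated definition of completeness, condition (i) is precisely the conjunction of (ii) and (iii), so that (i)$\Rightarrow$(ii) and (i)$\Rightarrow$(iii) are immediate, while the implications (ii)$\Rightarrow$(i) and (iii)$\Rightarrow$(i) will follow once we have (ii)$\Leftrightarrow$(iii). Hence the substance of the proposition is the equivalence of (ii) and (iii), which is Salce's lemma. The proof will rely on two basic observations that hold for any cotorsion pair $(\mathcal{C}, \mathcal{F})$ and that follow at once from the long exact sequence of $\Ext$: the class $\mathcal{F}=\mathcal{C}^{\perp}$ is closed under extensions and contains every injective object of $\mathcal{A}$, and dually $\mathcal{C}={}^{\perp}\mathcal{F}$ is closed under extensions and contains every projective object.

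For (ii)$\Rightarrow$(iii), given any $X\in\mathcal{A}$, I would use ``enough injectives'' to embed $X$ into a short exact sequence $0\to X\to I\to Z\to 0$ with $I$ injective, then apply (ii) to $Z$ to obtain $0\to F\to C\to Z\to 0$ with $C\in\mathcal{C}$ and $F\in\mathcal{F}$. Taking the pullback of $C\to Z$ along $I\to Z$ produces a commutative diagram whose middle row reads $0\to X\to E\to C\to 0$ and whose middle column reads $0\to F\to E\to I\to 0$. Since $I\in\mathcal{F}$ and $\mathcal{F}$ is extension-closed, the middle column forces $E\in\mathcal{F}$; the middle row then gives the sequence required by (iii) with $F'=E$ and $C'=C$.

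For (iii)$\Rightarrow$(ii), the construction is dual: choose a projective presentation $0\to K\to P\to X\to 0$ using ``enough projectives'', apply (iii) to $K$ to obtain $0\to K\to F\to C\to 0$, and form the pushout of $K\to P$ along $K\to F$. The resulting diagram yields a middle row $0\to F\to E\to X\to 0$ and a middle column $0\to P\to E\to C\to 0$; since $P$ and $C$ both lie in $\mathcal{C}$, and $\mathcal{C}$ is extension-closed, the column forces $E\in\mathcal{C}$, so the row is the desired exact sequence for (ii).

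The argument is essentially diagrammatic, and the main obstacle is mild: a careful bookkeeping of the pullback and pushout $3\times 3$ diagrams, together with the verification of the two closure properties mentioned above. Once these ingredients are in place, no deeper idea is required, and the hypotheses of ``enough injectives'' and ``enough projectives'' are used exactly once each, in the respective directions.
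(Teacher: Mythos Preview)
Your argument is correct; this is precisely Salce's classical trick, and nothing is missing. Note, however, that the paper does not supply its own proof of this proposition: it is quoted from [EJ, 7.17] without argument, so there is no in-paper proof to compare against. What you have written is the standard proof one finds in the literature, and it matches the level of generality stated (abelian category with enough projectives and injectives, using each hypothesis exactly once in the respective direction).
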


A cotorsion pair $(\mathcal C, \mathcal F)$ is {\it cogenerated by} a set $\mathcal S$,
if $\mathcal F = \mathcal S^\perp$. One should be careful with this terminology:
in some reference, e.g., in [GT, p.99],  it is also called ``{\it generated by}".

\begin{prop} \label{cogenerated}  \ Let \ $\mathcal A$ be a Grothendieck category with  enough projective objects.
Then any  cotorsion pair in $\mathcal A$ cogenerated by a set is complete.
\end{prop}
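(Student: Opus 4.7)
The plan is to apply the Eklof--Trlifaj small object argument, adapted to a Grothendieck category with enough projectives. By Proposition \ref{completenessandheredity}, it suffices to construct, for an arbitrary $X\in \mathcal A$, a short exact sequence $0\to X\to F\to C\to 0$ with $F\in \mathcal F$ and $C\in \mathcal C$; the other approximation then follows automatically.

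Fix a set $S$ with $\mathcal F=S^\perp$, so that $\mathcal C={}^\perp(S^\perp)$. Using enough projectives, choose for each $s\in S$ a short exact sequence $0\to K_s\to P_s\to s\to 0$ with $P_s$ projective. Since $\mathcal A$ is Grothendieck, every object is $\kappa_s$-presentable for some regular cardinal $\kappa_s$; pick a regular cardinal $\kappa$ strictly greater than $\sup_{s\in S}\kappa_s$. Construct a continuous chain $X=X_0\hookrightarrow X_1\hookrightarrow\cdots\hookrightarrow X_\alpha\hookrightarrow\cdots$ of length $\kappa$ as follows. At a successor stage, let $I_\alpha=\{(s,\varphi)\mid s\in S,\ \varphi\in \Hom_\mathcal A(K_s,X_\alpha)\}$ and form the pushout of $\bigoplus_{I_\alpha}K_s\to X_\alpha$ (the coproduct of the $\varphi$'s) along $\bigoplus_{I_\alpha}K_s\hookrightarrow \bigoplus_{I_\alpha}P_s$, obtaining $X_\alpha\hookrightarrow X_{\alpha+1}$ (monic by AB5). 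At limit ordinals, take the colimit. Set $F:=X_\kappa$ and $C:=F/X$.

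To see $F\in \mathcal F=S^\perp$, fix $s\in S$. The chosen presentation of $s$ yields an exact sequence $\Hom_\mathcal A(P_s,F)\to \Hom_\mathcal A(K_s,F)\to \Ext^1_\mathcal A(s,F)\to 0$, so $\Ext^1_\mathcal A(s,F)=0$ iff every $\varphi\colon K_s\to F$ extends along $K_s\hookrightarrow P_s$. By the $\kappa$-presentability of $K_s$ and regularity of $\kappa$, any such $\varphi$ factors through some $X_\alpha$ with $\alpha<\kappa$; the resulting $K_s\to X_\alpha$ appears as a component of the pushout data at stage $\alpha+1$, hence extends to $P_s\to X_{\alpha+1}\to F$. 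To see $C\in \mathcal C$, observe that $C$ inherits a continuous filtration $0=C_0\subseteq C_1\subseteq\cdots\subseteq C_\kappa=C$ with $C_\alpha=X_\alpha/X$ and $C_{\alpha+1}/C_\alpha\cong X_{\alpha+1}/X_\alpha\cong \bigoplus_{I_\alpha}s$, a coproduct of objects of $S\subseteq{}^\perp\mathcal F$. By Eklof's lemma in the Grothendieck setting, ${}^\perp\mathcal F$ is closed under such transfinite extensions, hence $C\in {}^\perp\mathcal F=\mathcal C$.

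The main technical input is the smallness step: one must know that each $K_s$ is $\kappa$-presentable for a single uniform regular cardinal $\kappa$, so that every morphism $K_s\to F=\operatorname{colim}_{\alpha<\kappa}X_\alpha$ factors through some $X_\alpha$. This, together with Eklof's lemma itself, is the only non-formal ingredient; both are available in any Grothendieck category, e.g.\ via the Gabriel--Popescu embedding, or directly from the existence of a generator combined with AB5 exactness of filtered colimits.
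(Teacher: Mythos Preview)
The paper does not actually prove this proposition; it merely records it and points to the literature: ``This result is given in [ET, Theorem 10] for the module category of a ring, and has the generality by [SS] or [Bec, 1.2.2].'' Your write-up is precisely the Eklof--Trlifaj small object argument underlying those references, carried out in the Grothendieck setting, and it is correct. One minor remark: your appeal to Proposition~\ref{completenessandheredity} to reduce to a single approximation tacitly uses that a Grothendieck category has enough injectives (which it does), so the hypotheses of that proposition are indeed satisfied.
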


This  result is given in [ET2, Theorem 10] for the module category of a ring,
and has the generality by [SS] or [Bec, 1.2.2].

\vskip5pt

A cotorsion pair $(\mathcal C, \mathcal F)$  is  {\it hereditary}, if $\mathcal C$ is closed under the kernel of epimorphisms, and $\mathcal F$ is closed under the cokernel of monomorphisms.

\begin{prop} \label{heredity} \ {\rm ([GR, 1.2.10])}  \ Let \ $\mathcal A$ be an abelian category with  enough projective objects and enough injective objects, and \ $(\mathcal C, \mathcal F)$ a cotorsion pair in
$\mathcal A$. Then the following is equivalent$:$

\vskip5pt

${\rm(i)}$ \   $(\mathcal C, \mathcal F)$ is hereditary$;$

\vskip5pt

${\rm(ii)}$ \  $\mathcal C$ is closed under the kernel of epimorphisms$;$

\vskip5pt

${\rm(iii)}$ \  $\mathcal F$ is closed under the cokernel of monomorphisms$;$

\vskip5pt

${\rm(iv)}$ \  $\Ext^2_\mathcal A(\mathcal C, \mathcal F) = 0;$

\vskip5pt

${\rm(v)}$  \ $\Ext^i_\mathcal A(\mathcal C, \mathcal F) = 0$ for $i\ge 1$.
\end{prop}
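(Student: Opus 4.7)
The plan is to establish the cycle $(\text{v})\Rightarrow(\text{iv})\Rightarrow(\text{ii})$ and $(\text{iv})\Rightarrow(\text{iii})$, together with $(\text{ii})\Rightarrow(\text{iv})$ and $(\text{iii})\Rightarrow(\text{iv})$, and finally $(\text{iv})\Rightarrow(\text{v})$; since $(\text{i})$ is by definition the conjunction of $(\text{ii})$ and $(\text{iii})$, this will give the proposition. The implication $(\text{v})\Rightarrow(\text{iv})$ is trivial. Throughout, I will use the observation that every projective object of $\mathcal A$ lies in $\mathcal C$ (as $\Ext^1_\mathcal A(P,\mathcal F)=0$ forces $P\in{}^\perp\mathcal F=\mathcal C$), and dually every injective object lies in $\mathcal F$; these are where the hypotheses of enough projectives and enough injectives enter.

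For $(\text{iv})\Rightarrow(\text{ii})$, I would take a short exact sequence $0\to C'\to C\to C''\to 0$ with $C,C''\in\mathcal C$ and any $F\in\mathcal F$, and read off the piece
\[
\Ext^1_\mathcal A(C,F)\longrightarrow \Ext^1_\mathcal A(C',F)\longrightarrow \Ext^2_\mathcal A(C'',F)
\]
of the long exact sequence. The outer terms vanish (the first by the cotorsion pair property, the second by $(\text{iv})$), so $\Ext^1_\mathcal A(C',F)=0$ and $C'\in\mathcal C$. Dually, $(\text{iv})\Rightarrow(\text{iii})$ using $0\to F'\to F\to F''\to 0$ with $F',F\in\mathcal F$ and the sequence $\Ext^1_\mathcal A(C,F)\to\Ext^1_\mathcal A(C,F'')\to\Ext^2_\mathcal A(C,F')$.

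For $(\text{ii})\Rightarrow(\text{iv})$, given $C\in\mathcal C$, pick a short exact sequence $0\to K\to P\to C\to 0$ with $P$ projective; then $P\in\mathcal C$, so by $(\text{ii})$ also $K\in\mathcal C$. Applying $\Hom_\mathcal A(-,F)$ for $F\in\mathcal F$ yields
\[
\Ext^1_\mathcal A(K,F)\longrightarrow \Ext^2_\mathcal A(C,F)\longrightarrow \Ext^2_\mathcal A(P,F)=0,
\]
and the left term vanishes because $K\in\mathcal C$, forcing $\Ext^2_\mathcal A(C,F)=0$. The implication $(\text{iii})\Rightarrow(\text{iv})$ is dual, using an injective copresentation $0\to F\to I\to L\to 0$ with $I$ injective.

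Finally, for $(\text{iv})\Rightarrow(\text{v})$, I would argue by induction on $i\ge 2$. The base case is $(\text{iv})$ itself. For the induction step, observe that $(\text{iv})$ already implies $(\text{ii})$; hence, in the sequence $0\to K\to P\to C\to 0$ above, we have $K\in\mathcal C$. The long exact sequence gives $\Ext^{i}_\mathcal A(C,F)\cong\Ext^{i-1}_\mathcal A(K,F)$ for $i\ge 2$, since $\Ext^{i-1}_\mathcal A(P,F)=0=\Ext^i_\mathcal A(P,F)$, and the inductive hypothesis applied to $K\in\mathcal C$ yields the conclusion. No step poses a real obstacle; the only thing to be careful about is verifying that projectives and injectives belong to $\mathcal C$ and $\mathcal F$ respectively, which is what makes the dimension-shifting arguments go through.
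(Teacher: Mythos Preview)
Your proof is correct and is the standard argument for this well-known characterization of hereditary cotorsion pairs. The paper does not supply its own proof of this proposition; it simply cites [GR, 1.2.10] and remarks afterward that the hypotheses of enough projectives and enough injectives are needed. Your argument makes this dependence explicit: enough projectives are used in $(\text{ii})\Rightarrow(\text{iv})$ and in the dimension-shift for $(\text{iv})\Rightarrow(\text{v})$, while enough injectives are used in the dual implication $(\text{iii})\Rightarrow(\text{iv})$. There is nothing to correct.
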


The proof of Proposition \ref{heredity} really needs the assumption that abelian category $\mathcal A$ has enough projective objects and enough injective objects.

\subsection{\bf Model structures}  A closed model structure on a category and a model category are introduced by D. Quillen \ [Q1] (see also [Q2]). {\it A closed model structure} on a category $\mathcal M$ is a triple
\ $(\Cofib(\mathcal M)$, \ $\Fib(\mathcal M)$,  \ $\Weq(\mathcal M))$ of classes of morphisms,
where the morphisms in the three classes are respectively called cofibrations, fibrations, and weak equivalences, satisfying ${\rm (CM1) - (CM4)}$:

\vskip5pt

(CM1) \ Let $X\xlongrightarrow{f}Y\xlongrightarrow{g}Z$ be morphisms in $\mathcal M$. If two of the morphisms $f, \ g, \ gf$ are weak equivalences, then so is the third.

\vskip5pt

(CM2) \  If $f$ is a retract of $g$, and $g$ is a cofibration (fibration, weak equivalence), then so is $f$.

\vskip5pt

(CM3) \  Given a commutative square
$$\xymatrix{A\ar[r]^-a \ar[d]_-i & X \ar[d]^-p \\
B\ar[r]^-b \ar@{.>}[ru]^-s & Y }$$
where $i\in \Cofib(\mathcal M)$ and $p\in \Fib(\mathcal M)$, if either $i\in \Weq(\mathcal M)$ or $p\in \Weq(\mathcal M)$,
then there exists a morphism $s: B\longrightarrow X$ such that $a = si, \ \ b = ps$.

\vskip5pt

(CM4) \ Any morphism $f: X\longrightarrow Y$ has a factorizations \ $f=pi$ \ with  $i\in \Cofib(\mathcal M)\cap \Weq(\mathcal M)$ and $p\in \Fib(\mathcal M)$; and also \ $f=p'i'$ with $i'\in \Cofib(\mathcal M)$ and $p'\in \Fib(\mathcal M)\cap \Weq(\mathcal M)$.

\vskip5pt

Following [H1] (also [Hir]), we will call a closed model structure just {\it a model structure}. A category is {\it bicomplete} if it has an arbitrary small limits and colimits.
{\it A model category} is a bicomplete category equipped with a model structure ([H1, 1.1.4]).

\vskip5pt

For a model structure $(\Cofib(\mathcal M)$, \ $\Fib(\mathcal M)$,  \ $\Weq(\mathcal M))$ on category $\mathcal M$ with zero object,
an object $X$ is {\it trivial} if $0 \longrightarrow X $ is a weak equivalence, or, equivalently,  $X\longrightarrow 0$ is a weak equivalence. It is {\it cofibrant} if $0\longrightarrow X$ is a cofibration, and it is {\it fibrant} if $X\longrightarrow 0$ is a fibration.
For a model structure  on category $\mathcal M$ with zero object ($\mathcal M$  is not necessarily a model category), Quillen's homotopy category is the localization $\mathcal M[\Weq(\mathcal M)^{-1}]$, and is denoted by ${\rm Ho}(\mathcal M)$.

\vskip5pt

A model structure on an abelian category is {\it an abelian model structure}, provided that
cofibrations are exactly monomorphisms with cofibrant cokernel, and that fibrations are exactly
epimorphisms with fibrant kernel. This is equivalent to the original definition ([H2, 2.1, 4.2]), see also [Bec, 1.1.3].
{\it An abelian model category} is a bicomplete abelian category equipped with an abelian model structure.

\subsection{\bf Hovey triples} \ Let $\mathcal A$ be an abelian category.
A triple \ $(\mathcal C, \mathcal F, \mathcal W)$ \ of classes of objects is {\it a Hovey triple} in $\mathcal A$ (see [H2]), if it satisfies the conditions:

\vskip5pt

(i) \ The class \  $\mathcal W$ is {\it thick}, i.e., $\mathcal W$ is closed under direct summands,
and if two out of three terms in a short exact sequence are in $\mathcal W$, then so is the third;

\vskip5pt

(ii) \ $(\mathcal C \cap \mathcal W, \ \mathcal F)$ and \ $(\mathcal C, \ \mathcal F \cap \mathcal W)$ \  are complete cotorsion pairs.

\begin{thm} \label{hoveycorrespondence} {\rm (Hovey correspondence) \ ([H2, Theorem 2.2];   also [BR, VIII 3.5, 3.6])} \ Let $\mathcal A$ be an abelian category.
Then there is a one-to-one correspondence between abelian model structures and Hovey triples in \ $\mathcal A$, given by
$$({\rm Cofib}(\mathcal{A}), \ {\rm Fib}(\mathcal{A}), \ {\rm Weq}(\mathcal{A}))\mapsto (\mathcal{C}, \ \mathcal{F}, \ \mathcal W)$$
where \ $\mathcal C  = \{\mbox{cofibrant objects}\}, \ \
\mathcal F  = \{\mbox{fibrant objects}\}, \ \
\mathcal W  = \{\mbox {trivial objects}\}$, with inverse $$(\mathcal{C}, \ \mathcal{F}, \ \mathcal W) \mapsto ({\rm Cofib}(\mathcal{A}), \ {\rm Fib}(\mathcal{A}), \ {\rm Weq}(\mathcal{A}))$$ where
\begin{align*} &{\rm Cofib}(\mathcal{A}) = \{\mbox{monomorphisms with cokernel in} \ \mathcal{C}\}, \ \ \
{\rm Fib}(\mathcal{A})  = \{\mbox{epimorphisms with kernel in} \ \mathcal{F} \}, \\
& {\rm Weq}(\mathcal{A})  = \{pi \ \mid \ i \ \mbox{is monic,} \ \Coker i\in \mathcal{C}\cap \mathcal W, \ p \ \mbox{is epic,} \ \Ker p\in \mathcal{F}\cap \mathcal W\}.\end{align*}
\end{thm}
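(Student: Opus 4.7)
The plan is to construct both directions of the correspondence and verify they are mutually inverse. The forward map sends an abelian model structure to the triple of cofibrant, fibrant, and trivial objects; the reverse map sends a Hovey triple to the classes of morphisms defined in the statement.

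For the forward direction, the abelian assumption forces trivial cofibrations to be exactly the monomorphisms with cokernel in $\mathcal{C}\cap\mathcal{W}$, and dually trivial fibrations are the epimorphisms with kernel in $\mathcal{F}\cap\mathcal{W}$. Applying the lifting axiom (CM3) to test diagrams built from short exact sequences translates directly to $\Ext^1_{\mathcal{A}}(\mathcal{C}\cap\mathcal{W},\mathcal{F})=0$ and $\Ext^1_{\mathcal{A}}(\mathcal{C},\mathcal{F}\cap\mathcal{W})=0$. Applying the factorization axiom (CM4) to the maps $0\to X$ and $X\to 0$ produces both approximation sequences, which simultaneously yield completeness and the orthogonality equalities $\mathcal{C}\cap\mathcal{W}={}^\perp\mathcal{F}$, $\mathcal{F}=(\mathcal{C}\cap\mathcal{W})^\perp$ (and the analogous pair). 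Thickness of $\mathcal{W}$ is extracted from the 2-out-of-3 property in (CM1) together with retract closure from (CM2), applied to short exact sequences where one factor is $0$-related by a weak equivalence.

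For the reverse direction, the definitions immediately ensure that $\Cofib(\mathcal{A})$ and $\Fib(\mathcal{A})$ are closed under retracts (via closure properties of $\mathcal{C}$ and $\mathcal{F}$) and have the structural form demanded by the abelian model condition. The lifting axiom (CM3) reduces to the $\Ext^1$-vanishing built into the two cotorsion pairs, via the standard translation between extensions and lifting diagrams. The factorization axiom (CM4) is precisely the completeness of the two cotorsion pairs: one factorization from $(\mathcal{C},\mathcal{F}\cap\mathcal{W})$ and the other from $(\mathcal{C}\cap\mathcal{W},\mathcal{F})$.

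The main obstacle is (CM1), the 2-out-of-3 property for $\Weq(\mathcal{A})$, because the definition $\Weq(\mathcal{A})=\{pi\mid i\text{ monic with cokernel in }\mathcal{C}\cap\mathcal{W},\ p\text{ epic with kernel in }\mathcal{F}\cap\mathcal{W}\}$ is not transparently closed under composition. The key intermediate lemma I would prove is the following characterization: a morphism $f$ belongs to $\Weq(\mathcal{A})$ if and only if in any factorization produced by (CM4) the intermediate object lies in the correct thick class, equivalently, the trivial objects of the model structure coincide exactly with $\mathcal{W}$. The proof of this lemma proceeds by forming pullbacks and pushouts along the given factorizations and exploiting thickness of $\mathcal{W}$ on the resulting short exact sequences of kernels and cokernels; the required $\Ext^1$-vanishing to splice things together comes from the two cotorsion pairs. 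Once this characterization is in hand, (CM1) becomes a formal 2-out-of-3 argument for membership in a thick class, and the fact that the two assignments are mutually inverse drops out automatically, since cofibrant (respectively fibrant, trivial) objects in the reconstructed model structure are visibly the elements of $\mathcal{C}$ (respectively $\mathcal{F}$, $\mathcal{W}$). I would follow the template of [H2, Theorem 2.2] and [BR, VIII 3.5, 3.6], leveraging the abelian setting where pushouts preserve monomorphisms and pullbacks preserve epimorphisms.
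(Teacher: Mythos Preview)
The paper does not supply its own proof of this theorem: it is stated in the Preliminaries section as a known result, attributed to Hovey [H2, Theorem 2.2] and Beligiannis--Reiten [BR, VIII 3.5, 3.6], with only the remark that bicompleteness is not actually used in Hovey's argument. Your proposal is a faithful sketch of the standard proof from those references, and the approach you outline (translating lifting to $\Ext^1$-vanishing, factorization to completeness, and handling the 2-out-of-3 axiom via thickness of $\mathcal W$ and pushout/pullback arguments) is exactly the template of [H2]; there is nothing to compare against in the present paper.
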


We stress that, in Theorem \ref{hoveycorrespondence}, \ $\mathcal A$ is not necessarily to be bicomplete: although this is assumed in [H2, Theorem 2.2],
however, the proof given there does not use the assumption ``bicomplete".
(In fact, one can also read this from lines of [Gil2] and [Gil3].)

\vskip5pt

A cofibrantly generated model category has been introduced in [H1, 2.1.17].
Let $\mathcal A$ be a Grothendieck category with enough projective objects.
A Hovey triple $(\mathcal C, \mathcal F, \mathcal W)$  in $\mathcal A$
will be called {\it cofibrantly generated}, if cotorsion pairs $(\mathcal C\cap \mathcal W, \mathcal F)$ and $(\mathcal C, \mathcal F\cap\mathcal W)$ are cogenerated by sets.
Note that a Grothendieck category is always bicomplete (see e.g. [KS, 8.3.27]).

\begin{prop} \label{cofibrantly} {\rm ([Bec, 1.2.7; 1.2.2])} \ Let $\mathcal A$ be a Grothendieck category with enough projective objects.
Then a Hovey triple $(\mathcal C, \mathcal F, \mathcal W)$  in $\mathcal A$
is cofibrantly generated if and only if the corresponding abelian model category $\mathcal A$ is cofibrantly generated. \end{prop}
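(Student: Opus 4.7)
My plan is to translate both conditions into cotorsion-theoretic terms via Hovey's correspondence (Theorem \ref{hoveycorrespondence}). In the abelian model structure attached to $(\mathcal C,\mathcal F,\mathcal W)$, a cofibration (resp.\ trivial cofibration) is a monomorphism with cokernel in $\mathcal C$ (resp.\ $\mathcal C\cap\mathcal W$), and dually a (trivial) fibration is an epimorphism with kernel in $\mathcal F$ (resp.\ $\mathcal F\cap\mathcal W$). So cofibrant generation amounts to finding sets $I$ and $J$ of morphisms whose saturation under the small-object argument equals the cofibrations and the trivial cofibrations, respectively; the task reduces to bijectively matching such sets with sets cogenerating the cotorsion pairs $(\mathcal C,\mathcal F\cap\mathcal W)$ and $(\mathcal C\cap\mathcal W,\mathcal F)$.

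For $(\Leftarrow)$, suppose $\mathcal F\cap\mathcal W=\mathcal S_1^{\perp}$ and $\mathcal F=\mathcal S_2^{\perp}$. Using enough projectives, for each $S\in\mathcal S_1$ I choose an exact sequence $0\to K_S\to P_S\to S\to 0$ with $P_S$ projective, and set $I_0=\{K_S\hookrightarrow P_S\}$. Fixing a generator $G$ of $\mathcal A$ and a representing set $\{L_\alpha\hookrightarrow G\}$ of its subobjects, put $I=I_0\cup\{L_\alpha\hookrightarrow G\}$. A long exact $\Ext$-argument (using projectivity of $P_S$) identifies the RLP of $X\to 0$ against $I_0$ with the vanishing $\Ext^1(\mathcal S_1,X)=0$, while RLP against the subobject inclusions, combined with the generator property of $G$, forces epimorphism-ness of a general $f:X\to Y$; hence RLP against $I$ is exactly the class of trivial fibrations. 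Define $J$ analogously from $\mathcal S_2$. Smallness of the domains of $I$ and $J$ is automatic in a Grothendieck category.

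For $(\Rightarrow)$, suppose the model structure is cofibrantly generated by sets $I$ and $J$. Each $i\in I$ is itself a cofibration, hence a monomorphism $A_i\hookrightarrow B_i$ with cokernel $C_i\in\mathcal C$; set $\mathcal S_1=\{C_i:i\in I\}\subseteq\mathcal C$. On one hand, $\mathcal S_1\subseteq\mathcal C$ gives $\mathcal F\cap\mathcal W\subseteq\mathcal S_1^{\perp}$ by the cotorsion pair property. On the other hand, if $X\in\mathcal S_1^{\perp}$, then $\Ext^1(C_i,X)=0$, and the long exact $\Ext$-sequence of $0\to A_i\to B_i\to C_i\to 0$ yields $\Hom(B_i,X)\twoheadrightarrow\Hom(A_i,X)$ for every $i$, i.e.,\ $X\to 0$ has RLP against $I$, i.e.,\ $X\in\mathcal F\cap\mathcal W$. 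Thus $(\mathcal C,\mathcal F\cap\mathcal W)$ is cogenerated by $\mathcal S_1$, and similarly $(\mathcal C\cap\mathcal W,\mathcal F)$ by $\mathcal S_2=\{\Coker(j):j\in J\}$. The main obstacle is the forward direction: the set $I_0$ built from $\mathcal S_1$ controls only the Ext-vanishing (kernel) condition, so one must adjoin subobject inclusions of a generator to additionally enforce epimorphism-ness—this construction, along with the attendant smallness verification, is carried out carefully in Beck's thesis [Bec, 1.2.2, 1.2.7].
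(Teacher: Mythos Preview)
The paper does not supply its own proof of this proposition; it is stated with a citation to Becker [Bec, 1.2.7; 1.2.2] and used as a black box. Your sketch is essentially the standard argument found in that reference (and, in germ, in Hovey's original paper): build generating sets $I,J$ from projective presentations of the cogenerating sets $\mathcal S_1,\mathcal S_2$ together with subobject inclusions of a generator, and conversely recover cogenerating sets as the cokernels of the generating cofibrations. The mathematical content of both directions is sound.

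One correction: your direction labels are swapped. In the paper's convention, ``the Hovey triple is cofibrantly generated'' \emph{means} that the two cotorsion pairs are cogenerated by sets, so the implication ``$\mathcal F\cap\mathcal W=\mathcal S_1^{\perp}$, $\mathcal F=\mathcal S_2^{\perp}$ $\Rightarrow$ model category cofibrantly generated'' is the $(\Rightarrow)$ direction, not $(\Leftarrow)$, and vice versa. Also, the reference [Bec] is Becker, not Beck.
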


\subsection{Hereditary Hovey triples} \ A Hovey triple  \ $(\mathcal C, \mathcal F, \mathcal W)$ \  is {\it hereditary}, if both
\ $(\mathcal C \cap \mathcal W, \mathcal F)$ and \ $(\mathcal C, \mathcal F \cap \mathcal W)$ \ are hereditary cotorsion pairs.
Hereditary Hovey triples enjoy the following pleasant property.

\vskip5pt

\begin{thm} \label{Ho} {\rm ([Bec, 1.1.14]; [BR, VIII 4.2]; [Gil4, 4.3])} \ Let  \ $(\mathcal C, \mathcal F, \mathcal W)$  be a  hereditary Hovey triple in abelian category \ $\mathcal A$.  Then \ $\mathcal C \cap \mathcal F$ is a Frobenius category $($with the canonical exact structure$)$, with $\mathcal C \cap \mathcal F\cap \mathcal W$ as
the class of projective-injective objects. The composition $\mathcal C \cap \mathcal F \hookrightarrow \mathcal A \longrightarrow {\rm Ho}(\mathcal A)$ induces a triangle equivalence \ ${\rm Ho}(\mathcal A)\cong (\mathcal C \cap \mathcal F)/(\mathcal C \cap \mathcal F\cap \mathcal W),$
where $(\mathcal C \cap \mathcal F)/(\mathcal C \cap \mathcal F\cap \mathcal W)$ is the stable category of \ $\mathcal C \cap \mathcal F$  modulo \ $\mathcal C \cap \mathcal F\cap \mathcal W$.
\end{thm}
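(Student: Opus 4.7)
The plan is to proceed in three stages: (a) equip $\mathcal C \cap \mathcal F$ with the canonical exact structure, (b) identify its projective-injective objects with $\mathcal C \cap \mathcal F \cap \mathcal W$, and (c) realize ${\rm Ho}(\mathcal A)$ as the resulting stable category. Step (a) is formal: as classes of a cotorsion pair, both $\mathcal C$ and $\mathcal F$ are closed under extensions and direct summands, so $\mathcal C \cap \mathcal F$ inherits an exact structure from $\mathcal A$ in which $\Ext^1_{\mathcal C \cap \mathcal F}(X, Y) = \Ext^1_\mathcal A(X, Y)$ for $X, Y \in \mathcal C \cap \mathcal F$.

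For step (b), I first produce enough relative projectives and injectives. Given $X \in \mathcal C \cap \mathcal F$, completeness of the hereditary cotorsion pair $(\mathcal C, \mathcal F \cap \mathcal W)$ yields an exact sequence $0 \to F \to C \to X \to 0$ with $C \in \mathcal C$ and $F \in \mathcal F \cap \mathcal W$; extension-closure of $\mathcal F$ forces $C \in \mathcal F$, and heredity (kernels of epis in $\mathcal C$) forces $F \in \mathcal C$, so this is a conflation in $\mathcal C \cap \mathcal F$ whose kernel lies in $\mathcal C \cap \mathcal F \cap \mathcal W$. Dually, $(\mathcal C \cap \mathcal W, \mathcal F)$ supplies $0 \to X \to F' \to C' \to 0$ with $F' \in \mathcal C \cap \mathcal F$ and $C' \in \mathcal C \cap \mathcal F \cap \mathcal W$. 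Every $W \in \mathcal C \cap \mathcal F \cap \mathcal W$ is both projective and injective in this exact category, since the two defining cotorsion pairs yield $\Ext^1_\mathcal A(\mathcal C, W) = 0$ and $\Ext^1_\mathcal A(W, \mathcal F) = 0$. The converse is the subtler step. Suppose $X \in \mathcal C \cap \mathcal F$ is projective in the exact structure, and let $Y \in \mathcal F$. A $(\mathcal C, \mathcal F \cap \mathcal W)$-precover of $Y$ gives $0 \to W \to C'' \to Y \to 0$ with $W \in \mathcal F \cap \mathcal W$ and $C'' \in \mathcal C$; as before $C'' \in \mathcal C \cap \mathcal F$. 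Heredity forces $\Ext^i_\mathcal A(X, W) = 0$ for $i \ge 1$, and projectivity of $X$ in the exact category forces $\Ext^1_\mathcal A(X, C'') = 0$, so the long exact sequence gives $\Ext^1_\mathcal A(X, Y) = 0$. Hence $X \in {}^\perp \mathcal F = \mathcal C \cap \mathcal W$ and therefore $X \in \mathcal C \cap \mathcal F \cap \mathcal W$. The dual argument treats injective objects, completing the Frobenius identification.

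For step (c), I appeal to the general model-category principle that ${\rm Ho}(\mathcal A)$ is the category of bifibrant objects modulo the homotopy relation. In an abelian model structure the bifibrant objects are exactly $\mathcal C \cap \mathcal F$, and the standard cylinder/path object constructions built from the two complete cotorsion pairs show that two parallel maps between bifibrant objects are left (equivalently right) homotopic precisely when their difference factors through an object of $\mathcal C \cap \mathcal F \cap \mathcal W$; this identifies ${\rm Ho}(\mathcal A)$ with $(\mathcal C \cap \mathcal F)/(\mathcal C \cap \mathcal F \cap \mathcal W)$ as an ordinary category, and its triangulated structure matches the standard one on the stable category of a Frobenius category. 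I expect the hardest point to be the converse direction in (b): a naive splitting argument applied to the special precover $0 \to F \to C \to X \to 0$ only gives $C \cong F \oplus X$ and cannot separate $X \in \mathcal W$ from $C \in \mathcal W$ by thickness alone, so one genuinely needs the $\Ext$-vanishing argument built on heredity to force $X \in {}^\perp \mathcal F$.
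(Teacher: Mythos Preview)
The paper does not prove this theorem; it is quoted from the cited references [Bec], [BR], [Gil4] as a background result in the preliminaries. So there is no in-paper argument to compare against, and your proposal stands on its own.

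Your overall strategy is the standard one and the hard step (the converse in (b), showing that a relative projective of $\mathcal C\cap\mathcal F$ must lie in $\mathcal W$) is handled correctly: the long exact sequence coming from a $(\mathcal C,\mathcal F\cap\mathcal W)$-precover of an arbitrary $Y\in\mathcal F$, together with heredity, forces $\Ext^1_\mathcal A(X,Y)=0$, hence $X\in{}^\perp\mathcal F=\mathcal C\cap\mathcal W$. Step (c) is also fine as a sketch.

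There is, however, a genuine slip in the ``enough projectives/injectives'' part of (b). The two sequences you write down do \emph{not} exhibit enough projectives or injectives: your first sequence $0\to F\to C\to X\to 0$ has the object of $\mathcal C\cap\mathcal F\cap\mathcal W$ sitting in the \emph{kernel} position, and your second has it in the \emph{cokernel} position, whereas what is needed is a deflation $P\twoheadrightarrow X$ (resp.\ inflation $X\hookrightarrow I$) with $P$ (resp.\ $I$) in $\mathcal C\cap\mathcal F\cap\mathcal W$. The fix is simply to swap which cotorsion pair you use in each case: take a $(\mathcal C\cap\mathcal W,\mathcal F)$-precover $0\to F'\to C'\to X\to 0$ (so $C'\in\mathcal C\cap\mathcal W$, $F'\in\mathcal F$); extension-closure of $\mathcal F$ gives $C'\in\mathcal C\cap\mathcal F\cap\mathcal W$ and heredity gives $F'\in\mathcal C\cap\mathcal F$, so $C'\twoheadrightarrow X$ is the desired projective deflation. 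Dually, a $(\mathcal C,\mathcal F\cap\mathcal W)$-preenvelope $0\to X\to F\to C\to 0$ yields the injective inflation. With this correction your argument goes through.
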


Note that the definition of  ${\rm Ho}(\mathcal A)$ does not need that $\mathcal A$ is bicomplete.
By this result, hereditary Hovey triples $(\mathcal{C}, \ \mathcal{F}, \ \mathcal W)$ with $(\mathcal C \cap \mathcal F) \nsubseteqq \mathcal W$ are of special interest.

\vskip5pt

 Two cotorsion pairs \ $(\Theta, \ \Theta^\perp)$ and \ $(^\perp\Upsilon, \ \Upsilon)$ are {\it compatible}
(see [Gil3]), if \ $\Ext_\mathcal A^1(\Theta, \ \Upsilon)$ $=0$ and  \
$\Theta\cap \Theta^\perp ={}^\perp\Upsilon\cap \Upsilon$. The compatibility depends on the order of two cotorsion pairs. This terminology of
compatible is taken from [HJ].

\vskip5pt

J. Gillespie  gives the following approach to construct all the hereditary Hovey triples.

\vskip5pt

\begin{thm} \label{GHtriple} \ {\rm (Gillespie Theorem) \ ([Gil3, 1.1])} \  Let $\mathcal A$ be an abelian category, and \ $(\Theta, \ \Theta^\perp)$ and \ $(^\perp\Upsilon, \ \Upsilon)$ complete hereditary cotorsion pairs in $\mathcal A$.
If \ $(\Theta, \ \Theta^\perp)$ and $({}^\perp\Upsilon, \ \Upsilon)$ are compatible, then
\ $(^\perp\Upsilon, \ \Theta^\perp, \ \mathcal W)$ is a hereditary Hovey triple, where
\begin{align*} \mathcal W & =
    \{W\in \mathcal A \ \mid \ \exists \ \text{ an exact sequence} \ 0\rightarrow P\rightarrow F\rightarrow W\rightarrow 0 \ \mbox{with} \ F\in \Theta, \ P\in \Upsilon\} \\ & =
    \{W\in \mathcal A \ \mid \ \exists \ \text{an exact sequence} \ 0\rightarrow W\rightarrow P'\rightarrow F'\rightarrow 0 \ \mbox{with} \ P'\in \Upsilon, \ F'\in \Theta\}.
\end{align*}
Conversely, any hereditary Hovey triple in $\mathcal A$ arises in this way. \end{thm}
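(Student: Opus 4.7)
The plan is to split the argument into two directions. For the forward direction, I would first verify that the two displayed descriptions of $\mathcal W$ coincide. Given $W$ with a resolution $0 \to P \to F \to W \to 0$ where $F \in \Theta$ and $P \in \Upsilon$, I would apply the completeness of $(^\perp\Upsilon, \Upsilon)$ to embed $F$ into a short exact sequence $0 \to F \to P' \to Q' \to 0$ with $P' \in \Upsilon$ and $Q' \in {}^\perp\Upsilon$, then form the pushout along $P \to F$. The resulting $0 \to W \to P' \to F' \to 0$ has $F' \in \Theta$: indeed $F'$ sits in $0 \to P \to F' \to Q' \to 0$, and since $\Theta \cap \Theta^\perp = {}^\perp\Upsilon \cap \Upsilon$ implies $P \in \Theta$ (as $P \in \Upsilon \cap \Theta^\perp$ will follow from $\mathrm{Ext}^1(\Theta,\Upsilon)=0$, hence $P \in {}^\perp\Upsilon \cap \Upsilon = \Theta \cap \Theta^\perp \subseteq \Theta$), $Q' \in {}^\perp\Upsilon$ combined with heredity of $(\Theta,\Theta^\perp)$ forces $Q' \in \Theta$, whence $F' \in \Theta$. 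The reverse implication is symmetric using completeness of $(\Theta, \Theta^\perp)$.

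Next I would identify the intersections. For $^\perp\Upsilon \cap \mathcal W = \Theta$: if $W \in {}^\perp\Upsilon \cap \mathcal W$ with $0 \to P \to F \to W \to 0$, $P \in \Upsilon$, $F \in \Theta$, then $\mathrm{Ext}^1_{\mathcal A}(W,P)=0$ splits the sequence, so $W$ is a direct summand of $F \in \Theta$. Conversely $\Theta \subseteq {}^\perp\Upsilon$ from $\mathrm{Ext}^1(\Theta, \Upsilon) = 0$, and $\Theta \subseteq \mathcal W$ via $0 \to 0 \to F \to F \to 0$ (since $0 \in \Upsilon$). The dual argument gives $\Theta^\perp \cap \mathcal W = \Upsilon$. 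The completeness of the two Hovey cotorsion pairs $(\Theta, \Theta^\perp)$ and $(^\perp\Upsilon, \Upsilon)$ is then just the given completeness, and heredity is inherited.

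The main obstacle is the thickness of $\mathcal W$. Closure under summands is clear from either description. For the 2-out-of-3 property, given a short exact sequence $0 \to W' \to W \to W'' \to 0$ with two of the terms in $\mathcal W$, I would combine resolutions/coresolutions of those two terms (say $0 \to P_i \to F_i \to W_i \to 0$ with $F_i \in \Theta$, $P_i \in \Upsilon$) via a horseshoe-style diagram, then use the heredity of both cotorsion pairs — equivalently $\mathrm{Ext}^{\geq 2}_{\mathcal A}(\Theta, \Upsilon) = 0$, and closure of $\Theta$ under kernels of epis and of $\Upsilon$ under cokernels of monos — to extract the required $\Theta$-by-$\Upsilon$ resolution of the third term. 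Each of the three configurations (third term on the left, middle, right) is handled by a slightly different diagram chase; the trickiest is when the middle term is being shown to lie in $\mathcal W$, where one forms the pushout/pullback and reads off the resolution.

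For the converse, given a hereditary Hovey triple $(\mathcal C, \mathcal F, \mathcal W)$, I would set $\Theta := \mathcal C \cap \mathcal W$ and $\Upsilon := \mathcal F \cap \mathcal W$, so that by the Hovey axioms $(\Theta, \mathcal F)$ and $(\mathcal C, \Upsilon)$ are the prescribed complete hereditary cotorsion pairs with $\Theta^\perp = \mathcal F$ and ${}^\perp\Upsilon = \mathcal C$. Compatibility is immediate: $\mathrm{Ext}^1(\Theta,\Upsilon) = 0$ since $\Theta \subseteq \mathcal C$ and $\Upsilon \subseteq \mathcal F$, and $\Theta \cap \Theta^\perp = \mathcal C \cap \mathcal W \cap \mathcal F = {}^\perp\Upsilon \cap \Upsilon$. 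To match the description of $\mathcal W$, for any $W \in \mathcal W$ use the completeness of $(\mathcal C, \Upsilon)$ to obtain $0 \to P \to C \to W \to 0$ with $C \in \mathcal C$ and $P \in \Upsilon \subseteq \mathcal W$; thickness of $\mathcal W$ then gives $C \in \mathcal C \cap \mathcal W = \Theta$, so this is a resolution of the required form, and the coresolution is dual via the other cotorsion pair.
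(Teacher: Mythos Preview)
The paper does not give its own proof of this statement; Theorem~\ref{GHtriple} is quoted verbatim from Gillespie~[Gil3, 1.1] as a preliminary result, with no argument supplied. So there is no ``paper's proof'' to compare against --- your proposal is effectively a reconstruction of Gillespie's original argument.

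Your overall architecture (equality of the two descriptions of $\mathcal W$, the intersection identities, thickness, and the easy converse) is the right one, and your treatments of the intersections $^\perp\Upsilon\cap\mathcal W=\Theta$, $\Theta^\perp\cap\mathcal W=\Upsilon$ and of the converse direction are correct. However, the step where you prove that the two descriptions of $\mathcal W$ coincide contains a genuine error. You write that from $P\in\Upsilon$ and $\mathrm{Ext}^1(\Theta,\Upsilon)=0$ one gets $P\in\Upsilon\cap\Theta^\perp$, ``hence $P\in{}^\perp\Upsilon\cap\Upsilon=\Theta\cap\Theta^\perp\subseteq\Theta$''. But $\Upsilon\cap\Theta^\perp=\Upsilon$ (since $\Upsilon\subseteq\Theta^\perp$), which is \emph{not} $^\perp\Upsilon\cap\Upsilon$ in general; an arbitrary $P\in\Upsilon$ has no reason to lie in $\Theta$. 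Consequently your extension argument ``$F'$ sits in $0\to P\to F'\to Q'\to 0$ with $P,Q'\in\Theta$, hence $F'\in\Theta$'' collapses. (The same slip affects the claim that heredity of $(\Theta,\Theta^\perp)$ forces $Q'\in{}^\perp\Upsilon$ to lie in $\Theta$: the inclusion goes the other way, $\Theta\subseteq{}^\perp\Upsilon$.)

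A correct route is: from $0\to P\to F\to W\to 0$ and the coresolution $0\to F\to P'\to Q'\to 0$ (with $P'\in\Upsilon$, $Q'\in{}^\perp\Upsilon$), first observe $F\in\Theta\subseteq{}^\perp\Upsilon$ and $Q'\in{}^\perp\Upsilon$ force $P'\in{}^\perp\Upsilon\cap\Upsilon=\Theta\cap\Theta^\perp$. One then needs $Q'\in\Theta$, and this does \emph{not} drop out of an extension argument; it requires the hereditary hypothesis in the form $\mathrm{Ext}^2({}^\perp\Upsilon,\Upsilon)=0$, used to realize the Yoneda $2$-extension $0\to P\to F\to P'\to Q'\to 0$ as a genuine short exact sequence $0\to F\to E\to Q'\to 0$ with $E\in\Upsilon$, and then to push the analysis through. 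Equivalently (and this is closer to Gillespie's presentation), one first proves thickness of each description separately via horseshoe diagrams and $\mathrm{Ext}^{\ge 2}(\Theta,\Upsilon)=0$, establishes the intersection identities for each description, and then invokes the uniqueness of the thick class in a Hovey triple to conclude the two descriptions agree. Your thickness sketch is on the right track for this, but the equality $\mathcal W_1=\mathcal W_2$ is not the cheap preliminary you present it as --- it is essentially the content of the theorem.
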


For later applications, we will call the hereditary Hovey triple \ $(^\perp\Upsilon, \ \Theta^\perp, \ \mathcal W)$ in Theorem \ref{GHtriple}
{\it the Gillespie-Hovey triple},
induced by compatible complete hereditary cotorsion pairs \ $(\Theta, \ \Theta^\perp)$ and \ $(^\perp\Upsilon, \ \Upsilon)$.  Thus,
the Gillespie-Hovey triples are exactly hereditary Hovey triples.

\subsection{Gorenstein rings} A noetherian ring $R$ is {\it a Iwanaga-Gorenstein ring}, or {\it a Gorenstein ring}, if \ ${\rm inj.dim} _RR < \infty$ and \ ${\rm inj.dim} R_R < \infty$.
In this case, it is well-known that

\vskip5pt

$\bullet$ \ ${\rm inj.dim} _RR = {\rm inj.dim} R_R$ \ and  \ $_R \mathcal P^{<\infty} = \ _R \mathcal I^{<\infty}$;

\vskip5pt

$\bullet$ \ ([EJ, p. 211]) \ If \ ${\rm inj.dim} _RR \le n$, then \ $_R \mathcal P^{< \infty} = \ _R \mathcal P^{\le n} = \ _R \mathcal I^{\le n} = \ _R \mathcal I^{<\infty}$, where \ $_R \mathcal P^{\le n}$  \ ($_R \mathcal I^{\le n}$, respectively) is the full subcategory of $R$-Mod consisting of modules  $X$ with ${\rm proj.dim} X \le n$ (${\rm inj.dim} X \le n$, respectively).

\vskip5pt

$\bullet$ \ ([EJ, 11.5.3]) \  ${\rm GP}(R) = \ ^{\bot_{\ge 1}} \ _R \mathcal P = \ ^{\bot_{\ge 1}} \ _R \mathcal P^{< \infty}$, and \ ${\rm GI}(R) = {}_R \mathcal I^{\bot_{\ge 1}} ={}(_R \mathcal I^{< \infty})^{\bot_{\ge 1}}$;

\vskip5pt

$\bullet$ \ ([H2, 8.6]) \  $({\rm GP}(R), \  R\mbox{-Mod}, \ _R\mathcal P^{<\infty})$  and $(R\mbox{-Mod},  \ {\rm GI}(R), \ _R\mathcal P^{<\infty})$ are hereditary Hovey triples in $R$-Mod. In particular,
\ $({\rm GP}(R), \  _R\mathcal P^{<\infty})$  and $({\rm GI}(R), \ _R\mathcal P^{<\infty})$ are complete hereditary  cotorsion pairs.

\section{\bf (Hereditary) cotorsion pairs in Morita rings}

\subsection{Three classes of modules over  a Morita ring} \ Let $\Lambda = \left(\begin{smallmatrix} A & N \\ M & B\end{smallmatrix}\right)$ be a Morita ring. For a class $\mathcal X$ of $A$-modules and a class $\mathcal Y$ of $B$-modules,  define
\begin{align*}\left(\begin{smallmatrix}\mathcal X\\ \mathcal Y\end{smallmatrix}\right): & =  \ \{\left(\begin{smallmatrix} X \\ Y\end{smallmatrix}\right)_{f, g}\in \Lambda\mbox{-}{\rm Mod} \ \mid \   X\in \mathcal X, \ \ Y\in\mathcal Y\};\\[7pt]
\Delta(\mathcal X, \ \mathcal Y): & = \ \{\left(\begin{smallmatrix} L_1 \\ L_2\end{smallmatrix}\right)_{f,g}\in \Lambda\mbox{-}{\rm Mod} \ \mid \ f: M\otimes_AL_1 \longrightarrow L_2  \ \ \text{and}
\ \ g: N\otimes_B L_2 \longrightarrow L_1
\\ & \ \ \ \ \ \ \  \ \text{are monomorphisms}, \ \ \Coker f\in \mathcal Y, \ \ \Coker g\in \mathcal X \}; \\[7pt] \nabla(\mathcal X,\ \mathcal Y): & = \ \{\left(\begin{smallmatrix} L_1 \\ L_2\end{smallmatrix}\right)_{f,g}\in \Lambda\mbox{-}{\rm Mod} \ \mid \ \widetilde{f}: L_1\longrightarrow \Hom_B(M, L_2) \ \text{and} \ \widetilde{g}:  L_2\longrightarrow \Hom_A(N, L_1) \\ &  \ \ \ \ \ \ \  \ \text{are epimorphisms}, \ \Ker\widetilde{f}\in \mathcal X, \ \Ker \widetilde{g}\in \mathcal Y\}.
\end{align*}
In particular, we put  \begin{align*}{\rm Mon}(\Lambda): & = \Delta(A\mbox{\rm-Mod},  \ B\mbox{\rm-Mod}) = \ \{\left(\begin{smallmatrix} L_1 \\ L_2\end{smallmatrix}\right)_{f,g}\in \Lambda\mbox{-}{\rm Mod} \ \mid \ f \ \mbox{and} \ g \ \text{are monomorphisms}\};
\\[7pt] {\rm Epi}(\Lambda): & = \nabla(A\mbox{\rm-Mod},  \ B\mbox{\rm-Mod}) = \ \{\left(\begin{smallmatrix} L_1 \\ L_2\end{smallmatrix}\right)_{f,g}\in \Lambda\mbox{-}{\rm Mod} \ \mid \ \widetilde{f} \ \mbox{and} \ \widetilde{g} \ \text{are epimorphisms}\}.
\end{align*}
They will be called {\it the monomorphism category} and {\it the epimorphism category} of $\Lambda$, respectively.

\vskip5pt

It is clear that if $M\otimes_A N = 0 = N\otimes_BM$, then
$$\Delta(_A\mathcal P,  \ _B\mathcal P) = \ _\Lambda\mathcal P \ \ \ \ \mbox{and} \ \ \ \ \nabla(_A\mathcal I, \ _B\mathcal I) =  \ _\Lambda\mathcal I.$$

\subsection{Constructions on (hereditary) cotorsion pairs}

\begin{thm}\label{ctp1} \ Let \ $\Lambda = \left(\begin{smallmatrix} A & N \\ M & B\end{smallmatrix}\right)$  be a Morita ring  with $\phi = 0=\psi$. Let \ $(\mathcal U, \ \mathcal X)$ and \ $(\mathcal V, \ \mathcal Y)$  be cotorsion pairs in $A\mbox{-}{\rm Mod}$ and $B$\mbox{\rm-Mod}, respectively.

\vskip5pt

$(1)$ \ If \ $\Tor^A_1(M, \ \mathcal U)=0 = \Tor^B_1(N, \ \mathcal V)$,
then \ $({}^\perp\left(\begin{smallmatrix}\mathcal X\\ \mathcal Y\end{smallmatrix}\right), \ \left(\begin{smallmatrix}\mathcal X\\ \mathcal Y\end{smallmatrix}\right))$ is a cotorsion pair in $\Lambda${\rm-Mod}$;$
and moreover, it is hereditary if and only if so are \ $(\mathcal U, \ \mathcal X)$  and  $(\mathcal V, \ \mathcal Y)$.

\vskip10pt

$(2)$ \ If \ $\Ext_A^1(N, \ \mathcal X) =0 = \Ext_B^1(M, \ \mathcal Y)$,
then \ $(\left(\begin{smallmatrix}\mathcal U\\ \mathcal V\end{smallmatrix}\right), \ \left(\begin{smallmatrix}\mathcal U\\ \mathcal V\end{smallmatrix}\right)^\perp)$ is a cotorsion pair in $\Lambda${\rm-Mod}$;$ and moreover, it is hereditary if and only if so are \ $(\mathcal U, \ \mathcal X)$  and  $(\mathcal V, \ \mathcal Y)$.
\end{thm}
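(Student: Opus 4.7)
My strategy exploits the four adjunctions $(\TT_A, \UU_A)$, $(\UU_A, \HH_A)$, $(\TT_B, \UU_B)$, $(\UU_B, \HH_B)$ from Theorem~\ref{recollments}, together with the fact that exactness in $\Lambda$-Mod is componentwise (Theorem~\ref{modovermorita}). The crucial technical input, to be established first in each part, is a pair of $\Ext^{1}$-adjunction isomorphisms.

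For (1) I would first prove that, for any $U \in \mathcal{U}$, $V \in \mathcal{V}$ and any $L = \binom{L_{1}}{L_{2}} \in \Lambda\mbox{-Mod}$,
\[
\Ext^{1}_{\Lambda}(\TT_A U, L) \;\cong\; \Ext^{1}_{A}(U, L_{1}), \qquad \Ext^{1}_{\Lambda}(\TT_B V, L) \;\cong\; \Ext^{1}_{B}(V, L_{2}).
\]
Take $0 \to U' \to P \to U \to 0$ in $A$-Mod with $P$ projective, and apply $\TT_A$; checking componentwise, the hypothesis $\Tor^{A}_{1}(M,\mathcal U) = 0$ is exactly what forces $0 \to \TT_A U' \to \TT_A P \to \TT_A U \to 0$ to be exact in $\Lambda$-Mod. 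Since $\TT_A P$ is projective in $\Lambda$-Mod, the long exact $\Hom_{\Lambda}(-, L)$-sequence together with the $\Hom$-adjunction $\Hom_{\Lambda}(\TT_A -, -) \cong \Hom_{A}(-, \UU_A -)$ yields the first iso; the second is symmetric, using $\Tor^{B}_{1}(N,\mathcal V) = 0$. From this lemma the cotorsion pair falls out: the inclusion $\binom{\mathcal X}{\mathcal Y} \subseteq ({}^{\perp}\binom{\mathcal X}{\mathcal Y})^{\perp}$ is trivial, while conversely the lemma gives $\TT_A \mathcal U \cup \TT_B \mathcal V \subseteq {}^{\perp}\binom{\mathcal X}{\mathcal Y}$, so testing any $L \in ({}^{\perp}\binom{\mathcal X}{\mathcal Y})^{\perp}$ against $\TT_A U$ and $\TT_B V$ forces $L_{1} \in \mathcal{U}^{\perp} = \mathcal X$ and $L_{2} \in \mathcal{V}^{\perp} = \mathcal Y$. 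For the heredity dichotomy I would invoke Proposition~\ref{heredity}(iii): a cotorsion pair is hereditary iff its right class is closed under cokernels of monomorphisms. Componentwise exactness shows $\binom{\mathcal X}{\mathcal Y}$ has this closure iff both $\mathcal X$ and $\mathcal Y$ do; the forward direction uses the embeddings $X \mapsto \binom{X}{0}$ and $Y \mapsto \binom{0}{Y}$, which are genuine $\Lambda$-modules since $\phi = 0 = \psi$ and which lie in $\binom{\mathcal X}{\mathcal Y}$ as $0$ belongs to every cotorsion-pair class.

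Part (2) runs dually, with $\HH$ replacing $\TT$. I would establish
\[
\Ext^{1}_{\Lambda}(K, \HH_A X) \;\cong\; \Ext^{1}_{A}(K_{1}, X), \qquad \Ext^{1}_{\Lambda}(K, \HH_B Y) \;\cong\; \Ext^{1}_{B}(K_{2}, Y)
\]
for $X \in \mathcal X$ and $Y \in \mathcal Y$, by embedding $X \hookrightarrow I$ into an $A$-injective $I$ with cokernel $C$ and verifying that $0 \to \HH_A X \to \HH_A I \to \HH_A C \to 0$ is exact in $\Lambda$-Mod; componentwise the second row reads $0 \to \Hom_{A}(N, X) \to \Hom_{A}(N, I) \to \Hom_{A}(N, C) \to 0$, and its exactness is precisely the content of $\Ext^{1}_{A}(N, X) = 0$. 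Since $\HH_A I$ is injective in $\Lambda$-Mod, the long exact sequence together with the adjunction $\Hom_{\Lambda}(-, \HH_A -) \cong \Hom_{A}(\UU_A -, -)$ delivers the iso. The cotorsion-pair identity and the heredity dichotomy then follow exactly as in (1), with $\HH_A \mathcal X$ and $\HH_B \mathcal Y$ detecting membership of $K_1, K_2$ in $\mathcal U$ and $\mathcal V$.

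The main technical obstacle is the clean derivation of these two $\Ext^{1}$-adjunction isomorphisms, whose validity hinges precisely on the one-sided vanishing hypotheses ($\Tor^{A}_{1}(M,\mathcal U) = 0$, respectively $\Ext^{1}_{A}(N,\mathcal X) = 0$) needed to transport a single short resolution step across the adjunction. Once these are in hand, both the cotorsion-pair property and the two heredity equivalences reduce to formal manipulation together with the componentwise description of exact sequences over $\Lambda$ from Theorem~\ref{modovermorita}.
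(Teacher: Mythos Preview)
Your proposal is correct and follows essentially the same route as the paper. The paper packages your $\Ext^{1}$-adjunction isomorphisms as Lemma~\ref{extadj1} (proved via the general Lemma~\ref{adj}, which is exactly your ``transport a single short resolution step across the adjunction'' argument), then bundles the consequence $\binom{\mathcal X}{\mathcal Y} = (\TT_A(\mathcal U)\cup\TT_B(\mathcal V))^{\perp}$ into Lemma~\ref{destheta} and finishes with the formal identity $({}^{\perp}(S^{\perp}))^{\perp} = S^{\perp}$; your direct verification of $({}^{\perp}\binom{\mathcal X}{\mathcal Y})^{\perp}\subseteq\binom{\mathcal X}{\mathcal Y}$ by testing against $\TT_A U$, $\TT_B V$ amounts to the same thing. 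The heredity argument and the use of $Z_A, Z_B$ for the converse direction are identical to the paper's.
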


\vskip5pt

\begin{thm}\label{ctp6} \ Let $\Lambda = \left(\begin{smallmatrix} A & N \\	M & B\end{smallmatrix}\right)$ be a Morita ring  with $M\otimes_A N = 0 = N\otimes_BM$. Let \ $(\mathcal U, \ \mathcal X)$ and $(\mathcal V, \ \mathcal Y)$  be cotorsion pairs in $A\mbox{-}{\rm Mod}$ and  $B$\mbox{\rm-Mod}, respectively.  Then

\vskip5pt

$(1)$ \ $(\Delta(\mathcal U, \ \mathcal V), \ \Delta(\mathcal U, \ \mathcal V)^\perp)$ is a cotorsion pair in $\Lambda${\rm-Mod}.

\vskip5pt

Moreover, if \ $M_A$ and \ $N_B$ are flat, then \ $(\Delta(\mathcal U, \ \mathcal V),$ \ $\Delta(\mathcal U, \ \mathcal V)^\perp)$  is hereditary if and only if so are \ $(\mathcal U, \ \mathcal X)$  and \ $(\mathcal V, \ \mathcal Y)$.

\vskip10pt

$(2)$ \  $(^{\perp}\nabla(\mathcal X, \ \mathcal Y), \ \nabla(\mathcal X, \ \mathcal Y))$ is a cotorsion pair in $\Lambda${\rm-Mod}.

\vskip5pt

Moreover, if \ $_BM$ and \ $_AN$ are projective, then
\ $(^{\perp}\nabla(\mathcal X, \ \mathcal Y),$ \ $\nabla(\mathcal X, \ \mathcal Y))$  is hereditary if and only if
so are \ $(\mathcal U, \ \mathcal X)$  and  $(\mathcal V, \ \mathcal Y)$.
\end{thm}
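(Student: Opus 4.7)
The inclusion $\Delta(\mathcal U, \mathcal V) \subseteq {}^\perp(\Delta(\mathcal U, \mathcal V)^\perp)$ in part~(1) is tautological, so the heart of the cotorsion-pair claim is the reverse direction. My plan is to exhibit $\Delta(\mathcal U, \mathcal V)$ as ${}^\perp\mathcal G$ for an explicit class $\mathcal G\subseteq \Delta(\mathcal U, \mathcal V)^\perp$, which then forces ${}^\perp(\Delta(\mathcal U, \mathcal V)^\perp)\subseteq {}^\perp\mathcal G = \Delta(\mathcal U, \mathcal V)$. Two observations under $M\otimes_A N = 0 = N\otimes_B M$ drive the argument. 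First, for $U\in\mathcal U$ the module ${\rm T}_A U = \left(\begin{smallmatrix} U \\ M\otimes_A U\end{smallmatrix}\right)_{1,0}$ satisfies the defining conditions for $\Delta$ automatically (the structure map $g$ has domain $N\otimes_B M\otimes_A U = 0$, so is trivially mono with cokernel $U\in\mathcal U$), and similarly ${\rm T}_B\mathcal V\subseteq \Delta(\mathcal U, \mathcal V)$. Second, for every $L = \left(\begin{smallmatrix} L_1 \\ L_2\end{smallmatrix}\right)_{f,g}$ with $f, g$ monic, the pairs $(1, f)$ and $(g, 1)$ yield canonical short exact sequences
\[
0 \to {\rm T}_A L_1 \xrightarrow{\binom{1}{f}} L \to {\rm Z}_B(\Coker f) \to 0, \qquad 0 \to {\rm T}_B L_2 \xrightarrow{\binom{g}{1}} L \to {\rm Z}_A(\Coker g) \to 0
\]
in $\Lambda$-Mod (the Morita hypothesis is exactly what makes these morphisms $\Lambda$-linear and makes the cokernels land in the image of ${\rm Z}_A, {\rm Z}_B$). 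Combined with the recollement adjunctions ${\rm T}_\bullet \dashv {\rm U}_\bullet \dashv {\rm H}_\bullet$ from Theorem~\ref{recollments}, these sequences reduce $\Ext^1_\Lambda(L, E)$ to component Ext's in $A$-Mod and $B$-Mod, and let me assemble $\mathcal G$ from ${\rm H}_A\mathcal X \cup {\rm H}_B\mathcal Y$ together with ``${\rm Z}$-type'' refinements that detect the monomorphism conditions on $f$ and $g$.

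\medskip

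For heredity under the flatness of $M_A$ and $N_B$, given $0\to K\to D'\to D\to 0$ exact in $\Lambda$-Mod with $D', D\in\Delta(\mathcal U, \mathcal V)$, I run the Snake Lemma on the diagram of structure maps. Flatness of $M_A$ (respectively $N_B$) keeps $M\otimes_A(-)$ (respectively $N\otimes_B(-)$) exact on the component sequences $0\to K_i\to D_i'\to D_i\to 0$, which forces the structure maps of $K$ to be monic and their cokernels to sit in short exact sequences whose end terms lie in $\mathcal U, \mathcal V$; heredity of the component pairs then closes these cokernels inside $\mathcal U, \mathcal V$, giving $K\in\Delta(\mathcal U, \mathcal V)$. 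The converse uses that ${\rm T}_A, {\rm T}_B$ are exact (by flatness) and land in $\Delta(\mathcal U, \mathcal V)$, so $\Lambda$-level heredity transfers back to the component cotorsion pairs along these embeddings.

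\medskip

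Part~(2) is formally dual: for $L\in\nabla(\mathcal X, \mathcal Y)$ in its second expression, the pairs $(1, \widetilde g)$ and $(\widetilde f, 1)$ give canonical short exact sequences
\[
0 \to {\rm Z}_B(\Ker\widetilde g) \to L \xrightarrow{\binom{1}{\widetilde g}} {\rm H}_A L_1 \to 0, \qquad 0 \to {\rm Z}_A(\Ker\widetilde f) \to L \xrightarrow{\binom{\widetilde f}{1}} {\rm H}_B L_2 \to 0
\]
(that these are $\Lambda$-maps uses precisely the second-expression identities $(M, \widetilde g)\widetilde f = 0$ and $(N, \widetilde f)\widetilde g = 0$), with ${\rm H}_A, {\rm H}_B$ replacing ${\rm T}_A, {\rm T}_B$ and projectivity of $_BM, _AN$ supplying exactness in place of flatness; the remaining steps dualize. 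The \emph{main obstacle}, shared with part~(1), is pinning down the test class $\mathcal G$: it must simultaneously encode the membership of cokernels (respectively kernels) in $\mathcal U, \mathcal V$ (respectively $\mathcal X, \mathcal Y$) and the mono (respectively epi) condition on the structure maps. Crucially, the cotorsion-pair half of the statement carries no flatness or projectivity hypothesis, so the Ext translations cannot rest on derived-adjunction identifications (which would need ${\rm T}_A$ or ${\rm H}_A$ exact) and must instead be extracted from the canonical short exact sequences above by a careful long-exact-sequence chase.
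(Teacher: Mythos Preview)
Your overall framework---exhibit $\Delta(\mathcal U,\mathcal V)$ as ${}^\perp\mathcal G$ for a suitable $\mathcal G$---is exactly right, and your heredity argument via the Snake Lemma matches the paper's Lemma~\ref{deltaher}. But your choice of $\mathcal G$ and your mechanism for the hard inclusion ${}^\perp\mathcal G\subseteq\Delta(\mathcal U,\mathcal V)$ both miss the mark.

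The paper takes simply $\mathcal G={\rm Z}_A(\mathcal X)\cup{\rm Z}_B(\mathcal Y)$, with no ${\rm H}$-type objects at all (Lemma~\ref{desdelta}). Your proposed ${\rm H}_A\mathcal X$ would, at best, test $\Ext^1_A(L_1,\mathcal X)$---and even that translation needs $\Ext^1_A(N,\mathcal X)=0$ (Lemma~\ref{extadj1}(3)), which is not assumed. More fundamentally, membership in $\Delta(\mathcal U,\mathcal V)$ constrains $\Coker g$, not $L_1$, so the ${\rm H}$-objects probe the wrong invariant. Your canonical sequences $0\to{\rm T}_AL_1\to L\to{\rm Z}_B(\Coker f)\to 0$ are correct when $f$ is already monic, which is precisely why they cannot drive the hard direction: they presuppose what must be proved. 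And extracting Ext information from them via $({\rm T}_B,{\rm U}_B)$ would need $\Tor^B_1(N,L_2)=0$, again not assumed.

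The paper's argument is different in kind. For the easy inclusion it uses the $({\rm C}_A,{\rm Z}_A)$ adjunction: Lemma~\ref{extadj2}(1) gives $\Ext^1_\Lambda(L,{\rm Z}_AX)\cong\Ext^1_A(\Coker g,X)$ as soon as $g$ is monic, with no flatness hypothesis. The substantive step is the reverse inclusion: given $L\in{}^\perp{\rm Z}_A(\mathcal X)$, one builds \emph{by hand}, for every $A$-map $u:N\otimes_BL_2\to X$ with $X\in\mathcal X$, an extension
\[
0\longrightarrow{\rm Z}_AX\longrightarrow\left(\begin{smallmatrix}X\oplus L_1\\ L_2\end{smallmatrix}\right)_{(0,f),\,\left(\begin{smallmatrix}u\\ g\end{smallmatrix}\right)}\longrightarrow L\longrightarrow 0.
\]
(The hypothesis $M\otimes_AN=0=N\otimes_BM$ is exactly what guarantees the middle term is a $\Lambda$-module for \emph{arbitrary} $u$.) Splitting forces $u$ to factor through $g$; since $\mathcal X$ contains the injectives, taking $X$ an injective envelope of $N\otimes_BL_2$ then forces $g$ monic. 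With $f,g$ monic in hand, Lemma~\ref{extadj2} delivers $\Coker g\in{}^\perp\mathcal X=\mathcal U$ and $\Coker f\in\mathcal V$. This explicit-extension construction is the idea your proposal is missing; part~(2) dualizes it in the second expression.
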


\begin{Notation} \label{not} \ For convenience, we will call the cotorsion pairs in Theorem {\rm\ref{ctp1}}
{\it the cotorsion pairs in Series} {\rm I}$;$
and the ones in Theorem {\rm\ref{ctp6}}  {\it the cotorsion pairs in Series} {\rm II}.\end{Notation}

\begin{exm}\label{irem1}  {\it The condition \ ``$M\otimes_A N = 0 = N\otimes_BM$" in {\rm Theorem \ref{ctp6}} can not be weakened as \ ``$\phi = 0 = \psi$", in general.

\vskip5pt

For example, taking \ $\Lambda = \left(\begin{smallmatrix} A & A \\A & A\end{smallmatrix}\right)$ with $A\ne 0$ and $\phi = 0=\psi$. Then for any class \ $\mathcal U \subseteq A\mbox{-}{\rm Mod}$ and any class \  $\mathcal V\subseteq B\mbox{-}{\rm Mod}$,  one has \ $\Delta(\mathcal U,  \mathcal V)= \{0\}$. In fact, if \  $\left(\begin{smallmatrix} L_1 \\ L_2\end{smallmatrix}\right)_{f, g}\in \Delta(\mathcal U,  \mathcal V),$ then $fg = 0=gf$. However,
$f: L_1 \longrightarrow L_2$ and $g: L_2 \longrightarrow L_1$ are monomorphisms.
Thus $L_1 = 0 = L_2$.

\vskip5pt

But $\{0\}$  can not occur in any cotorsion pair $($since $\Lambda\ne 0)$.}
\end{exm}

We will compare the cotorsion pairs in Series I with the corresponding cotorsion pairs in Series II. Comparing cotorsion pair \ $({}^\perp\left(\begin{smallmatrix}\mathcal X\\ \mathcal Y\end{smallmatrix}\right), \ \left(\begin{smallmatrix}\mathcal X\\ \mathcal Y\end{smallmatrix}\right))$  in Theorem \ref{ctp1}(1)
with  cotorsion pair \ $(\Delta(\mathcal U, \ \mathcal V), \ \Delta(\mathcal U, \ \mathcal V)^\perp)$ in Theorem \ref{ctp6}(1), we get the assertion (1) below;
comparing cotorsion pair \ $(\left(\begin{smallmatrix} \mathcal U\\ \mathcal V\end{smallmatrix}\right), \ \left(\begin{smallmatrix} \mathcal U\\ \mathcal V\end{smallmatrix}\right)^\perp)$ in Theorem \ref{ctp1}(2) with
cotorsion pair \ $(^{\perp}\nabla(\mathcal X, \ \mathcal Y), \ \nabla(\mathcal X, \ \mathcal Y))$ in Theorem \ref{ctp6}(2),
we get the assertion (2) below.

\begin{thm}\label{compare} \ Let $\Lambda = \left(\begin{smallmatrix} A & N \\
	M & B\end{smallmatrix}\right)$ be a Morita ring  with $M\otimes_A N = 0 = N\otimes_BM$. Suppose that $(\mathcal U, \ \mathcal X)$ and $(\mathcal V, \ \mathcal Y)$ are cotorsion pairs in $A\mbox{-}{\rm Mod}$ and $B$\mbox{\rm-Mod}, respectively.

\vskip5pt

$(1)$ \ If \ $\Tor^A_1(M, \ \mathcal U) =0 = \Tor^B_1(N, \ \mathcal V)$, then the  cotorsion pairs \ $$({}^\perp\left(\begin{smallmatrix}\mathcal X\\ \mathcal Y\end{smallmatrix}\right), \ \left(\begin{smallmatrix}\mathcal X\\ \mathcal Y\end{smallmatrix}\right)) \ \ \mbox{and} \ \ (\Delta(\mathcal U, \ \mathcal V), \ \Delta(\mathcal U, \ \mathcal V)^\perp)$$ in $\Lambda$-{\rm Mod}  have a relation   \ $\Delta(\mathcal U, \ \mathcal V)^{\bot} \subseteq \left(\begin{smallmatrix}\mathcal X\\ \mathcal Y\end{smallmatrix}\right),$ \  or equivalently,
\ \ $^\bot\left(\begin{smallmatrix}\mathcal X\\ \mathcal Y\end{smallmatrix}\right)\subseteq \Delta(\mathcal U, \ \mathcal V)$.

\vskip10pt

$(2)$ \ If \ $\Ext_A^1(N, \ \mathcal X)=0 = \Ext_B^1(M, \ \mathcal Y)$, then the  cotorsion pairs  $$(\left(\begin{smallmatrix} \mathcal U\\ \mathcal V\end{smallmatrix}\right), \ \left(\begin{smallmatrix} \mathcal U\\ \mathcal V\end{smallmatrix}\right)^\perp) \ \ \mbox{and} \ (^{\perp}\nabla(\mathcal X, \ \mathcal Y), \ \nabla(\mathcal X, \ \mathcal Y))$$ in $\Lambda$-{\rm Mod} have a relation  \ $^{\bot}\nabla(\mathcal X, \ \mathcal Y) \subseteq \left(\begin{smallmatrix}\mathcal U\\ \mathcal V\end{smallmatrix}\right)$, \  or equivalently,
\ \ $\left(\begin{smallmatrix}\mathcal U\\ \mathcal V\end{smallmatrix}\right)^\bot \subseteq \nabla(\mathcal X, \ \mathcal Y).$
\end{thm}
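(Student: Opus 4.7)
The plan is to use the elementary but crucial fact that, for two cotorsion pairs $(\mathcal C_1, \mathcal F_1)$ and $(\mathcal C_2, \mathcal F_2)$ in the same abelian category, $\mathcal C_1 \subseteq \mathcal C_2$ if and only if $\mathcal F_2 \subseteq \mathcal F_1$. So the two inclusions asserted in each part are equivalent, and I choose whichever direction is more convenient: in (1) I will prove $\Delta(\mathcal U, \mathcal V)^\perp \subseteq \left(\begin{smallmatrix}\mathcal X\\\mathcal Y\end{smallmatrix}\right)$, and in (2) I will prove ${}^\perp\nabla(\mathcal X, \mathcal Y) \subseteq \left(\begin{smallmatrix}\mathcal U\\\mathcal V\end{smallmatrix}\right)$. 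In both parts the core tactic is to test the $\perp$ against objects produced by the induction functors ${\rm T}_?$ (respectively the coinduction functors ${\rm H}_?$), and then reduce the resulting $\Ext^1_\Lambda$ to $\Ext^1$ over $A$ or $B$ via an adjunction-style computation built on the Tor- (respectively Ext-) vanishing hypothesis.

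For (1), first observe that ${\rm T}_A U \in \Delta(\mathcal U, \mathcal V)$ for every $U \in \mathcal U$: in ${\rm T}_A U = \left(\begin{smallmatrix} U \\ M\otimes_A U\end{smallmatrix}\right)_{1,\,0}$ the top structure map is the identity, monic with zero cokernel; and because $N\otimes_B M = 0$, the bottom structure map has source $N\otimes_B(M\otimes_A U) = 0$, hence is vacuously monic with cokernel $U \in \mathcal U$. Symmetrically ${\rm T}_B V \in \Delta(\mathcal U, \mathcal V)$ for $V \in \mathcal V$. Consequently any $C = \left(\begin{smallmatrix} C_1 \\ C_2 \end{smallmatrix}\right)_{f,g} \in \Delta(\mathcal U, \mathcal V)^\perp$ satisfies $\Ext^1_\Lambda({\rm T}_A U, C) = 0 = \Ext^1_\Lambda({\rm T}_B V, C)$ for all $U \in \mathcal U$, $V \in \mathcal V$. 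The key computation is the isomorphism
\[
\Ext^1_\Lambda({\rm T}_A U, C) \cong \Ext^1_A(U, C_1),
\]
valid under $\Tor^A_1(M, U) = 0$: choose $0 \to K \to P_0 \to U \to 0$ with $P_0$ projective; the Tor vanishing makes $0 \to {\rm T}_A K \to {\rm T}_A P_0 \to {\rm T}_A U \to 0$ exact in $\Lambda$-Mod; then apply $\Hom_\Lambda(-, C)$, use the adjunction $\Hom_\Lambda({\rm T}_A L, C) \cong \Hom_A(L, C_1)$, and use that ${\rm T}_A P_0$ is projective in $\Lambda$-Mod (since ${\rm T}_A$ is left adjoint to the exact functor ${\rm U}_A$). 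Hence $C_1 \in \mathcal U^\perp = \mathcal X$ and, symmetrically, $C_2 \in \mathcal Y$, proving (1).

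Part (2) runs dually. Using the second expression ${\rm H}_A X = \left(\begin{smallmatrix} X \\ \Hom_A(N, X)\end{smallmatrix}\right)_{0,\,1}$, the top map $\widetilde{f} = 0 : X \to \Hom_B(M, \Hom_A(N, X)) \cong \Hom_A(N\otimes_B M, X) = 0$ is epic with $\Ker\widetilde{f} = X \in \mathcal X$, and the bottom map $\widetilde{g} = 1$ is epic with zero kernel; so ${\rm H}_A X \in \nabla(\mathcal X, \mathcal Y)$ for $X \in \mathcal X$, and symmetrically ${\rm H}_B Y \in \nabla(\mathcal X, \mathcal Y)$ for $Y \in \mathcal Y$. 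Hence for $L = \left(\begin{smallmatrix} L_1 \\ L_2 \end{smallmatrix}\right)_{f,g} \in {}^\perp\nabla(\mathcal X, \mathcal Y)$ one has $\Ext^1_\Lambda(L, {\rm H}_A X) = 0 = \Ext^1_\Lambda(L, {\rm H}_B Y)$ for all $X \in \mathcal X$, $Y \in \mathcal Y$. The dual key identity is
\[
\Ext^1_\Lambda(L, {\rm H}_A X) \cong \Ext^1_A(L_1, X),
\]
valid under $\Ext^1_A(N, X) = 0$: embed $X \hookrightarrow I \twoheadrightarrow X'$ with $I \in {}_A\mathcal I$; the Ext-vanishing forces $\Hom_A(N, -)$ to preserve exactness of this sequence, so $0 \to {\rm H}_A X \to {\rm H}_A I \to {\rm H}_A X' \to 0$ is exact in $\Lambda$-Mod; apply $\Hom_\Lambda(L, -)$, use the adjunction $\Hom_\Lambda(L, {\rm H}_A Z) \cong \Hom_A(L_1, Z)$, and use that ${\rm H}_A I$ is injective in $\Lambda$-Mod (since ${\rm H}_A$ is right adjoint to the exact ${\rm U}_A$). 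One concludes $L_1 \in {}^\perp\mathcal X = \mathcal U$, and symmetrically $L_2 \in \mathcal V$, proving (2).

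The argument is more bookkeeping than conceptual difficulty. The one delicate point is that the hypothesis ``$\Tor^A_1(M, \mathcal U) = 0$'' (respectively ``$\Ext^1_A(N, \mathcal X) = 0$'') is invoked only on the single short exact sequence obtained one step into the resolution of the test module, so the pointwise form is exactly what is needed. The vanishing $M\otimes_A N = 0 = N\otimes_B M$ is used precisely to force the images of the induction (respectively coinduction) functors to land inside $\Delta(\mathcal U, \mathcal V)$ (respectively $\nabla(\mathcal X, \mathcal Y)$), without which no test objects would be available to begin with.
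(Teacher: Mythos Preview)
Your proof is correct. Both you and the paper reduce the problem to the inclusion ${\rm T}_A(\mathcal U)\cup {\rm T}_B(\mathcal V)\subseteq \Delta(\mathcal U,\mathcal V)$ (respectively ${\rm H}_A(\mathcal X)\cup {\rm H}_B(\mathcal Y)\subseteq \nabla(\mathcal X,\mathcal Y)$), and both then translate $\Ext^1_\Lambda$ to $\Ext^1_A$ or $\Ext^1_B$ via the adjunction isomorphism you spell out (this is Lemma~\ref{extadj1} in the paper). The difference is in how that inclusion is verified: you check it directly from the definition of $\Delta$ (respectively $\nabla$), using $N\otimes_B M=0$ to kill the source of one structure map; the paper instead invokes Lemma~\ref{desdelta}, which characterizes $\Delta(\mathcal U,\mathcal V)$ as ${}^\perp{\rm Z}_A(\mathcal X)\cap{}^\perp{\rm Z}_B(\mathcal Y)$, and then appeals to Lemma~\ref{extadj2} to place ${\rm T}_A(\mathcal U)$ and ${\rm T}_B(\mathcal V)$ there. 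Your route is more economical for this particular statement, since it bypasses the ${\rm Z}$-functor description and the adjoint pairs $({\rm C}_?,{\rm Z}_?)$, $({\rm Z}_?,{\rm K}_?)$ altogether; the paper's route has the advantage that the characterizations in Lemmas~\ref{destheta} and~\ref{desdelta} are already established for the proofs of Theorems~\ref{ctp1} and~\ref{ctp6}, so reusing them costs nothing.
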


\begin{rem}\label{irem2} \ {\it If  $M=0$ or $N = 0$, then {\rm Theorems \ref{ctp1},  \ref{ctp6} and  \ref{compare}} have been obtained by {\rm R. M. Zhu}, {\rm Y. Y. Peng} and {\rm N. Q. Ding} {\rm [ZPD]}. In particular, in that case one has $$({}^\perp\left(\begin{smallmatrix}\mathcal X\\ \mathcal Y\end{smallmatrix}\right), \ \left(\begin{smallmatrix}\mathcal X\\ \mathcal Y\end{smallmatrix}\right)) = (\Delta(\mathcal U, \ \mathcal V), \ \Delta(\mathcal U, \ \mathcal V)^\perp)$$  and $$(\left(\begin{smallmatrix} \mathcal U\\ \mathcal V\end{smallmatrix}\right), \ \left(\begin{smallmatrix} \mathcal U\\ \mathcal V\end{smallmatrix}\right)^\perp) =(^{\perp}\nabla(\mathcal X, \ \mathcal Y), \ \nabla(\mathcal X, \ \mathcal Y)).$$ See {\rm [ZPD, Proposition 3.7]}. But, in general,
{\bf they are not true!} See {\rm Example \ref{ie}}.} \end{rem}

\subsection{Induced isomorphisms between $\Ext^1$} To prove Theorems \ref{ctp1}, we need some preparations.
In the following lemma, functors ${\rm F}$ and ${\rm G}$ are not required to be exact. This is important for applications.

\begin{lem} \label{adj} \ Let $R$ and $S$ be rings, $({\rm F}, \ {\rm G})$ an adjoint pair with ${\rm F}: R\mbox{\rm -Mod}\longrightarrow S\mbox{\rm -Mod}$.

\vskip5pt

$(1)$ \ For an $X\in R$-{\rm Mod}, if \ $0 \rightarrow K \rightarrow P \rightarrow X \rightarrow 0$ is exact with $P$  projective,
such that \ $0\rightarrow {\rm F}K\rightarrow {\rm F}P\rightarrow {\rm F}X\rightarrow 0$ is exact with ${\rm F}P$ projective, then \
$\Ext_S^1({\rm F}X, \ Y)\cong \Ext_R^1(X, \ {\rm G}Y), \ \ \forall \ Y\in S\text{\rm-Mod}.$

\vskip5pt

$(2)$ \ For a $Y\in S$-{\rm Mod}, if \ $0 \rightarrow Y \rightarrow I \rightarrow C \rightarrow 0$ is exact with $I$ injective,
such that \ $0\rightarrow {\rm G}Y\rightarrow {\rm G}I\rightarrow {\rm G}C\rightarrow 0$ is exact with ${\rm G}I$ injective, then  \ $\Ext_S^1({\rm F}X, \ Y)\cong \Ext_R^1(X, \ {\rm G}Y), \ \ \forall \ X\in R\text{\rm-Mod}.$
\end{lem}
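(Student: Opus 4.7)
The plan is to reduce the computation of $\Ext^1$ on both sides to a comparison of $\Hom$-groups, and then exploit the naturality of the adjunction isomorphism $\Hom_S(\FF-,Y)\cong \Hom_R(-,\GG Y)$. The key point is that, although $\FF$ and $\GG$ need not be exact, the hypothesis provides exactness of precisely the one short sequence we need, and that is enough.

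For part (1), I would first apply $\Hom_R(-,\GG Y)$ to the given resolution $0\to K\to P\to X\to 0$. Since $P$ is projective, the resulting long exact sequence truncates into a four-term exact sequence
\[
0\to \Hom_R(X,\GG Y)\to \Hom_R(P,\GG Y)\to \Hom_R(K,\GG Y)\to \Ext_R^1(X,\GG Y)\to 0,
\]
so $\Ext_R^1(X,\GG Y)$ is identified with the cokernel of $\Hom_R(P,\GG Y)\to \Hom_R(K,\GG Y)$. In parallel, apply $\Hom_S(-,Y)$ to the sequence $0\to \FF K\to \FF P\to \FF X\to 0$, which is exact by hypothesis and has $\FF P$ projective; this yields the analogous four-term exact sequence with $\Ext_S^1(\FF X,Y)$ as the cokernel of $\Hom_S(\FF P,Y)\to \Hom_S(\FF K,Y)$.

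Next I would use the naturality of the adjunction in the first variable. It gives a commutative ladder between the two four-term exact sequences in which the three vertical maps on the left are isomorphisms. The induced map between the cokernels $\Ext_S^1(\FF X,Y)\to \Ext_R^1(X,\GG Y)$ is therefore an isomorphism (either by the five lemma or by the universal property of cokernels). Part (2) proceeds by the dual argument: replace the projective resolution of $X$ by the given injective resolution $0\to Y\to I\to C\to 0$, apply $\Hom_S(\FF X,-)$ using that $I$ is injective, apply $\Hom_R(X,-)$ to $0\to \GG Y\to \GG I\to \GG C\to 0$ using that $\GG I$ is injective by hypothesis, and compare via the naturality of the adjunction in the second variable.

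I do not anticipate a significant obstacle: the entire argument is essentially book-keeping around the adjunction. The only point that requires a bit of care is to observe that we never need $\FF$ or $\GG$ to be exact in general; the stated hypotheses are tailored to ensure exactness of only the one sequence appearing in the argument, and the projectivity (resp. injectivity) hypothesis on $\FF P$ (resp. $\GG I$) is exactly what is needed so that applying $\Hom$ produces a terminal $\Ext^1$ rather than a longer tail.
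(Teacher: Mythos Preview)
Your proposal is correct and follows essentially the same approach as the paper: both apply $\Hom$ to the two short exact sequences, use projectivity of $P$ and $\FF P$ to obtain exact rows terminating in the respective $\Ext^1$ groups, and then invoke naturality of the adjunction together with the Five Lemma (equivalently, the cokernel argument you mention) to conclude. The paper presents the diagram with just the last three terms of each row, but the argument is identical to yours.
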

\begin{proof} \ (1) \  Applying $\Hom_R(-, {\rm G}Y)$ to \ $0 \rightarrow K \rightarrow P \rightarrow X \rightarrow 0$
and applying $\Hom_S(-, Y)$ to \ $0\rightarrow {\rm F}K\rightarrow {\rm F}P\rightarrow {\rm F}X\rightarrow 0$, one gets a commutative diagram with exact rows
$$\xymatrix@C=.5cm{\Hom_S({\rm F}P, \ Y) \ar[r]\ar[d]^-\cong & \Hom_S({\rm F}K, \ Y)\ar[r]\ar[d]^-\cong & \Ext_S^1({\rm F}X, \ Y)\ar[r]\ar@{-->}[d] & 0 \\
\Hom_R(P, \ {\rm G}Y)\ar[r] & \Hom_R(K, \ {\rm G}Y)\ar[r] & \Ext_R^1(X, \ {\rm G}Y)\ar[r] & 0.}$$
Then the assertion follows from the Five Lemma.

\vskip5pt

The assertion $(2)$ is the dual of (1).
\end{proof}

\begin{lem}\label{extadj1} \ Let $\Lambda = \left(\begin{smallmatrix} A & N \\
	M & B\end{smallmatrix}\right)$ be a Morita ring  with $\phi = 0=\psi$, \  $X\in A\mbox{-}{\rm Mod}$ and $Y\in B\mbox{-}{\rm Mod}$. Then for any $L =\left(\begin{smallmatrix} L_1\\ L_2\end{smallmatrix}\right)_{f,g}\in \Lambda\mbox{-}{\rm Mod}$ one has

\vskip5pt

$(1)$ \ If \ $\Tor_1^A(M, \ X)=0$, then $\Ext_\Lambda^1({\rm T}_AX, \ L)\cong \Ext_A^1(X, \ {\rm U}_AL).$

\vskip5pt

$(2)$ \ If \ $\Tor_1^B(N, \ Y)=0$, then \ $\Ext_\Lambda^1({\rm T}_BY, \ L)\cong \Ext_B^1(Y, \ {\rm U}_BL).$

\vskip5pt

$(3)$ \ If \ $\Ext_A^1({}N, \ X)=0$, then \ $\Ext_A^1({\rm U}_AL, \ X)\cong \Ext_\Lambda^1(L, \ {\rm H}_AX).$

\vskip5pt

$(4)$ \ If \ $\Ext_B^1(M, \ Y)=0$, then \ $\Ext_B^1({\rm U}_BL, \ Y)\cong \Ext_\Lambda^1(L, \ {\rm H}_BY).$

\end{lem}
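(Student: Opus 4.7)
The plan is to apply Lemma \ref{adj} to each of the four adjoint pairs $(\TT_A, \UU_A)$, $(\TT_B, \UU_B)$, $(\UU_A, \HH_A)$, $(\UU_B, \HH_B)$ arising from the two recollements of Theorem \ref{recollments}. The point is that each $\Tor_1$ (resp.\ $\Ext^1$) vanishing hypothesis is precisely the condition needed for the left adjoint $\TT_{\bullet}$ (resp.\ right adjoint $\HH_{\bullet}$) to preserve exactness of a short exact sequence beginning with $X$ (resp.\ ending with $X$) in the appropriate abelian category.

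For (1), pick an exact sequence $0 \to K \to P \to X \to 0$ in $A\text{-Mod}$ with $P$ projective. Using the explicit formula $\TT_A Z = \left(\begin{smallmatrix} Z \\ M\otimes_A Z\end{smallmatrix}\right)_{1,0}$ together with the fact (Subsection 2.3) that a sequence of $\Lambda$-modules is exact iff the two row sequences are exact in $A\text{-Mod}$ and $B\text{-Mod}$ respectively, the image under $\TT_A$ is a sequence whose top row is $0\to K\to P\to X\to 0$ and whose bottom row is $0 \to M\otimes_A K \to M\otimes_A P \to M\otimes_A X \to 0$; the bottom row is exact precisely because $\Tor_1^A(M, X)=0$. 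Since $\TT_A P$ is projective in $\Lambda\text{-Mod}$ (Subsection 2.5), Lemma \ref{adj}(1) applies and gives the desired isomorphism. Part (2) is identical with $(\TT_B, \UU_B)$ in place of $(\TT_A, \UU_A)$, using $\Tor_1^B(N, Y)=0$.

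For (3), take an exact sequence $0 \to X \to I \to C \to 0$ in $A\text{-Mod}$ with $I$ injective. Applying $\HH_A$ and using the formula $\HH_A Z = \left(\begin{smallmatrix} Z \\ \Hom_A(N, Z)\end{smallmatrix}\right)_{0,\epsilon'_Z}$, one gets a candidate sequence whose top row is $0\to X\to I\to C\to 0$ and whose bottom row is $0 \to \Hom_A(N, X) \to \Hom_A(N, I) \to \Hom_A(N, C) \to 0$; this latter row is exact precisely because $\Ext_A^1(N, X) = 0$. Since $\HH_A I$ is injective in $\Lambda\text{-Mod}$ (Subsection 2.5), Lemma \ref{adj}(2) delivers the isomorphism. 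Part (4) is analogous via $(\UU_B, \HH_B)$ and $\Ext_B^1(M, Y) = 0$.

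There is essentially no obstacle beyond bookkeeping: everything is forced once one identifies the correct adjoint pair and recognises the row-wise description of exactness in $\Lambda\text{-Mod}$. The only subtlety worth flagging explicitly in the write-up is that in (1) and (2) the bottom row is a tensor-product computation of $\Tor_1$ and therefore requires no flatness assumption on $M$ or $N$, only the pointwise vanishing at $X$ or $Y$; dually in (3) and (4) the vanishing at $X$ or $Y$ of $\Ext^1_A(N, -)$ or $\Ext^1_B(M, -)$ suffices, and no injectivity of $N$ or $M$ is demanded.
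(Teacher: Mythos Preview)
Your proposal is correct and follows essentially the same approach as the paper: in each case one chooses a short projective (resp.\ injective) presentation, uses the $\Tor_1$ (resp.\ $\Ext^1$) vanishing hypothesis to see that $\TT_\bullet$ (resp.\ $\HH_\bullet$) preserves its exactness, and then applies Lemma~\ref{adj} to the relevant adjoint pair.
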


\begin{proof} We only justify (1) and (3). The assertions (2) and (4) can be similarly proved.

\vskip5pt

(1) \ Take an exact sequence \ $0\rightarrow K\rightarrow P\rightarrow X\rightarrow 0$ with $P$ projective.
Since by assumption \ $\Tor_1^A(M, \ X)=0$, one has an exact sequence of $B$-modules
$$0\longrightarrow M\otimes_AK \longrightarrow M\otimes_AP \longrightarrow M\otimes_AX \longrightarrow 0.$$
Applying ${\rm T}_A$ (note that \ ${\rm T}_A$ is not an exact functor), one gets an exact sequence of $\Lambda$-modules
$$0\longrightarrow \left(\begin{smallmatrix} K \\ M\otimes_AK\end{smallmatrix}\right)_{1, 0}\longrightarrow \left(\begin{smallmatrix} P \\ M\otimes_AP\end{smallmatrix}\right)_{1, 0}
\longrightarrow \left(\begin{smallmatrix} X \\ M\otimes_AX\end{smallmatrix}\right)_{1, 0}\longrightarrow 0$$
where $\left(\begin{smallmatrix} P \\ M\otimes_AP\end{smallmatrix}\right)_{1, 0}$ \ is a projective $\Lambda$-module.
Consider  adjoint pair $({\rm T}_A, \ {\rm U}_A)$ between $A$-Mod and $\Lambda$-Mod.
Applying Lemma \ref{adj}$(1)$ to  $X$,  one gets $\Ext_\Lambda^1({\rm T}_AX, \ L)\cong \Ext_A^1(X, \ {\rm U}_AL).$

\vskip5pt

(3) \ Take an exact sequence \ $0\rightarrow X\rightarrow I\rightarrow C\rightarrow 0$ with $I$ injective.
Since by assumption \ $\Ext_A^1(N, \ X)=0$, one has an exact sequence of $B$-modules
$$0\longrightarrow \Hom_A(N, X) \longrightarrow \Hom_A(N, I) \longrightarrow \Hom_A(N, C) \longrightarrow 0.$$
Applying ${\rm H}_A$ (note that \ ${\rm H}_A$ is also not an exact functor) one gets an exact sequence of $\Lambda$-modules
$$0\longrightarrow \left(\begin{smallmatrix} X \\ \Hom_A(N, X)\end{smallmatrix}\right)_{0, \epsilon_X} \longrightarrow \left(\begin{smallmatrix} I \\ \Hom_A(N, I)\end{smallmatrix}\right)_{0, \epsilon_I}
\longrightarrow \left(\begin{smallmatrix} C \\ \Hom_A(N, C) \end{smallmatrix}\right)_{0, \epsilon_C}\longrightarrow 0$$
where $\left(\begin{smallmatrix} I \\ \Hom_A(N, I)\end{smallmatrix}\right)_{0, \epsilon_I}$ is an injective $\Lambda$-module.
Consider  adjoint pair $({\rm U}_A, \ {\rm H}_A)$ between $\Lambda$-Mod and $A$-Mod.
Applying Lemma \ref{adj}$(2)$ to  $X$,  one gets
$\Ext_A^1({\rm U}_AL, \ X)\cong \Ext_\Lambda^1(L, \ {\rm H}_AX).$
\end{proof}

\begin{lem}\label{destheta} \ Let $\Lambda = \left(\begin{smallmatrix} A & N \\
	M & B\end{smallmatrix}\right)$ be a Morita ring  with $\phi = 0=\psi$, \ $\mathcal X \subseteq A\mbox{-}{\rm Mod}$, and $\mathcal Y\subseteq B\mbox{-}{\rm Mod}$.

\vskip5pt

$(1)$  \ If \ $\Tor_1^A(M, \ \mathcal X)=0 = \Tor_1^B(N, \ \mathcal Y)$, then
\ $\binom{\mathcal X^\perp}{\mathcal Y^\perp} = {\rm T}_A(\mathcal X)^\perp \ \cap \ {\rm T}_B(\mathcal Y)^\perp.$

\vskip5pt

$(2)$ \ If \  $\Ext_A^1(N, \ \mathcal X)=0=\Ext_B^1(M, \ \mathcal Y)$, then \ $\binom{^\perp\mathcal X}{^\perp\mathcal Y} = \ ^\perp{\rm H}_A(\mathcal X) \ \cap \ ^\perp{\rm H}_B(\mathcal Y).$
\end{lem}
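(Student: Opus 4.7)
The plan is to deduce both statements directly from Lemma~\ref{extadj1}, which already provides the key adjunction isomorphisms on $\Ext^1$. Since $\binom{\mathcal X^\perp}{\mathcal Y^\perp}$ is defined componentwise (a $\Lambda$-module $L=\left(\begin{smallmatrix}L_1\\ L_2\end{smallmatrix}\right)_{f,g}$ lies in it precisely when $L_1\in\mathcal X^\perp$ and $L_2\in\mathcal Y^\perp$), the task reduces to translating the vanishing of $\Ext^1_A(X,L_1)$ and $\Ext^1_B(Y,L_2)$ into the vanishing of $\Ext^1_\Lambda$ against $\TT_A X$ and $\TT_B Y$, respectively.

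For (1), I would fix an arbitrary $\Lambda$-module $L=\left(\begin{smallmatrix}L_1\\ L_2\end{smallmatrix}\right)_{f,g}$ and argue as follows. Because $\Tor_1^A(M,X)=0$ for every $X\in\mathcal X$, Lemma~\ref{extadj1}(1) applies and gives
\[
\Ext_\Lambda^1(\TT_A X,\,L)\ \cong\ \Ext_A^1(X,\,\UU_A L)\ =\ \Ext_A^1(X,\,L_1),\qquad \forall\, X\in\mathcal X.
\]
Consequently $L\in \TT_A(\mathcal X)^\perp$ iff $L_1\in\mathcal X^\perp$. Symmetrically, using $\Tor_1^B(N,\mathcal Y)=0$ and Lemma~\ref{extadj1}(2),
\[
\Ext_\Lambda^1(\TT_B Y,\,L)\ \cong\ \Ext_B^1(Y,\,L_2),\qquad \forall\, Y\in\mathcal Y,
\]
so $L\in \TT_B(\mathcal Y)^\perp$ iff $L_2\in\mathcal Y^\perp$. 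Intersecting the two characterisations yields the desired equality $\binom{\mathcal X^\perp}{\mathcal Y^\perp} = \TT_A(\mathcal X)^\perp\cap \TT_B(\mathcal Y)^\perp$.

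For (2), the argument is dual. Fix an arbitrary $L=\left(\begin{smallmatrix}L_1\\ L_2\end{smallmatrix}\right)_{f,g}\in\Lambda\text{-Mod}$. The assumption $\Ext_A^1(N,\mathcal X)=0$ allows us to invoke Lemma~\ref{extadj1}(3), producing
\[
\Ext_\Lambda^1(L,\,\HH_A X)\ \cong\ \Ext_A^1(\UU_A L,\,X)\ =\ \Ext_A^1(L_1,\,X),\qquad \forall\, X\in\mathcal X,
\]
whence $L\in{}^\perp\HH_A(\mathcal X)$ iff $L_1\in{}^\perp\mathcal X$. In the same way, $\Ext_B^1(M,\mathcal Y)=0$ together with Lemma~\ref{extadj1}(4) yields $L\in{}^\perp\HH_B(\mathcal Y)$ iff $L_2\in{}^\perp\mathcal Y$. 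Combining these gives $\binom{{}^\perp\mathcal X}{{}^\perp\mathcal Y}={}^\perp\HH_A(\mathcal X)\cap{}^\perp\HH_B(\mathcal Y)$.

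There is no real obstacle here: the content of the lemma is already packaged in Lemma~\ref{extadj1}, and the only thing to check is that the hypotheses on $\mathcal X$ and $\mathcal Y$ guarantee the applicability of the four isomorphisms uniformly in $X\in\mathcal X$ (respectively $Y\in\mathcal Y$), which they do by definition. The proof is therefore essentially a two-line componentwise translation in each part.
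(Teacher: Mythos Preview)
Your proposal is correct and follows essentially the same approach as the paper: both arguments reduce the statement componentwise to the $\Ext^1$-isomorphisms of Lemma~\ref{extadj1}, using the $\Tor_1$ (resp.\ $\Ext^1$) vanishing hypotheses to guarantee those isomorphisms apply for every $X\in\mathcal X$ and $Y\in\mathcal Y$.
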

\begin{proof}
(1) \ By definition \ $L=\left(\begin{smallmatrix} L_1 \\ L_2\end{smallmatrix}\right)_{f,g}\in \binom{\mathcal X^\perp}{\mathcal Y^\perp}$ if and only if
$L_1\in \mathcal X^\perp$ and $L_2\in \mathcal Y^\perp$, or equivalently,
$\Ext_A^1(\mathcal X, \ L_1)=0 =\Ext_B^1(\mathcal Y, \ L_2)$.
Since by assumption  $\Tor_1^A(M, \ \mathcal X)=0 =\Tor_1^B(N, \ \mathcal Y)$, it follows from Lemma \ref{extadj1}(1) and (2) that
$\Ext_A^1(\mathcal X, \ L_1)\cong \Ext_\Lambda^1({\rm T}_A(\mathcal X), \ L)$ and \ $\Ext_B^1(\mathcal Y, \ L_2) \cong \Ext_\Lambda^1({\rm T}_B(\mathcal Y), \ L).$
Thus,
$L=\left(\begin{smallmatrix} L_1 \\ L_2\end{smallmatrix}\right)_{f,g}\in \binom{\mathcal X^\perp}{\mathcal Y^\perp}$ if and only if
$$\Ext_\Lambda^1({\rm T}_A(\mathcal X), \ L) = 0 = \Ext_\Lambda^1({\rm T}_B(\mathcal Y), \ L)$$
i.e., $L\in {\rm T}_A(\mathcal X)^\perp \ \cap \ {\rm T}_B(\mathcal Y)^\perp$.

\vskip5pt

(2) \ Similarly,  $L=\left(\begin{smallmatrix} L_1 \\ L_2\end{smallmatrix}\right)_{f,g}\in \binom{{}^\perp\mathcal X}{^\perp\mathcal Y}$
if and only if $\Ext_A^1(L_1, \ \mathcal X)=0$ and $\Ext_B^1(L_2, \ \mathcal Y)=0$.
Since \ $\Ext_A^1(N, \ \mathcal X)=0$ and \ $\Ext_B^1(M, \ \mathcal Y)=0$,  by Lemma \ref{extadj1}(3) and (4),
$\Ext_A^1(L_1, \ \mathcal X)\cong \Ext_\Lambda^1(L, \ {\rm H}_A(\mathcal X))$ and \ $\Ext_B^1(L_2, \ \mathcal Y) \cong \Ext_\Lambda^1(L, \ {\rm H}_B(\mathcal Y)).$
Thus,
$L\in \left(\begin{smallmatrix}^\perp\mathcal X\\ ^\perp\mathcal Y\end{smallmatrix}\right)$ if and only if $L\in \ ^\perp{\rm H}_A(\mathcal X) \ \cap \ ^\perp{\rm H}_B(\mathcal Y)$.
\end{proof}

\subsection{Proof of Theorem \ref{ctp1}} $(1)$ \ To prove that \ $({}^\perp\binom{\mathcal X}{\mathcal Y}, \ \left(\begin{smallmatrix}\mathcal X\\ \mathcal Y\end{smallmatrix}\right))$ is a cotorsion pair,
it suffices to show  \ $\left(\begin{smallmatrix}\mathcal X\\ \mathcal Y\end{smallmatrix}\right) = (^\perp\left(\begin{smallmatrix}\mathcal X\\ \mathcal Y\end{smallmatrix}\right))^\perp.$
\vskip5pt
In fact, since \ $(\mathcal U, \ \mathcal X)$ and \ $(\mathcal V, \ \mathcal Y)$ are cotorsion pairs,
it follows that \ $\left(\begin{smallmatrix}\mathcal X\\ \mathcal Y\end{smallmatrix}\right) = \binom{\mathcal U^\perp}{\mathcal V^\perp}$.
Since by assumption \ $\Tor^A_1(M, \ \mathcal U)=0 = \Tor^B_1(N, \ \mathcal V)$, it follows from
Lemma \ref{destheta}(1) that
$$\left(\begin{smallmatrix}\mathcal U^\perp\\\mathcal V^\perp\end{smallmatrix}\right) = {\rm T}_A(\mathcal U)^\perp \ \cap \ {\rm T}_B(\mathcal V)^\perp =
({\rm T}_A(\mathcal U) \ \cup \ {\rm T}_B(\mathcal V))^\perp.$$
Thus
\begin{align*} (^\perp\left(\begin{smallmatrix}\mathcal X\\ \mathcal Y\end{smallmatrix}\right))^\perp & = (^\perp\left(\begin{smallmatrix}\mathcal U^\perp\\ \mathcal V^\perp\end{smallmatrix}\right))^\perp
= \{^\perp [({\rm T}_A(\mathcal U) \ \cup \ {\rm T}_B(\mathcal V))^\perp]\}^\perp \\ &
= ({\rm T}_A(\mathcal U) \ \cup \ {\rm T}_B(\mathcal V))^\perp = \left(\begin{smallmatrix}\mathcal X\\ \mathcal Y\end{smallmatrix}\right)
\end{align*}
here one uses the fact \ $(^\bot(\mathcal S^\bot))^\bot = \mathcal S^\bot,$  for any class $\mathcal S$ of modules.

\vskip5pt

If $(\mathcal U, \ \mathcal X)$  and  $(\mathcal V, \ \mathcal Y)$ are hereditary, then $\mathcal X$ and $\mathcal Y$ are closed under taking the cokernels of monomorphisms.
By the construction of $\left(\begin{smallmatrix}\mathcal X\\ \mathcal Y\end{smallmatrix}\right)$, it is clear that $\left(\begin{smallmatrix}\mathcal X\\ \mathcal Y\end{smallmatrix}\right)$ is also closed under taking the cokernels of monomorphisms, i.e.,
$({}^\perp\left(\begin{smallmatrix}\mathcal X\\ \mathcal Y\end{smallmatrix}\right), \ \left(\begin{smallmatrix}\mathcal X\\ \mathcal Y\end{smallmatrix}\right))$ is hereditary.

\vskip5pt

Conversely, let $({}^\perp\left(\begin{smallmatrix}\mathcal X\\ \mathcal Y\end{smallmatrix}\right), \ \left(\begin{smallmatrix}\mathcal X\\ \mathcal Y\end{smallmatrix}\right))$ be hereditary.
Using functors
\ ${\rm Z}_A$  and \ ${\rm Z}_B$, one sees that $\mathcal X$ and $\mathcal Y$ are closed under taking the cokernels of monomorphisms, i.e., $(\mathcal U, \ \mathcal X)$  and  $(\mathcal V, \ \mathcal Y)$ are hereditary.

\vskip5pt

$(2)$   \ Similarly, it suffices to show \ $\left(\begin{smallmatrix}\mathcal U\\ \mathcal V\end{smallmatrix}\right) = \ ^\perp(\left(\begin{smallmatrix}\mathcal U\\ \mathcal V\end{smallmatrix}\right)^\perp).$
In fact, by Lemma \ref{destheta}(2) one has
$$\left(\begin{smallmatrix}\mathcal U\\ \mathcal V\end{smallmatrix}\right) = \left(\begin{smallmatrix}^\perp\mathcal X\\ ^\perp\mathcal Y\end{smallmatrix}\right) = \ ^\perp{\rm H}_A(\mathcal X) \ \cap \ ^\perp{\rm H}_B(\mathcal Y) =
\ ^\perp ({\rm H}_A(\mathcal X) \ \cup \ {\rm H}_B(\mathcal Y)).$$
Thus
\begin{align*} \ ^\perp(\left(\begin{smallmatrix}\mathcal U\\ \mathcal V\end{smallmatrix}\right)^\perp) & = \ ^\perp(\left(\begin{smallmatrix}^\perp\mathcal X\\ ^\perp\mathcal Y\end{smallmatrix}\right)^\perp)
= \ ^\perp\{ [^\perp ({\rm H}_A(\mathcal X) \ \cup \ {\rm H}_B(\mathcal Y))]^\perp\} \\ &
= \ ^\perp ({\rm H}_A(\mathcal X) \ \cup \ {\rm H}_B(\mathcal Y)) =\left(\begin{smallmatrix}\mathcal U\\ \mathcal V\end{smallmatrix}\right)
\end{align*}
here one uses the fact \ $^\bot((^\bot\mathcal S)^\bot) = \ ^\bot\mathcal S,$ for any class $\mathcal S$ of modules.

\vskip5pt

If $(\mathcal U, \ \mathcal X)$  and  $(\mathcal V, \ \mathcal Y)$ are hereditary, then $\mathcal U$ and $\mathcal V$ are closed under taking the kernels of epimorphisms.
By construction,  $\left(\begin{smallmatrix}\mathcal U\\ \mathcal V\end{smallmatrix}\right)$ is also closed under taking the kernels of epimorphisms, i.e.,
\ $(\left(\begin{smallmatrix}\mathcal U\\ \mathcal V\end{smallmatrix}\right), \ \left(\begin{smallmatrix}\mathcal U\\\mathcal V\end{smallmatrix}\right)^\perp)$ is hereditary. One can see the converse, by using functors
\ ${\rm Z}_A$  and \ ${\rm Z}_B$. \hfill $\square$

\subsection{Induced isomorphisms between $\Ext^1$ (continued)}

\begin{lem}\label{extadj2}  \ Let $\Lambda = \left(\begin{smallmatrix} A & N \\
	M & B\end{smallmatrix}\right)$ be a Morita ring  with $\phi = 0=\psi$,  \   $L =\left(\begin{smallmatrix} L_1\\ L_2\end{smallmatrix}\right)_{f,g}$  a $\Lambda$-module.

\vskip5pt

$(1)$ \ If $g$ is a monomorphism, then \ $\Ext_A^1({\rm C}_AL, \ X)\cong \Ext_\Lambda^1(L, \ {\rm Z}_AX)$, \ $\forall \ X\in A\mbox{\rm -Mod}$.

\vskip5pt

$(2)$ \ If $f$ is a monomorphism, then \ $\Ext_B^1({\rm C}_BL, \ Y)\cong \Ext_\Lambda^1(L, \ {\rm Z}_BY)$, \ $\forall \ Y\in B\mbox{\rm -Mod}$.

\vskip5pt

$(3)$ \ If $\widetilde{f}$ is an epimorphism, then \ $\Ext_\Lambda^1({\rm Z}_AX, \ L)\cong \Ext_A^1(X, \ {\rm K}_AL)$, \ $\forall \ X\in A\mbox{\rm -Mod}$.

\vskip5pt

$(4)$ \ If $\widetilde{g}$ is an epimorphism, then \ $\Ext_\Lambda^1({\rm Z}_BY, \ L)\cong \Ext_B^1(Y, \ {\rm K}_BL)$, \ $\forall \ Y\in B\mbox{\rm -Mod}$.
\end{lem}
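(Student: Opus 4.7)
\medskip

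\noindent\textbf{Proof plan.} The four statements are completely parallel, so I will sketch (1) and (3) in detail and indicate the symmetry for (2) and (4). The strategy is the natural analogue of the proof of Lemma \ref{extadj1}: exploit the two recollements in Theorem \ref{recollments}, which give the adjoint pairs $({\rm C}_A,{\rm Z}_A)$ and $({\rm Z}_A,{\rm K}_A)$ (and symmetrically $({\rm C}_B,{\rm Z}_B)$, $({\rm Z}_B,{\rm K}_B)$), and then apply Lemma \ref{adj} to a carefully chosen projective resolution of $L$ in parts (1), (2) and an injective coresolution of $L$ in parts (3), (4). The hypothesis on the structure map of $L$ will be used exactly once, to guarantee that ${\rm C}_A$ (resp.\ ${\rm K}_A$) converts the chosen short exact sequence into a short exact sequence with a projective (resp.\ injective) $A$-module in the middle.

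For part (1), choose surjections $P_1\twoheadrightarrow L_1$ in $A$-Mod and $Q_1\twoheadrightarrow L_2$ in $B$-Mod from projectives. Using the adjoint descriptions from Subsection 2.4, one assembles a $\Lambda$-epimorphism $P:={\rm T}_AP_1\oplus {\rm T}_BQ_1\twoheadrightarrow L$ with kernel $K$, and $P$ is projective. From the shape of $P$ one reads off ${\rm C}_AP=P_1$, which is projective in $A$-Mod. The only point to verify is exactness of
$$0\longrightarrow {\rm C}_AK\longrightarrow {\rm C}_AP\longrightarrow {\rm C}_AL\longrightarrow 0.$$
Apply the snake lemma to the diagram whose rows are $N\otimes_B-$ of the $L_2$-component sequence and the $L_1$-component sequence, with vertical maps $g_K,g_P,g_L$. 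Since ${\rm C}_A=\Coker(g_{-})$ and the only potential obstruction to left exactness is the connecting map $\Ker g_L\to\Coker g_K$, the assumption that $g=g_L$ is a monomorphism makes $\Ker g_L=0$ and forces $\Coker g_K\hookrightarrow\Coker g_P$. Lemma \ref{adj}(1) applied to the adjoint pair $({\rm C}_A,{\rm Z}_A)$ now gives the required isomorphism. Part (2) is identical after swapping the roles of $A$ and $B$, using $({\rm C}_B,{\rm Z}_B)$ and the snake lemma on the diagram with vertical maps $f_K,f_P,f_L$.

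Part (3) is dual. Embed $L_1\hookrightarrow I_1$ in $A$-Mod and $L_2\hookrightarrow J_1$ in $B$-Mod into injectives, and assemble a $\Lambda$-monomorphism $L\hookrightarrow I:={\rm H}_AI_1\oplus {\rm H}_BJ_1$ via the $({\rm U}_A,{\rm H}_A)$ and $({\rm U}_B,{\rm H}_B)$ adjunctions; injectivity of the combined map follows from injectivity of the two component embeddings. One checks from the second expression of $I$ that ${\rm K}_AI=\Ker\widetilde{f}_I=I_1$, which is injective. For the cokernel $C$ of $L\hookrightarrow I$, apply the snake lemma to the diagram whose rows are the $L_1$-component exact sequence and $\Hom_B(M,-)$ of the $L_2$-component exact sequence, with vertical arrows $\widetilde{f}_L,\widetilde{f}_I,\widetilde{f}_C$. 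The only possible obstruction to surjectivity of ${\rm K}_AI\twoheadrightarrow {\rm K}_AC$ is $\Coker\widetilde{f}_L$, and by hypothesis $\widetilde{f}=\widetilde{f}_L$ is an epimorphism, so $\Coker\widetilde{f}_L=0$ and the sequence $0\to{\rm K}_AL\to{\rm K}_AI\to{\rm K}_AC\to0$ is exact. Lemma \ref{adj}(2) applied to $({\rm Z}_A,{\rm K}_A)$ delivers the isomorphism. Part (4) is the mirror image with $({\rm Z}_B,{\rm K}_B)$ and the analogous snake on $\widetilde{g}$.

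The only real obstacle is the exactness check after applying ${\rm C}_A$ (resp.\ ${\rm K}_A$); I would expect this to be the place where a referee lingers. The snake-lemma argument isolates precisely one connecting (or obstructing) term, and the hypothesis ``$g$ monic'' (resp.\ ``$\widetilde{f}$ epic'') is tailored to kill exactly that term. Notably, no vanishing of $\Tor$ or $\Ext$ is needed, because the non-$A$-part of $P$ (resp.\ $I$) contributes $0$ to ${\rm C}_AP$ (resp.\ to ${\rm K}_AI$), so one never sees $M\otimes_A-$ or $\Hom_B(M,-)$ applied to an unknown module on the relevant side.
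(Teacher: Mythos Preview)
Your proof is correct and follows essentially the same approach as the paper: use the adjoint pairs $({\rm C}_A,{\rm Z}_A)$ and $({\rm Z}_A,{\rm K}_A)$ from the recollements, apply the snake lemma to the diagram of structure maps to get exactness of the ${\rm C}_A$- (resp.\ ${\rm K}_A$-) sequence from the hypothesis on $g$ (resp.\ $\widetilde{f}$), and then invoke Lemma~\ref{adj}. The only cosmetic difference is that you build the projective (resp.\ injective) explicitly as ${\rm T}_AP_1\oplus{\rm T}_BQ_1$ (resp.\ ${\rm H}_AI_1\oplus{\rm H}_BJ_1$) and read off ${\rm C}_AP=P_1$ (resp.\ ${\rm K}_AI=I_1$), whereas the paper takes an arbitrary projective (resp.\ injective) $\Lambda$-module and uses that ${\rm C}_A$ (resp.\ ${\rm K}_A$), being a left (resp.\ right) adjoint of an exact functor, preserves projectives (resp.\ injectives).
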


\begin{proof} We only prove (1) and (3). The assertions (2) and (4) can be similarly proved.

\vskip5pt

(1) \ Taking an exact sequence
$$0\longrightarrow \left(\begin{smallmatrix} K_1 \\ K_2\end{smallmatrix}\right)_{s,t}\xlongrightarrow{\left(\begin{smallmatrix} i_1 \\ i_2 \end{smallmatrix}\right)} \left(\begin{smallmatrix} P_1 \\ P_2\end{smallmatrix}\right)_{u,v}\xlongrightarrow{\left(\begin{smallmatrix}p_1 \\ p_2\end{smallmatrix}\right)} \left(\begin{smallmatrix} L_1 \\ L_2\end{smallmatrix}\right)_{f,g}\longrightarrow 0$$
with $\left(\begin{smallmatrix} P_1 \\ P_2\end{smallmatrix}\right)_{u,v}$ a projective module, one gets a commutative diagram with exact rows:
$$\xymatrix{& N\otimes_B K_2 \ar[r]^-{1\otimes i_2}\ar[d]_-t & N\otimes_B P_2\ar[d]_-v\ar[r]^-{1\otimes p_2} & N\otimes_B L_2\ar[d]^-g \ar[r] & 0 \\
0\ar[r] & K_1\ar[r]^-{i_1} & P_1\ar[r]^-{p_1} & L_1\ar[r] & 0}$$
Since $g$ is a monomorphism, by Snake Lemma
$$0\longrightarrow \Coker t\longrightarrow \Coker v\longrightarrow \Coker g\longrightarrow 0$$
is  exact. Since $P = \left(\begin{smallmatrix} P_1 \\ P_2\end{smallmatrix}\right)_{u,v}$ is projective,
 $\Coker v = {\rm C}_A P$ is a projective $A$-module.

\vskip5pt

Consider  adjoint pair $({\rm C}_A, \ {\rm Z}_A)$ between $\Lambda$-Mod and $A$-Mod. (Note that  ${\rm C}_A$ is not exact.)
Applying Lemma \ref{adj}(1) to  $L =\left(\begin{smallmatrix} L_1\\ L_2\end{smallmatrix}\right)_{f,g}$,  one gets
$$\Ext_A^1({\rm C}_AL, \ X) \cong \Ext_\Lambda^1(L, \ {\rm Z}_AX), \ \forall \ X\in A\mbox{\rm -Mod}.$$

\vskip5pt

(3) \ Similarly, taking an exact sequence of $\Lambda$-modules
$$0\longrightarrow \left(\begin{smallmatrix} L_1 \\ L_2\end{smallmatrix}\right)_{f,g}\xlongrightarrow{\left(\begin{smallmatrix} \sigma_1 \\ \sigma_2 \end{smallmatrix}\right)}
\left(\begin{smallmatrix} I_1 \\ I_2 \end{smallmatrix}\right)_{u,v}\xlongrightarrow{\left(\begin{smallmatrix}\pi_1 \\ \pi_2\end{smallmatrix}\right)}
\left(\begin{smallmatrix} C_1 \\ C_2 \end{smallmatrix}\right)_{s,t}\longrightarrow 0$$
with $\left(\begin{smallmatrix} I_1 \\ I_2 \end{smallmatrix}\right)_{u,v}$  an injective module, one gets a commutative diagram with exact rows:
$$\xymatrix{& M\otimes_A L_1\ar[r]^-{1\otimes \sigma_1}\ar[d]_-{f} &  M\otimes_A I_1 \ar[r]^-{1\otimes \pi_1}\ar[d]_-{u} &  M\otimes_A C_1\ar[r]\ar[d]^-{s} & 0 \\
0\ar[r] & L_2\ar[r]^-{\sigma_2} & I_2 \ar[r]^-{\pi_2} & C_2\ar[r] & 0}$$
Using adjoint isomorphism, one gets a commutative diagram with exact rows:
$$\xymatrix{0\ar[r] & L_1\ar[r]^-{\sigma_1}\ar[d]_-{\widetilde{f}} & I_1 \ar[r]^-{\pi_1}\ar[d]_-{\widetilde{u}} & C_1\ar[r]\ar[d]^-{\widetilde{s}} & 0 \\
0\ar[r] & \Hom_B(M, L_2)\ar[r]^-{(M, \sigma_2)} & \Hom_B(M, I_2) \ar[r]^-{(M, \pi_2)} & \Hom_B(M, C_2)}$$
Since $\widetilde{f}$ is an epimorphism, by Snake Lemma that
$$0\longrightarrow \Ker\widetilde{f}\longrightarrow \Ker\widetilde{u} \longrightarrow \Ker\widetilde{s}\longrightarrow 0$$
is exact. Since $I = \left(\begin{smallmatrix} I_1 \\ I_2\end{smallmatrix}\right)_{u, v}$ is injective,
 $\Ker \widetilde{u} = {\rm K}_A I$ is an injective $A$-module.

\vskip5pt

Consider adjoint pair $({\rm Z}_A, \ {\rm K}_A)$ between $A$-Mod and $\Lambda$-Mod. Applying Lemma \ref{adj}$(2)$ to  $L =\left(\begin{smallmatrix} L_1\\ L_2\end{smallmatrix}\right)_{f,g}$,  one gets
$\Ext_\Lambda^1({\rm Z}_AX, \ L)\cong \Ext_A^1(X, \ {\rm K}_AL), \ \forall \ X\in A\mbox{\rm -Mod}.$
\end{proof}

\subsection{Key lemmas for Theorem \ref{ctp6}} \ The following lemma will play an important role in the proof of Theorem \ref{ctp6}.

\begin{lem}\label{desdelta} \ Let $\Lambda = \left(\begin{smallmatrix} A & N \\
	M & B\end{smallmatrix}\right)$ be a Morita ring  with $M\otimes_A N = 0 = N\otimes_BM$, \ \ $\mathcal X \subseteq A\mbox{-}{\rm Mod}$, and \ $\mathcal Y\subseteq B\mbox{-}{\rm Mod}$.

\vskip5pt

$(1)$ \ If \ $\mathcal X \supseteq \ _A \mathcal I$  \ and \  $\mathcal Y\supseteq  \ _A\mathcal I $,  \ then
\ $\Delta({}^\perp\mathcal X, \ {}^\perp\mathcal Y) = \ ^\perp{\rm Z}_A(\mathcal X) \ \cap \ ^\perp{\rm Z}_B(\mathcal Y).$

\vskip5pt

$(2)$ \ If \ $\mathcal X \supseteq \ _A \mathcal P$ \ and \ $\mathcal Y\supseteq \ _B \mathcal P$,  \ then \ $\nabla(\mathcal X^\perp, \ \mathcal Y^\perp) = {\rm Z}_A(\mathcal X)^\perp \ \cap \ {\rm Z}_B(\mathcal Y)^\perp.$
\end{lem}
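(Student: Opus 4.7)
The plan is to prove each inclusion separately in each part; the forward direction uses Lemma~\ref{extadj2} directly, while the reverse direction is obtained by explicitly computing $\Ext^1_\Lambda$ against modules of the form ${\rm Z}_A(X)$ and ${\rm Z}_B(Y)$ through a short exact sequence whose middle term is $\Lambda$-injective (resp.\ $\Lambda$-projective).

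For (1), the inclusion $\Delta({}^\perp\mathcal X, {}^\perp\mathcal Y) \subseteq {}^\perp {\rm Z}_A(\mathcal X) \cap {}^\perp {\rm Z}_B(\mathcal Y)$ is immediate: if $L = \left(\begin{smallmatrix} L_1 \\ L_2\end{smallmatrix}\right)_{f,g}$ has $g$ monic with ${\rm C}_A L = \Coker g \in {}^\perp\mathcal X$, then Lemma~\ref{extadj2}(1) gives $\Ext^1_\Lambda(L, {\rm Z}_A X) \cong \Ext^1_A({\rm C}_A L, X) = 0$ for all $X \in \mathcal X$, and symmetrically on the $B$-side. For the converse, fix $L \in {}^\perp {\rm Z}_A(\mathcal X) \cap {}^\perp {\rm Z}_B(\mathcal Y)$; the key step is forcing $g$ to be monic. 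For each $I \in {}_A\mathcal I$, the $\Lambda$-maps $\left(\begin{smallmatrix} 1 \\ 0\end{smallmatrix}\right)$ and $\left(\begin{smallmatrix} 0 \\ 1\end{smallmatrix}\right)$ assemble into a short exact sequence
\[
0 \longrightarrow {\rm Z}_A I \longrightarrow {\rm H}_A I \longrightarrow {\rm Z}_B(\Hom_A(N, I)) \longrightarrow 0,
\]
with ${\rm H}_A I$ a $\Lambda$-injective by Subsection~2.5. Applying $\Hom_\Lambda(L, -)$ and using the adjoint isomorphisms $({\rm U}_A, {\rm H}_A)$ and $({\rm C}_B, {\rm Z}_B)$ of Theorem~\ref{recollments}, the connecting homomorphism identifies $\Ext^1_\Lambda(L, {\rm Z}_A I)$ with the cokernel of
\[
\Hom_A(L_1, I) \longrightarrow \Hom_A\bigl(N \otimes_B (L_2 / \Im f),\, I\bigr),\quad a \longmapsto a \circ \bar{g},
\]
where $\bar{g}\colon N \otimes_B (L_2/\Im f) \to L_1$ is the map induced by $g$. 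Here $N \otimes_B M = 0$ is essential: it forces $N \otimes_B \Im f = 0$, so the projection $N \otimes_B L_2 \twoheadrightarrow N \otimes_B (L_2/\Im f)$ is an isomorphism and under it $\bar{g} = g$. A diagram chase on the two short exact sequences arising from $g$ (using that $I$ is injective) then yields
\[
\Ext^1_\Lambda(L, {\rm Z}_A I) \;\cong\; \Hom_A(\Ker g,\, I).
\]
Since ${}_A\mathcal I \subseteq \mathcal X$, the hypothesis forces $\Hom_A(\Ker g, I) = 0$ for every injective $I$; testing against the injective envelope $I = E(\Ker g)$ gives $\Ker g = 0$. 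The symmetric argument with $0 \to {\rm Z}_B J \to {\rm H}_B J \to {\rm Z}_A(\Hom_B(M, J)) \to 0$ shows $f$ is monic. Once $f, g$ are monic, Lemma~\ref{extadj2}(1),(2) back-translates the remaining vanishing into $\Coker g \in {}^\perp\mathcal X$ and $\Coker f \in {}^\perp\mathcal Y$, whence $L \in \Delta({}^\perp\mathcal X, {}^\perp\mathcal Y)$.

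Part (2) is proved dually, replacing the injective-type sequence by
\[
0 \longrightarrow {\rm Z}_B(M \otimes_A P) \longrightarrow {\rm T}_A P \longrightarrow {\rm Z}_A P \longrightarrow 0
\]
for $P \in {}_A\mathcal P$ (with ${\rm T}_A P$ a $\Lambda$-projective), and using the adjunctions $({\rm T}_A, {\rm U}_A)$ and $({\rm Z}_B, {\rm K}_B)$ to identify $\Ext^1_\Lambda({\rm Z}_A P, L) \cong \Hom_A(P, \Coker \widetilde{f})$ for projective $P$. The hypothesis ${}_A\mathcal P \subseteq \mathcal X$ forces $\Coker \widetilde{f} = 0$, i.e.\ $\widetilde{f}$ is epic; symmetrically $\widetilde{g}$ is epic. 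Lemma~\ref{extadj2}(3),(4) then converts the vanishing into $\Ker \widetilde{f} \in \mathcal X^\perp$ and $\Ker \widetilde{g} \in \mathcal Y^\perp$.

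The main obstacle is the explicit identification $\Ext^1_\Lambda(L, {\rm Z}_A I) \cong \Hom_A(\Ker g, I)$: one must propagate the connecting homomorphism through two distinct adjunctions and then exploit $N \otimes_B M = 0$ to collapse $N \otimes_B (L_2/\Im f)$ back to $N \otimes_B L_2$ so that $\Ker g$ emerges cleanly. This is precisely why the strong hypothesis $M \otimes_A N = 0 = N \otimes_B M$ cannot be weakened to $\phi = 0 = \psi$ here.
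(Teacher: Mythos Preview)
Your proof is correct, and the forward inclusions are handled exactly as in the paper via Lemma~\ref{extadj2}. For the reverse inclusions, however, you take a genuinely different route.

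The paper proceeds by an explicit element-level construction: given $L \in {}^\perp{\rm Z}_A(\mathcal X) \cap {}^\perp{\rm Z}_B(\mathcal Y)$ and any $X \in \mathcal X$, it builds by hand an extension
\[
0 \longrightarrow {\rm Z}_A X \longrightarrow \left(\begin{smallmatrix} X\oplus L_1 \\ L_2\end{smallmatrix}\right)_{f',g'} \longrightarrow L \longrightarrow 0
\]
(here $M\otimes_A N = 0 = N\otimes_B M$ is invoked to guarantee the middle term is a $\Lambda$-module without constraint on $f',g'$); the splitting of this extension yields that $\Hom_A(g,X)$ is surjective for every $X\in\mathcal X$, and specializing to injective $X$ gives $g$ monic. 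You instead exploit the recollement structure: the canonical sequence $0\to{\rm Z}_AI\to{\rm H}_AI\to{\rm Z}_B(\Hom_A(N,I))\to 0$ with ${\rm H}_AI$ injective, together with the adjunctions $({\rm U}_A,{\rm H}_A)$ and $({\rm C}_B,{\rm Z}_B)$, lets you read off $\Ext^1_\Lambda(L,{\rm Z}_AI)\cong\Hom_A(\Ker g,I)$ directly, where $N\otimes_B M=0$ enters to collapse $N\otimes_B\Coker f$ back to $N\otimes_B L_2$. Your argument is more functorial and makes the role of the recollement transparent; the paper's is more elementary and self-contained, and as a byproduct shows the stronger fact that $\Hom_A(g,X)$ is surjective for \emph{all} $X\in\mathcal X$, not only injectives (though only the injective case is used). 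Both arguments pinpoint the same obstruction---the tensor vanishing hypothesis is precisely what makes the relevant identification go through---but from opposite ends.
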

\begin{proof} (1) \ Let $L=\left(\begin{smallmatrix} L_1 \\ L_2\end{smallmatrix}\right)_{f,g}\in \Delta({}^\perp\mathcal X, \ {}^\perp\mathcal Y)$.
By definition $f$ and $g$ are monomorphisms, and $\Coker f\in {}^\perp\mathcal Y$ and $\Coker g\in {}^\perp\mathcal X$.
Since $g$ is a monomorphism and \ $\Ext_B^1({\rm C}_AL, \  \mathcal X) = \Ext_A^1(\Coker g,  \ \mathcal X) = 0$, it follows from Lemma \ref{extadj2}(1) that
$\Ext_\Lambda^1(L, \ {\rm Z}_A(\mathcal X)) = 0$, i.e., $L\in \ ^\perp{\rm Z}_A(\mathcal X)$.
Similarly, since $f$ is a monomorphism and \ $\Ext_B^1({\rm C}_BL, \ \mathcal Y) = \Ext_B^1(\Coker f, \ \mathcal Y) = 0$, by Lemma \ref{extadj2}(2),
$\Ext_\Lambda^1(L, \ {\rm Z}_B(\mathcal Y)) = 0$, i.e., $L\in \ ^\perp{\rm Z}_B(\mathcal Y)$. Thus, $L\in \ ^\perp{\rm Z}_A(\mathcal X) \ \cap \ ^\perp{\rm Z}_B(\mathcal Y)$.

\vskip5pt

Conversely, let $L=\left(\begin{smallmatrix} L_1 \\ L_2\end{smallmatrix}\right)_{f,g}\in \ ^\perp{\rm Z}_A(\mathcal X) \ \cap \ ^\perp{\rm Z}_B(\mathcal Y)$, i.e.,
$\Ext_\Lambda^1(L, \ {\rm Z}_A(\mathcal X)) = 0 = \Ext_\Lambda^1(L, \ {\rm Z}_B(\mathcal Y))$.

\vskip5pt

{\bf Claim 1:} \ $\Hom_A(g, \ X): \Hom_A(L_1, \ X)\longrightarrow \Hom_A(N\otimes_BL_2, \ X)$ is an epimorphism,
for any module $X\in \mathcal X$. In fact, for any $A$-map $u: N\otimes_B L_2\longrightarrow X$,
consider $A$-map $g' =\left(\begin{smallmatrix}u\\ g\end{smallmatrix}\right): N\otimes_BL_2\longrightarrow X\oplus L_1$ and
the exact sequence of $A$-modules
$$0\longrightarrow X\xlongrightarrow{\left(\begin{smallmatrix} 1\\ 0\end{smallmatrix}\right)}X\oplus L_1\xlongrightarrow{(0,1)}L_1\longrightarrow 0.$$
Put \ $f' = (0, f): M\otimes_A(X\oplus L_1)\longrightarrow L_2.$ Then $\left(\begin{smallmatrix} X\oplus L_1 \\ L_2\end{smallmatrix}\right)_{f',g'}$ is indeed a $\Lambda$-module.
We stress that this is a place where one needs the assumption \ $M\otimes_A N = 0 = N\otimes_BM$:
Given any $U\in A\mbox{-Mod}$ and $V\in B\mbox{-Mod}$,  for arbitrary $u\in \Hom_B(M\otimes_AU, V)$ and $v\in \Hom_A(N\otimes_BV, U)$,
\ $\left(\begin{smallmatrix} U \\ V \end{smallmatrix}\right)_{u, v}$ is always a left $\Lambda$-module, since the conditions
\ $v(1_N\otimes u) = 0$ and  $u(1_M\otimes v) = 0$ automatically hold.

\vskip5pt

Then one can check that
$$
0\longrightarrow\left(\begin{smallmatrix} X\\ 0\end{smallmatrix}\right)_{0,0}\xlongrightarrow{\left(\begin{smallmatrix}\left(\begin{smallmatrix} 1 \\ 0 \end{smallmatrix}\right) \\ 0 \end{smallmatrix}\right)}\left(\begin{smallmatrix} X\oplus L_1 \\ L_2\end{smallmatrix}\right)_{f',g'}\xlongrightarrow{\left(\begin{smallmatrix} (0,1) \\ 1 \end{smallmatrix}\right)} \left(\begin{smallmatrix} L_1\\ L_2\end{smallmatrix}\right)_{f,g}\longrightarrow 0$$
is an exact sequence of $\Lambda$-modules. Since $\left(\begin{smallmatrix} X\\ 0\end{smallmatrix}\right)_{0,0} = {\rm Z}_A X\in {\rm Z}_A(\mathcal X)$ and   $L\in \ ^\perp{\rm Z}_A(\mathcal X)\cap \ ^\perp{\rm Z}_B(\mathcal Y)$,
this exact sequence  splits. Thus there is a $\Lambda$-map  $$\left(\begin{smallmatrix}\left(\begin{smallmatrix} a \\ b\end{smallmatrix}\right) \\ \beta\end{smallmatrix}\right):\left(\begin{smallmatrix} L_1 \\ L_2\end{smallmatrix}\right)_{f,g}\longrightarrow \left(\begin{smallmatrix} X\oplus L_1 \\ L_2\end{smallmatrix}\right)_{f',g'}$$ such that
$\left(\begin{smallmatrix} (0,1) \\ 1 \end{smallmatrix}\right)\left(\begin{smallmatrix}\left(\begin{smallmatrix} a \\ b\end{smallmatrix}\right) \\ \beta\end{smallmatrix}\right)={\rm Id}_L$.
So $b={\rm Id}_{L_1}$ and $\beta ={\rm Id}_{L_2}$.  Thus one gets a commutative diagram
$$\xymatrix@C=2cm{N\otimes_BL_2\ar[d]_-g\ar@{=}[r] & N\otimes_BL_2\ar[d]^-{g'=\left(\begin{smallmatrix} u \\ g \end{smallmatrix}\right)} \\ L_1\ar[r]^-{\left(\begin{smallmatrix} a \\ 1 \end{smallmatrix}\right)} & X\oplus L_1}$$
and hence $u=ag$. This proves {\bf Claim 1}.

\vskip5pt

{\bf Claim 2:} \  $g$ is a monomorphism.  In fact, embedding  $N\otimes_BL_2$ into an injective $A$-module one has a monomorphism $i: N\otimes_BL_2\hookrightarrow I$. By assumption $I\in \mathcal X$, hence
$\Hom_A(g, \ I): \Hom_A(L_1, \ I)\longrightarrow \Hom_A(N\otimes_BL_2, \ I)$ is an epimorphism, by {\bf Claim 1}. Hence there is an $A$-map $v: L_1\longrightarrow I$ such that $vg = i.$
Thus, $g$ is a monomorphism.

\vskip5pt

Similar as {\bf Claim 1}, one has

\vskip5pt

{\bf Claim 3:} \ $\Hom_A(f, \ Y): \Hom_B(L_2, \ Y)\longrightarrow \Hom_B(M\otimes_AL_1, \ Y)$ is an epimorphism, for any module $Y\in \mathcal Y$.

\vskip5pt

Similar as {\bf Claim 2}, one has

\vskip5pt

{\bf Claim 4:} \ $f$ is a monomorphism.

\vskip5pt

We omit the similar proof of {\bf Claim 3} and {\bf Claim 4}.

\vskip5pt

Now, since $g$ and $f$ are monic, by Lemma \ref{extadj2}(1) and (2) one has
$$\Ext_A^1(\Coker g, \ \mathcal X) = \Ext_A^1({\rm C}_AL, \ \mathcal X) \cong  \Ext_\Lambda^1(L, \ {\rm Z}_A(\mathcal X)) = 0,$$
$$\Ext_B^1(\Coker f, \ \mathcal Y) = \Ext_B^1({\rm C}_BL, \ \mathcal Y)\cong  \Ext_\Lambda^1(L, \ {\rm Z}_B(\mathcal Y)) = 0$$
\noindent By definition, $L=\left(\begin{smallmatrix} L_1 \\ L_2\end{smallmatrix}\right)_{f,g}\in \Delta({}^\perp\mathcal X, \ {}^\perp\mathcal Y)$. This completes the proof of $(1)$.

\vskip5pt

(2) \ This can be similarly proved, however, it is difficult to say that it is the dual of (1), thus we include a justification. It will be much convenient to use the second expression of a
$\Lambda$-module.

\vskip5pt

Let $L=\left(\begin{smallmatrix} L_1 \\ L_2\end{smallmatrix}\right)_{\widetilde{f}, \widetilde{g}}\in \nabla(\mathcal X^\perp, \ \mathcal Y^\perp)$, i.e.,  $\widetilde{f}$ and $\widetilde{g}$ are epimorphisms, and $\Ker \widetilde{f}\in \mathcal X^\perp$ and $\Ker \widetilde{g}\in \mathcal Y^\perp$.
Since $\widetilde{f}$ is an epimorphism and \ $\Ext_B^1(\mathcal X, \ {\rm K}_AL) = \Ext_A^1(\mathcal X, \ \Ker\widetilde{f}) = 0$, by Lemma \ref{extadj2}(3), $L\in {\rm Z}_A(\mathcal X)^\perp$.
Similarly, since $\widetilde{g}$ is an epimorphism and \ $\Ext_B^1(\mathcal Y, \ {\rm K}_BL) = \Ext_B^1(\mathcal Y, \ \Ker\widetilde{g}) = 0$, by Lemma \ref{extadj2}(4),
 $L\in {\rm Z}_B(\mathcal Y)^\perp$. Thus, $L\in {\rm Z}_A(\mathcal X)^\perp \ \cap \ {\rm Z}_B(\mathcal Y)^\perp$.

\vskip5pt

Conversely, let $L=\left(\begin{smallmatrix} L_1 \\ L_2\end{smallmatrix}\right)_{\widetilde{f}, \widetilde{g}}\in \ {\rm Z}_A(\mathcal X)^\perp \ \cap \ {\rm Z}_B(\mathcal Y)^\perp$.

\vskip5pt

{\bf Claim 1:}  $\Hom_B(Y, \ \widetilde{g}): \Hom_B(Y, \ L_2)\longrightarrow \Hom_B(Y, \ \Hom_A(N, L_1))$ is an epimorphism, for any module $Y\in \mathcal Y$.
In fact, $\forall \ u\in \Hom_B(Y, \ \Hom_A(N,L_1))$,
consider $B$-map $\widetilde{g'}:=(u,\widetilde{g}): Y\oplus L_2\longrightarrow \Hom_A(N, \ L_1)$. Thus \ $g'\in \Hom_A(N\otimes_B(Y\oplus L_2), \ L_1)$.
Put \ $\widetilde{f'}= \binom{0}{\widetilde{f}}: L_1\longrightarrow \Hom_B(M,Y)\oplus \Hom_B(M,L_2).$ Thus \ $f'\in \Hom_B(M\otimes_AL_1, \ Y\oplus L_2)$.
Since \ $M\otimes_A N = 0 = N\otimes_BM$, \ $\left(\begin{smallmatrix} L_1 \\ Y\oplus L_2\end{smallmatrix}\right)_{\widetilde{f'}, \widetilde{g'}}$ is indeed a $\Lambda$-module.

\vskip5pt

Then one has the exact sequence
$$
0\longrightarrow\left(\begin{smallmatrix} L_1\\ L_2\end{smallmatrix}\right)_{\widetilde{f}, \widetilde{g}}\xlongrightarrow{\binom{1}{\binom{0}{1}}}
\left(\begin{smallmatrix} L_1 \\ Y\oplus L_2\end{smallmatrix}\right)_{\widetilde{f'}, \widetilde{g'}}\xlongrightarrow{\left(\begin{smallmatrix} 0 \\ (1,0) \end{smallmatrix}\right)} \left(\begin{smallmatrix} 0\\ Y\end{smallmatrix}\right)_{0,0}\longrightarrow 0.$$
 (We stress that it is much convenient to use the second expression of
$\Lambda$-modules. Otherwise, say, it is not direct to see that ${\binom{1}{\binom{0}{1}}}$ is a $\Lambda$-map.)

\vskip5pt

\noindent Since $\left(\begin{smallmatrix} 0\\ Y\end{smallmatrix}\right)_{0,0} = {\rm Z}_B Y\in {\rm Z}_B(\mathcal Y)$ and   $L\in \ {\rm Z}_A(\mathcal X)^\perp\cap \ {\rm Z}_B(\mathcal Y)^\perp$,
this exact sequence splits, i.e., there is a $\Lambda$-map  $$\left(\begin{smallmatrix}\alpha \\ (a,b)\end{smallmatrix}\right):\left(\begin{smallmatrix} L_1 \\ Y\oplus L_2\end{smallmatrix}\right)_{\widetilde{f'},\widetilde{g'}}\longrightarrow \left(\begin{smallmatrix} L_1 \\ L_2\end{smallmatrix}\right)_{\widetilde{f},\widetilde{g}}$$ such that
$\left(\begin{smallmatrix} \alpha \\ (a,b) \end{smallmatrix}\right)\left(\begin{smallmatrix} 1 \\ \binom{0}{1}\end{smallmatrix}\right)={\rm Id}_L$.
So $\alpha=\Id_{L_1}$ and $b={\rm Id}_{L_2}$.
This gives the commutative diagram
$$\xymatrix@C=2cm{Y\oplus L_2\ar[d]_-{\widetilde{g'}=(u,\widetilde{g})}\ar[r]^-{(a,1)} & L_2\ar[d]^-{\widetilde{g}} \\
\Hom_A(N, L_1)\ar@{=}[r] & \Hom_A(N,L_1)}$$

\noindent commutes. Hence $u=\widetilde{g}a$. This proves {\bf Claim 1}.

\vskip5pt

{\bf Claim 2:} \  $\widetilde{g}$ is an epimorphism.  In fact, taking a $B$-epimorphism  $q: Q\longrightarrow \Hom_A(N, L_1)$ with $Q$ projective. Then $Q\in \mathcal Y$, hence
$\Hom_B(Q, \ \widetilde{g}): \Hom_B(Q, \ L_2)\longrightarrow \Hom_B(Q, \ \Hom_A(N, \ L_1))$ is an epimorphism. So there is a $B$-map $v: Q\longrightarrow L_2$ with $q = \widetilde{g}v.$ This proves {\bf Claim 2}.

\vskip5pt

Similarly, \ $\Hom_A(X, \ \widetilde{f}): \Hom_A(X, \ L_1)\longrightarrow \Hom_A(X, \ \Hom_B(M,L_2))$ is an epimorphism for any  $X\in \mathcal X$; and
\ $\widetilde{f}$ is an epimorphism.

\vskip5pt

It follows from Lemma \ref{extadj2}(3) and (4) that \ $$\Ext_A^1(\mathcal X, \ \Ker\widetilde{f}) = \Ext_A^1(\mathcal X, \ {\rm K}_AL)\cong  \Ext_\Lambda^1({\rm Z}_A(\mathcal X), \ L) = 0$$
and that
$$ \Ext_B^1(\mathcal Y, \ \Ker\widetilde{g}) = \Ext_B^1(\mathcal Y, \ {\rm K}_BL) \cong  \Ext_\Lambda^1({\rm Z}_B(\mathcal Y), \ L) = 0.$$
By definition, $L=\left(\begin{smallmatrix} L_1 \\ L_2\end{smallmatrix}\right)_{\widetilde{f},\widetilde{g}}\in \nabla(\mathcal X^\perp, \ \mathcal Y^\perp)$. This completes the proof.
\end{proof}

\vskip5pt

\begin{lem}\label{deltaher} \ \ Let $\Lambda = \left(\begin{smallmatrix} A & N \\
	M & B\end{smallmatrix}\right)$ be a Morita ring with $M\otimes_A N = 0 = N\otimes_BM$.

\vskip5pt

$(1)$ \ Assume that $M_A$ and $N_B$ are flat modules.
Then  \ $\Delta(\mathcal U, \ \mathcal V)$ is closed under  the kernels of epimorphisms if and only if \ $\mathcal U$ and \ $\mathcal V$ are closed under  the kernels of epimorphisms.

\vskip5pt

$(2)$ \  Assume that  $_BM$ and  $_AN$ are projective.
Then $\nabla(\mathcal X, \ \mathcal Y)$ is closed under the cokernels of monomorphisms if and only if \ $\mathcal X$ and \ $\mathcal Y$ are closed under  the cokernels of monomorphisms.

\end{lem}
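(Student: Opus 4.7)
Both parts have the same shape, so I focus on part $(1)$ and indicate the routine modifications for $(2)$ at the end.

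For the ``if'' direction of $(1)$, suppose $\mathcal U$ and $\mathcal V$ are closed under kernels of epimorphisms and take a short exact sequence $0 \to L' \to L \to L'' \to 0$ in $\Lambda$-Mod with $L, L'' \in \Delta(\mathcal U, \mathcal V)$. Componentwise this gives short exact sequences $0 \to L_1' \to L_1 \to L_1'' \to 0$ in $A$-Mod and $0 \to L_2' \to L_2 \to L_2'' \to 0$ in $B$-Mod. Applying $N \otimes_B -$ to the second, flatness of $N_B$ keeps the top row exact in the commutative diagram
$$\xymatrix@R=0.55cm@C=0.55cm{0 \ar[r] & N\otimes_B L_2' \ar[r]\ar[d]_{g'} & N\otimes_B L_2 \ar[r]\ar[d]_g & N\otimes_B L_2'' \ar[r]\ar[d]_{g''} & 0 \\ 0 \ar[r] & L_1' \ar[r] & L_1 \ar[r] & L_1'' \ar[r] & 0.}$$
Since $g,g''$ are monic by hypothesis, the snake lemma yields $g'$ monic and a short exact sequence $0 \to \Coker g' \to \Coker g \to \Coker g'' \to 0$; closure of $\mathcal U$ then gives $\Coker g' \in \mathcal U$. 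Symmetrically, flatness of $M_A$ gives $f'$ monic with $\Coker f' \in \mathcal V$, so $L' \in \Delta(\mathcal U, \mathcal V)$.

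For the ``only if'' direction of $(1)$, start from $0 \to U' \to U \to U'' \to 0$ in $A$-Mod with $U, U'' \in \mathcal U$ and apply ${\rm T}_A$. Flatness of $M_A$ produces an exact sequence $0 \to {\rm T}_A U' \to {\rm T}_A U \to {\rm T}_A U'' \to 0$ of $\Lambda$-modules. The key observation is that ${\rm T}_A W = \binom{W}{M\otimes_A W}_{1,0} \in \Delta(\mathcal U, \mathcal V)$ for every $W \in \mathcal U$: here $f = 1$ is monic with $\Coker f = 0$, while $g$ is defined on $N \otimes_B M \otimes_A W = 0$ (using $N \otimes_B M = 0$), so $g$ is vacuously monic with $\Coker g = W \in \mathcal U$; the required $0 \in \mathcal V$ is automatic in the cotorsion-pair contexts where this lemma is applied. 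Hence ${\rm T}_A U, {\rm T}_A U'' \in \Delta(\mathcal U, \mathcal V)$, the hypothesised closure forces ${\rm T}_A U' \in \Delta(\mathcal U, \mathcal V)$, and reading off the first component gives $U' \in \mathcal U$. The closure of $\mathcal V$ is obtained identically using $N_B$ flat and ${\rm T}_B V = \binom{N \otimes_B V}{V}_{0,1}$.

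Part $(2)$ is the dual argument, carried out in the second expression $\binom{L_1}{L_2}_{\widetilde f,\widetilde g}$. For ``if'', projectivity of ${}_BM$ makes $\Hom_B(M,-)$ exact, so in the diagram
$$\xymatrix@R=0.55cm@C=0.45cm{0 \ar[r] & L_1' \ar[r]\ar[d]_{\widetilde{f'}} & L_1 \ar[r]\ar[d]_{\widetilde f} & L_1'' \ar[r]\ar[d]_{\widetilde{f''}} & 0 \\ 0 \ar[r] & \Hom_B(M,L_2') \ar[r] & \Hom_B(M,L_2) \ar[r] & \Hom_B(M,L_2'') \ar[r] & 0}$$
both rows are exact; with $\widetilde{f'},\widetilde f$ epic, the snake lemma gives $\widetilde{f''}$ epic together with an exact $0 \to \Ker \widetilde{f'} \to \Ker \widetilde f \to \Ker \widetilde{f''} \to 0$, whence closure of $\mathcal X$ under cokernels of monomorphisms forces $\Ker \widetilde{f''} \in \mathcal X$; projectivity of ${}_AN$ handles $\widetilde g$ analogously. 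For ``only if'', I take test objects ${\rm H}_A X = \binom{X}{\Hom_A(N,X)}_{0,1}$ (second expression): ${\rm H}_A X \in \nabla(\mathcal X,\mathcal Y)$ whenever $X \in \mathcal X$, because the adjunction $\Hom_B(M,\Hom_A(N,X)) \cong \Hom_A(N\otimes_B M,X) = 0$ makes $\widetilde f = 0$ trivially epic with kernel $X$, while $\widetilde g = 1$ is the identity; projectivity of ${}_AN$ ensures ${\rm H}_A$ preserves short exact sequences, and the closure hypothesis delivers the conclusion. The case of $\mathcal Y$ uses ${\rm H}_B$ symmetrically. The only real subtlety is keeping the hypotheses aligned: flatness/projectivity of the bimodules is precisely what keeps the outer rows of the snake-lemma diagrams exact, while the vanishing $M \otimes_A N = 0 = N \otimes_B M$ is exactly what makes ${\rm T}_A,{\rm T}_B,{\rm H}_A,{\rm H}_B$ land in $\Delta$ or $\nabla$ at all — without it, the test objects needed for the converse would not be available.
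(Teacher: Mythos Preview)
Your proof is correct and follows essentially the same route as the paper: the ``if'' direction uses flatness (resp.\ projectivity) to keep the outer rows exact in a snake-lemma diagram, and the ``only if'' direction uses the test functors ${\rm T}_A, {\rm T}_B$ (resp.\ ${\rm H}_A, {\rm H}_B$), landing in $\Delta$ or $\nabla$ thanks to $M\otimes_AN=0=N\otimes_BM$. Your version is slightly more explicit about why the test objects belong to the relevant class and why the functors preserve short exact sequences, but the argument is the same.
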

\begin{proof} \ $(1)$ \ Assume that \ $\mathcal U$ and \ $\mathcal V$ are closed under  the kernels of epimorphisms.
Let \ $0\longrightarrow \left(\begin{smallmatrix} L_1 \\ L_2\end{smallmatrix}\right)_{f, g}
\longrightarrow \left(\begin{smallmatrix} M_1 \\ M_2 \end{smallmatrix}\right)_{u, v}\longrightarrow
\left(\begin{smallmatrix} N_1 \\ N_2 \end{smallmatrix}\right)_{s, t}\longrightarrow 0$ \
be an exact sequence with  $\left(\begin{smallmatrix} M_1 \\ M_2 \end{smallmatrix}\right)_{u, v}, \ \left(\begin{smallmatrix} N_1 \\ N_2 \end{smallmatrix}\right)_{s, t}\in \Delta(\mathcal U, \ \mathcal V)$.
Thus $u, v, s, t$ are monomorphisms, $\Coker u \in \mathcal V,  \ \Coker v\in \mathcal U, \ \Coker s\in \mathcal V$,  and  \  $\Coker t\in \mathcal U.$
Since $M_A$ is flat, one has the commutative diagram with exact rows:
$$\xymatrix@R=0.5cm{0\ar[r] & M\otimes_A L_1\ar[r]^-{1\otimes \alpha}\ar[d]_-{f} &  M\otimes_A M_1 \ar[r]\ar[d]_-{u} &  M\otimes_A N_1\ar[r]\ar[d]^-{s} & 0 \\
0\ar[r] & L_2\ar[r] & M_2 \ar[r]& N_2\ar[r] & 0.}$$
Since $1\otimes \alpha$ and $u$ are monomorphisms, so is $f$. By Snake Lemma and the assumption
that $\mathcal V$ is closed under the kernels of epimorphisms,
one knows that $\Coker f \in \mathcal V$. Similarly,  $g$ is a monomorphism and  $\Coker g \in \ \mathcal U$. By definition $\left(\begin{smallmatrix} L_1 \\ L_2 \end{smallmatrix}\right)_{f, g}\in \Delta(\mathcal U, \ \mathcal V)$.
This proves that \ $\Delta(\mathcal U, \ \mathcal V)$ is closed under  the kernels of epimorphisms.

\vskip5pt

Conversely, using functors \ ${\rm T}_A$  and \ ${\rm T}_B$, one sees that
\ $\mathcal U$ and \ $\mathcal V$ are closed under  the kernels of epimorphisms.

\vskip5pt

$(2)$ \ Assume that \ $\mathcal X$ and \ $\mathcal Y$ are closed under  the cokernels of monomorphisms. Let \ $0\longrightarrow \left(\begin{smallmatrix} L_1 \\ L_2\end{smallmatrix}\right)_{f, g}
\longrightarrow \left(\begin{smallmatrix} M_1 \\ M_2 \end{smallmatrix}\right)_{u, v}\longrightarrow
\left(\begin{smallmatrix} N_1 \\ N_2 \end{smallmatrix}\right)_{s, t}\longrightarrow 0$
be an exact sequence of $\Lambda$-modules with $\left(\begin{smallmatrix} L_1 \\ L_2 \end{smallmatrix}\right)_{f, g}\in \nabla(\mathcal X, \ \mathcal Y)$ and \ $\left(\begin{smallmatrix} M_1 \\ M_2 \end{smallmatrix}\right)_{u, v}\in \nabla(\mathcal X, \ \mathcal Y)$.
Thus \ $\widetilde{f}, \ \widetilde{g}, \ \widetilde{u}, \ \widetilde{v}$ are epimorphisms, $\Ker \widetilde{f} \in \mathcal X,  \ \Ker \widetilde{g}\in \mathcal Y, \ \Ker \widetilde{u} \in \mathcal X$,   and \ $\Ker \widetilde{v}\in \mathcal Y.$
Since \ $_BM$ is projective, one has the commutative diagram with exact rows
$$\xymatrix@R=0.5cm{0\ar[r] & L_1\ar[r]\ar[d]_-{\widetilde{f}} & M_1 \ar[r]\ar[d]_-{\widetilde{u}} & N_1\ar[r]\ar[d]^-{\widetilde{s}} & 0 \\
0\ar[r] & \Hom_B(M, L_2)\ar[r] & \Hom_B(M, M_2) \ar[r]^-{(M, \beta)} & \Hom_B(M, N_2)\ar[r] & 0.}$$
Since $\widetilde{u}$ and $(M, \beta)$ are epimorphisms, so is $\widetilde{s}$. By Snake Lemma and the assumption that $\mathcal X$ is closed under taking the cokernels of monomorphisms,
one knows that $\Ker \widetilde{s} \in \mathcal X$. Similarly,  $\widetilde{t}$ is an epimorphism and \ $\Ker \widetilde{t} \in \mathcal Y$. By definition $\left(\begin{smallmatrix} N_1 \\ N_2 \end{smallmatrix}\right)_{s, t}
\in \nabla(\mathcal X, \ \mathcal Y)$. This proves that \ $\nabla(\mathcal X, \ \mathcal Y)$ is closed under the cokernels of monomorphisms.

\vskip5pt

Conversely, using functors \ ${\rm H}_A$  and \ ${\rm H}_B$, one sees that
\ $\mathcal X$ and \ $\mathcal Y$ are closed under the cokernels of monomorphisms.
\end{proof}

\subsection{Proof of Theorem \ref{ctp6}}  $(1)$ \  It suffices  to prove \ $\Delta(\mathcal U, \ \mathcal V) = \ ^\perp(\Delta(\mathcal U, \ \mathcal V)^\perp).$
In fact, $\Delta(\mathcal U, \ \mathcal V) = \Delta(^\perp\mathcal X, \ ^\perp\mathcal Y)$.
Since \ $\mathcal X$ contains all the injective $A$-modules and \ $\mathcal Y$ contains all the injective $B$-modules, it follows from
Lemma \ref{desdelta}(1) that
$$\Delta(^\perp\mathcal X, \ ^\perp\mathcal Y) = \ ^\perp{\rm Z}_A(\mathcal X) \ \cap \ ^\perp{\rm Z}_B(\mathcal Y) =
\ ^\perp({\rm Z}_A(\mathcal X) \ \cup \ {\rm Z}_B(\mathcal Y)).$$
Thus
\begin{align*} ^\perp(\Delta(\mathcal U, \ \mathcal V)^\perp) & = \ ^\perp(\Delta(^\perp\mathcal X, \ ^\perp\mathcal Y)^\perp)
= \ ^\perp \{[^\perp({\rm Z}_A(\mathcal X) \ \cup \ {\rm Z}_B(\mathcal Y))]^\perp\} \\ &
= \ ^\perp ({\rm Z}_A(\mathcal X) \ \cup \ {\rm Z}_B(\mathcal Y)) = \Delta(\mathcal U, \ \mathcal V).
\end{align*}

\vskip5pt

By Lemma \ref{deltaher}(1), $\Delta(\mathcal U, \ \mathcal V)$ is closed under  the kernels of epimorphisms if and only if
$\mathcal U$ and $\mathcal V$ are closed under  the kernels of epimorphisms.
That is, \ $(\Delta(\mathcal U, \ \mathcal V), \ \Delta(\mathcal U, \ \mathcal V)^\perp)$  is hereditary if and only if  $(\mathcal U, \ \mathcal X)$  and  $(\mathcal V, \ \mathcal Y)$ are hereditary.

\vskip5pt

$(2)$ \ Similarly, it suffices to show \ $\nabla(\mathcal X, \ \mathcal Y) = \ (^{\perp}\nabla(\mathcal X, \ \mathcal Y))^\perp.$
In fact, $\nabla(\mathcal X, \ \mathcal Y) = \nabla(\mathcal U^\perp, \ \mathcal V^\perp)$.
Since \ $\mathcal U$ contains all the projective $A$-modules and \ $\mathcal V$ contains all the projective $B$-modules, it follows from
Lemma \ref{desdelta}(2) that
$$\nabla(\mathcal U^\perp, \ \mathcal V^\perp) = \ {\rm Z}_A(\mathcal U)^\perp \ \cap \ {\rm Z}_B(\mathcal V)^\perp =
 ({\rm Z}_A(\mathcal U) \ \cup \ {\rm Z}_B(\mathcal V))^\perp.$$
Thus
\begin{align*} (^{\perp}\nabla(\mathcal X, \ \mathcal Y))^\perp & = (^{\perp}\nabla(\mathcal U^\perp, \ \mathcal V^\perp))^\perp
=  \{^\perp[({\rm Z}_A(\mathcal U) \ \cup \ {\rm Z}_B(\mathcal V))^\perp]\}^\perp  \\ &
= ({\rm Z}_A(\mathcal U) \ \cup \ {\rm Z}_B(\mathcal V))^\perp = \nabla(\mathcal X, \ \mathcal Y).
\end{align*}

\vskip5pt

By Lemma \ref{deltaher}(2), \  $\nabla(\mathcal X, \ \mathcal Y)$  is
closed under the cokernels of monomorphisms if and only if $\mathcal X$ and $\mathcal Y$ are closed under the cokernels of monomorphisms. That is, \  $(^{\perp}\nabla(\mathcal X, \ \mathcal Y), \ \nabla(\mathcal X, \ \mathcal Y))$  is hereditary if and only if \ $(\mathcal U, \ \mathcal X)$  and  $(\mathcal V, \ \mathcal Y)$ are hereditary. \hfill $\square$

\subsection{Proof of Theorem \ref{compare}}
$(1)$ \ By Theorem \ref{ctp1}(1), one has cotorsion pair \ $({}^\perp\left(\begin{smallmatrix}\mathcal X\\ \mathcal Y\end{smallmatrix}\right), \ \left(\begin{smallmatrix}\mathcal X\\ \mathcal Y\end{smallmatrix}\right))$;
and by Theorem \ref{ctp6}(1), one has cotorsion pair \ $(\Delta(\mathcal U, \ \mathcal V), \ \Delta(\mathcal U, \ \mathcal V)^\perp)$.  We will prove $\Delta(\mathcal U, \ \mathcal V)^{\bot} \subseteq \left(\begin{smallmatrix}\mathcal X\\ \mathcal Y\end{smallmatrix}\right).$ By Lemma \ref{desdelta}(1) one has
$$\Delta(\mathcal U, \ \mathcal V)^{\bot} = [\Delta({}^\perp\mathcal X, \ {}^\perp\mathcal Y)]^{\bot} = [^\perp{\rm Z}_A(\mathcal X) \ \cap \ ^\perp{\rm Z}_B(\mathcal Y)]^{\bot}.$$
Since by assumption \ $\Tor^A_1(M, \ \mathcal U) =0 = \Tor^B_1(N, \ \mathcal V)$, it follows from Lemma \ref{destheta}(1) that
$$\left(\begin{smallmatrix} \mathcal X\\ \mathcal Y\end{smallmatrix}\right) = \left(\begin{smallmatrix} \mathcal U^\perp \\ \mathcal V^\perp \end{smallmatrix}\right)
= {\rm T}_A(\mathcal U)^\perp \ \cap \ {\rm T}_B(\mathcal V)^\perp = ({\rm T}_A(\mathcal U) \ \cup \ {\rm T}_B(\mathcal V))^\perp.$$
Thus, to show \ $\Delta(\mathcal U, \ \mathcal V)^{\bot} \subseteq \left(\begin{smallmatrix}\mathcal X\\ \mathcal Y\end{smallmatrix}\right),$ it suffices to show
$${\rm T}_A(\mathcal U) \ \cup \ {\rm T}_B(\mathcal V) \subseteq \ ^\perp{\rm Z}_A(\mathcal X)\ \cap \ ^\perp{\rm Z}_B(\mathcal Y).$$

In fact, since $N\otimes_BM = 0$, the structure map $g = 0$ of any $\Lambda$-module in ${\rm T}_A(\mathcal U)$ is a monomorphism,
it follows from Lemma \ref{extadj2}(1) that
$$\Ext^1_{\Lambda}({\rm T}_A(\mathcal U), \ {\rm Z}_A(\mathcal X)) \cong \Ext^1_A({\rm C}_A{\rm T}_A(\mathcal U), \ \mathcal X) = \Ext^1_A(\mathcal U, \ \mathcal X) = 0.$$
By Lemma \ref{extadj2}(2) one has
$$\Ext^1_{\Lambda}({\rm T}_A(\mathcal U), \ {\rm Z}_B(\mathcal Y)) \cong \Ext^1_A({\rm C}_B{\rm T}_A(\mathcal U), \ \mathcal Y) = 0$$
since ${\rm C}_B{\rm T}_A = 0$.
So ${\rm T}_A(\mathcal U)\subseteq  \ ^\perp{\rm Z}_A(\mathcal X)\ \cap \ ^\perp{\rm Z}_B(\mathcal Y)$.

\vskip5pt

Similarly, by Lemma \ref{extadj2}(1) one has
$$\Ext^1_{\Lambda}({\rm T}_B(\mathcal V), \ {\rm Z}_A(\mathcal X)) \cong \Ext^1_A({\rm C}_A{\rm T}_B(\mathcal V), \ \mathcal X) = 0$$
since ${\rm C}_A{\rm T}_B = 0.$ Since $M\otimes_AN = 0$, the structure map $f=0$ of any $\Lambda$-module in ${\rm T}_B(\mathcal V)$ is a monomorphism,
it follows from Lemma \ref{extadj2}(2) that
$$\Ext^1_{\Lambda}({\rm T}_B(\mathcal V), \ {\rm Z}_B(\mathcal Y)) \cong \Ext^1_A({\rm C}_B{\rm T}_B(\mathcal V), \ \mathcal Y) = \Ext^1_A(\mathcal V, \ \mathcal Y) = 0.$$
So ${\rm T}_B(\mathcal V)\subseteq \ ^\perp{\rm Z}_A(\mathcal X)\ \cap \ ^\perp{\rm Z}_B(\mathcal Y)$. This completes the proof of (1).

\vskip5pt

(2) \  Comparing cotorsion pair \ $(\left(\begin{smallmatrix} \mathcal U\\ \mathcal V\end{smallmatrix}\right), \ \left(\begin{smallmatrix} \mathcal U\\ \mathcal V\end{smallmatrix}\right)^\perp)$ in Theorem \ref{ctp1}(2)
with  \ $(^{\perp}\nabla(\mathcal X, \ \mathcal Y), \ \nabla(\mathcal X, \ \mathcal Y))$ in Theorem \ref{ctp6}(2), we will prove \ $^{\bot}\nabla(\mathcal X, \ \mathcal Y) \subseteq \left(\begin{smallmatrix}\mathcal U\\ \mathcal V\end{smallmatrix}\right).$  This can be similarly done as (1). For convenience we include a brief justification. By Lemma \ref{desdelta}(2) one has
$$^{\bot}\nabla(\mathcal X, \ \mathcal Y) = \ ^{\bot}\nabla(\mathcal U^\perp, \ \mathcal V^\perp) = \ ^{\bot}[{\rm Z}_A(\mathcal U)^\perp \ \cap \ {\rm Z}_B(\mathcal V)^\perp].$$
By Lemma \ref{destheta}(2), one has
$$\left(\begin{smallmatrix} \mathcal U\\ \mathcal V\end{smallmatrix}\right) = \left(\begin{smallmatrix} ^{\bot}\mathcal X \\ ^{\bot} \mathcal Y\end{smallmatrix}\right) = \ ^{\bot}[{\rm H}_A(\mathcal X) \ \cup \ {\rm H}_B(\mathcal Y)].$$
So, it suffices to show \ ${\rm H}_A(\mathcal X) \ \cup \ {\rm H}_B(\mathcal Y) \subseteq {\rm Z}_A(\mathcal U)^\perp \ \cap \ {\rm Z}_B(\mathcal V)^\perp.$

\vskip5pt

In fact, since \ $\Ext_A^1(N, \ \mathcal X)=0$, it follows from Lemma \ref{extadj1}(3) that
$$\Ext^1_{\Lambda}({\rm Z}_A(\mathcal U), \ {\rm H}_A(\mathcal X)) \cong \Ext^1_A({\rm U}_A{\rm Z}_A(\mathcal U), \ \mathcal X) = \Ext^1_A(\mathcal U, \ \mathcal X) = 0$$
and
$$\Ext^1_{\Lambda}({\rm Z}_B(\mathcal V), \ {\rm H}_A(\mathcal X)) \cong \Ext^1_A({\rm U}_A{\rm Z}_B(\mathcal V), \ \mathcal X) = 0.$$
Thus ${\rm H}_A(\mathcal X)\subseteq {\rm Z}_A(\mathcal U)^\perp \ \cap \ {\rm Z}_B(\mathcal V)^\perp$.

\vskip5pt

Since \ $\Ext_B^1(M, \ \mathcal Y)=0$, it follows from Lemma \ref{extadj1}(4) that
$$\Ext^1_{\Lambda}({\rm Z}_A(\mathcal U), \ {\rm H}_B(\mathcal Y)) \cong \Ext^1_A({\rm U}_B{\rm Z}_A(\mathcal U), \ \mathcal Y) = 0$$
and
$$\Ext^1_{\Lambda}({\rm Z}_B(\mathcal V), \ {\rm H}_B(\mathcal Y)) \cong \Ext^1_A({\rm U}_B{\rm Z}_B(\mathcal V), \ \mathcal Y) = \Ext^1_A(\mathcal V, \ \mathcal Y) = 0,$$
which show ${\rm H}_B(\mathcal Y)\subseteq {\rm Z}_A(\mathcal U)^\perp \ \cap \ {\rm Z}_B(\mathcal V)^\perp$.
This completes the proof. \hfill $\square$

\section{\bf Identifications}

We will prove that the four constructions of cotorsion pairs, given in Theorem \ref{ctp1} and Theorem \ref{ctp6},
are pairwise generally different; and on the other hand, study the problem of identifications, i.e.,
we will show that, in many important cases, the cotorsion pairs in Series I coincide with the corresponding ones in Series II, and then we will get only two cotorsion pairs
$$(\Delta(\mathcal U, \ \mathcal V), \ \left(\begin{smallmatrix}\mathcal X\\ \mathcal Y\end{smallmatrix}\right))
\ \ \ \ \mbox{and} \ \ \ \ (\left(\begin{smallmatrix} \mathcal U\\ \mathcal V\end{smallmatrix}\right), \ \nabla(\mathcal X, \ \mathcal Y)).$$
Since the both cotorsion pairs are explicitly given,
they can be used in finding Hovey triples, i.e., the abelian model structures on Morita rings.

\subsection{Generally different cotorsion pairs}

For use in Section 6, we introduce the following notion.

\begin{defn} \label{difference} \ Let \ $\Omega$ be a class of Morita rings, $(\mathcal X, \ \mathcal Y)$ and \ $(\mathcal X', \ \mathcal Y')$  cotorsion pairs defined in $\Lambda\mbox{-}{\rm Mod}$, for arbitrary
Morita rings $\Lambda\in \Omega$.
We say that \ $(\mathcal X, \ \mathcal Y)$ and \ $(\mathcal X', \ \mathcal Y')$ are generally different, provided that there exist $\Lambda\in \Omega$, such that \ $(\mathcal X, \ \mathcal Y) \ne (\mathcal X', \ \mathcal Y')$ in $\Lambda\mbox{-}{\rm Mod}$.
\end{defn}

\begin{exm} \label{gdsame}  Generally different cotorsion pairs  could be the same for some special Morita rings, as the following example shows.

\vskip5pt

Let \ $\Omega = \{ \mbox{Morita ring} \ \Lambda = \left(\begin{smallmatrix}A  & N \\ M & B\end{smallmatrix}\right) \ | \ \phi= 0  = \psi, \ _BN \ \mbox{and} \ _AM \ \mbox{are projective}\}.$ Then
$(_\Lambda\mathcal P, \ \Lambda\mbox{\rm-Mod})$ and $(\binom{_A\mathcal P}{_B\mathcal P}, \ \binom{_A\mathcal P}{_B\mathcal P}^\perp)$ are cotorsion pairs in $\Lambda\mbox{-}{\rm Mod}$,  \ $\forall \ \Lambda\in \Omega$. See Theorem \ref{ctp1}(2).

\vskip5pt

If $M \ne 0$, then \ $\binom{A}{0}_{0, 0}\notin \ _\Lambda\mathcal P$. Thus \ $_\Lambda\mathcal P\ne \binom{_A\mathcal P}{_B\mathcal P}$ for
$\Lambda\in \Omega$ with $M\ne 0$. Hence
$(_\Lambda\mathcal P, \ \Lambda\mbox{\rm-Mod})$ and $(\binom{_A\mathcal P}{_B\mathcal P}, \ \binom{_A\mathcal P}{_B\mathcal P}^\perp)$ are generally different cotorsion pairs.
But they are the same for $\Lambda\in \Omega$ with  $M = 0 = N$.
\end{exm}

\subsection{The four cotorsion pairs are pairwise generally different} \ By Theorems \ref{ctp1} and \ref{ctp6},  the cotorsion pairs
$$(^\perp\left(\begin{smallmatrix}\mathcal X\\ \mathcal Y\end{smallmatrix}\right), \ \left(\begin{smallmatrix}\mathcal X\\ \mathcal Y\end{smallmatrix}\right)), \ \  \  \
(\Delta(\mathcal U, \ \mathcal V), \ \Delta(\mathcal U, \ \mathcal V)^\perp)$$
and
$$(\left(\begin{smallmatrix} \mathcal U\\ \mathcal V\end{smallmatrix}\right), \ \left(\begin{smallmatrix} \mathcal U\\ \mathcal V\end{smallmatrix}\right)^\perp),  \ \  \ \   (^{\perp}\nabla(\mathcal X, \ \mathcal Y), \ \nabla(\mathcal X, \ \mathcal Y))$$
are defined in $\Lambda$-Mod, \ $\forall \ \Lambda\in \Omega$, where $$\Omega = \{\Lambda = \left(\begin{smallmatrix} A & N \\
	M & A\end{smallmatrix}\right) \ | \ M\otimes_AN = 0 = N\otimes_BM, \ M_A \ \mbox{and} \ N_B \ \mbox{are flat}, \ _BM \ \mbox{and} \ _AN \ \mbox{are projective}\}.$$
We will show that the four cotorsion pairs are pairwise generally  different. For convenience, we will call the cotorsion pairs above the first, the second, the third, and the fourth cotorsion pairs.

\begin{exm} \label{ie} Let $A = B$ be the path algebra $k(1 \longrightarrow 2)$, where ${\rm char} \ k\ne 2$. Write the conjunction of paths from right to left. Thus $e_1Ae_2 = 0$ and $e_2Ae_1 \cong k$. Take $M = N = Ae_2\otimes_ke_1A$. Then $M\otimes_AN = 0 = N\otimes_AM$. Let  $\Lambda$ be the Morita ring $\left(\begin{smallmatrix} A & N \\
	M & A\end{smallmatrix}\right).$ Then $\Lambda\in \Omega.$

\vskip5pt

Note that  \  $_AM = \ _AN$ is isomorphic to the simple projective left $A$-module $Ae_2 = S_2$, and that \ $M_A = N_A$ is isomorphic to the simple projective right $A$-module $e_1A$.  Then \ $M\otimes_AAe_1\cong Ae_2\otimes_k (e_1A\otimes_AAe_1)  \cong S_2$. To see the left $A$-module structure on $\Hom_A(M, Ae_1)$, note that
$\Hom_A(M, Ae_1)\cong \Hom_A(Ae_2, Ae_1) \cong e_2A e_1 \cong k$ as $k$-spaces. For $f\in \Hom_A(M, Ae_1)$ given by $f(e_2\otimes_k e_1) = e_1$,  one has $e_1f = f$. Thus $\Hom_A(M, Ae_1)\cong S_1$ as left $A$-modules. The Auslander-Reiten quiver of $A$ is

$$\xymatrix@R=0.3cm@C=0.6cm{& Ae_1\ar[dr]^-\pi
\\ S_2\ar[ur]^-\sigma & & S_1}$$

\vskip5pt

Take \ $(\mathcal U, \mathcal X) = (A\mbox{-}{\rm Mod}, \ _A\mathcal I) = (\mathcal V, \ \mathcal Y)$. Note that $M\otimes_A\mathcal U \nsubseteq \mathcal Y, \ N\otimes_B\mathcal V \nsubseteq \mathcal X.$
Take \ $L = \left(\begin{smallmatrix}Ae_1\\ Ae_1\end{smallmatrix}\right)_{\sigma, \sigma}$.
Then $L\in {\rm Mon(\Lambda)} = \Delta(A\mbox{-}{\rm Mod}, \ A\mbox{-}{\rm Mod}) = \Delta(\mathcal U, \ \mathcal V)$ and $L\in \left(\begin{smallmatrix}_A\mathcal I\\ _A\mathcal I\end{smallmatrix}\right) =
\left(\begin{smallmatrix}\mathcal X\\ \mathcal Y\end{smallmatrix}\right).$
Consider the exact sequence of $\Lambda$-modules
$$0\longrightarrow \left(\begin{smallmatrix}Ae_1\\ Ae_1\end{smallmatrix}\right)_{\sigma, \sigma}
\stackrel{\left(\begin{smallmatrix} \binom{1}{0}\\ \binom{1}{0}\end{smallmatrix}\right)}\longrightarrow \left(\begin{smallmatrix} Ae_1\oplus Ae_1\\ Ae_1\oplus Ae_1\end{smallmatrix}\right)_{\left(\begin{smallmatrix}\sigma & \sigma\\ 0 & \sigma\end{smallmatrix}\right), \left(\begin{smallmatrix}\sigma & \sigma\\ 0 & \sigma\end{smallmatrix}\right)}
\stackrel{\left(\begin{smallmatrix} (0, 1) \\ (0,1)\end{smallmatrix}\right)}
\longrightarrow \left(\begin{smallmatrix}Ae_1\\ Ae_1\end{smallmatrix}\right)_{\sigma, \sigma}\longrightarrow 0.$$
This exact sequence does not split. In fact, if it splits, then
there is a $\Lambda$-map $\left(\begin{smallmatrix} (a,b) \\ (c,d)\end{smallmatrix}\right): \left(\begin{smallmatrix} Ae_1\oplus Ae_1\\ Ae_1\oplus Ae_1\end{smallmatrix}\right)_{\left(\begin{smallmatrix}\sigma & \sigma\\ 0 & \sigma\end{smallmatrix}\right), \left(\begin{smallmatrix}\sigma & \sigma\\ 0 & \sigma\end{smallmatrix}\right)}\longrightarrow \left(\begin{smallmatrix}Ae_1\\ Ae_1\end{smallmatrix}\right)_{\sigma, \sigma}$ such that
$\left(\begin{smallmatrix} (a,b) \\ (c,d)\end{smallmatrix}\right) \left(\begin{smallmatrix} \binom{1}{0}\\ \binom{1}{0}\end{smallmatrix}\right) = \left(\begin{smallmatrix} 1 \\ 1\end{smallmatrix}\right),$
i.e., $a = 1=c.$ Since the following diagrams
$$\xymatrix@R= 0.7cm{S_2\oplus S_2\ar[d]_-{\left(\begin{smallmatrix}\sigma & \sigma\\ 0 & \sigma\end{smallmatrix}\right)}\ar[r]^-{(1, b)} & S_2 \ar[d]^-{\sigma} \\
	Ae_1\oplus Ae_1 \ar[r]^-{(1, d)} & Ae_1}\qquad \qquad \qquad
\xymatrix@R= 0.7cm{S_2\oplus S_2\ar[d]_-{\left(\begin{smallmatrix}\sigma & \sigma\\ 0 & \sigma\end{smallmatrix}\right)}\ar[r]^-{(1, d)} & S_2 \ar[d]^-{\sigma} \\
	Ae_1\oplus Ae_1 \ar[r]^-{(1, b)} & Ae_1}$$
commute, \  $d+1 = b$ and  $b+1 = d$, which is a contradiction, since ${\rm char} \ k\ne 2$.

\vskip5pt

Thus \ ${\rm Ext}^1_\Lambda(L, L)\ne 0$. This means  \ $L\notin \ ^\perp\left(\begin{smallmatrix}_A\mathcal I\\ _A\mathcal I\end{smallmatrix}\right)$. Since $L\in {\rm Mon(\Lambda)}$,
\ $^\perp\left(\begin{smallmatrix}_A\mathcal I\\ _A\mathcal I\end{smallmatrix}\right) \ne {\rm Mon(\Lambda)}$.
Thus,  the first cotorsion pair
is not equal to the second one, i.e.,
$$(^\perp\left(\begin{smallmatrix}\mathcal X\\ \mathcal Y\end{smallmatrix}\right), \ \left(\begin{smallmatrix}\mathcal X\\ \mathcal Y\end{smallmatrix}\right)) =  ({}^\perp\left(\begin{smallmatrix}_A\mathcal I\\ _A\mathcal I\end{smallmatrix}\right), \ \left(\begin{smallmatrix}_A\mathcal I\\ _A\mathcal I\end{smallmatrix}\right)) \ne
({\rm Mon}(\Lambda), \ {\rm Mon}(\Lambda)^\bot) = (\Delta(\mathcal U, \ \mathcal V), \ \Delta(\mathcal U, \ \mathcal V)^\perp).$$

\vskip5pt

Since $\left(\begin{smallmatrix} \mathcal U\\ \mathcal V\end{smallmatrix}\right) = \left(\begin{smallmatrix} A\mbox{-}{\rm Mod}\\ A\mbox{-}{\rm Mod}\end{smallmatrix}\right) = \Lambda\mbox{-}{\rm Mod}$,
it follows that  $(\left(\begin{smallmatrix} \mathcal U\\ \mathcal V\end{smallmatrix}\right), \ \left(\begin{smallmatrix} \mathcal U\\ \mathcal V\end{smallmatrix}\right)^\perp) = (\Lambda\mbox{-}{\rm Mod}, \ _\Lambda\mathcal I).$
Since \ $L\notin \ ^\perp\left(\begin{smallmatrix}_A\mathcal I\\ _A\mathcal I\end{smallmatrix}\right)$,  the first cotorsion pair
is not equal to the third one:
$$(^\perp\left(\begin{smallmatrix}\mathcal X\\ \mathcal Y\end{smallmatrix}\right), \ \left(\begin{smallmatrix}\mathcal X\\ \mathcal Y\end{smallmatrix}\right)) =  ({}^\perp\left(\begin{smallmatrix}_A\mathcal I\\ _A\mathcal I\end{smallmatrix}\right), \ \left(\begin{smallmatrix}_A\mathcal I\\ _A\mathcal I\end{smallmatrix}\right)) \ne
(\Lambda\mbox{-}{\rm Mod}, \ _\Lambda\mathcal I) = (\left(\begin{smallmatrix} \mathcal U\\ \mathcal V\end{smallmatrix}\right), \ \left(\begin{smallmatrix} \mathcal U\\ \mathcal V\end{smallmatrix}\right)^\perp).$$

\vskip5pt

Since $\left(\begin{smallmatrix} Ae_1\\ 0\end{smallmatrix}\right)\notin {\rm Mon}(\Lambda) = \Delta(\mathcal U, \ \mathcal V)$, the second cotorsion pair
is not equal to the third one:
$$(\Delta(\mathcal U, \ \mathcal V), \ \Delta(\mathcal U, \ \mathcal V)^\perp)
\ne
(\Lambda\mbox{-}{\rm Mod}, \ _\Lambda\mathcal I) = (\left(\begin{smallmatrix} \mathcal U\\ \mathcal V\end{smallmatrix}\right), \ \left(\begin{smallmatrix} \mathcal U\\ \mathcal V\end{smallmatrix}\right)^\perp).$$

\vskip5pt

By definition $\nabla(\mathcal X, \ \mathcal Y) = \nabla(_A\mathcal I, \ _A\mathcal I) = \ _\Lambda \mathcal I$. Thus
$(^{\perp}\nabla(\mathcal X, \ \mathcal Y), \ \nabla(\mathcal X, \ \mathcal Y)) = (\Lambda\mbox{-}{\rm Mod}, \ _\Lambda \mathcal I)$, i.e., the fourth cotorsion pair is exactly
third cotorsion pair. Therefore, the first cotorsion pair is not equal to the fourth one, and  the second cotorsion pair
is not equal to the fourth one.

\vskip5pt

Finally, to see  the third cotorsion pair is not equal to the fourth one, namely,
$$(\left(\begin{smallmatrix} \mathcal U\\ \mathcal V\end{smallmatrix}\right), \ \left(\begin{smallmatrix} \mathcal U\\ \mathcal V\end{smallmatrix}\right)^\perp)\ne  (^{\perp}\nabla(\mathcal X, \ \mathcal Y), \ \nabla(\mathcal X, \ \mathcal Y))$$
we take $(\mathcal U, \mathcal X) = (_A\mathcal P, \ A\mbox{-}{\rm Mod}) = (\mathcal V, \ \mathcal Y)$.
Note that  \
$\Hom_B(M, \ \mathcal Y)\nsubseteq \mathcal U, \ \Hom_A(N, \ \mathcal X) \nsubseteq \mathcal V.$

\vskip5pt

Take $L = \left(\begin{smallmatrix}Ae_1\\ Ae_1\end{smallmatrix}\right)_{\sigma, \sigma}$ as above.
Then $L\in \left(\begin{smallmatrix}_A\mathcal P\\ _A\mathcal P\end{smallmatrix}\right) =
\left(\begin{smallmatrix}\mathcal U\\ \mathcal V\end{smallmatrix}\right).$
Since $\widetilde{\sigma}: Ae_1 \longrightarrow \Hom_A(M, Ae_1)\cong S_1$ is exactly the epimorphism $\pi: Ae_1\longrightarrow S_1$, by definition
$L\in {\rm Epi(\Lambda)} = \nabla (A\mbox{-}{\rm Mod}, \ A\mbox{-}{\rm Mod}) = \nabla(\mathcal X, \ \mathcal Y)$.
Since \ ${\rm Ext}^1_\Lambda(L, L)\ne 0$, \ $L\notin  \left(\begin{smallmatrix}_A\mathcal P\\ _A\mathcal P\end{smallmatrix}\right)^\perp$. Thus
$\nabla(\mathcal X, \ \mathcal Y)\ne \left(\begin{smallmatrix}_A\mathcal P\\ _A\mathcal P\end{smallmatrix}\right)^\perp$, and hence  the third cotorsion pair is not equal to the fourth one:
$$(\left(\begin{smallmatrix} \mathcal U\\ \mathcal V\end{smallmatrix}\right), \ \left(\begin{smallmatrix} \mathcal U\\ \mathcal V\end{smallmatrix}\right)^\perp) = (\left(\begin{smallmatrix}_A\mathcal P\\ _A\mathcal P\end{smallmatrix}\right), \ \left(\begin{smallmatrix}_A\mathcal P\\ _A\mathcal P\end{smallmatrix}\right)^\perp)\ne  (^{\perp}\nabla(\mathcal X, \ \mathcal Y), \ \nabla(\mathcal X, \ \mathcal Y)).$$

All together, we have proved that the four cotorsion pairs are pairwise generally different.
In fact, we have found an example $\Lambda$, such that the four constructions of cotorsion pairs in $\Lambda$-Mod
are pairwise different.
\end{exm}

\subsection{Main results on identification}

\begin{thm}\label{identify1} Let \ $\Lambda = \left(\begin{smallmatrix} A & N \\ M & B \end{smallmatrix}\right)$ be a Morita ring with \ $M\otimes_A N = 0 = N\otimes_BM$, \ $(\mathcal U, \ \mathcal X)$ and $(\mathcal V, \ \mathcal Y)$ be cotorsion pairs in $A\mbox{-}{\rm Mod}$ and in $B$\mbox{\rm-Mod}, respectively.

\vskip5pt

$(1)$ \ Assume that \ $\Tor^A_1(M, \ \mathcal U) =0 = \Tor^B_1(N, \ \mathcal V)$. If
$M\otimes_A\mathcal U \subseteq \mathcal Y$ or \ $N\otimes_B\mathcal V \subseteq \mathcal X$, then
$$(\Delta(\mathcal U, \ \mathcal V), \ \Delta(\mathcal U, \ \mathcal V)^\bot) =
(^\perp\left(\begin{smallmatrix} \mathcal X\\ \mathcal Y\end{smallmatrix}\right), \ \left(\begin{smallmatrix} \mathcal X \\ \mathcal Y\end{smallmatrix}\right)).$$
Thus \ \ $(\Delta(\mathcal U, \ \mathcal{V}), \ \left(\begin{smallmatrix} \mathcal X\\ \mathcal{Y}\end{smallmatrix}\right))$ \  is a cotorsion pair in $\Lambda\mbox{-}{\rm Mod}$.

\vskip5pt

Moreover, if $M\otimes_A\mathcal U \subseteq \mathcal Y$ and \ $N\otimes_B\mathcal V \subseteq \mathcal X$, then
\ $\Delta(\mathcal U, \ \mathcal V) = {\rm T}_A(\mathcal U)\oplus {\rm T}_B(\mathcal V).$

\vskip10pt

$(2)$ \ Assume that \ $\Ext_B^1(M, \ \mathcal Y) =0 = \Ext_A^1(N, \ \mathcal X)$. If \
$\Hom_B(M, \ \mathcal Y)\subseteq \mathcal U$ or \ $\Hom_A(N, \ \mathcal X) \subseteq \mathcal V$, then
$$(^\perp\nabla(\mathcal X, \ \mathcal Y), \ \nabla(\mathcal X, \ \mathcal Y)) =
(\left(\begin{smallmatrix} \mathcal U\\ \mathcal V\end{smallmatrix}\right), \ \left(\begin{smallmatrix} \mathcal U \\ \mathcal V\end{smallmatrix}\right)^\perp).$$
Thus \ \ $(\left(\begin{smallmatrix} \mathcal U \\ \mathcal V\end{smallmatrix}\right), \ \nabla(\mathcal X, \ \mathcal Y))$ \  is a cotorsion pair in $\Lambda\mbox{-}{\rm Mod}$.

\vskip5pt

Moreover, if \ $\Hom_B(M, \ \mathcal Y)\subseteq \mathcal U$ and \ $\Hom_A(N, \ \mathcal X) \subseteq \mathcal V$, then
\ $\nabla(\mathcal X, \ \mathcal Y)
={\rm H}_A(\mathcal X)\oplus {\rm H}_B(\mathcal Y).$
\end{thm}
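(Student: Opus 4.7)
The plan leverages Theorem \ref{compare}(1), which already yields $^\perp\binom{\mathcal X}{\mathcal Y} \subseteq \Delta(\mathcal U, \mathcal V)$. To identify the two cotorsion pairs it therefore suffices to prove the reverse inclusion $\Delta(\mathcal U, \mathcal V) \subseteq {}^\perp\binom{\mathcal X}{\mathcal Y}$. Fix $L = \binom{L_1}{L_2}_{f,g} \in \Delta(\mathcal U, \mathcal V)$ and set $U' = \Coker(g) \in \mathcal U$, $V' = \Coker(f) \in \mathcal V$. Applying $M\otimes_A -$ to $0 \to N\otimes_B L_2 \to L_1 \to U' \to 0$ and using $M\otimes_A N = 0$ together with $\Tor^A_1(M, U') = 0$ gives $M\otimes_A L_1 \cong M\otimes_A U'$; the dual argument gives $N\otimes_B L_2 \cong N\otimes_B V'$.

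Assume first $M\otimes_A \mathcal U \subseteq \mathcal Y$, so $M\otimes_A L_1 \cong M\otimes_A U' \in \mathcal Y$. Then the exact sequence $0 \to M\otimes_A L_1 \to L_2 \to V' \to 0$ splits by the cotorsion pair $(\mathcal V, \mathcal Y)$; choose a $B$-linear section $\iota\colon V' \to L_2$. Using $M\otimes_A N = 0 = N\otimes_B M$, one verifies that $\binom{g}{\iota}\colon {\rm T}_B(V') \to L$ is a $\Lambda$-map, is injective on both components, and has cokernel whose induced structure maps collapse precisely to those of ${\rm T}_A(U')$, yielding the short exact sequence $0 \to {\rm T}_B(V') \to L \to {\rm T}_A(U') \to 0$. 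For any $K \in \binom{\mathcal X}{\mathcal Y}$, applying $\Hom_\Lambda(-, K)$ and invoking Lemma \ref{extadj1}(1),(2) (whose Tor-vanishing hypotheses hold because $U' \in \mathcal U$ and $V' \in \mathcal V$) gives $\Ext^1_\Lambda({\rm T}_A(U'), K) \cong \Ext^1_A(U', X) = 0$ and $\Ext^1_\Lambda({\rm T}_B(V'), K) \cong \Ext^1_B(V', Y) = 0$, forcing $\Ext^1_\Lambda(L, K) = 0$. Under the symmetric condition $N\otimes_B \mathcal V \subseteq \mathcal X$, one instead splits $0 \to N\otimes_B L_2 \to L_1 \to U' \to 0$ and builds the mirror sequence $0 \to {\rm T}_A(U') \to L \to {\rm T}_B(V') \to 0$, after which the same Ext computation applies.

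For the ``moreover'' clause, with both conditions in force, Lemma \ref{extadj1}(1) gives $\Ext^1_\Lambda({\rm T}_A(U'), {\rm T}_B(V')) \cong \Ext^1_A(U', N\otimes_B V') = 0$ (since $N\otimes_B V' \in \mathcal X$), so the above short exact sequence splits and $L \cong {\rm T}_A(U') \oplus {\rm T}_B(V')$; the reverse inclusion ${\rm T}_A(\mathcal U) \oplus {\rm T}_B(\mathcal V) \subseteq \Delta(\mathcal U, \mathcal V)$ is immediate from the definitions. Part (2) proceeds entirely dually, using the second expression of $\Lambda$-modules: for $L \in \nabla(\mathcal X, \mathcal Y)$ with $X' = \Ker(\widetilde f)$ and $Y' = \Ker(\widetilde g)$, the hypothesis $\Ext^1_B(M, \mathcal Y) = 0$ combined with $N\otimes_B M = 0$ yields $\Hom_B(M, L_2) \cong \Hom_B(M, Y')$, and the condition $\Hom_B(M, \mathcal Y) \subseteq \mathcal U$ splits $0 \to X' \to L_1 \to \Hom_B(M, L_2) \to 0$; this produces a short exact sequence $0 \to {\rm H}_A(X') \to L \to {\rm H}_B(Y') \to 0$ on which $\Hom_\Lambda(K, -)$ combined with Lemma \ref{extadj1}(3),(4) concludes the argument. \textbf{The main obstacle} is the meticulous verification, making essential and repeated use of $M\otimes_A N = 0 = N\otimes_B M$, that the candidate maps are genuinely $\Lambda$-linear and that the resulting cokernel (or kernel, in part (2)) collapses exactly to ${\rm T}_A(U')$ (resp.\ ${\rm H}_B(Y')$) with the expected structure maps.
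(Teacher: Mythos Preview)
Your proposal is correct and follows the same route as the paper: invoke Theorem \ref{compare}(1) for one inclusion, then for $L\in\Delta(\mathcal U,\mathcal V)$ use $M\otimes_AN=0=N\otimes_BM$ to identify $M\otimes_AL_1\cong M\otimes_AU'$, split the sequence $0\to M\otimes_AL_1\to L_2\to V'\to 0$, build the short exact sequence $0\to{\rm T}_B(V')\to L\to{\rm T}_A(U')\to 0$, and finish via Lemma \ref{extadj1}; part (2) is handled dually in the second expression. Two minor remarks: your map ``$\binom{g}{\iota}$'' should strictly be $\binom{g(1_N\otimes\iota)}{\iota}$ (equivalently, $g$ precomposed with the isomorphism $N\otimes_BV'\cong N\otimes_BL_2$), and for the ``moreover'' clause the paper instead notes that under both hypotheses \emph{both} module-level sequences split and reads off $L\cong{\rm T}_A(U')\oplus{\rm T}_B(V')$ directly---your splitting via $\Ext^1_\Lambda({\rm T}_A(U'),{\rm T}_B(V'))\cong\Ext^1_A(U',N\otimes_BV')=0$ is a legitimate and slightly slicker variant.
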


\subsection{Applications} In Theorem \ref{identify1}, taking one of $(\mathcal U, \ \mathcal X)$  and
$(\mathcal V, \ \mathcal Y)$ being the projective cotorsion pair or the injective cotorsion pair,
and another being an arbitrary cotorsion pair, one has

\begin{cor}\label{identification1} \ Let \ $\Lambda = \left(\begin{smallmatrix} A & N \\ M & B \end{smallmatrix}\right)$ be a Morita ring with \ $M\otimes_A N = 0 = N\otimes_BM$.

\vskip10pt

$(1)$ \ If  \ $N_B$ is flat, then for any
cotorsion  pair \ $(\mathcal V, \ \mathcal Y)$ in $B\mbox{-}{\rm Mod}$
one has
$$(\Delta(_A\mathcal P, \ \mathcal V), \ \Delta(_A\mathcal P, \ \mathcal V)^\bot) =
(^\perp\left(\begin{smallmatrix} A\text{\rm\rm-Mod}\\ \mathcal Y\end{smallmatrix}\right), \ \left(\begin{smallmatrix} A\text{\rm-Mod} \\ \mathcal Y\end{smallmatrix}\right)).$$
Thus \ \ $(\Delta(_A\mathcal P, \ \mathcal{V}), \ \left(\begin{smallmatrix} A\text{\rm-Mod}\\ \mathcal{Y}\end{smallmatrix}\right))$ \  is a cotorsion pair in $\Lambda\mbox{-}{\rm Mod}$.

\vskip5pt

Moreover, if \ $M\otimes_A \mathcal P \subseteq \mathcal Y$  $($e.g., this is the case if $_BM$ is injective$)$,
then
\ $\Delta(_A\mathcal P, \ \mathcal V) = {\rm T}_A(_A\mathcal P)\oplus {\rm T}_B(\mathcal V).$

\vskip5pt

$(2)$ \ If $M_A$ is flat, then for any
cotorsion  pair \  $(\mathcal U, \ \mathcal X)$ in $A\mbox{-}{\rm Mod}$ one has
$$(\Delta(\mathcal U, \ _B\mathcal P), \ \Delta(\mathcal U, \ _B\mathcal P)^\bot) =
({}^\perp\left(\begin{smallmatrix}\mathcal X\\ B\mbox{-}{\rm Mod}\end{smallmatrix}\right),  \ \left(\begin{smallmatrix}\mathcal X\\ B\mbox{-}{\rm Mod}\end{smallmatrix}\right)).$$
Thus \ $(\Delta(\mathcal{U}, \ _B\mathcal P), \ \binom{\mathcal{X}}{B\text{\rm-Mod}})$ is a cotorsion pair in $\Lambda\mbox{-}{\rm Mod}$.

\vskip5pt
Moreover, if \ $N\otimes_B \mathcal P\subseteq \mathcal X$ $($e.g., this is the case if $_AN$ is injective$)$, then
\ $\Delta(\mathcal U, \ _B\mathcal P)= {\rm T}_A(\mathcal U)\oplus {\rm T}_B(_B\mathcal P).$

\vskip5pt

$(3)$ \ If \ $_BM$ is projective, then for any cotorsion pair $(\mathcal V, \ \mathcal Y)$ in \ $B\mbox{-}{\rm Mod}$ one has
$$(^\bot\nabla(_A\mathcal I, \ \mathcal Y), \ \nabla(_A\mathcal I, \ \mathcal Y))
= (\left(\begin{smallmatrix} A\mbox{-}{\rm Mod}\\ \mathcal V\end{smallmatrix}\right),  \ \left(\begin{smallmatrix}A\mbox{-}{\rm Mod}\\ \mathcal V\end{smallmatrix}\right)^\perp).$$
Thus \  $(\binom{A\text{\rm-Mod}}{_B\mathcal V}, \ \nabla(_A\mathcal I, \ \mathcal Y))$ is a cotorsion pair in $\Lambda\mbox{-}{\rm Mod}$.

\vskip5pt

Moreover, if \ $\Hom_A(N, \ _A\mathcal I)\subseteq \mathcal V$ \ $($e.g., this is the case if \ $B$ is quasi-Frobenius and $N_B$ is flat$)$, then
 \ $\nabla(_A\mathcal I, \ \mathcal Y) ={\rm H}_A(_A\mathcal I)\oplus {\rm H}_B(\mathcal Y).$

\vskip5pt

$(4)$  \  If $_AN$ is projective, then for any cotorsion pair
$(\mathcal U, \ \mathcal X)$ in $A\mbox{-}{\rm Mod}$ one has
$$(^\bot\nabla(\mathcal X, \ _B\mathcal I), \ \nabla(\mathcal X, \ _B\mathcal I)) = (\left(\begin{smallmatrix}\mathcal U\\ B\mbox{-}{\rm Mod}\end{smallmatrix}\right),  \ \left(\begin{smallmatrix}\mathcal U\\ B\mbox{-}{\rm Mod}\end{smallmatrix}\right)^\perp).$$
Thus $(\binom{\mathcal U}{B\text{\rm-Mod}}, \ \nabla(_A\mathcal X, \ _B\mathcal I))$ is a cotorsion pair in $\Lambda\mbox{-}{\rm Mod}$.

\vskip5pt

Moreover, if \ $\Hom_B(M, \ _B\mathcal I)\subseteq \mathcal U$ \ $($e.g., this is the case if \ $A$ is quasi-Frobenius and $M_A$ is flat$)$,  then \ $\nabla(\mathcal X, \ _B\mathcal I) = {\rm H}_A(\mathcal X)\oplus {\rm H}_B(_B\mathcal I).$
\end{cor}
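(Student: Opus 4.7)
The plan is to apply Theorem~\ref{identify1} directly to each of the four cases, specializing one of the input cotorsion pairs to the projective cotorsion pair $({}_A\mathcal{P}, A\mbox{-}{\rm Mod})$ in parts (1), (2), or to the injective cotorsion pair $(A\mbox{-}{\rm Mod}, {}_A\mathcal{I})$ in parts (3), (4). In each case, one of the two side conditions that appear in Theorem~\ref{identify1} (``$M\otimes_A\mathcal{U}\subseteq \mathcal{Y}$ or $N\otimes_B\mathcal{V}\subseteq \mathcal{X}$'' for part (1), and ``$\Hom_B(M,\mathcal{Y})\subseteq\mathcal{U}$ or $\Hom_A(N,\mathcal{X})\subseteq\mathcal{V}$'' for part (2)) becomes tautologically true once the specialization is made, so the identification of cotorsion pairs follows with no extra hypothesis beyond those stated.

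For part (1), with $(\mathcal U, \mathcal X) = ({}_A\mathcal P, A\mbox{-}{\rm Mod})$, the vanishing $\Tor^A_1(M, {}_A\mathcal P)=0$ is automatic, $\Tor^B_1(N,\mathcal V)=0$ follows from the flatness of $N_B$, and $N\otimes_B\mathcal V\subseteq A\mbox{-}{\rm Mod}=\mathcal X$ is trivially true. Hence Theorem~\ref{identify1}(1) applies and yields the claimed equality. For the ``moreover'' clause, the extra assumption $M\otimes_A{}_A\mathcal P\subseteq \mathcal Y$ supplies the remaining tensor inclusion, and the last line of Theorem~\ref{identify1}(1) then produces the direct sum decomposition $\Delta({}_A\mathcal P,\mathcal V)={\rm T}_A({}_A\mathcal P)\oplus {\rm T}_B(\mathcal V)$. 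The parenthetical ``e.g.'' follows from the observation that any projective $_AP$ is a summand of a free module $A^{(I)}$, so $M\otimes_A P$ is a summand of $M^{(I)}$; when $B$ is left noetherian and ${}_BM$ is injective, $M^{(I)}$ remains injective, hence $M\otimes_A P$ is injective and a fortiori lies in $\mathcal Y$ (since $\mathcal Y$ always contains ${}_B\mathcal I$). Part (2) is obtained by interchanging the roles of $A,B$ and of $M,N$.

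For part (3), with $(\mathcal U, \mathcal X) = (A\mbox{-}{\rm Mod}, {}_A\mathcal I)$, projectivity of ${}_BM$ yields $\Ext^1_B(M,\mathcal Y)=0$, while $\Ext^1_A(N,{}_A\mathcal I)=0$ is automatic, and $\Hom_B(M,\mathcal Y)\subseteq A\mbox{-}{\rm Mod}=\mathcal U$ trivially holds. Theorem~\ref{identify1}(2) therefore applies. For the ``moreover'' clause, the assumption $\Hom_A(N,{}_A\mathcal I)\subseteq \mathcal V$ supplies the remaining Hom inclusion, and the last line of Theorem~\ref{identify1}(2) gives $\nabla({}_A\mathcal I,\mathcal Y)={\rm H}_A({}_A\mathcal I)\oplus {\rm H}_B(\mathcal Y)$. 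The parenthetical sufficient condition follows from the natural adjunction $\Hom_B(-,\Hom_A(N,I))\cong \Hom_A(N\otimes_B -,I)$: flatness of $N_B$ together with injectivity of $I$ over $A$ makes the right-hand functor exact, so $\Hom_A(N,I)$ is an injective $B$-module; if $B$ is quasi-Frobenius, injective $B$-modules are projective, so $\Hom_A(N,I)\in {}_B\mathcal P\subseteq \mathcal V$. Part (4) is the mirror image of (3) obtained by interchanging the roles of $A,B$ and of $M,N$.

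The whole argument is essentially bookkeeping layered on top of Theorem~\ref{identify1}, so no substantial obstacle arises. The only points that require a bit of care are the two parenthetical ``e.g.'' conditions, which reduce respectively to the classical fact that direct sums of injective modules over a left noetherian ring are injective, and to the tensor--Hom adjunction combined with the self-duality of quasi-Frobenius rings.
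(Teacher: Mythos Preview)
Your proposal is correct and follows essentially the same approach as the paper: specialize one of the cotorsion pairs in Theorem~\ref{identify1} to the projective pair $({}_A\mathcal P, A\text{-Mod})$ (respectively the injective pair $(A\text{-Mod}, {}_A\mathcal I)$), observe that one of the two alternative inclusion conditions becomes tautological, and then verify the parenthetical ``e.g.'' conditions via the noetherian/direct-sum-of-injectives argument and the tensor--Hom adjunction combined with quasi-Frobenius self-duality. The paper's proof is identical in structure and detail.
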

\begin{proof}
(1) \ Taking \ $(\mathcal U, \ \mathcal X) = (_A\mathcal P, \ A\mbox{-}{\rm Mod})$ in Theorem \ref{identify1}(1). Then
\ $N\otimes_B\mathcal V \subseteq A\mbox{-}{\rm Mod} = \mathcal X$. By Theorem \ref{identify1}(1) one has
\ $(\Delta(_A\mathcal P, \ \mathcal V), \ \Delta(_A\mathcal P, \ \mathcal V)^\bot) =
(^\perp\left(\begin{smallmatrix} A\text{\rm\rm-Mod}\\ \mathcal Y\end{smallmatrix}\right), \ \left(\begin{smallmatrix} A\text{\rm-Mod} \\ \mathcal Y\end{smallmatrix}\right)).$

\vskip5pt

If  $M\otimes_A \mathcal P \subseteq \mathcal Y$, i.e.,
$M\otimes_A\mathcal U = M\otimes_A\mathcal P \subseteq \mathcal Y$,
then by Theorem \ref{identify1}(1),
$\Delta(_A\mathcal P, \ \mathcal V) = {\rm T}_A(_A\mathcal P)\oplus {\rm T}_B(\mathcal V).$

\vskip5pt

Assume that $_BM$ is injective.  For any $P\in \ _A\mathcal P$,
as a left $B$-module, \ $M\otimes_AP$ is a direct summand of
a direct sum of copies of $_BM$. Since $_BM$ is injective and $B$ is left noetherian (note that $\Lambda$ is assumed to be an Artin algebra, hence $B$ is an Artin algebra),
$M\otimes_AP$ is an injective left $B$-module, and hence it is in \ $\mathcal Y$. Thus
$M\otimes_A\mathcal P \subseteq \mathcal Y$, and hence
$\Delta(_A\mathcal P, \ \mathcal V) = {\rm T}_A(_A\mathcal P)\oplus {\rm T}_B(\mathcal V),$ by Theorem \ref{identify1}(1).

\vskip10pt
    	
(2) \ Taking \ $(\mathcal V, \ \mathcal Y) = (_B\mathcal P, \ B\mbox{-}{\rm Mod})$ in Theorem \ref{identify1}(1). Then
\ $M\otimes_A\mathcal U \subseteq B\mbox{-}{\rm Mod} = \mathcal Y$. By Theorem \ref{identify1}(1) one has
\ $(\Delta(\mathcal U, \ _B\mathcal P), \ \Delta(\mathcal U, \ _B\mathcal P)^\bot) =
({}^\perp\left(\begin{smallmatrix}\mathcal X\\ B\mbox{-}{\rm Mod}\end{smallmatrix}\right),  \ \left(\begin{smallmatrix}\mathcal X\\ B\mbox{-}{\rm Mod}\end{smallmatrix}\right)).$

\vskip5pt

If  $N\otimes_B\mathcal P\subseteq \mathcal X$, then by Theorem \ref{identify1}(1),
$\Delta(\mathcal U, \ _B\mathcal P)= {\rm T}_A(\mathcal U)\oplus {\rm T}_B(_B\mathcal P).$

\vskip5pt

Assume that \ $A$ is  $_AN$ is injective. Then
\ $N\otimes_B\mathcal P \subseteq \ _A\mathcal I \subseteq \mathcal X,$ and hence
$\Delta(\mathcal U, \ _B\mathcal P)= {\rm T}_A(\mathcal U)\oplus {\rm T}_B(_B\mathcal P).$

\vskip10pt

(3) \ Taking \ $(\mathcal U, \ \mathcal X) = (A\mbox{-}{\rm Mod}, \ _A\mathcal I)$ in Theorem \ref{identify1}(2). Then
\ $\Hom_B(M, \ \mathcal Y) \subseteq A\mbox{-}{\rm Mod} = \mathcal U.$
 By Theorem \ref{identify1}(2) one has
\ $(^\bot\nabla(_A\mathcal I, \ \mathcal Y), \ \nabla(_A\mathcal I, \ \mathcal Y))
= (\left(\begin{smallmatrix} A\mbox{-}{\rm Mod}\\ \mathcal V\end{smallmatrix}\right),  \ \left(\begin{smallmatrix}A\mbox{-}{\rm Mod}\\ \mathcal V\end{smallmatrix}\right)^\perp).$

\vskip5pt If  \ $\Hom_A(N, \ _A\mathcal I)\subseteq \mathcal V$,
then by Theorem \ref{identify1}(2), $\nabla(_A\mathcal I, \ \mathcal Y) ={\rm H}_A(_A\mathcal I)\oplus {\rm H}_B(\mathcal Y)$.

\vskip5pt

Assume that $B$ is quasi-Frobenius and $N_B$ is flat. Then
\ $\Hom_A(N, \ _A\mathcal I)\subseteq \ _B\mathcal I = \ _B\mathcal P\subseteq \mathcal V$, and thus
\ $\nabla(_A\mathcal I, \ \mathcal Y)
={\rm H}_A(_A\mathcal I)\oplus {\rm H}_B(\mathcal Y)$,  by Theorem \ref{identify1}(2).

\vskip10pt

(4) \ Taking $(\mathcal V, \ \mathcal Y)  = (B\mbox{-}{\rm Mod}, \ _B\mathcal I)$ in Theorem \ref{identify1}(2). Then
 \ $\Hom_A(N, \ \mathcal X) \subseteq B \mbox{-}{\rm Mod} = \mathcal V.$
 By Theorem \ref{identify1}(2) one has
\ $(^\bot\nabla(\mathcal X, \ _B\mathcal I), \ \nabla(\mathcal X, \ _B\mathcal I)) = (\left(\begin{smallmatrix}\mathcal U\\ B\mbox{-}{\rm Mod}\end{smallmatrix}\right),  \ \left(\begin{smallmatrix}\mathcal U\\ B\mbox{-}{\rm Mod}\end{smallmatrix}\right)^\perp).$

\vskip5pt

If  \ $\Hom_B(M, \ _B\mathcal I)\subseteq \mathcal U$,
then by Theorem \ref{identify1}(2), \  $\nabla(\mathcal X, \ _B\mathcal I)={\rm H}_A(\mathcal X)\oplus {\rm H}_B(_B\mathcal I)$.

\vskip5pt

Assume that  $A$ is quasi-Frobenius and $M_A$ is flat.     	
Then \ $\Hom_B(M, \ _B\mathcal I)\subseteq  \ _A\mathcal I = \ _A\mathcal P\subseteq \mathcal U,$
and hence  \  $\nabla(\mathcal X, \ _B\mathcal I)={\rm H}_A(\mathcal X)\oplus {\rm H}_B(_B\mathcal I)$.
\end{proof}

\vskip5pt

\subsection{Proof of Theorem \ref{identify1}} \ (1) \ By Theorem \ref{compare}(1),  one has cotorsion pairs
$$(^\perp\left(\begin{smallmatrix} \mathcal X\\ \mathcal Y\end{smallmatrix}\right), \ \left(\begin{smallmatrix} \mathcal X \\ \mathcal Y\end{smallmatrix}\right)), \ \ \ \ \ (\Delta(\mathcal U, \ \mathcal V), \ \Delta(\mathcal U, \ \mathcal V)^\perp)$$
with \ $^\bot\left(\begin{smallmatrix}\mathcal X \\ \mathcal Y\end{smallmatrix}\right)\subseteq \Delta(\mathcal U, \ \mathcal V)$.
To see that they are equal, it remains to prove $\Delta(\mathcal U, \ \mathcal V)\subseteq \ ^\perp\left(\begin{smallmatrix} \mathcal X \\ \mathcal Y\end{smallmatrix}\right)$.
 	
\vskip5pt

Let \ $L = \left(\begin{smallmatrix} L_1 \\ L_2\end{smallmatrix}\right)_{f, g}\in \Delta(\mathcal U, \ \mathcal V)$. By definition there are exact sequences
$$0\rightarrow M\otimes_AL_1\xlongrightarrow{f}L_2\xlongrightarrow{p_1}\Coker f\rightarrow 0 \ \ \ \mbox{and} \ \ \
0\rightarrow N\otimes_BL_2\xlongrightarrow{g}L_1\xlongrightarrow {p_2}\Coker g\rightarrow 0$$
with $\Coker f\in \ \mathcal V$ and $\Coker g\in \ \mathcal U$. Since $N\otimes_BM = 0 = M\otimes_AN$, it follows that
$$1\otimes p_1: N\otimes_B L_2 \longrightarrow N\otimes_B \Coker f \ \ \ \ \mbox{and} \ \ \
1\otimes p_2: M\otimes_A L_1 \longrightarrow M\otimes_A \Coker g$$ are isomorphisms.

\vskip5pt

{\bf Case I:} \ Assume that \ $M\otimes_A\mathcal U \subseteq \mathcal Y$. Then
$$M\otimes_A L_1\cong M\otimes_A\Coker g\in M\otimes_A\mathcal U \subseteq \mathcal Y.$$
Since $(\mathcal V, \ \mathcal Y)$ is a cotorsion pair, the exact sequence
$$0\longrightarrow M\otimes_AL_1\xlongrightarrow{f}L_2\xlongrightarrow{p_1}\Coker f\longrightarrow 0$$
splits. Thus there are $B$-maps $f': L_2\longrightarrow M\otimes_AL_1$ and $\sigma_1: \Coker f \longrightarrow L_2$
such that
$$f'f = 1_{M\otimes_AL_1}, \ \ \ p_1\sigma_1 = 1_{\Coker f}, \ \ \ ff' + \sigma_1p_1 = 1_{L_2}, \ \ f'\sigma_1 = 0.$$
Thus  \ $\left(\begin{smallmatrix} f' \\ p_1\end{smallmatrix}\right): L_2 \longrightarrow (M\otimes_AL_1)\oplus \Coker f$ is a $B$-isomorphism, and $$\left(\begin{smallmatrix} 1 \\ \left(\begin{smallmatrix} f' \\ p_1\end{smallmatrix}\right)\end{smallmatrix}\right): \ L = \left(\begin{smallmatrix} L_1 \\ L_2\end{smallmatrix}\right)_{f, g} \cong \left(\begin{smallmatrix} L_1 \\  (M\otimes_AL_1)\oplus \Coker f \end{smallmatrix}\right)_{\binom{1}{0}, \ g(1\otimes p_1)^{-1}}$$
is a $\Lambda$-isomorphism,  and  	
$$0\rightarrow \left(\begin{smallmatrix} N\otimes_B\Coker f \\ \Coker f\end{smallmatrix}\right)_{0,1}\xlongrightarrow{\left(\begin{smallmatrix} g(1\otimes p_1)^{-1} \\ \binom{0}{1}\end{smallmatrix}\right)}
\left(\begin{smallmatrix} L_1 \\ (M\otimes_AL_1)\oplus \Coker f \end{smallmatrix}\right)_{\binom{1}{0}, \ g(1\otimes  p_1)^{-1}}\xlongrightarrow{\left(\begin{smallmatrix} p_2 \\ (1\otimes p_2,0)\end{smallmatrix}\right)}
\left(\begin{smallmatrix} \Coker g \\ M\otimes_A\Coker g\end{smallmatrix}\right)_{1,0}\rightarrow 0$$
is an exact sequence of $\Lambda$-modules, i.e.,
$$0\longrightarrow {\rm T}_B\Coker f\longrightarrow \left(\begin{smallmatrix} L_1 \\ (M\otimes_AL_1)\oplus \Coker f \end{smallmatrix}\right)_{\binom{1}{0}, \ g(1\otimes  p_1)^{-1}}\longrightarrow {\rm T}_A\Coker g\longrightarrow 0$$
is exact.

\vskip5pt

Since $\Coker f\in \ \mathcal V$ and $(\mathcal V, \ \mathcal Y)$ is a cotorsion pair,
by Lemma \ref{extadj1}(2), ${\rm T}_B\Coker f\in \ ^\perp \binom{\mathcal{X}}{\mathcal Y}$.
Since $\Coker g \in \mathcal U$ and $(\mathcal U, \ \mathcal X)$ is a cotorsion pair, by Lemma \ref{extadj1}(1), ${\rm T}_A\Coker g\in \ ^\perp \binom{\mathcal{X}}{\mathcal Y}$.
Thus  $L\cong \left(\begin{smallmatrix} L_1 \\ (M\otimes_AL_1)\oplus \Coker f \end{smallmatrix}\right)_{\binom{1}{0}, \ g(1\otimes  p_1)^{-1}}\in \ ^\perp \binom{\mathcal{X}}{\mathcal Y}$.

\vskip5pt
    	
{\bf Case II:} \ Assume that \ $N\otimes_B\mathcal V \subseteq \mathcal X$.  \ This is similar to {\bf Case I}.
We include the main steps. Since $N\otimes_B L_2\cong N\otimes_B\Coker f\in N\otimes_B\mathcal V \subseteq \mathcal X,$
the exact sequence
$$0\longrightarrow N\otimes_BL_2\xlongrightarrow{g}L_1\xlongrightarrow {p_2}\Coker g\longrightarrow 0$$
splits. Then $L = \left(\begin{smallmatrix} L_1 \\ L_2\end{smallmatrix}\right)_{f, g} \cong \left(\begin{smallmatrix}(N\otimes_BL_2)\oplus \Coker g\\ L_2 \end{smallmatrix}\right)_{f(1\otimes p_2)^{-1}, \binom{1}{0}}$
and
$$0\rightarrow \left(\begin{smallmatrix} \Coker g \\ M\otimes_A\Coker g\end{smallmatrix}\right)_{1,0}\xlongrightarrow{\left(\begin{smallmatrix} \binom{0}{1} \\ f(1\otimes p_2)^{-1} \end{smallmatrix}\right)}
\left(\begin{smallmatrix}(N\otimes_BL_2)\oplus \Coker g\\ L_2 \end{smallmatrix}\right)_{f(1\otimes p_2)^{-1}, \binom{1}{0}} \xlongrightarrow{\left(\begin{smallmatrix} (1\otimes p_1,0) \\ p_1 \end{smallmatrix}\right)}
\left(\begin{smallmatrix} N\otimes_B\Coker f \\ \Coker f\end{smallmatrix}\right)_{0,1}\rightarrow 0$$
is an exact sequence of $\Lambda$-modules, i.e.,
$$0\longrightarrow {\rm T}_A\Coker g \longrightarrow \left(\begin{smallmatrix}(N\otimes_BL_2)\oplus \Coker g\\ L_2 \end{smallmatrix}\right)_{f(1\otimes p_2)^{-1}, \binom{1}{0}}\longrightarrow {\rm T}_B\Coker f\longrightarrow 0$$
is exact.
By Lemma \ref{extadj1}(1), ${\rm T}_A\Coker g \in  \ ^\perp\binom{\mathcal{X}}{\mathcal{Y}}$; and
by Lemma \ref{extadj1}(2), ${\rm T}_B\Coker f \in \ ^\perp\binom{\mathcal{Y}}{\mathcal{Y}}.$
Thus $L\in \ ^\perp\binom{\mathcal{Y}}{_B\mathcal{Y}}$.

\vskip5pt

Finally, assume that $M\otimes_A\mathcal U \subseteq \mathcal Y$ and \ $N\otimes_B\mathcal V \subseteq \mathcal X$.
Then from the proof above one sees that both \ $0\rightarrow M\otimes_AL_1\xlongrightarrow{f}L_2\xlongrightarrow{p_1}\Coker f\rightarrow 0$ \ and
\ $0\rightarrow N\otimes_BL_2\xlongrightarrow{g}L_1\xlongrightarrow {p_2}\Coker g\rightarrow 0$ split, and
$$L\cong\left(\begin{smallmatrix} \Coker g \\ M\otimes_A\Coker g\end{smallmatrix}\right)_{1, 0}
\oplus \left(\begin{smallmatrix} N\otimes_B \Coker f \\ \Coker f\end{smallmatrix}\right)_{0, 1} = {\rm T}_A \Coker g\oplus {\rm T}_B\Coker f.$$
Thus  \ $\Delta(\mathcal U, \ \mathcal V) \subseteq {\rm T}_A(\mathcal U)\oplus {\rm T}_B(\mathcal V).$ The inclusion
\ $ {\rm T}_A(\mathcal U)\oplus {\rm T}_B(\mathcal V)\subseteq \Delta(\mathcal U, \ \mathcal V)$ is clear. Thus shows
$\Delta(\mathcal U, \ \mathcal V) = {\rm T}_A(\mathcal U)\oplus {\rm T}_B(\mathcal V).$

\vskip10pt   	

(2) \ By Theorem \ref{compare}(2),  one has cotorsion pairs
$$(\left(\begin{smallmatrix} \mathcal U\\ \mathcal V\end{smallmatrix}\right), \ \left(\begin{smallmatrix} \mathcal U \\ \mathcal V\end{smallmatrix}\right)^\perp), \ \ \ \ \ (^\perp\nabla(\mathcal X, \ \mathcal Y), \ \nabla(\mathcal X, \ \mathcal Y))$$
with \ $\left(\begin{smallmatrix}\mathcal U \\ \mathcal V\end{smallmatrix}\right)^\bot \subseteq \nabla(\mathcal X, \ \mathcal Y)$.
To see that they are equal, it remains to prove $\nabla(\mathcal X, \ \mathcal Y)\subseteq \left(\begin{smallmatrix} \mathcal U \\ \mathcal V\end{smallmatrix}\right)^\bot$.

\vskip5pt

Here it is much more convenient to use the second expression of $\Lambda$-modules. Thus, let \ $L = \left(\begin{smallmatrix} L_1 \\ L_2\end{smallmatrix}\right)_{\widetilde{f}, \widetilde{g}}\in \nabla(\mathcal X, \ \mathcal Y)$, where $\widetilde{f}\in \Hom_A(X, \ \Hom_B(M, Y))$ and
$\widetilde{g}\in \Hom_B(Y, \ \Hom_A(N, X))$. By definition there are exact sequences
$$0\rightarrow \Ker\widetilde{f}\xlongrightarrow{i_1}L_1\xlongrightarrow{\widetilde{f}}\Hom_B(M, L_2)\rightarrow 0 \ \ \ \mbox{and} \ \ \ 0\rightarrow \Ker\widetilde{g}\xlongrightarrow{i_2}L_2\xlongrightarrow{\widetilde{g}}\Hom_A(N, L_1)\rightarrow 0$$
with $\Ker \widetilde{f}\in \ \mathcal X$ and $\Ker \widetilde{g}\in \ \mathcal Y$. Since $M\otimes_AN = 0 = N\otimes_BM$, it follows that
$$(N, \ i_1): \Hom_A(N, \Ker\widetilde{f})\cong \Hom_A(N, L_1) \ \ \ \mbox{and} \ \ \ (M, \ i_2): \Hom_B(M, \Ker\widetilde{g})\cong \Hom_B(M, L_2).$$

{\bf Case I:} \ Assume that \ $\Hom_B(M, \mathcal Y)\subseteq \mathcal U$. Then
$$\Hom_B(M, \ L_2)\cong \Hom_B(M, \ \Ker\widetilde{g})\in \Hom_B(M, \mathcal Y)\subseteq \mathcal U.$$
Since $(\mathcal U, \ \mathcal X)$ is a cotorsion pair, the exact sequence
$$0\longrightarrow \Ker\widetilde{f}\xlongrightarrow{i_1}L_1\xlongrightarrow{\widetilde{f}}\Hom_B(M, L_2)\longrightarrow 0$$
splits. Thus there are $A$-maps $\alpha: \Hom_B(M,\  L_2)\longrightarrow L_1$ and $\pi_1: L_1 \longrightarrow \Ker\widetilde{f}$
such that
$$\pi_1i_1 = 1_{\Ker\widetilde{f}}, \ \ \ \widetilde{f}\alpha = 1_{\Hom_B(M,L_2)}, \ \ \ \alpha\widetilde{f} + i_1\pi_1 = 1_{L_1}, \ \ \ \pi_1\alpha = 0.$$
Hence \ $\left(\begin{smallmatrix} \pi_1 \\ \widetilde{f}\end{smallmatrix}\right): L_1 \cong \Ker\widetilde{f}\oplus \Hom_B(M, L_2)$ and
$$\left(\begin{smallmatrix} \binom{\pi_1}{\widetilde{f}} \\ 1 \end{smallmatrix}\right): \ L = \left(\begin{smallmatrix} L_1 \\ L_2\end{smallmatrix}\right)_{\widetilde{f}, \widetilde{g}} \cong \left(\begin{smallmatrix} \Ker\widetilde{f}\oplus \Hom_B(M, \ L_2) \\ L_2\end{smallmatrix}\right)_{(0, \ 1), \ \binom{(N, \pi_1)\widetilde{g}}{0}}.$$
Moreover,   	
$$0 \rightarrow \left(\begin{smallmatrix} (M, \ \Ker\widetilde{g}) \\ \Ker\widetilde{g}\end{smallmatrix}\right)_{1, 0}
\xrightarrow{\left(\begin{smallmatrix} \binom{0}{(M, i_2)} \\ i_2\end{smallmatrix}\right)}
\left(\begin{smallmatrix} \Ker\widetilde{f}\oplus (M, \ L_2) \\ L_2\end{smallmatrix}\right)_{(0, 1), \ \binom{(N, \pi_1)\widetilde{g}}{0}}\xlongrightarrow{\left(\begin{smallmatrix} (1,0) \\ (N, \pi_1)\widetilde{g}\end{smallmatrix}\right)}
\left(\begin{smallmatrix} \Ker\widetilde{f} \\ (N, \ \Ker\widetilde{f}) \end{smallmatrix}\right)_{0, 1}\rightarrow 0$$
is an exact sequence of $\Lambda$-modules, i.e.,
$$0\longrightarrow {\rm H}_B\Ker \widetilde{g}\longrightarrow
\left(\begin{smallmatrix} \Ker\widetilde{f}\oplus \Hom_B(M, \ L_2) \\ L_2\end{smallmatrix}\right)_{(0, \ 1), \ \binom{(N, \pi_1)\widetilde{g}}{0}}
\longrightarrow {\rm H}_A\Ker \widetilde{f}\longrightarrow 0$$
is exact. (We stress that all the $\Lambda$-modules are in the second expression.)

\vskip5pt

Since $\Ker\widetilde{g}\in \ \mathcal Y$ and $(\mathcal V, \ \mathcal Y)$ is a cotorsion pair,
by Lemma \ref{extadj1}(4), ${\rm H}_B\Ker\widetilde{g}\in \binom{\mathcal{U}}{\mathcal V}^\bot$.
Since $\Ker\widetilde{f} \in \mathcal X$ and $(\mathcal U, \ \mathcal X)$ is a cotorsion pair, by Lemma \ref{extadj1}(3), ${\rm H}_A\Ker\widetilde{f}\in \binom{\mathcal{U}}{\mathcal V}^\perp$.
Thus  $L\cong \ \left(\begin{smallmatrix} \Ker\widetilde{f}\oplus \Hom_B(M,L_2) \\ L_2\end{smallmatrix}\right)_{(0, \ 1), \ \binom{(N, \pi_1)\widetilde{g}}{0}}\in \binom{\mathcal{U}}{\mathcal V}^\perp$.

\vskip5pt
    	
{\bf Case II:} \ Assume that \ $\Hom_A(N, \ \mathcal X)\subseteq \mathcal V$.  \ This is similar to {\bf Case I}.
Since $\Hom_A(N, \ L_1)\cong \Hom_A(N, \ \Ker\widetilde{f})\in \Hom_A(N, \ \mathcal X)\subseteq \mathcal V,$
the exact sequence
$$0\longrightarrow \Ker\widetilde{g}\xlongrightarrow{i_2}L_2\xlongrightarrow{\widetilde{g}}\Hom_A(N, L_1)\longrightarrow 0$$
splits. Thus $L = \left(\begin{smallmatrix} L_1 \\ L_2\end{smallmatrix}\right)_{\widetilde{f}, \widetilde{g}} \cong \left(\begin{smallmatrix}L_1 \\ \Ker\widetilde{g}\oplus \Hom_A(N, L_1) \end{smallmatrix}\right)_{\binom{(M, i_2)^{-1}\widetilde{f}}{0}, (0, 1)}$
and
$$0 \rightarrow \left(\begin{smallmatrix} \Ker\widetilde{f} \\ (N, \Ker\widetilde{f})\end{smallmatrix}\right)_{0,1}
\xlongrightarrow{\left(\begin{smallmatrix} i_1 \\ \binom{0}{(N, i_1)}\end{smallmatrix}\right)}
\left(\begin{smallmatrix} L_1 \\ \Ker\widetilde{g}\oplus (N, L_1) \end{smallmatrix}\right)_{\binom{(M, i_2)^{-1}\widetilde{f}}{0}, (0, 1)}
\xlongrightarrow{\left(\begin{smallmatrix} (M, i_2)^{-1}\widetilde{f} \\ (1,0) \end{smallmatrix}\right)}
\left(\begin{smallmatrix} (M, \Ker\widetilde{g}) \\ \Ker\widetilde{g} \end{smallmatrix}\right)_{1, 0}\rightarrow 0$$
is exact, i.e.,
$$0\longrightarrow {\rm H}_A\Ker \widetilde{f}\longrightarrow \left(\begin{smallmatrix} L_1 \\ \Ker\widetilde{g}\oplus \Hom_A(N, L_1) \end{smallmatrix}\right)_{\binom{(M, i_2)^{-1}\widetilde{f}}{0}, (0, 1)}
\longrightarrow {\rm H}_B\Ker \widetilde{g}\longrightarrow 0$$
is exact. By Lemma \ref{extadj1}(3), ${\rm H}_A\Ker\widetilde{f} \in  \binom{\mathcal{U}}{\mathcal{V}}^\perp$; and by  Lemma \ref{extadj1}(4), ${\rm H}_B\Ker\widetilde{g} \in \binom{\mathcal{U}}{\mathcal{V}}^\perp.$
Thus $L\in \binom{\mathcal{U}}{\mathcal{V}}^\perp$.

\vskip10pt

Finally, assume that \ $\Hom_B(M, \ \mathcal Y)\subseteq \mathcal U$ and \ $\Hom_A(N, \ \mathcal X) \subseteq \mathcal V$. Then both
\ $0\rightarrow \Ker\widetilde{f}\xlongrightarrow{i_1}L_1\xlongrightarrow{\widetilde{f}}\Hom_B(M, L_2)\rightarrow 0$  \ and \ $0\rightarrow \Ker\widetilde{g}\xlongrightarrow{i_2}L_2\xlongrightarrow{\widetilde{g}}\Hom_A(N, L_1)\rightarrow 0$ splits, and
$$L=\left(\begin{smallmatrix}L_1\\ L_2\end{smallmatrix}\right)_{\widetilde{f},\widetilde{g}}\cong \left(\begin{smallmatrix} \Ker\widetilde{f}\\ \Hom_A(N, \Ker\widetilde{f})\end{smallmatrix}\right)_{0, 1}\oplus
\left(\begin{smallmatrix} \Hom_B(M, \Ker \widetilde{g}) \\ \Ker \widetilde{g}\end{smallmatrix}\right)_{1,0}= {\rm H}_A\Ker\widetilde{f}\oplus {\rm H}_B\Ker \widetilde{g}
\in {\rm H}_A(\mathcal X)\oplus {\rm H}_B(\mathcal Y).$$
Conversely, it is clear that
${\rm H}_A(\mathcal X)\oplus {\rm H}_B(\mathcal Y)\subseteq \nabla(\mathcal X, \ \mathcal Y)$.
Thus \  $\nabla(\mathcal X, \ \mathcal Y)
={\rm H}_A(\mathcal X)\oplus {\rm H}_B(\mathcal Y)$.
\hfill $\square$

\subsection{Remark} In Theorem \ref{compare}, taking one of $(\mathcal U, \ \mathcal X)$  and
$(\mathcal V, \ \mathcal Y)$ being the projective cotorsion pair or the injective cotorsion pair,
and another being an arbitrary cotorsion pair, we conclude as follows: where ``$=$" follows from Corollary \ref{identification1}, and
``$\ne$" follows from Example \ref{ie}.

\vskip5pt

$(1)$ \  If \ $(\mathcal U, \ \mathcal X) = (_A\mathcal P, \ A\mbox{-}{\rm Mod})$, and \ $(\mathcal V, \ \mathcal Y)$
is an arbitrary cotorsion pair  in $B\mbox{-}{\rm Mod}$, then
$$(\Delta(_A\mathcal P, \ \mathcal V), \ \Delta(_A\mathcal P, \ \mathcal V)^\bot) = ({}^\perp\left(\begin{smallmatrix}A\mbox{-}{\rm Mod}\\ \mathcal Y\end{smallmatrix}\right), \ \left(\begin{smallmatrix}A\mbox{-}{\rm Mod}\\ \mathcal Y\end{smallmatrix}\right)).$$
But, in general \ $(^\bot\nabla(A\mbox{-}{\rm Mod}, \ \mathcal Y), \ \nabla(A\mbox{-}{\rm Mod}, \ \mathcal Y))\ne (\left(\begin{smallmatrix}_A\mathcal P\\ \mathcal V\end{smallmatrix}\right),  \ \left(\begin{smallmatrix}_A\mathcal P\\ \mathcal V\end{smallmatrix}\right)^\perp).$

\vskip5pt

$(2)$ \  If \ $(\mathcal U, \ \mathcal X) = (A\mbox{-}{\rm Mod}, \ _A\mathcal I)$, and \ $(\mathcal V, \ \mathcal Y)$ an arbitrary cotorsion pair  in $B\mbox{-}{\rm Mod}$,
then in general \ $(\Delta(A\mbox{-}{\rm Mod}, \ \mathcal V), \ \Delta(A\mbox{-}{\rm Mod}, \ \mathcal V)^\bot)\ne ({}^\perp\left(\begin{smallmatrix}_A\mathcal I\\ \mathcal Y\end{smallmatrix}\right), \ \left(\begin{smallmatrix}_A\mathcal I\\ \mathcal Y\end{smallmatrix}\right)).$
However, one has
$$(^\bot\nabla(_A\mathcal I, \ \mathcal Y), \ \nabla(_A\mathcal I, \ \mathcal Y)) =  (\left(\begin{smallmatrix}A\mbox{-}{\rm Mod}\\ \mathcal V\end{smallmatrix}\right), \ \left(\begin{smallmatrix}A\mbox{-}{\rm Mod}\\ \mathcal V\end{smallmatrix}\right)^\perp).$$

$(3)$ \  If \ $(\mathcal U, \ \mathcal X)$ is an arbitrary cotorsion pair  in $A\mbox{-}{\rm Mod}$, and
\ $(\mathcal V, \ \mathcal Y)  = (_B\mathcal P, \ B\mbox{-}{\rm Mod})$, then
$$(\Delta(\mathcal U, \ _B\mathcal P), \ \Delta(\mathcal U, \ _B\mathcal P)^\bot)= (^\perp\left(\begin{smallmatrix}\mathcal X\\ B\mbox{-}{\rm Mod}\end{smallmatrix}\right),  \ \left(\begin{smallmatrix}\mathcal X\\ B\mbox{-}{\rm Mod}\end{smallmatrix}\right)).$$
But, in general \ $(^\bot\nabla(\mathcal X, \ B\mbox{-}{\rm Mod}), \ \nabla(\mathcal X, \ B\mbox{-}{\rm Mod}))\ne (\left(\begin{smallmatrix}\mathcal U\\ _B\mathcal P\end{smallmatrix}\right),  \ \left(\begin{smallmatrix}\mathcal U\\ _B\mathcal P\end{smallmatrix}\right)^\perp).$

\vskip5pt

$(4)$ \ If \ $(\mathcal U, \ \mathcal X)$ is an arbitrary cotorsion pair in $A\mbox{-}{\rm Mod}$, and
\ $(\mathcal V, \ \mathcal Y)  = (B\mbox{-}{\rm Mod}, \ _B\mathcal I)$, then in general
\ $(\Delta(\mathcal U, \ B\mbox{-}{\rm Mod}), \ \Delta(\mathcal U, \ B\mbox{-}{\rm Mod})^\bot)\ne ({}^\perp\left(\begin{smallmatrix}\mathcal X\\ _B\mathcal I\end{smallmatrix}\right),  \ \left(\begin{smallmatrix}\mathcal X\\ _B\mathcal I\end{smallmatrix}\right)).$
However, one has
$$(^\perp\nabla(\mathcal X, \ _B\mathcal I), \ \nabla(\mathcal X, \ _B\mathcal I)) = (\left(\begin{smallmatrix}\mathcal U\\ B\mbox{-}{\rm Mod}\end{smallmatrix}\right), \ \left(\begin{smallmatrix}\mathcal U\\ B\mbox{-}{\rm Mod}\end{smallmatrix}\right)^\perp).$$

\vskip5pt

The above information is listed in Table 1 below, where
$$\mathcal A: = A\mbox{-}{\rm Mod}, \ \ \ \ \mathcal B: = B\mbox{-}{\rm Mod}, \ \ \ \ \mbox{proj.}: = \mbox{projective}.$$

\vskip10pt

\centerline{\bf Table 1: \ Cotorsion pairs in $\Lambda$-Mod}

\vspace{-10pt}
$${\tiny\begin{tabular}{|c|c|c|c|c|}
\hline
\phantom{\LARGE 0} & \multicolumn{2}{c|} {\tabincell{c}{Cotorsion pairs in Series I\\[3pt] $\varphi=0=\psi$}}
& \multicolumn{2}{c|} {\tabincell{c}{Cotorsion pairs in Series II\\[3pt] $M\otimes_AN=0 = N\otimes_BM$}}
\\[8pt]\hline
\tabincell{c}{$(_A\mathcal U, \ _A\mathcal X)$ \\ $(_B\mathcal V, \ _B\mathcal Y)$}
& \tabincell{c}{$\Tor_1(M,\mathcal U)=0$ \\ [3pt] $\Tor_1(N,\mathcal V)=0$: \\[3pt] $(^\perp\binom{\mathcal X}{\mathcal Y},\binom{\mathcal X}{\mathcal Y})$}
& \tabincell{c}{$\Ext^1(N, \mathcal X)=0$ \\ [3pt]$\Ext^1(M, \mathcal Y)=0$: \\[3pt] $(\binom{\mathcal U}{\mathcal V}, \binom{\mathcal U}{\mathcal V}^\perp)$}
& \tabincell{c}{$(\Delta(\mathcal U, \mathcal V), \ \Delta(\mathcal U, \mathcal V)^\bot)$}
& \tabincell{c}{$(^\bot\nabla(\mathcal X, \mathcal Y), \ \nabla(\mathcal X, \mathcal Y))$}
\\[15pt] \hline
\tabincell{c}{$(\mathcal P, \mathcal A$) \\ $(\mathcal V, \mathcal Y)$}
&\tabincell{c}{$N_B$ flat \\ [3pt] $(^\perp\binom{\mathcal A}{\mathcal Y}, \binom{\mathcal A}{\mathcal Y})$}
& \tabincell{c}{$_AN$, $_BM$  proj.: \\[3pt] $(\binom{\mathcal P}{\mathcal V}, \ \binom{\mathcal P}{\mathcal V}^\perp)$}
& \tabincell{c}{$(\Delta(\mathcal P, \mathcal V), \ \Delta(\mathcal P, \mathcal V)^\bot)$. \\[3pt] If $N_B$ flat then it is \\ [3pt]$(^\perp\binom{\mathcal A}{\mathcal Y}, \binom{\mathcal A}{\mathcal Y})$ \\[3pt] thus it is \\ [3pt]$(\Delta(\mathcal P, \mathcal V), \binom{\mathcal A}{\mathcal Y})$}
& \tabincell{c}{$(^\bot\nabla(\mathcal A, \mathcal Y), \ \nabla(\mathcal A, \mathcal Y))$. \\[3pt] Even if $_AN$, $_BM$  proj.,   \\[3pt]it  $\ne (\binom{\mathcal P}{\mathcal V}, \ \binom{\mathcal P}{\mathcal V}^\perp)$\\[3pt] in general.}
\\[30pt] \hline
\tabincell{c}{$(\mathcal A, \mathcal I$) \\ $(\mathcal V, \mathcal Y)$}
& \tabincell{c}{$M_A$, $N_B$ flat: \\ [3pt] $(^\perp\binom{\mathcal I}{\mathcal Y}, \ \binom{\mathcal I}{\mathcal Y})$}
& \tabincell{c}{$_BM$ proj.: \\[3pt] $(\binom{\mathcal A}{\mathcal V}, \ \binom{\mathcal A}{\mathcal V}^\perp)$}
& \tabincell{c}{$(\Delta(\mathcal A, \mathcal V), (\Delta(\mathcal A, \mathcal V)^\bot).$ \\ [3pt] Even if $M_A$, $N_B$ flat \\ [3pt] it $\ne (^\perp\binom{\mathcal I}{\mathcal Y}, \ \binom{\mathcal I}{\mathcal Y})$\\ [3pt] in general}
& \tabincell{c}{$(^\bot\nabla(\mathcal I,  \mathcal Y), \ \nabla(\mathcal I,  \mathcal Y)).$ \\ [3pt] If $_BM$ proj. then it is \\ [3pt]$(\binom{\mathcal A}{\mathcal V}, \ \binom{\mathcal A}{\mathcal V}^\perp)$\\[3pt]thus it is \\[3pt]$(\binom{\mathcal A}{\mathcal V}, \ \nabla(\mathcal I, \mathcal Y))$}
\\[30pt] \hline
\tabincell{c}{$(\mathcal U, \mathcal X$) \\ $(\mathcal P, \mathcal B)$}
& \tabincell{c}{$M_A$ flat: \\[3pt] $(^\perp\binom{\mathcal X}{\mathcal B}, \ \binom{\mathcal X}{\mathcal B})$}
& \tabincell{c}{$_BM, \ _AN$ proj.: \\[3pt] $(\binom{\mathcal U}{\mathcal P}, \ \binom{\mathcal U}{\mathcal P}^\perp)$}
& \tabincell{c}{$(\Delta(\mathcal U, \mathcal P), \Delta(\mathcal U, \mathcal P)^\bot).$ \\[3pt] If $M_A$ flat then it is \\[3pt]$(^\perp\binom{\mathcal X}{\mathcal B}, \binom{\mathcal X}{\mathcal B})$\\[3pt] thus it is \\ [3pt]$((\Delta(\mathcal U, \mathcal P), \ \binom{\mathcal X}{\mathcal B})$}
& \tabincell{c}{$(^\bot\nabla(\mathcal X, \mathcal B), \nabla(\mathcal X, \mathcal B))$. \\[3pt] Even if $_BM, \ _AN$ proj., \\[3pt] it $\ne (\binom{\mathcal U}{\mathcal P}, \ \binom{\mathcal U}{\mathcal P}^\perp)$\\[3pt]in general }
\\[30pt] \hline
\tabincell{c}{$(\mathcal U, \mathcal X)$ \\ $(\mathcal B, \mathcal I)$}
& \tabincell{c}{$M_A$, $N_B$ flat: \\[3pt] $(^\perp\binom{\mathcal X}{\mathcal I}, \ \binom{\mathcal X}{\mathcal I})$}
& \tabincell{c}{$_AN$ proj.: \\[3pt] $(\binom{\mathcal U}{\mathcal B}, \ \binom{\mathcal U}{\mathcal B}^\bot)$}
& \tabincell{c}{$(\Delta(\mathcal U, \mathcal B), \ \Delta(\mathcal U, \mathcal B)^\bot)$.\\[3pt]Even if $M_A$, $N_B$ flat,\\ [3pt]it $\ne (^\perp\binom{\mathcal X}{\mathcal I}, \ \binom{\mathcal X}{\mathcal I})$\\ [3pt]in general}
& \tabincell{c}{$(^\bot\nabla(\mathcal X,  \mathcal I), \ \nabla(\mathcal X,  \mathcal I)).$ \\ [3pt] If $_AN$ proj. then it is \\ [3pt]$(\binom{\mathcal U}{\mathcal B}, \ \binom{\mathcal U}{\mathcal B}^\perp)$\\[3pt]thus it is \\[3pt]$(\binom{\mathcal U}{\mathcal B}, \ \nabla(\mathcal X, \mathcal I))$}
\\[30pt] \hline
\end{tabular}}$$

\vskip5pt

\subsection{Monomorphism categories and epimorphism categories} \ Even if in the case of ``$\ne$" in general, the two cotorsion pairs can be the same, in some special cases.

\vskip5pt

If $_AN$ and $_BM$ are projective, then cotorsion pairs  \ $(\left(\begin{smallmatrix}_A\mathcal P\\ _B\mathcal P\end{smallmatrix}\right), \ \left(\begin{smallmatrix}_A\mathcal P\\ _B\mathcal P\end{smallmatrix}\right)^\perp)$  and $(^\bot{\rm Epi}(\Lambda), \ {\rm Epi}(\Lambda)) = (^\perp\nabla (A\mbox{-}{\rm Mod}, \ B\mbox{-}{\rm Mod}), \ \nabla (A\mbox{-}{\rm Mod}, \ B\mbox{-}{\rm Mod}))$
are not equal in general (cf. Example \ref{ie}); but the following result claims that they can be the same in some special cases.

\vskip5pt

Also, if $M_A$ and $N_B$ are flat, then \ $({}^\perp\left(\begin{smallmatrix}_A\mathcal I\\ _B\mathcal I\end{smallmatrix}\right), \ \left(\begin{smallmatrix}_A\mathcal I\\ _B\mathcal I\end{smallmatrix}\right))
\ne ({\rm Mon}(\Lambda), \ {\rm Mon}(\Lambda)^\bot)$ in general
(cf. Example \ref{ie}); but the following result claims that they can be the same in some special cases.

\vskip5pt

\begin{thm}\label{ctp4} \ Let \ $\Lambda = \left(\begin{smallmatrix} A & N \\ M & B \end{smallmatrix}\right)$ be a Morita ring which is an Artin algebra with $M\otimes_A N = 0 = N\otimes_BM$.
Assume that $A$ and $B$ are quasi-Frobenius rings, that $_AN$ and $_BM$ are projective, and that $M_A$ and $N_B$ are flat. Then

\vskip5pt

${\rm (1)}$ \ \ $\Lambda$ is a Gorenstein ring with ${\rm inj.dim} _\Lambda\Lambda \le 1$, and
$_\Lambda \mathcal P^{<\infty} = \ _\Lambda \mathcal P^{\le 1} = \binom{_A\mathcal P}{_B\mathcal P} = \binom{_A\mathcal I}{_B\mathcal I} = \ _\Lambda \mathcal I^{\le 1} = \ _\Lambda \mathcal I^{<\infty}.$

\vskip5pt

${\rm (2)}$ \ \ The cotorsion pair $(^\perp\binom{_A\mathcal I}{_B\mathcal I}, \ \binom{_A\mathcal I}{_B\mathcal I})$ coincides with \ $({\rm Mon}(\Lambda), \ {\rm Mon}(\Lambda)^\bot);$ and it is
exactly the Gorenstein-projective cotorsion pair  $({\rm GP}(\Lambda), \ _\Lambda\mathcal P^{\le 1})$.  So, it is complete and hereditary, and
$${\rm GP}(\Lambda) = {\rm Mon}(\Lambda) = {}^\perp \ _\Lambda\mathcal P, \ \ \ \ {\rm Mon}(\Lambda)^\bot = \ _\Lambda \mathcal P^{\le 1}.$$

\vskip5pt

${\rm (2)'}$ \ \ The cotorsion pair $(\binom{_A\mathcal P}{_B\mathcal P}, \ \binom{_A\mathcal P}{_B\mathcal P}^\perp)$ coincides with \ $(^\bot{\rm Epi}(\Lambda), \ {\rm Epi}(\Lambda));$ and it
is exactly the Gorenstein-injective cotorsion pair \ $(_\Lambda \mathcal P^{\le 1}, \ {\rm GI}(\Lambda))$.  So, it is  complete and hereditary, and
$$ {\rm GI}(\Lambda) = {\rm Epi}(\Lambda) = \ _\Lambda\mathcal I{}^\perp, \ \ \ \ ^\bot{\rm Epi}(\Lambda) = \ _\Lambda \mathcal P^{\le 1}.$$
\end{thm}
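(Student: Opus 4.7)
The plan is to derive everything from (1). First I establish that $\Lambda$ is Gorenstein with $\mathrm{inj.dim}_\Lambda\Lambda\le 1$ together with the identification ${}_\Lambda\mathcal P^{\le 1}=\binom{{}_A\mathcal P}{{}_B\mathcal P}=\binom{{}_A\mathcal I}{{}_B\mathcal I}$; then (2) and (2') follow by combining Theorems \ref{ctp1}, \ref{ctp6}, \ref{compare} applied to the injective and projective cotorsion pairs on $A$-Mod and $B$-Mod with the Gorenstein machinery recalled in Subsection 2.12.

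For (1), the equality ${}_\Lambda\mathcal P^{\le 1}=\binom{{}_A\mathcal P}{{}_B\mathcal P}$ is handled as follows. The inclusion $\subseteq$ is routine: pulling a length-one projective resolution to each coordinate yields $\mathrm{proj.dim}\,L_i<\infty$, and over a quasi-Frobenius ring this forces $L_i$ to be projective. For $\supseteq$, given $L=\binom{L_1}{L_2}_{f,g}$ with $L_1,L_2$ projective, the canonical surjection $T_AL_1\oplus T_BL_2\twoheadrightarrow L$ has kernel $Z_A(N\otimes_BL_2)\oplus Z_B(M\otimes_AL_1)$; since ${}_AN$ (respectively, ${}_BM$) is projective, $N\otimes_BL_2$ is projective over $A$ and $M\otimes_AL_1$ is projective over $B$, and the hypothesis $M\otimes_AN=0=N\otimes_BM$ collapses $T_A(N\otimes_BL_2)$ to $Z_A(N\otimes_BL_2)$ and $T_B(M\otimes_AL_1)$ to $Z_B(M\otimes_AL_1)$, both of which are therefore projective over $\Lambda$. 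For $\mathrm{inj.dim}_\Lambda\Lambda\le 1$, I write ${}_\Lambda\Lambda=T_A(A)\oplus T_B(B)$, use the short exact sequence $0\to Z_B(M)\to T_A(A)\to Z_A(A)\to 0$, and compute explicit length-one injective resolutions of $Z_A(A)$ and $Z_B(M)$ of the form $0\to Z_A(A)\to H_A(A)\to H_B(\mathrm{Hom}_A(N,A))\to 0$ (and its $Z_B(M)$ analogue), where $\mathrm{Hom}_A(N,A)$ is injective over $B$ (from $N_B$ flat and $A$ self-injective), $M$ is injective over $B$ (from ${}_BM$ projective and $B$ quasi-Frobenius), and the collapses $Z_B(\mathrm{Hom}_A(N,A))=H_B(\mathrm{Hom}_A(N,A))$, $Z_A(\mathrm{Hom}_B(M,M))=H_A(\mathrm{Hom}_B(M,M))$ come from $N\otimes_BM=0=M\otimes_AN$. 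The symmetric argument handles $\mathrm{inj.dim}\,\Lambda_\Lambda\le 1$; the stated chain of equalities then follows from Subsection 2.12 combined with the quasi-Frobenius identifications ${}_A\mathcal P={}_A\mathcal I$ and ${}_B\mathcal P={}_B\mathcal I$.

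For (2), Theorem \ref{ctp1}(1) applied to the injective cotorsion pairs gives the hereditary cotorsion pair $({}^\perp\binom{{}_A\mathcal I}{{}_B\mathcal I},\binom{{}_A\mathcal I}{{}_B\mathcal I})$, which by (1) and the Gorenstein facts equals the Gorenstein-projective cotorsion pair $(\mathrm{GP}(\Lambda),{}_\Lambda\mathcal P^{\le 1})$; Theorem \ref{ctp6}(1) produces $(\mathrm{Mon}(\Lambda),\mathrm{Mon}(\Lambda)^\perp)$; and Theorem \ref{compare}(1) gives $\mathrm{Mon}(\Lambda)^\perp\subseteq\binom{{}_A\mathcal I}{{}_B\mathcal I}$, hence $\mathrm{GP}(\Lambda)\subseteq\mathrm{Mon}(\Lambda)$. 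The main step is the reverse inclusion $\mathrm{Mon}(\Lambda)\subseteq\mathrm{GP}(\Lambda)={}^{\perp_{\ge 1}}\,{}_\Lambda\mathcal P$: for $L\in\mathrm{Mon}(\Lambda)$ and $P'\in{}_A\mathcal P$ (the case of $T_B(Q')$ being symmetric), apply $\mathrm{Hom}_\Lambda(L,-)$ to $0\to Z_B(M\otimes_AP')\to T_A(P')\to Z_A(P')\to 0$. Because $f$ and $g$ are monic, Lemma \ref{extadj2}(1)(2) yields $\mathrm{Ext}^1_\Lambda(L,Z_A(P'))\cong\mathrm{Ext}^1_A(L_1',P')=0$ and $\mathrm{Ext}^1_\Lambda(L,Z_B(M\otimes_AP'))\cong\mathrm{Ext}^1_B(L_2',M\otimes_AP')=0$ (using that $P'$ and $M\otimes_AP'$ are projective hence injective by QF); since $Z_B(M\otimes_AP')$ has projective dimension $\le 1$ by the argument of (1) and $\Lambda$ is Gorenstein of dimension $\le 1$, one also has $\mathrm{Ext}^2_\Lambda(L,Z_B(M\otimes_AP'))=0$. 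The long exact sequence collapses to $\mathrm{Ext}^1_\Lambda(L,T_A(P'))=0$, and analogously for $T_B(Q')$, so $\mathrm{Ext}^1_\Lambda(L,{}_\Lambda\mathcal P)=0$; combined with $\mathrm{Ext}^{\ge 2}_\Lambda(L,{}_\Lambda\mathcal P)=0$ (from $\mathrm{inj.dim}\,{}_\Lambda\mathcal P\le 1$) we conclude $L\in\mathrm{GP}(\Lambda)$.

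Part (2') is the mirror argument: Theorem \ref{ctp1}(2), Theorem \ref{ctp6}(2), and Theorem \ref{compare}(2) together give $(\binom{{}_A\mathcal P}{{}_B\mathcal P},\binom{{}_A\mathcal P}{{}_B\mathcal P}^\perp)=({}_\Lambda\mathcal P^{\le 1},\mathrm{GI}(\Lambda))$ and $\mathrm{GI}(\Lambda)\subseteq\mathrm{Epi}(\Lambda)$. For the reverse inclusion $\mathrm{Epi}(\Lambda)\subseteq\mathrm{GI}(\Lambda)={}_\Lambda\mathcal I^{\perp_{\ge 1}}$, given $X\in\mathrm{Epi}(\Lambda)$ and $I'\in{}_A\mathcal I$, apply $\mathrm{Hom}_\Lambda(-,X)$ to $0\to Z_A(I')\to H_A(I')\to Z_B(\mathrm{Hom}_A(N,I'))\to 0$ and use Lemma \ref{extadj2}(3)(4) (since $\widetilde f$ and $\widetilde g$ of $X$ are epic) to obtain $\mathrm{Ext}^1_\Lambda(Z_A(I'),X)\cong\mathrm{Ext}^1_A(I',K_AX)=0$ and $\mathrm{Ext}^1_\Lambda(Z_B(\mathrm{Hom}_A(N,I')),X)\cong\mathrm{Ext}^1_B(\mathrm{Hom}_A(N,I'),K_BX)=0$ (using $I'\in{}_A\mathcal I={}_A\mathcal P$ and $\mathrm{Hom}_A(N,I')$ injective = projective over $B$); the corresponding $\mathrm{Ext}^{\ge 2}$ vanish because the relevant modules have projective dimension $\le 1$. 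The main obstacle to overcome is bookkeeping: consistently tracking which of the four monic/epic hypotheses of Lemma \ref{extadj2} applies at each step, and verifying that the cascading zeros forced by $M\otimes_AN=0=N\otimes_BM$ fire in exactly the right places to collapse every long exact sequence to the desired vanishing.
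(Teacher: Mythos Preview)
Your proposal is correct and follows essentially the same route as the paper. The paper packages the dimension bounds into Lemma~\ref{proj-injdim} (applied directly to $\mathrm{T}_A(A)$ and $\mathrm{T}_B(B)$ rather than decomposing further into $\mathrm{Z}_A(A)$, $\mathrm{Z}_B(M)$), and in part~(2) it observes ${}^\perp\,{}_\Lambda\mathcal P={}^{\perp_{\ge 1}}\,{}_\Lambda\mathcal P$ upfront so that only the $\Ext^1$ vanishing via the same short exact sequence and Lemma~\ref{extadj2}(1)(2) is needed---your intermediate $\Ext^2_\Lambda(L,\mathrm{Z}_B(M\otimes_AP'))=0$ remark is not required for the collapse at $\Ext^1$ (the two flanking $\Ext^1$'s already force it), but it does no harm.
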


\vskip5pt

\begin{exm}\label{examctp4} $(1)$ \ {\it We give an example to justify the existence of the assumptions in {\rm Theorem \ref{ctp4}}.
 Let $Q$ be the quiver
$$\xymatrix{1\ar[r] & 2\ar[r] & 3\ar[r] & \cdots \ar[r] & n \ar@/_20pt/[llll]}$$

\vskip5pt

\noindent and $A=kQ/J^h$, where $J$ is the ideal of path algebra $kQ$ generated by all the arrows, and $2\le h\le n$. Then $A$ is a self-injective algebra, in particular, a quasi-Frobenius ring. Let $e=e_i$, $e'=e_j$, where $1\le i < j\le n$,  satisfying $j-i\ge h$. Then \ $e'Ae=e_jAe_i=0$.
Put $M: = Ae\otimes_k e'A$. Then $_AM$ and $M_A$ are projective, and  $M\otimes_AM  = (Ae\otimes_k e'A)\otimes_A (Ae\otimes_k e'A) = Ae\otimes_k(e'A\otimes_A Ae)\otimes_k e'A = 0.$

\vskip5pt

Take \ $\Lambda = \left(\begin{smallmatrix} A & M \\ M & A \end{smallmatrix}\right)$. Then $\Lambda$ satisfies
all the conditions in {\rm Theorem \ref{ctp4}}.}\end{exm}

\begin{rem}\label{remctp4} {\rm $(1)$} \ {\it Non-zero Morita rings $\Lambda$ in {\rm Theorem \ref{ctp4}} do not satisfy the sufficient condition for self-injective algebras in {\rm [GrP, Proposition 3.7]}.  In fact,
$\Lambda$ can not be quasi-Frobenius$:$ otherwise ${\rm Mon}(\Lambda) = {\rm GP}(\Lambda) = \Lambda$-{\rm Mod}, which is absurd $!$

\vskip5pt

{\rm $(2)$} \ Although {\rm Theorem \ref{ctp4}} does not give new cotorsion pairs, in the sense that
they are just the Gorenstein-projective $($respectively, Gorenstein-injective$)$ cotorsion pairs,
however, \ ${\rm GP}(\Lambda) = {\rm Mon}(\Lambda)$  is a new result. In the special case of triangular matrix rings, this is known, by
{\rm [LiZ, Thm. 1.1], [XZ, Cor.1.5], [Z2, Thm.1.4], [LuoZ1, Thm.4.1], [ECIT, Thm.3.5]}. For more
relations between monomorphism categories and the Gorenstein-projective modules, we refer to {\rm [Z1], [GrP], [LuoZ2], [GaP], [ZX], [HLXZ]}.}
\end{rem}

\subsection{Modules $\binom{_A\mathcal P}{_B\mathcal P}$ and $\binom{_A\mathcal I}{_B\mathcal I}$} To prove  Theorem \ref{ctp4}, we need the following fact, which is of independent interest.

\vskip5pt

\begin{lem}\label{proj-injdim} \ Let \ $\Lambda = \left(\begin{smallmatrix} A & N \\ M & B \end{smallmatrix}\right)$ be a Morita ring with $M\otimes_A N = 0 = N\otimes_BM.$

\vskip5pt

$(1)$ \  Assume that $_AN$ and $_BM$ are projective modules. Let $\binom{P}{Q}_{f, g}\in \binom{_A\mathcal P}{_B\mathcal P}$. Then
$$0\rightarrow
\left(\begin{smallmatrix} N\otimes_BQ\\M\otimes_AP \end{smallmatrix}\right)_{0,0}
\stackrel{\left(\begin{smallmatrix} -\binom{g}{1} \\ \binom{1}{f}\end{smallmatrix}\right)} \longrightarrow
\left(\begin{smallmatrix} P\\ M\otimes_AP\end{smallmatrix}\right)_{1, 0}
\oplus \left(\begin{smallmatrix} N\otimes_BQ \\ Q\end{smallmatrix}\right)
_{0, 1}\stackrel{\left(\begin{smallmatrix}	\binom{1}{f}, &  \binom{g}{1}\end{smallmatrix}\right)}\longrightarrow
\left(\begin{smallmatrix} P \\ Q\end{smallmatrix}\right)_{f,g}\rightarrow 0$$
is a projective resolution of $\binom{P}{Q}_{f, g}$. In particular,   ${\rm proj.dim} \binom{P}{Q}_{f, g} \le 1,
\ \forall \ \binom{P}{Q}_{f, g}\in \binom{_A\mathcal P}{_B\mathcal P}$.

\vskip5pt

$(2)$ \ Assume that $M_A$ and $N_B$ are flat modules. Let $\binom{I}{J}_{f, g}\in \binom{_A\mathcal I}{_B\mathcal I}$. Then
$$0\rightarrow
\left(\begin{smallmatrix}I\\ J\end{smallmatrix}\right)_{f, g}\stackrel {\left(\begin{smallmatrix} \binom {1} {\widetilde{g}} \\ \binom {\widetilde{f}}{1}\end{smallmatrix}\right)} \longrightarrow
\left(\begin{smallmatrix} I\\ \Hom_A(N, I)\end{smallmatrix}\right)_{0, \epsilon'_{_I}}\oplus \left(\begin{smallmatrix} \Hom_B(M, J)\\ J\end{smallmatrix}\right)_{\epsilon_{_J}, 0}
\stackrel {\left(\begin{smallmatrix}\binom{\widetilde{f}}{1}, &  -\binom{1}{\widetilde{g}}\end{smallmatrix}\right)} \longrightarrow
\left(\begin{smallmatrix}\Hom_B(M, J)\\ \Hom_A(N, I)\end{smallmatrix}\right)_{0, 0}\rightarrow 0$$
is an injective resolution of $\binom{I}{J}_{f, g}$. In particular,   ${\rm inj.dim} \binom{I}{J}_{f, g} \le 1, \ \forall \ \binom{I}{J}_{f, g}\in \binom{_A\mathcal I}{_B\mathcal I}$.\end{lem}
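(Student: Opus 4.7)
The plan is to exhibit each sequence as a short exact sequence built from natural maps (counits or units of adjunctions), compute the outer term as a kernel/cokernel, and then verify it is projective (respectively, injective) by decomposing it via the six functors of Subsection 2.4.

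\textbf{Part (1).} The right-hand map $\binom{(1,g)}{(f,1)}\colon {\rm T}_A P\oplus {\rm T}_B Q \to \binom{P}{Q}_{f,g}$ is the sum of the counits of the adjunctions $({\rm T}_A, {\rm U}_A)$ and $({\rm T}_B, {\rm U}_B)$ evaluated at $\binom{P}{Q}_{f,g}$; it is componentwise epic, hence epic in $\Lambda$-Mod. Computing the kernel slot-by-slot: the kernel of $(1,g)\colon P\oplus(N\otimes_B Q)\to P$ is a copy of $N\otimes_B Q$ embedded as $x\mapsto (-g(x),x)$, and the kernel of $(f,1)$ is a copy of $M\otimes_A P$ embedded as $y\mapsto (y,-f(y))$. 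I then need to check that the inherited $\Lambda$-structure has zero structure maps: this is immediate from $M\otimes_A N=0=N\otimes_B M$, because the restricted top structure map has domain $M\otimes_A(N\otimes_B Q)=(M\otimes_A N)\otimes_B Q=0$, and symmetrically for the bottom. Hence the kernel is exactly $\binom{N\otimes_B Q}{M\otimes_A P}_{0,0}$ and the given three-term sequence is exact.

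The remaining step is to show this kernel is projective. The key identity is
$$
{\rm T}_A(N\otimes_B Q) = \binom{N\otimes_B Q}{M\otimes_A(N\otimes_B Q)}_{1,0} = \binom{N\otimes_B Q}{0}_{0,0},
$$
using $(M\otimes_A N)\otimes_B Q=0$; dually ${\rm T}_B(M\otimes_A P)=\binom{0}{M\otimes_A P}_{0,0}$. Therefore $\binom{N\otimes_B Q}{M\otimes_A P}_{0,0}\cong {\rm T}_A(N\otimes_B Q)\oplus {\rm T}_B(M\otimes_A P)$. Since $Q$ is a summand of some $B^{(I)}$ and $_AN$ is projective, $N\otimes_B Q$ is a summand of $N^{(I)}$, hence projective over $A$; symmetrically $M\otimes_A P\in {}_B\mathcal P$. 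The classification of projective $\Lambda$-modules in Subsection 2.5 gives projectivity of the kernel, and thus a projective resolution of length $1$, so ${\rm proj.dim}\,\binom{P}{Q}_{f,g}\le 1$.

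\textbf{Part (2)} is dual. The left-hand map is the sum of the units of the adjunctions $({\rm U}_A, {\rm H}_A)$ and $({\rm U}_B, {\rm H}_B)$ at $\binom{I}{J}_{f,g}$; it is componentwise monic. The cokernel, computed componentwise, is $\binom{\Hom_B(M,J)}{\Hom_A(N,I)}_{0,0}$, where the zero structure maps appear because the tensor-hom adjunction gives $\Hom_A(N,\Hom_B(M,-))\cong \Hom_B(M\otimes_A N,-)=0$, and symmetrically. To see this cokernel is injective, the same vanishing yields ${\rm H}_A(\Hom_B(M,J))=\binom{\Hom_B(M,J)}{0}_{0,0}$ and ${\rm H}_B(\Hom_A(N,I))=\binom{0}{\Hom_A(N,I)}_{0,0}$, so the cokernel splits as ${\rm H}_A(\Hom_B(M,J))\oplus {\rm H}_B(\Hom_A(N,I))$. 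Since $M_A$ is flat, the right adjoint $\Hom_B(M,-)$ of the exact functor $M\otimes_A-$ preserves injectives, so $\Hom_B(M,J)\in {}_A\mathcal I$; symmetrically $\Hom_A(N,I)\in {}_B\mathcal I$. The classification of injective $\Lambda$-modules in Subsection 2.5 then gives injectivity of the cokernel and ${\rm inj.dim}\,\binom{I}{J}_{f,g}\le 1$.

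\textbf{Main obstacle.} The principal technical step is verifying that the natural maps are genuinely $\Lambda$-morphisms and that the kernel/cokernel inherits the advertised zero structure maps. This is precisely the place where the hypothesis $M\otimes_A N=0=N\otimes_B M$ is essential: it forces the mixed triple tensor products to vanish, which is exactly what allows the outer terms to decompose into images of ${\rm T}_A,{\rm T}_B$ (respectively, ${\rm H}_A,{\rm H}_B$) applied to projective (respectively, injective) modules over $A$ and $B$, rather than remaining some more exotic $\Lambda$-module.
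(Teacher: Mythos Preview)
Your proof is correct and follows essentially the same route as the paper: verify exactness componentwise, use $M\otimes_A N=0=N\otimes_B M$ to see that the outer term has zero structure maps and decomposes as ${\rm T}_A(N\otimes_B Q)\oplus {\rm T}_B(M\otimes_A P)$ (respectively ${\rm H}_A(\Hom_B(M,J))\oplus {\rm H}_B(\Hom_A(N,I))$), and then invoke the description of projectives/injectives in Subsection~2.5. Your interpretation of the maps as sums of counits/units of the adjunctions $({\rm T}_A,{\rm U}_A)$, $({\rm T}_B,{\rm U}_B)$ (respectively $({\rm U}_A,{\rm H}_A)$, $({\rm U}_B,{\rm H}_B)$) is a clean conceptual addition that the paper leaves implicit.
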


\begin{rem} {\it The condition $M\otimes_A N = 0 = N\otimes_BM$ can not be relaxed to $\phi = 0 = \psi$. Otherwise,  for example in {\rm (1)}, $\Ker \left(\begin{smallmatrix}	\binom{1}{f}, &  \binom{g}{1}\end{smallmatrix}\right)=\left(\begin{smallmatrix} N\otimes_BQ\\M\otimes_AP \end{smallmatrix}\right)_{-(1_M\otimes_Ag),-(1_N\otimes_Bf)}$, which is no longer a projective left $\Lambda$-module.}
The similar remark for (2).
\end{rem}

\vskip5pt \noindent {\bf Proof of Lemma \ref{proj-injdim}}  (1) \ Thanks to the assumption $M\otimes_A N = 0 = N\otimes_BM,$ the given maps are $\Lambda$-maps (otherwise $\left(\begin{smallmatrix} -\binom{g}{1} \\ \binom{1}{f}\end{smallmatrix}\right)$ is not necessarily a $\Lambda$-map in general, even if $\phi = 0 = \psi$). We omit the details.
The given sequence of $\Lambda$-modules is exact, since
$$\xymatrix {0\ar[r] &  N\otimes_BQ \ar[r]^-{\binom{-g}{1}} & P \oplus (N\otimes_BQ) \ar[rr]^-{(1, g)} && P \ar[r] & 0}$$
and
$$\xymatrix {0\ar[r] &  M\otimes_AP \ar[r]^-{\binom{-1}{f}} & (M\otimes_AP) \oplus Q \ar[rr]^-{(f, 1)} &&  Q \ar[r] & 0}$$
are exact.

\vskip5pt

We claim that  $\left(\begin{smallmatrix} N\otimes_BQ\\M\otimes_AP \end{smallmatrix}\right)_{0,0}$ is a projective left $\Lambda$-module.
In fact, since $_AN$ and $_BQ$ are projective, $N\otimes_BQ$ is a projective left $A$-module. Since $M\otimes_AN = 0$, it follows that
$$\left(\begin{smallmatrix} N\otimes_BQ \\ 0\end{smallmatrix}\right)_{0, 0} = \left(\begin{smallmatrix} N\otimes_BQ\\M\otimes_AN\otimes_BQ\end{smallmatrix}\right)_{0, 0}$$
is a projective left $\Lambda$-module. Similarly,  $\left(\begin{smallmatrix} 0 \\ M\otimes_AP\end{smallmatrix}\right)_{0, 0}$ is a projective left $\Lambda$-module. Thus, $\left(\begin{smallmatrix} N\otimes_BQ\\M\otimes_AP \end{smallmatrix}\right)_{0,0} = \binom{N\otimes_BQ}{0}_{0, 0}\oplus \binom{0}{M\otimes_AP}_{0, 0}$ is a projective left $\Lambda$-module.

\vskip10pt

(2) \ This can be similarly proved as (1). Since \  $N\otimes_BM = 0= M\otimes_A N,$ the given sequence  is an exact sequence of $\Lambda$-maps.
Since $M_A$ is flat and $_BJ$ is injective, \  $\Hom_B(M, J)$ is an injective left $A$-module, and hence
$$\left(\begin{smallmatrix}\Hom_B(M, J)\\ 0\end{smallmatrix}\right)_{0, 0} = \left(\begin{smallmatrix}\Hom_B(M, J)\\ \Hom_A(N, \ \Hom_B(M, J))\end{smallmatrix}\right)_{0, 0}$$
is an injective left $\Lambda$-module. Similarly,  $\binom{0}{\Hom_A(N, I)}_{0, 0}$ is an injective left $\Lambda$-module.
Thus, $\binom{\Hom_B(M, J)}{\Hom_A(N, I)}_{0, 0} = \binom{\Hom_B(M, J)}{0}_{0, 0}\oplus \binom{0}{\Hom_A(N, I)}_{0, 0}$ is an injective left $\Lambda$-module.
\hfill $\square$

\vskip5pt

\subsection {Proof of Theorem \ref{ctp4}.} \ (1) \  Since $A$ is quasi-Frobenius, $_AA\in \ _A\mathcal I$.
Since $B$ is quasi-Frobenius and $_BM$ is projective,  \ $_BM\in \ _B\mathcal I$.
Since \ $M_A$ and $N_B$ are flat, it follows from Lemma \ref{proj-injdim}(2) that \ ${\rm inj.dim} _\Lambda \binom{A}{M}_{1, 0} \le 1$.

\vskip5pt

Similarly,   since $A$ is quasi-Frobenius and $_AN$ is projective, $_AN\in \ _A\mathcal I$.
Since $B$ is quasi-Frobenius,  \ $_BB\in \ _B\mathcal I$.
Since \ $M_A$ and $N_B$ are flat, ${\rm inj.dim} _\Lambda \binom{N}{B}_{0, 1} \le 1,$
by Lemma \ref{proj-injdim}(2).

\vskip5pt

Thus, ${\rm inj.dim} _\Lambda\Lambda \le 1$.
By the right module version of Lemma \ref{proj-injdim}(2) one knows  \ ${\rm inj.dim} \Lambda_\Lambda \le 1.$
Thus $\Lambda$ is a Gorenstein ring.

\vskip5pt

Since $\Lambda$ is Gorenstein with  ${\rm inj.dim} _\Lambda\Lambda \le 1$,  it is well-known that $_\Lambda \mathcal P^{<\infty} = \ _\Lambda \mathcal P^{\le 1} = \ _\Lambda \mathcal I^{\le 1} = \ _\Lambda \mathcal I^{<\infty}.$

\vskip5pt

Since $_AN$ and $_BM$ are projective modules, it follows from Lemma \ref{proj-injdim}(1) that \ $\binom{_A\mathcal P}{_A\mathcal P} \subseteq \ _\Lambda \mathcal P^{\le 1}$. On the other hand, for any $\left(\begin{smallmatrix}X\\ Y\end{smallmatrix}\right)_{f, g}\in \ _\Lambda \mathcal P^{\le 1},$
let $0 \longrightarrow \binom{P_{11}}{P_{12}} \longrightarrow \binom{P_{01}}{P_{02}} \longrightarrow \left(\begin{smallmatrix}X\\ Y\end{smallmatrix}\right)_{f, g} \longrightarrow 0$ be a projective resolution of $\left(\begin{smallmatrix}X\\ Y\end{smallmatrix}\right)_{f, g}$. Then
one has exact sequence $0\longrightarrow P_{11} \longrightarrow P_{01} \longrightarrow X \longrightarrow 0$. Since $_AN$ is projective, $P_{11}$ and $P_{01}$ are projective (cf. Subsection 2.5), and hence injective. Thus the exact sequence splits and hence $X$ is projective. Similarly, $Y$ is projective. This shows $\left(\begin{smallmatrix}X\\ Y\end{smallmatrix}\right)_{f, g}\in \binom{_A\mathcal P}{_A\mathcal P}$. Hence $\ _\Lambda \mathcal P^{\le 1} = \binom{_A\mathcal P}{_A\mathcal P} = \binom{_A\mathcal I}{_A\mathcal I}$.

\vskip5pt

(2) \ \  By (1), $\Lambda$ is Gorenstein and $\binom{_A\mathcal I}{_B\mathcal I} = \ _\Lambda \mathcal P^{<\infty}$. Thus, ${}^\perp\binom{_A\mathcal I}{_B\mathcal I} = {\rm GP}(\Lambda)$
and $({}^\perp\binom{_A\mathcal I}{_B\mathcal I}, \ \binom{_A\mathcal I}{_B\mathcal I})$ is just the Gorenstein-projective cotorsion pair, so it is complete and hereditary.

\vskip5pt

By Theorem \ref{compare}(1), ${\rm Mon}(\Lambda)^\bot \subseteq \binom{_A\mathcal I}{_B\mathcal I} = \ _\Lambda \mathcal P^{<\infty}$.
Thus, to see $({\rm Mon}(\Lambda), \ {\rm Mon}(\Lambda)^\bot) = ({\rm GP}(\Lambda), \ _\Lambda \mathcal P^{<\infty}),$ it suffices to show \ ${\rm Mon}(\Lambda)\subseteq {\rm GP}(\Lambda)$.
Since $\Lambda$ is Gorenstein, ${\rm GP}(\Lambda) = \ ^{\bot_{\ge 1}} \ _\Lambda \mathcal P$. See Subsection 2.9.
While ${\rm inj.dim} _\Lambda\Lambda \le 1$, each projective $\Lambda$-module is of injective dimension $\le 1$.
It follows that \ $^\bot \ _\Lambda \mathcal P = \ ^{\bot_{\ge 1}} \ _\Lambda \mathcal P.$
Thus, it suffices to show \ ${\rm Mon}(\Lambda)\subseteq \ ^\bot \ _\Lambda \mathcal P$, namely,
it suffices  to show
$$\Ext_\Lambda^1({\rm Mon}(\Lambda), \ {\rm T}_A(_A\mathcal P)\oplus {\rm T}_B(_B\mathcal P))=0.$$
This is indeed true. In fact, let \ $\left(\begin{smallmatrix} X \\ Y\end{smallmatrix}\right)_{f,g}\in {\rm Mon}(\Lambda).$ For any $P\in \ _A\mathcal P$,  there is an exact sequence
$$0\longrightarrow \left(\begin{smallmatrix} 0 \\ M\otimes_AP\end{smallmatrix}\right)_{0,0}\xlongrightarrow{\binom{0}{1}}
{\rm T}_A P = \left(\begin{smallmatrix} P \\ M\otimes_AP\end{smallmatrix}\right)_{1,0}\xlongrightarrow{\binom{1}{0}}
\left(\begin{smallmatrix} P \\ 0 \end{smallmatrix}\right)\longrightarrow 0.$$
By Lemma \ref{extadj2}(2) one has
$$\Ext_\Lambda^1(\left(\begin{smallmatrix} X \\ Y\end{smallmatrix}\right)_{f,g}, \ \left(\begin{smallmatrix} 0 \\ M\otimes_AP\end{smallmatrix}\right)_{0,0})=
\Ext_B^1(\Coker f, \ M\otimes_AP) = 0$$
since $M\otimes_AP$ is projective as a left $B$-module  (and hence injective). By Lemma \ref{extadj2}(1), one has
$$\Ext_\Lambda^1(\left(\begin{smallmatrix} X \\ Y\end{smallmatrix}\right)_{f,g}, \ \left(\begin{smallmatrix} P \\ 0 \end{smallmatrix}\right)_{0,0})=
\Ext_A^1(\Coker g, \ P)=0.$$
Thus $\Ext_\Lambda^1(\left(\begin{smallmatrix} X \\ Y\end{smallmatrix}\right)_{f,g}, \ {\rm T}_A P )= 0$. This shows $\Ext_\Lambda^1({\rm Mon}(\Lambda), \ {\rm T}_A(_A\mathcal P))=0.$

\vskip5pt

Similarly, $\Ext_\Lambda^1({\rm Mon}(\Lambda), \ {\rm T}_B(_B\mathcal P))=0.$ Thus \ $({\rm Mon}(\Lambda), \ {\rm Mon}(\Lambda)^\bot) = ({\rm GP}(\Lambda), \ _\Lambda \mathcal P^{<\infty}),$
in particular,
 \ ${\rm Mon}(\Lambda) = \mathcal {GP}(\Lambda) = \ {}^\perp \ _\Lambda\mathcal P, \ \ \ \ {\rm Mon}(\Lambda)^\bot = \ _\Lambda \mathcal P^{\le 1}.$

\vskip5pt

The assertion \ ${\rm (2)'}$ \ is the dual of (2). \hfill $\square$

\section{\bf Completeness}

To study abelian model structures on Morita rings,  a key step  is to know the completeness of
cotorsion pairs in Morita rings.

\subsection{Completeness via cogenerations by sets}

First, by [ET2, Theorem 10], one has

\vskip5pt

\begin{prop}\label{generatingcomplete} \ Let  $\Lambda = \left(\begin{smallmatrix} A & N \\ M & B\end{smallmatrix}\right)$  be a Morita ring  with $\phi = 0=\psi$,  $(\mathcal U,  \mathcal X)$ and  $(\mathcal V,  \mathcal Y)$  cotorsion pairs in $A\mbox{-}{\rm Mod}$ and $B\mbox{-}{\rm Mod}$, cogenerated by sets $S_1$ and $S_2$, respectively.

\vskip5pt

$(1)$ \ If  $\Tor^A_1(M,  \mathcal U)=0 = \Tor^B_1(N,  \mathcal V)$,  then cotorsion pair  $(^\perp\binom{\mathcal X}{\mathcal Y},  \binom{\mathcal X}{\mathcal Y})$ is cogenerated by  ${\rm T}_A(S_1) \cup {\rm T}_B(S_2),$ and hence complete.

\vskip5pt

{\rm (2)} \ If \ $M\otimes_A N = 0 = N\otimes_BM$, then cotorsion pair \ $(^\perp\nabla(\mathcal X, \ \mathcal Y), \ \nabla(\mathcal X, \ \mathcal Y))$  is cogenerated by  ${\rm Z}_A(S_1) \cup {\rm Z}_B(S_2)$, and hence complete.

\end{prop}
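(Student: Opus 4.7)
The plan is to unwind the cotorsion pair conditions through the adjunctions attached to $T_A$, $T_B$, $Z_A$, $Z_B$, to identify the right-hand classes $\binom{\mathcal X}{\mathcal Y}$ and $\nabla(\mathcal X, \mathcal Y)$ as the explicit orthogonals of the indicated sets, and then to invoke Proposition \ref{cogenerated} (Eklof--Trlifaj) for completeness.

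For Part (1), I would verify directly that $\binom{\mathcal X}{\mathcal Y} = (T_A(S_1) \cup T_B(S_2))^\perp$. A module $L = \binom{L_1}{L_2}_{f,g}$ lies in $\binom{\mathcal X}{\mathcal Y}$ iff $L_1 \in \mathcal X = S_1^\perp$ and $L_2 \in \mathcal Y = S_2^\perp$. Since $S_1 \subseteq \mathcal U$ and $S_2 \subseteq \mathcal V$, the Tor hypotheses restrict to $\Tor^A_1(M, S_1) = 0$ and $\Tor^B_1(N, S_2) = 0$, so Lemma \ref{extadj1}(1),(2) convert these orthogonalities into $\Ext^1_\Lambda(T_A(S_1) \cup T_B(S_2), L) = 0$. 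Completeness is then immediate from Proposition \ref{cogenerated}.

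For Part (2), the inclusion $\nabla(\mathcal X, \mathcal Y) \subseteq (Z_A(S_1) \cup Z_B(S_2))^\perp$ is analogous: for $L \in \nabla(\mathcal X, \mathcal Y)$ the structure maps $\widetilde f, \widetilde g$ are epi with $\Ker \widetilde f \in \mathcal X$ and $\Ker \widetilde g \in \mathcal Y$, and Lemma \ref{extadj2}(3),(4) yield the desired Ext-vanishing. For the reverse inclusion I would first harmlessly enlarge $S_1, S_2$ to $S_1 \cup \{A\}$ and $S_2 \cup \{B\}$; this does not alter the cotorsion pairs $(\mathcal U, \mathcal X)$ and $(\mathcal V, \mathcal Y)$, because $A$ and $B$ are projective and hence contribute trivially to the perpendicular. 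Then for $L$ in the right-hand side, the short exact sequence
$$0 \longrightarrow Z_B M \longrightarrow T_A A \longrightarrow Z_A A \longrightarrow 0,$$
which is valid because $N \otimes_B M = 0$, together with projectivity of $T_A A$ and the adjunction identifications $\Hom_\Lambda(T_A A, L) = L_1$ and $\Hom_\Lambda(Z_B M, L) = \Hom_B(M, L_2)$, computes $\Ext^1_\Lambda(Z_A A, L) = \Coker \widetilde f$. Hence $\widetilde f$ is an epimorphism; dually, via $0 \to Z_A N \to T_B B \to Z_B B \to 0$, so is $\widetilde g$. Lemma \ref{extadj2}(3),(4) now give $\Ker \widetilde f \in S_1^\perp = \mathcal X$ and $\Ker \widetilde g \in S_2^\perp = \mathcal Y$, so $L \in \nabla(\mathcal X, \mathcal Y)$, and completeness follows from Proposition \ref{cogenerated}.

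The main obstacle is the reverse inclusion in Part (2): a priori, Ext-vanishing against $Z_A(S_1) \cup Z_B(S_2)$ carries no direct information about whether $\widetilde f, \widetilde g$ are surjective, which is precisely what is required in order to apply Lemma \ref{extadj2}(3),(4). The resolution combines two ingredients: the harmless enlargement of the cogenerating sets by the projective generators $A, B$, and the short exact sequences $0 \to Z_B M \to T_A A \to Z_A A \to 0$ and $0 \to Z_A N \to T_B B \to Z_B B \to 0$ arising from the vanishing tensor products $M \otimes_A N = 0 = N \otimes_B M$, which identify $\Ext^1_\Lambda(Z_A A, -)$ and $\Ext^1_\Lambda(Z_B B, -)$ with $\Coker \widetilde f$ and $\Coker \widetilde g$, respectively.
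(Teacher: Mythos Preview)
Your proof is correct. Part (1) is essentially the paper's argument: the paper packages the computation via Lemma~\ref{destheta}(1), which is exactly the consequence of Lemma~\ref{extadj1}(1),(2) that you invoke directly.

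For Part (2) you take a genuinely different route at the crucial step. The paper enlarges $S_1,S_2$ to contain the projectives and then quotes Lemma~\ref{desdelta}(2); the proof of that lemma establishes surjectivity of $\widetilde f,\widetilde g$ by building an explicit extension of $L$ by $Z_BY$ (resp.\ $Z_AX$) and showing it splits, which forces $\Hom_B(Y,\widetilde g)$ to be onto and hence $\widetilde g$ to be onto once one plugs in a projective $Y$. Your argument bypasses this splitting construction entirely: you produce the short exact sequence $0\to Z_BM\to T_AA\to Z_AA\to 0$ (valid because $N\otimes_BM=0$) and read off $\Ext^1_\Lambda(Z_AA,L)\cong\Coker\widetilde f$ from the long exact sequence, using that $T_AA$ is projective and that the connecting map identifies with $\widetilde f$. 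This is cleaner and more direct for the present purpose; the paper's Lemma~\ref{desdelta} is formulated more generally (for arbitrary classes $\mathcal X,\mathcal Y$ containing the projectives) and is reused elsewhere, which is what it buys. One small remark: both your enlargement by $\{A\},\{B\}$ and the paper's ``without loss of generality $S_1\supseteq{}_A\mathcal P$'' literally change the cogenerating set in the statement, so strictly speaking what is proved is cogeneration by $Z_A(S_1\cup\{A\})\cup Z_B(S_2\cup\{B\})$; this is harmless for the intended conclusion (completeness), and you flag it correctly.
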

\begin{proof} \ (1) \ By Theorem \ref{ctp1}(1), \ $(^\perp\binom{\mathcal X}{\mathcal Y}, \ \binom{\mathcal X}{\mathcal Y})$ is a cotorsion pair in $\Lambda$-Mod. By Lemma \ref{destheta}(1),
\ $\left (\begin{smallmatrix} \mathcal X \\ \mathcal Y\end{smallmatrix}\right) = \left (\begin{smallmatrix} S_1^\perp \\ S_2^\perp\end{smallmatrix}\right) = ({\rm T}_A(S_1) \cup {\rm T}_B(S_2))^\perp.$ Thus,
\ $(^\perp\binom{\mathcal X}{\mathcal Y}, \ \binom{\mathcal X}{\mathcal Y})$ is complete, by Proposition \ref{cogenerated}.

\vskip5pt

(2) \ Without loss of generality, one may assume that $S_1 \supseteq \ _A\mathcal P$ and
$S_2 \supseteq \ _B\mathcal P$. Then by Lemma \ref{desdelta}(2) one has \
$\nabla(\mathcal X, \ \mathcal Y) = \nabla(S_1^\perp, \ S_2^\perp) = ({\rm Z}_A(S_1) \cup {\rm Z}_B(S_2))^\perp.$
\end{proof}

\vskip5pt

Proposition \ref{generatingcomplete} gives some information on the completeness of the cotorsion pairs in Morita rings.
However, since Proposition \ref{cogenerated} has no dual versions in general, there are no results on the completeness of $(\left(\begin{smallmatrix}\mathcal U\\ \mathcal V\end{smallmatrix}\right),  \left(\begin{smallmatrix}\mathcal U\\ \mathcal V\end{smallmatrix}\right)^\perp)$  and  $(\Delta(\mathcal U,  \mathcal V)$,  $\Delta(\mathcal U, \ \mathcal V)^{\bot})$; moreover, it is more natural to study the completeness of the cotorsion pairs given in
Theorems \ref{ctp1} and \ref{ctp6}, directly from the completeness of $(\mathcal U,  \mathcal X)$ and  $(\mathcal V,  \mathcal Y)$, rather than requiring that
they are cogenerated by sets.
Thus, we need  module-theoretical methods to the completeness of the cotorsion pairs in Morita rings.

\vskip5pt

Such a general investigation is difficult.
We will deal with this question, by assuming that one of \ $(\mathcal U, \ \mathcal X)$ and \ $(\mathcal V, \ \mathcal Y)$
is arbitrary, and that another is the projective or injective cotorsion pair. In view of Section 4, we only consider cotorsion pairs in Theorem \ref{ctp1}.

\subsection{Main results on completeness} Take  $(\mathcal V,  \mathcal Y)$ to be an arbitrary complete cotorsion pair in $B$-Mod.
For cotorsion pair  $({}^\perp\left(\begin{smallmatrix}\mathcal X\\ \mathcal Y\end{smallmatrix}\right),  \left(\begin{smallmatrix}\mathcal X\\ \mathcal Y\end{smallmatrix}\right))$ in Theorem \ref{ctp1}(1),
taking  $(\mathcal U, \ \mathcal X) = (_A\mathcal P,  A\mbox{-}{\rm Mod})$, we have assertion (1) below;
for cotorsion pair  $(\left(\begin{smallmatrix}\mathcal U\\ \mathcal V\end{smallmatrix}\right),  \left(\begin{smallmatrix}\mathcal U\\ \mathcal V\end{smallmatrix}\right)^\perp)$ in Theorem \ref{ctp1}(2),
taking  $(\mathcal U,  \mathcal X) = (A\mbox{-}{\rm Mod}, \ _A\mathcal I)$, we have assertion (2) below.

\begin{thm} \label{ctp2}  \ Let \ $\Lambda = \left(\begin{smallmatrix} A & N \\ M & B \end{smallmatrix}\right)$ be a Morita ring with $\phi = 0= \psi$,
and \  $(\mathcal V, \ \mathcal Y)$ a complete cotorsion pair in $B\mbox{-}{\rm Mod}$. Suppose that \ $N_B$ is flat and \ $_BM$ is projective.
	
\vskip5pt
	
$(1)$  \  If \ $M\otimes_A\mathcal P\subseteq \mathcal Y$,
then \ $({}^\perp\binom{A\mbox{-}{\rm Mod}}{\mathcal Y}, \ \binom{A\mbox{-}{\rm Mod}}{\mathcal Y})$  is a complete cotorsion pair in $\Lambda\mbox{-}{\rm Mod};$
and it is hereditary if \  $(\mathcal V, \ \mathcal Y)$ is hereditary.
	
\vskip5pt

Moreover, if \ $M\otimes_AN = 0= N\otimes_BM$, then \ \ ${}^\perp\left (\begin{smallmatrix} A\mbox{-}{\rm Mod} \\ \mathcal Y\end{smallmatrix}\right) = {\rm T}_A(_A\mathcal P)\oplus {\rm T}_B(\mathcal V),$  \ and hence
$$({\rm T}_A(_A\mathcal P)\oplus {\rm T}_B(\mathcal V), \ \left (\begin{smallmatrix} A\mbox{-}{\rm Mod} \\ \mathcal Y\end{smallmatrix}\right))$$ is a complete cotorsion pair$;$ and it is hereditary if \  $(\mathcal V, \ \mathcal Y)$ is hereditary.

\vskip10pt

$(2)$ \ If \ $\Hom_A(N, \ _A\mathcal I) \subseteq \mathcal V$, then \ $(\binom{A\mbox{-}{\rm Mod}}{\mathcal V},  \ \binom{A\mbox{-}{\rm Mod}}{\mathcal V}^\perp)$
is a complete cotorsion pair in $\Lambda\mbox{-}{\rm Mod};$ and it is hereditary if \  $(\mathcal V, \ \mathcal Y)$ is hereditary.
	
\vskip5pt

Moreover, if \  $M\otimes_AN = 0= N\otimes_BM$, then \ \ $\binom{A\mbox{-}{\rm Mod}}{\mathcal V}^\perp = {\rm H}_A(_A\mathcal I)\oplus {\rm H}_B(\mathcal Y)$, and hence
$$(\left(\begin{smallmatrix} A\mbox{-}{\rm Mod} \\ \mathcal V\end{smallmatrix}\right), \ {\rm H}_A(_A\mathcal I)\oplus {\rm H}_B(\mathcal Y))$$ is a complete cotorsion pair$;$
and it is hereditary if \  $(\mathcal V, \ \mathcal Y)$ is hereditary. \end{thm}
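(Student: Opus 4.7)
The plan is to derive the cotorsion pair and its heredity from the general Theorem~\ref{ctp1}, and then to establish completeness by constructing explicit approximation sequences using the adjoints ${\rm T}_A,{\rm T}_B$ in part~(1) and ${\rm H}_A,{\rm H}_B$ in part~(2). The ``moreover'' identifications will drop out immediately from Theorem~\ref{identify1}.

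For part~(1), take $(\mathcal U,\mathcal X)=({}_A\mathcal P,\,A\mbox{-}{\rm Mod})$ in Theorem~\ref{ctp1}(1): the hypothesis $\Tor_1^A(M,{}_A\mathcal P)=0$ is automatic, and $\Tor_1^B(N,\mathcal V)=0$ follows from the flatness of $N_B$, so the cotorsion pair and heredity claims follow at once, since the projective cotorsion pair is always hereditary. For completeness, given $L=\binom{L_1}{L_2}_{f,g}$, one chooses an $A$-projective surjection $\alpha\colon P\twoheadrightarrow L_1$ together with a completeness approximation $0\to Y\to V\xrightarrow{\beta}L_2\to 0$ in $B\mbox{-}{\rm Mod}$ with $V\in\mathcal V$ and $Y\in\mathcal Y$, sets $C={\rm T}_A(P)\oplus{\rm T}_B(V)$, and forms the natural $\Lambda$-map $C\to L$ induced by $\alpha,\beta$ via the adjunctions $({\rm T}_A,{\rm U}_A)$ and $({\rm T}_B,{\rm U}_B)$. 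Componentwise surjectivity is immediate. The containment $C\in{}^\perp\binom{A\mbox{-}{\rm Mod}}{\mathcal Y}$ reduces via Lemma~\ref{extadj1}(1)(2) to $\Ext_A^1(P,-)=0$ and $\Ext_B^1(V,\mathcal Y)=0$. The key computation is that the kernel $F=\ker(C\to L)$ has second component fitting into an exact sequence
\[
0\longrightarrow Y\longrightarrow F_2\longrightarrow M\otimes_AP\longrightarrow 0,
\]
and the hypothesis $M\otimes_A{}_A\mathcal P\subseteq\mathcal Y$ forces $M\otimes_AP\in\mathcal Y$, so $F_2\in\mathcal Y$ by extension-closure of $\mathcal Y$; hence $F\in\binom{A\mbox{-}{\rm Mod}}{\mathcal Y}$.

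Part~(2) is the dual argument. Apply Theorem~\ref{ctp1}(2) with $(\mathcal U,\mathcal X)=(A\mbox{-}{\rm Mod},{}_A\mathcal I)$: the conditions $\Ext_A^1(N,{}_A\mathcal I)=0$ and $\Ext_B^1(M,\mathcal Y)=0$ hold by injectivity and by the projectivity of ${}_BM$, respectively. For completeness, given $L$, take an injective envelope $\iota\colon L_1\hookrightarrow I$ and an envelope approximation $0\to L_2\xrightarrow{j}Y\to V\to 0$, set $F'={\rm H}_A(I)\oplus{\rm H}_B(Y)$, and form the natural $\Lambda$-monomorphism $L\hookrightarrow F'$ induced by $\iota,j$ via $({\rm U}_A,{\rm H}_A)$ and $({\rm U}_B,{\rm H}_B)$. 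That $F'\in\binom{A\mbox{-}{\rm Mod}}{\mathcal V}^\perp$ follows from Lemma~\ref{extadj1}(3)(4). The essential check is $C'_2\in\mathcal V$, where $C'=F'/L$: a snake-lemma comparison with $0\to L_2\to Y\to V\to 0$ yields an exact sequence
\[
0\longrightarrow\Hom_A(N,I)\longrightarrow C'_2\longrightarrow V\longrightarrow 0,
\]
and the hypothesis $\Hom_A(N,{}_A\mathcal I)\subseteq\mathcal V$ combined with the extension-closure of $\mathcal V$ gives $C'_2\in\mathcal V$.

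Finally, under $M\otimes_AN=0=N\otimes_BM$ the ``moreover'' identifications of the left (respectively right) class as ${\rm T}_A({}_A\mathcal P)\oplus{\rm T}_B(\mathcal V)$ (respectively ${\rm H}_A({}_A\mathcal I)\oplus{\rm H}_B(\mathcal Y)$) are immediate from Theorem~\ref{identify1}, since the remaining hypotheses there---$N\otimes_B\mathcal V\subseteq A\mbox{-}{\rm Mod}$ in part~(1) and $\Hom_B(M,\mathcal Y)\subseteq A\mbox{-}{\rm Mod}$ in part~(2)---are vacuous. The main obstacle, and the only place where the hypotheses $M\otimes_A{}_A\mathcal P\subseteq\mathcal Y$ and $\Hom_A(N,{}_A\mathcal I)\subseteq\mathcal V$ are truly essential, is pinning down $F_2$ and $C'_2$ in the correct classes; the rest of the argument is a routine unwinding of the adjunction identities recorded in Lemma~\ref{extadj1}.
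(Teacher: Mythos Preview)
Your proposal is correct and follows essentially the same approach as the paper: reduce the cotorsion-pair and heredity statements to Theorem~\ref{ctp1}, build the approximation sequence from ${\rm T}_A(P)\oplus{\rm T}_B(V)$ (respectively ${\rm H}_A(I)\oplus{\rm H}_B(Y)$), and read off the ``moreover'' identifications from Theorem~\ref{identify1} (the paper routes through its specialization, Corollary~\ref{identification1}). The only cosmetic difference is in the kernel/cokernel analysis: the paper isolates this in Lemmas~\ref{completeness1} and~\ref{completeness3}, where the projectivity of $_BM$ (respectively flatness of $N_B$) is used to \emph{split} the relevant sequence and obtain $F_2\cong(M\otimes_AP)\oplus Y$ (respectively $C'_2\cong\Hom_A(N,I)\oplus V$) as a direct sum, whereas you obtain only the extension $0\to Y\to F_2\to M\otimes_AP\to 0$ and conclude via extension-closure of $\mathcal Y$; both routes are valid here.
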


\begin{rem}\label{remcomplete1}  (1) \ {\it $_BM$ is injective, then \ $M\otimes_A\mathcal P\subseteq \mathcal Y$ always
holds.}

\vskip5pt

$(2)$ \ {\it If \ $B$ is quasi-Frobenius and  \ $N_B$ is flat, then \ $\Hom_A(N, \ _A\mathcal I) \subseteq \mathcal V$ always holds.}
\end{rem}

\vskip5pt

Take  $(\mathcal U,  \mathcal X)$ to be an arbitrary complete cotorsion pair in $A$-Mod.
For cotorsion pair  $({}^\perp\left(\begin{smallmatrix}\mathcal X\\ \mathcal Y\end{smallmatrix}\right),  \left(\begin{smallmatrix}\mathcal X\\ \mathcal Y\end{smallmatrix}\right))$ in Theorem \ref{ctp1}(1),
taking $(\mathcal V,  \mathcal Y)  = (_B\mathcal P,  B\mbox{-}{\rm Mod})$, we have assertion (1) below;
for  cotorsion pair  $(\left(\begin{smallmatrix}\mathcal U\\ \mathcal V\end{smallmatrix}\right),  \left(\begin{smallmatrix}\mathcal U\\ \mathcal V\end{smallmatrix}\right)^\perp)$ in Theorem \ref{ctp1}(2),
taking $(\mathcal V,  \mathcal Y) = (B\mbox{-}{\rm Mod}, \ _B\mathcal I)$, we have assertion (2) below.

\begin{thm} \label{ctp3} \ Let \ $\Lambda = \left(\begin{smallmatrix} A & N \\ M & B \end{smallmatrix}\right)$ be a Morita ring with $\phi = 0= \psi$, and \ $(\mathcal U, \ \mathcal X)$ a complete cotorsion pair in $A\mbox{-}{\rm Mod}$.
Suppose that \ $M_A$ is flat and \ $_AN$ is projective.
	
\vskip5pt

$(1)$ \   If \ $N\otimes_B  \mathcal P\subseteq \mathcal X$,
then \ $({}^\perp\binom{\mathcal X}{B\mbox{-}{\rm Mod}},  \ \binom{\mathcal X}{B\mbox{-}{\rm Mod}})$  is a complete cotorsion pair in $\Lambda\mbox{-}{\rm Mod};$ and it is hereditary if \  $(\mathcal U, \ \mathcal X)$ is hereditary.

\vskip5pt

Moreover, if $M\otimes_AN = 0= N\otimes_BM$, then \ \ ${}^\perp\left(\begin{smallmatrix}\mathcal X\\ B\mbox{-}{\rm Mod}\end{smallmatrix}\right) = {\rm T}_A(\mathcal U)\oplus {\rm T}_B(_B\mathcal P)$, \ and hence
$$ ({\rm T}_A(\mathcal U)\oplus {\rm T}_B(_B\mathcal P),\ \left(\begin{smallmatrix}\mathcal X\\ B\mbox{-}{\rm Mod}\end{smallmatrix}\right))$$ is a complete cotorsion pair$;$ and it is hereditary if \  $(\mathcal U, \ \mathcal X)$ is hereditary.
	
\vskip10pt

$(2)$ \ If \ $\Hom_B(M, \ _B\mathcal I) \subseteq \mathcal U$, then
\ $(\binom{\mathcal U}{B\mbox{-}{\rm Mod}},  \ \binom{\mathcal U}{B\mbox{-}{\rm Mod}}^\perp)$  is a complete  cotorsion pair in $\Lambda\mbox{-}{\rm Mod};$ and it is hereditary if \  $(\mathcal U, \ \mathcal X)$ is hereditary.

\vskip5pt

Moreover, if  $M\otimes_AN = 0= N\otimes_BM$, then \ $\left(\begin{smallmatrix}\mathcal U\\ B\mbox{-}{\rm Mod}\end{smallmatrix}\right)^\perp = {\rm H}_A(\mathcal X)\oplus {\rm H}_B(_B\mathcal I)$, \ and hence
$$(\left(\begin{smallmatrix}\mathcal U\\ B\mbox{-}{\rm Mod}\end{smallmatrix}\right),  \ {\rm H}_A(\mathcal X)\oplus {\rm H}_B(_B\mathcal I))$$ is a complete cotorsion pair$;$ and it is hereditary if \  $(\mathcal U, \ \mathcal X)$ is hereditary.
\end{thm}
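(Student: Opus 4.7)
The plan is to derive Theorem \ref{ctp3} by a symmetric argument to Theorem \ref{ctp2}, with the roles of $A$ and $B$, and of $M$ and $N$, interchanged. First, I would extract the cotorsion pairs themselves from Theorem \ref{ctp1}: for (1), apply Theorem \ref{ctp1}(1) with $(\mathcal V, \mathcal Y) = ({}_B\mathcal P, B\mbox{-Mod})$, noting that $\Tor_1^A(M, \mathcal U) = 0$ holds because $M_A$ is flat and $\Tor_1^B(N, {}_B\mathcal P) = 0$ is automatic; the heredity statement then reduces to the heredity of $(\mathcal U, \mathcal X)$ alone, since $({}_B\mathcal P, B\mbox{-Mod})$ is trivially hereditary. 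For (2), apply Theorem \ref{ctp1}(2) with $(\mathcal V, \mathcal Y) = (B\mbox{-Mod}, {}_B\mathcal I)$, where $\Ext_A^1(N, \mathcal X) = 0$ because $_AN$ is projective and $\Ext_B^1(M, {}_B\mathcal I) = 0$ since $_B\mathcal I$ consists of injectives.

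The heart of the argument is completeness. For (1), given $L = \binom{X}{Y}_{f,g}$, I would take an $\mathcal X$-preenvelope $0 \to X \xrightarrow{i} X' \to U \to 0$ of $X$ in $A$-Mod. Flatness of $M_A$ makes $1_M \otimes i$ monic, so I can form the pushout $Y'$ of $f$ along $1_M \otimes i$, obtaining an exact sequence $0 \to Y \to Y' \to M\otimes_A U \to 0$ and a natural structure map $f': M\otimes_A X' \to Y'$. Using $\phi = 0 = \psi$ together with right exactness of $N\otimes_B -$ (and the vanishing $g\circ(1_N\otimes f) = 0$), I can descend the pair $(i\circ g, 0)$ to a structure map $g': N\otimes_B Y' \to X'$, producing a $\Lambda$-module $F = \binom{X'}{Y'}_{f', g'} \in \binom{\mathcal X}{B\mbox{-Mod}}$ with quotient $T_A U = \binom{U}{M\otimes_A U}_{1,0}$. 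Lemma \ref{extadj1}(1) then gives $T_A U \in {}^\perp\binom{\mathcal X}{B\mbox{-Mod}}$, and Proposition \ref{completenessandheredity} delivers completeness. For (2), I would dually take a $\mathcal U$-precover $0 \to K \to U' \xrightarrow{\pi} X \to 0$ and, working in the second expression, form the pullback $Y'$ of $\tilde g: Y \to \Hom_A(N, X)$ along $\Hom_A(N, \pi)$; since $_AN$ is projective, $\Hom_A(N, \pi)$ is epic with kernel $\Hom_A(N, K)$. The universal property of the pullback, combined with the identity $(M, \tilde g)\tilde f = 0$, allows me to construct compatible structure maps $\tilde f_C, \tilde g_C$ on $C = \binom{U'}{Y'}$, yielding an exact sequence $0 \to H_A K \to C \to L \to 0$ with $C \in \binom{\mathcal U}{B\mbox{-Mod}}$ and, via Lemma \ref{extadj1}(3), $H_A K \in \binom{\mathcal U}{B\mbox{-Mod}}^{\perp}$.

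For the identifications under $M\otimes_A N = 0 = N\otimes_B M$, I would invoke Theorem \ref{identify1}: in (1), apply part (1) with $(\mathcal V, \mathcal Y) = ({}_B\mathcal P, B\mbox{-Mod})$, since $M\otimes_A \mathcal U \subseteq B\mbox{-Mod}$ is trivial and $N\otimes_B {}_B\mathcal P \subseteq \mathcal X$ is the hypothesis, yielding $T_A(\mathcal U) \oplus T_B({}_B\mathcal P) = {}^\perp\binom{\mathcal X}{B\mbox{-Mod}}$; in (2), apply part (2) with $(\mathcal V, \mathcal Y) = (B\mbox{-Mod}, {}_B\mathcal I)$, since $\Hom_A(N, \mathcal X) \subseteq B\mbox{-Mod}$ is trivial and $\Hom_B(M, {}_B\mathcal I) \subseteq \mathcal U$ is the hypothesis. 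The main obstacle I foresee is the bookkeeping in the completeness step—namely, verifying that the pushout (respectively pullback) genuinely produces $\Lambda$-module structure maps compatible with $\phi = 0 = \psi$, and that the resulting sequence is exact as $\Lambda$-maps; this rests delicately on the flatness of $M_A$ (respectively projectivity of $_AN$) being used at precisely the right place.
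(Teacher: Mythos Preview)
Your overall strategy is correct, and the identification clauses via Theorem~\ref{identify1} match the paper's use of Corollary~\ref{identification1}. However, your completeness argument takes a genuinely different route from the paper's. The paper follows the template of Theorem~\ref{ctp2} exactly: for part~(1) it invokes Lemma~\ref{completeness2} to produce, for an arbitrary $\Lambda$-module $\binom{L_1}{L_2}_{f,g}$, a \emph{special right} approximation
\[
0 \longrightarrow \left(\begin{smallmatrix} X\oplus(N\otimes_B Q)\\ K\end{smallmatrix}\right)_{\alpha,\beta} \longrightarrow {\rm T}_A U \oplus {\rm T}_B Q \longrightarrow \left(\begin{smallmatrix} L_1\\ L_2\end{smallmatrix}\right)_{f,g} \longrightarrow 0
\]
with $U\in\mathcal U$, $X\in\mathcal X$ and $Q\in{}_B\mathcal P$; the hypothesis $N\otimes_B\mathcal P\subseteq\mathcal X$ is then needed to force the kernel into $\binom{\mathcal X}{B\text{-Mod}}$. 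Part~(2) is handled dually via Lemma~\ref{completeness4}, and the hypothesis $\Hom_B(M,{}_B\mathcal I)\subseteq\mathcal U$ plays the analogous role. By contrast, you build the \emph{opposite} approximation: for~(1) a special left $\binom{\mathcal X}{B\text{-Mod}}$-approximation $0\to L\to F\to {\rm T}_A U\to 0$ via a pushout along $1_M\otimes i$, and for~(2) a special right approximation $0\to {\rm H}_A K\to C\to L\to 0$ via a pullback along $\Hom_A(N,\pi)$. Your construction is more elementary (it bypasses Lemmas~\ref{completeness2} and~\ref{completeness4} entirely) and has the pleasant side effect that the extra hypotheses $N\otimes_B\mathcal P\subseteq\mathcal X$ and $\Hom_B(M,{}_B\mathcal I)\subseteq\mathcal U$ are not used at all in the completeness step---they are needed only for the ``Moreover'' identifications. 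The paper's approach, on the other hand, fits into its uniform framework of Lemmas~5.4--5.9 and yields approximating objects of the explicit shape ${\rm T}_A(-)\oplus{\rm T}_B(-)$ or ${\rm H}_A(-)\oplus{\rm H}_B(-)$, which is what later sections exploit.
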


\begin{rem}\label{remcomplete2} $(1)$  \ {\it If \ $_AN$ is injective, then \ $N\otimes_B\mathcal P\subseteq \mathcal X$ always holds.}

\vskip5pt

(2) \ {\it If \ $A$ is quasi-Frobenius and  \ $M_A$ is flat, then \ $\Hom_B(M, \ _B\mathcal I) \subseteq \mathcal U$ always holds.}
\end{rem}

\subsection{Lemmas for Theorem \ref{ctp2}} \ To prove Theorem \ref{ctp2}(1), we need

\vskip5pt

\begin{lem}\label{completeness1} \ Let \ $\Lambda = \left(\begin{smallmatrix} A & N \\ M & B \end{smallmatrix}\right)$ be a Morita ring with $\phi = 0= \psi$.
Suppose $_BM$ is projective. For a $\Lambda$-module $\left(\begin{smallmatrix}L_1\\ L_2\end{smallmatrix}\right)_{f, g}$, let $\pi: P \longrightarrow L_1$ be an epimorphism with $_AP$ projective,
and \ $0\longrightarrow Y\stackrel \sigma \longrightarrow V\stackrel {\pi'}\longrightarrow L_2\longrightarrow 0$ an exact sequence.
Then there is an exact sequence of the form$:$
$$0\rightarrow {\left(\begin{smallmatrix}K \\(M\otimes P)\oplus Y \end{smallmatrix}\right)_{\alpha,\beta}} \longrightarrow
 {\left(\begin{smallmatrix} P \\ M\otimes P\end{smallmatrix}\right)_{1, 0}}\oplus
{\left(\begin{smallmatrix} N\otimes V \\ V\end{smallmatrix}\right)_{0, 1}}
\xlongrightarrow {(\left(\begin{smallmatrix} \pi \\ f(1\otimes \pi)\end{smallmatrix}\right), \left(\begin{smallmatrix} g(1\otimes \pi')\\ \pi'\end{smallmatrix}\right))}
{\left(\begin{smallmatrix} L_1 \\ L_2\end{smallmatrix}\right)_{f,g}} \rightarrow 0.$$
\end{lem}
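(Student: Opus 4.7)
The middle term is visibly $\mathrm T_A(P)\oplus \mathrm T_B(V)$ in the notation of Subsection 2.4, and the displayed arrow is the pair of component maps $\binom{\pi}{f(1\otimes\pi)}\colon \mathrm T_A(P)\to \binom{L_1}{L_2}_{f,g}$ and $\binom{g(1\otimes\pi')}{\pi'}\colon \mathrm T_B(V)\to \binom{L_1}{L_2}_{f,g}$. The first thing I would check is that each of these is a $\Lambda$-map; the two required square-commutativities reduce, after expanding the structure maps $(1_{M\otimes P},0)$ of $\mathrm T_A(P)$ and $(0,1_{N\otimes V})$ of $\mathrm T_B(V)$, to the identities $f(1_M\otimes g)=0=g(1_N\otimes f)$ that hold on $L$ because $\phi=0=\psi$. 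Surjectivity of the combined map is then automatic on each coordinate, since $\pi$ hits $L_1$ and $\pi'$ hits $L_2$.

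For the kernel, the first coordinate of the kernel object is by definition $K:=\Ker\bigl((\pi,g(1\otimes\pi'))\colon P\oplus(N\otimes V)\to L_1\bigr)$, and no further identification is needed there. The real work is the second coordinate, and this is exactly where the hypothesis ``$_BM$ projective'' enters. Since $_AP$ is projective and $_BM$ is projective, the left $B$-module $M\otimes_A P$ is projective, so the $B$-map $f(1\otimes\pi)\colon M\otimes_A P\to L_2$ lifts through the $B$-epimorphism $\pi'\colon V\twoheadrightarrow L_2$ to some $\widetilde f\colon M\otimes_A P\to V$ with $\pi'\widetilde f=f(1\otimes\pi)$. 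The $B$-automorphism
\[
\varphi\colon (M\otimes P)\oplus V\longrightarrow (M\otimes P)\oplus V,\qquad (m\otimes p,\,v)\longmapsto (m\otimes p,\,v-\widetilde f(m\otimes p)),
\]
conjugates the map $(f(1\otimes\pi),\pi')$ into the projection followed by $\pi'$, and therefore carries $(M\otimes P)\oplus Y$ bijectively onto $\Ker(f(1\otimes\pi),\pi')$. This identifies the second coordinate of the kernel with $(M\otimes P)\oplus Y$.

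Packaging the two coordinate kernels together gives a $\Lambda$-submodule of $\mathrm T_A(P)\oplus \mathrm T_B(V)$; the structure maps $\alpha,\beta$ are those induced by restriction from $\mathrm T_A(P)\oplus \mathrm T_B(V)$ via the identifications above, and nothing needs to be checked since the kernel of a $\Lambda$-morphism is automatically a $\Lambda$-submodule. The only nontrivial step in the whole argument is the kernel computation on the second coordinate, for which the projectivity of $_BM$ (used through $M\otimes_A P$ being $B$-projective) is precisely what makes the lift $\widetilde f$, and hence the change of variables $\varphi$, available.
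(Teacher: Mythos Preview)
Your proposal is correct and follows essentially the same approach as the paper: verify the displayed map is a $\Lambda$-epimorphism using $f(1_M\otimes g)=0=g(1_N\otimes f)$ (from $\phi=0=\psi$), then identify the second coordinate of the kernel by noting that $M\otimes_A P$ is $B$-projective (since $_AP$ and $_BM$ are projective), lifting $f(1\otimes\pi)$ through $\pi'$, and using the resulting triangular change of variables. The paper writes the latter step as an explicit short exact sequence $0\to (M\otimes_A P)\oplus Y\xrightarrow{\left(\begin{smallmatrix}1&0\\-h&\sigma\end{smallmatrix}\right)}(M\otimes_A P)\oplus V\xrightarrow{(f(1\otimes\pi),\,\pi')}L_2\to 0$, which is the same as your automorphism $\varphi$.
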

\begin{proof} \ For convenience, rewrite the sequence as
 $$0\rightarrow \left(\begin{smallmatrix}K\\(M\otimes_AP)\oplus Y \end{smallmatrix}\right)_{\alpha, \beta} \longrightarrow\left(\begin{smallmatrix} P\oplus (N\otimes_BV) \\ (M\otimes_AP)\oplus V\end{smallmatrix}\right)_{\left(\begin{smallmatrix} 1_{M\otimes P} & 0 \\ 0 & 0 \end{smallmatrix}\right), \left(\begin{smallmatrix} 0 & 0 \\ 0 & 1_{N\otimes V} \end{smallmatrix}\right)}\xlongrightarrow{\left(\begin{smallmatrix}
	(\pi, \ g(1_N\otimes \pi')) \\ (f(1_M\otimes \pi), \ \pi')\end{smallmatrix}\right)}
	\left(\begin{smallmatrix} L_1 \\ L_2\end{smallmatrix}\right)_{f,g}\rightarrow 0.$$

\vskip5pt

We claim that \ $\left(\begin{smallmatrix}
	(\pi, g(1_N\otimes \pi')) \\ (f(1_M\otimes \pi), \ \pi')\end{smallmatrix}\right)$ is a $\Lambda$-epimorphism.
In fact, by $\phi = 0 = \psi$, \ $f(1_M\otimes g) = 0 = g(1_N\otimes f)$.  Hence
$$f(1_M\otimes g(1_N\otimes \pi')) = 0: \ M\otimes_AN\otimes_B V\longrightarrow L_2$$
and $$g(1_N\otimes f(1_M\otimes \pi)) = 0: \ N\otimes_B M\otimes P\longrightarrow L_1.$$
Thus, the following diagrams commute:
$$\xymatrix@R=0.7cm@C=1.2cm{(M\otimes_AP)\oplus (M\otimes_AN\otimes_B V)
\ar[d]_-{\left(\begin{smallmatrix} 1_{M\otimes P} &0 \\ 0&0 \end{smallmatrix}\right)}
\ar[rr]^-{(1_M \otimes \pi, 1_M\otimes g(1_N\otimes \pi'))} && M\otimes_AL_1\ar[d]^-f \\
(M\otimes_AP)\oplus V \ar[rr]^-{(f(1_M\otimes \pi), \ \pi')}  && L_2}$$

$$\xymatrix@R=0.7cm@C=1.2cm{(N\otimes_BM\otimes_A P)\oplus (N\otimes_BV) \ar[d]_-{\left(\begin{smallmatrix} 0& 0 \\ 0& 1_{N\otimes V} \end{smallmatrix}\right)}
\ar[rr]^-{(1_N\otimes f(1_M\otimes \pi), 1_N\otimes \pi')}&& N\otimes_BL_2  \ar[d]^-g \\
P\oplus (N\otimes_BV) \ar[rr]^-{(\pi, \ g(1_N\otimes \pi'))} && L_1}$$
i.e.,   $\left(\begin{smallmatrix}
(\pi, g(1_N\otimes \pi') \\ (f(1_M\otimes \pi), \ \pi')\end{smallmatrix}\right)$ is a $\Lambda$-map. Clearly, it is an epimorphism.
	
\vskip5pt
	
It remains to see that \ $\Ker \left(\begin{smallmatrix}
(\pi, g(1_N\otimes \pi') \\ (f(1_M\otimes \pi), \ \pi')\end{smallmatrix}\right)$ is of the form $\left(\begin{smallmatrix}K\\ (M\otimes_A P)\oplus Y\end{smallmatrix}\right)_{\alpha, \beta}$.

\vskip5pt

In fact, as a $\Lambda$-module, $\Ker \left(\begin{smallmatrix}
(\pi, g(1_N\otimes \pi') \\ (f(1_M\otimes \pi), \ \pi')\end{smallmatrix}\right)$ is of the form
$\left(\begin{smallmatrix} K\\ K'\end{smallmatrix}\right)_{\alpha,\beta}$,
where $K' = \Ker (f(1_M\otimes \pi), \ \pi')$.
Thus, it suffices to show  \ $\Ker (f(1_M\otimes \pi), \ \pi') \cong (M\otimes_AP)\oplus Y.$

\vskip5pt

Since \ $_AP$ is projective, \ $M\otimes_AP$ is a direct summand of copies of \ $_BM$, as a left $B$-module.
While  by assumption \ $_BM$ is projective, it follows that
\ $M\otimes_AP$ is a projective left $B$-module. Thus,  there is a $B$-map $h$ such that the following diagram commutes:
$$\xymatrix{& & M\otimes_AP\ar@{-->}[d]_{h} \ar[r]^-{1_M\otimes \pi} & M\otimes L_1\ar[d]^f \ar[r] & 0 \\
0\ar [r] & Y \ar[r]^-\sigma & V \ar[r]^{\pi'} & L_2 \ar[r] & 0.}$$
Then it is clear that
$$\xymatrix{0 \ar[r] &  (M\otimes_AP)\oplus Y \ar[rr]^-{\left(\begin{smallmatrix} 1 & 0\\ -h & \sigma\end{smallmatrix}\right)} &&  (M\otimes_AP)\oplus V\ar[rr]^-{(f(1_M\otimes \pi), \ \pi')}
&& L_2 \ar[r] & 0}$$
is exact. This completes the proof.
\end{proof}

\vskip5pt

To prove Theorem \ref{ctp2}(2), we need the following lemma, in which it is more convenient to write
a $\Lambda$-module in the second expression.

\begin{lem}\label{completeness3} \ Let \ $\Lambda = \left(\begin{smallmatrix} A & N \\ M & B \end{smallmatrix}\right)$ be a Morita ring with $\phi = 0= \psi$. Suppose $N_B$ is flat. For  $\Lambda$-module $\left(\begin{smallmatrix}L_1\\ L_2\end{smallmatrix}\right)_{\widetilde{f}, \widetilde{g}}$, let $\sigma: L_1 \longrightarrow I$ be a monomorphism with $_AI$ injective,
and \ $0\longrightarrow L_2 \stackrel {\sigma'} \longrightarrow Y\stackrel {\pi}\longrightarrow V\longrightarrow 0$ an exact sequence.
Then there is an exact sequence of the form$:$
$$0\rightarrow \left(\begin{smallmatrix} L_1 \\ L_2\end{smallmatrix}\right)_{\widetilde{f},\widetilde{g}}
\xlongrightarrow{({\left(\begin{smallmatrix} \sigma \\ (N, \sigma)\widetilde{g} \end{smallmatrix}\right)} \\ {\left(\begin{smallmatrix} (M, \sigma')\widetilde{f} \\ \sigma' \end{smallmatrix}\right)})}
\left(\begin{smallmatrix} I \\ \Hom_A(N, I)\end{smallmatrix}\right)_{0, 1} \oplus \left(\begin{smallmatrix} \Hom_B(M, Y) \\ Y\end{smallmatrix}\right)_{1, 0}
\longrightarrow
\left(\begin{smallmatrix} C \\ \Hom_A(N, I)\oplus V\end{smallmatrix}\right)_{\widetilde{\alpha},\widetilde{\beta}}\rightarrow 0.$$
\end{lem}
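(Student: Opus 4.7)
The plan is to produce the required short exact sequence by (i) checking the displayed map is a $\Lambda$-monomorphism, (ii) computing the cokernel componentwise, and (iii) reading off the structure maps from the quotient. I would first verify that the two column entries give genuine $\Lambda$-maps. For $\binom{\sigma}{(N,\sigma)\widetilde{g}}: \binom{L_1}{L_2}_{\widetilde{f},\widetilde{g}} \to \binom{I}{\Hom_A(N, I)}_{0, 1}$, the bottom compatibility square is trivial (the target's $\widetilde{g}$ is the identity), while the top square asks for $(M,(N,\sigma)\widetilde{g})\widetilde{f}=0$, which follows by applying $(M,(N,\sigma))$ to the relation $(M,\widetilde{g})\widetilde{f}=0$ encoded in the second expression of $\binom{L_1}{L_2}_{\widetilde{f},\widetilde{g}}$ (valid since $\phi=0=\psi$). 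The dual check for $\binom{(M,\sigma')\widetilde{f}}{\sigma'}$ uses $(N,\widetilde{f})\widetilde{g}=0$. Since $\sigma$ and $\sigma'$ are both monomorphisms, the combined map on each component is monic, so the full $\Lambda$-map is a monomorphism.

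The main step is to identify the cokernel. On the $A$-component it is simply $C := \Coker\bigl[\binom{\sigma}{(M,\sigma')\widetilde{f}}:L_1\to I\oplus \Hom_B(M,Y)\bigr]$, which is the definition of $C$ in the statement. The real work is on the $B$-component, where I must show
\[
\Coker\Bigl[\binom{(N,\sigma)\widetilde{g}}{\sigma'}: L_2\longrightarrow \Hom_A(N,I)\oplus Y\Bigr]\ \cong\ \Hom_A(N,I)\oplus V.
\]
This is exactly where the hypotheses bite: since $N_B$ is flat and $_AI$ is injective, $\Hom_A(N,I)$ is an injective left $B$-module. I can therefore extend the $B$-map $-(N,\sigma)\widetilde{g}: L_2\to \Hom_A(N,I)$ along the monomorphism $\sigma':L_2\hookrightarrow Y$ to a $B$-map $\phi: Y\to \Hom_A(N,I)$ with $\phi\sigma' = -(N,\sigma)\widetilde{g}$. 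Using $\phi$, I define
\[
\theta:\Hom_A(N,I)\oplus Y\longrightarrow \Hom_A(N,I)\oplus V,\qquad (\alpha,y)\longmapsto (\alpha+\phi(y),\,\pi(y)),
\]
and verify (by a direct check) that $\theta$ is surjective with kernel exactly the image of $\binom{(N,\sigma)\widetilde{g}}{\sigma'}$.

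Having identified both components of the cokernel, the structure maps $\widetilde{\alpha}, \widetilde{\beta}$ are induced automatically from the structure maps of the middle module $\binom{I}{\Hom_A(N,I)}_{0,1}\oplus \binom{\Hom_B(M,Y)}{Y}_{1,0}$ by passage to the quotient (so the resulting object is a well-defined $\Lambda$-module with the prescribed data). Exactness at the middle and right terms is then immediate from the componentwise analysis. The only real obstacle is the cokernel identification on the $B$-side, which hinges on choosing the extension $\phi$ using injectivity of $\Hom_A(N,I)$; everything else is diagram-chasing in $\mathcal M'(\Lambda)$.
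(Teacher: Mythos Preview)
Your proposal is correct and follows essentially the same route as the paper: verify the displayed map is a $\Lambda$-monomorphism using the relations $(M,\widetilde{g})\widetilde{f}=0=(N,\widetilde{f})\widetilde{g}$, then identify the $B$-component of the cokernel by extending $(N,\sigma)\widetilde{g}$ along $\sigma'$ via the injectivity of $\Hom_A(N,I)$ (which uses $N_B$ flat and $_AI$ injective). The paper phrases the extension as a map $h$ with $h\sigma'=(N,\sigma)\widetilde{g}$ and writes the cokernel map as $\left(\begin{smallmatrix}1 & -h\\ 0 & \pi\end{smallmatrix}\right)$, which is your $\theta$ up to a sign convention.
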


\begin{proof} Rewrite the sequence as
 $$0\rightarrow \left(\begin{smallmatrix} L_1 \\ L_2\end{smallmatrix}\right)_{\widetilde{f},\widetilde{g}}
\xlongrightarrow{\left(\begin{smallmatrix} {\left(\begin{smallmatrix} \sigma \\ (M, \sigma')\widetilde{f} \end{smallmatrix}\right)} \\ {\left(\begin{smallmatrix} (N, \sigma)\widetilde{g} \\ \sigma' \end{smallmatrix}\right)} \end{smallmatrix}\right)} \left(\begin{smallmatrix} I\oplus \Hom_B(M, Y) \\ \Hom_A(N, I)\oplus Y\end{smallmatrix}\right)_{{\left(\begin{smallmatrix} 0 & 0 \\ 0 & 1\end{smallmatrix}\right)}, {\left(\begin{smallmatrix} 1 & 0 \\ 0 & 0 \end{smallmatrix}\right)}}
\longrightarrow
\left(\begin{smallmatrix} C \\ \Hom_A(N, I)\oplus V\end{smallmatrix}\right)_{\widetilde{\alpha}, \widetilde{\beta}}\rightarrow 0.$$

Since $\phi = 0 = \psi,$ \ $(M, \widetilde{g}) \widetilde{f} = 0 = (N, \widetilde{f}) \widetilde{g},$ and hence
$(M, (N, \sigma)\widetilde{g}) \widetilde{f} = 0 = (N, (M, \sigma')\widetilde{f}) \widetilde{g}.$ Thus
the following diagrams commute:
$$\xymatrix@R=1cm{L_1 \ar[d]_-{\widetilde{f}}\ar[rr]^-{\left(\begin{smallmatrix} \sigma \\ (M, \sigma')\widetilde{f} \end{smallmatrix}\right)} && I\oplus \Hom_A(M,Y)\ar[d]^-{{\left(\begin{smallmatrix} 0 & 0 \\ 0 & 1\end{smallmatrix}\right)}} \\
\Hom_B(M, L_2)\ar[rr]^-{\left(\begin{smallmatrix} (M, (N, \sigma)\widetilde{g}) \\ (M, \sigma')\end{smallmatrix}\right)}&& \Hom_B(M, \Hom_A(N, I))\oplus \Hom_A(M,Y)}$$
$$\xymatrix@R=1cm{L_2 \ar[d]_{\widetilde{g}}\ar[rr]^-{\left(\begin{smallmatrix} (N, \sigma)\widetilde{g} \\ \sigma'\end{smallmatrix}\right)}
&& \Hom_A(N, I)\oplus Y\ar[d]^-{\left(\begin{smallmatrix} 1 & 0 \\ 0 & 0 \end{smallmatrix}\right)}  \\
\Hom_A(N, L_1)\ar[rr]^-{\left(\begin{smallmatrix} (N, \sigma)\\ (N, (M, \sigma')\widetilde{f})\end{smallmatrix}\right)} && \Hom_A(N, I)\oplus \Hom_A(N, \Hom_B(M, Y))}.$$
Therefore the map $\left(\begin{smallmatrix} {\left(\begin{smallmatrix} \sigma \\ (M, \sigma')\widetilde{f} \end{smallmatrix}\right)} \\ {\left(\begin{smallmatrix} (N, \sigma)\widetilde{g} \\ \sigma' \end{smallmatrix}\right)} \end{smallmatrix}\right)$ is a $\Lambda$-map. Clearly, it is  a monomorphism.

\vskip10pt

Write $\Coker \left(\begin{smallmatrix} {\left(\begin{smallmatrix} \sigma \\ (M, \sigma')\widetilde{f} \end{smallmatrix}\right)} \\ {\left(\begin{smallmatrix} (N, \sigma)\widetilde{g} \\ \sigma' \end{smallmatrix}\right)} \end{smallmatrix}\right)$
as
$\left(\begin{smallmatrix} C\\ C'\end{smallmatrix}\right)_{\widetilde{\alpha},\widetilde{\beta}}$.
Then $C'$ is the cokernel of $B$-monomorphism $\left(\begin{smallmatrix} (N, \sigma)\widetilde{g} \\ \sigma' \end{smallmatrix}\right)$.
Since $N_B$ is flat and $_AI$ is injective, it follows that $\Hom_A(N, I)$ is an injective left $B$-module. Thus there is a $B$-map $h$ such that the diagram
$$\xymatrix{0\ar[r] & L_2\ar[d]_-{\widetilde{g}} \ar[r]^-{\sigma'} & Y\ar@{-->}[d]^-h \ar[r]^-{\pi}& V\ar[r] & 0 \\
0\ar[r] & \Hom_A(N, L_1) \ar[r]^-{(N, \sigma)} & \Hom_A(N, I) & }$$
commutes. Therefore
$$\xymatrix{0 \ar[r] &  L_2\ar[rr]^-{\left(\begin{smallmatrix} (N, \sigma)\widetilde{g} \\ \sigma' \end{smallmatrix}\right)} &&
\Hom_A(N, I)\oplus Y\ar[rrr]^-{\left(\begin{smallmatrix} 1_{(N, I)} & -h \\ 0 & \pi \end{smallmatrix}\right)}
&&& \Hom_A(N, I)\oplus V \ar[r] & 0}$$
is an exact sequence. It follows that \ $C' \cong \Hom_A(N, I)\oplus V.$ This completes the proof. \end{proof}

\subsection{Proof of Theorem \ref{ctp2}}

\vskip5pt

(1) \ Since \ $N_B$ is flat, by Theorem \ref{ctp1}(1), \ \ $(^\perp\binom{A\mbox{-}{\rm Mod}}{\mathcal Y}, \ \binom{A\mbox{-}{\rm Mod}}{\mathcal Y})$ is a cotorsion pair;
and it is hereditary if \  $(\mathcal V, \ \mathcal Y)$ is hereditary.

\vskip5pt

Since \ $(\mathcal V, \ \mathcal Y)$ is complete, for any $\Lambda$-module $\left(\begin{smallmatrix}L_1\\ L_2\end{smallmatrix}\right)_{f, g}$,
there is an exact sequence $$0\longrightarrow Y \longrightarrow V\longrightarrow L_2\longrightarrow 0$$
with \ $V\in \mathcal V$ and \ $Y\in\mathcal Y$. Since \ $_BM$ is projective, by Lemma \ref{completeness1},
there is an exact sequence of $\Lambda$-modules of the form$:$
$$0\longrightarrow \left(\begin{smallmatrix}K\\ (M\otimes_AP)\oplus Y\end{smallmatrix}\right)_{\alpha,\beta} \longrightarrow \left(\begin{smallmatrix} P \\ M\otimes_AP\end{smallmatrix}\right)_{1, 0}
\oplus
\left(\begin{smallmatrix} N\otimes_BV \\ V\end{smallmatrix}\right)_{0, \ 1} \longrightarrow
\left(\begin{smallmatrix} L_1 \\ L_2\end{smallmatrix}\right)_{f,g}\longrightarrow 0$$
where $_AP$ is projective. Since by assumption \ $M\otimes_A\mathcal P\subseteq \mathcal Y$,
$(M\otimes_AP)\oplus Y\in \mathcal Y$, and hence $\left(\begin{smallmatrix}K\\ (M\otimes_AP)\oplus Y \end{smallmatrix}\right)_{\alpha, \beta}\in
\binom{A\mbox{-}{\rm Mod}}{\mathcal Y}.$

\vskip5pt

On the other hand, $\left(\begin{smallmatrix} P \\ M\otimes_AP\end{smallmatrix}\right)_{1, 0}
= {\rm T}_AP$ is a projective $\Lambda$-module, so it is in  $\ ^\perp\binom{A\mbox{-}{\rm Mod}}{\ _B\mathcal Y}.$
Also, $\left(\begin{smallmatrix} N\otimes_BV \\ V\end{smallmatrix}\right)_{0, 1} = {\rm T}_BV\in {\rm T}_B(\mathcal V)$.
Since  $N_B$ is flat and \ $\Ext^1_B(\mathcal V, \ \mathcal Y) = 0$, by Lemma \ref{extadj1}(2),  $\left(\begin{smallmatrix} N\otimes_BV \\ V\end{smallmatrix}\right)_{0, 1} = {\rm T}_BV\in
\ ^\perp\binom{A\mbox{-}{\rm Mod}}{\mathcal Y}.$  This shows the completeness of \ $(^\perp\binom{A\mbox{-}{\rm Mod}}{\mathcal Y}, \ \binom{A\mbox{-}{\rm Mod}}{\mathcal Y})$.

\vskip5pt

Finally, if  $M\otimes_AN = 0= N\otimes_BM$, then by Corollary \ref{identification1}(1) one has
${}^\perp\left (\begin{smallmatrix} A\mbox{-}{\rm Mod} \\ \mathcal Y\end{smallmatrix}\right) = \Delta(_A\mathcal P, \mathcal V) = {\rm T}_A(_A\mathcal P)\oplus {\rm T}_B(\mathcal V)$.

\vskip10pt

(2) \ Since \ $_BM$ is projective, by Theorem \ref{ctp1}(2), $(\binom{A\mbox{-}{\rm Mod}}{\mathcal V},  \ \binom{A\mbox{-}{\rm Mod}}{\mathcal V}^\perp)$  is a cotorsion pair; and it is hereditary if \  $(\mathcal V, \ \mathcal Y)$ is hereditary.

\vskip5pt

Since \ $(\mathcal V, \ \mathcal Y)$ is complete, for any $\Lambda$-module $\left(\begin{smallmatrix}L_1\\ L_2\end{smallmatrix}\right)_{\widetilde{f}, \widetilde{g}}$,
there is an exact sequence \ $0\longrightarrow L_2 \longrightarrow Y\longrightarrow V\longrightarrow 0$
with \ $Y\in \mathcal Y$ and \ $V\in\mathcal V$. Since  \ $N_B$ is flat, by Lemma \ref{completeness3}, there is an exact sequence of $\Lambda$-modules of the form$:$
$$0\rightarrow \left(\begin{smallmatrix} L_1 \\ L_2\end{smallmatrix}\right)_{\widetilde{f},\widetilde{g}}
\longrightarrow
\left(\begin{smallmatrix} I \\ \Hom_A(N, I)\end{smallmatrix}\right)_{0, 1} \oplus \left(\begin{smallmatrix} \Hom_B(M, Y) \\ Y\end{smallmatrix}\right)_{1, 0}
\longrightarrow
\left(\begin{smallmatrix} C \\ \Hom_A(N, I)\oplus V\end{smallmatrix}\right)_{\widetilde{\alpha},\widetilde{\beta}}\rightarrow 0$$
where $_AI$ is injective. Since $\left(\begin{smallmatrix} I \\ \Hom_A(N, I)\end{smallmatrix}\right)_{0, 1}$ is an injective $\Lambda$-module, it is in $\binom{A\mbox{-}{\rm Mod}}{\mathcal V}^\perp$.
Since \ $_BM$ is projective and $\Ext^1_B(\mathcal V, \mathcal Y) = 0$, it follows from
Lemma \ref{extadj1}(4) that \ $\left(\begin{smallmatrix} \Hom_B(M, Y) \\ Y\end{smallmatrix}\right)_{1, 0} = {\rm H}_BY\in
\ \binom{A\mbox{-}{\rm Mod}}{\mathcal V}^\perp.$

\vskip5pt

Since by assumption \ $\Hom_A(N, \ _A\mathcal I)\subseteq \mathcal V$, \ $\Hom_A(N, I) \in \mathcal V$, and hence  $\left(\begin{smallmatrix} C \\ \Hom_A(N, I)\oplus V\end{smallmatrix}\right)_{\widetilde{\alpha},\widetilde{\beta}}\in
\binom{A\mbox{-}{\rm Mod}}{\mathcal V}.$ This proves  the completeness of \ $(\binom{A\mbox{-}{\rm Mod}}{\mathcal V},  \ \binom{A\mbox{-}{\rm Mod}}{\mathcal V}^\perp)$.

\vskip5pt

Finally, if $M\otimes_AN = 0= N\otimes_BM$, then by Corollary \ref{identification1}(3) one has \ $\binom{A\mbox{-}{\rm Mod}}{\mathcal V}^\perp = \nabla(_A\mathcal I, \ \mathcal Y) ={\rm H}_A(_A\mathcal I)\oplus {\rm H}_B(\mathcal Y)$. \hfill $\square$

\vskip5pt

\subsection{Lemmas for Theorem \ref{ctp3}} To see Theorem \ref{ctp3}(1), we need

\begin{lem}\label{completeness2} \ Let \ $\Lambda = \left(\begin{smallmatrix} A & N \\ M & B \end{smallmatrix}\right)$ be a Morita ring with $\phi = 0= \psi$.
Suppose $_AN$ is projective. For a $\Lambda$-module $\left(\begin{smallmatrix}L_1\\ L_2\end{smallmatrix}\right)_{f, g}$,
let $\pi: Q \longrightarrow L_2$ be an epimorphism with $_BQ$ projective, and
\ $0\longrightarrow X\stackrel \sigma \longrightarrow U\stackrel {\pi'}\longrightarrow L_1\longrightarrow 0$ an exact sequence.
Then there is an exact sequence of the form$:$
$$0\rightarrow \left(\begin{smallmatrix}X\oplus (N\otimes Q)\\ K\end{smallmatrix}\right)_{\alpha,\beta}
\longrightarrow
\left(\begin{smallmatrix} U\\ M\otimes U\end{smallmatrix}\right)_{1, 0}
\oplus \left(\begin{smallmatrix} N\otimes Q\\ Q\end{smallmatrix}\right)_{0, 1}
\xlongrightarrow{(\left(\begin{smallmatrix} \pi' \\ f(1\otimes \pi')\end{smallmatrix}\right), \left(\begin{smallmatrix}
g(1\otimes \pi) \\ \pi \end{smallmatrix}\right))}
\left(\begin{smallmatrix} L_1 \\ L_2\end{smallmatrix}\right)_{f,g}\rightarrow 0.$$
\end{lem}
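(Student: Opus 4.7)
The plan is to mirror the proof of Lemma \ref{completeness1} with the roles of $(A, M, P)$ and $(B, N, Q)$ exchanged throughout. First I would rewrite the asserted sequence by identifying the middle $\Lambda$-module $\left(\begin{smallmatrix} U\\ M\otimes_A U\end{smallmatrix}\right)_{1, 0} \oplus \left(\begin{smallmatrix} N\otimes_B Q\\ Q\end{smallmatrix}\right)_{0, 1}$ with $\left(\begin{smallmatrix} U \oplus (N\otimes_B Q) \\ (M\otimes_A U) \oplus Q \end{smallmatrix}\right)$ carrying the block structure maps $\left(\begin{smallmatrix} 1 & 0 \\ 0 & 0 \end{smallmatrix}\right)$ and $\left(\begin{smallmatrix} 0 & 0 \\ 0 & 1 \end{smallmatrix}\right)$; the target morphism then has top component $(\pi',\ g(1_N\otimes \pi))$ and bottom component $(f(1_M\otimes \pi'),\ \pi)$. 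Checking this is a $\Lambda$-map reduces to two compatibility squares, which commute by the identical computation as in Lemma \ref{completeness1}, using the standing identities $f(1_M\otimes g) = 0$ and $g(1_N\otimes f) = 0$ arising from $\phi = 0 = \psi$. Surjectivity is immediate since $\pi'$ and $\pi$ are already surjective.

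The main content is then to identify the kernel, which a priori has the form $\left(\begin{smallmatrix} K'' \\ K \end{smallmatrix}\right)_{\alpha,\beta}$ with $K'' = \Ker(\pi',\ g(1_N\otimes \pi))$; as in Lemma \ref{completeness1}, the other component $K$ is left unnamed. The only place the hypothesis ``$_AN$ projective'' enters is here: since $_BQ$ is projective, $N\otimes_B Q$ is a direct summand of a direct sum of copies of $_AN$, so $_A(N\otimes_B Q)$ is projective. Hence $g(1_N\otimes \pi): N\otimes_B Q \to L_1$ lifts through the surjection $\pi': U \to L_1$ to an $A$-map $h: N\otimes_B Q \to U$ with $\pi' h = g(1_N\otimes \pi)$, and the change-of-basis matrix $\left(\begin{smallmatrix} \sigma & -h \\ 0 & 1 \end{smallmatrix}\right): X \oplus (N\otimes_B Q) \to U \oplus (N\otimes_B Q)$ fits into a short exact sequence whose quotient is $(\pi',\ g(1_N\otimes \pi)): U \oplus (N\otimes_B Q) \twoheadrightarrow L_1$, yielding $K'' \cong X \oplus (N\otimes_B Q)$.

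There is no genuine obstacle: the argument is entirely symmetric to that of Lemma \ref{completeness1}, and once the lifting $h$ is chosen the remaining verification is routine diagram chasing. The two minor care-points are (i) that the displayed map really is a $\Lambda$-homomorphism -- the two compatibility squares invoke $\phi = 0 = \psi$ in exactly the positions used in Lemma \ref{completeness1}, merely transposed -- and (ii) that the induced structure maps $\alpha$ and $\beta$ on the kernel are determined automatically from the fact that the kernel inclusion is a $\Lambda$-morphism, so they need not be described explicitly in the statement.
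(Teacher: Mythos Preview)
Your proposal is correct and follows essentially the same route as the paper: rewrite the middle term as a block $\Lambda$-module, verify the two compatibility squares using $f(1_M\otimes g)=0=g(1_N\otimes f)$, then use projectivity of $_A(N\otimes_BQ)$ (from $_AN$ projective and $_BQ$ projective) to lift $g(1_N\otimes\pi)$ through $\pi'$ and identify the top kernel component via the same change-of-basis matrix. The paper's proof is exactly this symmetric transcription of Lemma~\ref{completeness1}.
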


\begin{proof} The proof is similar to Lemma \ref{completeness1}. We include the points.
Rewrite the sequence as
$$0\rightarrow \left(\begin{smallmatrix}X\oplus (N\otimes_BQ)\\ K\end{smallmatrix}\right)_{\alpha,\beta}
\longrightarrow\left(\begin{smallmatrix} U\oplus (N\otimes_BQ) \\ (M\otimes_AU)\oplus Q\end{smallmatrix}\right)_{\left(\begin{smallmatrix} 1 & 0 \\ 0 & 0 \end{smallmatrix}\right), \left(\begin{smallmatrix} 0 & 0 \\ 0 & 1 \end{smallmatrix}\right)}\xlongrightarrow{\left(\begin{smallmatrix}
(\pi', g(1_N\otimes \pi)) \\ (f(1_M\otimes \pi'), \pi)\end{smallmatrix}\right)}
\left(\begin{smallmatrix} L_1 \\ L_2\end{smallmatrix}\right)_{f,g}\rightarrow 0.$$
The map \ $\left(\begin{smallmatrix}
(\pi', g(1_N\otimes \pi)) \\ (f(1_M\otimes \pi'), \pi)\end{smallmatrix}\right)$ is a $\Lambda$-epimorphism, since the diagrams commute:
$$\xymatrix@R=0.6cm@C=1.2cm{(M\otimes_AU)\oplus (M\otimes_AN\otimes_B Q) \ar[d]_-{\left(\begin{smallmatrix} 1 &0 \\ 0&0 \end{smallmatrix}\right)}
\ar[rr]^-{(1\otimes \pi', 1_M\otimes g(1_N\otimes \pi))} && M\otimes_AL_1 \ar[d]^-f \\
(M\otimes_AU)\oplus Q \ar[rr]^-{(f(1\otimes \pi'), \pi)} && L_2}$$

$$\xymatrix@R=0.6cm@C=1.2cm{(N\otimes_BM\otimes_A U)\oplus (N\otimes_BQ) \ar[d]_-{\left(\begin{smallmatrix} 0& 0 \\ 0& 1 \end{smallmatrix}\right)}
\ar[rr]^-{(1_N\otimes f(1\otimes \pi'), 1_N\otimes \pi)}&& N\otimes_BL_2
\ar[d]^-g \\
U\oplus (N\otimes_BQ)\ar[rr]^-{(\pi', \ g(1_N\otimes \pi))} && L_1.}$$

\vskip5pt

\noindent It remains to prove \ $\Ker (\pi', \ g(1_N\otimes \pi))\cong X\oplus (N\otimes_BQ)$.
Since \ $_AN$ is projective, \ $N\otimes_BQ$ is a projective left $A$-module. Thus,  there is an $A$-map $h$ such that the diagram
$$\xymatrix{& & N\otimes_BQ\ar@{-->}[d]_{h} \ar[r]^-{1_N\otimes \pi} & N\otimes L_2\ar[d]^g \ar[r] & 0 \\
0\ar [r] & X \ar[r]^-\sigma & U \ar[r]^{\pi'} & L_1 \ar[r] & 0.}$$
commutes. Then
$$\xymatrix{0 \ar[r] & X\oplus(N\otimes_BQ)\ar[rrr]^-{\left(\begin{smallmatrix} \sigma &  -h \\ 0 & 1\end{smallmatrix}\right)} &&&  U\oplus (N\otimes_BQ)\ar[rr]^-{(\pi', \ g(1_N\otimes \pi))}
&& L_1 \ar[r] & 0}$$
is exact. This completes the proof.\end{proof}

\vskip5pt

 To prove Theorem \ref{ctp3}(2), we need the following lemma, in which
the second expression of a $\Lambda$-module is more convenient.

\begin{lem}\label{completeness4} \ Let \ $\Lambda = \left(\begin{smallmatrix} A & N \\ M & B \end{smallmatrix}\right)$ be a Morita ring with $\phi = 0= \psi$. Suppose $M_A$ is flat.
For $\Lambda$-module $\left(\begin{smallmatrix}L_1\\ L_2\end{smallmatrix}\right)_{\widetilde{f}, \widetilde{g}}$, let $\sigma: L_2 \longrightarrow J$ be a monomorphism with $_BJ$ injective, and
\ $0\longrightarrow L_1 \stackrel {\sigma'} \longrightarrow X\stackrel {\pi}\longrightarrow U\longrightarrow 0$ an exact sequence.
Then there is an exact sequence  of the form$:$
$$0\rightarrow \left(\begin{smallmatrix} L_1 \\ L_2\end{smallmatrix}\right)_{\widetilde{f},\widetilde{g}}
\xlongrightarrow{({\left(\begin{smallmatrix} \sigma' \\ (N, \sigma')\widetilde{g} \end{smallmatrix}\right)} \\ {\left(\begin{smallmatrix} (M, \sigma)\widetilde{f} \\ \sigma \end{smallmatrix}\right)})}
\left(\begin{smallmatrix} X \\ \Hom_A(N, X)\end{smallmatrix}\right)_{0, 1} \oplus \left(\begin{smallmatrix} \Hom_B(M, J) \\ J\end{smallmatrix}\right)_{1, 0}
\longrightarrow
\left(\begin{smallmatrix} U\oplus \Hom_B(M, J) \\ C\end{smallmatrix}\right)_{\widetilde{\alpha},\widetilde{\beta}}\rightarrow 0.$$
\end{lem}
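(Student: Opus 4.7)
The plan is to mimic the proof of Lemma \ref{completeness3} in dual form. First I would rewrite the middle term in the displayed sequence using the second expression for injective $\Lambda$-modules from Subsection 2.5, namely
$$\left(\begin{smallmatrix} X \\ \Hom_A(N, X)\end{smallmatrix}\right)_{0, 1} \oplus \left(\begin{smallmatrix} \Hom_B(M, J) \\ J\end{smallmatrix}\right)_{1, 0} \ = \ \left(\begin{smallmatrix} X\oplus \Hom_B(M, J) \\ \Hom_A(N, X)\oplus J\end{smallmatrix}\right)_{{\left(\begin{smallmatrix} 0 & 0 \\ 0 & 1\end{smallmatrix}\right)}, {\left(\begin{smallmatrix} 1 & 0 \\ 0 & 0 \end{smallmatrix}\right)}},$$
so that the leftmost map becomes $\left(\begin{smallmatrix} \binom{\sigma'}{(M,\sigma)\widetilde{f}} \\ \binom{(N,\sigma')\widetilde{g}}{\sigma} \end{smallmatrix}\right)$. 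It is clearly a monomorphism of abelian groups, since both $\sigma$ and $\sigma'$ are monic.

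Next I would verify that this map is actually a $\Lambda$-map, by checking the two defining commutative squares for morphisms in $\mathcal M'(\Lambda)$. Here the relations $(M, \widetilde{g})\widetilde{f} = 0 = (N, \widetilde{f})\widetilde{g}$ coming from $\phi = 0 = \psi$ enter exactly as in the proof of Lemma \ref{completeness3}: they force the two off-diagonal entries in the target's structure maps $\left(\begin{smallmatrix} 0 & 0 \\ 0 & 1\end{smallmatrix}\right)$ and $\left(\begin{smallmatrix} 1 & 0 \\ 0 & 0 \end{smallmatrix}\right)$ to absorb the contributions of $\widetilde{f}$ and $\widetilde{g}$ in the expected way, so both squares commute.

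Then I would compute the cokernel. Writing it as $\left(\begin{smallmatrix} C_1 \\ C_2\end{smallmatrix}\right)_{\widetilde{\alpha},\widetilde{\beta}}$, one has $C_2 = \Coker\binom{(N,\sigma')\widetilde{g}}{\sigma}$, which we simply name $C$. The key step is to show $C_1 \cong U \oplus \Hom_B(M, J)$. This is where the hypothesis that $M_A$ is flat is used: since $_BJ$ is injective, $\Hom_B(M, J)$ is an injective left $A$-module, so the $A$-map $(M,\sigma)\widetilde{f}\colon L_1 \to \Hom_B(M, J)$ extends along the monomorphism $\sigma'\colon L_1 \hookrightarrow X$ to some $h\colon X \to \Hom_B(M, J)$ with $h\sigma' = (M,\sigma)\widetilde{f}$. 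The sequence
$$0 \longrightarrow L_1 \xlongrightarrow{\binom{\sigma'}{(M,\sigma)\widetilde{f}}} X\oplus \Hom_B(M,J) \xlongrightarrow{\left(\begin{smallmatrix} \pi & 0 \\ -h & 1\end{smallmatrix}\right)} U\oplus \Hom_B(M, J) \longrightarrow 0$$
is then exact, giving $C_1 \cong U\oplus \Hom_B(M,J)$.

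This is routine dualization of Lemma \ref{completeness3}, with the roles of $A, B$ and of $f, g$ swapped, so there is no real obstacle. The only point requiring attention is the correct formulation of the $\Lambda$-module structure on the cokernel (i.e.\ identifying the induced structure maps $\widetilde{\alpha}, \widetilde{\beta}$), but since the statement only asserts the existence of the short exact sequence with middle term of the prescribed injective form, one does not need to describe $\widetilde{\alpha}, \widetilde{\beta}$ explicitly.
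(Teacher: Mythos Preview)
Your proposal is correct and follows essentially the same route as the paper's proof: rewrite the middle term in combined form, verify the left map is a $\Lambda$-monomorphism using $(M,\widetilde{g})\widetilde{f}=0=(N,\widetilde{f})\widetilde{g}$, and identify the first component of the cokernel via the injectivity of $\Hom_B(M,J)$ (from $M_A$ flat and $_BJ$ injective) by extending $(M,\sigma)\widetilde{f}$ along $\sigma'$. The paper even omits the verification of the $\Lambda$-map property, referring back to Lemma~\ref{completeness3}, so your outline is if anything slightly more detailed.
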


\begin{proof}  The proof is similar to Lemma \ref{completeness3}. We include the points.
First, as in the proof of Lemma \ref{completeness3}, one can show that
the map $$\left(\begin{smallmatrix} {\left(\begin{smallmatrix} \sigma' \\ (M, \sigma)\widetilde{f} \end{smallmatrix}\right)} \\ {\left(\begin{smallmatrix} (N, \sigma')\widetilde{g} \\ \sigma \end{smallmatrix}\right)} \end{smallmatrix}\right): \left(\begin{smallmatrix} L_1 \\ L_2\end{smallmatrix}\right)_{\widetilde{f},\widetilde{g}}
\longrightarrow \left(\begin{smallmatrix} X\oplus \Hom_B(M, J) \\ \Hom_A(N, X)\oplus J\end{smallmatrix}\right)_{{\left(\begin{smallmatrix} 0 & 0 \\ 0 & 1\end{smallmatrix}\right)}, {\left(\begin{smallmatrix} 1 & 0 \\ 0 & 0 \end{smallmatrix}\right)}}$$ is a $\Lambda$-monomorphism. We omit the details.

\vskip5pt
	
Write $\Coker \left(\begin{smallmatrix} {\left(\begin{smallmatrix} \sigma' \\ (M, \sigma)\widetilde{f} \end{smallmatrix}\right)} \\ {\left(\begin{smallmatrix} (N, \sigma')\widetilde{g} \\ \sigma \end{smallmatrix}\right)} \end{smallmatrix}\right)$
as
$\left(\begin{smallmatrix} C'\\ C\end{smallmatrix}\right)_{\widetilde{\alpha},\widetilde{\beta}}$.
Then \ $C' \cong \Coker\left(\begin{smallmatrix} \sigma' \\ (M, \sigma)\widetilde{f} \end{smallmatrix}\right)$.
Since $M_A$ is flat and $_BJ$ is injective, it follows that $\Hom_B(M, J)$ is an injective left $A$-module. Thus there is an $A$-map $h$ such that the diagram
$$\xymatrix{0\ar[r] & L_1\ar[d]_-{\widetilde{f}} \ar[r]^-{\sigma'} & X\ar@{-->}[d]^-h \ar[r]^-{\pi}& U\ar[r] & 0 \\
0\ar[r] & \Hom_B(M, L_2) \ar[r]^-{(M, \sigma')} & \Hom_B(M, J) & }$$
commutes. Therefore  $$\xymatrix{0 \ar[r] & L_1\ar[rr]^-{\left(\begin{smallmatrix} \sigma' \\ (M, \sigma)\widetilde{f} \end{smallmatrix}\right)} &&  X\oplus\Hom_B(M, J)\ar[rrr]^-
{\left(\begin{smallmatrix} \pi & 0 \\ -h & 1_{(M, J)} \end{smallmatrix}\right)}
&&& U\oplus \Hom_B(M, J) \ar[r] & 0}$$
is exact,  and hence \ $C' \cong U\oplus \Hom_B(M, J).$  \end{proof}

\subsection{Proof of Theorem \ref{ctp3}} \ The proof is similar to Theorem \ref{ctp2}.

\vskip5pt

(1) \ Since \ $M_A$ is flat, by Theorem \ref{ctp1}(1), \ $({}^\perp\binom{\mathcal X}{B\mbox{-}{\rm Mod}},  \ \binom{\mathcal X}{B\mbox{-}{\rm Mod}})$  is a cotorsion pair;
and it is hereditary if \  $(\mathcal U, \ \mathcal X)$ is hereditary.

\vskip5pt

For any $\Lambda$-module $\left(\begin{smallmatrix}L_1\\ L_2\end{smallmatrix}\right)_{f, g}$, \ since \ $(\mathcal U, \ \mathcal X)$ is complete,
there is an exact sequence $$0\longrightarrow X \longrightarrow U\longrightarrow L_1\longrightarrow 0$$
with \ $U\in \mathcal U$, $X\in\mathcal X$.
Since \ $_AN$ is projective, by Lemma \ref{completeness2},  there is an exact sequence of $\Lambda$-modules of the form$:$
$$0\longrightarrow \left(\begin{smallmatrix}X\oplus(N\otimes Q)\\ K\end{smallmatrix}\right)_{\alpha,\beta} \longrightarrow \left(\begin{smallmatrix} U \\ M\otimes U\end{smallmatrix}\right)_{1, 0}
\oplus \left(\begin{smallmatrix} N\otimes Q \\ Q\end{smallmatrix}\right)_{0, 1} \longrightarrow
\left(\begin{smallmatrix} L_1 \\ L_2\end{smallmatrix}\right)_{f,g}\longrightarrow 0$$
where $_BQ$ is projective. Since by assumption $N\otimes_B\mathcal P\subseteq \mathcal X$, it follows that $X\oplus(N\otimes_BQ)\in \mathcal X$, and hence $\left(\begin{smallmatrix}X\oplus(N\otimes Q)\\ K\end{smallmatrix}\right)_{\alpha, \beta}\in
\binom{\mathcal X}{B\mbox{-}{\rm Mod}}.$

\vskip5pt

Since $\left(\begin{smallmatrix} N\otimes Q \\ Q\end{smallmatrix}\right)_{0, 1}\in \ _\Lambda\mathcal P$,  it is in \ $^\perp\binom{_A\mathcal X}{B\mbox{-}{\rm Mod}}.$
Since \ $M_A$ is flat and \ $\Ext^1_A(\mathcal U, \mathcal X) = 0$, by Lemma \ref{extadj1}(1), \ $\left(\begin{smallmatrix} U \\ M\otimes U\end{smallmatrix}\right)_{1, 0}
= {\rm T}_AU\in
\ ^\perp\binom{\mathcal X}{B\mbox{-}{\rm Mod}}.$ Thus,  $({}^\perp\binom{\mathcal X}{B\mbox{-}{\rm Mod}},  \ \binom{\mathcal X}{B\mbox{-}{\rm Mod}})$ is complete.

\vskip5pt

Finally if $M\otimes_AN = 0= N\otimes_BM$, then by Corollary \ref{identification1}(2) one has
\ \ ${}^\perp\left (\begin{smallmatrix} A\mbox{-}{\rm Mod} \\ \mathcal Y\end{smallmatrix}\right) = \Delta(_A\mathcal P, \mathcal V)= {\rm T}_A(_A\mathcal P)\oplus {\rm T}_B(\mathcal V)$.

\vskip10pt

(2) \ Since \ $_AN$ is projective, by Theorem \ref{ctp1}(2), $(\binom{_A\mathcal U}{B\mbox{-}{\rm Mod}},  \ \binom{_A\mathcal U}{B\mbox{-}{\rm Mod}}^\perp)$  is a cotorsion pair;
and it is hereditary if \  $(\mathcal U, \ \mathcal X)$ is hereditary.

\vskip5pt For any $\Lambda$-module $\left(\begin{smallmatrix}X\\ Y\end{smallmatrix}\right)_{\widetilde{f}, \widetilde{g}}$, \ since \ $(\mathcal U, \ \mathcal X)$ is complete,
there is an exact sequence  \ $0\longrightarrow L_1 \longrightarrow X \longrightarrow U\longrightarrow 0$
with \ $X\in\mathcal X$, \ $U\in \mathcal U$. Since \ $M_A$ is flat,
by Lemma \ref{completeness4}, there is an exact sequence$:$
$$0\rightarrow \left(\begin{smallmatrix} L_1 \\ L_2\end{smallmatrix}\right)_{\widetilde{f},\widetilde{g}}
\longrightarrow \left(\begin{smallmatrix} X \\ \Hom_A(N, X)\end{smallmatrix}\right)_{0, 1} \oplus \left(\begin{smallmatrix} \Hom_B(M, J) \\ J\end{smallmatrix}\right)_{1, 0}
\longrightarrow
\left(\begin{smallmatrix} U\oplus \Hom_B(M, J) \\ C\end{smallmatrix}\right)_{\widetilde{\alpha}, \widetilde{\beta}}\rightarrow 0$$
where $_BJ$ is injective. Since \ $\left(\begin{smallmatrix} \Hom_B(M, J) \\ J\end{smallmatrix}\right)_{1, 0}$ is an injective $\Lambda$-module, it is in
$\binom{\ _A\mathcal U}{B\mbox{-}{\rm Mod}}^\perp.$ Since \ $_AN$ is projective and $\Ext^1_A(\mathcal U, \mathcal X) = 0$, by Lemma \ref{extadj1}(3),  $\left(\begin{smallmatrix} X \\ \Hom_A(N, X)\end{smallmatrix}\right)_{0, 1}
= {\rm H}_AX\in
\ \binom{\ _A\mathcal U}{B\mbox{-}{\rm Mod}}^\perp.$

\vskip5pt

Since by assumption \ $\Hom_B(M, \ _B\mathcal I)\subseteq \mathcal U$, \ $\Hom_B(M, J)\in\mathcal U$,  and  hence \ $\left(\begin{smallmatrix} U\oplus \Hom_B(M, J) \\ C \end{smallmatrix}\right)_{\widetilde{\alpha},\widetilde{\beta}}\in
\binom{\ _A\mathcal U}{B\mbox{-}{\rm Mod}}.$ So \
$(\binom{_A\mathcal U}{B\mbox{-}{\rm Mod}},  \ \binom{_A\mathcal U}{B\mbox{-}{\rm Mod}}^\perp)$  is complete.

\vskip5pt

Finally if $M\otimes_AN = 0= N\otimes_BM$, then by Corollary \ref{identification1}(4) one has
$\binom{_A\mathcal U}{B\mbox{-}{\rm Mod}}^\perp = \nabla(\mathcal X, \ _B\mathcal I) = {\rm H}_A(\mathcal X)\oplus {\rm H}_B(_B\mathcal I).$ \hfill $\square$

\vskip10pt

\subsection{Remark} Under the framework of one of $(\mathcal U, \ \mathcal X)$  and
$(\mathcal V, \ \mathcal Y)$ being an arbitrary complete cotorsion pair, and another being the projective or the injective one, the careful reader will find that the completeness of the following cotorsion pairs
\begin{align*}&({}^\perp\left(\begin{smallmatrix}_A\mathcal I\\ \mathcal Y\end{smallmatrix}\right), \ \left(\begin{smallmatrix}_A\mathcal I\\ \mathcal Y\end{smallmatrix}\right)),
\ \ \ \ \ \ ({}^\perp\left(\begin{smallmatrix}\mathcal X\\ _B\mathcal I\end{smallmatrix}\right),  \ \left(\begin{smallmatrix}\mathcal X\\ _B\mathcal I\end{smallmatrix}\right)) \ \ \ \ \ \
(\mbox{if} \ M_A \ \mbox{and} \  N_B \ \mbox{are flat})\\ &
(\left(\begin{smallmatrix}_A\mathcal P\\ \mathcal V\end{smallmatrix}\right), \ \left(\begin{smallmatrix}_A\mathcal P\\ \mathcal V\end{smallmatrix}\right)^\perp),
\ \ \ \ \ \ (\left(\begin{smallmatrix}\mathcal U\\ _B\mathcal P\end{smallmatrix}\right),  \ \left(\begin{smallmatrix}\mathcal U\\ _B\mathcal P\end{smallmatrix}\right)^\perp) \ \ \ \ \ \
(\mbox{if} \ _BM \ \mbox{and} \  _AN \ \mbox{are projective})\end{align*}
have not been discussed (also they will be not used in constructing Hovey triples in Section 7).
An interesting special cases of $(\left(\begin{smallmatrix}_A\mathcal P\\ _B\mathcal P\end{smallmatrix}\right), \ \left(\begin{smallmatrix}_A\mathcal P\\ _B\mathcal P\end{smallmatrix}\right)^\perp)$ and
$({}^\perp\left(\begin{smallmatrix}_A\mathcal I\\ _B\mathcal I\end{smallmatrix}\right), \ \left(\begin{smallmatrix}_A\mathcal I\\ _B\mathcal I\end{smallmatrix}\right))$
have been treated in Theorem \ref{ctp4}.

\subsection{Triangular matrix rings} \ For the case of $M = 0$ one has

\begin{prop}\label{triangular}
Let $\Lambda=\left(\begin{smallmatrix} A & N \\ 0 & B \end{smallmatrix}\right)$ be an upper triangular matrix ring.
Suppose that \ $(\mathcal U, \ \mathcal X)$ and $(\mathcal V, \ \mathcal Y)$ are complete cotorsion pairs
in $A$-{\rm Mod} and $B$-{\rm Mod}, respectively.

\vskip5pt

$(1)$ \ Assume that $\Tor_1^B(N, \ \mathcal V)=0$. If
\ $N\otimes_B\mathcal V\subseteq \mathcal X$, then the cotorsion pair
$$(\Delta(\mathcal U, \ \mathcal V), \ \left(\begin{smallmatrix} \mathcal X \\ \mathcal Y\end{smallmatrix}\right))
=({\rm T}_A(\mathcal U)\oplus {\rm T}_B(\mathcal V), \ \left(\begin{smallmatrix} \mathcal X \\ \mathcal Y\end{smallmatrix}\right))$$
is complete.

\vskip5pt

$(2)$ \ Assume that $\Ext_A^1(N, \ \mathcal X)=0$. If \ $\Hom_A(N, \ \mathcal X) \subseteq \mathcal V$, then the cotorsion pair
$$(\left(\begin{smallmatrix} \mathcal U \\ \mathcal V\end{smallmatrix}\right), \ \nabla(\mathcal X, \ \mathcal Y))
=(\left(\begin{smallmatrix} \mathcal U \\ \mathcal V\end{smallmatrix}\right), \ {\rm H}_A(\mathcal X)\oplus {\rm H}_B(\mathcal Y))$$
is complete.
\end{prop}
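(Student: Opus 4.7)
The plan is to exploit the fact that with $M = 0$ one of the two side-conditions in Theorem \ref{identify1} becomes vacuous, so the identifications of cotorsion pairs are immediate, and the only remaining task is to manufacture, for each $\Lambda$-module $L$, an explicit completing short exact sequence. These I would build from special precovers (respectively, preenvelopes) supplied by the given cotorsion pairs on each factor, together with the induction functors ${\rm T}_A, {\rm T}_B$ (respectively, ${\rm H}_A, {\rm H}_B$).

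For part (1), since $M = 0$ the condition $M\otimes_A \mathcal U \subseteq \mathcal Y$ holds trivially, so Theorem \ref{identify1}(1) applies and supplies both $\Delta(\mathcal U, \mathcal V) = {}^\perp\binom{\mathcal X}{\mathcal Y}$ and $\Delta(\mathcal U, \mathcal V) = {\rm T}_A(\mathcal U) \oplus {\rm T}_B(\mathcal V)$. Given an arbitrary $L = \binom{L_1}{L_2}_{0, g}$, I would take exact sequences $0 \to X \to U \xrightarrow{\pi'} L_1 \to 0$ and $0 \to Y \to V \xrightarrow{\pi} L_2 \to 0$ with $U\in\mathcal U$, $X\in\mathcal X$, $V\in\mathcal V$, $Y\in\mathcal Y$, furnished by the completeness of the two cotorsion pairs. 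I would then define the $\Lambda$-epimorphism
$$\Phi \colon {\rm T}_A(U)\oplus {\rm T}_B(V) = \tbinom{U\oplus (N\otimes_B V)}{V}_{0,\,\binom{0}{1}} \twoheadrightarrow \tbinom{L_1}{L_2}_{0,g}$$
whose $A$-component is $(\pi',\, g(1\otimes\pi))$ and whose $B$-component is $\pi$; the $\Lambda$-map condition is automatic since both routes $N\otimes_B V \to L_1$ equal $g(1\otimes\pi)$. The kernel of $\Phi$ has $B$-part $Y \in \mathcal Y$, while its $A$-part $K$ is the pullback of $\pi'$ along $g(1\otimes\pi)$, hence fits into a short exact sequence $0 \to X \to K \to N\otimes_B V \to 0$. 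Since $X \in \mathcal X$ and $N\otimes_B V \in \mathcal X$ by hypothesis, and $\mathcal X$ is closed under extensions, $K \in \mathcal X$. Thus $\Ker\Phi \in \binom{\mathcal X}{\mathcal Y}$, which gives completeness by Proposition \ref{completenessandheredity}.

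For part (2), dually, $\Hom_B(M, \mathcal Y) \subseteq \mathcal U$ is vacuous, so Theorem \ref{identify1}(2) yields $\nabla(\mathcal X, \mathcal Y) = \binom{\mathcal U}{\mathcal V}^\perp = {\rm H}_A(\mathcal X) \oplus {\rm H}_B(\mathcal Y)$. Given $L = \binom{L_1}{L_2}_{0,\widetilde{g}}$ in the second expression, I would take exact sequences $0 \to L_1 \xrightarrow{\sigma} X \to U \to 0$ and $0 \to L_2 \xrightarrow{\sigma'} Y \to V \to 0$ from the completeness of the two cotorsion pairs, and construct the $\Lambda$-monomorphism
$$\Psi\colon L \hookrightarrow {\rm H}_A(X) \oplus {\rm H}_B(Y) = \tbinom{X}{\Hom_A(N, X) \oplus Y}_{0,\,\binom{1}{0}}$$
with $A$-component $\sigma$ and $B$-component $\binom{(N,\sigma)\widetilde{g}}{\sigma'}$; the $\Lambda$-map condition reduces to the tautology $(N,\sigma)\widetilde{g} = (N,\sigma)\widetilde{g}$. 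The cokernel has $A$-part $U \in \mathcal U$, while its $B$-part is the pushout $P$ of $\sigma'$ along $(N,\sigma)\widetilde{g}$, which fits into $0 \to \Hom_A(N, X) \to P \to V \to 0$. Since $\Hom_A(N, X) \in \mathcal V$ by hypothesis and $V \in \mathcal V$, closure of $\mathcal V$ under extensions forces $P \in \mathcal V$. Hence $\Coker\Psi \in \binom{\mathcal U}{\mathcal V}$, which completes the proof.

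The main obstacle is the bookkeeping needed to recognize $\Ker\Phi$ (respectively, $\Coker\Psi$) as a pullback (respectively, pushout) so as to extract the clean short exact sequences $0 \to X \to K \to N\otimes_B V \to 0$ and $0 \to \Hom_A(N, X) \to P \to V \to 0$; once these are in hand, the extension-closure of $\mathcal X$ and $\mathcal V$ (as the right and left halves of cotorsion pairs) does the rest. Note that the hypotheses $\Tor^B_1(N, \mathcal V) = 0$ and $\Ext^1_A(N, \mathcal X) = 0$ enter only through the invocation of Theorem \ref{identify1}; the completeness construction itself is purely set-theoretic given the extension closures.
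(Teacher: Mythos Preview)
Your proof is correct, and it takes a genuinely different route from the paper's. The paper argues by decomposing an arbitrary $\Lambda$-module via the short exact sequence
\[
0 \longrightarrow \binom{L_1}{0}_0 \longrightarrow \binom{L_1}{L_2}_g \longrightarrow \binom{0}{L_2}_0 \longrightarrow 0,
\]
constructs explicit special right $\Delta(\mathcal U,\mathcal V)$-approximations of the two outer terms separately, and then invokes the horseshoe-type Lemma~\ref{horseshoe} to glue them into an approximation of the middle term. You instead build the approximating epimorphism ${\rm T}_A(U)\oplus{\rm T}_B(V)\twoheadrightarrow L$ (respectively the enveloping monomorphism $L\hookrightarrow {\rm H}_A(X)\oplus{\rm H}_B(Y)$) in a single stroke, and identify its kernel (respectively cokernel) directly as an extension of objects in $\mathcal X$ (respectively $\mathcal V$) via the pullback/pushout description.

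What each approach buys: the paper's argument is more modular and reuses a general lemma, but that lemma, as stated in the paper, carries a \emph{hereditary} hypothesis on the cotorsion pair --- a hypothesis not assumed in Proposition~\ref{triangular}. Your direct construction sidesteps this entirely: the only closure property you use is that $\mathcal X$ and $\mathcal V$, being halves of cotorsion pairs, are closed under extensions, which holds without any heredity assumption. So your argument is not only more self-contained but also strictly more general as written. Your closing remark that $\Tor_1^B(N,\mathcal V)=0$ and $\Ext_A^1(N,\mathcal X)=0$ enter only through Theorem~\ref{identify1} is accurate and worth keeping.
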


For lower triangle matrix rings (i.e., $N = 0$) one has the similar results. We omit the details. For proof of Proposition \ref{triangular} we need

\begin{lem}\label{horseshoe} {\rm ([AA, 3.1])}
Let $\mathcal C$ be an abelian category with enough projectives and injectives.
Assume that $(\mathcal A, \mathcal B)$ be a hereditary cotorsion pair in $\mathcal C$,
and $0\longrightarrow X\xlongrightarrow{f}Y\xlongrightarrow{g}Z\longrightarrow 0$
be an exact sequence in $\mathcal C$.

\vskip5pt

$(1)$ \ Assume that $X$ and $Z$ have special right $\mathcal A$-approximation, i.e., there are exact sequences:
$$0\longrightarrow B_1\longrightarrow A_1\longrightarrow X\longrightarrow 0, \qquad
0\longrightarrow B_2\longrightarrow A_2\longrightarrow Z\longrightarrow 0,$$
with $A_i\in \mathcal A$, $B_i\in \mathcal B$, $i=1, 2$. Then $Y$ has a special right $\mathcal A$-approximation.

\vskip5pt

$(2)$ \ Assume that $X$ and $Z$ have special left $\mathcal B$-approximation, i.e., there are exact sequences
$$0\longrightarrow X\longrightarrow B_1\longrightarrow A_1\longrightarrow 0, \qquad
0\longrightarrow Z\longrightarrow B_2\longrightarrow A_2\longrightarrow 0$$
with $B_i\in \mathcal B$, $A_i\in \mathcal A$, $i=1, 2$.
Then $Y$ has a special left $\mathcal B$-approximation.
\end{lem}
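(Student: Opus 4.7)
The plan is to prove both parts by explicit pull-back and push-out constructions, relying only on the closure of $\mathcal{A}$ and $\mathcal{B}$ under extensions (which holds for any cotorsion pair in $\mathcal{C}$).

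For $(1)$, first form the pull-back $P = Y\times_Z A_2$ along the approximation $A_2 \to Z$. This yields two short exact sequences: the pulled-back row
\[ 0 \to X \to P \to A_2 \to 0, \]
and, since the kernel of $A_2 \to Z$ is $B_2$,
\[ 0 \to B_2 \to P \to Y \to 0. \]
Next, push out the approximation $0 \to B_1 \to A_1 \to X \to 0$ along the inclusion $X \hookrightarrow P$ to obtain $Q = A_1 \sqcup_X P$, sitting in two exact sequences
\[ 0 \to B_1 \to Q \to P \to 0, \qquad 0 \to A_1 \to Q \to A_2 \to 0, \]
the second being the induced cokernel row (the cokernel $A_2$ of $X \to P$ is preserved by the push-out). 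Since $A_1, A_2 \in \mathcal{A}$ and $\mathcal{A}$ is closed under extensions, $Q \in \mathcal{A}$.

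Composing the two epimorphisms, $Q \twoheadrightarrow P \twoheadrightarrow Y$ is an epimorphism whose kernel $B$ fits, by the Snake lemma, in a short exact sequence $0 \to B_1 \to B \to B_2 \to 0$ assembled from the kernels $B_1 = \Ker(Q \to P)$ and $B_2 = \Ker(P \to Y)$. Closure of $\mathcal{B}$ under extensions then forces $B \in \mathcal{B}$, and $0 \to B \to Q \to Y \to 0$ is the desired special right $\mathcal{A}$-approximation of $Y$. For $(2)$, the dual argument uses two successive push-outs: first push out $0 \to X \to Y \to Z \to 0$ along the monomorphism $X \to B_1$ to get $0 \to B_1 \to Q \to Z \to 0$ together with $0 \to Y \to Q \to A_1 \to 0$; then push out the first of these along $Z \to B_2$ to obtain $0 \to B_1 \to B \to B_2 \to 0$ together with $0 \to Q \to B \to A_2 \to 0$. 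Extension closure gives $B \in \mathcal{B}$, while the composite monomorphism $Y \hookrightarrow Q \hookrightarrow B$ has cokernel $A$ fitting in $0 \to A_1 \to A \to A_2 \to 0$, so $A \in \mathcal{A}$, producing the special left $\mathcal{B}$-approximation $0 \to Y \to B \to A \to 0$.

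The only real work is the diagram-chase at each stage needed to identify kernels and cokernels correctly via the Snake and $3\times 3$ lemmas; this is routine but must be done carefully since four different short exact sequences must be tracked in each part. Notably, the hereditary hypothesis on $(\mathcal{A}, \mathcal{B})$ plays no role in the construction itself, since the extension closure of both classes is automatic for any cotorsion pair; it is presumably stated in the lemma to match the typical context in which these approximations will be applied in later sections.
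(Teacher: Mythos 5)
The paper itself gives no proof of this lemma (it is quoted from [AA, 3.1]), so your argument has to stand on its own; as written it has a genuine gap at the second stage of each part. The first stage is fine: the pullback $P=Y\times_Z A_2$ in (1) and the pushout $Q=Y\sqcup_X B_1$ in (2) do produce the two short exact sequences you claim, by the standard behaviour of pullbacks of epimorphisms and pushouts of monomorphisms. But the second stage is not a categorical construction at all. In (1) you write $Q=A_1\sqcup_X P$, a ``pushout along the inclusion $X\hookrightarrow P$''; a pushout needs a span $A_1\leftarrow X\rightarrow P$, and you only have $A_1\twoheadrightarrow X$ and $X\hookrightarrow P$, with no map $X\to A_1$. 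What you actually need is an object $Q$ completing the map of extensions from $0\to B_1\to A_1\to X\to 0$ to an extension $0\to B_1\to Q\to P\to 0$ over the monomorphism $X\to P$ (equivalently, a class in $\Ext^1_{\mathcal C}(A_2,A_1)$ mapping to the class of $0\to X\to P\to A_2\to 0$ under $A_1\to X$). The long exact sequence for $0\to B_1\to A_1\to X\to 0$ applied with $\Ext^\bullet_{\mathcal C}(A_2,-)$ shows the obstruction to such a lift lies in $\Ext^2_{\mathcal C}(A_2,B_1)$, and it is precisely the hereditary hypothesis (Proposition \ref{heredity}(iv)) that kills it. The same problem occurs in (2): ``pushing out $0\to B_1\to Q\to Z\to 0$ along $Z\to B_2$'' is not a pushout (the map goes out of the cokernel term), and the existence of $B$ with both $0\to B_1\to B\to B_2\to 0$ and $0\to Q\to B\to A_2\to 0$ is again an $\Ext^1$-lifting problem obstructed by $\Ext^2_{\mathcal C}(A_2,B_1)$.

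Your closing remark that heredity ``plays no role'' is therefore the tell: extension-closure of $\mathcal A$ and $\mathcal B$ alone does not produce the object $Q$ (resp.\ $B$), and without heredity the required lift of the extension class need not exist. Once you replace the illegitimate second ``pushout'' by the $\Ext^2$-vanishing argument, the rest of your bookkeeping (extension-closure of $\mathcal A$ and $\mathcal B$, the Snake/$3\times 3$ identifications of kernels and cokernels, and composing $Q\twoheadrightarrow P\twoheadrightarrow Y$, resp.\ $Y\hookrightarrow Q\hookrightarrow B$) goes through and gives the stated approximations.
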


\vskip5pt

\noindent {\bf Proof of Proposition \ref{triangular}.}
(1) \ By Theorem \ref{identify1}(1), $(\Delta(\mathcal U, \ \mathcal V), \ \left(\begin{smallmatrix} \mathcal X \\ \mathcal Y\end{smallmatrix}\right))
=({\rm T}_A(\mathcal U)\oplus {\rm T}_B(\mathcal V), \ \left(\begin{smallmatrix} \mathcal X \\ \mathcal Y\end{smallmatrix}\right))$ is a cotorsion pair. Let $\binom{L_1}{L_2}_g$ be a $\Lambda$-module. By the completeness of $(\mathcal U, \ \mathcal X)$ and $(\mathcal V, \ \mathcal Y)$, one has the exact sequences
$$0\longrightarrow X\xlongrightarrow{\sigma_1}U\xlongrightarrow{\pi_1}L_1\longrightarrow 0, \qquad
0\longrightarrow Y\xlongrightarrow{\sigma_2}V\xlongrightarrow{\pi_2}L_2\longrightarrow 0$$
in $A$-Mod and $B$-Mod respectively, with $U\in \mathcal U$, \ $X\in \mathcal X$, \ $V\in \mathcal V$, and \ $Y\in \mathcal Y$.
Then
$$0\longrightarrow \left(\begin{smallmatrix} X \\ 0 \end{smallmatrix}\right)_0\xlongrightarrow{\binom{\sigma_1}{0}}
\left(\begin{smallmatrix} U \\ 0 \end{smallmatrix}\right)_0\xlongrightarrow{\binom{\pi_1}{0}}
\left(\begin{smallmatrix} L_1 \\ 0 \end{smallmatrix}\right)_0\longrightarrow 0$$
is the special right $\Delta(\mathcal U, \ \mathcal V)$-approximation of $\binom{L_1}{0}_0$. Also, since
$N\otimes_B\mathcal V\subseteq \mathcal X$,
$$0\longrightarrow \left(\begin{smallmatrix} N\otimes_BV \\ Y \end{smallmatrix}\right)_{1\otimes\sigma_2} \xlongrightarrow{\binom{1}{\sigma_2}}
\left(\begin{smallmatrix} N\otimes_BV \\ V \end{smallmatrix}\right)_1\xlongrightarrow{\binom{0}{\pi_2}}
\left(\begin{smallmatrix} 0 \\ L_2 \end{smallmatrix}\right)_0\longrightarrow 0
$$
is the special right $\Delta(\mathcal U, \ \mathcal V)$-approximation of $\binom{0}{L_2}_0$.
Since  $$0\longrightarrow \left(\begin{smallmatrix} L_1 \\ 0 \end{smallmatrix}\right) \longrightarrow
\left(\begin{smallmatrix} L_1 \\ L_2 \end{smallmatrix}\right)_g \longrightarrow
\left(\begin{smallmatrix} 0 \\ L_2 \end{smallmatrix}\right) \longrightarrow 0$$
is exact, it follows from Lemma \ref{horseshoe}(1) that
$\Lambda$-module $\binom{L_1}{L_2}_g$ has a special right $\Delta(\mathcal U, \ \mathcal V)$-approximation
$$0\longrightarrow \left(\begin{smallmatrix} X' \\ Y' \end{smallmatrix}\right)_s \longrightarrow
\left(\begin{smallmatrix} U' \\ V' \end{smallmatrix}\right)_h \longrightarrow
\left(\begin{smallmatrix} L_1 \\ L_2 \end{smallmatrix}\right)_g \longrightarrow 0$$
with $\binom{U'}{V'}_h\in \Delta(\mathcal U, \ \mathcal V)$ and $\binom{X'}{Y'}_s\in \binom{\mathcal X}{\mathcal Y}$.
This proves the completeness.

\vskip5pt

The assertion (2) can be similarly proved.
\hfill $\square$

\vskip5pt

Theorems \ref{ctp2}, \ref{ctp3}, and Proposition \ref{triangular} are new, even when $M = 0$ or $N =0$.

\section{\bf Realizations}
In Table 1, taking \ $(\mathcal U, \ \mathcal X)$ and \ $(\mathcal V, \ \mathcal Y)$ to be the projective cotorsion pair or the injective cotorsion pair, we get Table 2 below.
This section is to show that these cotorsion pairs in Table 2 are pairwise generally different and ``new" in some sense. For details see Definitions \ref{difference} and \ref{new}, Propositions \ref{different}, \ref{newI}, \ref{different2} and  \ref{newII}.
All these results are new, even for $M= 0$ or $N = 0$. Thus, it turns out that Morita rings are rich in producing ``new" cotorsion pairs.

\subsection{Cotorsion pairs in Series I in Table 2 are pairwise generally different}  To save the space, in Table 2 we use
\ $\mathcal A: = A\mbox{-}{\rm Mod}, \ \ \mathcal B: = B\mbox{-}{\rm Mod}, \ \ \mbox{proj.}: = \mbox{projective},$ \
$\mathcal M: = {\rm Mon}(\Lambda) = \Delta(A\mbox{\rm-Mod},  \ B\mbox{\rm-Mod})$ \ and \ $\mathcal E: = {\rm Epi}(\Lambda) = \nabla(A\mbox{\rm-Mod},  \ B\mbox{\rm-Mod}).$

\vskip5pt

{\bf About Table 2:}  (i)  \ It is clear that (see also Subsection 3.1)
$$(\Delta(_A\mathcal P,  \ _B\mathcal P), \ \Delta(_A\mathcal P,  \ _B\mathcal P)^\bot) = (_\Lambda\mathcal P, \ \Lambda\mbox{\rm-Mod}); \ \ \ (^\bot\nabla(_A\mathcal I, \ _B\mathcal I), \ \nabla(_A\mathcal I, \ _B\mathcal I)) = (\Lambda\mbox{-}{\rm Mod}, \ _\Lambda\mathcal I).$$

\vskip5pt

(ii) \  The cotorsion pairs in columns 2 and 3 in Table 2 are
cotorsion pairs in Series I,
and the ones in columns 4 and 5 are the cotorsion pairs in Series II. See Notation \ref{not}.

\vskip5pt

\vskip10pt

\centerline{\bf Table 2: \ Cotorsion pairs in $\Lambda$-Mod}

\vspace{-10pt}
$${\tiny\begin{tabular}{|c|c|c|c|c|}
\hline
\phantom{\LARGE 0} & \multicolumn{2}{c|} {\tabincell{c}{Hereditary cotorsion pairs in Series I\\[3pt] $\varphi=0=\psi$}}
& \multicolumn{2}{c|} {\tabincell{c}{Cotorsion pairs in Series II\\[3pt] $M\otimes_AN=0 = N\otimes_BM$}}
\\[8pt]\hline
\tabincell{c}{$(_A\mathcal U, \ _A\mathcal X)$ \\ $(_B\mathcal V, \ _B\mathcal Y)$}
& \tabincell{c}{$\Tor_1(M,\mathcal U)=0$ \\ [3pt] $\Tor_1(N,\mathcal V)=0$: \\[3pt] $(^\perp\binom{\mathcal X}{\mathcal Y},\binom{\mathcal X}{\mathcal Y})$}
& \tabincell{c}{$\Ext^1(N,\mathcal X)=0$ \\ [3pt]$\Ext^1(M,\mathcal Y)=0$: \\[3pt] $(\binom{\mathcal U}{\mathcal V},\binom{\mathcal U}{\mathcal V}^\perp)$}
& \tabincell{c}{$(\Delta(\mathcal U,\mathcal V), \ \Delta(\mathcal U, \mathcal V)^\bot)$}
& \tabincell{c}{$(^\bot\nabla(\mathcal X,\mathcal Y), \ \nabla(\mathcal X,\mathcal Y))$}
\\[15pt] \hline
\tabincell{c}{$(\mathcal P, \mathcal A$) \\ $(\mathcal P, \mathcal B)$}
&\tabincell{c}{$(_\Lambda\mathcal P, \Lambda\mbox{\rm-Mod})$}
& \tabincell{c}{$_AN$, $_BM$  proj.: \\[3pt] $(\binom{\mathcal P}{\mathcal P}, \ \binom{\mathcal P}{\mathcal P}^\perp)$}
& \tabincell{c}{$(_\Lambda\mathcal P, \Lambda\mbox{\rm-Mod})$}
& \tabincell{c}{$(^\bot{\rm Epi}(\Lambda), \ {\rm Epi}(\Lambda)).$ \\[3pt] Even if $_AN$, $_BM$  proj.,   \\[3pt]$(^\bot\mathcal E, \mathcal E)\ne (\binom{\mathcal P}{\mathcal P}, \ \binom{\mathcal P}{\mathcal P}^\perp)$\\[3pt] in general.}
\\[20pt] \hline
\tabincell{c}{$(\mathcal P, \mathcal A$) \\ $(\mathcal B, \mathcal I)$}
& \tabincell{c}{$N_B$ flat: \\ [3pt] $(^\perp\binom{\mathcal A}{\mathcal I}, \ \binom{\mathcal A}{\mathcal I})$}
& \tabincell{c}{$_AN$ proj.: \\[3pt] $(\binom{\mathcal P}{\mathcal B}, \ \binom{\mathcal P}{\mathcal B}^\perp)$}
& \tabincell{c}{$(\Delta(\mathcal P, \mathcal B), (\Delta(\mathcal P, \mathcal B)^\bot).$ \\ [3pt] If  $N_B$ flat then it is \\ [3pt] $(^\perp\binom{\mathcal A}{\mathcal I}, \ \binom{\mathcal A}{\mathcal I})$\\ [3pt] thus it is \\ [3pt] $(\Delta(\mathcal P, \mathcal B), \ \binom{\mathcal A}{\mathcal I})$}
& \tabincell{c}{$(^\perp\nabla(\mathcal A,  \mathcal I),  \nabla(\mathcal A,  \mathcal I)).$ \\ [3pt] If $_AN$ proj. then it is \\ [3pt]$(\binom{\mathcal P}{\mathcal B}, \ \binom{\mathcal P}{\mathcal B}^\perp)$\\[3pt]thus it is \\[3pt]$(\binom{\mathcal P}{\mathcal B}, \ \nabla(\mathcal A, \mathcal I))$}
\\[30pt] \hline
\tabincell{c}{$(\mathcal A, \mathcal I$) \\ $(\mathcal P, \mathcal B)$}
& \tabincell{c}{$M_A$ flat: \\[3pt] $(^\perp\binom{\mathcal I}{\mathcal B}, \ \binom{\mathcal I}{\mathcal B})$}
& \tabincell{c}{$_BM$ proj.: \\[3pt] $(\binom{\mathcal A}{\mathcal P}, \ \binom{\mathcal A}{\mathcal P}^\perp)$}
& \tabincell{c}{$(\Delta(\mathcal A, \mathcal P), \Delta(\mathcal A, \mathcal P)^\bot).$ \\[3pt] If $M_A$ flat then it is \\[3pt]$(^\perp\binom{\mathcal I}{\mathcal B}, \binom{\mathcal I}{\mathcal B})$\\[3pt] thus it is \\ [3pt]$((\Delta(\mathcal A, \mathcal P), \ \binom{\mathcal I}{\mathcal B})$}
& \tabincell{c}{$(^\bot\nabla(\mathcal I, \mathcal B), \nabla(\mathcal I, \mathcal B))$. \\[3pt] If $_BM$ proj. then it is\\[3pt]$(\binom{\mathcal A}{\mathcal P}, \ \binom{\mathcal A}{\mathcal P}^\perp)$\\[3pt]thus it is \\[3pt] $(\binom{\mathcal A}{\mathcal P}, \ \nabla(\mathcal I, \mathcal B))$}
\\[30pt] \hline
\tabincell{c}{$(\mathcal A, \mathcal I)$ \\ $(\mathcal B, \mathcal I)$}
& \tabincell{c}{$M_A$, $N_B$ flat: \\[3pt] $(^\perp\binom{\mathcal I}{\mathcal I}, \ \binom{\mathcal I}{\mathcal I})$}
& \tabincell{c}{$(\Lambda\mbox{-}{\rm Mod}, \ _\Lambda\mathcal I)$}
& \tabincell{c}{$({\rm Mon}(\Lambda), \ {\rm Mon}(\Lambda)^\bot)$.\\[3pt]Even if $M_A$, $N_B$ flat,\\ [3pt] $(\mathcal M, \ \mathcal M^\bot)\ne (^\perp\binom{\mathcal I}{\mathcal I}, \ \binom{\mathcal I}{\mathcal I})$\\ [3pt]in general}
& \tabincell{c} {$(\Lambda\mbox{-}{\rm Mod}, \ _\Lambda\mathcal I)$}
\\[20pt]\hline
\end{tabular}}$$

\vskip10pt

From the proof of Proposition \ref{different} we will see that, in the most cases,
the eight hereditary cotorsion pairs in Series I in Table 2 are pairwise different.

\vskip5pt

\begin{prop}\label{different}  \ Let $\Lambda = \left(\begin{smallmatrix} A & N \\
	M & B\end{smallmatrix}\right)$ be a Morita ring  with $\phi = 0=\psi$.
Then the eight hereditary cotorsion pairs in {\rm Series  I} in {\rm Table 2} are pairwise generally different, in the sense of {\rm Definition \ref{difference}}.\end{prop}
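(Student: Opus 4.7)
The plan is to label the eight hereditary cotorsion pairs in Series~I of Table~2 as $P_1,\dots,P_4$ (column~2) and $Q_1,\dots,Q_4$ (column~3) in row order, and then, for each of the $\binom{8}{2}=28$ unordered pairs among them, to exhibit one Morita ring in which the two disagree. I will organize the argument into three steps, each using a different family of test rings; since ``generally different'' only requires one witnessing $\Lambda$ per pair, the witnesses need not be uniform.

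\emph{Within-group and off-diagonal cross-group comparisons.} Take $M=N=0$ with $A=B=k(1\to 2)$. Within Group~A, each $P_i$ is determined by its right-hand side $\binom{\mathcal X_i}{\mathcal Y_i}$; the four classes that arise are separated by the test modules $\left(\begin{smallmatrix}S_1\\ 0\end{smallmatrix}\right)_{0,0}$ and $\left(\begin{smallmatrix}0\\ S_1\end{smallmatrix}\right)_{0,0}$, where $S_1$ is the simple projective non-injective module, giving $P_i\neq P_j$; the dual test modules built from the simple injective non-projective $S_2$ give $Q_i\neq Q_j$. In this product case $\Ext^1_\Lambda\bigl(\binom{X}{Y},\binom{X'}{Y'}\bigr)=\Ext^1_A(X,X')\oplus \Ext^1_B(Y,Y')$, so ${}^\perp\binom{\mathcal X}{\mathcal Y}=\binom{{}^\perp\mathcal X}{{}^\perp\mathcal Y}$; using ${}^\perp(A\mbox{-Mod})={_A\mathcal P}$ and ${}^\perp({_A\mathcal I})=A\mbox{-Mod}$, a direct check against the Table~2 indexing yields $P_i=Q_i$ for $i=1,2,3,4$ in the product ring, and combining with the above gives $P_i\neq Q_j$ for all $i\neq j$.

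\emph{The diagonal comparisons $P_i\neq Q_i$.} For these four remaining cases the product trick fails, so I pass to triangular Morita rings. For $P_1\neq Q_1$ and $P_4\neq Q_4$ take the upper triangular $\Lambda=\left(\begin{smallmatrix}A&N\\ 0&B\end{smallmatrix}\right)$ with $A=B=k[x]/(x^2)$ (self-injective) and $N=A\neq 0$: the classification in Subsection~2.5 forces the structure map of every projective $\Lambda$-module to be a split injection, while every summand of an injective of the form ${\rm H}_A(I)=\left(\begin{smallmatrix}I\\ \Hom_A(N,I)\end{smallmatrix}\right)_{0,\epsilon'_I}$ has nonzero second component when $N=A$ and $I=A$; consequently $\left(\begin{smallmatrix}A\\ B\end{smallmatrix}\right)_{0,0}\in\binom{{_A\mathcal P}}{{_B\mathcal P}}\setminus{_\Lambda\mathcal P}$ and $\left(\begin{smallmatrix}A\\ 0\end{smallmatrix}\right)_{0,0}\in\binom{{_A\mathcal I}}{{_B\mathcal I}}\setminus{_\Lambda\mathcal I}$. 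For $P_2\neq Q_2$, Theorem~\ref{identify1} together with Corollary~\ref{identification1} identifies ${}^\perp\binom{A\mbox{-Mod}}{{_B\mathcal I}}={\rm T}_A({_A\mathcal P})\oplus{\rm T}_B(B\mbox{-Mod})$ on the same upper triangular $\Lambda$, and $\left(\begin{smallmatrix}0\\ B\end{smallmatrix}\right)_{0,0}\in\binom{{_A\mathcal P}}{B\mbox{-Mod}}$ cannot be written in that form when $N\neq 0$; the symmetric lower triangular example with $M\neq 0$ gives $P_3\neq Q_3$.

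The main obstacle is precisely this diagonal step, where the $\Ext$-decomposition underpinning Steps~1--2 is no longer available. There one needs a concrete description of ${}^\perp\binom{\mathcal X_i}{\mathcal Y_i}$ fine enough to decide membership of the candidate witnesses, and this is exactly the role played by the identification theorems of Section~4, since the raw $\Ext$-vanishing definition of ${}^\perp$ is too opaque to settle these membership questions directly.
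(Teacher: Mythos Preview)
Your proof is correct, but it is organized quite differently from the paper's, and the two approaches use different tools.

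The paper breaks the 28 pairs into: (i) same-column comparisons (12 cases), handled directly by comparing the defining classes $\binom{\mathcal X}{\mathcal Y}$ or $\binom{\mathcal U}{\mathcal V}$ when $A,B$ are not semisimple; (ii) the projective cotorsion pair $(_\Lambda\mathcal P,\Lambda\text{-Mod})$ versus the remaining four in column~3 (4 cases), using $\Lambda$ with $A=B=M=N\ne 0$ and the test module $\binom{N}{0}_{0,0}$; (iii) the injective cotorsion pair similarly (3 cases); (iv) the remaining 9 cross-column cases, each checked individually with test modules of the form ${\rm T}_AX$ or ${\rm T}_BY$ and Lemma~\ref{extadj1}. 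Crucially, the paper never leaves the Section~3 toolkit: all membership questions about ${}^\perp\binom{\mathcal X}{\mathcal Y}$ are settled by Lemma~\ref{extadj1} alone.

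Your route is: (i) within-column and off-diagonal cross comparisons (24 cases) all at once, via the observation that over the product ring $M=N=0$ one has $P_i=Q_i$, so the within-column separations propagate to $P_i\ne Q_j$ for $i\ne j$; (ii) the four diagonal cases $P_i\ne Q_i$ via triangular rings, invoking the identification theorems of Section~4 (Corollary~\ref{identification1}) to describe ${}^\perp\binom{A\text{-Mod}}{_B\mathcal I}$ explicitly as ${\rm T}_A(_A\mathcal P)\oplus{\rm T}_B(B\text{-Mod})$ and then exhibiting a module outside this class. Your product-ring collapse is elegant and dispatches 24 cases with one witness ring, but the price is that the remaining diagonal cases rely on the heavier Section~4 machinery, whereas the paper's more pedestrian case analysis stays entirely within the elementary $\Ext$-isomorphisms of Lemma~\ref{extadj1}. (A minor point: in $k(1\to 2)$ with the paper's conventions the simple projective non-injective is $S_2$, not $S_1$, but this is only a labeling slip and does not affect your argument.)
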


\begin{proof} \ All together there are $\binom{8}{2} = 28$ situations.

\vskip5pt {\bf Step 1.} \ If \ $A$ and $B$ are not semisimple,
then the cotorsion pairs in {\rm Series  I} in the same columns are pairwise different. This occupies $2\binom{4}{2} = 12$ situations.

\vskip5pt

For example, since $A$ is not semisimple, $A\mbox{-}{\rm Mod}\ne \ _A\mathcal I.$
Thus \ $\left(\begin{smallmatrix} A\mbox{-}{\rm Mod}\\ _B\mathcal I\end{smallmatrix}\right)
\ne \left(\begin{smallmatrix} _A\mathcal I\\ _B\mathcal I\end{smallmatrix}\right),$
and hence
$$(^\perp\left(\begin{smallmatrix} A\mbox{-}{\rm Mod}\\ _B\mathcal I\end{smallmatrix}\right), \ \left(\begin{smallmatrix} A\mbox{-}{\rm Mod}\\ _B\mathcal I\end{smallmatrix}\right))
\ne (^\perp\left(\begin{smallmatrix} _A\mathcal I\\ _B\mathcal I\end{smallmatrix}\right), \ \left(\begin{smallmatrix} _A\mathcal I\\ _B\mathcal I\end{smallmatrix}\right)).$$

{\bf Step 2.} \ The projective cotorsion pair $(_\Lambda\mathcal P, \ \Lambda\mbox{\rm-Mod})$
is generally different from all other seven cotorsion pairs in Series I. This occupies $4$ situations.

\vskip5pt

In fact, taking \ $_AN = A = B = \ _BM \ne 0$, then \
$$\left(\begin{smallmatrix} N\\ 0\end{smallmatrix}\right)_{0, 0}\in \left(\begin{smallmatrix}_A\mathcal P\\ _B\mathcal P\end{smallmatrix}\right), \ \ \  \
\left(\begin{smallmatrix} N\\ 0\end{smallmatrix}\right)_{0, 0}\in \left(\begin{smallmatrix}_A\mathcal P\\ B\mbox{-}{\rm Mod}\end{smallmatrix}\right), \ \  \ \
\left(\begin{smallmatrix} N\\ 0\end{smallmatrix}\right)_{0, 0}\in \left(\begin{smallmatrix} A\mbox{-}{\rm Mod}\\ _B\mathcal P\end{smallmatrix}\right), \ \ \ \ \left(\begin{smallmatrix} N\\ 0\end{smallmatrix}\right)_{0, 0}\in \Lambda\mbox{-}{\rm Mod}$$
but \  $\binom{N}{0}_{0, 0}\notin \ _\Lambda\mathcal P$.
Thus \ $(_\Lambda\mathcal P, \ \Lambda\mbox{\rm-Mod})$ is generally different from
$$(\left(\begin{smallmatrix}_A\mathcal P\\ _B\mathcal P\end{smallmatrix}\right), \ \left(\begin{smallmatrix}_A\mathcal P\\ _B\mathcal P\end{smallmatrix}\right)^\perp),  \ \ \ (\left(\begin{smallmatrix}_A\mathcal P\\ B\mbox{-}{\rm Mod}\end{smallmatrix}\right), \ \left(\begin{smallmatrix}_A\mathcal P\\ B\mbox{-}{\rm Mod}\end{smallmatrix}\right)^\perp), \ \ \ (\left(\begin{smallmatrix}A\mbox{-}{\rm Mod}\\ _B\mathcal P\end{smallmatrix}\right), \ \left(\begin{smallmatrix}A\mbox{-}{\rm Mod}\\ _B\mathcal P\end{smallmatrix}\right)^\perp), \ \ \ (\Lambda\mbox{\rm-Mod}, \ _\Lambda\mathcal I).$$

{\bf Step 3.} \ Similarly, the injective cotorsion pair $(\Lambda\mbox{\rm-Mod}, \ _\Lambda\mathcal I)$
is generally different from all other seven cotorsion pairs in Series I. This occupies $3$ situations.

\vskip5pt

{\bf Step 4.} \ For convenience, denote by $R_{\mathcal X, \ \mathcal Y}$ the cotorsion pair where \
$\left(\begin{smallmatrix}\mathcal X\\ \mathcal Y\end{smallmatrix}\right)$ is at the right hand side, i.e.,
$R_{\mathcal X, \ \mathcal Y} = ({}^\perp\left(\begin{smallmatrix}\mathcal X\\ \mathcal Y\end{smallmatrix}\right), \ \left(\begin{smallmatrix}\mathcal X\\ \mathcal Y\end{smallmatrix}\right))$.
Similarly, \ $L_{\mathcal U, \ \mathcal V} = (\left(\begin{smallmatrix}\mathcal U\\ \mathcal V\end{smallmatrix}\right), \ \left(\begin{smallmatrix}\mathcal U\\ \mathcal V\end{smallmatrix}\right)^\perp)$.

\vskip5pt

Assume that \ $A$ and $B$ are not semisimple. Under some extra conditions we will show the following remaining $9$ cases (listed in the order of comparing each cotorsion pair with the ones after):
\begin{align*}& L_{_A\mathcal P, \ _B\mathcal P} \ne R_{A\mbox{-}{\rm Mod}, \ _B\mathcal I}; \ \ \ \ \ \ \ \ L_{_A\mathcal P, \ _B\mathcal P}\ne R_{_A\mathcal I, \ B\mbox{-}{\rm Mod}};
\ \ \ \ \ \ \ \ \ \ L_{_A\mathcal P, \ _B\mathcal P}\ne R_{_A\mathcal I, \ _B\mathcal I};
\\ &
R_{A\mbox{-}{\rm Mod}, \ _B\mathcal I} \ne L_{_A\mathcal P, \ B\mbox{-}{\rm Mod}}; \ \ \ \  R_{A\mbox{-}{\rm Mod}, \ _B\mathcal I} \ne L_{A\mbox{-}{\rm Mod}, \ _B\mathcal P};
\ \ \ \ \ \ L_{_A\mathcal P, B\mbox{-}{\rm Mod}}\ne R_{_A\mathcal I, \ B\mbox{-}{\rm Mod}};
\\ &
L_{_A\mathcal P, B\mbox{-}{\rm Mod}}\ne R_{_A\mathcal I, \ _B\mathcal I}; \ \ \ \ \ \ \ \ \
R_{_A\mathcal I, \ B\mbox{-}{\rm Mod}}\ne  L_{A\mbox{-}{\rm Mod}, \ _B\mathcal P}; \ \ \ \ \ \
L_{A\mbox{-}{\rm Mod}, \ _B\mathcal P}\ne R_{_A\mathcal I, \ _B\mathcal I}.
\end{align*}

To see the  inequalities involving $L_{_A\mathcal P, \ _B\mathcal P}= (\binom{_A\mathcal P}{_B\mathcal P}, \ \binom{_A\mathcal P}{_B\mathcal P}^\perp)$, it suffices to show
$$\left(\begin{smallmatrix}_A\mathcal P\\ _B\mathcal P\end {smallmatrix}\right) \ne \ ^\perp\left(\begin{smallmatrix}A\mbox{-}{\rm Mod} \\ _B\mathcal I\end {smallmatrix}\right), \ \ \ \
\left(\begin{smallmatrix}_A\mathcal P\\ _B\mathcal P\end {smallmatrix}\right) \ne \ ^\perp\left(\begin{smallmatrix}_A\mathcal I\\ B\mbox{-}{\rm Mod}\end {smallmatrix}\right), \ \ \ \
\left(\begin{smallmatrix}_A\mathcal P\\ _B\mathcal P\end {smallmatrix}\right) \ne \ ^\perp\left(\begin{smallmatrix}_A\mathcal I\\_B\mathcal I\end {smallmatrix}\right).$$

Since $B$ is not semisimple, there is a non-projective $B$-module $Y$. By Lemma \ref{extadj1}(2),
\ $ {\rm T}_BY = \left(\begin{smallmatrix}N\otimes_BY\\ Y\end {smallmatrix}\right)_{0, 1}\in \ ^\perp\left(\begin{smallmatrix}A\mbox{-}{\rm Mod}\\ _B\mathcal I\end {smallmatrix}\right),$  but
$\left(\begin{smallmatrix}N\otimes_BY\\ Y\end {smallmatrix}\right)_{0,1}\notin \left(\begin{smallmatrix}_A\mathcal P\\ _B\mathcal P\end {smallmatrix}\right)$.
This shows \ $\left(\begin{smallmatrix}_A\mathcal P\\ _B\mathcal P\end {smallmatrix}\right) \ne \ ^\perp\left(\begin{smallmatrix}A\mbox{-}{\rm Mod}\\ _B\mathcal I\end {smallmatrix}\right)$.

Since $A$ is not semisimple, there is a non-projective $A$-module $X$. By Lemma \ref{extadj1}(1),
\ ${\rm T}_A X= \left(\begin{smallmatrix}X\\ M\otimes_AX\end {smallmatrix}\right)_{1,0}\in \ ^\perp\left(\begin{smallmatrix}_A\mathcal I\\ B\mbox{-}{\rm Mod}\end {smallmatrix}\right)\cap
\ \ ^\perp\left(\begin{smallmatrix}_A\mathcal I\\ _B\mathcal I\end {smallmatrix}\right)$.   But $\left(\begin{smallmatrix}X\\ M\otimes_AX\end {smallmatrix}\right)_{1,0}
\notin \left(\begin{smallmatrix}_A\mathcal P\\ _B\mathcal P\end {smallmatrix}\right)$.
This shows \ $\left(\begin{smallmatrix}_A\mathcal P\\ _B\mathcal P\end {smallmatrix}\right) \ne \ ^\perp\left(\begin{smallmatrix}_A\mathcal I\\ B\mbox{-}{\rm Mod}\end {smallmatrix}\right)$ and
\ $\binom{_A\mathcal P}{_B\mathcal P} \ne
\ ^\perp\left(\begin{smallmatrix}_A\mathcal I\\ _B\mathcal I\end {smallmatrix}\right).$

\vskip10pt

For the next inequalities involving  \ $R_{A\mbox{-}{\rm Mod}, \ _B\mathcal I}= (^\perp\binom{A\mbox{-}{\rm Mod}}{_B\mathcal I}, \ \binom{A\mbox{-}{\rm Mod}}{_B\mathcal I}),$
we need to find conditions such that
$$^\perp\left(\begin{smallmatrix}A\mbox{-}{\rm Mod}\\ _B\mathcal I\end {smallmatrix}\right) \ne \left(\begin{smallmatrix}_A\mathcal P\\ B\mbox{-}{\rm Mod}\end {smallmatrix}\right), \ \ \ \ \ \ \ ^\perp\left(\begin{smallmatrix}A\mbox{-}{\rm Mod}\\_B\mathcal I\end {smallmatrix}\right) \ne \left(\begin{smallmatrix}A\mbox{-}{\rm Mod}\\ _B\mathcal P\end {smallmatrix}\right).$$
Taking $A = B = M = N\ne 0$ and a non-projective $B$-module $Y$,
then ${\rm T}_BY = \left(\begin{smallmatrix}N\otimes_BY\\ Y\end {smallmatrix}\right)_{0, 1} = \left(\begin{smallmatrix}Y\\ Y\end {smallmatrix}\right)_{0, 1} \in \ ^\perp\left(\begin{smallmatrix}A\mbox{-}{\rm Mod}\\_B\mathcal I\end {smallmatrix}\right)$ by Lemma \ref{extadj1}(2),
but \ $\left(\begin{smallmatrix}Y\\ Y\end {smallmatrix}\right)_{0,1}\notin \left(\begin{smallmatrix}_A\mathcal P\\ B\mbox{-}{\rm Mod}\end {smallmatrix}\right)$ \ and \ $\left(\begin{smallmatrix}Y\\ Y\end {smallmatrix}\right)_{0,1}\notin \left(\begin{smallmatrix}A\mbox{-}{\rm Mod}\\ _B\mathcal P\end {smallmatrix}\right)$.

\vskip10pt

To see $L_{_A\mathcal P, \ B\mbox{-}{\rm Mod}}\ne R_{_A\mathcal I, \ B\mbox{-}{\rm Mod}}$ and \ $L_{_A\mathcal P, \ B\mbox{-}{\rm Mod}}\ne R_{_A\mathcal I, \ _B\mathcal I}$,
it suffices to show
$$\left(\begin{smallmatrix}_A\mathcal P\\ B\mbox{-}{\rm Mod}\end {smallmatrix}\right) \ne \ ^\perp\left(\begin{smallmatrix}_A\mathcal I\\ B\mbox{-}{\rm Mod}\end {smallmatrix}\right),
\ \ \ \ \ \ \ \left(\begin{smallmatrix}_A\mathcal P\\ B\mbox{-}{\rm Mod}\end {smallmatrix}\right) \ne \ ^\perp\left(\begin{smallmatrix}_A\mathcal I\\ _B\mathcal I\end {smallmatrix}\right).$$
For a non-projective $A$-module $X$, \ $ {\rm T}_AX = \binom{X}{M\otimes_AX}_{1, 0}\in \ ^\perp\left(\begin{smallmatrix}_A\mathcal I\\ B\mbox{-}{\rm Mod}\end {smallmatrix}\right)\cap
\ ^\perp\binom{_A\mathcal I}{_B\mathcal I}$,
but $\left(\begin{smallmatrix}X\\ M\otimes_AX \end {smallmatrix}\right)_{1, 0}\notin \binom{_A\mathcal P}{B\mbox{-}{\rm Mod}}$.

\vskip10pt

Finally, we show that \ $L_{A\mbox{-}{\rm Mod}, \ _B\mathcal P}$ is generally different from \ $R_{_A\mathcal I, \ B\mbox{-}{\rm Mod}}$ \ and \ $R_{_A\mathcal I, \ _A\mathcal I}$.
Taking $A = B = M = N\ne 0$ and a non-projective $A$-module $X$,  it suffices to see
$${}^\perp\left(\begin{smallmatrix}_A\mathcal I\\ B\mbox{-}{\rm Mod}\end {smallmatrix}\right)\ne  \left(\begin{smallmatrix} A\mbox{-}{\rm Mod}\\_B\mathcal P\end {smallmatrix}\right), \ \ \ \ \ \
\left(\begin{smallmatrix} A\mbox{-}{\rm Mod}\\ _B\mathcal P\end {smallmatrix}\right) \ne \ ^\perp\left(\begin{smallmatrix}_A\mathcal I\\_B\mathcal I\end {smallmatrix}\right).$$
In fact, by Lemma \ref{extadj1}(2), \ ${\rm T}_AX = \left(\begin{smallmatrix}X\\ M\otimes_AX\end {smallmatrix}\right)_{1, 0}= \left(\begin{smallmatrix}X\\ X\end {smallmatrix}\right)_{1, 0}\in \ ^\perp\left(\begin{smallmatrix}_A\mathcal I\\ B\mbox{-}{\rm Mod}\end {smallmatrix}\right)\cap \ ^\perp\left(\begin{smallmatrix}_A\mathcal I\\ _B\mathcal I\end {smallmatrix}\right)$;
but $\left(\begin{smallmatrix}X\\ X\end {smallmatrix}\right)_{1, 0}\notin \left(\begin{smallmatrix}A\mbox{-}{\rm Mod}\\ _B\mathcal P\end {smallmatrix}\right)$.

\vskip5pt

This completes the proof. \end{proof}

\subsection{``New" cotorsion pairs in Series I in Table 2} Keep the notations $R_{\mathcal X, \ \mathcal Y}$ and $L_{\mathcal U, \ \mathcal V}$ as in the {\bf Step 4} of the proof of Proposition \ref{different}.
Taking off the projective cotorsion pair and
the injective one from Series I of Table 2,
the remaining six hereditary cotorsion pairs
\begin{align*}& R_{_A\mathcal I, \ _B\mathcal I} =
(^\perp\left(\begin{smallmatrix}_A\mathcal I\\ _B\mathcal I\end {smallmatrix}\right), \ \left(\begin{smallmatrix}_A\mathcal I\\ _B\mathcal I\end {smallmatrix}\right)),
\ \ \ \ \ \ \ \ \ \ \ \ \ \ \ \ \ \ L_{_A\mathcal P, \ _B\mathcal P}   =
(\left(\begin{smallmatrix}_A\mathcal P\\ _B\mathcal P\end {smallmatrix}\right), \ \left(\begin{smallmatrix}_A\mathcal P\\ _B\mathcal P\end {smallmatrix}\right)^\perp),
\\ & R_{A\mbox{-}{\rm Mod}, \ _B\mathcal I}= (^\perp\left(\begin{smallmatrix}A\mbox{-}{\rm Mod}\\ _B\mathcal I\end {smallmatrix}\right), \ \left(\begin{smallmatrix}A\mbox{-}{\rm Mod}\\ _B\mathcal I\end {smallmatrix}\right)),
\ \ \ \ \ L_{_A\mathcal P, \ B\mbox{-}{\rm Mod}} = (\left(\begin{smallmatrix}_A\mathcal P\\ B\mbox{-}{\rm Mod}\end {smallmatrix}\right),  \
\left(\begin{smallmatrix}_A\mathcal P\\ B\mbox{-}{\rm Mod}\end {smallmatrix}\right)^\perp),
\\ & R_{_A\mathcal I, \ B\mbox{-}{\rm Mod}} = (^\perp\left(\begin{smallmatrix}_A\mathcal I\\ B\mbox{-}{\rm Mod}\end {smallmatrix}\right), \ \left(\begin{smallmatrix}_A\mathcal I\\B\mbox{-}{\rm Mod}\end {smallmatrix}\right)),
\ \ \ \ \ L_{A\mbox{-}{\rm Mod}, \ _B\mathcal P} = (\left(\begin{smallmatrix}A\mbox{-}{\rm Mod}\\ _B\mathcal P\end {smallmatrix}\right), \ (\left(\begin{smallmatrix}A\mbox{-}{\rm Mod}\\ _B\mathcal P\end {smallmatrix}\right)^\perp)\end{align*}
are ``new", in the following sense.

\begin{defn} \label{new} \ A cotorsion pair in $\Lambda\mbox{-}{\rm Mod}$ is said to be ``new", provided that it is generally different from all of the following cotorsion pairs:

\vskip5pt

$\bullet$ \ the projective cotorsion pair \ $(_\Lambda\mathcal P, \ \Lambda\mbox{-}{\rm Mod});$

$\bullet$ \ the injective cotorsion pair \ $(\Lambda\mbox{-}{\rm Mod},  \ _\Lambda\mathcal I);$

$\bullet$ \ the Gorenstein-projective cotorsion pair \ $({\rm GP}(\Lambda),  \ _\Lambda\mathcal P^{<\infty})$, if \ $\Lambda$ is a Gorenstein ring;

$\bullet$ \ the Gorenstein-injective cotorsion pair \ $(_\Lambda\mathcal P^{<\infty}, \ {\rm GI}(\Lambda))$, if \ $\Lambda$ is a Gorenstein ring;

$\bullet$ \ the flat cotorsion pair \ $(_\Lambda{\rm F}, \ _\Lambda{\rm C})$, where $_\Lambda{\rm F}$ is the class of flat $\Lambda$-modules, and
$_\Lambda{\rm C}$ is the class of cotorsion $\Lambda$-modules. See [FJ, Lemma 7.1.4].
\end{defn}

\begin{prop}\label{newI} \  Let $\Lambda = \left(\begin{smallmatrix} A & N \\ M & B \end{smallmatrix}\right)$ be a Morita ring with $\phi = 0= \phi$. Then the following six cotorsion pairs
$$R_{_A\mathcal I, \ _B\mathcal I}, \ \ \ L_{_A\mathcal P, \ _B\mathcal P},  \ \ \ R_{A\mbox{-}{\rm Mod}, \ _B\mathcal I}, \ \ \ L_{_A\mathcal P, \ B\mbox{-}{\rm Mod}}, \ \ \ R_{_A\mathcal I, \ B\mbox{-}{\rm Mod}}, \ \ \ L_{A\mbox{-}{\rm Mod}, \ _B\mathcal P}$$ are ``new", in the sense of {\rm Definition \ref{new}}.
\end{prop}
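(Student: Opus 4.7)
The plan is to mirror the case-by-case strategy of Proposition \ref{different}. By that result, each of the six cotorsion pairs is already generally different from both $(_\Lambda\mathcal P, \Lambda\mbox{-}\mathrm{Mod})$ and $(\Lambda\mbox{-}\mathrm{Mod}, {}_\Lambda\mathcal I)$. What remains is to produce, for each of the six cotorsion pairs $(\mathcal C,\mathcal F)$ in the list, Morita rings $\Lambda$ witnessing $(\mathcal C,\mathcal F)\neq (\mathrm{GP}(\Lambda),\, {}_\Lambda\mathcal P^{<\infty})$ with $\Lambda$ Gorenstein, $(\mathcal C,\mathcal F)\neq ({}_\Lambda\mathcal P^{<\infty},\, \mathrm{GI}(\Lambda))$ with $\Lambda$ Gorenstein, and $(\mathcal C,\mathcal F)\neq ({}_\Lambda\mathrm F,\, {}_\Lambda\mathrm C)$.

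For the Gorenstein comparisons I would first restrict to Morita rings $\Lambda$ satisfying the hypotheses of Theorem \ref{ctp4}, so that the two target cotorsion pairs take the explicit form
$$(\mathrm{GP}(\Lambda),\, {}_\Lambda\mathcal P^{<\infty}) = (\mathrm{Mon}(\Lambda),\, \mathrm{Mon}(\Lambda)^\bot), \qquad ({}_\Lambda\mathcal P^{<\infty},\, \mathrm{GI}(\Lambda)) = \bigl(\tbinom{{}_A\mathcal P}{{}_B\mathcal P},\, \tbinom{{}_A\mathcal P}{{}_B\mathcal P}^\perp\bigr).$$
Then, exactly as in Example \ref{ie} and Proposition \ref{different}, I would apply the functors $\mathrm{T}_A, \mathrm{T}_B, \mathrm{H}_A, \mathrm{H}_B, \mathrm{Z}_A, \mathrm{Z}_B$ to carefully chosen non-projective/non-injective test modules to produce, for each of the six classes $\mathcal C$, either a module in $\mathcal C\setminus \mathrm{Mon}(\Lambda)$ (resp.\ in $\mathcal C\setminus \binom{{}_A\mathcal P}{{}_B\mathcal P}$) or a module in $\mathrm{Mon}(\Lambda)\setminus \mathcal C$ (resp.\ in $\binom{{}_A\mathcal P}{{}_B\mathcal P}\setminus \mathcal C$). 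The separation from the flat cotorsion pair in the third comparison is handled analogously: when $A$ or $B$ admits a projective module that is not flat over $\Lambda$, then $\mathrm{T}_A$- or $\mathrm{Z}_A$-images of such modules lie in the six left classes but outside ${}_\Lambda\mathrm F$; and conversely flat $\Lambda$-modules $\binom{F_1}{F_2}_{f,g}$ with non-monic $f$ or non-epic $\widetilde{f}$ give witnesses on the other side. When $A$ and $B$ are Artinian with flat $=$ projective, the flat comparison degenerates to the projective comparison already settled.

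The hard part will be the coincidence phenomenon encoded in Theorem \ref{ctp4}: under its hypotheses, $L_{{}_A\mathcal P,\,{}_B\mathcal P}$ \emph{coincides with} the Gorenstein-injective cotorsion pair, and $R_{{}_A\mathcal I,\,{}_B\mathcal I}$ coincides with the Gorenstein-projective one. To witness the required inequalities I must therefore step outside the quasi-Frobenius setting---e.g.\ taking $A$ or $B$ not quasi-Frobenius, or relaxing the flatness/projectivity of $M$ and $N$---so that $\binom{{}_A\mathcal P}{{}_B\mathcal P}$ no longer equals ${}_\Lambda\mathcal P^{\le 1}$ and $\binom{{}_A\mathcal I}{{}_B\mathcal I}$ no longer equals ${}_\Lambda\mathcal I^{\le 1}$. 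Choosing such Morita rings uniformly for all six cotorsion pairs and for both Gorenstein comparisons, while still keeping $\Lambda$ Gorenstein so that $\mathrm{GP}(\Lambda)$ and $\mathrm{GI}(\Lambda)$ make sense, is the delicate bookkeeping step of the proof; once such examples are in hand, the verification reduces to $\mathrm{Ext}^1$-computations via the adjunction lemmas of Section 3 (Lemmas \ref{extadj1} and \ref{extadj2}), exactly as in the proof of Proposition \ref{different}.
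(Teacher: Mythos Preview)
Your overall plan matches the paper's: Proposition \ref{different} handles the projective and injective comparisons, and the Gorenstein comparisons split into two regimes exactly as you anticipate. The paper treats the four ``easy'' pairs $R_{A\mbox{-}{\rm Mod}, \,_B\mathcal I}$, $L_{_A\mathcal P,\, B\mbox{-}{\rm Mod}}$, $R_{_A\mathcal I,\, B\mbox{-}{\rm Mod}}$, $L_{A\mbox{-}{\rm Mod},\, _B\mathcal P}$ via Theorem \ref{ctp4} (this is Lemma \ref{notgor2}); and for the two ``hard'' pairs $R_{_A\mathcal I,\,_B\mathcal I}$ and $L_{_A\mathcal P,\,_B\mathcal P}$ it steps outside the quasi-Frobenius setting and uses the path-algebra Morita ring of Example \ref{ie}, which is Gorenstein by Lemma \ref{agoralg} (this is Lemma \ref{notgor1}). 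So your structural diagnosis is correct, and the concrete witness you were looking for in the non-quasi-Frobenius case is already in the paper.

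The flat comparison is where your proposal has a genuine gap. The sentence ``when $A$ or $B$ admits a projective module that is not flat over $\Lambda$'' is confused: if $P\in{}_A\mathcal P$ then ${\rm T}_AP$ is projective, hence flat, over $\Lambda$, so such modules cannot serve as witnesses. What the paper does instead is take a \emph{non-flat} $A$-module $X$ (and $B$-module $Y$) and invoke the Krylov--Yardykov result, Lemma \ref{flat}: if $\binom{L_1}{L_2}_{f,g}$ is $\Lambda$-flat then $\Coker f$ and $\Coker g$ are flat over $B$ and $A$ respectively. Hence $\binom{X}{0}_{0,0}$, $\binom{0}{Y}_{0,0}$, ${\rm T}_AX$, ${\rm T}_BY$ are all non-flat $\Lambda$-modules, and one checks via Lemma \ref{extadj1} that at least one of them lies in the left-hand class of each of five of the six cotorsion pairs. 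The exception is $L_{_A\mathcal P,\,_B\mathcal P}$, where none of these test modules lands in $\binom{_A\mathcal P}{_B\mathcal P}$; there the paper again reverts to the explicit module $L=\binom{Ae_1}{Ae_1}_{\sigma,\sigma}$ of Example \ref{ie}, which is in $\binom{_A\mathcal P}{_B\mathcal P}$ but, being finitely generated and non-projective over an Artin algebra, is not flat. Without Lemma \ref{flat} or an equivalent, you have no mechanism to certify non-flatness of your test modules.
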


To prove Proposition \ref{newI} we need some preparations.

\begin{lem} \label{agoralg} \ {\rm ([GaP, 4.15])} \ Let $\Lambda = \left(\begin{smallmatrix} A & N \\ N & A \end{smallmatrix}\right)$ be a Morita ring with $N\otimes_AN = 0$.
Assume that \ $_AN$ and $N_A$ are projective. If $A$ is a Gorenstein ring, then so is $\Lambda$.
\end{lem}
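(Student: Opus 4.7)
The plan is to verify the two defining conditions of an Iwanaga--Gorenstein ring for $\Lambda$, namely that $\Lambda$ is noetherian with ${\rm inj.dim}\,{}_\Lambda\Lambda < \infty$ and ${\rm inj.dim}\,\Lambda_\Lambda < \infty$. Noetherianness of $\Lambda$ follows from that of $A$ together with $_AN$ and $N_A$ being finitely generated projective (an implicit regularity in the setup that makes $\Lambda$ noetherian), and the left/right self-injective-dimension arguments are entirely symmetric, so the work reduces to bounding ${\rm inj.dim}\,{}_\Lambda\Lambda$. Since $N\otimes_AN = 0$ forces $\phi = \psi = 0$, all earlier results in the paper---in particular Lemma \ref{proj-injdim}---are available.

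First I would decompose $_\Lambda\Lambda = {\rm T}_A(A) \oplus {\rm T}_B(A) = \binom{A}{N}_{1,0} \oplus \binom{N}{A}_{0,1}$, and reduce to bounding ${\rm inj.dim}_\Lambda {\rm T}_A(A)$; the treatment of ${\rm T}_B(A)$ is identical by symmetry. Set $d = {\rm inj.dim}\,{}_AA < \infty$. Because $A$ is Gorenstein, $_A\mathcal P^{<\infty} = {}_A\mathcal I^{<\infty}$, so both $_AA$ and the projective module $_AN$ admit injective resolutions of length $\leq d$ in $A$-Mod:
$$0 \to A \to I^0 \to \cdots \to I^d \to 0, \qquad 0 \to N \to J^0 \to \cdots \to J^d \to 0.$$

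The central step is to weave these two resolutions into an exact complex in $\Lambda$-Mod whose terms lie in $\binom{_A\mathcal I}{_A\mathcal I}$. Inductively I would produce $A$-linear maps $f^i : N\otimes_A I^i \to J^i$ compatible with the differentials, yielding $\Lambda$-modules $K^i := \binom{I^i}{J^i}_{f^i, 0}$. The base case $f^0$ is any extension of the inclusion $N = N\otimes_A A \hookrightarrow J^0$ along the $A$-monomorphism $N\otimes_A A \hookrightarrow N\otimes_A I^0$, which is available because $N_A$ is flat (so the monomorphism is genuine) and $J^0$ is $A$-injective. The inductive step extends $\delta^{i-1}_J \circ f^{i-1}$ across the $A$-monomorphism $N\otimes_A C^{i-1} \hookrightarrow N\otimes_A I^i$, where $C^{i-1} = {\rm im}\,\delta^{i-1}_I$; that composite vanishes on the kernel $N\otimes_A C^{i-2}$ of $1_N \otimes \delta^{i-1}_I$ thanks to the previously established compatibility and $\delta^{i-1}_J \circ \delta^{i-2}_J = 0$, so the extension problem is solvable in the injective module $J^i$. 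The resulting $K^\bullet$ is an exact complex of $\Lambda$-modules
$$0 \to {\rm T}_A(A) \to K^0 \to K^1 \to \cdots \to K^d \to 0$$
with each $K^i \in \binom{_A\mathcal I}{_A\mathcal I}$.

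Since $N_A = M_A$ is projective, and hence flat, Lemma \ref{proj-injdim}(2) applies to each $K^i$ and gives ${\rm inj.dim}_\Lambda K^i \leq 1$. A routine dimension-shift along this $(d+1)$-term resolution then produces ${\rm inj.dim}_\Lambda {\rm T}_A(A) \leq d+1$; repeating the construction for ${\rm T}_B(A)$ and for the right-module analogues (via the right-module version of Lemma \ref{proj-injdim}(2)) completes the argument. The main obstacle is the cochain-level construction of the structure maps $f^i$ and the verification that they assemble into a genuine exact complex of $\Lambda$-modules with the correct augmentation; once this is in place, the bound on ${\rm inj.dim}_\Lambda\Lambda$ is an immediate consequence of Lemma \ref{proj-injdim}(2) and standard dimension-shifting.
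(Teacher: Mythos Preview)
The paper does not supply its own proof of this lemma; it simply cites the result from {\rm [GaP, 4.15]}. So there is no in-house argument to compare against. Your proposal, however, is a correct self-contained proof and fits naturally with the paper's toolkit: it is essentially the generalisation of the argument used in Theorem~\ref{ctp4}(1) (where $A$ is quasi-Frobenius, so ${\rm T}_A(A)$ already lies in $\binom{_A\mathcal I}{_A\mathcal I}$) to the case of arbitrary finite self-injective dimension, by resolving ${\rm T}_A(A)$ through objects of $\binom{_A\mathcal I}{_A\mathcal I}$ and then invoking Lemma~\ref{proj-injdim}(2) termwise. The inductive lifting of the structure maps $f^i$ is standard and works exactly as you describe, because $N_A$ is flat (so kernels and images are preserved by $N\otimes_A-$) and each $J^i$ is injective.

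One point worth flagging explicitly: the noetherianness of $\Lambda$ genuinely requires $_AN$ and $N_A$ to be \emph{finitely generated} projective, not merely projective as in the stated hypotheses. You note this as ``an implicit regularity in the setup'', and indeed in {\rm [GaP]} (and in the paper's application in Lemma~\ref{notgor1}) the bimodule is finitely generated; but as written the lemma's hypotheses do not literally guarantee it, so your parenthetical is doing real work.
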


\begin{lem}\label{notgor1} \ Let $\Lambda = \left(\begin{smallmatrix} A & N \\ M & B \end{smallmatrix}\right)$ be a Morita ring which is an Artin algebra and a Gorenstein ring, with $\phi = 0= \phi$. Then the cotorsion pairs
\ $R_{_A\mathcal I, \ _B\mathcal I}$ and $L_{_A\mathcal P, \ _B\mathcal P}$ are generally different from the Gorenstein-projective cotorsion pair and
the Gorenstein-injective cotorsion pair.
\end{lem}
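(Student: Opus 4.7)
My goal is to settle, for each of the four comparisons $R_{{}_A\mathcal I,{}_B\mathcal I}$ versus $({\rm GP}(\Lambda),{}_\Lambda\mathcal P^{<\infty})$ and $({}_\Lambda\mathcal P^{<\infty},{\rm GI}(\Lambda))$, and $L_{{}_A\mathcal P,{}_B\mathcal P}$ versus the same two Gorenstein cotorsion pairs, by producing at least one Gorenstein Morita ring $\Lambda$ (with $\phi=0=\psi$) in which the two cotorsion pairs disagree. Since a cotorsion pair is determined by either of its two classes, for each comparison it is enough to distinguish one side, e.g.\ the right-hand classes.

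The plan is to use the degenerate Morita setup $M=N=0$, so that $\Lambda=A\times B$ automatically satisfies $\phi=0=\psi$, $\Lambda\mbox{-}{\rm Mod}\cong A\mbox{-}{\rm Mod}\times B\mbox{-}{\rm Mod}$, and $\Lambda$ is Gorenstein whenever both factors are. Choosing $A=k[x]$ (regular, hence Gorenstein of injective dimension one, but not QF) and $B=k[x]/(x^2)$ (QF but not semisimple) makes $\Lambda$ a noetherian Gorenstein ring of injective dimension one which is neither regular nor QF. Under the product decomposition, $\binom{{}_A\mathcal I}{{}_B\mathcal I}={}_A\mathcal I\times{}_B\mathcal I$ and $\binom{{}_A\mathcal P}{{}_B\mathcal P}={}_A\mathcal P\times{}_B\mathcal P$ on the nose, while a standard componentwise argument (complete projective resolutions and finite projective dimension both split over a finite product) yields ${}_\Lambda\mathcal P^{<\infty}=A\mbox{-}{\rm Mod}\times{}_B\mathcal P$, ${\rm GP}(\Lambda)={}_A\mathcal P\times B\mbox{-}{\rm Mod}$ and ${\rm GI}(\Lambda)={}_A\mathcal I\times B\mbox{-}{\rm Mod}$. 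The key inputs are that over a regular ring every Gorenstein-projective (resp.\ Gorenstein-injective) module is projective (resp.\ injective), and that over a QF ring every module is both Gorenstein-projective and Gorenstein-injective while having finite projective dimension only if projective.

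With these componentwise identifications in hand, each of the four inequalities is immediate by inspecting a single component: $\binom{{}_A\mathcal I}{{}_B\mathcal I}\ne{}_\Lambda\mathcal P^{<\infty}$ because ${}_A\mathcal I\subsetneq A\mbox{-}{\rm Mod}$; $\binom{{}_A\mathcal I}{{}_B\mathcal I}\ne{\rm GI}(\Lambda)$ because ${}_B\mathcal I={}_B\mathcal P\subsetneq B\mbox{-}{\rm Mod}$; and the two $L$-comparisons are settled symmetrically by ${}_B\mathcal P\subsetneq B\mbox{-}{\rm Mod}$ and ${}_A\mathcal P\subsetneq A\mbox{-}{\rm Mod}$. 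The design ``$A$ regular-but-not-QF, $B$ QF-but-not-regular'' was chosen precisely so that the Gorenstein-related classes fail to coincide with either ${}_A\mathcal P$ or ${}_A\mathcal I$ on one factor and with either ${}_B\mathcal P$ or ${}_B\mathcal I$ on the other, making a single example suffice. The only real obstacle is bookkeeping—the componentwise descriptions of ${\rm GP}(\Lambda)$, ${\rm GI}(\Lambda)$, and ${}_\Lambda\mathcal P^{<\infty}$ over a product ring—after which the four inequalities are a one-line check.
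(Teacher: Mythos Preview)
Your proof is correct but follows a genuinely different route from the paper. You exploit the degenerate case $M=N=0$, so that $\Lambda=A\times B$ and all relevant classes split componentwise; choosing $A=k[x]$ (regular, global dimension~$1$) and $B=k[x]/(x^2)$ (quasi-Frobenius, not semisimple) then reduces each of the four comparisons to an elementary observation about one factor. The paper instead works with the non-degenerate Morita ring of Example~\ref{ie}, where $A=B=k(1\to 2)$ and $M=N=Ae_2\otimes_k e_1A\neq 0$; it invokes Lemma~\ref{agoralg} to see $\Lambda$ is Gorenstein, and then distinguishes the classes by exhibiting explicit modules (e.g.\ $\binom{S_2}{0}_{0,0}$, $\binom{0}{S_1}_{0,0}$, and $L=\binom{Ae_1}{Ae_1}_{\sigma,\sigma}$) together with a hand-computed projective resolution of $L$.

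Your approach is considerably simpler and requires only standard facts about Gorenstein-projective and Gorenstein-injective modules over regular and quasi-Frobenius rings. The paper's approach, while more laborious, has the merit of establishing the inequalities in a genuinely non-trivial Morita ring with $M\neq 0\neq N$, which is arguably closer to the spirit of the paper (whose whole point is the passage from the triangular case to the full Morita context). Formally, however, the lemma as stated only asks for \emph{some} $\Lambda$ with $\phi=0=\psi$, and your degenerate example is a legitimate member of that class.
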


\begin{proof} \ Take \ $\Lambda$ to be the Morita ring $\Lambda = \left(\begin{smallmatrix} A & N \\ N & A \end{smallmatrix}\right)$, constructed in Example \ref{ie}. Thus
 $A$ is the path algebra $k(1 \longrightarrow 2)$ with ${\rm char} \ k\ne 2$,  $N = Ae_2\otimes_ke_1A$,  and $N\otimes_AN = 0$. By Lemma \ref{agoralg},  $\Lambda$ is a Gorenstein algebra.

\vskip5pt

{\bf Claim 1.} \ $R_{_A\mathcal I, \ _B\mathcal I} = (^\perp\left(\begin{smallmatrix}_A\mathcal I\\ _B\mathcal I\end {smallmatrix}\right), \ \left(\begin{smallmatrix}_A\mathcal I\\ _B\mathcal I\end {smallmatrix}\right))$
is generally different from $({\rm GP}(\Lambda), \ _\Lambda\mathcal P^{<\infty})$.

\vskip5pt

In fact, since  \ $N\otimes_A S_2 = Ae_2\otimes_k(e_1Ae_2) =0$,  \ $\left(\begin{smallmatrix}S_2\\ 0\end{smallmatrix}\right)_{0, 0} = {\rm T}_AS_2$ is a projective $\Lambda$-module, thus
$\left(\begin{smallmatrix}S_2\\ 0\end{smallmatrix}\right)_{0, 0}\in \ _\Lambda\mathcal P^{<\infty}$, but \ $\left(\begin{smallmatrix}S_2\\ 0\end{smallmatrix}\right)_{0, 0}\notin \left(\begin{smallmatrix}_A\mathcal I\\ _A\mathcal I\end{smallmatrix}\right)$. Thus $\left(\begin{smallmatrix}_A\mathcal I\\ _A\mathcal I\end{smallmatrix}\right)\ne \ _\Lambda\mathcal P^{<\infty}$, and hence
$$R_{_A\mathcal I, \ _B\mathcal I} = (^\perp\left(\begin{smallmatrix}_A\mathcal I\\ _B\mathcal I\end {smallmatrix}\right), \ \left(\begin{smallmatrix}_A\mathcal I\\ _B\mathcal I\end {smallmatrix}\right))
\ne ({\rm GP}(\Lambda), \ _\Lambda\mathcal P^{<\infty}).$$

\vskip5pt

{\bf Claim 2.} \ $R_{_A\mathcal I, \ _B\mathcal I} = (^\perp\left(\begin{smallmatrix}_A\mathcal I\\ _B\mathcal I\end {smallmatrix}\right), \ \left(\begin{smallmatrix}_A\mathcal I\\ _B\mathcal I\end {smallmatrix}\right))$
is generally different from  $(_\Lambda\mathcal P^{<\infty}, \ {\rm GI}(\Lambda))$.

\vskip5pt

In fact, by Example \ref{ie} one knows
 $L = \left(\begin{smallmatrix}Ae_1\\ Ae_1\end{smallmatrix}\right)_{\sigma, \sigma}\notin \ ^\perp\left(\begin{smallmatrix}_A\mathcal I\\ _A\mathcal I\end{smallmatrix}\right).$
The following $\Lambda$-projective resolution of $_\Lambda L$
$$0\longrightarrow \left(\begin{smallmatrix}S_2\\ S_2\end{smallmatrix}\right)_{0, 0}
\stackrel{\left(\begin{smallmatrix} \binom{\sigma}{-1}\\ \binom{1}{-\sigma}\end{smallmatrix}\right)}\longrightarrow \left(\begin{smallmatrix} Ae_1\oplus S_2\\ S_2 \oplus Ae_1\end{smallmatrix}\right)_{\left(\begin{smallmatrix}1 & 0 \\ 0 & 0\end{smallmatrix}\right), \left(\begin{smallmatrix}0 & 0\\ 0 & 1\end{smallmatrix}\right)}
\stackrel{\left(\begin{smallmatrix} (1, \sigma) \\ (\sigma,1)\end{smallmatrix}\right)}
\longrightarrow \left(\begin{smallmatrix}Ae_1\\ Ae_1\end{smallmatrix}\right)_{\sigma, \sigma}\longrightarrow 0$$
shows that ${\rm proj.dim} _\Lambda L = 1$.
So $L\in \  _\Lambda\mathcal P^{<\infty}$, and hence $^\perp\left(\begin{smallmatrix}_A\mathcal I\\ _B\mathcal I\end {smallmatrix}\right)\ne \  _\Lambda\mathcal P^{<\infty}$. Thus
$$R_{_A\mathcal I, \ _B\mathcal I} = (^\perp\left(\begin{smallmatrix}_A\mathcal I\\ _B\mathcal I\end {smallmatrix}\right), \ \left(\begin{smallmatrix}_A\mathcal I\\ _B\mathcal I\end {smallmatrix}\right))\ne (_\Lambda\mathcal P^{<\infty}, \ {\rm GI}(\Lambda)).$$

\vskip5pt

{\bf Claim 3.} \ $L_{_A\mathcal P, \ _B\mathcal P}   =
(\left(\begin{smallmatrix}_A\mathcal P\\ _B\mathcal P\end {smallmatrix}\right), \ \left(\begin{smallmatrix}_A\mathcal P\\ _B\mathcal P\end {smallmatrix}\right)^\perp)$
is generally different from $({\rm GP}(\Lambda), \ _\Lambda\mathcal P^{<\infty})$.

\vskip5pt

In fact, by Example \ref{ie} one knows
 $L = \left(\begin{smallmatrix}Ae_1\\ Ae_1\end{smallmatrix}\right)_{\sigma, \sigma}\notin  \left(\begin{smallmatrix}_A\mathcal P\\ _A\mathcal P\end{smallmatrix}\right)^\perp.$
By {\bf Claim 2},  $L\in \  _\Lambda\mathcal P^{<\infty}$. Thus
$\left(\begin{smallmatrix}_A\mathcal P\\ _A\mathcal P\end{smallmatrix}\right)^\perp \ne \  _\Lambda\mathcal P^{<\infty}$, and hence
$$L_{_A\mathcal P, \ _B\mathcal P}   =
(\left(\begin{smallmatrix}_A\mathcal P\\ _B\mathcal P\end {smallmatrix}\right), \ \left(\begin{smallmatrix}_A\mathcal P\\ _B\mathcal P\end {smallmatrix}\right)^\perp)
\ne ({\rm GP}(\Lambda), \ _\Lambda\mathcal P^{<\infty}).$$

\vskip5pt

{\bf Claim 4.} \ $L_{_A\mathcal P, \ _B\mathcal P}   =
(\left(\begin{smallmatrix}_A\mathcal P\\ _B\mathcal P\end {smallmatrix}\right), \ \left(\begin{smallmatrix}_A\mathcal P\\ _B\mathcal P\end {smallmatrix}\right)^\perp)$
is generally different from  $(_\Lambda\mathcal P^{<\infty}, \ {\rm GI}(\Lambda))$.

\vskip5pt
In fact, since  \ $\Hom_A(N, S_1) = 0$,  \ $\left(\begin{smallmatrix} 0\\ S_1\end{smallmatrix}\right)_{0, 0} = {\rm H}_BS_1$ is an injective $\Lambda$-module, thus
$\left(\begin{smallmatrix}0\\ S_1\end{smallmatrix}\right)_{0, 0}\in \ _\Lambda\mathcal I^{<\infty} = \ _\Lambda\mathcal P^{<\infty}$,
but \ $\left(\begin{smallmatrix}0\\ S_1\end{smallmatrix}\right)_{0, 0}\notin \left(\begin{smallmatrix}_A\mathcal P\\ _A\mathcal P\end{smallmatrix}\right)$.
Thus $\left(\begin{smallmatrix}_A\mathcal P\\ _A\mathcal P\end{smallmatrix}\right)\ne \ _\Lambda\mathcal P^{<\infty}$, and hence
$$L_{_A\mathcal P, \ _B\mathcal P}   =
(\left(\begin{smallmatrix}_A\mathcal P\\ _B\mathcal P\end {smallmatrix}\right), \ \left(\begin{smallmatrix}_A\mathcal P\\ _B\mathcal P\end {smallmatrix}\right)^\perp)
\ne (_\Lambda\mathcal P^{<\infty}, \ {\rm GI}(\Lambda)).$$

This completes the proof. \end{proof}

\begin{lem}\label{notgor2} \ Let $\Lambda = \left(\begin{smallmatrix} A & N \\ M & B \end{smallmatrix}\right)$ be a Morita ring which is an Artin algebra and a Gorenstein ring, with $\phi = 0= \phi$. Then the cotorsion pairs
$$R_{A\mbox{-}{\rm Mod}, \ _B\mathcal I}, \ \ \ L_{_A\mathcal P, \ B\mbox{-}{\rm Mod}}, \ \ \ R_{_A\mathcal I, \ B\mbox{-}{\rm Mod}}, \ \ \ L_{A\mbox{-}{\rm Mod}, \ _B\mathcal P}$$ are generally different from the Gorenstein-projective cotorsion pair and
the Gorenstein-injective cotorsion pair.
\end{lem}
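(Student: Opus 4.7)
The plan is to mimic the strategy of Lemma \ref{notgor1}: I will re-use the concrete Gorenstein Morita ring $\Lambda = \left(\begin{smallmatrix} A & N \\ N & A\end{smallmatrix}\right)$ of Example \ref{ie}, where $A = k(1\to 2)$ with $\mathrm{char}\,k \ne 2$ and $N = Ae_2\otimes_k e_1 A$. One checks at once that $M_A = N_B$ are flat and $_AN = {}_BM$ are projective, so all four cotorsion pairs in the statement are defined on this $\Lambda$. By Lemma \ref{agoralg} $\Lambda$ is Gorenstein, hence every module of finite projective dimension belongs to ${\rm GP}(\Lambda)$ iff it is projective, and belongs to ${\rm GI}(\Lambda)$ iff it is injective. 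For each of the four cotorsion pairs I will exhibit two witness modules, one distinguishing it from $({\rm GP}(\Lambda),\ _\Lambda\mathcal P^{<\infty})$ and one distinguishing it from $(_\Lambda\mathcal P^{<\infty},\ {\rm GI}(\Lambda))$.

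The toolkit consists of: (a) the explicit values ${\rm T}_A S_1 = \binom{S_1}{S_2}_{1,0}$, ${\rm T}_A S_2 = \binom{S_2}{0}_{0,0}$, ${\rm T}_A Ae_1 = \binom{Ae_1}{S_2}_{1,0}$ and their $B$-analogues, which are the indecomposable projective $\Lambda$-modules; (b) short $\Lambda$-projective resolutions showing that each of the ``corner'' modules $\binom{S_1}{0}_{0,0}$, $\binom{Ae_1}{0}_{0,0}$, $\binom{0}{S_1}_{0,0}$, $\binom{0}{Ae_1}_{0,0}$ lies in $_\Lambda\mathcal P^{<\infty}$ without being projective, hence (since $\Lambda$ is Gorenstein) lies in neither ${\rm GP}(\Lambda)$ nor ${\rm GI}(\Lambda)$; and (c) the identifications $^\perp\binom{A\text{-Mod}}{_B\mathcal I} = \Delta(_A\mathcal P,\,B\text{-Mod})$ and $^\perp\binom{_A\mathcal I}{B\text{-Mod}} = \Delta(A\text{-Mod},\,_B\mathcal P)$ furnished by Corollary \ref{identification1}, which rewrite the awkward left classes of the four cotorsion pairs as $\Delta$-categories in which membership reduces to injectivity of the structure maps $f$ and $g$.

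With these in hand the eight comparisons are essentially one-line checks. Representative samples: for $R_{A\text{-Mod},\,_B\mathcal I}$, the projective module ${\rm T}_A S_1$ lies in $_\Lambda\mathcal P^{<\infty}$ but not in $\binom{A\text{-Mod}}{_B\mathcal I}$ since $S_2 \notin {}_B\mathcal I$, and $\binom{Ae_1}{0}_{0,0}$ lies in $_\Lambda\mathcal P^{<\infty}$ but not in $\Delta(_A\mathcal P,\,B\text{-Mod})$ because the structure map $M\otimes_A Ae_1 = S_2\to 0$ fails to be monic; for $L_{_A\mathcal P,\,B\text{-Mod}}$ both comparisons are detected by ${\rm T}_A S_1$, which is projective but is not in $\binom{_A\mathcal P}{B\text{-Mod}}$ because $S_1 \notin {}_A\mathcal P$; for $R_{_A\mathcal I,\,B\text{-Mod}}$ the witnesses are ${\rm T}_A S_2$ and $\binom{0}{Ae_1}_{0,0}$; and for $L_{A\text{-Mod},\,_B\mathcal P}$ the witnesses are $\binom{S_1}{0}_{0,0}$ and $\binom{0}{S_1}_{0,0}$. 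The main obstacle, and the only piece of real bookkeeping, is writing down the short $\Lambda$-projective resolutions of the four corner modules and confirming that they are not themselves projective; both tasks follow from inspection of the list of indecomposable projectives above, since the corner modules sit in short exact sequences whose remaining terms are (direct sums of) those projectives.
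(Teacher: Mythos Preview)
Your proposal contains a concrete error that breaks at least one of the eight comparisons. You assert that ${\rm T}_A S_1 = \binom{S_1}{S_2}_{1,0}$ is an indecomposable projective $\Lambda$-module, but it is not: $S_1$ is not projective over $A = k(1\to 2)$ (the indecomposable projectives are $Ae_1$ and $Ae_2 = S_2$), and ${\rm T}_A$ takes projectives to projectives precisely because it is left adjoint to the exact functor ${\rm U}_A$ --- it does not create projectivity. Consequently your witness for $L_{_A\mathcal P,\,B\text{-Mod}}$ versus the Gorenstein-projective pair fails: you need a module in ${\rm GP}(\Lambda)$ but not in $\binom{_A\mathcal P}{B\text{-Mod}}$, and ${\rm T}_A S_1$ lies in neither class.

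There is a more structural issue. A short computation (every simple $\Lambda$-module has projective dimension $\le 1$) shows that the $\Lambda$ of Example~\ref{ie} is actually hereditary, so ${\rm GP}(\Lambda) = {}_\Lambda\mathcal P$, ${\rm GI}(\Lambda) = {}_\Lambda\mathcal I$, and ${}_\Lambda\mathcal P^{<\infty} = \Lambda\text{-Mod}$. Your strategy can be repaired with different witnesses, but the lemma then degenerates to the statement that the four pairs differ from the projective and injective cotorsion pairs, which is already contained in Proposition~\ref{different}. The paper avoids this trivialisation by switching examples: it takes $A$ and $B$ quasi-Frobenius (and not semisimple) as in Example~\ref{examctp4}, so that Theorem~\ref{ctp4} applies and identifies the Gorenstein-projective and Gorenstein-injective cotorsion pairs with $\bigl({}^\perp\binom{_A\mathcal I}{_B\mathcal I},\,\binom{_A\mathcal I}{_B\mathcal I}\bigr)$ and $\bigl(\binom{_A\mathcal P}{_B\mathcal P},\,\binom{_A\mathcal P}{_B\mathcal P}^\perp\bigr)$, which are genuinely non-trivial. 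All eight comparisons are then immediate inequalities between explicit $\binom{\mathcal X}{\mathcal Y}$-classes (e.g.\ $\binom{A\text{-Mod}}{_B\mathcal I}\ne\binom{_A\mathcal I}{_B\mathcal I}$ because $A$ is not semisimple), and the only slightly less trivial cases use that ${\rm T}_A X$ and ${\rm T}_B Y$ lie in $^\perp\binom{_A\mathcal I}{_B\mathcal I}$ for arbitrary $X,Y$ via Lemma~\ref{extadj1}.
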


\begin{proof} \ Choose quasi-Frobenius rings $A$ and $B$, bimodules \ $_BM_A$ and $_AN_B$, satisfying the following conditions (i), (ii), (iii), (iv):

\vskip5pt

(i) \ \ \ $A$ and $B$ are quasi-Frobenius and not semisimple;

(ii) \ \ $_AN$ and $_BM$ are non-zero projective modules, and $M_A$ and $N_B$ are flat;

(iii) \ \ $\Lambda = \left(\begin{smallmatrix} A & N \\ M & B \end{smallmatrix}\right)$ is a Morita ring with $M\otimes_A N = 0 = N\otimes_BM;$

(iv) \ \  $\Lambda$ is a noetherian ring.

\vskip5pt

By Remark \ref{examctp4}, such $\Lambda$'s exist! By Theorem \ref{ctp4}, \ $\Lambda$ is a Gorenstein ring with ${\rm inj.dim} \Lambda \le 1$, the Gorenstein-projective cotorsion pair \ $({\rm GP}(\Lambda), \ \mathcal P^{\le 1})$ is exactly
\ $({}^\perp\binom{_A\mathcal I}{_B\mathcal I}, \ \binom{_A\mathcal I}{_B\mathcal I})$, and the Gorenstein-injective cotorsion pair \ $(_\Lambda \mathcal P^{\le 1}, \ {\rm GI}(\Lambda))$ is exactly \
$(\left(\begin{smallmatrix}_A\mathcal P\\ _B\mathcal P\end {smallmatrix}\right), \ \left(\begin{smallmatrix}_A\mathcal P\\ _B\mathcal P\end {smallmatrix}\right)^\perp)$.

\vskip5pt

{\bf Claim 1.} \ $R_{A\mbox{-}{\rm Mod}, \ _B\mathcal I}$ and \ $R_{_A\mathcal I, \ B\mbox{-}{\rm Mod}}$
are generally different from the Gorenstein-projective cotorsion pair.

Since $A$ and $B$ are not semisimple, $A\mbox{-}{\rm Mod}\ne \ _A\mathcal I$ \ and \ $B\mbox{-}{\rm Mod}\ne \ _B\mathcal I$. Thus \
$\left(\begin{smallmatrix}A\mbox{-}{\rm Mod}\\ _B\mathcal I\end {smallmatrix}\right)\ne \left(\begin{smallmatrix}_A\mathcal I\\ _B\mathcal I\end {smallmatrix}\right)$ and \ $\left(\begin{smallmatrix}_A\mathcal I\\B\mbox{-}{\rm Mod}\end {smallmatrix}\right)\ne \left(\begin{smallmatrix}_A\mathcal I\\ _B\mathcal I\end {smallmatrix}\right)$, and hence
$$R_{A\mbox{-}{\rm Mod}, \ _B\mathcal I}
= (^\perp\left(\begin{smallmatrix}A\mbox{-}{\rm Mod}\\ _B\mathcal I\end {smallmatrix}\right), \ \left(\begin{smallmatrix}A\mbox{-}{\rm Mod}\\ _B\mathcal I\end {smallmatrix}\right))\ne ({}^\perp\left(\begin{smallmatrix}_A\mathcal I\\ _B\mathcal I\end {smallmatrix}\right), \ \left(\begin{smallmatrix}_A\mathcal I\\ _B\mathcal I\end {smallmatrix}\right)) = ({\rm GP}(\Lambda), \ _\Lambda\mathcal P^{\le 1})$$
and
$$R_{_A\mathcal I, \ B\mbox{-}{\rm Mod}} = (^\perp\left(\begin{smallmatrix}_A\mathcal I\\ B\mbox{-}{\rm Mod}\end {smallmatrix}\right), \ \left(\begin{smallmatrix}_A\mathcal I\\B\mbox{-}{\rm Mod}\end {smallmatrix}\right))\ne ({}^\perp\left(\begin{smallmatrix}_A\mathcal I\\ _B\mathcal I\end {smallmatrix}\right), \ \left(\begin{smallmatrix}_A\mathcal I\\ _B\mathcal I\end {smallmatrix}\right)) = ({\rm GP}(\Lambda), \ _\Lambda\mathcal P^{\le 1}).$$

\vskip5pt

{\bf Claim 2.} \ $L_{_A\mathcal P, \ B\mbox{-}{\rm Mod}}$ and \ $L_{A\mbox{-}{\rm Mod}, \ _B\mathcal P}$
are generally different from the Gorenstein-projective cotorsion pair.

Since $A$ is not semisimple, there is a non-projective $A$-module $X$. By Lemma \ref{extadj1}(1),
\ ${\rm T}_AX = \binom{X}{M\otimes_AX}_{1, 0}\in \ ^\perp\binom{_A\mathcal I}{_B\mathcal I}$,
but $\left(\begin{smallmatrix}X\\ M\otimes_AX \end {smallmatrix}\right)_{1, 0} \notin \binom{_A\mathcal P}{B\mbox{-}{\rm Mod}}$,
which shows \ $\binom{_A\mathcal P}{B\mbox{-}{\rm Mod}} \ne \ ^\perp\binom{_A\mathcal I}{_B\mathcal I}.$
Hence
$$L_{_A\mathcal P, \ B\mbox{-}{\rm Mod}} = (\left(\begin{smallmatrix}_A\mathcal P\\ B\mbox{-}{\rm Mod}\end {smallmatrix}\right),  \ \left(\begin{smallmatrix}_A\mathcal P\\ B\mbox{-}{\rm Mod}\end {smallmatrix}\right)^\perp)\ne
({}^\perp\left(\begin{smallmatrix}_A\mathcal I\\ _B\mathcal I\end {smallmatrix}\right), \ \left(\begin{smallmatrix}_A\mathcal I\\ _B\mathcal I\end {smallmatrix}\right))= ({\rm GP}(\Lambda), \ _\Lambda\mathcal P^{\le 1}).$$

Similarly, \ $L_{A\mbox{-}{\rm Mod}, \ _B\mathcal P} = (\left(\begin{smallmatrix}A\mbox{-}{\rm Mod}\\ _B\mathcal P\end {smallmatrix}\right), \ (\left(\begin{smallmatrix}A\mbox{-}{\rm Mod}\\ _B\mathcal P\end {smallmatrix}\right)^\perp)\ne
({}^\perp\left(\begin{smallmatrix}_A\mathcal I\\ _B\mathcal I\end {smallmatrix}\right), \ \left(\begin{smallmatrix}_A\mathcal I\\ _B\mathcal I\end {smallmatrix}\right))= ({\rm GP}(\Lambda), \ _\Lambda\mathcal P^{\le 1})
.$

\vskip5pt

{\bf Claim 3.} \ $R_{A\mbox{-}{\rm Mod}, \ _B\mathcal I}$ and \ $R_{_A\mathcal I, \ B\mbox{-}{\rm Mod}}$ are generally different from the Gorenstein-injective cotorsion pair.

Since $B$ is not semisimple, there is a non-projective $B$-module $Y$. Then
\ $ {\rm T}_BY = \left(\begin{smallmatrix}N\otimes_BY\\ Y\end {smallmatrix}\right)_{0, 1}\in \ ^\perp\left(\begin{smallmatrix}A\mbox{-}{\rm Mod}\\ _B\mathcal I\end {smallmatrix}\right)$ by Lemma \ref{extadj1}(2), but
$\left(\begin{smallmatrix}N\otimes_BY\\ Y\end {smallmatrix}\right)_{0,1}\notin \left(\begin{smallmatrix}_A\mathcal P\\ _B\mathcal P\end {smallmatrix}\right)$.
This shows \ $^\perp\left(\begin{smallmatrix}A\mbox{-}{\rm Mod}\\ _B\mathcal I\end {smallmatrix}\right)\ne \left(\begin{smallmatrix}_A\mathcal P\\ _B\mathcal P\end {smallmatrix}\right)$.
Thus
$$R_{A\mbox{-}{\rm Mod}, \ _B\mathcal I}
= (^\perp\left(\begin{smallmatrix}A\mbox{-}{\rm Mod}\\ _B\mathcal I\end {smallmatrix}\right), \ \left(\begin{smallmatrix}A\mbox{-}{\rm Mod}\\ _B\mathcal I\end {smallmatrix}\right))\ne (\left(\begin{smallmatrix}_A\mathcal P\\ _B\mathcal P\end {smallmatrix}\right), \ \left(\begin{smallmatrix}_A\mathcal P\\ _B\mathcal P\end {smallmatrix}\right)^\perp)= (_\Lambda \mathcal P^{\le 1}, \ {\rm GI}(\Lambda)).$$

Similarly, \ $R_{_A\mathcal I, \ B\mbox{-}{\rm Mod}}= (^\perp\left(\begin{smallmatrix}_A\mathcal I\\ B\mbox{-}{\rm Mod}\end {smallmatrix}\right), \ \left(\begin{smallmatrix}_A\mathcal I\\ B\mbox{-}{\rm Mod}\end {smallmatrix}\right))\ne (\left(\begin{smallmatrix}_A\mathcal P\\ _B\mathcal P\end {smallmatrix}\right), \ \left(\begin{smallmatrix}_A\mathcal P\\ _B\mathcal P\end {smallmatrix}\right)^\perp)= (_\Lambda \mathcal P^{\le 1}, \ {\rm GI}(\Lambda)).$

\vskip5pt

{\bf Claim 4.} \ $L_{_A\mathcal P, \ B\mbox{-}{\rm Mod}}$ and \ $L_{A\mbox{-}{\rm Mod}, \ _B\mathcal P}$
are generally different from the Gorenstein-injective cotorsion pair.

Since $B$ is not semisimple, $B\mbox{-}{\rm Mod}\ne \ _B\mathcal P$. Thus \
$\left(\begin{smallmatrix}_A\mathcal P \\ B\mbox{-}{\rm Mod}\end {smallmatrix}\right)\ne \left(\begin{smallmatrix}_A\mathcal P\\ _B\mathcal P\end {smallmatrix}\right)$, and hence $$L_{_A\mathcal P, \ B\mbox{-}{\rm Mod}} = (\left(\begin{smallmatrix}_A\mathcal P\\ B\mbox{-}{\rm Mod}\end {smallmatrix}\right),  \
\left(\begin{smallmatrix}_A\mathcal P\\ B\mbox{-}{\rm Mod}\end {smallmatrix}\right)^\perp)\ne (\left(\begin{smallmatrix}_A\mathcal P\\ _B\mathcal P\end {smallmatrix}\right), \ \left(\begin{smallmatrix}_A\mathcal P\\ _B\mathcal P\end {smallmatrix}\right)^\perp) = (_\Lambda \mathcal P^{\le 1}, \ {\rm GI}(\Lambda)).$$

Similarly,  \ $L_{A\mbox{-}{\rm Mod}, \ _B\mathcal P} = (\left(\begin{smallmatrix}A\mbox{-}{\rm Mod}\\ _B\mathcal P\end {smallmatrix}\right), \ (\left(\begin{smallmatrix}A\mbox{-}{\rm Mod}\\ _B\mathcal P\end {smallmatrix}\right)^\perp)\ne (\left(\begin{smallmatrix}_A\mathcal P\\ _B\mathcal P\end {smallmatrix}\right), \ \left(\begin{smallmatrix}_A\mathcal P\\ _B\mathcal P\end {smallmatrix}\right)^\perp) = (_\Lambda \mathcal P^{\le 1}, \ {\rm GI}(\Lambda)).$

\vskip5pt

This completes the proof. \end{proof}

\vskip5pt
	
We also need the following result due to P. A. Krylov and E. Yu. Yardykov [KY].

\begin{lem}\label{flat} {\rm ([KY, Corollary 2.5])}
	Let $L=\left(\begin{smallmatrix}X\\ Y\end{smallmatrix}\right)_{f,g}$ be a flat $\Lambda$-module. Then $\Coker g$ is a flat $A$-module and $\Coker f$ is a flat $B$-module.
\end{lem}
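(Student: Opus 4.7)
The plan is to reduce the problem to the projective case and then pass to flat modules via Lazard's characterization, using that the cokernel functors $\mathrm{C}_A$ and $\mathrm{C}_B$ are left adjoints (hence commute with filtered colimits).

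\textbf{Step 1: the projective case.} Recall from Subsection 2.5 that any projective $\Lambda$-module has the form
$$\left(\begin{smallmatrix} P\oplus (N\otimes_B Q)\\ (M\otimes_A P)\oplus Q\end{smallmatrix}\right)_{\left(\begin{smallmatrix}1 & 0\\ 0 & 0\end{smallmatrix}\right),\ \left(\begin{smallmatrix}0 & 0\\ 0 & 1\end{smallmatrix}\right)}\ =\ \mathrm{T}_A P\oplus \mathrm{T}_B Q$$
with $P\in {}_A\mathcal P$ and $Q\in {}_B\mathcal P$. A direct inspection of the structure maps yields
$\mathrm{C}_A(\mathrm{T}_A P\oplus \mathrm{T}_B Q)=P$ and $\mathrm{C}_B(\mathrm{T}_A P\oplus \mathrm{T}_B Q)=Q$,
both of which are projective (hence flat) over $A$, resp.\ $B$.

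\textbf{Step 2: filtered colimit argument.} By Lazard's theorem, every flat $\Lambda$-module $L$ can be written as $L\cong \varinjlim_i L_i$ with each $L_i$ a finitely generated free (in particular projective) $\Lambda$-module. By Theorem \ref{recollments}, $\mathrm{C}_A$ is left adjoint to $\mathrm{Z}_A$ and $\mathrm{C}_B$ is left adjoint to $\mathrm{Z}_B$, so both preserve arbitrary colimits; in particular
$$\Coker g\ =\ \mathrm{C}_A L\ \cong\ \varinjlim_i \mathrm{C}_A L_i,\qquad \Coker f\ =\ \mathrm{C}_B L\ \cong\ \varinjlim_i \mathrm{C}_B L_i.$$
By Step 1 each $\mathrm{C}_A L_i$ is a projective $A$-module and each $\mathrm{C}_B L_i$ is a projective $B$-module, and since a filtered colimit of flat modules over a ring is flat, one concludes that $\Coker g\in {}_A\mathcal F$ and $\Coker f\in {}_B\mathcal F$.

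\textbf{Main obstacle.} There is no real obstacle once one recognises $\mathrm{C}_A,\mathrm{C}_B$ as colimit-preserving; the only point to verify carefully is that the cokernel of the structure map $g:N\otimes_B Y\to X$ computed inside $A\text{-Mod}$ genuinely coincides with $\mathrm{C}_A L$, which is immediate from the description of morphisms in $\mathcal M(\Lambda)$ in Subsection 2.3. I would not attempt a more ``direct'' route trying to transport the vanishing of $\Tor_1^\Lambda(-,L)$ into the vanishing of $\Tor_1^A(-,\Coker g)$ by a base-change computation; that approach is feasible but substantially messier than the colimit argument above, and would not exploit the clean adjunction structure already set up in the preliminaries.
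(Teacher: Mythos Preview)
The paper does not give its own proof of this lemma; it simply cites \textnormal{[KY, Corollary 2.5]}. Your Lazard/colimit argument is correct and pleasant. One remark: Step 1 and the appeal to Theorem \ref{recollments} tacitly use the hypothesis $\phi=0=\psi$ (otherwise $\mathrm C_A(\mathrm T_AA)=\Coker\Psi_A=A/\operatorname{Im}\psi$, which need not be projective, and $\mathrm Z_A$ is not even defined). Since every application of the lemma in the paper is under $\phi=0=\psi$, this is harmless, but you should state the hypothesis explicitly rather than leave it implicit.
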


\vskip5pt

\noindent {\bf Proof of Proposition \ref{newI}.} \ By Proposition \ref{different}, these six cotorsion pairs
are generally different from the projective cotorsion pair and the injective one; and
they are generally different from the Gorenstein-projective cotorsion pair and the Gorenstein-injective one, by Lemmas \ref{notgor1} and \ref{notgor2}. It remains to show that they are generally different from the flat cotorsion pair.

\vskip5pt

In fact, choose rings $A$ and $B$ such that they admit  non flat modules (such a ring $A$ of course exists! For example, just take a finite-dimensional algebra $A$ which is not semisimple. Then
$A$ has a finitely generated module $M$ which is not projective, and $M$ is not flat).  Taking non-flat modules \ $_AX$ and  $_BY$, by Lemma \ref{flat}, all the following  $\Lambda$-modules are not flat:
$$\left(\begin{smallmatrix} X\\ 0\end{smallmatrix}\right)_{0, 0}, \ \ \left(\begin{smallmatrix} 0\\ Y\end{smallmatrix}\right)_{0,0},
\ \ {\rm T}_A X = \left(\begin{smallmatrix}X\\ M\otimes_AX\end{smallmatrix}\right)_{1,0}, \ \ {\rm T}_BY = \left(\begin{smallmatrix} N\otimes_BY\\ Y\end{smallmatrix}\right)_{0,1}.
$$

\vskip5pt

\noindent However,

\vskip5pt

$\bullet$ \ For the cotorsion pair \ $R_{_A\mathcal I, \ _B\mathcal I} = (^\perp\left(\begin{smallmatrix}_A\mathcal I\\ _B\mathcal I\end {smallmatrix}\right), \ \left(\begin{smallmatrix}_A\mathcal I\\ _B\mathcal I\end {smallmatrix}\right))$,  one has \
${\rm T}_A X = \binom{X}{M\otimes_AX}_{1,0}\in {}^\perp\binom{_A\mathcal I}{_B\mathcal I}$, by Lemma \ref{extadj1}(1).

\vskip5pt

$\bullet$ \ For the cotorsion pair \ $R_{A\mbox{-}{\rm Mod}, \ _B\mathcal I} = (^\perp\binom{A\mbox{-}{\rm Mod}}{_B\mathcal I}, \ \binom{A\mbox{-}{\rm Mod}}{_B\mathcal I})$,
one has \ ${\rm T}_BY = \binom{N\otimes_BY}{Y}_{0,1}\in \ ^\perp\binom{A\mbox{-}{\rm Mod}}{_B\mathcal I}$, by Lemma \ref{extadj1}(2).

\vskip5pt

$\bullet$ \ For the cotorsion pair \ $L_{_A\mathcal P, \ B\mbox{-}{\rm Mod}} = (\binom{_A\mathcal P}{B\mbox{-}{\rm Mod}}, \ \binom{_A\mathcal P}{B\mbox{-}{\rm Mod}}^\perp),$ one has
\ $\binom{0}{Y}_{0,0}\in \binom{_A\mathcal P}{B\mbox{-}{\rm Mod}}$.

\vskip5pt

$\bullet$ \ For the cotorsion pair \ $R_{_A\mathcal I, \ B\mbox{-}{\rm Mod}} = ({}^\perp\binom{_A\mathcal I}{B\mbox{-}{\rm Mod}}, \ \binom{_A\mathcal I}{B\mbox{-}{\rm Mod}})$, one has \
${\rm T}_A X = \binom{X}{M\otimes_AX}_{1,0}\in {}^\perp\binom{_A\mathcal I}{B\mbox{-}{\rm Mod}}$, by Lemma \ref{extadj1}(1).

\vskip5pt

$\bullet$ \ For the cotorsion pair \ $L_{A\mbox{-}{\rm Mod}, \ _B\mathcal P} = (\binom{A\mbox{-}{\rm Mod}}{_B\mathcal P}, \ \binom{A\mbox{-}{\rm Mod}}{_B\mathcal P}^\perp)$, one has
\ $\binom{X}{0}_{0, 0}\in \binom{A\mbox{-}{\rm Mod}}{_B\mathcal P}$.

\vskip5pt

\noindent
In conclusion, the five cotorsion pairs $R_{_A\mathcal I, \ _B\mathcal I}, \ \ R_{A\mbox{-}{\rm Mod}, \ _B\mathcal I},  \ \ L_{_A\mathcal P, \ B\mbox{-}{\rm Mod}},  \ \ R_{_A\mathcal I, \ B\mbox{-}{\rm Mod}},  \ \ L_{A\mbox{-}{\rm Mod}, \ _B\mathcal P}$ are generally different from the flat cotorsion pair.

\vskip5pt
Finally,  \ for the cotorsion pair \ $L_{_A\mathcal P, \ _B\mathcal P}   =
(\left(\begin{smallmatrix}_A\mathcal P\\ _B\mathcal P\end {smallmatrix}\right), \ \left(\begin{smallmatrix}_A\mathcal P\\ _B\mathcal P\end {smallmatrix}\right)^\perp)$,
we take $\Lambda$ to be the Morita ring and $L = \binom{Ae_1}{Ae_1}_{\sigma, \sigma}$, as given in Example \ref{ie}. Then $L\in \left(\begin{smallmatrix}_A\mathcal P\\ _B\mathcal P\end {smallmatrix}\right)$.
But $L$ is not a flat $\Lambda$-module (otherwise, since $L$ is finitely generated, $L$ is projective, which is absurd).

\vskip5pt

This completes the proof.  \hfill $\square$

\subsection{Cotorsion pairs in Series II in Table 2 are pairwise generally different} Also, in the most cases, the eight cotorsion pairs in Series II in Table 2 are pairwise different.

\vskip5pt

\begin{lem}\label{different1} \ Let $\Lambda = \left(\begin{smallmatrix} A & N \\
	M & B\end{smallmatrix}\right)$ be a Morita ring  with $M\otimes_A N=0=N\otimes_BM$, \ $(\mathcal U, \ \mathcal X)$ and \ $(\mathcal U', \ \mathcal X')$ cotorsion pairs in $A\mbox{-}{\rm Mod}$, and  \ $(\mathcal V, \ \mathcal Y)$ and \ $(\mathcal V', \ \mathcal Y')$ cotorsion pairs in $B\mbox{-}{\rm Mod}$.
Then

\vskip5pt

$(1)$ \ $\Delta(\mathcal U, \ \mathcal V)=\Delta(\mathcal U', \ \mathcal V')$ if and only if \ $\mathcal U=\mathcal U'$ and \ $\mathcal V=\mathcal V'.$

\vskip5pt

$(2)$ \ $\nabla(\mathcal X, \ \mathcal Y) = \nabla(\mathcal X', \ \mathcal Y')$ if and only if \
$\mathcal X = \mathcal X'$ and \ $\mathcal Y = \mathcal Y'$.
\end{lem}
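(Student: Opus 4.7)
The ``if'' direction is immediate from the definition, so the plan is to prove only the ``only if'' direction, and the idea is to use the induction functors $\mathrm{T}_A, \mathrm{T}_B$ (for part (1)) and $\mathrm{H}_A, \mathrm{H}_B$ (for part (2)) to embed the given classes into $\Delta(\mathcal U, \mathcal V)$ or $\nabla(\mathcal X, \mathcal Y)$, then read them off by applying the functors $\mathrm{C}_A, \mathrm{C}_B$ (resp. $\mathrm{K}_A, \mathrm{K}_B$).

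For (1), I would first observe that, for any $U \in \mathcal U$, the module $\mathrm{T}_A U = \bigl(\begin{smallmatrix} U \\ M\otimes_A U\end{smallmatrix}\bigr)_{1, 0}$ lies in $\Delta(\mathcal U, \mathcal V)$: indeed, its structure maps are $f = 1_{M\otimes_A U}$ and $g = 0$, the latter being a monomorphism \emph{because} $N \otimes_B M = 0$ forces the domain $N \otimes_B M \otimes_A U$ to vanish; then $\Coker f = 0 \in \mathcal V$ and $\Coker g = U \in \mathcal U$. Symmetrically, $\mathrm{T}_B V \in \Delta(\mathcal U, \mathcal V)$ for every $V \in \mathcal V$, using $M \otimes_A N = 0$. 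Thus assuming $\Delta(\mathcal U, \mathcal V) = \Delta(\mathcal U', \mathcal V')$, for any $U \in \mathcal U$ the module $\mathrm{T}_A U$ lies in $\Delta(\mathcal U', \mathcal V')$, and applying $\mathrm{C}_A$ to read off the cokernel of its second structure map yields $U \in \mathcal U'$. Hence $\mathcal U \subseteq \mathcal U'$, and by symmetry $\mathcal U = \mathcal U'$. The same argument with $\mathrm{T}_B$ and $\mathrm{C}_B$ gives $\mathcal V = \mathcal V'$.

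For (2), I would work in the second expression of $\Lambda$-modules. For $X \in \mathcal X$, the module $\mathrm{H}_A X = \bigl(\begin{smallmatrix} X \\ \Hom_A(N, X)\end{smallmatrix}\bigr)_{0, 1}$ belongs to $\nabla(\mathcal X, \mathcal Y)$: its first structure map $\widetilde f \colon X \to \Hom_B(M, \Hom_A(N, X)) \cong \Hom_A(N \otimes_B M, X) = 0$ is trivially surjective because $N \otimes_B M = 0$; the second $\widetilde g = 1$ is surjective; and $\Ker \widetilde f = X \in \mathcal X$, $\Ker \widetilde g = 0 \in \mathcal Y$. Dually, $\mathrm{H}_B Y \in \nabla(\mathcal X, \mathcal Y)$ for every $Y \in \mathcal Y$, using $M \otimes_A N = 0$. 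Assuming $\nabla(\mathcal X, \mathcal Y) = \nabla(\mathcal X', \mathcal Y')$, applying $\mathrm{K}_A$ (respectively $\mathrm{K}_B$) to $\mathrm{H}_A X$ (resp.\ $\mathrm{H}_B Y$) recovers $X$ (resp.\ $Y$), forcing $\mathcal X \subseteq \mathcal X'$ and $\mathcal Y \subseteq \mathcal Y'$; the reverse inclusions follow by symmetry.

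There is no genuinely hard step: the whole argument rests on the fact that, thanks to $M \otimes_A N = 0 = N \otimes_B M$, the modules $\mathrm{T}_A U$, $\mathrm{T}_B V$, $\mathrm{H}_A X$, $\mathrm{H}_B Y$ sit inside $\Delta(\mathcal U, \mathcal V)$ or $\nabla(\mathcal X, \mathcal Y)$ in the most economical possible way (with vanishing structure maps and vanishing cokernels/kernels on the ``other'' side), so that the defining data $U, V, X, Y$ can be recovered unambiguously. The only point to be slightly careful about is verifying, in each case, that the \emph{zero} structure map does the right thing — that is, is a monomorphism in part (1) and an epimorphism in part (2) — and this is precisely where the hypothesis $M \otimes_A N = 0 = N \otimes_B M$ is used.
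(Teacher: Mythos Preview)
Your proposal is correct and follows exactly the same approach as the paper's proof: for (1) you use that $\mathrm{T}_A U$ and $\mathrm{T}_B V$ lie in $\Delta(\mathcal U,\mathcal V)$ and recover $U,V$ via $\mathrm{C}_A,\mathrm{C}_B$, and for (2) you use that $\mathrm{H}_A X$ and $\mathrm{H}_B Y$ (in the second expression) lie in $\nabla(\mathcal X,\mathcal Y)$ and recover $X,Y$ via $\mathrm{K}_A,\mathrm{K}_B$. Your write-up is in fact more detailed than the paper's, which simply records the key memberships $\mathrm{T}_A U,\mathrm{T}_B V\in\Delta(\mathcal U,\mathcal V)$ and $\mathrm{H}_A X,\mathrm{H}_B Y\in\nabla(\mathcal X,\mathcal Y)$ and leaves the rest implicit.
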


\begin{proof} \ $(1)$ \ This follows from the fact $${\rm T}_A U = \left(\begin{smallmatrix}U\\ M\otimes_AU\end {smallmatrix}\right)_{1,0} \in
\Delta(\mathcal U, \ \mathcal V), \ \ \forall \ U\in \mathcal U; \ \
\ \ \ {\rm T}_B V = \left(\begin{smallmatrix}N\otimes_BV \\ V\end {smallmatrix}\right)_{0,1} \in
\Delta(\mathcal U, \ \mathcal V), \ \ \forall \ V\in \mathcal V.$$

$(2)$ \  This follows from the fact \ ${\rm H}_A X = \left(\begin{smallmatrix} X \\ \Hom_A(N, X)\end{smallmatrix}\right)_{0, 1}
\in \nabla(\mathcal X, \ \mathcal Y), \ \ \forall \ X\in \mathcal X,$ and \ $
{\rm H}_BY = \left(\begin{smallmatrix} \Hom_B(M, Y) \\ Y\end{smallmatrix}\right)_{1, 0} \in
\nabla(\mathcal X, \ \mathcal Y), \ \ \forall \ Y\in \mathcal Y,$  here we use the second expression of $\Lambda$-modules. \end{proof}

\begin{prop}\label{different2}
Let $\Lambda = \left(\begin{smallmatrix} A & N \\
	M & B\end{smallmatrix}\right)$ be a Morita ring  with $M\otimes_A N=0=N\otimes_BM$.
Then the eight cotorsion pairs in {\rm Series II} in {\rm Table 2} are pairwise generally different.\end{prop}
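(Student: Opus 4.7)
The plan is to handle the $\binom{8}{2}=28$ pairwise inequalities in three stages, mirroring the strategy of Proposition \ref{different}: six among the four $\Delta$-type cotorsion pairs $(\Delta(\mathcal U,\mathcal V),\Delta(\mathcal U,\mathcal V)^\bot)$, six among the four $\nabla$-type cotorsion pairs $({}^\perp\nabla(\mathcal X,\mathcal Y),\nabla(\mathcal X,\mathcal Y))$, and sixteen cross-type comparisons.

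For the two within-type stages, Lemma \ref{different1} is the main reduction: a $\Delta$-type (respectively $\nabla$-type) cotorsion pair is uniquely determined by its parameter pair $(\mathcal U,\mathcal V)$ (respectively $(\mathcal X,\mathcal Y)$). I would choose any $\Lambda\in\Omega$ with $A$ and $B$ both non-semisimple; then $_A\mathcal P\subsetneq A\mbox{-}{\rm Mod}$, $_B\mathcal P\subsetneq B\mbox{-}{\rm Mod}$, $_A\mathcal I\subsetneq A\mbox{-}{\rm Mod}$ and $_B\mathcal I\subsetneq B\mbox{-}{\rm Mod}$. Hence the four $\Delta$-parameter pairs $({}_A\mathcal P,{}_B\mathcal P), ({}_A\mathcal P, B\mbox{-}{\rm Mod}), (A\mbox{-}{\rm Mod},{}_B\mathcal P), (A\mbox{-}{\rm Mod}, B\mbox{-}{\rm Mod})$ are pairwise distinct, and the four $\nabla$-parameter pairs are pairwise distinct as well, settling the twelve within-type inequalities in one stroke.

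For the sixteen cross-comparisons, the plan splits them into three sub-groups. First, the projective cotorsion pair $(_\Lambda\mathcal P,\Lambda\mbox{-}{\rm Mod})$ is distinguished from each of the four $\nabla$-type cotorsion pairs by comparing right-hand sides: whenever $M\ne 0$ (and symmetrically for $N\ne 0$), a module of the form $\left(\begin{smallmatrix}0\\Y\end{smallmatrix}\right)_{0,0}$ with $Y$ chosen so that $\Hom_B(M,Y)\ne 0$ lies in $\Lambda\mbox{-}{\rm Mod}$ but never in $\nabla(\mathcal X,\mathcal Y)$ for any $\mathcal X,\mathcal Y$ (since $\widetilde{f}=0: 0\to\Hom_B(M,Y)$ fails to be epic); the inequality $(_\Lambda\mathcal P,\Lambda\mbox{-}{\rm Mod})\ne(\Lambda\mbox{-}{\rm Mod},{}_\Lambda\mathcal I)$ is immediate for any non-semisimple $\Lambda$. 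Dually, the injective cotorsion pair is separated from the three non-trivial $\Delta$-type cotorsion pairs, accounting for $4+3=7$ of the sixteen. The remaining nine comparisons, between the three non-trivial $\Delta$-types and the three non-trivial $\nabla$-types, will be handled on the explicit Morita algebra of Example \ref{ie}: the module $L=\left(\begin{smallmatrix}Ae_1\\ Ae_1\end{smallmatrix}\right)_{\sigma,\sigma}$ satisfies $L\in{\rm Mon}(\Lambda)\cap{\rm Epi}(\Lambda)$ while ${\rm Ext}^1_\Lambda(L,L)\ne 0$, and together with ${\rm T}_A X,{\rm T}_B Y,{\rm H}_A X,{\rm H}_B Y$ for suitable $X,Y$ it produces, case by case, a $\Lambda$-module lying in one of the two classes under comparison but not in the other.

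The main obstacle will be this last sub-stage, particularly the pairings involving $({\rm Mon}(\Lambda),{\rm Mon}(\Lambda)^\bot)$ and $({}^\perp{\rm Epi}(\Lambda),{\rm Epi}(\Lambda))$, whose left-hand sides are described only implicitly. The computations already performed in Example \ref{ie} carry most of the weight, and together with Theorem \ref{compare} and Lemma \ref{different1} they reduce the remaining sub-cases to checking membership of a few explicit test modules in the relevant $\Delta$- or $\nabla$-classes, a routine but case-heavy verification that completes the ``generally different'' statement within the class $\Omega$.
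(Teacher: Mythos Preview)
Your decomposition into $12+4+3+9$ cases and the handling of the first three stages match the paper's proof exactly. The difference lies in the final nine cross-type comparisons, and here your toolkit is not quite the right one.

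You propose working in the specific algebra of Example \ref{ie} with the module $L$ together with ${\rm T}_A X,{\rm T}_B Y,{\rm H}_A X,{\rm H}_B Y$, and you invoke Theorem \ref{compare}. But ${\rm T}_A,{\rm T}_B$ always land in ${\rm Mon}(\Lambda)$ and ${\rm H}_A,{\rm H}_B$ always land in ${\rm Epi}(\Lambda)$, so they do not readily separate a $\Delta(\cdot,\cdot)^\perp$ class from a $\nabla(\cdot,\cdot)$ class; and Theorem \ref{compare} only gives inclusions, not the needed inequalities. The paper instead uses test modules of the form ${\rm Z}_A I=\binom{I}{0}_{0,0}$, ${\rm Z}_A N=\binom{N}{0}_{0,0}$, and ${\rm Z}_B J=\binom{0}{J}_{0,0}$ (with $I,J$ the injective envelopes of $_AN,\,_BM$), together with Lemma \ref{extadj2}. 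The point is that Lemma \ref{extadj2}(1)--(2) places such ${\rm Z}$-modules in the relevant $\Delta(\cdot,\cdot)^\perp$ classes, while they fail to lie in any $\nabla(\cdot,\cdot)$ because one of the structure maps is $0\to\Hom_A(N,I)$ or $0\to\Hom_B(M,J)$, which is not epic whenever $N\ne 0$ or $M\ne 0$. This argument is uniform, requires only $M\ne 0\ne N$ rather than the particular algebra of Example \ref{ie}, and avoids computing the implicit classes ${}^\perp{\rm Epi}(\Lambda)$ or ${\rm Mon}(\Lambda)^\perp$ altogether.
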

\begin{proof}   \ All together there are $\binom{8}{2} = 28$ situations.

\vskip5pt

{\bf Step 1.} \ By Lemma \ref{different1}, the cotorsion pairs in {\rm Series II} in the same columns of Table 2 are pairwise different. This occupies $2\binom{4}{2} = 12$ situations.

\vskip5pt

{\bf Step 2.} \ $(_\Lambda\mathcal P, \ \Lambda\mbox{\rm-Mod})$
is generally different from all other cotorsion pairs in Series II in Table 2. This occupies $4$ situations.

\vskip5pt

In fact, taking $A$ to be a non semisimple ring and $N$ a non injective $A$-module. Then $\binom{N}{0}_{0,0}\in \Lambda$-Mod.
Since the map $0\longrightarrow \Hom_A(N,N)$ is not epic, it follows that
$$\left(\begin{smallmatrix}N\\ 0\end{smallmatrix}\right)_{0,0}\notin {\rm Epi}(\Lambda),  \ \ \ \left(\begin{smallmatrix}N\\ 0\end{smallmatrix}\right)_{0,0}\notin \nabla(A\mbox{\rm-Mod}, \ _B\mathcal I), \ \ \
\left(\begin{smallmatrix}N\\ 0\end{smallmatrix}\right)_{0,0}\notin\nabla(_A\mathcal I, \ B\mbox{\rm-Mod}), \ \ \ \left(\begin{smallmatrix}N\\ 0\end{smallmatrix}\right)_{0,0}\notin \ _\Lambda\mathcal I.$$
So $(_\Lambda\mathcal P, \ \Lambda\mbox{\rm-Mod})$ is generally different from \ \
$(^\perp{\rm Epi}(\Lambda), \ {\rm Epi}(\Lambda))$, \ \ \ $(^\perp\nabla(A\mbox{\rm-Mod}, \ _B\mathcal I), \ \nabla(A\mbox{\rm-Mod}, \ _B\mathcal I))$, \ \ \  $(^\perp\nabla(\mathcal I, \ B\mbox{\rm-Mod}), \ \nabla(\mathcal I, \ B\mbox{\rm-Mod}))$, \ and
\  $(\Lambda\mbox{\rm-Mod}, \ _\Lambda\mathcal I)$.

\vskip5pt

{\bf Step 3.} \ Similarly,  \ $(\Lambda\mbox{\rm-Mod}, \ _\Lambda\mathcal I)$
is generally different from other cotorsion pairs in Series II. This occupies $3$ situations.

\vskip5pt

{\bf Step 4.} \ Assume that  $M \neq 0\ne N$. It remains to show the following $9$ cases:
\begin{align*}& {\rm Epi}(\Lambda)\ne \Delta(_A\mathcal P, \ B\mbox{-}{\rm Mod})^\perp; \ \  \ \ \ \  {\rm Epi}(\Lambda)\ne \Delta (A\mbox{-}{\rm Mod}, \ _B\mathcal P)^\perp ;
\ \ \ \ \ \ \ \  {\rm Epi}(\Lambda)\ne {\rm Mon}(\Lambda)^\perp;
\\ &
\Delta(_A\mathcal P, \ B\mbox{-}{\rm Mod})^\perp \ne \nabla(A\mbox{-}{\rm Mod}, \ _B\mathcal I); \ \ \ \ \ \ \ \ \ \ \ \ \  \Delta(_A\mathcal P, \ B\mbox{-}{\rm Mod})^\perp \ne \nabla(_A\mathcal I, \ B\mbox{-}{\rm Mod});
\\ &\nabla(A\mbox{-}{\rm Mod}, \ _B\mathcal I) \ne \Delta(A\mbox{-}{\rm Mod}, \ _B\mathcal P)^\perp;
\ \ \ \ \ \ \ \ \ \ \ \ \
\nabla(A\mbox{-}{\rm Mod}, \ _B\mathcal I)\ne {\rm Mon}(\Lambda)^\perp; \\ &
\Delta(A\mbox{-}{\rm Mod}, \ _B\mathcal P)^\perp\ne \nabla(_A\mathcal I, \ B\mbox{-}{\rm Mod}); \ \ \ \ \ \ \ \ \ \ \ \ \
\nabla(_A\mathcal I, \ B\mbox{-}{\rm Mod})\ne {\rm Mon}(\Lambda)^\perp.
\end{align*}

First, we see the inequalities involving ${\rm Epi}(\Lambda) = \nabla(A\mbox{-}{\rm Mod}, \ B\mbox{-}{\rm Mod})$.
Let $_AI$ be the injective envelope of $_AN$. By Lemma \ref{extadj2}(1) one has
$${\rm Z}_AI= \left(\begin{smallmatrix}I\\ 0\end{smallmatrix}\right )_{0,0}\in \Delta(_A\mathcal P, \ B\mbox{-}{\rm Mod})^\perp\cap \Delta(A\mbox{-}{\rm Mod}, \ _B\mathcal P)^\perp \cap \Delta(A\mbox{-}{\rm Mod}, \ B\mbox{-}{\rm Mod})^\perp .$$
But $\widetilde{g}: 0\longrightarrow \Hom_A(N, I)$ is not epic, so $\binom{I}{0}_{0,0}\notin \nabla(A\mbox{-}{\rm Mod}, \ B\mbox{-}{\rm Mod}) = {\rm Epi}(\Lambda)$.
This shows ${\rm Epi}(\Lambda)\ne \Delta(_A\mathcal P, \ B\mbox{-}{\rm Mod})^\perp$, \ \ ${\rm Epi}(\Lambda)\ne \Delta(A\mbox{-}{\rm Mod}, \ _B\mathcal P)^\perp$ and \ ${\rm Epi}(\Lambda)\ne {\rm Mon}(\Lambda)^\perp$.

\vskip10pt

Next, we see the two inequalities involving  \ $\Delta(_A\mathcal P, \ B\mbox{-}{\rm Mod})^\perp.$
By Lemma \ref{extadj2}(1),
\ ${\rm Z}_AN=\binom{N}{0}_{0,0}\in \Delta(_A\mathcal P, \ B\mbox{-}{\rm Mod})^\perp$. But $\widetilde{g}: 0\longrightarrow \Hom_A(N,N)\ne 0$ is not epic, so $\binom{N}{0}_{0,0}$ is not in $\nabla(A\mbox{-}{\rm Mod}, \  _B\mathcal I)$ and $\nabla(_A\mathcal I, \ B\mbox{-}{\rm Mod})$.
This shows the two inequalities.

\vskip10pt

Next, to see \ $\nabla(A\mbox{-}{\rm Mod}, \  _B\mathcal I) \ne \Delta(A\mbox{-}{\rm Mod}, \  _B\mathcal P)^\perp$ and $\nabla(A\mbox{-}{\rm Mod}, \  _B\mathcal I)\ne \Delta (A\mbox{-}{\rm Mod}, \  B\mbox{-}{\rm Mod})^\perp$,
Let $_BJ$ be the injective envelope of $_BM$. By Lemma \ref{extadj2}(2),
\ ${\rm Z}_BJ=\binom{0}{J}_{0,0}$ is in $\Delta(A\mbox{-}{\rm Mod}, \  _B\mathcal P)^\perp$ and \ $\Delta(A\mbox{-}{\rm Mod}, \  B\mbox{-}{\rm Mod})^\perp$.
But $\widetilde{f}: 0\longrightarrow \Hom_B(M, J)$ is not epic, so $\binom{0}{J}_{0,0}\notin \nabla(A\mbox{-}{\rm Mod}, \  _B\mathcal I)$.

\vskip10pt

Finally,  to see the two inequalities involving \ $\nabla(_A\mathcal I, \ B\mbox{-}{\rm Mod})$.
Let $_AI$ be the injective envelope of $_AN$. By Lemma \ref{extadj2}(1),
\ ${\rm Z}_AI=\binom{I}{0}_{0,0}$ is in \ $\Delta(A\mbox{-}{\rm Mod}, \  _B\mathcal P)^\perp$ \ and \ $\Delta(A\mbox{-}{\rm Mod}, \  B\mbox{-}{\rm Mod})^\perp$.
But $\widetilde{g}: 0\longrightarrow \Hom_A(N, I)$ is not epic, so $\binom{I}{0}_{0,0}\notin \nabla(_A\mathcal I, \ B\mbox{-}{\rm Mod})$.
This shows \ $\Delta(A\mbox{-}{\rm Mod}, \  _B\mathcal P)^\perp\ne \nabla(_A\mathcal I, \ B\mbox{-}{\rm Mod})$
and \ $\nabla(_A\mathcal I, \ B\mbox{-}{\rm Mod})\ne {\rm Mon}(\Lambda)^\perp.$
\end{proof}

\subsection{``New" cotorsion pairs in Series II in Table 2}
In Series II of Table 2, taking off the projective cotorsion pair and
the injective one, the remaining six cotorsion pairs
are ``new".

\vskip5pt

\begin{prop}\label{newII} \ Let $\Lambda = \left(\begin{smallmatrix} A & N \\ M & B \end{smallmatrix}\right)$ be a Morita ring with $M\otimes_AN=0 = N\otimes_BM$.
Then all the six cotorsion pairs
\begin{align*} & ({\rm Mon}(\Lambda), \ {\rm Mon}(\Lambda)^\bot), \ \ \ \ \ \ \ \ \ \ \ \ \ \ \ \ \ \ \ \ \ \ \ \ \ \  (^\bot{\rm Epi}(\Lambda), \ {\rm Epi}(\Lambda))
\\ &(\Delta(_A\mathcal P, \ B\mbox{-}{\rm Mod}), \ \Delta(_A\mathcal P, \ B\mbox{-}{\rm Mod})^\bot),
\ \ \ \ \ \ (^\perp\nabla(A\mbox{-}{\rm Mod},  \ _B\mathcal I),  \ \nabla(A\mbox{-}{\rm Mod},  \ _B\mathcal I))
\\ &
(\Delta(A\mbox{-}{\rm Mod}, \ _B\mathcal P), \ \Delta(A\mbox{-}{\rm Mod}, \ _B\mathcal P)^\bot),
\ \ \ \ \ \  (^\bot\nabla(_A\mathcal I, \ B\mbox{-}{\rm Mod}),  \ \nabla(_A\mathcal I, \ B\mbox{-}{\rm Mod}))\end{align*} are ``new", in the sense of \ {\rm Definition \ref{new}}.
\end{prop}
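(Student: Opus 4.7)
The plan is to mirror the proof of Proposition \ref{newI} and split the argument according to the five classes of cotorsion pairs appearing in Definition \ref{new}. Two of the comparisons are immediate: Proposition \ref{different2} already shows that each of the six cotorsion pairs listed is generally different from both $(_\Lambda\mathcal P, \ \Lambda\text{-}{\rm Mod})$ and $(\Lambda\text{-}{\rm Mod}, \ _\Lambda\mathcal I)$. Hence only the Gorenstein-projective, Gorenstein-injective, and flat cotorsion pairs remain to be separated from our six cotorsion pairs.

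For the flat cotorsion pair $(_\Lambda{\rm F}, \ _\Lambda{\rm C})$, I would invoke Lemma \ref{flat} of Krylov--Yardykov: if $L = \binom{X}{Y}_{f,g}$ is $\Lambda$-flat, then $\Coker f$ is $B$-flat and $\Coker g$ is $A$-flat. Choosing rings $A$ and $B$ admitting non-flat modules $_AX$ and $_BY$, one can then produce, in each of the six left-hand classes, an explicit module whose cokernel under ${\rm C}_A$ or ${\rm C}_B$ is non-flat. Concretely, ${\rm T}_A X$ lies in $\Delta(_A\mathcal P, \ B\text{-}{\rm Mod})$ and in ${\rm Mon}(\Lambda)$; $\binom{X}{0}_{0,0}$ lies in $^\perp\nabla(_A\mathcal I, \ B\text{-}{\rm Mod})$ by Lemma \ref{extadj2}(1); the dual constructions (using ${\rm T}_B Y$, ${\rm Z}_B Y$ and their second-expression analogues) cover the other three cotorsion pairs. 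In every case the cokernel fails to be flat, so the module itself is not $\Lambda$-flat.

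The hard part is the comparison with the Gorenstein-projective and Gorenstein-injective cotorsion pairs, because under the quasi-Frobenius hypotheses of Theorem \ref{ctp4} the pairs $({\rm Mon}(\Lambda), \ {\rm Mon}(\Lambda)^\perp)$ and $(^\perp{\rm Epi}(\Lambda), \ {\rm Epi}(\Lambda))$ \emph{coincide} with the Gorenstein cotorsion pairs. To force them apart I would work with a Gorenstein Morita ring that lies outside the scope of Theorem \ref{ctp4}: the Morita ring of Example \ref{ie} is such a choice, since by Lemma \ref{agoralg} it is Gorenstein ($A$ is hereditary, $_AN$ and $N_A$ are projective, and $N\otimes_A N = 0$) while $A$ is not quasi-Frobenius. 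Using the module $L = \binom{Ae_1}{Ae_1}_{\sigma,\sigma}$ from Example \ref{ie}, for which $\Ext_\Lambda^1(L,L)\ne 0$, one verifies that $L\in {\rm Mon}(\Lambda)$ but $L\notin \, _\Lambda\mathcal P^{<\infty}$, and a short projective resolution argument (as in Lemma \ref{notgor1}) separates ${\rm Mon}(\Lambda)$ from both Gorenstein cotorsion pairs; the dual analysis handles $(^\perp{\rm Epi}(\Lambda), \ {\rm Epi}(\Lambda))$. For the remaining four cotorsion pairs, which each involve exactly one of $_A\mathcal P, \ _B\mathcal P, \ _A\mathcal I, \ _B\mathcal I$, I would work inside the Gorenstein framework of Theorem \ref{ctp4} with $A, B$ quasi-Frobenius but not semisimple, and use routine $\Ext$-computations via Lemmas \ref{extadj1} and \ref{extadj2} to produce distinguishing modules, following precisely the strategy of Lemma \ref{notgor2}; for instance, $\binom{X}{0}_{0,0}$ with $_AX$ non-projective distinguishes $\Delta(A\text{-}{\rm Mod}, \ _B\mathcal P)^\perp$ from both Gorenstein cotorsion pairs, and analogous witnesses cover the other three cases.
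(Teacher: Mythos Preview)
Your overall architecture matches the paper's: Proposition \ref{different2} for the projective/injective comparisons, an Example \ref{ie} argument for ${\rm Mon}(\Lambda)$ and ${\rm Epi}(\Lambda)$ versus the Gorenstein pairs (this is Lemma \ref{nongor3}), a Theorem \ref{ctp4} setup for the remaining four versus the Gorenstein pairs (Lemma \ref{nongor4}), and Lemma \ref{flat} for the flat comparison. But there is one genuine gap and one factual error.

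\textbf{The gap: flat versus $(^\bot{\rm Epi}(\Lambda),\,{\rm Epi}(\Lambda))$.} Your uniform method is to exhibit a non-flat $\Lambda$-module inside the left-hand class. This works for five of the six pairs, but not for $^\bot{\rm Epi}(\Lambda)$. None of your candidate modules land there: for instance, by Lemma \ref{extadj2}(3) one has $\Ext^1_\Lambda({\rm Z}_A X, L)\cong \Ext^1_A(X,\Ker\widetilde f)$ for $L\in{\rm Epi}(\Lambda)$, and since $\Ker\widetilde f$ ranges over all of $A$-Mod this forces $X$ projective; the same obstruction kills ${\rm Z}_B Y$, ${\rm T}_A X$, ${\rm T}_B Y$ via Lemma \ref{extadj1}. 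The paper therefore reverses the inclusion: it takes $M=N=0$ and $A$ a ring with a flat non-projective module (e.g.\ $A=\Bbb Z$), so that ${\rm Epi}(\Lambda)=\Lambda$-Mod, hence $^\bot{\rm Epi}(\Lambda)= {}_\Lambda\mathcal P\subsetneq {}_\Lambda{\rm F}$. That is, it produces a flat module \emph{outside} $^\bot{\rm Epi}(\Lambda)$ rather than a non-flat module inside it.

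\textbf{The error: $L$ and $_\Lambda\mathcal P^{<\infty}$.} You assert $L=\binom{Ae_1}{Ae_1}_{\sigma,\sigma}\notin {}_\Lambda\mathcal P^{<\infty}$, but in fact ${\rm proj.dim}_\Lambda L=1$ (see the resolution in Lemma \ref{notgor1}, Claim 2), so $L\in {}_\Lambda\mathcal P^{<\infty}$. The correct deduction is the opposite one: since $L$ has finite projective dimension and is not projective, it cannot be Gorenstein-projective, whence ${\rm Mon}(\Lambda)\ne{\rm GP}(\Lambda)$. To separate ${\rm Mon}(\Lambda)$ from $_\Lambda\mathcal P^{<\infty}$ the paper uses a different witness, $\binom{Ae_1}{0}_{0,0}$, which lies in $_\Lambda\mathcal P^{<\infty}$ but not in ${\rm Mon}(\Lambda)$. (A smaller slip: ${\rm T}_A X$ does not lie in $\Delta(_A\mathcal P,\,B\text{-Mod})$ for non-projective $X$, since $\Coker g = X$; the correct witness there is ${\rm T}_B Y$.)
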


\vskip5pt

To prove Proposition \ref{newII}, we first show

\vskip5pt

\begin{lem}\label{nongor3} \ Let $\Lambda = \left(\begin{smallmatrix} A & N \\ M & B \end{smallmatrix}\right)$ be a Morita ring which is an Artin algebra and a Gorenstein ring, with $M\otimes_AN=0 = N\otimes_BM$.
Then the cotorsion pairs
\ $({\rm Mon}(\Lambda), \ {\rm Mon}(\Lambda)^\bot)$ and $(^\bot{\rm Epi}(\Lambda), \ {\rm Epi}(\Lambda))$ are generally different from the Gorenstein-projective cotorsion pair
and the Gorenstein-injective one.
\end{lem}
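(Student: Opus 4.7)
The plan is to mimic the strategy of Lemma \ref{notgor1} by working in the same specific Morita ring. Take $A = k(1\to 2)$ with $\mathrm{char}\, k\neq 2$, $M=N=Ae_2\otimes_k e_1A$, and $\Lambda = \left(\begin{smallmatrix} A & N \\ N & A\end{smallmatrix}\right)$, as in Example \ref{ie}. Then $N\otimes_A N = 0$, $_AN$ and $N_A$ are projective (both isomorphic to simple projectives), and by Lemma \ref{agoralg} the ring $\Lambda$ is Gorenstein; in particular $_\Lambda\mathcal P^{<\infty} = {}_\Lambda\mathcal I^{<\infty}$, and moreover ${\rm GP}(\Lambda)\cap{}_\Lambda\mathcal P^{<\infty}={}_\Lambda\mathcal P$ and ${\rm GI}(\Lambda)\cap{}_\Lambda\mathcal I^{<\infty}={}_\Lambda\mathcal I$ hold by standard Gorenstein homological algebra.

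For $({\rm Mon}(\Lambda),{\rm Mon}(\Lambda)^\bot)$, the central object is $L=\left(\begin{smallmatrix}Ae_1\\ Ae_1\end{smallmatrix}\right)_{\sigma,\sigma}$. By construction its structure maps are the inclusion $S_2\hookrightarrow Ae_1$, hence $L\in{\rm Mon}(\Lambda)$. The explicit projective resolution of $L$ exhibited in Claim 2 of the proof of Lemma \ref{notgor1} shows ${\rm proj.dim}_\Lambda L\le 1$, so $L\in{}_\Lambda\mathcal P^{<\infty}$, while $L$ is plainly non-projective (it is not isomorphic to any $\mathrm T_A P\oplus\mathrm T_B Q$ by inspection of dimensions and structure maps). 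Hence $L\notin{\rm GP}(\Lambda)$, which gives ${\rm Mon}(\Lambda)\neq{\rm GP}(\Lambda)$, so the cotorsion pair differs from the Gorenstein-projective one. To separate it from the Gorenstein-injective cotorsion pair $({}_\Lambda\mathcal P^{<\infty},{\rm GI}(\Lambda))$ it suffices to exhibit a module in ${}_\Lambda\mathcal P^{<\infty}$ that is not in ${\rm Mon}(\Lambda)$; $\mathrm Z_A(Ae_1)=\left(\begin{smallmatrix} Ae_1\\ 0\end{smallmatrix}\right)_{0,0}$ is a natural candidate: its structure map $f:M\otimes_A Ae_1\cong S_2\to 0$ is not injective, so it lies outside ${\rm Mon}(\Lambda)$, while the short exact sequence $0\to \mathrm Z_B(M\otimes_A Ae_1)\to\mathrm T_A(Ae_1)\to\mathrm Z_A(Ae_1)\to 0$, combined with the fact that $N\otimes_A S_2=0$ makes $\mathrm Z_B(S_2)=\mathrm T_B(S_2)$ projective, yields ${\rm proj.dim}\,\mathrm Z_A(Ae_1)\le 1$.

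For $(^\bot{\rm Epi}(\Lambda),{\rm Epi}(\Lambda))$ the argument is the formal dual. The same module $L$ belongs to ${\rm Epi}(\Lambda)$ (checked in Example \ref{ie} via $\widetilde{\sigma}=\pi$) and lies in ${}_\Lambda\mathcal I^{<\infty}={}_\Lambda\mathcal P^{<\infty}$ but is not injective, so $L\notin{\rm GI}(\Lambda)$ and therefore ${\rm Epi}(\Lambda)\neq{\rm GI}(\Lambda)$, ruling out equality with the Gorenstein-injective cotorsion pair. To separate $(^\bot{\rm Epi}(\Lambda),{\rm Epi}(\Lambda))$ from the Gorenstein-projective cotorsion pair $({\rm GP}(\Lambda),{}_\Lambda\mathcal P^{<\infty})$, one produces an element of ${}_\Lambda\mathcal P^{<\infty}\setminus{\rm Epi}(\Lambda)$; $\mathrm Z_A(S_1)=\left(\begin{smallmatrix} S_1\\ 0\end{smallmatrix}\right)_{0,0}$ does the job because $\widetilde{f}:S_1\to\Hom_B(M,0)=0$ fails to be surjective, while the resolution $0\to\mathrm Z_B(M\otimes_A S_1)\to\mathrm T_A(S_1)\to\mathrm Z_A(S_1)\to 0$ combined with $N\otimes_A S_2=0$ again shows $\mathrm Z_A(S_1)$ has projective dimension at most one.

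The main obstacle is the bookkeeping: carefully verifying that the candidate modules really satisfy the required ${\rm Mon}$/${\rm Epi}$ membership conditions and that the asserted projective resolutions terminate, which in turn reduces to computing $M\otimes_A -$, $N\otimes_A-$, $\Hom_B(M,-)$ and $\Hom_A(N,-)$ on the simple and indecomposable projective $A$-modules. These are short direct computations once one records that $_AN=_AM\cong S_2$ and $N_A=M_A\cong e_1 A$, so the plan reduces to extracting and reassembling the structural data already collected in Example \ref{ie} and the proof of Lemma \ref{notgor1}.
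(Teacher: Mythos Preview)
Three of your four separations are fine and essentially match the paper (the ${\rm Epi}$ vs.\ GI step is a slight variant: the paper compares ${}^\bot{\rm Epi}(\Lambda)$ with ${}_\Lambda\mathcal P^{<\infty}$ via $\Ext^1_\Lambda(L,L)\ne 0$, while you compare ${\rm Epi}(\Lambda)$ with ${\rm GI}(\Lambda)$ directly; both work).

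The ${\rm Epi}$ vs.\ GP step, however, is broken. Your candidate ${\rm Z}_A(S_1)=\binom{S_1}{0}_{0,0}$ lies \emph{inside} ${\rm Epi}(\Lambda)$, not outside it. Your stated reason, that $\widetilde f:S_1\to\Hom_B(M,0)=0$ is not surjective, cannot be right: any map to the zero module is surjective. The relevant map to test is $\widetilde g:0\to\Hom_A(N,S_1)$, and since $_AN\cong S_2$ with $\Hom_A(S_2,S_1)=0$, this $\widetilde g$ is also surjective. In fact ${\rm Z}_A(S_1)={\rm H}_A(S_1)$ is an injective $\Lambda$-module, so it certainly belongs to ${\rm Epi}(\Lambda)$. (As a minor additional issue, your ``resolution'' $0\to{\rm Z}_B(M\otimes_AS_1)\to{\rm T}_A(S_1)\to{\rm Z}_A(S_1)\to 0$ is not a projective resolution, since $S_1$ is not projective in $A$-Mod and hence ${\rm T}_A(S_1)$ is not projective; one would need a further step.)

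The fix is to use ${\rm Z}_A(Ae_1)=\binom{Ae_1}{0}_{0,0}$ instead, exactly the module you already exploited in the ${\rm Mon}$ vs.\ GI step. Here $\widetilde g:0\to\Hom_A(N,Ae_1)\cong S_1\neq 0$ genuinely fails to be surjective, so $\binom{Ae_1}{0}_{0,0}\notin{\rm Epi}(\Lambda)$, while you already established $\binom{Ae_1}{0}_{0,0}\in{}_\Lambda\mathcal P^{<\infty}$. This is precisely what the paper does.
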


\begin{proof} \ Take \ $\Lambda$ to be the Morita ring $\Lambda = \left(\begin{smallmatrix} A & N \\ N & A \end{smallmatrix}\right)$, constructed in Example \ref{ie}. Thus
 $A$ is the path algebra $k(1 \longrightarrow 2)$ with ${\rm char} \ k\ne 2$,  $N = Ae_2\otimes_ke_1A$,  and $N\otimes_AN = 0$. By Lemma \ref{agoralg},  $\Lambda$ is a Gorenstein algebra.

\vskip5pt

{\bf Claim 1.} \ $({\rm Mon}(\Lambda), \ {\rm Mon}(\Lambda)^\bot)$
is generally different from $({\rm GP}(\Lambda), \ _\Lambda\mathcal P^{<\infty})$.

\vskip5pt

In fact, $L = \left(\begin{smallmatrix}Ae_1\\ Ae_1\end{smallmatrix}\right)_{\sigma, \sigma}\in {\rm Mon}(\Lambda).$
By  {\bf Claim 2} in the proof of Lemma \ref{notgor1}, ${\rm proj.dim}_\Lambda L = 1.$ Thus $L$ is not Gorenstein-projective (otherwise $L$ is projective, which is absurd. Note that a Gorenstein-projective module of finite projective dimension is projective. See [EJ, 10.2.3]).
So $L\notin {\rm GP}(\Lambda)$. Thus ${\rm Mon}(\Lambda)\ne {\rm GP}(\Lambda)$, and hence
$$({\rm Mon}(\Lambda), \ {\rm Mon}(\Lambda)^\bot)
\ne ({\rm GP}(\Lambda), \ _\Lambda\mathcal P^{<\infty}).$$

\vskip5pt

{\bf Claim 2.} \ $({\rm Mon}(\Lambda), \ {\rm Mon}(\Lambda)^\bot)$
is generally different from  $(_\Lambda\mathcal P^{<\infty}, \ {\rm GI}(\Lambda))$.

\vskip5pt

In fact, the following $\Lambda$-projective resolution of $\left(\begin{smallmatrix}Ae_1\\ 0\end{smallmatrix}\right)_{0, 0}$
$$0\longrightarrow {\rm T}_BS_2 = \left(\begin{smallmatrix}0\\ S_2\end{smallmatrix}\right)_{0, 0}
\stackrel{\binom{0}{1}}\longrightarrow {\rm T}_A(Ae_1) = \left(\begin{smallmatrix} Ae_1\\ S_2\end{smallmatrix}\right)_{1, 0}
\stackrel{\left(\begin{smallmatrix} 1 \\ 0\end{smallmatrix}\right)}
\longrightarrow \left(\begin{smallmatrix}Ae_1\\ 0\end{smallmatrix}\right)_{0, 0}\longrightarrow 0$$
shows that $\left(\begin{smallmatrix}Ae_1\\ 0\end{smallmatrix}\right)_{0, 0}\in \  _\Lambda\mathcal P^{<\infty}$.
Since $N\otimes_AAe_1\cong S_2$, \ $\left(\begin{smallmatrix}Ae_1\\ 0\end{smallmatrix}\right)_{0, 0}\notin {\rm Mon}(\Lambda)$.
Thus \ ${\rm Mon}(\Lambda)\ne \  _\Lambda\mathcal P^{<\infty}$, and hence
$$({\rm Mon}(\Lambda), \ {\rm Mon}(\Lambda)^\bot)\ne (_\Lambda\mathcal P^{<\infty}, \ {\rm GI}(\Lambda)).$$

\vskip5pt

{\bf Claim 3.} \ $(^\bot{\rm Epi}(\Lambda), \ {\rm Epi}(\Lambda))$
is generally different from $({\rm GP}(\Lambda), \ _\Lambda\mathcal P^{<\infty})$.

\vskip5pt

In fact, by {\bf Claim 2}, $\left(\begin{smallmatrix}Ae_1\\ 0\end{smallmatrix}\right)_{0, 0}\in \  _\Lambda\mathcal P^{<\infty}$. Since $\Hom_A(N, Ae_1)\cong S_1 \ne 0$ (cf. Example\ref{ie}),
$\left(\begin{smallmatrix}Ae_1\\ 0\end{smallmatrix}\right)_{0, 0}\notin {\rm Epi}(\Lambda)$.
Thus
${\rm Epi}(\Lambda) \ne \  _\Lambda\mathcal P^{<\infty}$, and hence
$$(^\bot{\rm Epi}(\Lambda), \ {\rm Epi}(\Lambda))
\ne ({\rm GP}(\Lambda), \ _\Lambda\mathcal P^{<\infty}).$$

\vskip5pt

{\bf Claim 4.} \ $(^\bot{\rm Epi}(\Lambda), \ {\rm Epi}(\Lambda))$
is generally different from  $(_\Lambda\mathcal P^{<\infty}, \ {\rm GI}(\Lambda))$.

\vskip5pt

In fact, $L = \left(\begin{smallmatrix}Ae_1\\ Ae_1\end{smallmatrix}\right)_{\sigma, \sigma}\in {\rm Epic}(\Lambda)$ and $\Ext^1_\Lambda(L, L)\ne 0$ \ (cf. Example \ref{ie}).
So \ $L\notin \ ^\bot{\rm Epi}(\Lambda)$. However, $L\in \ _\Lambda\mathcal P^{<\infty}$.
Thus \ $^\bot{\rm Epi}(\Lambda)\ne \ _\Lambda\mathcal P^{<\infty}$, and hence
$$(^\bot{\rm Epi}(\Lambda), \ {\rm Epi}(\Lambda))
\ne (_\Lambda\mathcal P^{<\infty}, \ {\rm GI}(\Lambda)).$$

This completes the proof. \end{proof}

\vskip5pt

\begin{lem}\label{nongor4} \ Let $\Lambda = \left(\begin{smallmatrix} A & N \\ M & B \end{smallmatrix}\right)$ be a Morita ring which is an Artin algebra and a Gorenstein ring, with $M\otimes_AN=0 = N\otimes_BM$.
Then the cotorsion pairs
\begin{align*} & (\Delta(_A\mathcal P, \ B\mbox{-}{\rm Mod}), \ \Delta(_A\mathcal P, \ B\mbox{-}{\rm Mod})^\bot),
\ \ \ \ (^\perp\nabla(A\mbox{-}{\rm Mod},  \ _B\mathcal I),  \ \nabla(A\mbox{-}{\rm Mod},  \ _B\mathcal I))
\\ &
(\Delta(A\mbox{-}{\rm Mod}, \ _B\mathcal P), \ \Delta(A\mbox{-}{\rm Mod}, \ _B\mathcal P)^\bot),
\ \ \ \ (^\bot\nabla(_A\mathcal I, \ B\mbox{-}{\rm Mod}),  \ \nabla(_A\mathcal I, \ B\mbox{-}{\rm Mod}))\end{align*} are generally different from the Gorenstein-projective cotorsion pair
and the Gorenstein-injective one.
\end{lem}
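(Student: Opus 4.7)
The plan is to reduce Lemma \ref{nongor4} to the already-established Lemma \ref{notgor2} by invoking the identification results from Section 4. I will work inside the class of Morita rings $\Lambda=\binom{A\; N}{M\; B}$ arising from Example \ref{examctp4}: take $A$, $B$ non-semisimple quasi-Frobenius rings, $_AN$ and $_BM$ non-zero projective, $M_A$ and $N_B$ flat, $M\otimes_AN=0=N\otimes_BM$, and $\Lambda$ noetherian. By Theorem \ref{ctp4} such a $\Lambda$ is Gorenstein with $\mathrm{inj.dim}\,_\Lambda\Lambda\le 1$, so both the Gorenstein-projective and Gorenstein-injective cotorsion pairs are available for comparison.

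Next I would identify each of the four Series II cotorsion pairs with one of the four Series I cotorsion pairs analysed in Lemma \ref{notgor2}. Since $N_B$ is flat, Corollary \ref{identification1}(1) applied to $(\mathcal V,\mathcal Y)=(B\text{-Mod},\,{}_B\mathcal I)$ gives
\[
(\Delta({}_A\mathcal P,\,B\text{-Mod}),\ \Delta({}_A\mathcal P,\,B\text{-Mod})^\bot)\ =\ (^\perp\tbinom{A\text{-Mod}}{{}_B\mathcal I},\ \tbinom{A\text{-Mod}}{{}_B\mathcal I})\ =\ R_{A\text{-Mod},\,{}_B\mathcal I}.
\]
Since $M_A$ is flat, Corollary \ref{identification1}(2) applied to $(\mathcal U,\mathcal X)=(A\text{-Mod},\,{}_A\mathcal I)$ yields
\[
(\Delta(A\text{-Mod},\,{}_B\mathcal P),\ \Delta(A\text{-Mod},\,{}_B\mathcal P)^\bot)\ =\ R_{{}_A\mathcal I,\,B\text{-Mod}}.
\]
For the remaining two, I invoke Theorem \ref{identify1}(2). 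Taking $(\mathcal U,\mathcal X)=({}_A\mathcal P,\,A\text{-Mod})$ and $(\mathcal V,\mathcal Y)=(B\text{-Mod},\,{}_B\mathcal I)$, the $\Ext^1$-vanishing conditions hold because $_AN$ and $_BM$ are projective; the inclusion $\Hom_A(N,\,A\text{-Mod})\subseteq B\text{-Mod}$ is trivial, so
\[
(^\perp\nabla(A\text{-Mod},\,{}_B\mathcal I),\ \nabla(A\text{-Mod},\,{}_B\mathcal I))\ =\ (\tbinom{{}_A\mathcal P}{B\text{-Mod}},\ \tbinom{{}_A\mathcal P}{B\text{-Mod}}^\perp)\ =\ L_{{}_A\mathcal P,\,B\text{-Mod}}.
\]
Symmetrically, taking $(\mathcal U,\mathcal X)=(A\text{-Mod},\,{}_A\mathcal I)$ and $(\mathcal V,\mathcal Y)=({}_B\mathcal P,\,B\text{-Mod})$ gives
\[
(^\bot\nabla({}_A\mathcal I,\,B\text{-Mod}),\ \nabla({}_A\mathcal I,\,B\text{-Mod}))\ =\ L_{A\text{-Mod},\,{}_B\mathcal P}.
\]

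With these identifications in hand, the four Series II cotorsion pairs coincide, for the specific $\Lambda$ of Example \ref{examctp4}, with exactly the four Series I cotorsion pairs treated in Lemma \ref{notgor2}. That lemma has already shown each of $R_{A\text{-Mod},\,{}_B\mathcal I}$, $L_{{}_A\mathcal P,\,B\text{-Mod}}$, $R_{{}_A\mathcal I,\,B\text{-Mod}}$, $L_{A\text{-Mod},\,{}_B\mathcal P}$ to differ from both $(\mathrm{GP}(\Lambda),\,{}_\Lambda\mathcal P^{\le 1})$ and $({}_\Lambda\mathcal P^{\le 1},\,\mathrm{GI}(\Lambda))$ for this $\Lambda$, so the conclusion follows.

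The main obstacle is verifying that the hypotheses of Theorem \ref{identify1}(2) and Corollary \ref{identification1} are genuinely satisfied in the chosen example; in particular, one must check that the flatness/projectivity conditions on $M$ and $N$ are strong enough to trigger the Hom-inclusion condition in Theorem \ref{identify1}(2) (here it holds trivially on the ``or'' branch, but writing this cleanly requires care). Everything else is routine bookkeeping. If for some reason one preferred not to pass through the identifications, the alternative would be to repeat the direct witness arguments of Lemma \ref{notgor2} adapted to the $\Delta/\nabla$ descriptions, but this would essentially duplicate the content of Lemma \ref{notgor2} and is unnecessary.
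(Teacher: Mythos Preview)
Your argument is correct, but it takes a different route from the paper's own proof. You use the identification results (Corollary \ref{identification1} and Theorem \ref{identify1}(2)) to rewrite the four Series~II cotorsion pairs as the four Series~I cotorsion pairs $R_{A\text{-Mod},\,{}_B\mathcal I}$, $L_{{}_A\mathcal P,\,B\text{-Mod}}$, $R_{{}_A\mathcal I,\,B\text{-Mod}}$, $L_{A\text{-Mod},\,{}_B\mathcal P}$ for the witness $\Lambda$ of Example~\ref{examctp4}, and then appeal to the proof of Lemma~\ref{notgor2}, which establishes the inequalities for that same $\Lambda$. (A minor remark: your third and fourth identifications are already packaged as Corollary~\ref{identification1}(4) and (3); there is no need to go back to Theorem~\ref{identify1}(2).) One point worth making explicit is that you are using the \emph{proof} of Lemma~\ref{notgor2}, not merely its statement: ``generally different'' only promises some witness, and you need the witness to be the $\Lambda$ from Example~\ref{examctp4}. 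You note this, so the logic is sound.

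The paper instead argues directly in the Series~II language. It also works with the $\Lambda$ of Example~\ref{examctp4} and uses Theorem~\ref{ctp4} to write the Gorenstein-projective and Gorenstein-injective cotorsion pairs as $({\rm Mon}(\Lambda),\,{\rm Mon}(\Lambda)^\bot)$ and $({}^\bot{\rm Epi}(\Lambda),\,{\rm Epi}(\Lambda))$. The $\Delta$-pairs are then distinguished from ${\rm Mon}(\Lambda)$ via Lemma~\ref{different1}, and from ${\rm Epi}(\Lambda)$ via the explicit witness ${\rm Z}_AI$ together with Lemma~\ref{extadj2}(1); the $\nabla$-pairs are handled dually with the witness ${\rm Z}_AA$ and Lemma~\ref{extadj2}(3), and Lemma~\ref{different1} again. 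Your reduction is more economical since it recycles Lemma~\ref{notgor2} wholesale; the paper's direct argument is more self-contained and avoids invoking the identification machinery, staying entirely within the $\Delta/\nabla$ descriptions.
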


\begin{proof} \ Choose rings $A$ and $B$, bimodules \ $_BM_A$ and $_AN_B$, such that

\vskip5pt

(i) \ \ \ $A$ and $B$ are quasi-Frobenius and not semisimple;

(ii) \ \ $_AN$ and $_BM$ are non-zero projective modules, and $M_A$ and $N_B$ are flat;

(iii) \ \ $M\otimes_A N = 0 = N\otimes_BM;$

(iv) \ \  $\Lambda$ is an Artin algebra.

\vskip5pt

By Remark \ref{examctp4}, such $\Lambda$'s always exist! By Theorem \ref{ctp4}, \ $\Lambda$ is a Gorenstein ring with ${\rm inj.dim} \Lambda \le 1$,  \ $({\rm GP}(\Lambda), \ \mathcal P^{\le 1}) = ({\rm Mon}(\Lambda), \ {\rm Mon}(\Lambda)^\bot)$,
\ and \ $(_\Lambda \mathcal P^{\le 1}, \ {\rm GI}(\Lambda)) = (^\bot{\rm Epi}(\Lambda), \ {\rm Epi}(\Lambda))$.

\vskip5pt

{\bf Claim 1.} \ $(\Delta(_A\mathcal P, \ B\mbox{-}{\rm Mod}), \ \Delta(_A\mathcal P, \ B\mbox{-}{\rm Mod})^\bot)$ and \ $(\Delta(A\mbox{-}{\rm Mod}, \ _B\mathcal P), \ \Delta(A\mbox{-}{\rm Mod}, \ _B\mathcal P)^\bot)$ are generally different from the Gorenstein-projective cotorsion pair.

\vskip5pt

In fact, since \ $A$ and $B$ are not semisimple, \ $A\mbox{-}{\rm Mod}\ne \ _A\mathcal P$ and
\ $B\mbox{-}{\rm Mod}\ne \ _B\mathcal P$. By Lemma \ref{different1},
\ $\Delta(_A\mathcal P, \ B\mbox{-}{\rm Mod})\ne \Delta(A\mbox{-}{\rm Mod}, \ B\mbox{-}{\rm Mod}) = {\rm Mon}(\Lambda)$, and
$\Delta(A\mbox{-}{\rm Mod}, \ _B\mathcal P) \ne {\rm Mon}(\Lambda)$.
Thus
$$(\Delta(_A\mathcal P, \ B\mbox{-}{\rm Mod}), \ \Delta(_A\mathcal P, \ B\mbox{-}{\rm Mod})^\bot)
\ne ({\rm Mon}(\Lambda), \ {\rm Mon}(\Lambda)^\bot) = ({\rm GP}(\Lambda), \ \mathcal P^{\le 1})$$
and
$$(\Delta(A\mbox{-}{\rm Mod}, \ _B\mathcal P), \ \Delta(A\mbox{-}{\rm Mod}, \ _B\mathcal P)^\bot)\ne ({\rm Mon}(\Lambda), \ {\rm Mon}(\Lambda)^\bot) = ({\rm GP}(\Lambda), \ \mathcal P^{\le 1}).$$

\vskip5pt

{\bf Claim 2.} \ $(^\perp\nabla(A\mbox{-}{\rm Mod},  \ _B\mathcal I),  \ \nabla(A\mbox{-}{\rm Mod},  \ _B\mathcal I))$
and \ $(^\bot\nabla(_A\mathcal I, \ B\mbox{-}{\rm Mod}),  \ \nabla(_A\mathcal I, \ B\mbox{-}{\rm Mod}))$ are generally different from the Gorenstein-projective cotorsion pair.

\vskip5pt

In fact, by Lemma \ref{extadj2}(3),
\ ${\rm Z}_AA=\binom{A}{0}_{0,0}\in \ ^\perp\nabla(A\mbox{-}{\rm Mod},  \ _B\mathcal I)\cap \ ^\bot\nabla(_A\mathcal I, \ B\mbox{-}{\rm Mod})$.
But $f: M\otimes_AA\longrightarrow 0$ is not monic, so $\binom{A}{0}_{0,0}\notin \Delta(A\mbox{-}{\rm Mod}, \ B\mbox{-}{\rm Mod}) = {\rm Mon}(\Lambda)$.
This shows \ $^\perp\nabla(A\mbox{-}{\rm Mod},  \ _B\mathcal I)\ne {\rm Mon}(\Lambda)$ and \ $^\bot\nabla(_A\mathcal I, \ B\mbox{-}{\rm Mod})\ne {\rm Mon}(\Lambda)$.
Thus
$$(^\perp\nabla(A\mbox{-}{\rm Mod},  \ _B\mathcal I),  \ \nabla(A\mbox{-}{\rm Mod},  \ _B\mathcal I))
\ne ({\rm Mon}(\Lambda), \ {\rm Mon}(\Lambda)^\bot) = ({\rm GP}(\Lambda), \ \mathcal P^{\le 1})$$
and
$$(^\bot\nabla(_A\mathcal I, \ B\mbox{-}{\rm Mod}),  \ \nabla(_A\mathcal I, \ B\mbox{-}{\rm Mod}))\ne ({\rm Mon}(\Lambda), \ {\rm Mon}(\Lambda)^\bot) = ({\rm GP}(\Lambda), \ \mathcal P^{\le 1}).$$

\vskip5pt

{\bf Claim 3.} \ $(\Delta(_A\mathcal P, \ B\mbox{-}{\rm Mod}), \ \Delta(_A\mathcal P, \ B\mbox{-}{\rm Mod})^\bot)$ and \ $(\Delta(A\mbox{-}{\rm Mod}, \ _B\mathcal P), \ \Delta(A\mbox{-}{\rm Mod}, \ _B\mathcal P)^\bot)$ are generally different from the Gorenstein-injective cotorsion pair.

\vskip5pt

In fact, let $I$ be the injective envelope of \ $_AN$. By Lemma \ref{extadj2}(1),
\ ${\rm Z}_AI=\binom{I}{0}_{0,0}\in \Delta(_A\mathcal P, \ B\mbox{-}{\rm Mod})^\perp\cap \Delta(A\mbox{-}{\rm Mod}, \ _B\mathcal P)^\perp$.
But $\widetilde{g}: 0\longrightarrow \Hom_A(N, I)$ is not epic, so $\binom{I}{0}_{0,0}\notin \nabla(A\mbox{-}{\rm Mod}, \ B\mbox{-}{\rm Mod}) = {\rm Epi}(\Lambda)$.
This shows $\Delta(_A\mathcal P, \ B\mbox{-}{\rm Mod})^\perp\ne {\rm Epi}(\Lambda)$ and \ $\Delta(_A\mathcal P, \ B\mbox{-}{\rm Mod})^\perp\ne {\rm Epi}(\Lambda)$.
Thus
$$(\Delta(_A\mathcal P, \ B\mbox{-}{\rm Mod}), \ \Delta(_A\mathcal P, \ B\mbox{-}{\rm Mod})^\bot)
\ne (^\bot{\rm Epi}(\Lambda), \ {\rm Epi}(\Lambda)) = (_\Lambda \mathcal P^{\le 1}, \ {\rm GI}(\Lambda))$$
and
$$(\Delta(_A\mathcal P, \ B\mbox{-}{\rm Mod}), \ \Delta(_A\mathcal P, \ B\mbox{-}{\rm Mod})^\bot)\ne (^\bot{\rm Epi}(\Lambda), \ {\rm Epi}(\Lambda)) = (_\Lambda \mathcal P^{\le 1}, \ {\rm GI}(\Lambda)).$$

\vskip5pt

{\bf Claim 4.} \ $(^\perp\nabla(A\mbox{-}{\rm Mod},  \ _B\mathcal I),  \ \nabla(A\mbox{-}{\rm Mod},  \ _B\mathcal I))$ and \ $(^\bot\nabla(_A\mathcal I, \ B\mbox{-}{\rm Mod}),  \ \nabla(_A\mathcal I, \ B\mbox{-}{\rm Mod}))$ are generally different from the Gorenstein-injective cotorsion pair.

\vskip5pt

In fact, since \ $A$ and \ $B$ are not semisimple, \ $_A\mathcal I \ne A\mbox{-}{\rm Mod}$ \ and
\ $_B\mathcal I\ne B\mbox{-}{\rm Mod}$. By Lemma \ref{different1},
\ $\nabla(_A\mathcal I, \ B\mbox{-}{\rm Mod})\ne \nabla(A\mbox{-}{\rm Mod}, \ B\mbox{-}{\rm Mod}) = {\rm Epi}(\Lambda)$
\ and \ $\nabla(A\mbox{-}{\rm Mod},  \ _B\mathcal I)\ne {\rm Epi}(\Lambda)$.
Thus
$$(^\bot\nabla(_A\mathcal I, \ B\mbox{-}{\rm Mod}),  \ \nabla(_A\mathcal I, \ B\mbox{-}{\rm Mod}))\ne (^\bot{\rm Epi}(\Lambda), \ {\rm Epi}(\Lambda)) = (_\Lambda \mathcal P^{\le 1}, \ {\rm GI}(\Lambda))$$
and $$(^\perp\nabla(A\mbox{-}{\rm Mod},  \ _B\mathcal I),  \ \nabla(A\mbox{-}{\rm Mod},  \ _B\mathcal I))
\ne (^\bot{\rm Epi}(\Lambda), \ {\rm Epi}(\Lambda)) = (_\Lambda \mathcal P^{\le 1}, \ {\rm GI}(\Lambda)).$$

This completes the proof. \end{proof}

\vskip5pt

\noindent {\bf Proof of Proposition \ref{newII}.} \ By Proposition \ref{different2}, these six cotorsion pairs
are generally different from the projective cotorsion pair and the injective one.
By Lemmas \ref{nongor3} and \ref{nongor4}, they are generally different from the Gorenstein-projective cotorsion pair
and the Gorenstein-injective one. It remains to show that they are generally different from the flat cotorsion pair.

\vskip5pt

In fact, choose rings $A$ and $B$ such that they admit non flat modules (such a ring of course exists! See the proof of Proposition \ref{newI}). Taking non flat modules \ $_AX$ and  $_BY$, by Lemma \ref{flat}, all the following  $\Lambda$-modules are not flat:
$$\left(\begin{smallmatrix} X\\ 0\end{smallmatrix}\right)_{0, 0}, \ \ \left(\begin{smallmatrix} 0\\ Y\end{smallmatrix}\right)_{0,0},
\ \ {\rm T}_A X = \left(\begin{smallmatrix}X\\ M\otimes_AX\end{smallmatrix}\right)_{1,0}, \ \ {\rm T}_BY = \left(\begin{smallmatrix} N\otimes_BY\\ Y\end{smallmatrix}\right)_{0,1}.
$$

\vskip5pt

\noindent However,

\vskip5pt

$\bullet$ \ For the cotorsion pair \ $({\rm Mon}(\Lambda), \ {\rm Mon}(\Lambda)^\bot)$ , one has \
${\rm T}_A X = \binom{X}{M\otimes_AX}_{1,0}\in {\rm Mon}(\Lambda)$.

\vskip5pt

$\bullet$ \ For the cotorsion pair \ $(\Delta(_A\mathcal P, \ B\mbox{-}{\rm Mod}), \ \Delta(_A\mathcal P, \ B\mbox{-}{\rm Mod})^\perp)$,
one has \ ${\rm T}_BY = \binom{N\otimes_BY}{Y}_{0,1}\in \Delta(_A\mathcal P, \ B\mbox{-}{\rm Mod})$.

\vskip5pt

$\bullet$ \ For the cotorsion pair \ $(^\perp\nabla(A\mbox{-}{\rm Mod},  \ _B\mathcal I),  \ \nabla(A\mbox{-}{\rm Mod},  \ _B\mathcal I))$, one has
\ $\binom{0}{Y}_{0,0}\in \ ^\perp\nabla(A\mbox{-}{\rm Mod}, \ _B\mathcal I)$, by Lemma \ref{extadj2}(4).

\vskip5pt

$\bullet$ \ For the cotorsion pair \ $(\Delta(A\mbox{-}{\rm Mod}, \ _B\mathcal P), \ \Delta(A\mbox{-}{\rm Mod}, \ _B\mathcal P)^\bot)$, one has \
${\rm T}_A X = \binom{X}{M\otimes_AX}_{1,0}\in \Delta(A\mbox{-}{\rm Mod}, \ _B\mathcal P)$.

\vskip5pt

$\bullet$ \ For the cotorsion pair \ $(^\bot\nabla(_A\mathcal I, \ B\mbox{-}{\rm Mod}),  \ \nabla(_A\mathcal I, \ B\mbox{-}{\rm Mod}))$, one has
\ $\binom{X}{0}_{0, 0}\in \ ^\bot\nabla(_A\mathcal I, \ B\mbox{-}{\rm Mod})$, by Lemma \ref{extadj2}(3).

\vskip5pt

In conclusion,  the five cotorsion pairs are different from the flat cotorsion pair.

\vskip5pt

Finally, to see $(^\bot{\rm Epi}(\Lambda), \ {\rm Epi}(\Lambda))$ is generally different from the flat cotorsion pair, choose a ring $A$ such that $A$ admits a flat (left) module which is not projective.

(For example, the ring $\Bbb Z$ of integers has a flat module $_{\Bbb Z}\Bbb Q$, but $_{\Bbb Z}\Bbb Q$ is not projective, or equivalently, $_{\Bbb Z}\Bbb Q$ is not free.)

Let
$\Lambda = \left(\begin{smallmatrix} A & 0 \\ 0 & A \end{smallmatrix}\right) = A\times A$. Then ${\rm Epi}(\Lambda) = \Lambda\mbox{-}{\rm Mod}$, and hence
$^\bot{\rm Epi}(\Lambda) = \ _\Lambda\mathcal P$. By the choice of $A$,  \ $^\bot{\rm Epi}(\Lambda) = \ _\Lambda\mathcal P$ is strictly contained in $_\Lambda{\rm F}$, the class of flat $\Lambda$-modules. It follows that
$(^\bot{\rm Epi}(\Lambda), \ {\rm Epi}(\Lambda))$ is generally different from the flat cotorsion pair. \hfill $\square$

\section{\bf Abelian model structures on Morita rings}

Based on results in the previous sections, we will see how abelian model structures on $A$-Mod and $B$-Mod induce abelian model structures on Morita rings; and
we will see that all these abelian model structures obtained on Morita rings are pairwise generally different, and they are generally different from the six well-known abelian model structures (cf. Proposition \ref{newmodel}).

\subsection{Cofibrantly generated Hovey triples in Morita rings}

Let $R$ be a ring. Recall that a Hovey triple $(\mathcal C, \mathcal F, \mathcal W)$  in  $R\mbox{-}{\rm Mod}$ is cofibrantly generated,
if both the cotorsion pairs $(\mathcal C\cap \mathcal W, \mathcal F)$ and $(\mathcal C, \mathcal F\cap\mathcal W)$ are cogenerated by sets.
If a model structure on $R\mbox{-}{\rm Mod}$ is clear in context, we write Quillen's homotopy category simply as ${\rm Ho}(R)$.

\vskip5pt

\begin{thm}\label{cofibrantlygenHtriple} \ Let \ $\Lambda=\left(\begin{smallmatrix} A & N \\
M & B\end{smallmatrix}\right)$ be a Morita ring with  $M\otimes_AN=0 = N\otimes_BM$,   \ \ $(\mathcal U', \ \mathcal X, \ \mathcal W_1)$ and \ $(\mathcal V', \ \mathcal Y, \ \mathcal W_2)$ cofibrantly generated
Hovey triples in $A\mbox{-}{\rm Mod}$ and  $B\mbox{-}{\rm Mod}$, respectively.

\vskip5pt

$(1)$ \ Suppose that  \ ${\rm Tor}^A_1(M, \ \mathcal U') = 0 = {\rm Tor}^B_1(N, \ \mathcal V')$,  \  $M\otimes_A\mathcal U' \subseteq \mathcal Y\cap \mathcal W_2$ and \ $N\otimes_B\mathcal V' \subseteq \mathcal X\cap \mathcal W_1.$ Then
$$({\rm T}_A(\mathcal U')\oplus {\rm T}_B(\mathcal V'), \ \left(\begin{smallmatrix} \mathcal X \\ \mathcal Y\end{smallmatrix}\right), \ \left(\begin{smallmatrix} \mathcal W_1 \\ \mathcal W_2\end{smallmatrix}\right))$$
is a cofibrantly generated Hovey triple in $\Lambda\mbox{-}{\rm Mod};$ and it is hereditary with \ ${\rm Ho}(\Lambda) \cong   {\rm Ho}(A)\oplus {\rm Ho}(B),$ provided that  \ $(\mathcal U',  \mathcal X,  \mathcal W_1)$ and \ $(\mathcal V',  \mathcal Y,  \mathcal W_2)$ are hereditary.

\vskip5pt

$(2)$ \ Suppose that   ${\rm Ext}_B^1(M,  \mathcal Y) = 0 = {\rm Ext}_A^1(N,  \mathcal X)$, $\Hom_B(M, \mathcal Y) \subseteq \mathcal U'\cap \mathcal W_1$ and  $\Hom_A(N, \mathcal X) \subseteq \mathcal V'\cap \mathcal W_2$. Then
$$(\left(\begin{smallmatrix} \mathcal U' \\ \mathcal V'\end{smallmatrix}\right),  \ {\rm H}_A(\mathcal X)\oplus {\rm H}_B(\mathcal Y), \ \left(\begin{smallmatrix} \mathcal W_1 \\ \mathcal W_2\end{smallmatrix}\right))$$
is a cofibrantly generated
Hovey triple$;$ and it is hereditary with  ${\rm Ho}(\Lambda) \cong   {\rm Ho}(A)\oplus {\rm Ho}(B)$, provided that $(\mathcal U', \mathcal X,  \mathcal W_1)$ and  $(\mathcal V',  \mathcal Y, \mathcal W_2)$ are hereditary.
\end{thm}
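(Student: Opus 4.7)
My plan is to obtain (1) by assembling three ingredients from Sections 3--5: Theorem \ref{identify1}(1) gives an explicit description of both cotorsion pairs that should form the desired Hovey triple, Proposition \ref{generatingcomplete}(1) transports cogenerating sets across the constructions, and Theorem \ref{Ho} identifies Quillen's homotopy category in the hereditary case. The argument for (2) will be entirely parallel, replacing ${\rm T}_A,{\rm T}_B$ by ${\rm H}_A,{\rm H}_B$ and invoking Theorem \ref{identify1}(2). Throughout, the given Hovey triples unpack into two complete hereditary cotorsion pairs $(\mathcal U'\cap\mathcal W_1,\mathcal X)$, $(\mathcal U',\mathcal X\cap\mathcal W_1)$ in $A\mbox{-}{\rm Mod}$ and $(\mathcal V'\cap\mathcal W_2,\mathcal Y)$, $(\mathcal V',\mathcal Y\cap\mathcal W_2)$ in $B\mbox{-}{\rm Mod}$; by the cofibrant generation hypothesis, each is cogenerated by a set.

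First I would apply Theorem \ref{identify1}(1) twice. Applied to $(\mathcal U'\cap\mathcal W_1,\mathcal X)$ and $(\mathcal V'\cap\mathcal W_2,\mathcal Y)$ it yields the cotorsion pair
$$\bigl({\rm T}_A(\mathcal U'\cap\mathcal W_1)\oplus{\rm T}_B(\mathcal V'\cap\mathcal W_2),\ \left(\begin{smallmatrix}\mathcal X\\ \mathcal Y\end{smallmatrix}\right)\bigr);$$
applied to $(\mathcal U',\mathcal X\cap\mathcal W_1)$ and $(\mathcal V',\mathcal Y\cap\mathcal W_2)$ it yields
$$\bigl({\rm T}_A(\mathcal U')\oplus{\rm T}_B(\mathcal V'),\ \left(\begin{smallmatrix}\mathcal X\cap\mathcal W_1\\ \mathcal Y\cap\mathcal W_2\end{smallmatrix}\right)\bigr).$$
The required ${\rm Tor}$-vanishings pass to subclasses, and the tensor-inclusion hypotheses $M\otimes_A\mathcal U'\subseteq\mathcal Y\cap\mathcal W_2$ and $N\otimes_B\mathcal V'\subseteq\mathcal X\cap\mathcal W_1$ are already stronger than what Theorem \ref{identify1}(1) asks for in both invocations. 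Proposition \ref{generatingcomplete}(1) then produces explicit cogenerating sets of the form ${\rm T}_A(S)\cup{\rm T}_B(S')$ for each pair, so both pairs are cogenerated by sets and, in particular, complete.

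Next I would establish the key identification
$$\bigl({\rm T}_A(\mathcal U')\oplus{\rm T}_B(\mathcal V')\bigr)\cap\left(\begin{smallmatrix}\mathcal W_1\\ \mathcal W_2\end{smallmatrix}\right)={\rm T}_A(\mathcal U'\cap\mathcal W_1)\oplus{\rm T}_B(\mathcal V'\cap\mathcal W_2),$$
so that the two cotorsion pairs above are precisely $(\mathcal C\cap\mathcal W,\mathcal F)$ and $(\mathcal C,\mathcal F\cap\mathcal W)$ for the proposed Hovey triple. The inclusion $\supseteq$ is immediate from the tensor-inclusion hypotheses; for $\subseteq$, the module ${\rm T}_AU\oplus{\rm T}_BV$ has top component $U\oplus(N\otimes_BV)$ and bottom component $(M\otimes_AU)\oplus V$, and since $N\otimes_BV\in\mathcal W_1$ and $M\otimes_AU\in\mathcal W_2$ by hypothesis, thickness of $\mathcal W_1$ and $\mathcal W_2$ (specifically closure under direct summands) forces $U\in\mathcal W_1$ and $V\in\mathcal W_2$. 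Thickness of $\left(\begin{smallmatrix}\mathcal W_1\\ \mathcal W_2\end{smallmatrix}\right)$ in $\Lambda\mbox{-}{\rm Mod}$ is immediate from componentwise exactness, so the Hovey triple is verified and, by construction, cofibrantly generated.

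Finally, for the hereditary statement I would invoke the iff-part of Theorem \ref{ctp1}(1), noting that after the identification above both cotorsion pairs are of the form $(^\perp\left(\begin{smallmatrix}\mathcal X'\\ \mathcal Y'\end{smallmatrix}\right),\left(\begin{smallmatrix}\mathcal X'\\ \mathcal Y'\end{smallmatrix}\right))$, so heredity reduces to that of the constituent cotorsion pairs on $A$-Mod and $B$-Mod. For the homotopy category, Theorem \ref{Ho} gives ${\rm Ho}(\Lambda)\cong(\mathcal C\cap\mathcal F)/(\mathcal C\cap\mathcal F\cap\mathcal W)$, and using the same direct-sum identification as above one computes $\mathcal C\cap\mathcal F={\rm T}_A(\mathcal U'\cap\mathcal X)\oplus{\rm T}_B(\mathcal V'\cap\mathcal Y)$ together with $\mathcal C\cap\mathcal F\cap\mathcal W={\rm T}_A(\mathcal U'\cap\mathcal X\cap\mathcal W_1)\oplus{\rm T}_B(\mathcal V'\cap\mathcal Y\cap\mathcal W_2)$, yielding ${\rm Ho}(\Lambda)\cong{\rm Ho}(A)\oplus{\rm Ho}(B)$ after applying Theorem \ref{Ho} to each constituent Hovey triple. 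The main obstacle, or at least the only step requiring real work, will be the intersection identifications in the third and fourth paragraphs; every other ingredient is packaged in earlier results.
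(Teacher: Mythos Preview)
Your proposal is correct and follows essentially the same route as the paper: apply Theorem~\ref{identify1}(1) to both constituent cotorsion pairs to obtain the explicit form $({\rm T}_A(-)\oplus{\rm T}_B(-),\ \binom{-}{-})$, invoke Proposition~\ref{generatingcomplete}(1) for the cogenerating sets, verify the intersection identity and thickness of $\binom{\mathcal W_1}{\mathcal W_2}$, and compute ${\rm Ho}(\Lambda)$ via Theorem~\ref{Ho}. One small correction for part (2): the cogenerating sets there come from Proposition~\ref{generatingcomplete}(2), which produces sets of the form ${\rm Z}_A(S)\cup{\rm Z}_B(S')$ (after identifying $\binom{\mathcal U'}{\mathcal V'}^\perp$ with $\nabla(\mathcal X',\mathcal Y')$ via Theorem~\ref{identify1}(2)), not via ${\rm H}_A,{\rm H}_B$ as your parallelism suggests; the ${\rm H}$-functors describe the fibrant class, not the cogenerating set.
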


\noindent{\bf Proof.} \  Put \ $\mathcal U: = \mathcal U'\cap \mathcal W_1,  \ \ \mathcal X': = \mathcal X\cap \mathcal W_1, \ \ \mathcal V: = \mathcal V'\cap \mathcal W_2, \ \ \mathcal Y':= \mathcal Y\cap \mathcal W_2.$

\vskip5pt

Since \ $(\mathcal U', \ \mathcal X, \ \mathcal W_1)$ is a cofibrantly generated Hovey triple in $A$-{\rm Mod},
\ $(\mathcal U, \ \mathcal X)$ and $(\mathcal U', \ \mathcal X')$ are cotorsion pairs in $A$-{\rm Mod}, cogenerated by, say,
set  $S_1$ and set $S_1'$, respectively. \ Similarly,  \ $(\mathcal V, \ \mathcal Y)$ and $(\mathcal V', \ \mathcal Y')$ are cotorsion pairs in $B$-{\rm Mod}, cogenerated by, say,  set $S_2$ and set $S_2'$, respectively.

\vskip5pt

(1) \ Since \ ${\rm Tor}^A_1(M, \ \mathcal U) \subseteq {\rm Tor}^A_1(M, \ \mathcal U') = 0$
and \ ${\rm Tor}^B_1(N, \ \mathcal V) \subseteq {\rm Tor}^B_1(N, \ \mathcal V') = 0$,
it follows from Theorem \ref{ctp1}(1) that \ $(^\perp\binom{\mathcal X}{\mathcal Y}, \ \binom{\mathcal X}{\mathcal Y})$ is a cotorsion pair in $\Lambda$-Mod; and it is  cogenerated by set ${\rm T}_A(S_1)\oplus {\rm T}_B(S_2)$,  by Proposition \ref{generatingcomplete}(1).

\vskip5pt

Since \ $M\otimes_A\mathcal U \subseteq M\otimes_A\mathcal U' \subseteq \mathcal Y$ \  and \
$N\otimes_B\mathcal V \subseteq N\otimes_B\mathcal V' \subseteq \mathcal X,$  by Theorem \ref{identify1}(1), \ $^\perp\binom{\mathcal X}{\mathcal Y} = {\rm T}_A(\mathcal U)\oplus {\rm T}_B(\mathcal V)$.
Thus, \ $({\rm T}_A(\mathcal U)\oplus {\rm T}_B(\mathcal V), \ \left(\begin{smallmatrix} \mathcal X \\ \mathcal Y\end{smallmatrix}\right))$ is
a cotorsion pair, cogenerated by set ${\rm T}_A(S_1)\oplus {\rm T}_B(S_2)$.

\vskip5pt

Similarly, \ $({\rm T}_A(\mathcal U')\oplus {\rm T}_B(\mathcal V'), \ \left(\begin{smallmatrix} \mathcal X' \\ \mathcal Y'\end{smallmatrix}\right))$ is a cotorsion pair,
cogenerated by set ${\rm T}_A(S'_1)\oplus {\rm T}_B(S'_2)$.

\vskip5pt

Since \
$M\otimes_A\mathcal U' \subseteq \mathcal W_2$ \ and \ $N\otimes_B\mathcal V'\subseteq \mathcal W_1$, one has
$$({\rm T}_A(\mathcal U')\oplus {\rm T}_B(\mathcal V'))\cap \left(\begin{smallmatrix} \mathcal W_1 \\ \mathcal W_2\end{smallmatrix}\right)
= {\rm T}_A(\mathcal U'\cap\mathcal W_1)\oplus {\rm T}_B(\mathcal V'\cap \mathcal W_2) = {\rm T}_A(\mathcal U)\oplus {\rm T}_B(\mathcal V).$$
Also,  \ $\left(\begin{smallmatrix} \mathcal X \\ \mathcal Y\end{smallmatrix}\right)\cap \left(\begin{smallmatrix} \mathcal W_1 \\ \mathcal W_2\end{smallmatrix}\right)
= \left(\begin{smallmatrix} \mathcal X\cap \mathcal W_1 \\ \mathcal Y\cap \mathcal W_2\end{smallmatrix}\right) = \left(\begin{smallmatrix} \mathcal X' \\ \mathcal Y'\end{smallmatrix}\right).$
Since \ $\mathcal W_1$ and \ $\mathcal W_2$ are thick, \ $\left(\begin{smallmatrix} \mathcal W_1 \\ \mathcal W_2\end{smallmatrix}\right)$
is thick.
Thus
$$({\rm T}_A(\mathcal U')\oplus {\rm T}_B(\mathcal V'), \ \left(\begin{smallmatrix} \mathcal X\\ \mathcal Y\end{smallmatrix}\right), \ \left(\begin{smallmatrix} \mathcal W_1 \\ \mathcal W_2\end{smallmatrix}\right))$$
is a cofibrantly generated Hovey triple.

\vskip5pt

If \ $(\mathcal U',  \mathcal X,  \mathcal W_1)$ and \ $(\mathcal V',  \mathcal Y,  \mathcal W_2)$ are hereditary Hovey triples, then so is \ $({\rm T}_A(\mathcal U')\oplus {\rm T}_B(\mathcal V'), \ \left(\begin{smallmatrix} \mathcal X \\ \mathcal Y\end{smallmatrix}\right), \ \left(\begin{smallmatrix} \mathcal W_1 \\ \mathcal W_2\end{smallmatrix}\right))$. Since \ $M\otimes_A\mathcal U'\subseteq \mathcal Y\cap \mathcal W_2$ and \ $N\otimes_B\mathcal V'\subseteq \mathcal X\cap \mathcal W_1,$
by  Theorem \ref{Ho} one has
\begin{align*}{\rm Ho}(\Lambda)&
\cong (({\rm T}_A(\mathcal U')\oplus {\rm T}_B(\mathcal V'))\cap \left(\begin{smallmatrix} \mathcal X \\ \mathcal Y\end{smallmatrix}\right))/(({\rm T}_A(\mathcal U')\oplus {\rm T}_B(\mathcal V'))
\cap \left(\begin{smallmatrix} \mathcal X\cap \mathcal W_1 \\ \mathcal Y\cap\mathcal W_2\end{smallmatrix}\right))
\\ & \cong  ({\rm T}_A(\mathcal U'\cap \mathcal X)\oplus {\rm T}_B(\mathcal V'\cap\mathcal Y))/(({\rm T}_A(\mathcal U\cap\mathcal X)\oplus {\rm T}_B(\mathcal V\cap \mathcal Y))
\\ & \cong [{\rm T}_A(\mathcal U'\cap\mathcal X)/ {\rm T}_A(\mathcal U\cap \mathcal X)]\oplus [{\rm T}_B(\mathcal V'\cap\mathcal Y)/ {\rm T}_B(\mathcal V\cap \mathcal Y)]
\\ & \cong [(\mathcal U'\cap \mathcal X)/(\mathcal U\cap \mathcal X)]\oplus [(\mathcal V'\cap \mathcal Y)/(\mathcal V\cap \mathcal Y)]
\\& =   {\rm Ho}(A)\oplus {\rm Ho}(B).\end{align*}

\vskip5pt

$(2)$  Since \ ${\rm Ext}_A^1(M, \ \mathcal Y') \subseteq {\rm Ext}_A^1(M, \ \mathcal Y) = 0$
and \ ${\rm Ext}_B^1(N, \ \mathcal X') \subseteq {\rm Ext}_B^1(N, \ \mathcal X) = 0$, by Theorem \ref{ctp1}(2), \ $(\binom{\mathcal U'}{\mathcal V'}, \ \binom{\mathcal U'}{\mathcal V'}^\perp)$ is a cotorsion pair in $\Lambda$-Mod.

\vskip5pt

Since \ ${\rm Hom}_B(M,  \mathcal Y') \subseteq {\rm Hom}_B(M,  \mathcal Y) \subseteq \mathcal U'$ \  and \
${\rm Hom}_A(N,  \mathcal X') \subseteq {\rm Hom}_A(N,  \mathcal X) \subseteq \mathcal V',$  by Theorem \ref{identify1}(2) one has
$$(\left(\begin{smallmatrix} \mathcal U' \\ \mathcal V'\end{smallmatrix}\right), \ \left(\begin{smallmatrix} \mathcal U' \\ \mathcal V'\end{smallmatrix}\right)^\perp)= (^\perp\nabla(\mathcal X', \ \mathcal Y'), \ \nabla(\mathcal X', \ \mathcal Y'))$$
and  $\binom{\mathcal U'}{\mathcal V'}^\perp  = {\rm H}_A(\mathcal X')\oplus {\rm H}_B(\mathcal Y')$. By Proposition \ref{generatingcomplete}(2), $(^\perp\nabla(\mathcal X', \ \mathcal Y'), \ \nabla(\mathcal X', \ \mathcal Y'))$
is cogenerated by set ${\rm Z}_A(S'_1)\oplus {\rm Z}_B(S'_2)$.
Thus, \ $(\left(\begin{smallmatrix} \mathcal U' \\ \mathcal V'\end{smallmatrix}\right), \ \ {\rm H}_A(\mathcal X')\oplus {\rm H}_B(\mathcal Y'))$ is
a cotorsion pair, cogenerated by set ${\rm Z}_A(S'_1)\oplus {\rm Z}_B(S'_2)$.

\vskip5pt

Similarly, \ $(\left(\begin{smallmatrix} \mathcal U \\ \mathcal V\end{smallmatrix}\right), \ \ {\rm H}_A(\mathcal X)\oplus {\rm H}_B(\mathcal Y))$ is
a cotorsion pair, cogenerated by set ${\rm Z}_A(S_1)\oplus {\rm Z}_B(S_2)$.

\vskip5pt

Note that  \ $\left(\begin{smallmatrix} \mathcal U' \\ \mathcal V'\end{smallmatrix}\right)\cap \left(\begin{smallmatrix} \mathcal W_1 \\ \mathcal W_2\end{smallmatrix}\right)
= \left(\begin{smallmatrix} \mathcal U'\cap \mathcal W_1 \\ \mathcal V'\cap \mathcal W_2\end{smallmatrix}\right) = \left(\begin{smallmatrix} \mathcal U \\ \mathcal V\end{smallmatrix}\right).$
Since \
${\rm Hom}_A(N, \mathcal X) \subseteq \mathcal W_2$ \ and \ ${\rm Hom}_B(M, \mathcal Y) \subseteq \mathcal W_1$, one has
$$({\rm H}_A(\mathcal X)\oplus {\rm H}_B(\mathcal Y))\cap \left(\begin{smallmatrix} \mathcal W_1 \\ \mathcal W_2\end{smallmatrix}\right)
= {\rm H}_A(\mathcal X\cap\mathcal W_1)\oplus {\rm H}_B(\mathcal Y\cap \mathcal W_2) = {\rm H}_A(\mathcal X')\oplus {\rm H}_B(\mathcal Y').$$

Since \ $\mathcal W_1$ and \ $\mathcal W_2$ are thick, \ $\left(\begin{smallmatrix} \mathcal W_1 \\ \mathcal W_2\end{smallmatrix}\right)$
is thick.
Thus
$$(\left(\begin{smallmatrix} \mathcal U'\\ \mathcal V'\end{smallmatrix}\right),  \ {\rm H}_A(\mathcal X)\oplus {\rm H}_B(\mathcal Y),  \ \left(\begin{smallmatrix} \mathcal W_1 \\ \mathcal W_2\end{smallmatrix}\right))$$
is a cofibrantly generated Hovey triple.

\vskip5pt

If \ $(\mathcal U',  \mathcal X,  \mathcal W_1)$ and \ $(\mathcal V',  \mathcal Y,  \mathcal W_2)$ are hereditary, then  \ $(\left(\begin{smallmatrix} \mathcal U'\\ \mathcal V'\end{smallmatrix}\right),  \ {\rm H}_A(\mathcal X)\oplus {\rm H}_B(\mathcal Y),  \ \left(\begin{smallmatrix} \mathcal W_1 \\ \mathcal W_2\end{smallmatrix}\right))$
is  hereditary. Since \ ${\rm Hom}_A(N, \mathcal X) \subseteq \mathcal V'$ and  \
${\rm Hom}_B(M, \mathcal Y) \subseteq \mathcal U',$
by  Theorem \ref{Ho} one has
\begin{align*}\ \ \ \ \ \ \ \ \ \  \ \ \ \ \  \ \ \ \ \ \ {\rm Ho}(\Lambda)&
\cong (\left(\begin{smallmatrix} \mathcal U' \\ \mathcal V'\end{smallmatrix}\right)\cap ({\rm H}_A(\mathcal X)\oplus {\rm H}_B(\mathcal Y)))/(\left(\begin{smallmatrix} \mathcal U \\ \mathcal V\end{smallmatrix}\right)\cap ({\rm H}_A(\mathcal X)\oplus {\rm H}_B(\mathcal Y)))
\\ & \cong  ({\rm H}_A(\mathcal U'\cap \mathcal X)\oplus {\rm H}_B(\mathcal V'\cap\mathcal Y))/(({\rm H}_A(\mathcal U\cap\mathcal X)\oplus {\rm H}_B(\mathcal V\cap \mathcal Y))
\\ & \cong [{\rm H}_A(\mathcal U'\cap\mathcal X)/ {\rm H}_A(\mathcal U\cap \mathcal X)]\oplus [{\rm H}_B(\mathcal V'\cap\mathcal Y)/ {\rm H}_B(\mathcal V\cap \mathcal Y)]
\\ & \cong [(\mathcal U'\cap \mathcal X)/(\mathcal U\cap \mathcal X)]\oplus [(\mathcal V'\cap \mathcal Y)/(\mathcal V\cap \mathcal Y)]
\\& =   {\rm Ho}(A)\oplus {\rm Ho}(B). \ \ \ \ \ \ \ \ \ \  \ \ \ \ \ \  \ \ \ \ \  \ \ \ \ \ \  \ \ \ \ \  \ \ \ \ \ \  \ \  \ \ \ \ \  \ \ \ \ \ \ \ \ \  \ \ \ \ \ \  \ \ \ \ \  \ \ \ \ \ \ \square \end{align*}

From Theorem \ref{cofibrantlygenHtriple} and its proof, one easily sees the following.

\begin{cor}\label{cofibrantlygenGHtriple} \ Let $\Lambda=\left(\begin{smallmatrix} A & N \\
M & B\end{smallmatrix}\right)$ be a Morita ring with  $M\otimes_AN=0 = N\otimes_BM$.
Let   $(\mathcal U,  \mathcal X)$ and $(\mathcal U',  \mathcal X')$ be
compatible hereditary cotorsion pairs in $A$-{\rm Mod}, cogenerated by sets $S_1$ and $S_1'$, respectively, with Gillespie-Hovey triple  $(\mathcal U',  \mathcal X,  \mathcal W_1)$.
Let  $(\mathcal V,  \mathcal Y)$ and $(\mathcal V',  \mathcal Y')$ be
compatible hereditary cotorsion pairs in $B$-{\rm Mod}, cogenerated by sets $S_2$ and $S_2'$, respectively, with Gillespie-Hovey triple  $(\mathcal V',  \mathcal Y,  \mathcal W_2)$.

\vskip5pt

$(1)$ \ Assume that   ${\rm Tor}^A_1(M, \ \mathcal U') = 0 = {\rm Tor}^B_1(N, \ \mathcal V'),  \ \ M\otimes_A\mathcal U' \subseteq \mathcal Y', \
N\otimes_B\mathcal V' \subseteq \mathcal X'.$ Then
$$({\rm T}_A(\mathcal U)\oplus {\rm T}_B(\mathcal V), \ \left(\begin{smallmatrix} \mathcal X \\ \mathcal Y\end{smallmatrix}\right))
\ \ \ \mbox{and} \ \ \
({\rm T}_A(\mathcal U')\oplus {\rm T}_B(\mathcal V'), \ \left(\begin{smallmatrix} \mathcal X' \\ \mathcal Y'\end{smallmatrix}\right))$$
are compatible complete hereditary  cotorsion pairs in \ $\Lambda\mbox{-}{\rm Mod}$, with Gillespie-Hovey triple
$$({\rm T}_A(\mathcal U')\oplus {\rm T}_B(\mathcal V'), \ \left(\begin{smallmatrix} \mathcal X \\ \mathcal Y\end{smallmatrix}\right), \ \left(\begin{smallmatrix} \mathcal W_1 \\ \mathcal W_2\end{smallmatrix}\right))$$
and \ ${\rm Ho}(\Lambda) \cong
{\rm Ho}(A)\oplus {\rm Ho}(B).$

\vskip5pt

$(2)$ \ Assume that   ${\rm Ext}_B^1(M,  \mathcal Y) = 0 = {\rm Ext}_A^1(N,  \mathcal X)$, $\Hom_B(M, \mathcal Y) \subseteq \mathcal U$ and  $\Hom_A(N, \mathcal X) \subseteq \mathcal V$. Then
$$(\left(\begin{smallmatrix} \mathcal U \\ \mathcal V\end{smallmatrix}\right), \ {\rm H}_A(\mathcal X)\oplus {\rm H}_B(\mathcal Y))
\ \ \ \mbox{and} \ \ \
(\left(\begin{smallmatrix} \mathcal U' \\ \mathcal V'\end{smallmatrix}\right), \ {\rm H}_A(\mathcal X')\oplus {\rm H}_B(\mathcal Y'))$$
are compatible complete hereditary  cotorsion pairs in \ $\Lambda\mbox{-}{\rm Mod}$, with Gillespie-Hovey triple
$$(\left(\begin{smallmatrix} \mathcal U' \\ \mathcal V'\end{smallmatrix}\right), \ {\rm H}_A(\mathcal X)\oplus {\rm H}_B(\mathcal Y), \ \left(\begin{smallmatrix} \mathcal W_1 \\ \mathcal W_2\end{smallmatrix}\right))$$
and \ ${\rm Ho}(\Lambda) \cong {\rm Ho}(A)\oplus {\rm Ho}(B).$
\end{cor}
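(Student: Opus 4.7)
The plan is to deduce the corollary directly from Theorem \ref{cofibrantlygenHtriple} by unpacking the language of Gillespie-Hovey triples. First I would recall from Theorem \ref{GHtriple} that a Gillespie-Hovey triple in an abelian category is exactly a hereditary Hovey triple, so the data $(\mathcal U', \mathcal X, \mathcal W_1)$ in $A$-\textrm{Mod} is a hereditary Hovey triple whose two underlying complete hereditary cotorsion pairs are $(\mathcal U' \cap \mathcal W_1,\ \mathcal X)$ and $(\mathcal U',\ \mathcal X \cap \mathcal W_1)$. Uniqueness of these pairs forces the identifications
\[
\mathcal U = \mathcal U' \cap \mathcal W_1, \qquad \mathcal X' = \mathcal X \cap \mathcal W_1,
\]
and symmetrically $\mathcal V = \mathcal V' \cap \mathcal W_2$, $\mathcal Y' = \mathcal Y \cap \mathcal W_2$ on the $B$-side. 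Since the two cotorsion pairs on each side are generated by the sets $S_1, S_1'$ (respectively $S_2, S_2'$), the Hovey triples $(\mathcal U', \mathcal X, \mathcal W_1)$ and $(\mathcal V', \mathcal Y, \mathcal W_2)$ are cofibrantly generated in the sense of Subsection 2.8.

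For part (1), I would then verify the hypotheses of Theorem \ref{cofibrantlygenHtriple}(1). The Tor vanishing is hypothesis, and the containments become
\[
M\otimes_A \mathcal U' \subseteq \mathcal Y' = \mathcal Y \cap \mathcal W_2, \qquad N\otimes_B \mathcal V' \subseteq \mathcal X' = \mathcal X \cap \mathcal W_1,
\]
which is precisely the form required. Theorem \ref{cofibrantlygenHtriple}(1) thus produces the cofibrantly generated hereditary Hovey triple
\[
\left({\rm T}_A(\mathcal U') \oplus {\rm T}_B(\mathcal V'),\ \binom{\mathcal X}{\mathcal Y},\ \binom{\mathcal W_1}{\mathcal W_2}\right)
\]
in $\Lambda$-\textrm{Mod}, together with the isomorphism ${\rm Ho}(\Lambda) \cong {\rm Ho}(A) \oplus {\rm Ho}(B)$. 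Its two underlying complete hereditary cotorsion pairs (automatically compatible by Theorem \ref{GHtriple}) are $(\mathcal C \cap \mathcal W,\ \mathcal F)$ and $(\mathcal C,\ \mathcal F \cap \mathcal W)$; computing
\[
\big({\rm T}_A(\mathcal U') \oplus {\rm T}_B(\mathcal V')\big) \cap \binom{\mathcal W_1}{\mathcal W_2} = {\rm T}_A(\mathcal U) \oplus {\rm T}_B(\mathcal V), \qquad \binom{\mathcal X}{\mathcal Y} \cap \binom{\mathcal W_1}{\mathcal W_2} = \binom{\mathcal X'}{\mathcal Y'},
\]
recovers exactly the two cotorsion pairs asserted in the corollary. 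By Theorem \ref{GHtriple}, the Hovey triple above is then the Gillespie-Hovey triple of this compatible pair, completing part (1).

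Part (2) is formally dual and I would prove it by applying Theorem \ref{cofibrantlygenHtriple}(2) to the same Hovey-triple data. The Ext vanishing is hypothesis, and the containments required there, namely $\Hom_B(M, \mathcal Y) \subseteq \mathcal U' \cap \mathcal W_1$ and $\Hom_A(N, \mathcal X) \subseteq \mathcal V' \cap \mathcal W_2$, coincide with the given $\Hom_B(M, \mathcal Y) \subseteq \mathcal U$ and $\Hom_A(N, \mathcal X) \subseteq \mathcal V$ via the identifications $\mathcal U = \mathcal U' \cap \mathcal W_1$ and $\mathcal V = \mathcal V' \cap \mathcal W_2$ recorded in the first paragraph. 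Reading off the two underlying cotorsion pairs of the resulting Hovey triple $\bigl(\binom{\mathcal U'}{\mathcal V'},\ {\rm H}_A(\mathcal X) \oplus {\rm H}_B(\mathcal Y),\ \binom{\mathcal W_1}{\mathcal W_2}\bigr)$ via the analogous intersection computation yields the stated pairs, and Theorem \ref{GHtriple} identifies the triple as the asserted Gillespie-Hovey triple.

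The only genuine subtlety, which is not really a serious obstacle, is making the identifications $\mathcal U = \mathcal U' \cap \mathcal W_1$, $\mathcal X' = \mathcal X \cap \mathcal W_1$, $\mathcal V = \mathcal V' \cap \mathcal W_2$, $\mathcal Y' = \mathcal Y \cap \mathcal W_2$ explicit at the outset, so that the hypotheses of the corollary (phrased via $\mathcal X'$, $\mathcal Y'$, $\mathcal U$, $\mathcal V$) translate verbatim into the hypotheses of Theorem \ref{cofibrantlygenHtriple} (phrased via $\mathcal X \cap \mathcal W_1$, $\mathcal Y \cap \mathcal W_2$, $\mathcal U' \cap \mathcal W_1$, $\mathcal V' \cap \mathcal W_2$). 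Once this bookkeeping is done, the corollary is essentially a translation of the theorem into the Gillespie-Hovey language.
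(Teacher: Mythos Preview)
Your proposal is correct and follows exactly the approach the paper indicates: the paper simply states that the corollary follows ``from Theorem \ref{cofibrantlygenHtriple} and its proof,'' and your argument is a faithful unpacking of that claim. In particular, your identifications $\mathcal U = \mathcal U' \cap \mathcal W_1$, $\mathcal X' = \mathcal X \cap \mathcal W_1$ (and their $B$-side analogues) and the intersection computations are precisely what appear in the proof of Theorem \ref{cofibrantlygenHtriple}, so nothing additional is needed.
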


\subsection{Hovey triples in Morita rings} We stress that, all the results in the rest of this section are not consequences of Theorem \ref{cofibrantlygenHtriple}, or Corollary \ref{cofibrantlygenGHtriple},
since they need module-theoretical arguments on the completeness of cotorsion pairs in Morita rings, developed in Section 5. Thus, all these results are new even for $M = 0$ or $N = 0$.

\begin{thm}\label{Htriple1} \ Let $\Lambda=\left(\begin{smallmatrix} A & N \\
M & B\end{smallmatrix}\right)$ be a Morita ring with  $M\otimes_AN=0 = N\otimes_BM$.
Let \ $(\mathcal V', \ \mathcal Y, \ \mathcal W)$ be a Hovey triple in $B$\mbox{-}{\rm Mod}. Suppose that  \ $N_B$ is flat and  $_BM$ is projective.

\vskip5pt

$(1)$  \  If \ $M\otimes_A\mathcal P \subseteq \mathcal Y\cap \mathcal W$, then
$$({\rm T}_A(_A\mathcal P)\oplus {\rm T}_B(\mathcal V'), \ \left(\begin{smallmatrix} A\text{\rm\rm-Mod} \\ \mathcal Y\end{smallmatrix}\right), \ \left(\begin{smallmatrix} A\text{\rm\rm-Mod} \\ \mathcal W\end{smallmatrix}\right))$$
is a Hovey triple in $\Lambda$\mbox{-}{\rm Mod}$;$ and it is hereditary with \ ${\rm Ho}(\Lambda) \cong {\rm Ho}(B)$, provided that
 \ $(\mathcal V',  \mathcal Y,  \mathcal W)$ is hereditary.

\vskip5pt

$(2)$  \ If \ $\Hom_A(N, \ _A\mathcal I)\subseteq \mathcal V'\cap \mathcal W$, then
$$(\left(\begin{smallmatrix} A\mbox{-}{\rm Mod} \\ \mathcal V'\end{smallmatrix}\right), \ \ {\rm H}_A(_A\mathcal I)\oplus {\rm H}_B(\mathcal Y), \ \ \left(\begin{smallmatrix}A\text{\rm\rm-Mod}\\ \mathcal W\end{smallmatrix}\right))$$
is a Hovey triple in $\Lambda$\mbox{-}{\rm Mod}$;$ and it is hereditary with \ ${\rm Ho}(\Lambda) \cong {\rm Ho}(B)$, provided that
 \ $(\mathcal V',  \mathcal Y,  \mathcal W)$ is hereditary.
\end{thm}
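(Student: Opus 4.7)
\textbf{Proof proposal for Theorem \ref{Htriple1}.}
My plan is to verify the three axioms for a Hovey triple by feeding the right pieces of the given Hovey triple in $B$\mbox{-}{\rm Mod} into the completeness machinery of Theorem \ref{ctp2}, and then to read off Quillen's homotopy category via Theorem \ref{Ho}. I set $\mathcal V := \mathcal V'\cap \mathcal W$ and $\mathcal Y' := \mathcal Y\cap \mathcal W$, so that $(\mathcal V,\mathcal Y)$ and $(\mathcal V',\mathcal Y')$ are complete (and, in the hereditary case, hereditary) cotorsion pairs in $B$\mbox{-}{\rm Mod}. I focus on (1); (2) is dual, using Theorem \ref{ctp2}(2) in place of Theorem \ref{ctp2}(1).

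First I would show that $\binom{A\mbox{-}{\rm Mod}}{\mathcal W}$ is thick in $\Lambda$\mbox{-}{\rm Mod}: a sequence of $\Lambda$-modules is exact iff both component sequences are exact, so thickness of $\mathcal W$ in $B$\mbox{-}{\rm Mod} transfers verbatim, and closure under direct summands is obvious. For the two cotorsion pairs, note that
\[
\binom{A\mbox{-}{\rm Mod}}{\mathcal Y}\cap \binom{A\mbox{-}{\rm Mod}}{\mathcal W} \;=\; \binom{A\mbox{-}{\rm Mod}}{\mathcal Y'}.
\]
Since $M\otimes_A{}_A\mathcal P \subseteq \mathcal Y\cap\mathcal W=\mathcal Y'\subseteq\mathcal Y$, Theorem \ref{ctp2}(1) applied to $(\mathcal V',\mathcal Y')$ yields a complete cotorsion pair
$({\rm T}_A({}_A\mathcal P)\oplus {\rm T}_B(\mathcal V'), \ \binom{A\mbox{-}{\rm Mod}}{\mathcal Y'})$, which is the ``$(\mathcal C,\mathcal F\cap\widetilde{\mathcal W})$'' half. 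For the other half, the key identification is
\[
\bigl({\rm T}_A({}_A\mathcal P)\oplus {\rm T}_B(\mathcal V')\bigr)\cap \binom{A\mbox{-}{\rm Mod}}{\mathcal W} \;=\; {\rm T}_A({}_A\mathcal P)\oplus {\rm T}_B(\mathcal V),
\]
which uses the explicit forms ${\rm T}_AP = \binom{P}{M\otimes_AP}_{1,0}$ and ${\rm T}_BV' = \binom{N\otimes_BV'}{V'}_{0,1}$ together with $M\otimes_A{}_A\mathcal P\subseteq \mathcal W$ (to place all ${\rm T}_AP$ inside $\binom{A\mbox{-}{\rm Mod}}{\mathcal W}$) and thickness of $\mathcal W$ (to force the $\mathcal V'$-component into $\mathcal W$). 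A second application of Theorem \ref{ctp2}(1), this time to $(\mathcal V,\mathcal Y)$, supplies the complete cotorsion pair $({\rm T}_A({}_A\mathcal P)\oplus {\rm T}_B(\mathcal V), \ \binom{A\mbox{-}{\rm Mod}}{\mathcal Y})$, completing the verification of the Hovey triple. The heredity of each half follows from the heredity of the corresponding cotorsion pair in $B$\mbox{-}{\rm Mod}, again by Theorem \ref{ctp2}(1).

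For the homotopy category, Theorem \ref{Ho} gives
\[
{\rm Ho}(\Lambda)\cong (\mathcal C\cap \mathcal F)\big/(\mathcal C\cap \mathcal F\cap \widetilde{\mathcal W}).
\]
Using the hypothesis $M\otimes_A{}_A\mathcal P\subseteq \mathcal Y\cap\mathcal W$, the same bookkeeping on structure maps identifies
\[
\mathcal C\cap\mathcal F \;=\; {\rm T}_A({}_A\mathcal P)\oplus {\rm T}_B(\mathcal V'\cap \mathcal Y), \qquad
\mathcal C\cap\mathcal F\cap\widetilde{\mathcal W} \;=\; {\rm T}_A({}_A\mathcal P)\oplus {\rm T}_B(\mathcal V\cap \mathcal Y).
\]
Since ${\rm U}_B{\rm T}_B = \Id_{B\mbox{-}{\rm Mod}}$, the functor ${\rm T}_B$ is fully faithful, and restricts to a fully faithful exact functor $\mathcal V'\cap\mathcal Y \to \mathcal C\cap\mathcal F$ sending $\mathcal V\cap\mathcal Y$ into $\mathcal C\cap\mathcal F\cap\widetilde{\mathcal W}$. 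The summand ${\rm T}_A({}_A\mathcal P)$ sits entirely inside $\mathcal C\cap\mathcal F\cap\widetilde{\mathcal W}$, so in the stable quotient every object is isomorphic to one from ${\rm T}_B(\mathcal V'\cap\mathcal Y)$, and the induced functor
\[
(\mathcal V'\cap \mathcal Y)/(\mathcal V\cap \mathcal Y)\;\longrightarrow\;(\mathcal C\cap \mathcal F)/(\mathcal C\cap \mathcal F\cap \widetilde{\mathcal W})
\]
is a triangle equivalence. Since the left-hand side is ${\rm Ho}(B)$ (again by Theorem \ref{Ho}), this yields ${\rm Ho}(\Lambda)\cong {\rm Ho}(B)$.

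The main obstacle I anticipate is not the existence of the cotorsion pairs---Theorem \ref{ctp2} does all the heavy lifting there---but the careful book\-keeping in the two intersection identifications, especially the claim that factorisations through ${\rm T}_A({}_A\mathcal P)$ do not contribute extra morphisms in the stable quotient; this requires the explicit description of $\Hom_\Lambda({\rm T}_BW,{\rm T}_AP)$ and $\Hom_\Lambda({\rm T}_AP,{\rm T}_BW)$ in terms of adjunctions and the hypotheses $M\otimes_AN=0=N\otimes_BM$. Part (2) proceeds in parallel by replacing ${\rm T}_A,{\rm T}_B$ with ${\rm H}_A,{\rm H}_B$ and Theorem \ref{ctp2}(1) with Theorem \ref{ctp2}(2), and using that the hypothesis $\Hom_A(N,{}_A\mathcal I)\subseteq \mathcal V'\cap \mathcal W$ simultaneously serves both halves of the Hovey triple.
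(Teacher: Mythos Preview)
Your proposal is correct and follows essentially the same route as the paper: set $\mathcal V=\mathcal V'\cap\mathcal W$, $\mathcal Y'=\mathcal Y\cap\mathcal W$, apply Theorem~\ref{ctp2}(1) twice to obtain the two complete cotorsion pairs, verify the intersection identities using $M\otimes_A{}_A\mathcal P\subseteq\mathcal W$, check thickness of $\binom{A\text{-Mod}}{\mathcal W}$, and compute ${\rm Ho}(\Lambda)$ via Theorem~\ref{Ho}. The paper treats the homotopy-category step more tersely, simply writing the chain of isomorphisms $({\rm T}_A({}_A\mathcal P)\oplus{\rm T}_B(\mathcal V'\cap\mathcal Y))/({\rm T}_A({}_A\mathcal P)\oplus{\rm T}_B(\mathcal V'\cap\mathcal Y'))\cong{\rm T}_B(\mathcal V'\cap\mathcal Y)/{\rm T}_B(\mathcal V'\cap\mathcal Y')\cong(\mathcal V'\cap\mathcal Y)/(\mathcal V'\cap\mathcal Y')$ without dwelling on why the ${\rm T}_A({}_A\mathcal P)$ summand disappears or why ${\rm T}_B$ induces an equivalence on the stable quotients; your extra care about factorisations through ${\rm T}_A({}_A\mathcal P)$ is justified but not strictly needed beyond what you already observe.
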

\begin{proof} \  Put \ $\mathcal V: = \mathcal V'\cap \mathcal W, \ \ \mathcal Y':= \mathcal Y\cap \mathcal W.$
Since \ $(\mathcal V', \ \mathcal Y, \ \mathcal W)$ is a Hovey triple in $B$\mbox{-}{\rm Mod},
\ $(\mathcal V, \ \mathcal Y)$ and \ $(\mathcal V', \ \mathcal Y')$ are complete cotorsion pairs in $B$\mbox{-}{\rm Mod}.

\vskip5pt

$(1)$ \ Since \ $M\otimes_A \mathcal P \subseteq \mathcal Y,$
it follows from Theorem \ref{ctp2}(1) that
 \ $({\rm T}_A(_A\mathcal P)\oplus {\rm T}_B(\mathcal V), \ \left(\begin{smallmatrix}A\text{\rm\rm-Mod}\\ \mathcal Y\end{smallmatrix}\right))$ is a complete cotorsion pair in $\Lambda$-Mod.
Similarly, \ $({\rm T}_A(_A\mathcal P)\oplus {\rm T}_B(\mathcal V'), \ \left(\begin{smallmatrix}A\text{\rm\rm-Mod}\\ \mathcal Y'\end{smallmatrix}\right))$ is a complete cotorsion pair.

\vskip5pt

Since \ $M\otimes_A \mathcal P \subseteq \mathcal W$, it follows that
$$({\rm T}_A(_A\mathcal P)\oplus {\rm T}_B(\mathcal V'))\cap \left(\begin{smallmatrix} A\text{\rm-Mod} \\ \mathcal W\end{smallmatrix}\right) ={\rm T}_A(_A\mathcal P)\oplus {\rm T}_B(\mathcal V'\cap \mathcal W)={\rm T}_A(_A\mathcal P)\oplus {\rm T}_B(\mathcal V).$$
Clearly, $\left(\begin{smallmatrix} A\text{\rm-Mod} \\ \mathcal Y\end{smallmatrix}\right)\cap \left(\begin{smallmatrix} A\text{\rm-Mod} \\ \mathcal W\end{smallmatrix}\right) = \left(\begin{smallmatrix} A\text{\rm-Mod} \\ \mathcal Y'\end{smallmatrix}\right)$.
Since $\mathcal W$ is a thick class of $B$\mbox{-}{\rm Mod}, \ $\left(\begin{smallmatrix} A\text{\rm-Mod} \\ \mathcal W\end{smallmatrix}\right)$ is a thick class of $\Lambda$\mbox{-}{\rm Mod}.
By definition
\ $({\rm T}_A(_A\mathcal P)\oplus {\rm T}_B(\mathcal V'), \ \left(\begin{smallmatrix} A\text{\rm\rm-Mod} \\ \mathcal Y\end{smallmatrix}\right), \ \left(\begin{smallmatrix} A\text{\rm\rm-Mod} \\ \mathcal W\end{smallmatrix}\right))$
is a Hovey triple.

\vskip5pt

If \ $(\mathcal V', \ \mathcal Y, \ \mathcal W)$ is a hereditary Hovey triple,
then by Theorem \ref{ctp2}(1),   both \ $({\rm T}_A(_A\mathcal P)\oplus {\rm T}_B(\mathcal V), \ \left(\begin{smallmatrix}A\text{\rm\rm-Mod}\\ \mathcal Y\end{smallmatrix}\right))$ and \ $({\rm T}_A(_A\mathcal P)\oplus {\rm T}_B(\mathcal V'), \ \left(\begin{smallmatrix}A\text{\rm\rm-Mod}\\ \mathcal Y'\end{smallmatrix}\right))$ are hereditary cotorsion pairs, and hence  \ $({\rm T}_A(_A\mathcal P)\oplus {\rm T}_B(\mathcal V'), \ \left(\begin{smallmatrix} A\text{\rm\rm-Mod} \\ \mathcal Y\end{smallmatrix}\right), \ \left(\begin{smallmatrix} A\text{\rm\rm-Mod} \\ \mathcal W\end{smallmatrix}\right))$ is a hereditary Hovey triple. By Theorem \ref{Ho} one has
\begin{align*}{\rm Ho}(\Lambda)& \cong (({\rm T}_A(_A\mathcal P)\oplus {\rm T}_B(\mathcal V'))\cap \left(\begin{smallmatrix} A\text{\rm\rm-Mod} \\ \mathcal Y\end{smallmatrix}\right)) /(({\rm T}_A(_A\mathcal P)\oplus {\rm T}_B(\mathcal V'))\cap \left(\begin{smallmatrix} A\text{\rm\rm-Mod} \\ \mathcal Y\cap\mathcal W\end{smallmatrix}\right))
\\ & \cong  ({\rm T}_A(_A\mathcal P)\oplus {\rm T}_B(\mathcal V'\cap\mathcal Y))/(({\rm T}_A(_A\mathcal P)\oplus {\rm T}_B(\mathcal V'\cap \mathcal Y'))\\ & \cong {\rm T}_B(\mathcal V'\cap\mathcal Y)/ {\rm T}_B(\mathcal V'\cap \mathcal Y')
\\ & \cong (\mathcal V'\cap \mathcal Y)/(\mathcal V'\cap \mathcal Y')\cong {\rm Ho}(B).\end{align*}

\vskip5pt

$(2)$ \ The proof is similar as $(1)$. We include the main steps. Since \ $\Hom_A(N, \ _A\mathcal I) \subseteq \mathcal V$,
by Theorem \ref{ctp2}(2), $(\left(\begin{smallmatrix} A\mbox{-}{\rm Mod} \\ \mathcal V\end{smallmatrix}\right), \ {\rm H}_A(_A\mathcal I)\oplus {\rm H}_B(\mathcal Y))$ is a complete cotorsion pair.
Similarly, \ $(\left(\begin{smallmatrix} A\mbox{-}{\rm Mod} \\ \mathcal V'\end{smallmatrix}\right), \ {\rm H}_A(_A\mathcal I)\oplus {\rm H}_B(\mathcal Y'))$ is a complete cotorsion pair.

\vskip5pt

Clearly  \ $\left(\begin{smallmatrix} A\text{\rm\rm-Mod} \\ \mathcal V'\end{smallmatrix}\right)\cap \left(\begin{smallmatrix} A\text{\rm\rm-Mod} \\ \mathcal W\end{smallmatrix}\right) = \left(\begin{smallmatrix} A\text{\rm\rm-Mod} \\ \mathcal V\end{smallmatrix}\right).$
Since
$\Hom_A(N, \ _A\mathcal I) \subseteq \mathcal W$, it follows that
$$({\rm H}_A(_A\mathcal I)\oplus {\rm H}_B(\mathcal Y))\cap \left(\begin{smallmatrix} A\text{\rm\rm-Mod} \\ \mathcal W\end{smallmatrix}\right)
= {\rm H}_A(_A\mathcal I)\oplus {\rm H}_B(\mathcal Y\cap \mathcal W) = {\rm H}_A(_A\mathcal I)\oplus {\rm T}_B(\mathcal Y').$$
Also, $\left(\begin{smallmatrix} A\text{\rm-Mod} \\ \mathcal W\end{smallmatrix}\right)$ is a thick class of $\Lambda$\mbox{-}{\rm Mod}. By definition
$$(\left(\begin{smallmatrix} A\mbox{-}{\rm Mod} \\ \mathcal V'\end{smallmatrix}\right), \ \ {\rm H}_A(_A\mathcal I)\oplus {\rm H}_B(\mathcal Y), \ \ \left(\begin{smallmatrix}A\text{\rm\rm-Mod}\\ \mathcal W\end{smallmatrix}\right))$$
is a Hovey triple. Moreover, it is hereditary if \ $(\mathcal V', \ \mathcal Y, \ \mathcal W)$ is hereditary. In this case,
by Theorem \ref{Ho} one has
\begin{align*}{\rm Ho}(\Lambda\mbox{-}{\rm Mod})& \cong (\left(\begin{smallmatrix} A\text{\rm\rm-Mod} \\ \mathcal V'\end{smallmatrix}\right)\cap ({\rm H}_A(_A\mathcal I)\oplus {\rm H}_B(\mathcal Y))) /(\left(\begin{smallmatrix} A\text{\rm\rm-Mod} \\ \mathcal V'\cap \mathcal W\end{smallmatrix}\right)\cap ({\rm H}_A(_A\mathcal I)\oplus {\rm H}_B(\mathcal Y)))
\\ & \cong  ({\rm H}_A(_A\mathcal I)\oplus {\rm H}_B(\mathcal V'\cap\mathcal Y))/({\rm H}_A(_A\mathcal I)\oplus {\rm H}_B(\mathcal V'\cap\mathcal Y')) \\ & \cong {\rm H}_B(\mathcal V'\cap\mathcal Y)/{\rm H}_B(\mathcal V'\cap\mathcal Y')
\\ & \cong (\mathcal V'\cap \mathcal Y)/(\mathcal V'\cap\mathcal Y') \cong {\rm Ho}(B).\end{align*}
\end{proof}

From Theorem \ref{Htriple1} and its proof, one has

\begin{cor}\label{GHtriple1} \ Let $\Lambda=\left(\begin{smallmatrix} A & N \\
M & B\end{smallmatrix}\right)$ be a Morita ring with  $M\otimes_AN=0 = N\otimes_BM$, $(\mathcal V, \ \mathcal Y)$ and $(\mathcal V', \ \mathcal Y')$
compatible complete hereditary cotorsion pairs in $B$-{\rm Mod}, with Gillespie-Hovey triple
\ $(\mathcal V', \ \mathcal Y, \ \mathcal W)$. Suppose that  \ $N_B$ is flat and  $_BM$ is projective.

\vskip5pt

$(1)$  \  If \ $M\otimes_A\mathcal P \subseteq \mathcal Y'$, then
$$({\rm T}_A(_A\mathcal P)\oplus {\rm T}_B(\mathcal V), \ \left(\begin{smallmatrix} A\text{\rm\rm-Mod} \\ \mathcal Y\end{smallmatrix}\right))
\ \ \ \mbox{and} \ \ \
({\rm T}_A(_A\mathcal P)\oplus {\rm T}_B(\mathcal V'), \ \left(\begin{smallmatrix} A\text{\rm\rm-Mod} \\ \mathcal Y'\end{smallmatrix}\right))$$
are compatible complete hereditary  cotorsion pairs in \ $\Lambda\mbox{-}{\rm Mod}$, with Gillespie-Hovey triple
$$({\rm T}_A(_A\mathcal P)\oplus {\rm T}_B(\mathcal V'), \ \left(\begin{smallmatrix} A\text{\rm\rm-Mod} \\ \mathcal Y\end{smallmatrix}\right), \ \left(\begin{smallmatrix} A\text{\rm\rm-Mod} \\ \mathcal W\end{smallmatrix}\right))$$
and \ ${\rm Ho}(\Lambda)\cong {\rm Ho}(B)$.

\vskip5pt

$(2)$  If \ $\Hom_A(N, \ _A\mathcal I)\subseteq \mathcal V$, then
$$(\left(\begin{smallmatrix} A\mbox{-}{\rm Mod} \\ \mathcal V\end{smallmatrix}\right), \ {\rm H}_A(_A\mathcal I)\oplus {\rm H}_B(\mathcal Y)) \ \ \ \mbox{and} \ \ \ (\left(\begin{smallmatrix} A\mbox{-}{\rm Mod} \\ \mathcal V'\end{smallmatrix}\right), \ {\rm H}_A(_A\mathcal I)\oplus {\rm H}_B(\mathcal Y'))$$
are compatible complete hereditary cotorsion pairs in $\Lambda$-{\rm Mod}, with Gillespie-Hovey triple
$$(\left(\begin{smallmatrix} A\mbox{-}{\rm Mod} \\ \mathcal V'\end{smallmatrix}\right), \ \ {\rm H}_A(_A\mathcal I)\oplus {\rm H}_B(\mathcal Y), \ \ \left(\begin{smallmatrix}A\text{\rm\rm-Mod}\\ \mathcal W\end{smallmatrix}\right))$$
and \ ${\rm Ho}(\Lambda) \cong {\rm Ho}(B)$.
\end{cor}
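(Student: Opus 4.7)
The plan is to deduce the corollary from Theorem~\ref{Htriple1} by re-interpreting the resulting hereditary Hovey triple through Gillespie's theorem (Theorem~\ref{GHtriple}). Since $(\mathcal V, \mathcal Y)$ and $(\mathcal V', \mathcal Y')$ are compatible complete hereditary cotorsion pairs in $B$-Mod whose Gillespie-Hovey triple is $(\mathcal V', \mathcal Y, \mathcal W)$, the Hovey correspondence (Theorem~\ref{hoveycorrespondence}) applied in $B$-Mod forces
$$\mathcal V = \mathcal V' \cap \mathcal W, \qquad \mathcal Y' = \mathcal Y \cap \mathcal W.$$
In particular, the hypothesis $M\otimes_A \mathcal P \subseteq \mathcal Y'$ of part (1) is literally $M\otimes_A \mathcal P \subseteq \mathcal Y \cap \mathcal W$, which is exactly what Theorem~\ref{Htriple1}(1) requires.

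Invoking Theorem~\ref{Htriple1}(1) then supplies the hereditary Hovey triple
$$({\rm T}_A({}_A\mathcal P)\oplus {\rm T}_B(\mathcal V'),\ \binom{A\text{-Mod}}{\mathcal Y},\ \binom{A\text{-Mod}}{\mathcal W})$$
in $\Lambda$-Mod, together with ${\rm Ho}(\Lambda) \cong {\rm Ho}(B)$. By Theorem~\ref{GHtriple}, every hereditary Hovey triple $(\mathcal C, \mathcal F, \mathcal W')$ is a Gillespie-Hovey triple arising from its two canonical compatible complete hereditary cotorsion pairs $(\mathcal C \cap \mathcal W', \mathcal F)$ and $(\mathcal C, \mathcal F \cap \mathcal W')$. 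Computing these two intersections for our triple, and applying the equalities $\mathcal V' \cap \mathcal W = \mathcal V$ and $\mathcal Y \cap \mathcal W = \mathcal Y'$, one reads off
$$({\rm T}_A({}_A\mathcal P)\oplus {\rm T}_B(\mathcal V),\ \binom{A\text{-Mod}}{\mathcal Y}) \quad\text{and}\quad ({\rm T}_A({}_A\mathcal P)\oplus {\rm T}_B(\mathcal V'),\ \binom{A\text{-Mod}}{\mathcal Y'}),$$
which are precisely the cotorsion pairs displayed in the statement. Compatibility and the form of the thick class $\binom{A\text{-Mod}}{\mathcal W}$ are then automatic outputs of the Gillespie framework, and all the assertions in (1) fall out simultaneously.

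Part (2) is strictly parallel, starting from Theorem~\ref{Htriple1}(2): its hypothesis $\Hom_A(N, {}_A\mathcal I) \subseteq \mathcal V' \cap \mathcal W$ is exactly $\Hom_A(N, {}_A\mathcal I) \subseteq \mathcal V$ by the same Hovey correspondence for $B$-Mod, and the two injective-type cotorsion pairs of the statement are then read off from the Gillespie decomposition of the resulting hereditary Hovey triple in $\Lambda$-Mod.

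The only real obstacle is the bookkeeping identification of the two cotorsion pairs produced by Gillespie's decomposition with the two pairs named in the statement; this rests entirely on the equalities $\mathcal V' \cap \mathcal W = \mathcal V$ and $\mathcal Y \cap \mathcal W = \mathcal Y'$ in $B$-Mod, so no genuinely new module-theoretic input beyond Section~5 and Theorem~\ref{Htriple1} is needed.
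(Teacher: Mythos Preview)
Your proposal is correct and matches the paper's approach: the paper states only that the corollary follows ``from Theorem~\ref{Htriple1} and its proof,'' and what you have written is precisely the unpacking of that remark. The intersection computations you allude to (e.g., $({\rm T}_A({}_A\mathcal P)\oplus {\rm T}_B(\mathcal V'))\cap\binom{A\text{-Mod}}{\mathcal W}={\rm T}_A({}_A\mathcal P)\oplus{\rm T}_B(\mathcal V)$, which also uses $M\otimes_A\mathcal P\subseteq\mathcal Y'\subseteq\mathcal W$) are exactly those carried out in the proof of Theorem~\ref{Htriple1}, and the passage from the hereditary Hovey triple back to compatible cotorsion pairs via the converse direction of Theorem~\ref{GHtriple} is the intended mechanism.
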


Similar as Theorem \ref{Htriple1}, starting from a Hovey triple in $A$-{\rm Mod} and using Theorem \ref{ctp3},  we get

\begin{thm}\label{Htriple2} \ Let $\Lambda=\left(\begin{smallmatrix} A & N \\
M & B\end{smallmatrix}\right)$ be a Morita ring with  $M\otimes_AN=0 = N\otimes_BM$.
Let \ $(\mathcal U', \ \mathcal X, \ \mathcal W)$ be a Hovey triple in $A$\mbox{-}{\rm Mod}.
Suppose that \ $M_A$ is flat and \ $_AN$ is projective.

\vskip5pt

$(1)$ \ If \ $N\otimes_B\mathcal P \subseteq \mathcal X\cap \mathcal W$, then
$$({\rm T}_A(\mathcal U')\oplus {\rm T}_B(_B\mathcal P), \ \left(\begin{smallmatrix}\mathcal X \\ B\text{\rm\rm-Mod}\end{smallmatrix}\right), \ \left(\begin{smallmatrix}\mathcal W \\ B\text{\rm\rm-Mod}\end{smallmatrix}\right))$$
is a Hovey triple$;$ and it is hereditary with \ ${\rm Ho}(\Lambda) \cong {\rm Ho}(A)$, provided that
 \ $(\mathcal U',  \mathcal X,  \mathcal W)$ is hereditary.
\vskip5pt

$(2)$ \  If \ $\Hom_B(M, \ _B\mathcal I) \subseteq \mathcal U'\cap \mathcal W,$ then
$$(\left(\begin{smallmatrix}\mathcal U' \\ B\text{\rm\rm-Mod}\end{smallmatrix}\right), \ \ {\rm H}_A(\mathcal X)\oplus {\rm H}_B(_B\mathcal I), \ \ \left(\begin{smallmatrix}\mathcal W \\ B\text{\rm\rm-Mod}\end{smallmatrix}\right))$$
is a Hovey triple$;$ and it is hereditary with \ ${\rm Ho}(\Lambda) \cong {\rm Ho}(A)$, provided that
 \ $(\mathcal U',  \mathcal X,  \mathcal W)$ is hereditary.
\end{thm}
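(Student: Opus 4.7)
\medskip

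\noindent\textbf{Plan of proof of Theorem \ref{Htriple2}.}
The statement is a mirror image of Theorem \ref{Htriple1} obtained by swapping the roles of $A$ and $B$, so the plan is to run the same argument, but feeding in Theorem \ref{ctp3} (rather than Theorem \ref{ctp2}) as the module-theoretic source of completeness. More precisely, set $\mathcal U := \mathcal U'\cap\mathcal W$ and $\mathcal X' := \mathcal X\cap\mathcal W$. Since $(\mathcal U',\mathcal X,\mathcal W)$ is a Hovey triple in $A$-{\rm Mod}, both $(\mathcal U,\mathcal X)$ and $(\mathcal U',\mathcal X')$ are complete cotorsion pairs in $A$-{\rm Mod}; and if $(\mathcal U',\mathcal X,\mathcal W)$ is hereditary, then both are hereditary.

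For part (1), since $M_A$ is flat, $_AN$ is projective, and $N\otimes_B{}_B\mathcal P\subseteq \mathcal X$, Theorem \ref{ctp3}(1) applied to the complete cotorsion pair $(\mathcal U,\mathcal X)$ yields that
$$({\rm T}_A(\mathcal U)\oplus {\rm T}_B({}_B\mathcal P),\ \left(\begin{smallmatrix}\mathcal X\\ B\text{\rm-Mod}\end{smallmatrix}\right))$$
is a complete cotorsion pair in $\Lambda$-{\rm Mod}, and applying it to $(\mathcal U',\mathcal X')$ yields that
$$({\rm T}_A(\mathcal U')\oplus {\rm T}_B({}_B\mathcal P),\ \left(\begin{smallmatrix}\mathcal X'\\ B\text{\rm-Mod}\end{smallmatrix}\right))$$
is complete as well (note $N\otimes_B{}_B\mathcal P\subseteq \mathcal X\cap\mathcal W=\mathcal X'$). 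To recognize these two cotorsion pairs as the two pieces of a Hovey triple with thick part $\binom{\mathcal W}{B\text{\rm-Mod}}$, I would check the two identifications
$$({\rm T}_A(\mathcal U')\oplus {\rm T}_B({}_B\mathcal P))\cap \left(\begin{smallmatrix}\mathcal W\\ B\text{\rm-Mod}\end{smallmatrix}\right)={\rm T}_A(\mathcal U'\cap\mathcal W)\oplus {\rm T}_B({}_B\mathcal P)={\rm T}_A(\mathcal U)\oplus {\rm T}_B({}_B\mathcal P)$$
(using $N\otimes_B{}_B\mathcal P\subseteq\mathcal W$, so the $B$-summand automatically lands in the thick class), and
$$\left(\begin{smallmatrix}\mathcal X\\ B\text{\rm-Mod}\end{smallmatrix}\right)\cap \left(\begin{smallmatrix}\mathcal W\\ B\text{\rm-Mod}\end{smallmatrix}\right)=\left(\begin{smallmatrix}\mathcal X'\\ B\text{\rm-Mod}\end{smallmatrix}\right).$$
Thickness of $\binom{\mathcal W}{B\text{\rm-Mod}}$ is immediate from thickness of $\mathcal W$ together with the componentwise description of short exact sequences in $\Lambda$-{\rm Mod}. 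By definition the claimed triple is then a Hovey triple, and it is hereditary whenever $(\mathcal U',\mathcal X,\mathcal W)$ is, since both cotorsion pairs are then hereditary by Theorem \ref{ctp3}(1).

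For the homotopy category, assuming hereditariness, Theorem \ref{Ho} gives
$${\rm Ho}(\Lambda)\cong \big(({\rm T}_A(\mathcal U')\oplus {\rm T}_B({}_B\mathcal P))\cap \left(\begin{smallmatrix}\mathcal X\\ B\text{\rm-Mod}\end{smallmatrix}\right)\big)\big/\big(({\rm T}_A(\mathcal U')\oplus {\rm T}_B({}_B\mathcal P))\cap \left(\begin{smallmatrix}\mathcal X'\\ B\text{\rm-Mod}\end{smallmatrix}\right)\big),$$
which, after the elementary simplification ${\rm T}_A(\mathcal U'\cap\mathcal X)\oplus{\rm T}_B({}_B\mathcal P)$ modulo ${\rm T}_A(\mathcal U\cap\mathcal X)\oplus{\rm T}_B({}_B\mathcal P)$, collapses to $(\mathcal U'\cap\mathcal X)/(\mathcal U\cap\mathcal X)\cong {\rm Ho}(A)$ via the fully faithful functor ${\rm T}_A$. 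Part (2) is entirely analogous: replace ${\rm T}_A,{\rm T}_B$ by ${\rm H}_A,{\rm H}_B$, invoke Theorem \ref{ctp3}(2) (whose hypotheses $M_A$ flat, $_AN$ projective, $\Hom_B(M,{}_B\mathcal I)\subseteq \mathcal U'\cap\mathcal W\subseteq\mathcal U$ are satisfied) to obtain the two complete cotorsion pairs $(\binom{\mathcal U}{B\text{\rm-Mod}},{\rm H}_A(\mathcal X)\oplus{\rm H}_B({}_B\mathcal I))$ and $(\binom{\mathcal U'}{B\text{\rm-Mod}},{\rm H}_A(\mathcal X')\oplus{\rm H}_B({}_B\mathcal I))$, and check the same two intersections (now using $\Hom_B(M,{}_B\mathcal I)\subseteq\mathcal W$ to absorb the $B$-summand into the thick class).

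The only real subtlety, and the step I would expect to take a little care, is the bookkeeping that shows the ``extra'' summand ${\rm T}_B({}_B\mathcal P)$ (respectively ${\rm H}_B({}_B\mathcal I)$) behaves harmlessly under intersection with $\binom{\mathcal W}{B\text{\rm-Mod}}$ --- this is exactly where the hypothesis $N\otimes_B\mathcal P\subseteq\mathcal W$ (respectively $\Hom_B(M,{}_B\mathcal I)\subseteq\mathcal W$) is used, and it is what makes the second component of ${\rm T}_B({}_B\mathcal P)$ (namely $N\otimes_B Q$ as an $A$-module) automatically trivial. Apart from this, there is nothing new: the argument is a faithful transcription of the proof of Theorem \ref{Htriple1} under the involution $A\leftrightarrow B$, $M\leftrightarrow N$, ${\rm T}_A\leftrightarrow {\rm T}_B$, ${\rm H}_A\leftrightarrow {\rm H}_B$.
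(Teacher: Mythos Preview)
Your proposal is correct and matches the paper's approach exactly: the paper does not give a separate proof for Theorem \ref{Htriple2} but simply states it is obtained ``similar as Theorem \ref{Htriple1}, starting from a Hovey triple in $A$-{\rm Mod} and using Theorem \ref{ctp3}'', which is precisely what you carry out. One tiny slip: in your final paragraph you write ``the second component of ${\rm T}_B({}_B\mathcal P)$ (namely $N\otimes_B Q$ as an $A$-module)'' --- since ${\rm T}_BQ = \binom{N\otimes_B Q}{Q}$, the $A$-module $N\otimes_B Q$ is the \emph{first} (upper) component, but your parenthetical makes clear you have the right object in mind and the argument is unaffected.
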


\begin{cor}\label{GHtriple2} \ Let $\Lambda=\left(\begin{smallmatrix} A & N \\
M & B\end{smallmatrix}\right)$ be a Morita ring with  $M\otimes_AN=0 = N\otimes_BM$, \ $(\mathcal U, \mathcal X)$ and \ $(\mathcal U', \mathcal X')$
compatible complete hereditary cotorsion pairs in $A$-{\rm Mod}, with Gillespie-Hovey triple
\ $(\mathcal U', \ \mathcal X, \ \mathcal W)$.  Suppose that \ $M_A$ is flat and \ $_AN$ is projective.

\vskip5pt

$(1)$ \ If \ $N\otimes_B\mathcal P \subseteq \mathcal X'$, then
$$({\rm T}_A(\mathcal U)\oplus {\rm T}_B(_B\mathcal P), \ \left(\begin{smallmatrix}\mathcal X \\ B\text{\rm\rm-Mod}\end{smallmatrix}\right))
\ \ \ \mbox{and} \ \ \
({\rm T}_A(\mathcal U')\oplus {\rm T}_B(_B\mathcal P), \ \left(\begin{smallmatrix}\mathcal X' \\ B\text{\rm\rm-Mod}\end{smallmatrix}\right))$$
are compatible complete hereditary cotorsion pairs in \ $\Lambda\mbox{-}{\rm Mod}$, with Gillespie-Hovey triple
$$({\rm T}_A(\mathcal U')\oplus {\rm T}_B(_B\mathcal P), \ \left(\begin{smallmatrix}\mathcal X \\ B\text{\rm\rm-Mod}\end{smallmatrix}\right), \ \left(\begin{smallmatrix}\mathcal W \\ B\text{\rm\rm-Mod}\end{smallmatrix}\right))$$
and \ ${\rm Ho}(\Lambda)\cong {\rm Ho}(A)$.

\vskip5pt

$(2)$ \  If \ $\Hom_B(M, \ _B\mathcal I) \subseteq \mathcal U,$ then
$$(\left(\begin{smallmatrix}\mathcal U \\ B\text{\rm\rm-Mod}\end{smallmatrix}\right), \ {\rm H}_A(\mathcal X)\oplus {\rm H}_B(_B\mathcal I)) \ \ \ \mbox{and} \ \ \
(\left(\begin{smallmatrix}\mathcal U' \\ B\text{\rm\rm-Mod}\end{smallmatrix}\right), \ {\rm H}_A(\mathcal X')\oplus {\rm H}_B(_B\mathcal I))$$
are compatible complete hereditary cotorsion pairs in $\Lambda$-{\rm Mod}, with Gillespie-Hovey triple
$$(\left(\begin{smallmatrix}\mathcal U' \\ B\text{\rm\rm-Mod}\end{smallmatrix}\right), \ \ {\rm H}_A(\mathcal X)\oplus {\rm H}_B(_B\mathcal I), \ \ \left(\begin{smallmatrix}\mathcal W \\ B\text{\rm\rm-Mod}\end{smallmatrix}\right))$$
and \ ${\rm Ho}(\Lambda) \cong {\rm Ho}(A)$.
\end{cor}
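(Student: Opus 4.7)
The plan is to deduce Corollary \ref{GHtriple2} directly from Theorem \ref{Htriple2} together with the Gillespie correspondence (Theorem \ref{GHtriple}), in complete parallel with how Corollary \ref{GHtriple1} follows from Theorem \ref{Htriple1}. First I would unpack the Gillespie-Hovey input data on the $A$-side: since $(\mathcal U,\mathcal X)$ and $(\mathcal U',\mathcal X')$ are the two compatible complete hereditary cotorsion pairs producing the Hovey triple $(\mathcal U',\mathcal X,\mathcal W)$, by Hovey correspondence one has the identifications
\[
\mathcal U = \mathcal U'\cap \mathcal W, \qquad \mathcal X' = \mathcal X\cap \mathcal W.
\]
Hence the hypothesis $N\otimes_B\,\mathcal P \subseteq \mathcal X'$ is exactly $N\otimes_B\,\mathcal P \subseteq \mathcal X\cap \mathcal W$, which is the standing hypothesis of Theorem \ref{Htriple2}(1).

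Next, under the assumptions that $M_A$ is flat and $_AN$ is projective, Theorem \ref{Htriple2}(1) applied to the Hovey triple $(\mathcal U',\mathcal X,\mathcal W)$ yields the hereditary Hovey triple
\[
\bigl({\rm T}_A(\mathcal U')\oplus {\rm T}_B({}_B\mathcal P),\ \tbinom{\mathcal X}{B\text{-Mod}},\ \tbinom{\mathcal W}{B\text{-Mod}}\bigr)
\]
in $\Lambda$-Mod, together with the identification ${\rm Ho}(\Lambda)\cong {\rm Ho}(A)$. By the Hovey correspondence (Theorem \ref{hoveycorrespondence}), the two associated complete cotorsion pairs are
\[
\bigl(\mathcal C\cap \mathcal W_{\Lambda},\ \mathcal F\bigr) \quad\text{and}\quad \bigl(\mathcal C,\ \mathcal F\cap \mathcal W_{\Lambda}\bigr),
\]
and they are compatible by the converse part of Gillespie's Theorem \ref{GHtriple}. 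Thus I would only have to compute the two intersections on the $\Lambda$-side. The intersection $\tbinom{\mathcal X}{B\text{-Mod}}\cap \tbinom{\mathcal W}{B\text{-Mod}} = \tbinom{\mathcal X'}{B\text{-Mod}}$ is immediate from $\mathcal X'=\mathcal X\cap\mathcal W$. For the intersection
\[
\bigl({\rm T}_A(\mathcal U')\oplus {\rm T}_B({}_B\mathcal P)\bigr)\cap \tbinom{\mathcal W}{B\text{-Mod}},
\]
the only arithmetic to check is that a summand ${\rm T}_A(U')\oplus {\rm T}_B(Q) = \binom{U'\oplus (N\otimes_B Q)}{M\otimes_A U'\oplus Q}_{(\,\cdot\,)}$ has top component in $\mathcal W$ if and only if $U'\in \mathcal W$, since $N\otimes_B Q\in \mathcal X'\subseteq \mathcal W$ by hypothesis; thickness of $\mathcal W$ then gives $U'\oplus (N\otimes_B Q)\in\mathcal W \Longleftrightarrow U'\in\mathcal U'\cap\mathcal W=\mathcal U$. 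This delivers
\[
\bigl({\rm T}_A(\mathcal U')\oplus {\rm T}_B({}_B\mathcal P)\bigr)\cap \tbinom{\mathcal W}{B\text{-Mod}} = {\rm T}_A(\mathcal U)\oplus {\rm T}_B({}_B\mathcal P),
\]
completing part (1). The class $\mathcal W_\Lambda = \binom{\mathcal W}{B\text{-Mod}}$ is the thick class of the Gillespie-Hovey triple, and the identity ${\rm Ho}(\Lambda)\cong{\rm Ho}(A)$ already stated in Theorem \ref{Htriple2}(1) is the correct one.

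Part (2) proceeds by the dual argument, starting from the same Gillespie-Hovey data on $A$-Mod but now using Theorem \ref{Htriple2}(2) and the functor ${\rm H}_A$; the hypothesis $\Hom_B(M,\ _B\mathcal I)\subseteq\mathcal U$ reads $\Hom_B(M,\ _B\mathcal I)\subseteq \mathcal U'\cap\mathcal W$, which is exactly what Theorem \ref{Htriple2}(2) requires, and the two intersection computations are entirely analogous. The only conceptual obstacle in the whole argument is the one just isolated, namely verifying that the $\mathcal W$-thickness argument interacts correctly with the functors ${\rm T}_A, {\rm T}_B$ (respectively ${\rm H}_A, {\rm H}_B$); once the hypothesis $N\otimes_B\,\mathcal P\subseteq \mathcal X'$ (respectively $\Hom_B(M,\ _B\mathcal I)\subseteq\mathcal U$) is in place, the rest is bookkeeping via the Hovey and Gillespie correspondences. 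No new module-theoretic completeness argument is needed beyond what is already packaged in Theorem \ref{Htriple2}.
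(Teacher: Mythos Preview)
Your proposal is correct and follows essentially the same route as the paper, which simply states that the corollary follows ``from Theorem \ref{Htriple2} and its proof'' in parallel with how Corollary \ref{GHtriple1} was obtained from Theorem \ref{Htriple1}. The identifications $\mathcal U=\mathcal U'\cap\mathcal W$, $\mathcal X'=\mathcal X\cap\mathcal W$ and the two intersection computations you spell out are precisely the steps implicit in the proof of Theorem \ref{Htriple2}, and invoking the converse direction of Gillespie's Theorem \ref{GHtriple} to get compatibility is the intended mechanism.
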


\subsection{Gillespie-Hovey triples in Morita rings, via generalized projective (injective) cotorsion pairs}

The notion of generalized projective (injective) cotorsion pairs is essentially due to H. Becker [Bec].

\begin{defn} \label{genprojctp}  $(1)$ \ \ A complete cotorsion pair \ $(\mathcal X, \ \mathcal Y)$  in an abelian category $\mathcal A$ with enough projective objects
is {\it a generalized projective cotorsion pair}, or in short, gpctp, provided that

${\rm (i)}$ \ \ $\mathcal X\cap \mathcal Y = \mathcal P$, where  $\mathcal P$ is the class of projective objects of $\mathcal A;$

${\rm (ii)}$  \ \ the class \ $\mathcal Y$ is thick.

\vskip5pt

${\rm (1')}$  \ \ A complete cotorsion pair \ $(\mathcal X, \ \mathcal Y)$  in an abelian category $\mathcal A$ with enough injective objects is {\it a generalized injective cotorsion pair}, or in short, gictp, provided that

${\rm (i')}$  \ \ $\mathcal X\cap \mathcal Y = \mathcal I$, where  $\mathcal I$ is the class of injective objects of $\mathcal A;$

${\rm (ii')}$ \ \ the class \ $\mathcal X$ is thick.
\end{defn}

\begin{exmrem} A {\rm gpctp} $($respectively, {\rm gictp}$)$  is not necessarily the projective $($respectively, injective$)$ cotorsion pair \ $(\mathcal P, \ \mathcal A)$ \
$($respectively, \ $(\mathcal A, \ \mathcal I))$.

\vskip5pt

$(1)$ \ {\rm ([H2])} \ For a Gorenstein ring $R$, the Gorenstein-projective cotorsion pair
\ $({\rm GP}(R), \ _R\mathcal P^{<\infty})$ is  a {\rm gpctp}. Dually,
\ $(_R\mathcal P^{<\infty}, \ {\rm GI}(R))$ is a {\rm gictp}.

\vskip5pt

$(2)$ \ Let ${\rm Ch}(R)$ be the complex category of modules over ring $R$, \ $\mathcal E$ the class of
acyclic complexes, and ${\rm dg}\mathcal P$ the class of dg projective complexes $Q$
$($see {\rm [Sp], [AF]}$)$, i.e., components of $Q$ are projective and \ $\Hom^\bullet (Q, \mathcal E)$ is acyclic.
By {\rm [EJX]}, \ $({\rm dg}\mathcal P, \ \mathcal E)$ is a cotorsion pair, and
${\rm dg}\mathcal P\cap \mathcal E$ is exactly the class of projective objects of  ${\rm Ch}(R)$. That is,
$${\rm dg}\mathcal P\cap \mathcal E = \{\bigoplus\limits_{i\in\Bbb Z} P^i(P) \ | \ P\in \ _R\mathcal P\}$$
where \ $P^i(P): \ \cdots \rightarrow 0 \rightarrow P \stackrel{{\rm Id}}\rightarrow P \rightarrow 0 \rightarrow \cdots $ is the complex with $i$-th and $(i+1)$-th component $P$. By {\rm [Sp]} $($also {\rm [BN]}$)$, for any complex $X$ there is
an epimorphism $Q\longrightarrow X$ which is a quasi-isomorphism. Thus, \ $({\rm dg}\mathcal P, \ \mathcal E)$ is complete, and hence generalized projective.
Dually, there is a {\rm gictp} \ $(\mathcal E, \ {\rm dg}\mathcal I)$. See {\rm [Gil1]} for an important development of this work.

\vskip5pt

$(3)$ \ Any {\rm gpctp}  $(\mathcal X,  \mathcal Y)$ is hereditary,   $\mathcal X$ is a Frobenius category $($with the canonical exact structure$)$, and  $\mathcal P$  is
the class of projective-injective objects.

\vskip5pt

$(3')$ \ Any {\rm gictp}   $(\mathcal X,  \mathcal Y)$ is hereditary,   $\mathcal Y$ is a Frobenius category, and  $\mathcal I$  is
the class of projective-injective objects.    \end{exmrem}

\vskip5pt

Taking gpctps or gictps in Corollary \ref{GHtriple1}, we get a stronger and an improved result without extra conditions (i.e., the conditions
``$M\otimes_A\mathcal P \subseteq \mathcal Y'$" and ``$\Hom_A(N, \ _A\mathcal I)\subseteq \mathcal V$" in Corollary \ref{GHtriple1} can be dropped).
This is the reason we list it as a theorem.

\vskip5pt

\begin{thm}\label{GHtriplegpgiB} \ Let $\Lambda=\left(\begin{smallmatrix} A & N \\
M & B\end{smallmatrix}\right)$ be a Morita ring with  $M\otimes_AN=0 = N\otimes_BM$.  Suppose that \ $N_B$ is flat and  $_BM$ is projective.

\vskip5pt

$(1)$ \ Let \  $(\mathcal V, \ \mathcal Y)$ and $(\mathcal V', \ \mathcal Y')$ be compatible gpctps in $B$-{\rm Mod}, with Gillespie-Hovey triple
\ $(\mathcal V', \ \mathcal Y, \ \mathcal W)$.
Then
$$({\rm T}_A(_A\mathcal P)\oplus {\rm T}_B(\mathcal V), \ \left(\begin{smallmatrix} A\text{\rm\rm-Mod} \\ \mathcal Y\end{smallmatrix}\right))
\ \ \ \mbox{and} \ \ \
({\rm T}_A(_A\mathcal P)\oplus {\rm T}_B(\mathcal V'), \ \left(\begin{smallmatrix} A\text{\rm\rm-Mod} \\ \mathcal Y'\end{smallmatrix}\right))$$
are compatible gpctps in \ $\Lambda\mbox{-}{\rm Mod}$, with Gillespie-Hovey triple
$$({\rm T}_A(_A\mathcal P)\oplus {\rm T}_B(\mathcal V'), \ \left(\begin{smallmatrix} A\text{\rm\rm-Mod} \\ \mathcal Y\end{smallmatrix}\right), \ \left(\begin{smallmatrix} A\text{\rm\rm-Mod} \\ \mathcal W\end{smallmatrix}\right))$$
and \ ${\rm Ho}(\Lambda)\cong (\mathcal V'\cap \mathcal Y)/_B\mathcal P\cong {\rm Ho}(B)$.

\vskip5pt

$(2)$   \ Let \ $(\mathcal V, \mathcal Y)$ and \ $(\mathcal V', \mathcal Y')$ be compatible gictps in $B$-{\rm Mod}, with Gillespie-Hovey triple
\ $(\mathcal V', \ \mathcal Y, \ \mathcal W)$.  Then
$$(\left(\begin{smallmatrix} A\mbox{-}{\rm Mod} \\ \mathcal V\end{smallmatrix}\right), \ {\rm H}_A(_A\mathcal I)\oplus {\rm H}_B(\mathcal Y)) \ \ \ \mbox{and} \ \ \ (\left(\begin{smallmatrix} A\mbox{-}{\rm Mod} \\ \mathcal V'\end{smallmatrix}\right), \ {\rm H}_A(_A\mathcal I)\oplus {\rm H}_B(\mathcal Y'))$$
are compatible gictps in $\Lambda$-{\rm Mod}, with Gillespie-Hovey triple
$$(\left(\begin{smallmatrix} A\mbox{-}{\rm Mod} \\ \mathcal V'\end{smallmatrix}\right), \ \ {\rm H}_A(_A\mathcal I)\oplus {\rm H}_B(\mathcal Y), \ \ \left(\begin{smallmatrix}A\text{\rm\rm-Mod}\\ \mathcal W\end{smallmatrix}\right))$$
and \ ${\rm Ho}(\Lambda) \cong (\mathcal V'\cap \mathcal Y)/ _B\mathcal I\cong {\rm Ho}(B)$.
\end{thm}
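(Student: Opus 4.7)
The plan is to reduce the theorem to Corollary \ref{GHtriple1} by showing that the two auxiliary hypotheses there, namely $M\otimes_A\mathcal P\subseteq\mathcal Y'$ in part~(1) and $\Hom_A(N,\ _A\mathcal I)\subseteq\mathcal V$ in part~(2), are automatic once the input cotorsion pairs are gpctps (respectively gictps). Once Corollary \ref{GHtriple1} is invoked one already has the compatible complete hereditary cotorsion pairs in $\Lambda$-Mod together with the claimed Gillespie--Hovey triple; what remains is to upgrade the lifted pairs to gpctps (gictps) and to identify ${\rm Ho}(\Lambda)$ with ${\rm Ho}(B)$.

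For (1), since $\ _BM$ is projective and every $P\in\ _A\mathcal P$ is a summand of a free $A$-module, $M\otimes_AP$ is a summand of a direct sum of copies of $\ _BM$, so $M\otimes_A\mathcal P\subseteq\ _B\mathcal P$; as $(\mathcal V',\mathcal Y')$ is a gpctp one has $\ _B\mathcal P=\mathcal V'\cap\mathcal Y'\subseteq\mathcal Y'$, whence the required inclusion. Corollary \ref{GHtriple1}(1) now gives the Gillespie--Hovey triple displayed in the statement. To promote the lifted pairs to gpctps I would check two things. Thickness of $\binom{A\text{\rm-Mod}}{\mathcal Y}$ follows from thickness of $\mathcal Y$. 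For the intersection, ${\rm T}_AP\in\binom{A\text{\rm-Mod}}{\mathcal Y}$ is automatic (since $M\otimes_AP\in\ _B\mathcal P\subseteq\mathcal Y$) and ${\rm T}_AP$ is $\Lambda$-projective; while ${\rm T}_BV\in\binom{A\text{\rm-Mod}}{\mathcal Y}$ forces $V\in\mathcal V\cap\mathcal Y=\ _B\mathcal P$, so ${\rm T}_BV$ is again $\Lambda$-projective. Combined with the classification of projective $\Lambda$-modules recalled in Subsection~2.5 this gives $({\rm T}_A(_A\mathcal P)\oplus{\rm T}_B(\mathcal V))\cap\binom{A\text{\rm-Mod}}{\mathcal Y}=\ _\Lambda\mathcal P$, and the primed pair is handled identically.

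For (2), the flatness of $N_B$ together with the adjunction $\Hom_A(N\otimes_B-,I)\cong\Hom_B(-,\Hom_A(N,I))$ shows that $\Hom_A(N,I)$ is injective as a left $B$-module whenever $_AI$ is injective; hence $\Hom_A(N,\ _A\mathcal I)\subseteq\ _B\mathcal I\subseteq\mathcal V$, the last inclusion following from $\mathcal V\cap\mathcal Y=\ _B\mathcal I$ in a gictp. Corollary \ref{GHtriple1}(2) then supplies the Gillespie--Hovey triple. The dual intersection computation, using the second expression of $\Lambda$-modules and the classification of injective $\Lambda$-modules from Subsection~2.5, yields $\binom{A\text{\rm-Mod}}{\mathcal V}\cap({\rm H}_A(_A\mathcal I)\oplus{\rm H}_B(\mathcal Y))=\ _\Lambda\mathcal I$, and $\binom{A\text{\rm-Mod}}{\mathcal V}$ is thick because $\mathcal V$ is.

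For the homotopy category in case~(1), apply Theorem \ref{Ho}: the cofibrant--fibrant class is $\mathcal C\cap\mathcal F={\rm T}_A(_A\mathcal P)\oplus{\rm T}_B(\mathcal V'\cap\mathcal Y)$, while its subclass of trivial objects is $\mathcal C\cap\mathcal F\cap\mathcal W={\rm T}_A(_A\mathcal P)\oplus{\rm T}_B(\ _B\mathcal P)=\ _\Lambda\mathcal P$, using $\mathcal V'\cap\mathcal Y'=\ _B\mathcal P$ and $\ _B\mathcal P\subseteq\mathcal W$ (the latter since $\ _B\mathcal P=\mathcal V\cap\mathcal Y=(\mathcal V'\cap\mathcal W)\cap\mathcal Y\subseteq\mathcal W$). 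Every ${\rm T}_AP$ is thus annihilated in the stable quotient; and the adjunction ${\rm T}_B\dashv{\rm U}_B$, combined with the fact that ${\rm U}_B$ carries $\Lambda$-projectives to projective $B$-modules (again by $M\otimes_A\mathcal P\subseteq\ _B\mathcal P$), shows that a morphism ${\rm T}_BV\to{\rm T}_BV'$ factors through a $\Lambda$-projective if and only if the underlying $B$-map $V\to V'$ factors through a projective $B$-module. This yields ${\rm Ho}(\Lambda)\cong(\mathcal V'\cap\mathcal Y)/\ _B\mathcal P$; a second application of Theorem \ref{Ho} on the $B$-side identifies this quotient with ${\rm Ho}(B)$. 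Case~(2) is entirely dual, using ${\rm H}_B$ and the coadjunction ${\rm U}_B\dashv{\rm H}_B$. The main obstacle is this final descent step: one must verify that ${\rm T}_B$ (respectively ${\rm H}_B$) induces a triangle equivalence of the stable Frobenius quotients, which amounts to checking compatibility of the canonical exact structures under these functors on the relevant class of objects.
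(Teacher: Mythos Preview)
Your proposal is correct and follows the paper's proof essentially verbatim: verify $M\otimes_A\mathcal P\subseteq{}_B\mathcal P\subseteq\mathcal Y'$ (respectively $\Hom_A(N,{}_A\mathcal I)\subseteq{}_B\mathcal I\subseteq\mathcal V$), invoke Corollary~\ref{GHtriple1}, then check the gpctp/gictp conditions by computing the intersection with the right-hand class and noting thickness.

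The only divergence is your treatment of ${\rm Ho}(\Lambda)$. You recompute it via Theorem~\ref{Ho} and then worry about whether ${\rm T}_B$ descends to a triangle equivalence of stable categories. This concern is unnecessary: the identification ${\rm Ho}(\Lambda)\cong{\rm Ho}(B)$ is already part of the conclusion of Corollary~\ref{GHtriple1}, whose proof (carried out in Theorem~\ref{Htriple1}) simply observes that ${\rm T}_B$ is fully faithful and hence the chain
\[
({\rm T}_A({}_A\mathcal P)\oplus{\rm T}_B(\mathcal V'\cap\mathcal Y))\big/({\rm T}_A({}_A\mathcal P)\oplus{\rm T}_B(\mathcal V'\cap\mathcal Y'))\;\cong\;{\rm T}_B(\mathcal V'\cap\mathcal Y)/{\rm T}_B(\mathcal V'\cap\mathcal Y')\;\cong\;(\mathcal V'\cap\mathcal Y)/(\mathcal V'\cap\mathcal Y')
\]
holds at the level of additive quotient categories, with no need to analyze factorizations through projectives or compatibility of exact structures separately. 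Since in the gpctp situation $\mathcal V'\cap\mathcal Y'={}_B\mathcal P$, this immediately gives $(\mathcal V'\cap\mathcal Y)/{}_B\mathcal P\cong{\rm Ho}(B)$. So the ``main obstacle'' you flag does not arise.
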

\begin{proof} $(1)$ \ Since \ $_BM$ is projective, $M\otimes_A \mathcal P \subseteq \ _B\mathcal P.$ \
Since cotorsion pair \ $(\mathcal V', \mathcal Y')$ \ is generalized projective,
\ $M\otimes_A \mathcal P \subseteq \ _B\mathcal P = \mathcal V'\cap \mathcal Y'\subseteq \mathcal Y'\subseteq \mathcal Y.$

\vskip5pt

Thus, by Corollary \ref{GHtriple1}(1), $$({\rm T}_A(_A\mathcal P)\oplus {\rm T}_B(\mathcal V), \ \left(\begin{smallmatrix} A\text{\rm\rm-Mod} \\ \mathcal Y\end{smallmatrix}\right))
\ \ \ \mbox{and} \ \ \
({\rm T}_A(_A\mathcal P)\oplus {\rm T}_B(\mathcal V'), \ \left(\begin{smallmatrix} A\text{\rm\rm-Mod} \\ \mathcal Y'\end{smallmatrix}\right))$$
are compatible complete hereditary cotorsion pairs in \ $\Lambda\mbox{-}{\rm Mod}$, with Gillespie-Hovey triple
$$({\rm T}_A(_A\mathcal P)\oplus {\rm T}_B(\mathcal V'), \ \left(\begin{smallmatrix} A\text{\rm\rm-Mod} \\ \mathcal Y\end{smallmatrix}\right), \ \left(\begin{smallmatrix} A\text{\rm\rm-Mod} \\ \mathcal W\end{smallmatrix}\right))$$
and \ ${\rm Ho}(\Lambda)\cong {\rm Ho}(B) \cong (\mathcal V'\cap \mathcal Y)/_B\mathcal P$. Since
$$_\Lambda\mathcal P = {\rm T}_A(_A\mathcal P)\oplus {\rm T}_B(_B\mathcal P)
= \{\left(\begin{smallmatrix}P \\ M\otimes_A P\end{smallmatrix}\right)\oplus \left(\begin{smallmatrix}N\otimes_B Q \\ Q\end{smallmatrix}\right) \ | \ P\in \ _A\mathcal P,
\ Q\in \ _B\mathcal P\}$$
and \ $M\otimes_A \mathcal P \subseteq \mathcal Y$, it follows that
$$({\rm T}_A(_A\mathcal P)\oplus {\rm T}_B(\mathcal V))\cap \left(\begin{smallmatrix} A\text{\rm-Mod} \\ \mathcal Y\end{smallmatrix}\right)
={\rm T}_A(_A\mathcal P)\oplus {\rm T}_B(\mathcal V\cap \mathcal Y)={\rm T}_A(_A\mathcal P)\oplus {\rm T}_B(_B\mathcal P)= \ _\Lambda\mathcal P.$$
Since $\mathcal Y$ is thick, $\left(\begin{smallmatrix}A\text{\rm\rm-Mod}\\ \mathcal Y\end{smallmatrix}\right)$ is thick.
Thus, cotorsion pair $({\rm T}_A(_A\mathcal P)\oplus {\rm T}_B(\mathcal V), \ \left(\begin{smallmatrix}A\text{\rm\rm-Mod}\\ \mathcal Y\end{smallmatrix}\right))$ is generalized projective.
Similarly, \ $({\rm T}_A(_A\mathcal P)\oplus {\rm T}_B(\mathcal V'), \ \left(\begin{smallmatrix}A\text{\rm\rm-Mod}\\ \mathcal Y'\end{smallmatrix}\right))$ is generalized projective.

\vskip5pt

$(2)$ \ Since \ $N_B$ is flat, $\Hom_A(N, \ _A\mathcal I) \subseteq  \ _B \mathcal I$.
Since \ $(\mathcal V, \mathcal Y)$ \ is generalized injective,
$\Hom_A(N, \ _A\mathcal I) \subseteq \ _B \mathcal I = \mathcal V\cap \mathcal Y \subseteq \mathcal V$.

\vskip5pt

Thus, by Corollary \ref{GHtriple1}(2), $$(\left(\begin{smallmatrix}\mathcal U \\ B\text{\rm\rm-Mod}\end{smallmatrix}\right), \ {\rm H}_A(\mathcal X)\oplus {\rm H}_B(_B\mathcal I)) \ \ \ \mbox{and} \ \ \
(\left(\begin{smallmatrix}\mathcal U' \\ B\text{\rm\rm-Mod}\end{smallmatrix}\right), \ {\rm H}_A(\mathcal X')\oplus {\rm H}_B(_B\mathcal I))$$
are compatible complete hereditary cotorsion pairs, with Gillespie-Hovey triple
$$(\left(\begin{smallmatrix}\mathcal U' \\ B\text{\rm\rm-Mod}\end{smallmatrix}\right), \ \ {\rm H}_A(\mathcal X)\oplus {\rm H}_B(_B\mathcal I), \ \ \left(\begin{smallmatrix}\mathcal W \\ B\text{\rm\rm-Mod}\end{smallmatrix}\right))$$
and \ ${\rm Ho}(\Lambda) \cong {\rm Ho}(B)\cong (\mathcal U'\cap \mathcal X)/ _A\mathcal I$.
Since
$$_\Lambda\mathcal I ={\rm H}_A(_A\mathcal I)\oplus {\rm H}_B(_B\mathcal I)= \{\left(\begin{smallmatrix} I \\ \Hom_A(N,I)\end{smallmatrix}\right)\oplus \left(\begin{smallmatrix} \Hom_B(M,J) \\ J \end{smallmatrix}\right) \ | \ I\in \ _A\mathcal I, \ J\in \ _B\mathcal I\}$$
and $\Hom_A(N, \ _A\mathcal I) \subseteq  \mathcal V$,
it follows that  $$\left(\begin{smallmatrix} A\text{\rm-Mod} \\ \mathcal V\end{smallmatrix}\right)\cap ({\rm H}_A(_A\mathcal I)\oplus {\rm H}_B(\mathcal Y))
={\rm H}_A(_A\mathcal I)\oplus {\rm H}_B(\mathcal V\cap \mathcal Y)={\rm H}_A(_A\mathcal I)\oplus {\rm H}_B(_B\mathcal I)= \ _\Lambda\mathcal I.$$
Since \ $\mathcal V$ is thick, \ $\binom{A\text{\rm\rm-Mod}}{\mathcal V}$ is thick.
Thus \ $(\left(\begin{smallmatrix} A\mbox{-}{\rm Mod} \\ \mathcal V\end{smallmatrix}\right), \ {\rm H}_A(_A\mathcal I)\oplus {\rm H}_B(\mathcal Y))$ is generalized injective.  Similarly, \ $(\left(\begin{smallmatrix} A\mbox{-}{\rm Mod} \\ \mathcal V'\end{smallmatrix}\right), \ {\rm H}_A(_A\mathcal I)\oplus {\rm H}_B(\mathcal Y'))$ is generalized injective.
\end{proof}

\vskip5pt

Similarly, taking gpctps or gictps in Corollary \ref{GHtriple2}, we get a stronger and an improved result.

\vskip5pt

\begin{thm}\label{GHtriplegpgiA} \ Let $\Lambda=\left(\begin{smallmatrix} A & N \\
M & B\end{smallmatrix}\right)$ be a Morita ring with  $M\otimes_AN=0 = N\otimes_BM$. Suppose that  \ $M_A$ is flat and \ $_AN$ is projective.

\vskip5pt

$(1)$ \ Let \  $(\mathcal U, \ \mathcal X)$ and $(\mathcal U', \ \mathcal X')$ be compatible gpctps in $A$-{\rm Mod}, with Gillespie-Hovey triple
\ $(\mathcal U', \ \mathcal X, \ \mathcal W)$.
Then
$$({\rm T}_A(\mathcal U)\oplus {\rm T}_B(_B\mathcal P), \ \left(\begin{smallmatrix}\mathcal X \\ B\text{\rm\rm-Mod}\end{smallmatrix}\right))
\ \ \ \mbox{and} \ \ \
({\rm T}_A(\mathcal U')\oplus {\rm T}_B(_B\mathcal P), \ \left(\begin{smallmatrix}\mathcal X' \\ B\text{\rm\rm-Mod}\end{smallmatrix}\right))$$
are compatible gpctps in \ $\Lambda\mbox{-}{\rm Mod}$, with Gillespie-Hovey triple
$$({\rm T}_A(\mathcal U')\oplus {\rm T}_B(_B\mathcal P), \ \left(\begin{smallmatrix}\mathcal X \\ B\text{\rm\rm-Mod}\end{smallmatrix}\right), \ \left(\begin{smallmatrix}\mathcal W \\ B\text{\rm\rm-Mod}\end{smallmatrix}\right))$$
and \ ${\rm Ho}(\Lambda)\cong (\mathcal U'\cap \mathcal X)/_A\mathcal P\cong {\rm Ho}(A)$.

\vskip5pt

$(2)$ \  Let \ $(\mathcal U, \mathcal X)$ and \ $(\mathcal U', \mathcal X')$ be compatible gictps in $A$-{\rm Mod}, with Gillespie-Hovey triple
\ $(\mathcal U', \ \mathcal X, \ \mathcal W)$.  Then
$$(\left(\begin{smallmatrix}\mathcal U \\ B\text{\rm\rm-Mod}\end{smallmatrix}\right), \ {\rm H}_A(\mathcal X)\oplus {\rm H}_B(_B\mathcal I)) \ \ \ \mbox{and} \ \ \
(\left(\begin{smallmatrix}\mathcal U' \\ B\text{\rm\rm-Mod}\end{smallmatrix}\right), \ {\rm H}_A(\mathcal X')\oplus {\rm H}_B(_B\mathcal I))$$
are compatible gictps in $\Lambda$-{\rm Mod}, with Gillespie-Hovey triple
$$(\left(\begin{smallmatrix}\mathcal U' \\ B\text{\rm\rm-Mod}\end{smallmatrix}\right), \ \ {\rm H}_A(\mathcal X)\oplus {\rm H}_B(_B\mathcal I), \ \ \left(\begin{smallmatrix}\mathcal W \\ B\text{\rm\rm-Mod}\end{smallmatrix}\right))$$
and \ ${\rm Ho}(\Lambda)\cong (\mathcal U'\cap \mathcal X)/ _A\mathcal I\cong {\rm Ho}(A)$.
\end{thm}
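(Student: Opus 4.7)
The plan is to deduce Theorem \ref{GHtriplegpgiA} from Corollary \ref{GHtriple2} by verifying its side conditions from the gpctp/gictp assumptions, and then checking that the induced cotorsion pairs in $\Lambda$-{\rm Mod} satisfy the generalized projectivity/injectivity axioms.

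For part $(1)$, I would first observe that the extra hypothesis $N\otimes_B\mathcal P\subseteq \mathcal X'$ of Corollary \ref{GHtriple2}(1) holds automatically: since $_AN$ is projective we have $N\otimes_B\mathcal P\subseteq \ _A\mathcal P$, and since $(\mathcal U',\mathcal X')$ is a gpctp, $_A\mathcal P=\mathcal U'\cap\mathcal X'\subseteq \mathcal X'$. Corollary \ref{GHtriple2}(1) then immediately produces the two compatible complete hereditary cotorsion pairs, the Gillespie-Hovey triple $({\rm T}_A(\mathcal U')\oplus {\rm T}_B(_B\mathcal P),\ \binom{\mathcal X}{B\text{-Mod}},\ \binom{\mathcal W}{B\text{-Mod}})$, and the isomorphism ${\rm Ho}(\Lambda)\cong {\rm Ho}(A)$. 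It remains to show that each of the two cotorsion pairs is a gpctp. Thickness of $\mathcal X$ gives thickness of $\binom{\mathcal X}{B\text{-Mod}}$, so axiom (ii) of Definition \ref{genprojctp} is clear. For axiom (i), using the explicit description in Subsection 2.5 one has $_\Lambda\mathcal P={\rm T}_A(_A\mathcal P)\oplus {\rm T}_B(_B\mathcal P)$, and $N\otimes_B Q\in \ _A\mathcal P\subseteq \mathcal X$ for $Q\in \ _B\mathcal P$, so
$$\bigl({\rm T}_A(\mathcal U)\oplus {\rm T}_B(_B\mathcal P)\bigr)\cap \left(\begin{smallmatrix}\mathcal X\\ B\text{-Mod}\end{smallmatrix}\right) = {\rm T}_A(\mathcal U\cap\mathcal X)\oplus {\rm T}_B(_B\mathcal P) = {\rm T}_A(_A\mathcal P)\oplus {\rm T}_B(_B\mathcal P) = \ _\Lambda\mathcal P.$$
The same computation with $(\mathcal U',\mathcal X')$ in place of $(\mathcal U,\mathcal X)$ yields the second gpctp, and the identification ${\rm Ho}(\Lambda)\cong (\mathcal U'\cap\mathcal X)/_A\mathcal P$ is then read off from Theorem \ref{Ho} applied to the Gillespie-Hovey triple (just as in the proof of Theorem \ref{GHtriplegpgiB}).

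For part $(2)$, by symmetric reasoning the extra hypothesis $\Hom_B(M,\ _B\mathcal I)\subseteq \mathcal U$ of Corollary \ref{GHtriple2}(2) is automatic: flatness of $M_A$ gives $\Hom_B(M,\ _B\mathcal I)\subseteq \ _A\mathcal I$, and the gictp condition gives $_A\mathcal I=\mathcal U\cap\mathcal X\subseteq \mathcal U$. Corollary \ref{GHtriple2}(2) then supplies the two compatible complete hereditary cotorsion pairs and the Gillespie-Hovey triple, together with ${\rm Ho}(\Lambda)\cong {\rm Ho}(A)$. Thickness of $\mathcal U$ transfers to $\binom{\mathcal U}{B\text{-Mod}}$. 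For axiom (i$'$), using $_\Lambda\mathcal I={\rm H}_A(_A\mathcal I)\oplus {\rm H}_B(_B\mathcal I)$ and the fact that $\Hom_B(M,J)\in \ _A\mathcal I\subseteq \mathcal U$ for $J\in \ _B\mathcal I$, one obtains
$$\left(\begin{smallmatrix}\mathcal U\\ B\text{-Mod}\end{smallmatrix}\right)\cap \bigl({\rm H}_A(\mathcal X)\oplus {\rm H}_B(_B\mathcal I)\bigr) = {\rm H}_A(\mathcal U\cap\mathcal X)\oplus {\rm H}_B(_B\mathcal I) = {\rm H}_A(_A\mathcal I)\oplus {\rm H}_B(_B\mathcal I) = \ _\Lambda\mathcal I,$$
and similarly for the primed pair. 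The final statement about ${\rm Ho}(\Lambda)$ again follows from Theorem \ref{Ho}.

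The argument is essentially the mirror image of the proof of Theorem \ref{GHtriplegpgiB}, with the roles of $(A,B)$ and of ``projective / injective on the correct side'' swapped. The only point that needs some care is making sure that the transfer inclusions $N\otimes_B(-)\subseteq \ _A\mathcal P$ and $\Hom_B(M,-)\subseteq \ _A\mathcal I$ land on the $A$-side (which is where the generalized projective/injective intersection is computed); these hold precisely because $_AN$ is projective and $M_A$ is flat. I do not anticipate any serious obstacle, since all the heavy lifting has already been carried out in Theorem \ref{ctp3}, Corollary \ref{GHtriple2}, and the calculations developed for Theorem \ref{GHtriplegpgiB}.
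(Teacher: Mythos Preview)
Your proposal is correct and follows exactly the approach the paper intends: it is the mirror of the proof of Theorem \ref{GHtriplegpgiB}, invoking Corollary \ref{GHtriple2} in place of Corollary \ref{GHtriple1}, with the side conditions $N\otimes_B\mathcal P\subseteq\mathcal X'$ and $\Hom_B(M,\,_B\mathcal I)\subseteq\mathcal U$ discharged via the projectivity of $_AN$ and flatness of $M_A$ combined with the gpctp/gictp intersection axioms. The verification of the gpctp/gictp axioms for the induced cotorsion pairs in $\Lambda$-Mod is likewise the direct analogue of the computations in the proof of Theorem \ref{GHtriplegpgiB}.
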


\subsection{Projective (Injective) models on Morita rings}
An abelian model structure on (abelian) category $\mathcal A$ is {\it projective} (respectively, {\it injective}) if
each object is fibrant (respectively, cofibrant), i.e.,
the  Hovey triple is of the form  $(\mathcal X,  \mathcal A,  \mathcal Y)$ \ (respectively,  $(\mathcal A,  \mathcal Y,  \mathcal X)$). See [H2], [Gil2].

\vskip5pt

The following observation clarifies the relation between projective (respectively, injective) models and gpctps (respectively, gictps).

\vskip5pt

\begin{lem}\label{Htriple} {\rm ([Bec, 1.1.9]; [Gil3, 1.1])}  \ Let  \ $(\mathcal X, \ \mathcal Y)$ be a complete cotorsion pair in abelian category $\mathcal A$ with enough projective objects and enough injective objects. Then

\vskip5pt

$(1)$ \ $(\mathcal X, \ \mathcal A, \ \mathcal Y)$ is a $($hereditary$)$ Hovey triple if and only if
\ $(\mathcal X, \ \mathcal Y)$ is a generalized projective cotorsion pair.

\vskip5pt

$(1')$ \ $(\mathcal A, \ \mathcal Y, \ \mathcal X)$  is a $($hereditary$)$ Hovey triple if and only if
\ $(\mathcal X, \ \mathcal Y)$ is a generalized injective cotorsion pair.
\end{lem}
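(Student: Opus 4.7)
The plan is to unfold the Hovey triple and cotorsion pair definitions and check that each bullet in the Hovey triple axioms matches, one at a time, a bullet in the gpctp (respectively gictp) definition. For part $(1)$, the Hovey triple $(\mathcal{C},\mathcal{F},\mathcal{W})=(\mathcal{X},\mathcal{A},\mathcal{Y})$ requires three things: that $\mathcal{W}=\mathcal{Y}$ is thick, that $(\mathcal{C}\cap\mathcal{W},\mathcal{F})=(\mathcal{X}\cap\mathcal{Y},\mathcal{A})$ is a complete cotorsion pair, and that $(\mathcal{C},\mathcal{F}\cap\mathcal{W})=(\mathcal{X},\mathcal{Y})$ is a complete cotorsion pair. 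The last of these is the hypothesis. So the equivalence reduces to showing: $\mathcal{Y}$ is thick together with $(\mathcal{X}\cap\mathcal{Y},\mathcal{A})$ being a complete cotorsion pair is the same as $\mathcal{X}\cap\mathcal{Y}=\mathcal{P}$ together with $\mathcal{Y}$ thick.

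For the forward direction of this reduced equivalence, I would use that $\mathcal{A}$ has enough projectives to identify ${}^\perp\mathcal{A}=\mathcal{P}$ (any $X\in{}^\perp\mathcal{A}$ splits out of a projective cover $0\to K\to P\to X\to 0$ because $\operatorname{Ext}^1_{\mathcal A}(X,K)=0$), so if $(\mathcal{X}\cap\mathcal{Y},\mathcal{A})$ is a cotorsion pair then $\mathcal{X}\cap\mathcal{Y}=\mathcal{P}$. For the converse, the pair $(\mathcal{P},\mathcal{A})$ is trivially a cotorsion pair, and completeness is automatic from ``enough projectives'': for every $X$, a short exact sequence $0\to K\to P\to X\to 0$ with $P$ projective is a special left $\mathcal{P}$-approximation, and by Proposition~\ref{completenessandheredity} this one-sided approximation suffices.

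For the hereditary addendum, the cotorsion pair $(\mathcal{P},\mathcal{A})$ is obviously hereditary, and heredity of $(\mathcal{X},\mathcal{Y})$ is forced by thickness: given an exact sequence $0\to Y_1\to Y_2\to C\to 0$ with $Y_1,Y_2\in\mathcal{Y}$, the two-out-of-three property for the thick class $\mathcal{Y}$ puts $C\in\mathcal{Y}$, so $\mathcal{Y}$ is closed under cokernels of monomorphisms and Proposition~\ref{heredity} applies. In particular, every Hovey triple of the form $(\mathcal{X},\mathcal{A},\mathcal{Y})$ produced by this correspondence is automatically hereditary, which matches the parenthetical ``(hereditary)'' in the statement.

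Part $(1')$ is entirely formally dual: one takes the Hovey triple in the shape $(\mathcal{C},\mathcal{F},\mathcal{W})=(\mathcal{A},\mathcal{Y},\mathcal{X})$ and uses the enough-injectives hypothesis to identify $\mathcal{A}^\perp\cap\mathcal{Y}$-side pair $(\mathcal{A},\mathcal{Y}\cap\mathcal{X})$ with the injective cotorsion pair; the role of ``$\mathcal{X}\cap\mathcal{Y}=\mathcal{P}$'' is played by ``$\mathcal{X}\cap\mathcal{Y}=\mathcal{I}$'', and the thickness hypothesis in the definition of gictp lands on $\mathcal{X}$ in place of $\mathcal{Y}$. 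I do not foresee a genuine obstacle; the only thing one must be careful about is the bookkeeping of which of $\mathcal{X},\mathcal{Y}$ plays the role of $\mathcal{C}$, $\mathcal{F}$, or $\mathcal{W}$ in the two cases, but once that is fixed the verification is a direct matching of axioms.
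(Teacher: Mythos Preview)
Your proof is correct. The paper does not supply its own proof of this lemma; it simply cites [Bec, 1.1.9] and [Gil3, 1.1] and states the result. Your direct verification by unfolding the Hovey-triple axioms and matching them against the gpctp/gictp conditions is exactly the standard argument, and each step is sound. One terminological quibble: where you write ``projective cover'' you mean an arbitrary epimorphism from a projective (enough projectives gives this, not covers), but the splitting argument works the same way.
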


\vskip5pt

Any gpctp $(\mathcal V, \ \mathcal Y)$ in $B$-{\rm Mod} gives compatible gpctps
$(_B\mathcal P, \ B\mbox{-}{\rm Mod})$ and $(\mathcal V, \ \mathcal Y)$.
Any gictp $(\mathcal V, \ \mathcal Y)$ in $B$-{\rm Mod} gives compatible gictps
$(\mathcal V, \ \mathcal Y)$ and $(B\mbox{-}{\rm Mod}, \ _B\mathcal I)$.
Thus, by Theorem \ref{GHtriplegpgiB} one gets:

\vskip5pt

\begin{cor}\label{projinjtripleB} \ Let $\Lambda=\left(\begin{smallmatrix} A & N \\
M & B\end{smallmatrix}\right)$ be a Morita ring with  $M\otimes_AN=0 = N\otimes_BM$. Suppose that \ $N_B$ is flat and \ $_BM$ is projective.

\vskip5pt

$(1)$ \  Let \ $(\mathcal V, \mathcal Y)$ be a gpctp in $B$-{\rm Mod}. Then
$$({\rm T}_A(_A\mathcal P)\oplus {\rm T}_B(\mathcal V), \ \ \Lambda\text{\rm\rm-Mod}, \ \ \left(\begin{smallmatrix} A\text{\rm\rm-Mod} \\ \mathcal Y\end{smallmatrix}\right))$$
is a hereditary Hovey triple, and \ ${\rm Ho}(\Lambda)\cong\mathcal V/_B\mathcal P$.

\vskip5pt

$(2)$   \ Let \ $(\mathcal V, \mathcal Y)$ be a gictp in $B$-{\rm Mod}. Then
$$(\Lambda\text{\rm\rm-Mod}, \ \ {\rm H}_A(_A\mathcal I)\oplus {\rm H}_B(\mathcal Y), \ \ \left(\begin{smallmatrix}A\text{\rm\rm-Mod}\\ \mathcal V\end{smallmatrix}\right))$$
is a hereditary Hovey triple, and \ ${\rm Ho}(\Lambda)\cong \mathcal Y/_B\mathcal I$.
\end{cor}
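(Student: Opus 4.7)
The plan is to deduce both parts as direct specializations of Theorem \ref{GHtriplegpgiB} by pairing the given gpctp (resp.\ gictp) with its trivial counterpart, and then invoking Lemma \ref{Htriple} to convert a gpctp/gictp into a projective/injective hereditary Hovey triple. For part (1), I would apply Theorem \ref{GHtriplegpgiB}(1) to the two compatible gpctps $(_B\mathcal P,\ B\text{-}\mathrm{Mod})$ and $(\mathcal V,\ \mathcal Y)$ in $B$-Mod. Compatibility is immediate: $\mathrm{Ext}^1_B(_B\mathcal P,\ \mathcal Y)=0$ since $_B\mathcal P$ consists of projectives, and $_B\mathcal P\cap B\text{-}\mathrm{Mod}=\ _B\mathcal P=\mathcal V\cap\mathcal Y$ by the defining property of a gpctp. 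The theorem then yields two compatible gpctps in $\Lambda\text{-}\mathrm{Mod}$, namely $(_\Lambda\mathcal P,\ \Lambda\text{-}\mathrm{Mod})$ and $({\rm T}_A(_A\mathcal P)\oplus{\rm T}_B(\mathcal V),\ \left(\begin{smallmatrix}A\text{-}\mathrm{Mod}\\ \mathcal Y\end{smallmatrix}\right))$.

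Now Lemma \ref{Htriple}(1) turns the second of these gpctps immediately into the desired projective hereditary Hovey triple $({\rm T}_A(_A\mathcal P)\oplus{\rm T}_B(\mathcal V),\ \Lambda\text{-}\mathrm{Mod},\ \left(\begin{smallmatrix}A\text{-}\mathrm{Mod}\\ \mathcal Y\end{smallmatrix}\right))$, whose thick class is automatically $\left(\begin{smallmatrix}A\text{-}\mathrm{Mod}\\ \mathcal Y\end{smallmatrix}\right)$ by Lemma \ref{Htriple}(1). This bypasses any recomputation of Gillespie's $\mathcal W$; as a consistency check, one can verify directly that the $\mathcal W$ in Theorem \ref{GHtriplegpgiB} coincides with $\mathcal Y$ by using the completeness of $(\mathcal V,\mathcal Y)$: for $W\in\mathcal Y$, a special precover $0\to Y\to V\to W\to 0$ forces $V\in\mathcal V\cap\mathcal Y=\ _B\mathcal P$ since $\mathcal Y$ is thick and $Y,W\in\mathcal Y$. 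For the homotopy category, Theorem \ref{GHtriplegpgiB}(1) gives $\mathrm{Ho}(\Lambda)\cong(\mathcal V\cap B\text{-}\mathrm{Mod})/{}_B\mathcal P=\mathcal V/{}_B\mathcal P$.

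Part (2) is dual in strategy: I would apply Theorem \ref{GHtriplegpgiB}(2) to the compatible gictps $(\mathcal V,\mathcal Y)$ and $(B\text{-}\mathrm{Mod},\ _B\mathcal I)$. Compatibility again reduces to $\mathrm{Ext}^1_B(\mathcal V,\ _B\mathcal I)=0$ (injectivity of $_B\mathcal I$) and $\mathcal V\cap\mathcal Y=\ _B\mathcal I=B\text{-}\mathrm{Mod}\cap\ _B\mathcal I$ (the gictp defining condition). The theorem produces the gictp $(\left(\begin{smallmatrix}A\text{-}\mathrm{Mod}\\ \mathcal V\end{smallmatrix}\right),\ {\rm H}_A(_A\mathcal I)\oplus{\rm H}_B(\mathcal Y))$ in $\Lambda\text{-}\mathrm{Mod}$, and Lemma \ref{Htriple}(1') promotes it to the claimed injective Hovey triple with trivial class $\left(\begin{smallmatrix}A\text{-}\mathrm{Mod}\\ \mathcal V\end{smallmatrix}\right)$. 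The isomorphism $\mathrm{Ho}(\Lambda)\cong\mathcal Y/{}_B\mathcal I$ follows again from the general formula of Theorem \ref{GHtriplegpgiB}(2), specialized to our choice $(\mathcal V',\mathcal Y')=(B\text{-}\mathrm{Mod},\ _B\mathcal I)$.

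There is no real obstacle here—the result is essentially bookkeeping on top of Theorem \ref{GHtriplegpgiB} and Lemma \ref{Htriple}. The only minor subtlety is matching the paired-cotorsion-pairs indexing in Theorem \ref{GHtriplegpgiB} with the trivial projective/injective cotorsion pair so that the ``outer'' cotorsion pair becomes $(_\Lambda\mathcal P,\ \Lambda\text{-}\mathrm{Mod})$ (resp.\ $(\Lambda\text{-}\mathrm{Mod},\ _\Lambda\mathcal I)$), which then forces the corresponding $\mathcal F$-class (resp.\ $\mathcal C$-class) in the Gillespie-Hovey triple to be the whole category $\Lambda\text{-}\mathrm{Mod}$.
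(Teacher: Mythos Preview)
Your proposal is correct and follows essentially the same approach as the paper: pair the given gpctp (resp.\ gictp) with the trivial projective (resp.\ injective) cotorsion pair in $B$-Mod, check compatibility, and apply Theorem \ref{GHtriplegpgiB}. Your extra step of invoking Lemma \ref{Htriple} to read off the Hovey triple from the resulting gpctp/gictp in $\Lambda$-Mod is a valid alternative to reading the Gillespie--Hovey triple directly from Theorem \ref{GHtriplegpgiB} (where the class $\mathcal W$ in $B$-Mod specializes to $\mathcal Y$ in part (1) and to $\mathcal V$ in part (2)), and your consistency check confirms these agree.
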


\vskip5pt

If $B$  is quasi-Frobenius, then  \ $(B\mbox{-}{\rm Mod}, \ _B\mathcal I)$
is a gpctp, and
\ $(_B\mathcal P, \ B\mbox{-}{\rm Mod})$ is a gictp. By Corollary \ref{projinjtripleB} one gets

\begin{cor}\label{frobB} \ Let $\Lambda=\left(\begin{smallmatrix} A & N \\
M & B\end{smallmatrix}\right)$ be a Morita ring with  $M\otimes_AN=0 = N\otimes_BM$. Suppose that $B$ is quasi-Frobenius,  $N_B$ is flat and \ $_BM$ is projective. Then

\vskip5pt

$(1)$ \  $({\rm T}_A(_A\mathcal P)\oplus {\rm T}_B(B\text{\rm\rm-Mod}), \ \ \Lambda\text{\rm\rm-Mod}, \ \ \left(\begin{smallmatrix} A\text{\rm\rm-Mod} \\ _B\mathcal I\end{smallmatrix}\right))$
is a hereditary Hovey triple$;$ and \ ${\rm Ho}(\Lambda)\cong B\mbox{-}\underline{{\rm Mod}}.$

\vskip5pt

$(2)$  \ $(\Lambda\text{\rm\rm-Mod}, \ \ {\rm H}_A(_A\mathcal I)\oplus {\rm H}_B(B\text{\rm\rm-Mod}), \ \ \left(\begin{smallmatrix}A\text{\rm\rm-Mod}\\ _B\mathcal P\end{smallmatrix}\right))$
is a hereditary Hovey triple$;$ and \ ${\rm Ho}(\Lambda)\cong B\mbox{-}\underline{{\rm Mod}}.$
\end{cor}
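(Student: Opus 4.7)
The plan is to specialize Corollary \ref{projinjtripleB} to the quasi-Frobenius setting. The crucial observation is that when $B$ is quasi-Frobenius, one has $_B\mathcal P = {}_B\mathcal I$, so this common class is thick and it serves simultaneously as the intersection term in both a gpctp and a gictp on $B\mbox{-}{\rm Mod}$. All hypotheses on $\Lambda$ (namely $M\otimes_AN=0=N\otimes_BM$, $N_B$ flat, and $_BM$ projective) are already assumed, so Corollary \ref{projinjtripleB} applies once we produce the appropriate cotorsion pairs in $B\mbox{-}{\rm Mod}$.

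For part (1), I would take $(\mathcal V, \mathcal Y) = (B\mbox{-}{\rm Mod}, \ _B\mathcal I)$. This is a complete cotorsion pair since $B\mbox{-}{\rm Mod} = {}^\perp(_B\mathcal I)$ (every module has an injective cover/envelope) and ${}_B\mathcal I = (B\mbox{-}{\rm Mod})^\perp$ is trivially verified. Its intersection is $B\mbox{-}{\rm Mod}\cap {}_B\mathcal I = {}_B\mathcal I = {}_B\mathcal P$, which is thick for a quasi-Frobenius ring (closed under summands, extensions, kernels, and cokernels, since $B$ is self-injective and noetherian). Hence $(B\mbox{-}{\rm Mod}, \ _B\mathcal I)$ is a gpctp. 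Applying Corollary \ref{projinjtripleB}(1) yields the hereditary Hovey triple
\[
({\rm T}_A(_A\mathcal P)\oplus {\rm T}_B(B\mbox{-}{\rm Mod}), \ \Lambda\mbox{-}{\rm Mod}, \ \left(\begin{smallmatrix} A\mbox{-}{\rm Mod} \\ _B\mathcal I\end{smallmatrix}\right))
\]
with ${\rm Ho}(\Lambda) \cong \mathcal V/{}_B\mathcal P = B\mbox{-}{\rm Mod}/{}_B\mathcal P = B\mbox{-}\underline{\rm Mod}$, where the last equality uses $_B\mathcal P = {}_B\mathcal I$.

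For part (2), I would dually take $(\mathcal V, \mathcal Y) = (_B\mathcal P, \ B\mbox{-}{\rm Mod})$. Completeness holds trivially; the intersection is ${}_B\mathcal P\cap B\mbox{-}{\rm Mod} = {}_B\mathcal P = {}_B\mathcal I$, and since ${}_B\mathcal P$ is thick (same argument as above using quasi-Frobenius), the pair is a gictp. Applying Corollary \ref{projinjtripleB}(2) gives the hereditary Hovey triple
\[
(\Lambda\mbox{-}{\rm Mod}, \ {\rm H}_A(_A\mathcal I)\oplus {\rm H}_B(B\mbox{-}{\rm Mod}), \ \left(\begin{smallmatrix}A\mbox{-}{\rm Mod}\\ _B\mathcal P\end{smallmatrix}\right))
\]
with ${\rm Ho}(\Lambda)\cong \mathcal Y/{}_B\mathcal I = B\mbox{-}{\rm Mod}/{}_B\mathcal I = B\mbox{-}\underline{\rm Mod}$.

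Since Corollary \ref{projinjtripleB} has already been established and the quasi-Frobenius hypothesis immediately supplies the required gpctp and gictp, there is no real obstacle. The only point meriting explicit mention is the identification ${}_B\mathcal P = {}_B\mathcal I$ for quasi-Frobenius $B$, which simultaneously guarantees the thickness needed to invoke Corollary \ref{projinjtripleB} and identifies Quillen's homotopy category with the usual stable module category $B\mbox{-}\underline{\rm Mod}$.
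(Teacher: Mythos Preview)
Your proposal is correct and follows exactly the same approach as the paper: the paper simply notes that when $B$ is quasi-Frobenius, $(B\mbox{-}{\rm Mod},\,{}_B\mathcal I)$ is a gpctp and $({}_B\mathcal P,\,B\mbox{-}{\rm Mod})$ is a gictp, and then invokes Corollary~\ref{projinjtripleB}. Your write-up supplies the verification of these facts (via ${}_B\mathcal P={}_B\mathcal I$ and thickness) that the paper leaves implicit, but the argument is the same.
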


\vskip5pt

Similar as Corollary \ref{projinjtripleB}, by Theorem \ref{GHtriplegpgiA} one gets

\vskip5pt

\begin{cor}\label{projinjtripleA} \ Let $\Lambda=\left(\begin{smallmatrix} A & N \\
M & B\end{smallmatrix}\right)$ be a Morita ring with  $M\otimes_AN=0 = N\otimes_BM$. Suppose that \ $M_A$ is flat and \ $_AN$ is projective.

\vskip5pt

$(1)$ \  Let  \ $(\mathcal U, \mathcal X)$ be a gpctp in $A$-{\rm Mod}. Then
$$({\rm T}_A(\mathcal U)\oplus {\rm T}_B(_B\mathcal P), \ \ \Lambda\text{\rm\rm-Mod}, \ \ \left(\begin{smallmatrix}\mathcal X \\ B\text{\rm\rm-Mod}\end{smallmatrix}\right))$$
is a hereditary  Hovey triple, and \ ${\rm Ho}(\Lambda)\cong\mathcal U/_A\mathcal P$.

\vskip5pt

$(2)$ \  Let \ $(\mathcal U, \mathcal X)$ be a gictp in $A$-{\rm Mod}.  Then
$$(\Lambda\text{\rm\rm-Mod}, \ \ {\rm H}_A(\mathcal X)\oplus {\rm H}_B(_B\mathcal I), \ \ \left(\begin{smallmatrix}\mathcal U \\ B\text{\rm\rm-Mod}\end{smallmatrix}\right))$$
is a hereditary Hovey triple, and \ ${\rm Ho}(\Lambda)\cong \mathcal X/_A\mathcal I$.
\end{cor}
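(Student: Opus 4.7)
The plan is to derive both parts by direct application of Theorem \ref{GHtriplegpgiA}, feeding in the given gpctp (respectively, gictp) together with the projective (respectively, injective) cotorsion pair of $A\mbox{-}{\rm Mod}$ as the two compatible pairs. This mirrors the strategy of Corollary \ref{projinjtripleB} but now pivots on the $A$-side rather than the $B$-side; the symmetric hypotheses ``$M_A$ flat and ${}_AN$ projective'' correspond, under the swap of $A$ and $B$, to ``$N_B$ flat and ${}_BM$ projective''.

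For $(1)$, first observe that $({}_A\mathcal P,\ A\mbox{-}{\rm Mod})$ is itself a gpctp: the class $A\mbox{-}{\rm Mod}$ is trivially thick and ${}_A\mathcal P \cap A\mbox{-}{\rm Mod} = {}_A\mathcal P$. Given the gpctp $(\mathcal U, \mathcal X)$, the pairs $({}_A\mathcal P,\ A\mbox{-}{\rm Mod})$ and $(\mathcal U, \mathcal X)$ are compatible in the sense of Subsection 2.9: one has $\Ext_A^1({}_A\mathcal P, \mathcal X) = 0$ automatically, and ${}_A\mathcal P \cap A\mbox{-}{\rm Mod} = {}_A\mathcal P = \mathcal U \cap \mathcal X$ is the defining property of a gpctp. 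The induced Gillespie-Hovey triple in $A\mbox{-}{\rm Mod}$ is $(\mathcal U,\ A\mbox{-}{\rm Mod},\ \mathcal X)$, so the thick class $\mathcal W$ there equals $\mathcal X$. Plugging these two compatible gpctps into Theorem \ref{GHtriplegpgiA}(1) (with the given $(\mathcal U, \mathcal X)$ in the primed slot and $({}_A\mathcal P,\ A\mbox{-}{\rm Mod})$ in the unprimed slot) directly yields the Gillespie-Hovey triple
$$({\rm T}_A(\mathcal U) \oplus {\rm T}_B({}_B\mathcal P),\ \left(\begin{smallmatrix}A\mbox{-}{\rm Mod}\\ B\mbox{-}{\rm Mod}\end{smallmatrix}\right),\ \left(\begin{smallmatrix}\mathcal X\\ B\mbox{-}{\rm Mod}\end{smallmatrix}\right))$$
in $\Lambda\mbox{-}{\rm Mod}$, together with ${\rm Ho}(\Lambda) \cong (\mathcal U \cap A\mbox{-}{\rm Mod})/{}_A\mathcal P = \mathcal U/{}_A\mathcal P$. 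Recognising $\left(\begin{smallmatrix}A\mbox{-}{\rm Mod}\\ B\mbox{-}{\rm Mod}\end{smallmatrix}\right) = \Lambda\mbox{-}{\rm Mod}$ finishes $(1)$.

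For $(2)$, symmetrically, $(A\mbox{-}{\rm Mod},\ {}_A\mathcal I)$ is a gictp since $A\mbox{-}{\rm Mod}$ is thick and $A\mbox{-}{\rm Mod} \cap {}_A\mathcal I = {}_A\mathcal I$. For the given gictp $(\mathcal U, \mathcal X)$, compatibility of $(\mathcal U, \mathcal X)$ and $(A\mbox{-}{\rm Mod},\ {}_A\mathcal I)$ follows from $\Ext_A^1(\mathcal U,\ {}_A\mathcal I)=0$ and $\mathcal U \cap \mathcal X = {}_A\mathcal I = A\mbox{-}{\rm Mod} \cap {}_A\mathcal I$; the induced Gillespie-Hovey triple in $A\mbox{-}{\rm Mod}$ is $(A\mbox{-}{\rm Mod},\ \mathcal X,\ \mathcal U)$. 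Applying Theorem \ref{GHtriplegpgiA}(2) to these two compatible gictps then produces the stated triple in $\Lambda\mbox{-}{\rm Mod}$ and the homotopy category $(A\mbox{-}{\rm Mod} \cap \mathcal X)/{}_A\mathcal I = \mathcal X/{}_A\mathcal I$. The only real task throughout is the bookkeeping of matching the primed/unprimed input slots of Theorem \ref{GHtriplegpgiA} to the two compatible pairs and verifying that the thick class $\mathcal W$ of the auxiliary Hovey triple on $A\mbox{-}{\rm Mod}$ collapses, as it should, to $\mathcal X$ in $(1)$ and to $\mathcal U$ in $(2)$; all substantive content has already been absorbed into Theorem \ref{GHtriplegpgiA}.
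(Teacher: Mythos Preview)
Your proof is correct and follows exactly the approach intended by the paper: the paper simply states ``Similar as Corollary \ref{projinjtripleB}, by Theorem \ref{GHtriplegpgiA} one gets'', and you have accurately filled in the details of pairing the given gpctp (respectively, gictp) with $({}_A\mathcal P,\ A\mbox{-}{\rm Mod})$ (respectively, $(A\mbox{-}{\rm Mod},\ {}_A\mathcal I)$), verifying compatibility, and reading off the output of Theorem \ref{GHtriplegpgiA}.
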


\vskip5pt

If $A$ is quasi-Frobenius, then  \ $(A\mbox{-}{\rm Mod}, \ _A\mathcal I)$ is a gpctp, and
\ $(_A\mathcal P, \ A\mbox{-}{\rm Mod})$
is a gictp. By Corollary \ref{projinjtripleA} one gets

\begin{cor}\label{frobA} \ Let $\Lambda=\left(\begin{smallmatrix} A & N \\
M & B\end{smallmatrix}\right)$ be a Morita ring with  $M\otimes_AN=0 = N\otimes_BM$. Suppose that $A$ is quasi-Frobenius, \ $M_A$ is flat and \ $_AN$ is projective. Then

\vskip5pt

$(1)$ \ $({\rm T}_A(A\text{\rm\rm-Mod})\oplus {\rm T}_B(_B\mathcal P), \ \ \Lambda\text{\rm\rm-Mod}, \ \ \left(\begin{smallmatrix}_A\mathcal I \\ B\text{\rm\rm-Mod}\end{smallmatrix}\right))$
is a hereditary Hovey triple$;$ and \ ${\rm Ho}(\Lambda)\cong A\mbox{-}\underline{{\rm Mod}}.$

\vskip5pt

$(2)$ \ $(\Lambda\text{\rm\rm-Mod}, \ \ {\rm H}_A(A\text{\rm\rm-Mod})\oplus {\rm H}_B(_B\mathcal I), \ \ \left(\begin{smallmatrix}_A\mathcal P \\ B\text{\rm\rm-Mod}\end{smallmatrix}\right))$
is a hereditary Hovey triple$;$ and \ ${\rm Ho}(\Lambda)\cong A\mbox{-}\underline{{\rm Mod}}.$
\end{cor}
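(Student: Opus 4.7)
The plan is to derive both parts of Corollary~\ref{frobA} as direct specializations of Corollary~\ref{projinjtripleA}, by exhibiting natural gpctp/gictp structures on $A\mbox{-}{\rm Mod}$ coming from the quasi-Frobenius hypothesis on $A$. The flatness of $M_A$ and projectivity of $_AN$ already match the running assumptions of Corollary~\ref{projinjtripleA}, so the whole task reduces to verifying that the correct cotorsion pairs in $A\mbox{-}{\rm Mod}$ are generalized projective (respectively, generalized injective), and then tracing ${\rm Ho}(\Lambda)$ through the isomorphisms already recorded there.

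For part~(1), I would take $(\mathcal U,\mathcal X)=(A\mbox{-}{\rm Mod},\ _A\mathcal I)$. This is a (trivially) complete cotorsion pair, since $\Ext^1_A(X,I)=0$ for any module $X$ and any injective $I$, and any module admits an injective resolution and a trivial projective resolution (by itself). Because $A$ is quasi-Frobenius, $_A\mathcal P=\ _A\mathcal I$, so $\mathcal U\cap\mathcal X=\ _A\mathcal I=\ _A\mathcal P$, and $_A\mathcal I$ is thick (kernels, cokernels, and extensions of injectives are injective over a quasi-Frobenius ring). By Definition~\ref{genprojctp}, this is a gpctp. Corollary~\ref{projinjtripleA}(1) then delivers the hereditary Hovey triple
\[
({\rm T}_A(A\text{\rm-Mod})\oplus {\rm T}_B(_B\mathcal P),\ \Lambda\text{\rm-Mod},\ \left(\begin{smallmatrix}_A\mathcal I \\ B\text{\rm-Mod}\end{smallmatrix}\right))
\]
with ${\rm Ho}(\Lambda)\cong\mathcal U/_A\mathcal P = (A\mbox{-}{\rm Mod})/_A\mathcal P = A\mbox{-}\underline{{\rm Mod}}$.

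For part~(2), dually, I would take $(\mathcal U,\mathcal X)=(_A\mathcal P,\ A\mbox{-}{\rm Mod})$. Completeness is again immediate, and the quasi-Frobenius hypothesis gives $\mathcal U\cap\mathcal X = \ _A\mathcal P=\ _A\mathcal I$ together with thickness of $_A\mathcal P$, so Definition~\ref{genprojctp} identifies this as a gictp. Applying Corollary~\ref{projinjtripleA}(2) then yields the hereditary Hovey triple
\[
(\Lambda\text{\rm-Mod},\ {\rm H}_A(A\text{\rm-Mod})\oplus {\rm H}_B(_B\mathcal I),\ \left(\begin{smallmatrix}_A\mathcal P \\ B\text{\rm-Mod}\end{smallmatrix}\right))
\]
with ${\rm Ho}(\Lambda)\cong \mathcal X/_A\mathcal I = (A\mbox{-}{\rm Mod})/_A\mathcal I = A\mbox{-}\underline{{\rm Mod}}$.

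There is no genuine obstacle here: all the analytic content (completeness of cotorsion pairs in $\Lambda\mbox{-}{\rm Mod}$, description of the trivial class, identification of the homotopy category with a stable module category) has been packaged into Theorem~\ref{GHtriplegpgiA} and its consequence Corollary~\ref{projinjtripleA}. The only mildly nontrivial point to record explicitly is the quasi-Frobenius folklore $_A\mathcal P=\ _A\mathcal I$ and the thickness of this common class, which together ensure the two cotorsion pairs above really are generalized projective and generalized injective in the sense of Definition~\ref{genprojctp}. Once that observation is in place, the corollary follows by direct substitution.
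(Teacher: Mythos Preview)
Your proposal is correct and follows essentially the same approach as the paper: the paper simply observes that for quasi-Frobenius $A$, the pair $(A\mbox{-}{\rm Mod},\ _A\mathcal I)$ is a gpctp and $(_A\mathcal P,\ A\mbox{-}{\rm Mod})$ is a gictp, then invokes Corollary~\ref{projinjtripleA}. Your write-up is in fact more explicit than the paper's, spelling out why these pairs satisfy Definition~\ref{genprojctp} and how the homotopy category identification follows.
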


\subsection{Generally different Hovey triples}

\begin{fact} \label{thesameHoveytriple} \ Let \ $(\mathcal C, \ \mathcal F, \ \mathcal W)$ and \ $(\mathcal C', \ \mathcal F', \ \mathcal W')$  be Hovey triples in  abelian category $\mathcal A$.
If $$(\mathcal C\cap \mathcal W, \ \mathcal F) = (\mathcal C'\cap \mathcal W', \ \mathcal F'), \ \  \ \ (\mathcal C, \ \mathcal F\cap \mathcal W) = (\mathcal C', \ \mathcal F'\cap \mathcal W')$$
then \ $(\mathcal C, \ \mathcal F, \ \mathcal W) = (\mathcal C', \ \mathcal F', \ \mathcal W')$.
\end{fact}

In fact, by Theorem \ref{hoveycorrespondence}, the corresponding two abelian model structures  are the same. Thus $\mathcal W = \mathcal W'$.

\begin{defn} \label{Hdifference} \ Let \ $\Omega$ be a class of Morita rings,  \ $(\mathcal C, \ \mathcal F, \ \mathcal W)$ and \ $(\mathcal C', \ \mathcal F', \ \mathcal W')$  Hovey triples defined in $\Lambda\mbox{-}{\rm Mod}$, for arbitrary Morita rings $\Lambda\in \Omega$.
They are said to be generally different Hovey triples, provided that there is $\Lambda\in \Omega$, such that \ $(\mathcal C, \ \mathcal F, \ \mathcal W)\ne (\mathcal C', \ \mathcal F', \ \mathcal W')$ in $\Lambda\mbox{-}{\rm Mod}$.
\end{defn}

\begin{lem} \label{Hdifference1} \ Hovey triples \ $(\mathcal C, \ \mathcal F, \ \mathcal W)$ and \ $(\mathcal C', \ \mathcal F', \ \mathcal W')$ in $\Lambda\mbox{-}{\rm Mod}$ are generally different if and only if
 \ $(\mathcal C\cap \mathcal W, \ \mathcal F)$ \  and $(\mathcal C'\cap \mathcal W', \ \mathcal F')$ are generally different, or,
\ $(\mathcal C, \ \mathcal F\cap \mathcal W)$ and \ $(\mathcal C', \ \mathcal F'\cap \mathcal W')$ are generally different, as cotorsion pairs.
\end{lem}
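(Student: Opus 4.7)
The plan is to deduce this directly from Lemma \ref{thesameHoveytriple}, which already tells us that a Hovey triple is determined, as a triple, by its two associated complete cotorsion pairs $(\mathcal C\cap\mathcal W,\ \mathcal F)$ and $(\mathcal C,\ \mathcal F\cap\mathcal W)$. Once we phrase the definition of ``generally different'' pointwise over $\Lambda\in\Omega$, the equivalence becomes a propositional manipulation; the only content is Lemma \ref{thesameHoveytriple}.

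For the forward implication, suppose $(\mathcal C,\mathcal F,\mathcal W)$ and $(\mathcal C',\mathcal F',\mathcal W')$ are generally different. By Definition \ref{Hdifference}, fix some $\Lambda\in\Omega$ at which the two triples disagree in $\Lambda\mbox{-}{\rm Mod}$. The contrapositive of Lemma \ref{thesameHoveytriple} then forces, for this particular $\Lambda$, that at least one of
$$(\mathcal C\cap\mathcal W,\ \mathcal F)\neq (\mathcal C'\cap\mathcal W',\ \mathcal F'),\qquad (\mathcal C,\ \mathcal F\cap\mathcal W)\neq (\mathcal C',\ \mathcal F'\cap\mathcal W')$$
holds in $\Lambda\mbox{-}{\rm Mod}$. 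Hence, by Definition \ref{difference}, at least one of these two families of cotorsion pairs is generally different.

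For the backward implication, suppose (without loss of generality) that $(\mathcal C\cap\mathcal W,\ \mathcal F)$ and $(\mathcal C'\cap\mathcal W',\ \mathcal F')$ are generally different. Choose a witness $\Lambda\in\Omega$ at which they disagree. If the two Hovey triples coincided in $\Lambda\mbox{-}{\rm Mod}$, then in particular $\mathcal C=\mathcal C'$, $\mathcal F=\mathcal F'$ and $\mathcal W=\mathcal W'$, which would force the two cotorsion pairs in question to be equal at $\Lambda$, a contradiction. Thus $(\mathcal C,\mathcal F,\mathcal W)\neq (\mathcal C',\mathcal F',\mathcal W')$ at this $\Lambda$, and they are generally different. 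The argument starting from the other pair is identical.

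There is no real obstacle: the forward direction is the only step using content, and that content is exactly Lemma \ref{thesameHoveytriple}; the backward direction is a purely formal consequence of the definitions. The one thing to be careful about is that ``generally different'' is an existential statement over $\Omega$, so both implications must exhibit, and reuse, a single witness $\Lambda$ rather than merely compare the global classes.
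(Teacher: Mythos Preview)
Your proof is correct and follows essentially the same approach as the paper: the forward direction is the contrapositive of Lemma \ref{thesameHoveytriple} applied at a witnessing $\Lambda$, and the backward direction is the trivial observation that equality of the triples forces equality of the derived cotorsion pairs. Your backward argument is in fact slightly cleaner than the paper's, which handles it by the case split ``either $\mathcal F\ne\mathcal F'$, or $\mathcal F=\mathcal F'$ and then $\mathcal C\cap\mathcal W\ne\mathcal C'\cap\mathcal W'$ forces $\mathcal C\ne\mathcal C'$ or $\mathcal W\ne\mathcal W'$''.
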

\begin{proof} The ``only if" part follows from Fact \ref{thesameHoveytriple}. Conversely, without loss of generality, we may assume that
there are $A$, $B$, \ $_BM_A$ and $_AN_B$, such that \ $\mathcal F = \mathcal F'$ and  $\mathcal C\cap \mathcal W\ne \mathcal C'\cap \mathcal W'$.
Then, either $\mathcal C \ne \mathcal C'$, or $\mathcal W\ne \mathcal W'$. Hence $(\mathcal C, \ \mathcal F, \ \mathcal W) \ne (\mathcal C', \ \mathcal F', \ \mathcal W')$ for the corresponding $\Lambda$. \end{proof}

\begin{exm} \label{Hgdsame} \ Generally different Hovey triples could be the same in special cases.

\vskip5pt

For example,
$(_\Lambda\mathcal P, \ \Lambda\mbox{\rm-Mod}, \ \Lambda\mbox{\rm-Mod})$ and $(\binom{_A\mathcal P}{_B\mathcal P}, \ \binom{_A\mathcal P}{_B\mathcal P}^\perp, \ \Lambda\mbox{\rm-Mod})$ are Hovey triples.
Since \ $(_\Lambda\mathcal P, \ \Lambda\mbox{\rm-Mod})$ and $(\binom{_A\mathcal P}{_B\mathcal P}, \ \binom{_A\mathcal P}{_B\mathcal P}^\perp)$ are generally different (cf. Example \ref{gdsame}), by
Lemma \ref{Hdifference1},  the two Hovey triples are
generally different. But, if $M = 0 = N$, then they are the same.
\end{exm}

\begin{prop} \label{newmodel} $(1)$ \ The two Hovey triples in {\rm Theorem \ref{cofibrantlygenHtriple}} are generally different.

\vskip5pt

$(2)$ \ The four Hovey triples  in {\rm Theorems \ref{Htriple1} and \ref{Htriple2}} are pairwise generally different.

\vskip5pt

$(3)$ \ The four Hovey triples in {\rm Theorems \ref{GHtriplegpgiB} and \ref{GHtriplegpgiA}} are pairwise generally different.

\vskip5pt

$(4)$ \ The four Hovey triples in  {\rm Corollaries \ref{projinjtripleB} and \ref{projinjtripleA}} are pairwise generally different.

\vskip5pt

$(5)$ \ The four Hovey triples in  {\rm Corollaries \ref{frobB} and \ref{frobA}} are pairwise generally different.

\vskip5pt

$(6)$ \ All the Hovey triples in $(1)$- $(5)$ are generally different from the following Hovey triples$:$

\vskip5pt

\hskip20pt  $\bullet$  \ $(_\Lambda\mathcal P, \ \Lambda\mbox{-}{\rm Mod},  \ \Lambda\mbox{-}{\rm Mod});$
\vskip5pt
\hskip20pt $\bullet$  \ $(\Lambda\mbox{-}{\rm Mod},  \ _\Lambda\mathcal I, \ \Lambda\mbox{-}{\rm Mod});$
\vskip5pt
\hskip20pt $\bullet$  \ the Frobenius model ${\rm ([Gil2])}:$  \ $(\Lambda\mbox{-}{\rm Mod}, \  \Lambda\mbox{-}{\rm Mod}, \ _\Lambda\mathcal P)$ $($if  $\Lambda$ is quasi-Frobenius$);$
\vskip5pt
\hskip20pt $\bullet$  \  $({\rm GP}(\Lambda), \  \Lambda\mbox{-}{\rm Mod}, \ _\Lambda\mathcal P^{<\infty})$ $($if  $\Lambda$ is Gorenstein$);$
\vskip5pt
\hskip20pt $\bullet$   \ $(\Lambda\mbox{-}{\rm Mod},  \ {\rm GI}(\Lambda), \ _\Lambda\mathcal P^{<\infty})$ $($if $\Lambda$ is Gorenstein$);$
\vskip5pt
\hskip20pt $\bullet$ \ the flat-cotorsion Hovey triple  \ $({\rm F}(\Lambda),  \ {\rm C}(\Lambda), \  \Lambda\mbox{-}{\rm Mod})$ $($see {\rm [BBE], [EJ, 7.4.3]}$)$.
\end{prop}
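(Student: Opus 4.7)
The plan is to reduce all these inequalities of Hovey triples to inequalities of cotorsion pairs, and then invoke the machinery of Section 6. The key reduction tool is Lemma \ref{Hdifference1}: two Hovey triples $(\mathcal C,\mathcal F,\mathcal W)$ and $(\mathcal C',\mathcal F',\mathcal W')$ in $\Lambda$-Mod are generally different as Hovey triples if and only if at least one of the associated cotorsion pairs $(\mathcal C\cap\mathcal W,\mathcal F)$ or $(\mathcal C,\mathcal F\cap\mathcal W)$ is generally different from its counterpart. Thus for every pair of Hovey triples in the statement, the task is simply to pick one of the two associated cotorsion pairs and read off the ``generally different'' conclusion from Propositions \ref{different}, \ref{newI}, \ref{different2} and \ref{newII}, together with the identifications given in Theorems \ref{main71}--\ref{main73} and Corollaries \ref{main74}, \ref{projinjtripleA}, \ref{frobA}.

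For (1), the two Hovey triples in Theorem \ref{cofibrantlygenHtriple} have, respectively, associated cotorsion pairs of the form $({\rm T}_A(\mathcal U')\oplus {\rm T}_B(\mathcal V'),\binom{\mathcal X}{\mathcal Y})$ and $(\binom{\mathcal U'}{\mathcal V'},{\rm H}_A(\mathcal X)\oplus{\rm H}_B(\mathcal Y))$; specializing $(\mathcal U',\mathcal X)$ and $(\mathcal V',\mathcal Y)$ to projective/injective cotorsion pairs and invoking Propositions \ref{different} and \ref{different2} (row ``$(\mathcal P,\mathcal A)$ vs $(\mathcal P,\mathcal B)$'' compared to row ``$(\mathcal A,\mathcal I)$ vs $(\mathcal B,\mathcal I)$'' in Table 2) shows these two cotorsion pairs are generally different. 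For (2)--(5), I would proceed analogously: each Hovey triple appearing in Theorems \ref{Htriple1}, \ref{Htriple2}, \ref{GHtriplegpgiB}, \ref{GHtriplegpgiA} and the four corollaries determines, via its ``cofibrant--trivially fibrant'' or ``trivially cofibrant--fibrant'' cotorsion pair, one of the six ``new'' cotorsion pairs studied in Table 2, and the required pairwise inequalities are exactly those already proved in Propositions \ref{newI} and \ref{newII}. In several cases (e.g.\ comparing a triple from Theorem \ref{Htriple1}(1) with one from Theorem \ref{Htriple2}(2)) I would exhibit witnesses of the form $\binom{A\text{-Mod}}{\mathcal Y}$ versus $\binom{\mathcal X}{B\text{-Mod}}$ on the ``fibrant'' side, whose right-hand classes clearly differ as soon as $A$ and $B$ are chosen non-semisimple.

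For (6), which is the longest part, the six well-known Hovey triples listed have ``fibrant--trivially cofibrant'' or ``trivially fibrant--cofibrant'' cotorsion pairs equal to one of $(_\Lambda\mathcal P,\Lambda\text{-Mod})$, $(\Lambda\text{-Mod},{}_\Lambda\mathcal I)$, $({\rm GP}(\Lambda),{}_\Lambda\mathcal P^{<\infty})$, $(_\Lambda\mathcal P^{<\infty},{\rm GI}(\Lambda))$, or the flat cotorsion pair $(_\Lambda{\rm F},{}_\Lambda{\rm C})$ (for the Frobenius model one uses both that $\mathcal C=\mathcal F=\Lambda$-Mod and $\mathcal W={}_\Lambda\mathcal P$, which forces the triple to be the injective one when $\Lambda$ is quasi-Frobenius). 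Since Propositions \ref{newI} and \ref{newII} explicitly assert that the six ``new'' cotorsion pairs arising from Series I and Series II in Table 2 are generally different from each of these five cotorsion pairs (in the precise sense of Definition \ref{new}), Lemma \ref{Hdifference1} then yields the required inequalities with the Hovey triples in (6).

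The main obstacle, I expect, is not any single inequality but the organization: the total number of pairs is quadratic in the number of Hovey triples considered, and one must choose, uniformly for each pair, which of the two associated cotorsion pairs to use as the witness of difference, and under which specialization of $A$, $B$, $M$, $N$ the witness works. In particular, distinguishing the Frobenius model $(\Lambda\text{-Mod},\Lambda\text{-Mod},{}_\Lambda\mathcal P)$ from the injective model $(\Lambda\text{-Mod},{}_\Lambda\mathcal I,\Lambda\text{-Mod})$ requires $\Lambda$ not to be semisimple (so that ${}_\Lambda\mathcal P\neq\Lambda\text{-Mod}$), and distinguishing the Gorenstein-projective/injective triples from the ``new'' ones requires invoking the examples constructed in Example \ref{ie} and Remark \ref{examctp4}, which are precisely the Morita rings used in the proofs of Propositions \ref{newI} and \ref{newII}. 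Once the table of associated cotorsion pairs is drawn up, the verification becomes a bookkeeping exercise reading off entries from Table 2 and citing the relevant clause of Proposition \ref{newI} or \ref{newII}.
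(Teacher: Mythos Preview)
Your overall strategy matches the paper's: reduce each inequality of Hovey triples to an inequality of cotorsion pairs via Lemma~\ref{Hdifference1} and then appeal to the Section~6 results. A few points of comparison and correction are worth making.

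For (2)--(5), the paper streamlines the argument by a single reduction: since the Hovey triples in Corollaries~\ref{frobB} and~\ref{frobA} are specializations of those in Theorems~\ref{Htriple1}, \ref{Htriple2}, \ref{GHtriplegpgiB}, \ref{GHtriplegpgiA} and Corollaries~\ref{projinjtripleB}, \ref{projinjtripleA}, it suffices to prove (5). Your proposal treats (2)--(5) in parallel, which is fine but more laborious. Also, you cite Propositions~\ref{newI} and~\ref{newII} for the \emph{pairwise} inequalities among the constructed triples; that is a slip --- those propositions establish ``newness'' relative to the five standard cotorsion pairs, while pairwise differences come from Propositions~\ref{different} and~\ref{different2}.

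For (6), the paper again reduces to the four triples of Corollaries~\ref{frobB} and~\ref{frobA}, leaving $24$ comparisons, and declares all but four ``easy''. The four non-easy ones are exactly the comparisons with the Gorenstein-projective and Gorenstein-injective models. Here the paper does \emph{not} go through Lemma~\ref{Hdifference1}; instead it compares the thick classes $\mathcal W$ directly, using Theorem~\ref{ctp4} to identify $_\Lambda\mathcal P^{<\infty}$ with $\binom{_A\mathcal I}{_B\mathcal I}=\binom{_A\mathcal P}{_B\mathcal P}$, which visibly differs from $\binom{A\text{-Mod}}{_B\mathcal I}$, $\binom{_A\mathcal I}{B\text{-Mod}}$, etc., when $A$ or $B$ is not semisimple. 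Your route through Proposition~\ref{newI} (specifically Lemma~\ref{notgor2}) also works and uses exactly the same Morita rings from Example~\ref{examctp4}; the paper's direct $\mathcal W$-comparison is simply more economical. One caution: your parenthetical that the Frobenius model ``forces the triple to be the injective one'' is not right --- the Frobenius model $(\Lambda\text{-Mod},\Lambda\text{-Mod},{}_\Lambda\mathcal P)$ and the injective model $(\Lambda\text{-Mod},{}_\Lambda\mathcal I,\Lambda\text{-Mod})$ share the cotorsion pair $(\Lambda\text{-Mod},{}_\Lambda\mathcal I)$ but are distinct triples unless $\Lambda$ is semisimple; distinguishing the new triples from the Frobenius model thus still requires a quasi-Frobenius witness $\Lambda$ (e.g.\ $A=B$ non-semisimple quasi-Frobenius with $M=N=0$), which is easy but not covered by merely citing Proposition~\ref{newI}.
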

\begin{proof} \ (1) \ Let $k$ be a field. In Theorem \ref{cofibrantlygenHtriple},
taking \ $\Lambda = \left(\begin{smallmatrix} k & k \\
	0 & k\end{smallmatrix}\right)$ and
 \ $\mathcal U' = k\mbox{-}{\rm Mod} = \mathcal X = \mathcal W_1 = \mathcal V' = \mathcal Y = \mathcal W_2$, then all the conditions are satisfied. To see that
$({\rm T}_A(\mathcal U')\oplus {\rm T}_B(\mathcal V'), \ \left(\begin{smallmatrix} \mathcal X \\ \mathcal Y\end{smallmatrix}\right), \ \left(\begin{smallmatrix} \mathcal W_1 \\ \mathcal W_2\end{smallmatrix}\right))$
\ and
$(\left(\begin{smallmatrix} \mathcal U' \\ \mathcal V'\end{smallmatrix}\right),  \ {\rm H}_A(\mathcal X)\oplus {\rm H}_B(\mathcal Y), \ \left(\begin{smallmatrix} \mathcal W_1 \\ \mathcal W_2\end{smallmatrix}\right))$
are different Hovey triples, it suffices to see that cotorsion pairs
$(_\Lambda\mathcal P, \ \Lambda\mbox{-}{\rm Mod})$ and $(\Lambda\mbox{-}{\rm Mod}, \ _\Lambda\mathcal I)$ are different. This is clear since $_\Lambda\mathcal P \subsetneqq \Lambda\mbox{-}{\rm Mod}$.

\vskip5pt

To show (2), (3), (4), (5), by the definition of generally different Hovey triples, it suffices to prove (5), since the Hovey triples in
Corollaries \ref{frobB} and \ref{frobA} are respectively the special cases of the Hovey triples in {\rm Theorems \ref{Htriple1} and \ref{Htriple2}}
(or, in {\rm Theorems \ref{GHtriplegpgiB} and \ref{GHtriplegpgiA}}; or, in {\rm Corollaries \ref{projinjtripleB} and \ref{projinjtripleA}}).
While for the four kinds of Hovey triples in Corollaries \ref{frobB} and \ref{frobA}, one can easily see that they are pairwise generally different.

\vskip5pt

(6) \ It suffices to show that the four Hovey triples in  Corollaries \ref{frobB} and \ref{frobA} are generally different from the six Hovey triples listed above.
Then, all together there are 24 cases, and all these 24 cases are easy, except the following cases.

\vskip5pt

To see Hovey triple $({\rm T}_A(_A\mathcal P)\oplus {\rm T}_B(B\text{\rm\rm-Mod}), \ \ \Lambda\text{\rm\rm-Mod}, \ \ \left(\begin{smallmatrix} A\text{\rm\rm-Mod} \\ _B\mathcal I\end{smallmatrix}\right))$
in Corollary \ref{frobB}(1) is generally different from $({\rm GP}(\Lambda), \  \Lambda\mbox{-}{\rm Mod}, \ _\Lambda\mathcal P^{<\infty})$ (if $\Lambda$ is Gorenstein), we take $\Lambda$ to be the Morita rings as in Theorem \ref{ctp4}.
Then $_\Lambda \mathcal P^{<\infty} = \binom{_A\mathcal I}{_B\mathcal I}\ne \left(\begin{smallmatrix} A\text{\rm\rm-Mod} \\ _B\mathcal I\end{smallmatrix}\right)$ if $A$ is not semisimple.

\vskip5pt

To see the Hovey triple $(\Lambda\text{\rm\rm-Mod}, \ \ {\rm H}_A(_A\mathcal I)\oplus {\rm H}_B(B\text{\rm\rm-Mod}), \ \ \left(\begin{smallmatrix}A\text{\rm\rm-Mod}\\ _B\mathcal P\end{smallmatrix}\right))$
in Corollary \ref{frobB}(2) is generally different from  $(\Lambda\mbox{-}{\rm Mod},  \ {\rm GI}(\Lambda), \ _\Lambda\mathcal P^{<\infty})$ \ (if $\Lambda$ is Gorenstein),
we take $\Lambda$ to be the Morita rings as in Theorem \ref{ctp4}.
Then $_\Lambda \mathcal P^{<\infty} = \binom{_A\mathcal P}{_B\mathcal P}\ne \left(\begin{smallmatrix} A\text{\rm\rm-Mod} \\ _B\mathcal P\end{smallmatrix}\right)$ if $A$ is not semisimple.

\vskip5pt

To see the Hovey triple $({\rm T}_A(A\text{\rm\rm-Mod})\oplus {\rm T}_B(_B\mathcal P), \ \ \Lambda\text{\rm\rm-Mod}, \ \ \left(\begin{smallmatrix}_A\mathcal I \\ B\text{\rm\rm-Mod}\end{smallmatrix}\right))$
in Corollary \ref{frobA}(1) is generally different from $({\rm GP}(\Lambda), \  \Lambda\mbox{-}{\rm Mod}, \ _\Lambda\mathcal P^{<\infty})$ (if $\Lambda$ is Gorenstein), we take $\Lambda$ to be the Morita rings as in Theorem \ref{ctp4}.
Then $_\Lambda \mathcal P^{<\infty} = \binom{_A\mathcal I}{_B\mathcal I}\ne \left(\begin{smallmatrix} _A\mathcal I \\ B\text{\rm\rm-Mod}\end{smallmatrix}\right)$ if $B$ is not semisimple.

\vskip5pt

To see the Hovey triple $(\Lambda\text{\rm\rm-Mod}, \ \ {\rm H}_A(A\text{\rm\rm-Mod})\oplus {\rm H}_B(_B\mathcal I), \ \ \left(\begin{smallmatrix}_A\mathcal P \\ B\text{\rm\rm-Mod}\end{smallmatrix}\right))$
in Corollary \ref{frobA}(2) is generally different from $(\Lambda\mbox{-}{\rm Mod},  \ {\rm GI}(\Lambda), \ _\Lambda\mathcal P^{<\infty})$ \ (if $\Lambda$ is Gorenstein), we take $\Lambda$ to be the Morita rings as in Theorem \ref{ctp4}.
Then $_\Lambda \mathcal P^{<\infty} = \binom{_A\mathcal P}{_B\mathcal P}\ne \left(\begin{smallmatrix} _A\mathcal P \\ B\text{\rm\rm-Mod}\end{smallmatrix}\right)$ if $B$ is not semisimple.
\end{proof}

\vskip20pt

{\bf Acknowledgement}: We thank the anonymous referee for helpful comments and suggestions.

\end{document}